\numberwithin{equation}{section}
\theoremstyle{plain}
\newtheorem{Theorem}{Theorem}[section]
\newtheorem{Lemma}[Theorem]{Lemma}
\newtheorem{Proposition}[Theorem]{Proposition}
\newtheorem{Corollary}[Theorem]{Corollary}
\newtheorem{thmalph}{Theorem}
\theoremstyle{definition}
\newtheorem{Definition}[Theorem]{Definition}
\newtheorem{Remark}[Theorem]{Remark}
\newtheorem{Example}[Theorem]{Example}
\newtheorem*{Definition*}{Definition}
\newcommand{\C}{\mathbb{C}}
\newcommand{\R}{\mathbb{R}}
\newcommand{\Z}{\mathbb{Z}}
\newcommand{\kk}{\Bbbk}
\newcommand{\Cdot}{\boldsymbol{\cdot}}
\newcommand{\Hom}{\mathrm{Hom}}
\newcommand{\Ext}{\mathrm{Ext}}
\newcommand{\Stab}{\mathrm{Stab}}
\newcommand{\Rep}{\mathrm{Rep}}
\newcommand{\Tilt}{\mathrm{Tilt}}
\newcommand{\SL}{\mathrm{SL}}
\newcommand{\gfd}{\mathrm{gfd}}
\newcommand{\pr}{\mathrm{pr}}
\newcommand{\om}{\/\textsuperscript{$\omega$}}
\DeclareMathOperator{\ch}{ch}
\providecommand{\abs}[1]{\lvert#1\rvert}
\title[Multiplicity free and completely reducible tensor products]{Multiplicity free and completely reducible\\tensor products for $\mathrm{SL}_3(\Bbbk)$ and $\mathrm{Sp}_4(\Bbbk)$}
\author{Jonathan Gruber}
\address{Department of Mathematics, FAU Erlangen-Nuremberg, Cauerstr.\ 11, 91058 Erlangen, Germany}
\email{jonathan.gruber@fau.de}
\author{Gaëtan Mancini}
\address{School of Mathematics and Natural Sciences, University of Wuppertal, Gaußstr.\ 20, 42119 Wuppertal, Germany}
\email{mancini@uni-wuppertal.de}
\date{\today}
\begin{document}

\begin{abstract}
	Let $G$ be a simple algebraic group over an algebraically closed field $\kk$ of positive characteristic.
	We consider the questions of when the tensor product of two simple $G$-modules is multiplicity free or completely reducible.
	We develop tools for answering these questions in general, and we use them to provide complete answers for the groups $G = \SL_3(\kk)$ and $G = \mathrm{Sp}_4(\kk)$.
\end{abstract}

\maketitle

\section*{Introduction}

Let $G$ be a simply connected simple algebraic group over an algebraically closed field $\kk$ of characteristic $p>0$.
The category $\Rep(G)$ of finite-dimensional rational $G$-modules is a non-semisimple tensor category, and the composition multiplicities and submodule structure of tensor products of simple $G$-modules are a priori very hard to understand.
In this article, we propose to gain a better understanding of tensor products in $\Rep(G)$ by classifying the pairs of simple modules for which the composition multiplicities or the submodule structure of the tensor product are as elementary as possible.
More concretely, we ask when the tensor product of two simple $G$-modules is \emph{multiplicity free} (i.e.\ all composition multiplicities are bounded by $1$) or \emph{completely reducible} (i.e.\ a direct sum of simple $G$-modules).
These questions are closely related to one another:
If the tensor product of two simple $G$-modules is multiplicity free then it is also completely reducible by \cite[Lemma 4.12]{GruberCompleteReducibility}, and the converse is true under additional assumptions (e.g.\ if all weight spaces of one of the simple $G$-modules are at most one-dimensional), as we observe in Subsection \ref{subsec:weightspaces}.
We establish a number of new tools and techniques to decide whether a tensor product of simple $G$-modules is completely reducible or multiplicity free, and we show that these questions are intimately linked with
the \emph{alcove geometry} which governs the decomposition of $\Rep(G)$ into blocks.
Using these tools, we achieve a complete classification of all pairs of simple $G$-modules whose tensor product is completely reducible or multiplicity free, for the groups $G=\SL_3(\kk)$ and $G = \mathrm{Sp}_4(\kk)$.
In order to formulate our results more precisely, we next introduce some more notation.

The group $G$ is equipped with a root system $\Phi$, and the isomorphism classes of simple $G$-modules in $\Rep(G)$ are parametrized by the set $X^+$ of dominant weights in the weight lattice $X$ of $\Phi$, with respect to a fixed choice of simple roots $\Pi \subseteq \Phi$.
The unique (up to isomorphism) simple $G$-module of highest weight $\lambda \in X^+$ is denoted by $L(\lambda)$.
We write $W_\mathrm{fin}$ for the Weyl group of $\Phi$ and $W_\mathrm{aff} = W_\mathrm{fin} \rtimes \Z\Phi$ for the affine Weyl group.
Both $W_\mathrm{fin}$ and $W_\mathrm{aff}$ are Coxeter groups, with simple reflections $S_\mathrm{fin}$ and $S_\mathrm{aff} = S_\mathrm{fin} \sqcup \{ s_0 \}$, respectively, where $s_0$ denotes the affine simple reflection (see Subsection \ref{subsec:alcoves} for more details).
Furthermore, $W_\mathrm{aff}$ acts on $X$ via the so-called ($p$-dilated) dot action $(x,\lambda) \mapsto x \Cdot \lambda$, and two simple $G$-modules $L(\lambda)$ and $L(\mu)$ belong to the same block of $\Rep(G)$ only if their highest weights $\lambda$ and $\mu$ belong to the same $W_\mathrm{aff}$-orbit.

The fixed points of a reflection in $s \in W_\mathrm{aff}$ under the dot action form an affine hyperplane $H_s$ in the Euclidean space $X_\R = X \otimes_\Z \R$, and the connected components of $X_\R \setminus \bigcup_s H_s$ are called \emph{alcoves}.
The closure of any alcove is a fundamental domain for the dot action of $W_\mathrm{aff}$ on $X_\R$, and every point $x \in X_\R$ belongs to the `upper closure' of a unique alcove.
We say that a reflection hyperplane $H = H_s$ is a \emph{wall} of an alcove $C \subseteq X_\R$ if $H$ is the unique reflection hyperplane separating the alcoves $C$ and $s \Cdot C$.
With these conventions in place, we can define our key alcove geometric criterion for the complete reducibility of tensor products.

\begin{Definition*}
	Let  $\lambda , \mu \in X^+$ and let $C \subseteq X_\R$ be the unique alcove whose upper closure contains $\lambda$.
	We call $\mu$ \emph{reflection small} with respect to $\lambda$ if, for every reflection $s \in W_\mathrm{aff}$ such that $H_s$ is a wall of $C$ and $\lambda \leq s \Cdot \lambda$, we have $\lambda+w(\mu) \leq s\Cdot(\lambda + w(\mu))$ for all $w \in W_\mathrm{fin}$.
\end{Definition*}
 
This seemingly technical condition is in fact easy to verify in any given example:
One only needs to check that all weights of the form $\lambda + w(\mu)$ lie below certain reflection hyperplanes determined by $\lambda$, and this amounts to a simple system of linear inequalities (see Examples \ref{ex:A2reflectionsmall} and \ref{ex:B2reflectionsmall}).
Our motivation for introducing the notion of reflection small weights is the following result; see Theorem \ref{thm:reflectionsmalltensorproduct} below.

\begin{thmalph} \label{thm:introreflectionsmallCR}
	Let $\lambda,\mu \in X^+$ such that $\mu$ is reflection small with respect to $\lambda$.
	Then $L(\lambda) \otimes L(\mu)$ is completely reducible.
\end{thmalph}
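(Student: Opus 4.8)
The plan is to prove that $M := L(\lambda) \otimes L(\mu)$ is semisimple by showing that it admits no nonsplit extensions between its composition factors, and to control these extensions through the alcove geometry recalled above; throughout I write $C$ for the alcove whose upper closure contains $\lambda$. As a first reduction, I would use the standard fact that a finite-dimensional $G$-module is semisimple as soon as $\Ext^1_G(L(\nu),L(\nu')) = 0$ for every pair $L(\nu), L(\nu')$ of its composition factors: inducting on the composition length, one splits off a simple socle constituent $L(\nu) \subseteq \mathrm{soc}\,M$ using the vanishing of $\Ext^1_G(M/L(\nu), L(\nu))$, which holds because $M/L(\nu)$ is semisimple by induction and its constituents are composition factors of $M$. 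It therefore suffices to prove that $\Ext^1_G(L(\nu),L(\nu')) = 0$ for all composition factors of $M$, and by the linkage principle I may assume $\nu$ and $\nu'$ lie in a common $W_\mathrm{aff}$-orbit under the dot action.

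The geometric input, to be developed on the way to Theorem \ref{thm:reflectionsmalltensorproduct}, is an estimate constraining the relative position of the alcoves of $\nu$ and $\nu'$ whenever $\Ext^1_G(L(\nu),L(\nu')) \neq 0$: the two weights must be linked and separated by the reflection hyperplanes in a controlled manner, the prototypical situation being that $\nu$ and $\nu'$ lie on opposite sides of a single wall. The strategy is then to show that the reflection small hypothesis closes off exactly those wall-crossings that the composition factors of $M$ could use to extend one another. The hypothesis bears directly on the extreme weights $\lambda + w(\mu)$ with $w \in W_\mathrm{fin}$, forcing each of them onto the lower side of every upper wall $H_s$ of $C$, that is, every wall with $\lambda \leq s \Cdot \lambda$. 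I would then argue that these upper walls of $C$ are the only walls relevant to $M$, so that confining the extreme weights below them suffices to rule out the linkages permitted by the extension estimate.

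Retaining all conjugates $w \in W_\mathrm{fin}$ in the definition, rather than only $w = e$, is essential: the functors one uses to detect extensions across a wall $H_s$ interact with the $W_\mathrm{fin}$-action—and the contravariant form on $M$ is likewise $W_\mathrm{fin}$-symmetric on weights—so it is precisely the $W_\mathrm{fin}$-symmetric hypothesis that survives these manipulations and lets one conclude that no composition factor of $M$ is pushed past an upper wall of $C$. Since ``lying on the lower side of $H_s$'' is a half-space, hence convex, condition, controlling the extreme weights $\lambda + w(\mu)$ simultaneously should propagate to a confinement of all the composition-factor weights that can participate in an upper wall-crossing.

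The main obstacle is this passage from geometric confinement to the vanishing of the extension groups, which I expect to carry out with translation functors relating the block of $M$ to the upper walls of $C$: reflection smallness should guarantee that translating $M$ onto such a wall $H_s$ and back does not manufacture the adjacency across $H_s$ that a nonsplit extension would require. Making this precise demands controlling how the composition factors of $M$ distribute among the alcoves adjacent to the upper walls of $C$—in particular bounding their highest weights in terms of the extreme weights $\lambda + w(\mu)$ rather than merely the top weight $\lambda + \mu$—and verifying that no extension can be routed through a hyperplane that is not a wall of $C$, neither the lower walls of $C$ nor the more distant reflection hyperplanes. This convexity-and-equivariance bookkeeping, together with the precise form of the extension estimate, is where I anticipate the bulk of the effort to lie, rather than in any single inequality.
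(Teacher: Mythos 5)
There is a genuine gap, and it sits exactly where you predict "the bulk of the effort" would lie. Your endgame is the same as the paper's: if one knows that all composition factors of $L(\lambda)\otimes L(\mu)$ have highest weights in the upper closure $\widehat{C}$ of the \emph{single} alcove $C$, then complete reducibility is immediate, since $\overline{C}$ is a fundamental domain for the dot action, so two distinct linked weights cannot both lie in $\overline{C}$, and simple modules have no self-extensions. But your mechanism for establishing confinement does not deliver this. Reflection smallness, propagated by convexity (this is Lemma \ref{lem:reflectionsmallconditionallweights} in the paper), only shows that all \emph{weights} $\lambda+\nu$, $\nu\in\Lambda(\mu)$, of the tensor product lie on the lower side of the upper walls of $C$. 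That region contains many alcoves besides $C$, so a priori there can be composition factors with highest weights in alcoves strictly below $C$, and nothing in the linkage principle or in your proposed "extension estimate" rules out nonsplit extensions between such factors across lower walls or more distant hyperplanes. Knowing where the composition factors sit is precisely what your Ext-vanishing argument needs as input, and your proposal never produces it: the claim that translating onto an upper wall and back "does not manufacture adjacency" is a heuristic, not an argument, and it addresses only the upper walls in any case.

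What actually closes this gap in the paper is a character-level cancellation, not an Ext computation on the tensor product. Writing $\ch L(\lambda)=\sum_x a_x\,\chi(x\Cdot\lambda')$ and expanding $\ch\big(L(\lambda)\otimes L(\mu)\big)=\sum_\delta \dim L(\mu)_\delta \sum_x a_x\,\chi\big(x\Cdot(\lambda'+\delta)\big)$, one uses Corollary \ref{cor:reflectionsmallorbit} to move each shifted weight into $\overline{C}$ by an element $u_\delta$ of the finite group $W_C$ generated by reflections in the \emph{lower} walls of $C$, and then the key identity (Proposition \ref{prop:charactersimplechishifted}, resting on the sign relation $a_{xs}=-a_x$ of Proposition \ref{prop:charactersimplecoefficientsnegative}, which is where translation functors and $\Ext$ groups genuinely enter) shows that $\sum_x a_x\,\chi(xu\Cdot\lambda')$ equals $(-1)^{\ell(u)}\ch L(w\Cdot\lambda')$ when the target weight lies in $\widehat{C}$ and vanishes otherwise. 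It is this systematic cancellation that kills every would-be composition factor outside $\widehat{C}$; confinement of weights below the upper walls is only the first, easy half of the argument. Without an analogue of this identity (or some other device producing the statement "all composition factors lie in $\widehat{C}$"), your reduction to pairwise $\Ext^1$-vanishing cannot be completed.
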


If $\mu \in X^+$ is reflection small with respect to $\lambda \in X^+$, we further show in Theorem \ref{thm:reflectionsmalltensorproduct} that all composition factors of $L(\lambda) \otimes L(\mu)$ have highest weights in the upper closure of the unique alcove whose upper closure contains $\lambda$, and we give explicit formulas for the composition multiplicities of the simple $G$-modules with highest weight in the upper closure of this alcove, in terms of dimensions of weight spaces in $L(\mu)$.
These formulas are a useful tool for determining the pairs of simple $G$-modules whose tensor product is multiplicity free, among the pairs of simple $G$-modules whose tensor product is completely reducible.

Another key component of our strategy for studying complete reduciblity and multiplicity freeness of tensor products is a reduction to pairs of simple modules with \emph{$p$-restricted} highest weights, based on results from \cite{GruberCompleteReducibility}.
A weight $\lambda \in X^+$ is called \emph{$p$-restricted} if $(\lambda,\alpha^\vee) < p$ for all simple roots $\alpha \in \Pi$, and we write $X_1 \subseteq X^+$ for the set of $p$-restricted weights.
Every weight $\mu \in X^+$ admits an expansion $\mu = \mu_0 + p \mu_1 + \cdots + p^m \mu_m$ with $\mu_i \in X_1$ for $i = 0 , \ldots , m$, and Steinberg's tensor product theorem gives a tensor product decomposition
\[ L(\mu) \cong L(\mu_0)^{[0]} \otimes L(\mu_1)^{[1]} \otimes \cdots \otimes L(\mu_m)^{[m]} , \]
where $M \mapsto M^{[r]}$ denotes the $r$-th Frobenius twist functor on $\Rep(G)$.
Now for weights $\lambda,\mu \in X^+$ with expansions $\lambda = \lambda_0 + \cdots + p^m \lambda_m$ and $\mu = \mu_0 + \cdots + p^m \mu_m$ such that $\lambda_i,\mu_i \in X_1$ for $i = 0 , \ldots , m$, it is shown in \cite[Theorem C]{GruberCompleteReducibility} that the tensor product $L(\lambda) \otimes L(\mu)$ is completely reducible if and only if the tensor products $L(\lambda_i) \otimes L(\mu_i)$ are completely reducible for $i = 0 , \ldots , m$.
Using this result, and the fact that multiplicity freeness implies complete reducibility, we show in Subsection \ref{subsec:prestricted} that an analogous reduction result holds for multiplicity freeness:
The tensor product $L(\lambda) \otimes L(\mu)$ is multiplicity free if and only if $L(\lambda_i) \otimes L(\mu_i)$ is multiplicity free for $i = 0 , \ldots , m$.
Therefore, in order to obtain a complete classification of the pairs of weights $\lambda,\mu \in X^+$ such that $L(\lambda) \otimes L(\mu)$ is completely reducible (or multiplicity free), it suffices to classify the pairs of $p$-restricted weights $\lambda^\prime,\mu^\prime \in X_1$ such that $L(\lambda^\prime) \otimes L(\mu^\prime)$ is completely reducible (or multiplicity free). 
This is the task that we accomplish for the groups $G = \SL_3(\kk)$ and $G = \mathrm{Sp}_4(\kk)$.

To conclude the introduction, we state our classification results for the group $G = \SL_3(\kk)$.
(The results for $G = \mathrm{Sp}_4(\kk)$ are stated in Theorems \ref{thm:B2CR} and \ref{thm:B2MF}.)
The weight lattice $X$ of $G = \SL_3(\kk)$ is spanned by the fundamental dominant weights $\varpi_1$, $\varpi_2$, and every $p$-restricted weight $\lambda \in X_1$ can be written uniquely as $\lambda=a\varpi_1+b\varpi_2$, with $0 \leq a,b < p$.
Furthermore, there are precisely two alcoves $C_0$ and $C_1$ whose upper closure contains $p$-restricted weights.

\begin{thmalph} \label{thm:introA2CRMF}
	Let $G = \SL_3(\kk)$ and let $\lambda,\mu \in X_1$ be $p$-restricted weights.
	\begin{enumerate}
		\item The tensor product $L(\lambda) \otimes L(\mu)$ is completely reducible if and only if $\lambda$ and $\mu$ satisfy one of the conditions 1--4 or 1*--3* in Table \ref{tab:introA2CRMF}, up to interchanging $\lambda$ and $\mu$.
		\item The tensor product $L(\lambda) \otimes L(\mu)$ is multiplicity free if and only if $\lambda$ and $\mu$ satisfy one of the conditions 1--3, 1*--3*, or 4a, 4a* or 4b in Table \ref{tab:introA2CRMF}, up to interchanging $\lambda$ and $\mu$.
	\end{enumerate}
	\begin{table}[htbp]
	\centering
	\caption{Weights $\lambda,\mu \in X_1 \setminus \{ 0 \}$ such that $L(\lambda) \otimes L(\mu)$ is c.r.\ or m.f., for $G = \SL_3(\kk)$.}
	\label{tab:introA2CRMF}
	\begin{tabular}{|l||l|}
	\hline
	$\mathbf{\#}$ & \textbf{conditions} \\
	\hline\hline
	1 & $\lambda = a\varpi_1$ and $\mu = a^\prime \varpi_1$, where $a+a^\prime = p-1$ \\
	\hline
	1* & $\lambda = b\varpi_2$ and $\mu = b^\prime \varpi_2$, where $b+b^\prime = p-1$ \\
	\hline
	2 & $\lambda = (p-1) \cdot \varpi_1$ and $\mu = \varpi_2$ \\
	\hline
	2* & $\lambda = (p-1) \cdot \varpi_2$ and $\mu = \varpi_1$ \\
	\hline
	3 & $\lambda = a \varpi_1 + b \varpi_2 \in C_1$ and $\mu = (b+1) \cdot \varpi_2$, where $a+b = p-1$ and $b<a$ \\
	\hline
	3* & $\lambda = a \varpi_1 + b \varpi_2 \in C_1$ and $\mu = (a+1) \cdot \varpi_1$, where $a+b = p-1$ and $a<b$ \\
	\hline
	4 & $\lambda \in C_0 \cup C_1$ and $\mu$ is reflection small with respect to $\lambda$ \\
	\hline \hline
	4a & $\lambda \in C_0 \cup C_1$ and $\mu = a^\prime \varpi_1$ is reflection small with respect to $\lambda$; \\
	\hline
	4a* & $\lambda \in C_0 \cup C_1$ and $\mu = b^\prime \varpi_2$ is reflection small with respect to $\lambda$; \\
	\hline
	4b & $\lambda = a \varpi_1 + b \varpi_2 \in C_1$, where $a+b=p-1$, and $\mu$ is reflection small with respect to $\lambda$; \\
	\hline
	\end{tabular}
	\end{table}
\end{thmalph}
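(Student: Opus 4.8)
My plan is to start from the reduction to $p$-restricted weights established in the introduction (via \cite[Theorem C]{GruberCompleteReducibility} for complete reducibility and its multiplicity-free analogue), so that the entire theorem becomes a statement about pairs $\lambda,\mu\in X_1$, which is what the displayed Table \ref{tab:introA2CRMF} classifies. I would then cut the work in half using symmetry: the graph automorphism of $A_2$ interchanging $\varpi_1$ and $\varpi_2$ sends $L(a\varpi_1+b\varpi_2)$ to $L(b\varpi_1+a\varpi_2)$, commutes with tensor products, and preserves both complete reducibility and multiplicity freeness; hence every starred condition follows formally from its unstarred counterpart, and I may treat only conditions $1$, $2$, $3$, $4$ (and $4a$, $4b$). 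Throughout I also use $L(\lambda)\otimes L(\mu)\cong L(\mu)\otimes L(\lambda)$ to work up to interchanging $\lambda$ and $\mu$, exactly as in the statement.

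For sufficiency of the generic case, condition $4$ gives complete reducibility immediately from Theorem \ref{thm:introreflectionsmallCR}. To decide multiplicity freeness within this case I would feed the explicit composition-multiplicity formulas of Theorem \ref{thm:reflectionsmalltensorproduct} with the weight-space dimensions of $L(\mu)$. When $\mu=a^\prime\varpi_1$ or $\mu=b^\prime\varpi_2$, the module $L(\mu)$ has all weight multiplicities equal to $1$ (its weights being those of a symmetric power of the natural $\SL_3(\kk)$-module or its dual), so every composition factor in the upper closure of the alcove $C\ni\lambda$ occurs once, yielding conditions $4a$ and $4a^*$. When instead $\lambda$ lies on the antidiagonal $a+b=p-1$ inside $C_1$, a direct inspection of the multiplicity formula shows that only the extremal, multiplicity-one weights of $L(\mu)$ survive into the upper closure of $C_1$ (the interior multiplicities of $L(\mu)$ fold onto reflection hyperplanes and contribute nothing), so the tensor product is multiplicity free for \emph{every} reflection-small $\mu$; this is condition $4b$.

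The special pairings $1$, $2$, $3$ lie outside the reflection-small regime and must be handled by hand. Here I would decompose $L(\lambda)\otimes L(\mu)$ explicitly, exploiting that the second factor is a small module: in condition $1$ the product $L(a\varpi_1)\otimes L(a^\prime\varpi_1)$ with $a+a^\prime=p-1$ is analysed through the Clebsch--Gordan/Pieri combinatorics for $\SL_3(\kk)$, the equality $a+a^\prime=p-1$ being precisely the threshold that keeps all resulting highest weights at or below the wall $H_{s_0}$, so that no non-split extension can occur; in conditions $2$ and $3$ the factors $L(\varpi_2)$ and $L((b+1)\varpi_2)$ tensored with $L(\lambda)$ are computed by translation, recognising the product as a semisimple (and, in the multiplicity-free columns, multiplicity-free) module in the relevant block. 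These are finitely many families, so the computations, though case-dependent, are routine once the alcove picture is fixed.

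The main obstacle is the necessity direction: showing that every pair \emph{not} on the list fails the relevant property. The guiding principle is the contrapositive of the reflection-small criterion: if $\mu$ is not reflection small with respect to $\lambda$, there is a wall $H_s$ of $C\ni\lambda$ with $\lambda\leq s\Cdot\lambda$ and some $w\in W_{\mathrm{fin}}$ for which $\lambda+w(\mu)$ lies strictly above $H_s$; since $\lambda+w(\mu)$ is a weight of $L(\lambda)\otimes L(\mu)$, the tensor product acquires a composition factor whose highest weight lies above $H_s$, and by the linkage principle together with an explicit $\Ext^1$-computation in the relevant $A_2$-block this factor enters a non-split extension, so $L(\lambda)\otimes L(\mu)$ is not completely reducible. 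The delicate part is to make this obstruction precise at the boundary of the reflection-small region and for the borderline special cases, and then to upgrade it to the multiplicity-free statement of part (2): for pairs that are completely reducible but not of type $1$--$3$ or $4a$, $4a^*$, $4b$, I would pinpoint either a weight of $L(\mu)$ of multiplicity $\geq 2$ that survives into the upper closure of $C$ as a repeated composition factor, or two distinct translates with $\lambda+w(\mu)$ and $\lambda+w^\prime(\mu)$ dot-conjugate to the same dominant weight. Establishing these obstructions uniformly requires tight control of the weight multiplicities of the $p$-restricted simple $\SL_3(\kk)$-modules, and this is where the bulk of the case analysis will be concentrated.
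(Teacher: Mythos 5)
There is a genuine gap in your necessity argument, and it is fatal to the proposed mechanism. You claim that if $\mu$ is not reflection small with respect to $\lambda$, then some weight $\lambda + w(\mu)$ lying strictly above a wall $H_s$ produces a composition factor above $H_s$ which ``enters a non-split extension''. Conditions 1--3 of the table itself refute this: for $\lambda = a\varpi_1$, $\mu = a'\varpi_1$ with $a + a' = p-1$, the weight $\lambda + \mu = (p-1)\cdot\varpi_1$ satisfies $(\lambda+\mu+\rho,\alpha_\mathrm{h}^\vee) = p+1 > p$, so it lies strictly above the wall $F_{0,1}$ and $\mu$ is not reflection small with respect to $\lambda$, yet $L(\lambda)\otimes L(\mu)$ \emph{is} completely reducible (Lemma \ref{lem:A2api1tensoraprimepi1}); the composition factors with highest weight beyond the wall, such as $L((p-1)\cdot\varpi_1)$, are simple tilting modules and split off. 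So no $\Ext^1$-computation of the kind you sketch can exist, and this is precisely why the necessity direction is the bulk of the paper's work. It rests on two ingredients for which your proposal offers no substitute: (i) the fact that all composition factors of a completely reducible tensor product of $p$-restricted simples have $p$-restricted highest weights (\cite[Theorem A]{GruberCompleteReducibility}), which via truncation to the two $\mathrm{A}_1$-Levi subgroups yields the inequalities of Remark \ref{rem:LeviprestrctedconditionA2}; and (ii) tilting-module arguments: Proposition \ref{prop:A2directsumandsC1tilting} (every indecomposable summand of $L(\lambda)\otimes T$ with $\lambda \in C_1$ and $T$ tilting is either negligible tilting or simple with highest weight in $C_1$), the socle argument with $T(sus\Cdot\lambda)$ in Proposition \ref{prop:A2lambdaabovewall}, and the observation in Proposition \ref{prop:A2CRweightsinC0} that $T(\lambda+\mu)$ is a direct summand of $T(\lambda)\otimes T(\mu)$ while indecomposable tilting modules with highest weight in $C_1$ are non-simple.

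Two further steps would also fail as written. For the sufficiency of case 4b, your claim that ``the interior multiplicities of $L(\mu)$ fold onto reflection hyperplanes and contribute nothing'' is unsubstantiated, and it is also unnecessary: when $a+b = p-1$ the module $L(\lambda)$ \emph{itself} has all weight spaces of dimension at most one (Lemma \ref{lem:A2onedimensionalweightspaces}), so multiplicity freeness follows from complete reducibility by Corollary \ref{cor:onedimensionalweightspacesMF} with the roles of the two factors exchanged --- the same device that handles 4a and 4a*, and indeed all of the paper's sufficiency proofs for part (2). For the necessity direction of part (2), ``pinpointing a repeated weight that survives'' gives no method for controlling the alternating sums of Theorem \ref{thm:reflectionsmalltensorproduct}(3) when $\lambda \in C_1$; the paper instead bounds composition multiplicities from below by Verlinde coefficients (Lemma \ref{lem:MFboundVerlindecoeff}), computes $c_{\lambda,\mu}^{\lambda+\mu-\alpha_\mathrm{h}} \geq 2$ (Lemma \ref{lem:A2VerlindesuminC0}), and transports this into $C_1$ using the exchange property of Lemma \ref{lem:Verlindecoefficientsflipping} (Corollaries \ref{cor:A2reflectionsmallC0MF} and \ref{cor:A2reflectionsmallC1MF}). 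Finally, a factual slip: in condition 1 the threshold $a+a'=p-1$ does \emph{not} keep the resulting highest weights at or below $H_{s_0}$ --- the top weight $(p-1)\cdot\varpi_1$ lies above it --- so the sufficiency of conditions 1 and 3 genuinely needs the inductive translation and tilting arguments of Lemmas \ref{lem:A2wallstensorpi1}, \ref{lem:A2api1tensoraprimepi1} and Proposition \ref{prop:A2lambdaabovewall}, not Pieri-type combinatorics alone.
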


We observe that in Theorem \ref{thm:introA2CRMF}, the tensor product $L(\lambda) \otimes L(\mu)$ is completely reducible if and only if either $L(\lambda) \otimes L(\mu)$ is multiplicity free or $\mu$ is reflection small with respect to $\lambda$, and the same observation can be made in our results for $G = \mathrm{Sp}_4(\kk)$ (see Section \ref{sec:B2}).
This, in our view, underlines both the importance of the reflection smallness condition for the study of complete reducibility of tensor products, and the fact that complete reducibility and multiplicity freeness of tensor products are closely related phenomena.

The article is organized as follows:
Section \ref{sec:preliminaries} contains definitions and preliminary results about representations of algebraic groups and the associated alcove geometry.
In Section \ref{sec:alcovegeometryCR}, we establish the relation between the reflection smallness condition and complete reducibility of tensor products, and we give the proof of Theorem \ref{thm:introreflectionsmallCR} (see Theorem \ref{thm:reflectionsmalltensorproduct}).
Section \ref{sec:CRMFcriteria} introduces further tools and criteria for deciding whether a tensor product of simple $G$-modules is completely reducible or multiplicity free.
For instance, we prove the results that enable us to reduce these questions to pairs of simple $G$-modules with $p$-restricted highest weights (Subsection \ref{subsec:prestricted}), and we establish complete reducibility and multiplicity freeness criteria using the notion of \emph{good filtration dimension} from \cite{FriedlanderParshallGFD} (Subsection \ref{subsec:GFD}) and the tensor ideal of \emph{singular $G$-modules} from \cite{GruberLinkageTranslation} (Subsection \ref{subsec:singular}).
The two final Sections \ref{sec:A2} and \ref{sec:B2} are devoted to our classification results for the groups $G = \SL_3(\kk)$ and $G = \mathrm{Sp}_4(\kk)$, respectively.
See Theorems \ref{thm:A2CR} and \ref{thm:A2MF} for the classification of completely reducible and multiplicity free tensor products of simple $G$-modules for $G = \SL_3(\kk)$ (as stated in Theorem \ref{thm:introA2CRMF}), and Theorems \ref{thm:B2CR} and \ref{thm:B2MF} for the analogous results for $G = \mathrm{Sp}_4(\kk)$.

\subsubsection*{Acknowledgments}
	This project has originated from the master's thesis of G.M.\ \cite{ManciniMF}, which contains a classification of the pairs of simple $G$-modules whose tensor product is multiplicity free for $G = \SL_3(\kk)$ and partial results for $G = \mathrm{Sp}_4(\kk)$, using different methods than the present article.
	The thesis was written under the direction of Donna Testerman, and we would like to express our deepest gratitude for her valuable advice and for encouraging our collaboration.
	We acknowledge support by the Swiss National Science Foundation via the grants FNS 200020\_207730 and P500PT\_206751.

\section{Preliminaries}
\label{sec:preliminaries}

\subsection{Roots and weights}

Let $\Phi$ be a simple root system in a Euclidean space $X_\R$ with scalar product $(-\,,-)$.
Let $\Phi^+ \subseteq \Phi$ be a positive system, corresponding to a base $\Pi = \{ \alpha_1, \ldots ,\alpha_n \}$ of $\Phi$, and write $\alpha_\mathrm{hs}$ and $\alpha_\mathrm{h}$ for the highest short root and the highest root in $\Phi$, respectively (with the convention that $\alpha_\mathrm{hs} = \alpha_\mathrm{h}$ if $\Phi$ is simply laced).
For $\alpha \in \Phi$, let $\alpha^\vee = 2\alpha/(\alpha,\alpha)$ be the coroot of $\alpha$, and let $s_\alpha$ be the reflection corresponding to $\alpha$, with $s_\alpha(x) = x - (x,\alpha^\vee) \cdot \alpha$ for $x \in X_\R$.
We write
\[ X = \{ \lambda \in X_\R \mid (\lambda,\alpha^\vee) \in \Z \text{ for all } \alpha \in \Phi \} \]
for the weight lattice and $W_\mathrm{fin} = \langle s_\alpha \mid \alpha \in \Phi \rangle$ for the (finite) Weyl group of $\Phi$, with simple reflections $S_\mathrm{fin} = \{ s_\alpha \mid \alpha \in \Pi \}$ and length function $\ell \colon W_\mathrm{fin} \to \Z_{\geq 0}$.
Furthermore, we write $\rho = \frac12 \cdot \sum_{\alpha \in \Phi^+} \alpha$ and denote by $h = (\rho,\alpha_\mathrm{hs}^\vee)+1$ the Coxeter number of $\Phi$.
The set of \emph{dominant weights} is
\[ X^+ = \{ \lambda \in X \mid (\lambda,\alpha^\vee) \geq 0 \text{ for all } \alpha \in \Phi^+ \} , \]
and the \emph{fundamental dominant weights} $\{ \varpi_1 , \ldots , \varpi_n \}$ are defined by $(\varpi_i,\alpha_j^\vee) = \delta_{ij}$ for $1 \leq i,j \leq n$.
We consider the partial order $\leq$ on $X$ such that $\lambda \leq \mu$ if and only if $\mu - \lambda \in \sum_{\alpha \in \Pi} \Z_{\geq 0} \cdot \alpha$.

\subsection{Representations and characters}
\label{subsec:representationscharacters}

Let $\kk$ be an algebraically closed field of characteristic $p>0$ and let $G$ be the unique (up to isomorphism) simply connected simple algebraic group over $\kk$ with root system $\Phi$.
We identify $X$ with the character lattice of a fixed maximal torus of $G$ and write $\Rep(G)$ for the category of finite-dimensional rational $G$-modules.
In the following, we will simply refer to the objects of $\Rep(G)$ as \emph{$G$-modules} (omitting the mention of rationality and finite-dimensionality).

Every $G$-module $M$ admits a weight space decomposition $M = \bigoplus_{\mu \in X} M_\mu$, and the character of $M$ is defined as $\ch M = \sum_\mu \dim M_\mu \cdot e^\mu \in \Z[X]$.
The simple $G$-modules are determined up to isomorphism by their highest weight in $X^+$, and we write $L(\lambda)$ for the simple $G$-module of highest weight~$\lambda$.
Every $G$-module $M$ has a finite composition series, and we write $[M:L(\lambda)]$ for the multiplicity of the simple $G$-module $L(\lambda)$ in a composition series of $M$.
We also denote by $\nabla(\lambda)$ and $\Delta(\lambda)$ the induced module and the Weyl module of highest weight $\lambda \in X^+$ (see Subsections II.2.1--2 and II.2.13 in \cite{Jantzen}), so that $L(\lambda)$ is the unique simple submodule of $\nabla(\lambda)$ and the unique simple quotient of $\Delta(\lambda)$.
Writing $M^*$ for the dual of a $G$-module $M$, we have $L(\lambda)^* \cong L(-w_0\lambda)$ and $\nabla(\lambda)^* \cong \Delta(-w_0\lambda)$ for all $\lambda \in X^+$, where $w_0 \in W_\mathrm{fin}$ denotes the longest element.

The characters of both $\nabla(\lambda)$ and $\Delta(\lambda)$ are given by the well-known Weyl character formula, cf.\ Subsections II.5.10--11 in \cite{Jantzen}.
We write $\chi(\lambda) = \ch \nabla(\lambda) = \ch \Delta(\lambda)$ and extend the definition of $\chi(\lambda)$ to all $\lambda \in X$ (possibly non-dominant) by requiring that $\chi(w \Cdot \mu) = (-1)^{\ell(w)} \cdot \chi(\mu)$ for $\mu \in X$ and $w \in W_\mathrm{fin}$, and $\chi(\mu)=0$ if $(\mu,\alpha^\vee) = -1$ for some $\alpha \in \Pi$.
The characters $\chi(\lambda)$ with $\lambda \in X^+$ form a basis of the ring $\Z[X]^{W_\mathrm{fin}}$ of $W_\mathrm{fin}$-invariants in $\Z[X]$, and so do the characters $\ch L(\lambda)$ of the simple $G$-modules, by Section II.5.8 in \cite{Jantzen}.
We write
\[ \Lambda(\lambda) = \{ \mu \in X \mid \Delta(\lambda)_\mu \neq 0 \} \]
for the set of weights of $\Delta(\lambda)$ and note that $\Lambda(\lambda) = \mathrm{conv}( W_\mathrm{fin} \lambda ) \cap ( \lambda + \Z\Phi )$ by \cite[Theorem 14.18]{FultonHarris}, where $\mathrm{conv}(-)$ denotes the convex hull of a subset of $X_\R$.

A \emph{good filtration} of a $G$-module $M$ is a filtration
\[ 0 = M_0 \subseteq \cdots \subseteq M_r = M \]
such that $M_i / M_{i-1} \cong \nabla(\lambda_i)$ for some $\lambda_i \in X^+$ for $i=1,\ldots,r$, and a \emph{Weyl filtration} of a $G$-module $N$ is a filtration
\[ 0 = N_0 \subseteq \cdots \subseteq N_s = N \]
such that $N_i / N_{i-1} \cong \Delta(\mu_i)$ for some $\mu_i \in X^+$ for $i=1,\ldots,s$.
For a $G$-module $M$ that admits a good filtration (as above), the multiplicity of $\nabla(\lambda)$ in any good filtration of $M$ is defined by
\[ [ M : \nabla(\lambda) ]_\nabla  \coloneqq \abs{ \{ i \mid \lambda_i = \lambda \} } = \dim \Hom_G\big( \Delta(\lambda) , M \big) \]
for all $\lambda \in X^+$ (see Proposition II.4.16 in \cite{Jantzen}), and for a $G$-module $N$ that admits a Weyl filtration, the multiplicity $[N : \Delta(\lambda)]_\Delta$ of $\Delta(\lambda)$ in a Weyl filtration of $N$ is defined analogously.
A $G$-module $M$ is called a \emph{tilting module} if it admits both a good filtration and a Weyl filtration.
For all $\lambda \in X^+$, there is a unique indecomposable tilting module $T(\lambda)$ of highest weight $\lambda$, and every tilting module can be written as a finite direct sum of these indecomposable tilting modules \cite{RingelAlmostSplit,DonkinTiltingModules}.
Furthermore, the class of $G$-modules admitting a good filtration is closed under tensor products \cite{WangGoodFiltration,DonkinGoodFiltration,MathieuGoodFiltration}, and hence so are the class of all $G$-modules admitting a Weyl filtration and the full subcategory $\Tilt(G)$ of tilting modules in $\Rep(G)$.

\subsection{Alcove geometry}
\label{subsec:alcoves}

Let $W_\mathrm{aff} = \Z\Phi \rtimes W_\mathrm{fin}$ be the affine Weyl group of $G$.
We write $\gamma \mapsto t_\gamma$ for the canonical embedding of $\Z\Phi$ into $\Z\Phi \rtimes W_\mathrm{fin} = W_\mathrm{aff}$.
The \emph{($p$-dilated) dot action} of $W_\mathrm{aff}$ on $X_\R$ is defined by
\[ t_\gamma w \Cdot x = w( x + \rho ) - \rho + p \gamma \]
for $\gamma \in \Z\Phi$, $w \in W_\mathrm{fin}$ and $x \in X_\R$.
In the following, we recall some results about the alcove geometry associated with the dot action of $W_\mathrm{aff}$ on $X_\R$; we refer the reader to Subsections II.6.1--5 in \cite{Jantzen} for more details and additional references.

The affine Weyl group is generated by the affine reflections $s_{\alpha,r} = t_{r\alpha} s_\alpha$, for $\alpha \in \Phi^+$ and $r \in \Z$, and the fixed points of $s_{\alpha,r}$ with respect to the $p$-dilated dot action form the affine hyperplane
\[ H_{\alpha,r} = \{ x \in X_\R \mid (x+\rho,\alpha^\vee) = pr \} . \]
The connected components of $X_\R \setminus \big( \bigcup_{\alpha,r} H_{\alpha,r} \big)$ are called \emph{alcoves}, so an alcove is any non-empty set of the form
\[ C = \{ x \in X_\R \mid p n_\alpha < (x+\rho,\alpha^\vee) < p \cdot (n_\alpha + 1) \text{ for all } \alpha \in \Phi^+ \} , \]
for some collection of integers $n_\alpha \in \Z$ with $\alpha \in \Phi^+$.
The \emph{upper closure} of the alcove $C$ is defined by
\[ \widehat C = \{ x \in X_\R \mid p n_\alpha < (x+\rho,\alpha^\vee) \leq p \cdot (n_\alpha + 1) \text{ for all } \alpha \in \Phi^+ \} . \]
For every element $x \in X_\R$, there is a unique alcove $C$ with $x \in \widehat{C}$.
We call
\begin{align*}
	C_0 & = \{ x \in X_\R \mid 0 < (x+\rho,\alpha^\vee) < p \text{ for all } \alpha \in \Phi^+ \} \\
	& = \{ x \in X_\R \mid 0 < (x+\rho,\alpha^\vee) \text{ for all } \alpha \in \Pi \text{ and } (x+\rho,\alpha_\mathrm{hs}^\vee) < p \}
\end{align*}
the \emph{fundamental alcove}.
The affine Weyl group acts simply transitively on the set of alcoves and the closure of every alcove is a fundamental domain.
In particular, $W_\mathrm{aff}$ is in bijection with the set of alcoves via $w \mapsto w \Cdot C_0$.
A weight $\lambda \in X$ is called \emph{$p$-regular} if it belongs to an alcove and \emph{$p$-singular} if it belongs to one of the reflection hyperplanes $H_{\alpha,r}$ with $\alpha \in \Phi^+$ and $r \in \Z$.

The affine Weyl group is a Coxeter group with simple reflections $S_\mathrm{aff} = S_\mathrm{fin} \sqcup \{ s_0 \}$, where $s_0 \coloneqq s_{\alpha_\mathrm{hs},1}$, and we write $\ell \colon W_\mathrm{aff} \to \Z_{\geq 0}$ for the length function.
For all $x \in W_\mathrm{aff}$, the coset $W_\mathrm{fin} x$ contains a unique element of minimal length, and we define
\[ W_\mathrm{aff}^+ = \{ x \in W_\mathrm{aff} \mid x \text{ has minimal length in } W_\mathrm{fin} x \} . \]
If $p \geq h$ then we also have $W_\mathrm{aff}^+ = \{ x \in W_\mathrm{aff} \mid x\Cdot0 \in X^+ \}$.
An alcove $C = w \Cdot C_0$ is called \emph{dominant} if $w \in W_\mathrm{aff}^+$.
We say that a reflection hyperplane $H = H_{\alpha,r}$ separates two points $x,y \in X_\R$ if either
\[ (x+\rho,\alpha^\vee) < pr < (y+\rho,\alpha^\vee) \qquad \text{or} \qquad (y+\rho,\alpha^\vee) < pr < (x+\rho,\alpha^\vee) . \]
Similarly, $H$ separates two alcoves $C , C^\prime \subseteq X_\R$ if there exist points $x \in C$ and $y \in C^\prime$ such that $H$ separates $x$ and~$y$.
(Equivalently, $H$ separates $x$ and $y$ for all $x \in C$ and $y \in C^\prime$.)
The alcoves $C$ and $C^\prime$ are called \emph{adjacent} if there is a unique reflection hyperplane $H = H_{\alpha,r}$ separating $C$ and $C^\prime$, and in that case, we have $C^\prime = s_{\alpha,r} \Cdot C$ and $H_{\alpha,r}$ is called a \emph{wall} of $C$ and $C^\prime$.
For every wall $H = H_{\alpha,r}$ of $C$, we have either $(x+\rho,\alpha^\vee) < pr$ for all $x \in C$ or $(x+\rho,\alpha^\vee) > pr$ for all $x \in C$, and accordingly, we say that $H$ belongs to the upper closure or to the lower closure of $C$, respectively.
For an alcove $C \subseteq X_\R$ and a reflection $s = s_{\alpha,r}$ such that $(x+\rho,\alpha^\vee) < pr$ for all $x \in C$, we write $C \uparrow s \Cdot C$, and we define the \emph{linkage partial order} $\uparrow$ on the set of alcoves as the reflexive and transitive closure of this relation; see Subsection II.6.5 in \cite{Jantzen}.

\subsection{Linkage principle and translation functors}

The linkage principle asserts that two simple $G$-modules $L(\lambda)$ and $L(\mu)$ belong to the same block of $\Rep(G)$ only if the highest weights $\lambda,\mu \in X^+$ belong to the same $W_\mathrm{aff}$-orbit with respect to the $p$-dilated dot action; see Subsections II.7.1--3 in \cite{Jantzen}.
For $\lambda \in \overline{C}_0 \cap X$, we let $\Rep_\lambda(G)$ be the full subcategory of $\Rep(G)$ whose objects are the $G$-modules all of whose composition factors have highest weight in $W_\mathrm{aff} \Cdot \lambda$ (i.e.\ the Serre subcategory generated by the simple $G$-modules $L(w \Cdot \lambda)$, for $w \in W_\mathrm{aff}$ such that $w\Cdot\lambda \in X^+$), and we call $\Rep_\lambda(G)$ the \emph{linkage class} of $\lambda$.
Then by the linkage principle, there is a canonical projection functor
\[ \mathrm{pr}_\lambda \colon \Rep(G) \longrightarrow \Rep_\lambda(G) \]
which sends a $G$-module $M$ to its largest submodule $\pr_\lambda M$ that belongs to $\Rep_\lambda(G)$, and there is a direct sum decomposition $M = \bigoplus_{\lambda \in \overline{C}_0 \cap X} \pr_\lambda M$.
If the weight $\lambda \in \overline{C}_0 \cap X$ is dominant then
\[ L(\lambda) \cong \Delta(\lambda) \cong \nabla(\lambda) \cong T(\lambda) ; \]
see Corollary II.5.6 and Section II.E.1 in \cite{Jantzen}.

For $\lambda,\mu \in \overline{C}_0 \cap X$, let $\nu \in X^+$ be the unique dominant weight in the $W_\mathrm{fin}$-orbit of $\mu-\lambda$, and consider the \emph{translation functor}
\[ T_\lambda^\mu = \pr_\mu\big( L(\nu) \otimes - \big) \colon \quad \Rep_\lambda(G) \longrightarrow \Rep_\mu(G) . \]
In the definition of $T_\lambda^\mu$, we can replace the simple $G$-module $L(\nu)$ by $\nabla(\nu)$, $\Delta(\nu)$ or $T(\nu)$ without changing the functor, up to a natural isomorphism (see Remark 1 in Subsection II.7.6 in \cite{Jantzen}), and in particular, translation functors preserve the class of $G$-modules with Weyl filtrations and the class of tilting modules.
For $p$-regular weights $\lambda,\mu \in C_0$, the translation functor $T_\lambda^\mu$ is an equivalence with quasi-inverse $T_\mu^\lambda$ by \cite[Proposition II.7.9]{Jantzen}, and for $w \in W_\mathrm{aff}^+$, we have
\[ T_\lambda^\mu L(w\Cdot\lambda) \cong L(w\Cdot\mu) , \qquad T_\lambda^\mu \Delta(w\Cdot\lambda) \cong \Delta(w\Cdot\mu) , \qquad T_\lambda^\mu T(w\Cdot\lambda) \cong T(w\Cdot\mu) . \]
Furthermore, if $\lambda \in C_0$ and $\mu \in \overline{C}_0$ then we have
\begin{equation} \label{eq:translationWeylsimple}
	T_\lambda^\mu \Delta(w\Cdot\lambda) \cong \begin{cases} \Delta(w\Cdot\mu) & \text{if } w\Cdot\mu \in X^+ , \\ 0 & \text{otherwise} , \end{cases} \qquad T_\lambda^\mu L(w\Cdot\lambda) \cong \begin{cases} L(w\Cdot\mu) & \text{if } w\Cdot\mu \in \widehat{ w \Cdot C_0 } , \\ 0 & \text{otherwise} , \end{cases}
\end{equation} 
by Subsections II.7.11 and II.7.15 in \cite{Jantzen}.

\section{Alcove geometry and complete reducibility}
\label{sec:alcovegeometryCR}

Our goal in this section is to establish a condition on a pair of weights $\lambda,\mu \in X^+$ that guarantees the complete reducibility of the tensor product $L(\lambda) \otimes L(\mu)$.
Loosely speaking, we require that the weight $\mu$ is sufficiently small with respect to the position of $\lambda$ within the unique alcove whose upper closure contains $\lambda$.
More specifically, we make the following definition:

\begin{Definition}
	Let $\lambda,\mu \in X^+$ and let $C$ be the unique alcove whose upper closure contains $\lambda$.
	We say that $\mu$ is \emph{reflection small} with respect to $\lambda$ if
	\[ \lambda + w(\mu) \leq s \Cdot (\lambda + w(\mu)) \]
	for all $w \in W_\mathrm{fin}$ and all reflections $s$ in walls of $C$ such that $\lambda \leq s \Cdot \lambda$.
\end{Definition}

Some particular cases where this condition is made more explicit can be found in Examples \ref{ex:A2reflectionsmall} and \ref{ex:B2reflectionsmall} below.
We will show in Theorem \ref{thm:reflectionsmalltensorproduct} that for all pairs of weights $\lambda,\mu \in X^+$ such that $\mu$ is reflection small with respect to $\lambda$, the tensor product $L(\lambda) \otimes L(\mu)$ is completely reducible.
Furthermore, we will derive an explicit formula for the composition multiplicities in the tensor product in terms of weight space dimensions in $L(\mu)$.
We first need to establish some additional facts about reflection smallness.

\begin{Remark} \label{rem:reflectionsmallpregular}
	Let $\lambda,\mu \in X^+$ such that $\mu$ is reflection small with respect to $\lambda$.
	We claim that either $\lambda$ is $p$-regular or $\mu = 0$.
	Indeed, suppose that $\mu \neq 0$ and let $C$ be the unique alcove whose upper closure contains $\lambda$.
	If $\lambda$ is $p$-singular then there is a wall $H = H_{\beta,r}$ of $C$ such that $\lambda \in H$, and for the corresponding reflection $s = s_{\beta,r}$, we have $s\Cdot\lambda = \lambda$.
	We can further choose an element $w \in W_\mathrm{fin}$ such that $w^{-1}(\beta) \in \{ \alpha_\mathrm{h} , \alpha_\mathrm{hs} \}$ is either the highest root or the highest short root in $\Phi$, and it follows that
	\[ \big( w(\mu) , \beta^\vee \big) = \big( \mu , w^{-1}(\beta^\vee) \big) > 0 . \]
	This implies that $\big( \lambda+w(\mu)+\rho , \beta^\vee \big)>pr$ and $s\Cdot \big( \lambda+w(\mu) \big) < \lambda+w(\mu)$, contradicting the assumption that $\mu$ is reflection small with respect to $\lambda$.
	In particular, the existence of a pair of weights $\lambda,\mu \in X^+$ such that $\mu$ is reflection small with respect to $\lambda$ and $\mu \neq 0$ implies that $p \geq h$.
\end{Remark}

Recall that for $\mu \in X^+$, we write $\Lambda(\mu) = \mathrm{conv}(W_\mathrm{fin} \mu) \cap ( \mu + \Z\Phi )$ for the set of weights of $\Delta(\mu)$.

\begin{Lemma} \label{lem:reflectionsmallconditionallweights}
	Let $\lambda,\mu \in X^+$ such that $\mu$ is reflection small with respect to $\lambda$, and let $C$ be the unique alcove whose upper closure contains $\lambda$.
	For all $\nu \in \Lambda(\mu)$ and every reflection $s$ in a wall of $C$ such that $\lambda \leq s \Cdot \lambda$, we have $\lambda + \nu \leq s \Cdot (\lambda + \nu)$.
\end{Lemma}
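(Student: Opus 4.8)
The plan is to reduce the dominance-order inequality $\lambda+\nu \leq s\Cdot(\lambda+\nu)$ to a single scalar (linear) inequality in $\nu$, and then to exploit the fact that, by the very definition of reflection smallness, this inequality is already known for the extreme weights $\nu \in W_\mathrm{fin}\mu$. Since $\Lambda(\mu) = \mathrm{conv}(W_\mathrm{fin}\mu) \cap (\mu + \Z\Phi)$, every $\nu \in \Lambda(\mu)$ lies in the convex hull of the $W_\mathrm{fin}$-orbit of $\mu$, and a linear inequality holding at all the vertices $w(\mu)$ automatically holds on the whole convex hull.

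First I would fix a reflection $s = s_{\beta,r}$ in a wall $H_{\beta,r}$ of $C$, so that $\beta \in \Phi^+$ by our convention on reflection hyperplanes, with $\lambda \leq s\Cdot\lambda$. Using the formula for the dot action, one computes directly that $s\Cdot x - x = \big(pr - (x+\rho,\beta^\vee)\big)\cdot\beta$ for every $x \in X_\R$. For $x \in X$, the scalar $c = pr - (x+\rho,\beta^\vee)$ is an integer, and $\beta$ is a nonnegative integral combination of simple roots; hence $c\beta \in \sum_{\alpha\in\Pi}\Z_{\geq 0}\alpha$ if and only if $c \geq 0$. This yields the reformulation
\[ x \leq s\Cdot x \quad\Longleftrightarrow\quad (x+\rho,\beta^\vee) \leq pr \]
for all $x \in X$. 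Applying it with $x = \lambda + w(\mu)$ converts the reflection smallness hypothesis into the scalar inequalities $(w(\mu),\beta^\vee) \leq pr - (\lambda+\rho,\beta^\vee)$, valid for all $w \in W_\mathrm{fin}$; applying it with $x = \lambda+\nu$ shows that the desired conclusion is equivalent to $(\nu,\beta^\vee) \leq pr - (\lambda+\rho,\beta^\vee)$.

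It then remains to pass from the vertices to the convex hull. Writing $\nu = \sum_{w} c_w\, w(\mu)$ as a convex combination with $c_w \geq 0$ and $\sum_w c_w = 1$ (possible since $\nu \in \mathrm{conv}(W_\mathrm{fin}\mu)$), linearity of the pairing $(-,\beta^\vee)$ gives
\[ (\nu,\beta^\vee) = \sum_w c_w\,(w(\mu),\beta^\vee) \leq \max_{w\in W_\mathrm{fin}}(w(\mu),\beta^\vee) \leq pr - (\lambda+\rho,\beta^\vee), \]
which is precisely the inequality required, so $\lambda+\nu \leq s\Cdot(\lambda+\nu)$. The one point demanding care is the reformulation step: the equivalence of the dominance-order statement with the scalar inequality relies on $\beta$ being a positive root and on the pairing being integral, so I would record explicitly that the walls $H_{\beta,r}$ are indexed by $\beta \in \Phi^+$, ensuring $c\beta \geq 0$ exactly when $c \geq 0$. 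Everything after that is a one-line convexity argument, and no deeper input is needed.
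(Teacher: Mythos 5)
Your proof is correct and takes essentially the same route as the paper's: both reduce the dominance-order statement $\lambda+\nu \leq s\Cdot(\lambda+\nu)$ to the scalar inequality $(\lambda+\nu+\rho,\beta^\vee)\leq pr$ via the identity $s\Cdot x - x = \bigl(pr-(x+\rho,\beta^\vee)\bigr)\cdot\beta$, and then bound $(\nu,\beta^\vee)$ by its maximum over the orbit $W_\mathrm{fin}\mu$ using $\Lambda(\mu)\subseteq\mathrm{conv}(W_\mathrm{fin}\mu)$. The only difference is presentational: you make explicit the equivalence between the two formulations (including the role of $\beta\in\Phi^+$ and integrality) and the convex-combination step, which the paper leaves implicit.
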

\begin{proof}
	Let $\alpha \in \Phi^+$ and $r \in \Z$ such that $s = s_{\alpha,r}$, and observe that $( \lambda + w(\mu) + \rho , \alpha^\vee ) \leq pr$
	for all $w \in W_\mathrm{fin}$ because $\mu$ is reflection small with respect to $\lambda$.
	As $\nu$ belongs to the convex hull of $W_\mathrm{fin} \mu$, we have $(\nu,\alpha^\vee) \leq ( w(\mu) , \alpha^\vee )$ for some $w \in W_\mathrm{fin}$, and it follows that
	\[ ( \lambda + \nu + \rho , \alpha^\vee ) \leq ( \lambda + w(\mu) + \rho , \alpha^\vee ) \leq pr , \]
	whence $\lambda+\nu \leq s \Cdot (\lambda+\nu)$, as required.
\end{proof}

Motivated by Lemma \ref{lem:reflectionsmallconditionallweights}, we next prove a general result on sets of the form $\lambda + \Lambda(\mu)$ and the action of $W_\mathrm{aff}$, for $\lambda,\mu \in X^+$.
We will apply this to the case where $\mu$ is reflection small with respect to $\lambda$ in Corollaries \ref{cor:reflectionsmallsequence} and \ref{cor:reflectionsmallorbit} below.

\begin{Proposition} \label{prop:shiftedweightsystemsequence}
	Let $\lambda,\mu \in X^+$ and let $C \subseteq X_\R$ be an alcove with $\lambda \in \overline{C}$.
	For all $\nu \in \Lambda(\mu)$, there is a sequence of weights $\nu_1,\ldots,\nu_m \in \Lambda(\mu)$ and reflections $s_1,\ldots,s_m$ in walls of $C$ such that
	\[ s_i s_{i-1} \cdots s_1\Cdot (\lambda+\nu) = \lambda + \nu_i \]
	for $1 \leq i \leq m$ and $\lambda + \nu_m \in \overline{C}$.
\end{Proposition}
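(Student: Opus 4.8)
The plan is to argue by induction on the number
\[ d(\lambda+\nu) = \#\{ H_{\beta,s} \mid \beta \in \Phi^+,\ s \in \Z,\ H_{\beta,s} \text{ strictly separates } \lambda+\nu \text{ from } C \} , \]
which is finite because $\lambda+\nu$ and $C$ lie in a bounded region of $X_\R$. The base case is $d(\lambda+\nu)=0$, which is equivalent to $\lambda+\nu \in \overline{C}$, since $\overline{C}$ is exactly the intersection of the closed half-spaces bounded by the hyperplanes $H_{\beta,s}$ and containing $C$; here we take $m=0$. For the inductive step I would show that whenever $\lambda+\nu \notin \overline{C}$, there is a reflection $s_1 = s_{\alpha,r}$ in a wall of $C$ with $s_1 \Cdot(\lambda+\nu) = \lambda + \nu_1$ for some $\nu_1 \in \Lambda(\mu)$ and $d(\lambda+\nu_1) = d(\lambda+\nu)-1$; applying the inductive hypothesis to $\nu_1$ and prepending $s_1$ then produces the asserted sequence.

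The existence of the wall is immediate: if $\lambda+\nu \notin \overline{C}$, then one of the defining inequalities of $\overline{C}$ fails, so some wall $H=H_{\alpha,r}$ of $C$ strictly separates $\lambda+\nu$ from $C$, and I set $s_1 = s_{\alpha,r}$. Writing $m=(\lambda+\rho,\alpha^\vee)-pr$ and $k=(\lambda+\nu+\rho,\alpha^\vee)-pr$, a direct computation with the dot action gives $s_1\Cdot(\lambda+\nu)=\lambda+\nu_1$ with $\nu_1 = \nu - k\alpha$, while $(\nu,\alpha^\vee)=k-m$, so that $s_\alpha(\nu)=\nu-(k-m)\alpha$. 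The heart of the argument, and the step I expect to require the most care, is to check that $\nu_1 \in \Lambda(\mu)$. The lattice condition $\nu_1 \in \mu+\Z\Phi$ is clear, so the point is to show $\nu_1 \in \mathrm{conv}(W_\mathrm{fin}\mu)$. I claim that $\nu_1$ lies on the line segment joining $\nu$ and $s_\alpha(\nu)$: because $H$ strictly separates $\lambda+\nu$ from $C$ while $\lambda \in \overline{C}$ lies on the closed $C$-side of $H$, the integers $k$ and $k-m$ have the same sign and satisfy $|k| \leq |k-m|$, which is precisely the statement that $\nu-k\alpha$ lies between $\nu$ and $\nu-(k-m)\alpha = s_\alpha(\nu)$. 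Since $\nu \in \Lambda(\mu) \subseteq \mathrm{conv}(W_\mathrm{fin}\mu)$ and $s_\alpha(\nu) \in \mathrm{conv}(W_\mathrm{fin}\mu)$ by $W_\mathrm{fin}$-invariance of the convex hull, convexity forces $\nu_1 \in \mathrm{conv}(W_\mathrm{fin}\mu)$, whence $\nu_1 \in \Lambda(\mu)$. It is exactly here that the hypothesis $\lambda \in \overline{C}$ enters essentially, to keep the reflected weight inside $\Lambda(\mu)$.

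Finally I would verify $d(\lambda+\nu_1)=d(\lambda+\nu)-1$, which is the standard folding estimate for wall reflections. Applying the isometry $s_1$ puts the hyperplanes strictly separating $\lambda+\nu_1$ from $C$ in bijection with those strictly separating $\lambda+\nu$ from the adjacent alcove $s_1\Cdot C$. Since $C$ and $s_1\Cdot C$ are separated by $H$ alone, every hyperplane other than $H$ lies on the same side of both, and hence separates $\lambda+\nu$ from $C$ if and only if it separates it from $s_1\Cdot C$; meanwhile $H$ separates $\lambda+\nu$ from $C$ but not from $s_1\Cdot C$, as $\lambda+\nu$ lies strictly on the $s_1\Cdot C$-side of $H$. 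This yields the drop by one and closes the induction, the terminal weight satisfying $d(\lambda+\nu_m)=0$, i.e.\ $\lambda+\nu_m\in\overline{C}$. The folding count is routine; the convexity/segment step is the crux.
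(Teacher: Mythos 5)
Your proof is correct and follows essentially the same route as the paper's: induction on a count of separating reflection hyperplanes, reflection in a wall of $C$, the sign analysis showing the reflected weight lies between $\nu$ and $s_\alpha(\nu)$ (using $\lambda \in \overline{C}$), and convexity plus the lattice condition to conclude $\nu_1 \in \Lambda(\mu)$. The only cosmetic difference is the induction quantity: you count hyperplanes strictly separating the point $\lambda+\nu$ from $C$, whereas the paper counts hyperplanes separating $C$ from an auxiliary alcove $C^\prime$ chosen with $\lambda+\nu \in \overline{C^\prime}$ and minimal separation, and both versions rest on comparable standard facts about walls of alcoves.
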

\begin{proof}
	If $\lambda + \nu \in \overline{C}$ then there is nothing to show, so assume that $\lambda + \nu \notin \overline{C}$, and let $C^\prime \subseteq X_\R$ be an alcove with $\lambda+\nu \in \overline{C^\prime}$, such that the number of reflection hyperplanes separating $C$ and $C^\prime$ is minimal.
	Let $H = H_{\alpha,r}$ be a wall of $C$ that separates $C^\prime$ from $C$, and define $s_1 = s_{\alpha,r}$ and $\nu_1 = s_1 \Cdot (\lambda+\nu) - \lambda$.
	As $H$ separates $C$ and $C^\prime$, we have either
	\[ (\lambda+\rho,\alpha^\vee) \leq pr < (\lambda+\nu+\rho,\alpha^\vee) \qquad \text{or} \qquad (\lambda+\nu+\rho,\alpha^\vee) < pr \leq (\lambda+\rho,\alpha^\vee) , \]
	according to whether $H$ belongs to the upper closure or to the lower closure of $C$, and the weight
	\[ \nu_1 = s_1 \Cdot (\lambda+\nu) - \lambda = \nu + \big( pr - (\lambda+\nu+\rho,\alpha^\vee) \big) \cdot \alpha = s_\alpha(\nu) + \big( pr - (\lambda+\rho,\alpha^\vee) \big) \cdot \alpha \]
	lies on the $\alpha$-string between $\nu$ and $s_\alpha(\nu)$ because we have either
	\[ pr - (\lambda+\nu+\rho,\alpha^\vee) > 0 \geq pr - (\lambda+\rho,\alpha^\vee) \qquad \text{or} \qquad pr - (\lambda+\nu+\rho,\alpha^\vee) < 0 \leq pr - (\lambda+\rho,\alpha^\vee) . \]
	This implies that $\nu_1 \in \Lambda(\mu)$ and $s_1 \Cdot (\lambda+\nu) = \lambda + \nu_1$, as required.
	Now we have $\lambda+\nu_1 \in s_1 \Cdot \overline{C^\prime}$, and the alcoves $C$ and $s_1\Cdot C^\prime$ are separated by one reflection hyperplane less than $C$ and $C^\prime$ (since $H$ is a wall of $C$ that separates $C$ and $C^\prime$), so the claim follows by induction.
\end{proof}

\begin{Corollary} \label{cor:shiftedweightsystemorbit}
	Let $\lambda,\mu \in X^+$ and let $C \subseteq X_\R$ be an alcove with $\lambda \in \overline{C}$.
	For all $\nu \in \Lambda(\mu)$, there is an element $x \in W_\mathrm{aff}$ such that $x\Cdot(\lambda+\nu) \in \overline{C}$ and $x\Cdot(\lambda+\nu)-\lambda \in \Lambda(\mu)$.
\end{Corollary}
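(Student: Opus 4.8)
The plan is to read off the corollary directly from Proposition~\ref{prop:shiftedweightsystemsequence}, which already contains all the substantive content. First I would invoke the proposition for the given $\lambda,\mu \in X^+$, the alcove $C$ with $\lambda \in \overline{C}$, and the weight $\nu \in \Lambda(\mu)$. This produces weights $\nu_1,\ldots,\nu_m \in \Lambda(\mu)$ and reflections $s_1,\ldots,s_m$ in walls of $C$ with
\[ s_i s_{i-1} \cdots s_1 \Cdot (\lambda+\nu) = \lambda + \nu_i \qquad (1 \leq i \leq m), \]
and with $\lambda + \nu_m \in \overline{C}$.

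Next I would set $x \coloneqq s_m s_{m-1} \cdots s_1$. Since each $s_j$ is an affine reflection $s_{\alpha,r}$ and hence lies in $W_\mathrm{aff}$, so does their product $x$. Specializing the displayed identity to $i = m$ gives $x \Cdot (\lambda+\nu) = \lambda + \nu_m$. The condition $\lambda + \nu_m \in \overline{C}$ then says precisely that $x \Cdot (\lambda+\nu) \in \overline{C}$, and rearranging gives $x \Cdot (\lambda+\nu) - \lambda = \nu_m \in \Lambda(\mu)$. Both required conclusions follow at once.

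I do not expect any genuine obstacle, as the proposition does the work of the induction; the only point to keep in mind is the degenerate case $m = 0$, i.e.\ when $\lambda + \nu$ already lies in $\overline{C}$. In that situation the empty product $x = \id$ is the right choice, and both conditions hold trivially, the second because $\nu \in \Lambda(\mu)$ is part of the hypothesis. One could equally well phrase the whole argument as a one-line consequence of the proposition by simply naming $x$ to be the full composite of the reflections it supplies.
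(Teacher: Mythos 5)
Your proof is correct and is essentially identical to the paper's: both take $x = s_m s_{m-1} \cdots s_1$ from Proposition \ref{prop:shiftedweightsystemsequence} and read off the two conclusions, your only addition being the (harmless) explicit mention of the degenerate case $x = \mathrm{id}$.
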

\begin{proof}
	In the notation of Proposition \ref{prop:shiftedweightsystemsequence}, we set $x = s_m s_{m-1} \cdots s_1$, so that $x \Cdot ( \lambda  + \nu ) = \lambda+\nu_m \in \overline{C}$, where $\nu_m \in \Lambda(\mu)$, as required.
\end{proof}

\begin{Corollary} \label{cor:reflectionsmallsequence}
	Let $\lambda,\mu \in X^+$ such that $\mu$ is reflection small with respect to $\lambda$, and let $C$ be the unique alcove whose upper closure contains $\lambda$.
	For every weight $\nu \in \Lambda(\mu)$, there is a sequence of weights $\nu_1,\ldots,\nu_m \in \Lambda(\mu)$ and reflections $s_1,\ldots,s_m$ in walls of $C$ with $s_i \Cdot \lambda < \lambda$ for $1 \leq i \leq m$, such that $s_i s_{i-1} \cdots s_1\Cdot (\lambda+\nu) = \lambda + \nu_i$ for $1 \leq i \leq m$ and $\lambda + \nu_m \in \overline{C}$.
\end{Corollary}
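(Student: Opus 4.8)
The plan is to re-run the inductive argument from the proof of Proposition \ref{prop:shiftedweightsystemsequence}, taking $C$ to be the unique alcove whose upper closure contains $\lambda$, and to verify that the reflection smallness hypothesis forces every wall produced by that argument to lie in the lower closure of $C$. First I would record the relevant translation: for a wall $H_{\alpha,r}$ of $C$ one has $(x+\rho,\alpha^\vee) > pr$ for all $x \in C$ exactly when the wall lies in the lower closure of $C$, and since $\lambda \in \widehat{C}$ this is equivalent to $(\lambda+\rho,\alpha^\vee) > pr$, hence to $s_{\alpha,r} \Cdot \lambda < \lambda$. Thus the extra condition $s_i \Cdot \lambda < \lambda$ demanded by the corollary is precisely the assertion that each wall chosen during the induction lies in the lower closure of $C$; the weights $\nu_i \in \Lambda(\mu)$ and the relations $s_i s_{i-1} \cdots s_1 \Cdot (\lambda+\nu) = \lambda + \nu_i$ with $\lambda + \nu_m \in \overline{C}$ are already furnished verbatim by Proposition \ref{prop:shiftedweightsystemsequence}.

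The substantive point to establish is the following strengthening of the single inductive step in that proposition. At a given step one has a weight $\lambda + \nu_{i-1}$ with $\nu_{i-1} \in \Lambda(\mu)$ and $\lambda + \nu_{i-1} \notin \overline{C}$, and one selects an alcove $C^\prime$ with $\lambda + \nu_{i-1} \in \overline{C^\prime}$ minimizing the number of reflection hyperplanes separating $C$ and $C^\prime$. I claim that, under reflection smallness, every wall $H = H_{\alpha,r}$ of $C$ separating $C$ from $C^\prime$ lies in the lower closure of $C$. I would argue by contradiction: suppose such a wall lies in the upper closure, so that $\lambda \leq s_{\alpha,r} \Cdot \lambda$. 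Then Lemma \ref{lem:reflectionsmallconditionallweights}, applied to $\nu_{i-1} \in \Lambda(\mu)$, yields $(\lambda+\nu_{i-1}+\rho,\alpha^\vee) \leq pr$. On the other hand, as $H$ lies in the upper closure of $C$ and separates $C$ from $C^\prime$, the alcove $C^\prime$ lies strictly above $H$, so $(\lambda+\nu_{i-1}+\rho,\alpha^\vee) \geq pr$. These combine to force $(\lambda+\nu_{i-1}+\rho,\alpha^\vee) = pr$, that is, $\lambda + \nu_{i-1} \in H$.

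But then $\lambda + \nu_{i-1}$ is fixed by $s_{\alpha,r}$, whence $\lambda + \nu_{i-1} \in \overline{s_{\alpha,r} \Cdot C^\prime}$ as well; and since $H$ separated $C$ from $C^\prime$ but no longer separates $C$ from $s_{\alpha,r} \Cdot C^\prime$, the alcove $s_{\alpha,r} \Cdot C^\prime$ is separated from $C$ by strictly fewer hyperplanes than $C^\prime$, contradicting the minimality in the choice of $C^\prime$. This proves the claim, and the induction then proceeds exactly as in Proposition \ref{prop:shiftedweightsystemsequence}, now with each reflection $s_i = s_{\alpha,r}$ satisfying $s_i \Cdot \lambda < \lambda$. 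The main obstacle, in my view, is the minimality argument of the previous paragraph: it is precisely the minimal-separation choice of $C^\prime$ that upgrades the conclusion from "walls of $C$" to "walls in the lower closure of $C$". One must also take care that reflection smallness feeds in at every step through Lemma \ref{lem:reflectionsmallconditionallweights}, which is legitimate because each intermediate weight $\nu_{i-1}$ remains in $\Lambda(\mu)$.
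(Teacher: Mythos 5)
Your proposal is correct and takes essentially the same route as the paper: both re-run the induction of Proposition \ref{prop:shiftedweightsystemsequence} and invoke Lemma \ref{lem:reflectionsmallconditionallweights} to show that no wall in the upper closure of $C$ can be crossed, so every reflection $s_i$ satisfies $s_i \Cdot \lambda < \lambda$. The only difference is one of explicitness: the paper gets its contradiction by quoting the strict inequalities recorded in the proof of Proposition \ref{prop:shiftedweightsystemsequence}, whereas you justify exactly that strictness in the boundary case $(\lambda+\nu_{i-1}+\rho,\alpha^\vee)=pr$ via the minimal-separation choice of $C'$ and the fixed-point argument, a detail the paper leaves implicit.
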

\begin{proof}
	By Proposition \ref{prop:shiftedweightsystemsequence}, there is a sequence of weights $\nu_1,\ldots,\nu_m \in \Lambda(\mu)$ and reflections $s_1,\ldots,s_m$ in walls of $C$ such that $s_i s_{i-1} \cdots s_1\Cdot (\lambda+\nu) = \lambda + \nu_i$ for $1 \leq i \leq m$ and $\lambda + \nu_m \in \overline{C}$.
	By the proof of Proposition \ref{prop:shiftedweightsystemsequence}, we may further assume that we have either
	\[ (\lambda+\rho,\beta_i^\vee) \leq pr_i < (\lambda+\nu_i+\rho,\beta_i^\vee) \qquad \text{or} \qquad (\lambda+\nu_i+\rho,\beta_i^\vee) < pr_i \leq (\lambda+\rho,\beta_i^\vee) \]
	for $1 \leq i \leq m$, where $s_i = s_{\beta_i,r_i}$ with $\beta_i \in \Phi^+$ and $r_i \in \Z$, according to whether $H_{\beta_i,r_i}$ belongs to the upper closure or to the lower closure of $C$.
	Since $\lambda$ belongs to the upper closure of $C$, we can strengthen the second chain of inequalities to $(\lambda+\nu_i+\rho,\beta_i^\vee) < pr_i < (\lambda+\rho,\beta_i^\vee)$.
	The first chain of inequalities contradicts Lemma \ref{lem:reflectionsmallconditionallweights} because $\mu$ is reflection small with respect to $\lambda$, so we conclude that $(\lambda+\rho,\beta_i^\vee) > p r_i$ and $s_i \Cdot \lambda < \lambda$ for $1 \leq i \leq m$.
\end{proof}

For an alcove $C \subseteq X_\R$, let us write $S_C$ for the set of reflections in the walls of $C$.
Note that we have $S_\mathrm{aff} = S_{C_0}$, and if $x \in W_\mathrm{aff}$ such that $C = x \Cdot C_0$ then $S_C = x S_\mathrm{aff} x^{-1}$.

\begin{Corollary} \label{cor:reflectionsmallorbit}
	Let $\lambda,\mu \in X^+$ such that $\mu$ is reflection small with respect to $\lambda$ and let $C$ be the unique alcove whose upper closure contains $\lambda$.
	For all $\nu \in \Lambda(\mu)$, there is an element $x \in \langle s \in S_C \mid s \Cdot C \uparrow  C \rangle$ such that $x \Cdot (\lambda+\nu) \in \overline{C}$ and $x \Cdot ( \lambda + \nu ) - \lambda  \in \Lambda(\mu)$.
\end{Corollary}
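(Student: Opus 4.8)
The plan is to bootstrap directly from Corollary \ref{cor:reflectionsmallsequence}, whose conclusion already produces a sequence of weights $\nu_1,\ldots,\nu_m \in \Lambda(\mu)$ and reflections $s_1,\ldots,s_m$ in walls of $C$, satisfying $s_i s_{i-1} \cdots s_1 \Cdot (\lambda+\nu) = \lambda + \nu_i$ for all $i$, with $s_i \Cdot \lambda < \lambda$ and $\lambda + \nu_m \in \overline{C}$. Setting $x = s_m s_{m-1} \cdots s_1$, we immediately obtain $x \Cdot (\lambda+\nu) = \lambda + \nu_m \in \overline{C}$ and $x \Cdot (\lambda+\nu) - \lambda = \nu_m \in \Lambda(\mu)$, which are the two displayed conditions. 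So the only thing left to establish is the group-theoretic containment $x \in \langle s \in S_C \mid s \Cdot C \uparrow C \rangle$, and for this it suffices to show that each individual reflection $s_i$ lies in the generating set $\{ s \in S_C \mid s \Cdot C \uparrow C \}$.

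Each $s_i$ is by construction a reflection in a wall of $C$, so $s_i \in S_C$ is automatic; the work is in converting the inequality $s_i \Cdot \lambda < \lambda$ into the alcove statement $s_i \Cdot C \uparrow C$. First I would dispose of the degenerate case: if $\mu = 0$ then $\Lambda(\mu) = \{0\}$, so $\nu = 0$ and $\lambda \in \widehat{C} \subseteq \overline{C}$ forces $m = 0$ and $x = \id$, and the claim holds trivially. Otherwise $\mu \neq 0$, and Remark \ref{rem:reflectionsmallpregular} guarantees that $\lambda$ is $p$-regular; combined with $\lambda \in \widehat{C}$ this forces $\lambda \in C$ (the open alcove), since any point of the upper closure that is not in the open alcove lies on a wall and is therefore $p$-singular.

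Writing $s_i = s_{\beta_i,r_i}$, I would then invoke the dichotomy for walls recorded in Subsection \ref{subsec:alcoves}: since $H_{\beta_i,r_i}$ is a wall of $C$, either $(x+\rho,\beta_i^\vee) < pr_i$ for all $x \in C$ (the wall lies in the upper closure) or $(x+\rho,\beta_i^\vee) > pr_i$ for all $x \in C$ (the wall lies in the lower closure). Because $\lambda \in C$, the first alternative would give $s_i \Cdot \lambda = \lambda + (pr_i - (\lambda+\rho,\beta_i^\vee)) \cdot \beta_i > \lambda$, contradicting $s_i \Cdot \lambda < \lambda$. Hence $H_{\beta_i,r_i}$ belongs to the lower closure of $C$, so that $(y+\rho,\beta_i^\vee) < pr_i$ for all $y$ in the alcove $s_i \Cdot C$ on the opposite side of the wall; unwinding the definition of the relation $\uparrow$ applied to $s_i \Cdot C$ yields exactly $s_i \Cdot C \uparrow s_i \Cdot (s_i \Cdot C) = C$.

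With every $s_i$ shown to lie in $\{ s \in S_C \mid s \Cdot C \uparrow C \}$, the product $x = s_m \cdots s_1$ lies in the subgroup they generate, completing the argument. I expect the only subtle point to be the bookkeeping at the very end, namely confirming that ``$H_{\beta_i,r_i}$ lies in the lower closure of $C$'' coincides with ``$s_i \Cdot C \uparrow C$'' under the conventions fixed in Subsection \ref{subsec:alcoves}, rather than with its reverse. Once the $p$-regularity of $\lambda$ pins down that $\lambda$ sits strictly inside $C$, this is a direct unwinding of definitions and no genuine obstacle remains.
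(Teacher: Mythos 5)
Your proposal is correct and takes essentially the same route as the paper: both obtain $x = s_m \cdots s_1$ from Corollary \ref{cor:reflectionsmallsequence} and then reduce the membership claim to the equivalence ``$s_i \Cdot \lambda < \lambda$ if and only if $s_i \Cdot C \uparrow C$'' for reflections in walls of $C$. The only difference is that your detour through Remark \ref{rem:reflectionsmallpregular} (splitting off $\mu = 0$ and using $p$-regularity to place $\lambda$ in the open alcove) is unnecessary: the inequalities defining the upper closure $\widehat{C}$ are already strict on the walls in the lower closure of $C$, so the equivalence holds for every $\lambda \in \widehat{C}$, which is exactly how the paper states it.
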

\begin{proof}
	Since $\lambda$ belongs to the upper closure of $C$, we have $s \Cdot \lambda < \lambda$ if and only if $s \Cdot C \uparrow C$, for every reflection $s$ in a wall of $C$.
	The claim follows from Corollary \ref{cor:reflectionsmallsequence} with $x = s_m s_{m-1} \cdots s_1$.
\end{proof}

In view of Corollaries \ref{cor:reflectionsmallsequence} and \ref{cor:reflectionsmallorbit}, the subgroups $W_C$ of $W_\mathrm{aff}$ in the following definition control the set of weights $\lambda + \Lambda(\mu)$ if $\mu \in X^+$ is reflection small with respect to $\lambda \in X^+$.

\begin{Definition}
	For an alcove $C \subseteq X_\R$, let $W_C = \langle s \in S_C \mid s \Cdot C \uparrow C \rangle$.
\end{Definition}

\begin{Remark} \label{rem:stabilizerupperclosure}
	Observe that for an alcove $C \subseteq X_\R$, the subgroup $W_C$ of $W_\mathrm{aff}$ is generated by a conjugate of a proper subset $S' \subsetneq S_\mathrm{aff}$.
	It is straightforward to check that the intersection of the reflection hyperplanes $H_s$ with $s \in S'$ is non-empty, and so $W_C$ is finite by Proposition 4 in \cite[Section V.3.6]{Bourbaki}.
	Furthermore,  if $\lambda \in X^+$ belongs to the upper closure of $C$ then $s \Cdot \lambda < \lambda$ for all $s \in S_C$ with $s \Cdot C \uparrow C$, hence the stabilizer of $\lambda$ in $W_C$ is trivial (cf.\ Section II.6.3 in \cite{Jantzen}).
\end{Remark}

In preparation for proving our complete reducibility criterion (Theorem \ref{thm:reflectionsmalltensorproduct}), we will establish below some identities for characters of simple $G$-modules that are ``shifted" by an element of $W_C$ (see Proposition \ref{prop:charactersimplechishifted}).
We will often assume that $p \geq h$, the Coxeter number of $G$; this is no serious restriction for our application to reflection smallness in view of Remark \ref{rem:reflectionsmallpregular}.

\begin{Proposition} \label{prop:charactersimplecoefficientsnegative}
	Suppose that $p \geq h$ and let $w \in W_\mathrm{aff}^+$ and $s \in S_\mathrm{aff}$ such that $ws \Cdot C_0 \uparrow w \Cdot C_0$.
	Further define integers $a_x \in \Z$, for $x \in W_\mathrm{aff}^+$, via
	\[ \ch L(w\Cdot0) = \sum_{x \in W_\mathrm{aff}^+} a_x \cdot \chi(x\Cdot0) . \]
	Then for all $x \in W_\mathrm{aff}^+$ with $xs \in W_\mathrm{aff}^+$, we have $a_{xs} = - a_x$.
\end{Proposition}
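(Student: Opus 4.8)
The plan is to apply the translation functor onto the wall fixed by $s$ and to extract the relation among the $a_x$ from the vanishing of the translate of $L(w\Cdot0)$. Since $p \geq h$, we have $0 \in C_0$, so the linkage class of $0$ is regular. First I would fix a weight $\mu \in \overline{C}_0 \cap X$ lying on the wall $H_s$ of $C_0$ fixed by $s$ and on no other reflection hyperplane; such a $\mu$ exists because $p \geq h$, and then $s\Cdot\mu = \mu$ while $\Stab_{W_\mathrm{aff}}(\mu) = \{1,s\}$. The defining identity for the $a_x$ is finite (by highest weight triangularity only finitely many are nonzero) and lifts to the Grothendieck group as $[L(w\Cdot0)] = \sum_x a_x[\Delta(x\Cdot0)]$, since $\chi(x\Cdot0) = \ch\Delta(x\Cdot0)$ and $\ch$ is injective. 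Applying the exact functor $T_0^\mu$ and taking characters then yields
\[ \ch T_0^\mu L(w\Cdot0) = \sum_{x\in W_\mathrm{aff}^+} a_x\,\ch T_0^\mu \Delta(x\Cdot0). \]

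For the left-hand side, the hypothesis $ws\Cdot C_0 \uparrow w\Cdot C_0$ says that $w\Cdot C_0$ lies above the shared wall $w\cdot H_s$, so $w\Cdot\mu$ lies in the lower closure of $w\Cdot C_0$; hence $w\Cdot\mu \notin \widehat{w\Cdot C_0}$ and $T_0^\mu L(w\Cdot0) = 0$ by \eqref{eq:translationWeylsimple}. For the right-hand side I would again use \eqref{eq:translationWeylsimple}: $\ch T_0^\mu \Delta(x\Cdot0) = \chi(x\Cdot\mu)$ if $x\Cdot\mu \in X^+$ and $0$ otherwise. The crux is the dichotomy that, for $x \in W_\mathrm{aff}^+$, this character is nonzero precisely when $xs \in W_\mathrm{aff}^+$, in which case it equals $\chi(\eta)$ with $\eta \coloneqq x\Cdot\mu = xs\Cdot\mu$ (using $s\Cdot\mu = \mu$). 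To see this, note that the reflection in the wall shared by $x\Cdot C_0$ and $xs\Cdot C_0$ is $xsx^{-1}$, and that $xsx^{-1} \in W_\mathrm{fin}$ if and only if that wall is a finite wall $H_{\gamma,0}$, if and only if $xs \in W_\mathrm{fin}\,x$; for $x \in W_\mathrm{aff}^+$ this last condition is equivalent to $xs \notin W_\mathrm{aff}^+$. In the excluded case $\eta$ lies on $H_{\gamma,0}$, so $\eta \notin X^+$ or $\chi(\eta) = 0$; in the remaining case $\eta$ lies on exactly one non-finite wall, hence is $W_\mathrm{fin}$-regular, and lying in the closure of the dominant alcove $x\Cdot C_0$ forces $\eta \in X^+$, so $\chi(\eta) \neq 0$.

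Finally I would group the right-hand sum according to the pairs $\{x,xs\} \subseteq W_\mathrm{aff}^+$, which partition the contributing indices because $x \mapsto xs$ is a fixed-point-free involution there. Distinct pairs give distinct weights $\eta$ (if $x\Cdot\mu = x'\Cdot\mu$ then $x^{-1}x' \in \Stab_{W_\mathrm{aff}}(\mu) = \{1,s\}$), and the corresponding characters $\chi(\eta)$ belong to the basis $\{\chi(\lambda)\}_{\lambda\in X^+}$ of $\Z[X]^{W_\mathrm{fin}}$, hence are linearly independent. Comparing coefficients in $0 = \sum_{\{x,xs\}}(a_x + a_{xs})\,\chi(\eta)$ then gives $a_x + a_{xs} = 0$, that is $a_{xs} = -a_x$.

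The step I expect to be the main obstacle is the precise justification of the right-hand dichotomy: translating the group-theoretic condition $xs \in W_\mathrm{aff}^+$ into the geometric statement about the shared wall, and verifying in the surviving case that $\eta$ is simultaneously dominant and $W_\mathrm{fin}$-regular, so that $\chi(\eta)\neq0$ (a spurious vanishing for some admissible pair would destroy the information about that pair). The existence of a weight $\mu$ lying on exactly the wall $H_s$ and off all others also requires care, and this is where the hypothesis $p \geq h$ is genuinely needed beyond ensuring $0 \in C_0$.
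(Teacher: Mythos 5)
Your proof is correct, and its core deduction takes a genuinely different route from the paper's, even though both share the same opening move: choose $\mu \in \overline{C}_0 \cap X$ with $\Stab_{W_\mathrm{aff}}(\mu) = \{ e , s \}$ (Section II.6.3 in \cite{Jantzen}, which is where $p \geq h$ enters) and deduce from $ws \Cdot C_0 \uparrow w \Cdot C_0$ that $w \Cdot \mu \notin \widehat{w \Cdot C_0}$, hence $T_0^\mu L(w\Cdot0) = 0$ by \eqref{eq:translationWeylsimple}. From there the paper argues homologically: it writes each coefficient as an Euler characteristic $a_x = \sum_{i \geq 0} (-1)^i \cdot \dim \Ext_G^i\big( L(w\Cdot0) , \nabla(x\Cdot0) \big)$ (Section II.6.21 in \cite{Jantzen}) and invokes Proposition II.7.19(b) of \cite{Jantzen}, which converts the vanishing $T_0^\mu L(w\Cdot0) = 0$ into degree-shift isomorphisms $\Ext_G^i\big( L(w\Cdot0) , \nabla(x\Cdot0) \big) \cong \Ext_G^{i-1}\big( L(w\Cdot0) , \nabla(xs\Cdot0) \big)$ together with the vanishing of $\Hom$, so the sign flip falls out of the alternating sum. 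You instead stay in the Grothendieck group: expand $[L(w\Cdot0)] = \sum_x a_x \cdot [\Delta(x\Cdot0)]$, apply the exact functor $T_0^\mu$, use \eqref{eq:translationWeylsimple} for Weyl modules, and pair the surviving terms under the fixed-point-free involution $x \mapsto xs$, extracting $a_x + a_{xs} = 0$ from linear independence of the characters $\chi(\eta)$ with $\eta \in X^+$. The dichotomy you flagged as the main obstacle is indeed the crux, and your justification of it is sound: for $x \in W_\mathrm{aff}^+$, the wall shared by $x \Cdot C_0$ and $xs \Cdot C_0$ is of the form $H_{\gamma,0}$ exactly when $xs \notin W_\mathrm{aff}^+$ (an adjacent non-dominant alcove must be separated from a dominant one by a chamber hyperplane), in which case $(x\Cdot\mu, \gamma^\vee) = -(\rho,\gamma^\vee) < 0$ kills the translated Weyl module, while in the complementary case $x \Cdot \mu$ lies on exactly one hyperplane, which is not a chamber wall, forcing $x \Cdot \mu \in X^+$ and $\chi(x\Cdot\mu) \neq 0$. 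As for what each approach buys: the paper's proof is shorter because the hard work is delegated to Jantzen's Ext results, and it records finer homological information along the way; yours is more elementary and essentially self-contained, using only exactness of translation functors and the formula \eqref{eq:translationWeylsimple} already quoted in the paper, at the cost of verifying the coset/wall combinatorics by hand.
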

\begin{proof}
	First observe that by Section II.6.21 in \cite{Jantzen}, we have
	\[ a_x = \sum_{i \geq 0} (-1)^i \cdot \dim \Ext_G^i\big( L(w\Cdot0) , \nabla(x\Cdot0) \big) , \qquad a_{xs} = \sum_{i \geq 0} (-1)^i \cdot \dim \Ext_G^i\big( L(w\Cdot0) , \nabla(xs\Cdot0) \big) . \]
	By Section II.6.3 in \cite{Jantzen}, there is a weight $\mu \in \overline{C}_0 \cap X$ such that $\Stab_{W_\mathrm{aff}}(\mu) = \{ e , s \}$, and since we have $ws \Cdot C_0 \uparrow w \Cdot C_0$, the weight $w \Cdot \mu$ does not belong to the upper closure of $w \Cdot C_0$.
	Now \eqref{eq:translationWeylsimple} implies that $T_0^\mu L(w\Cdot0) = 0$.
	We may assume that $x \Cdot 0 < xs \Cdot 0$ (possibly after replacing $x$ by $xs$), and Proposition II.7.19(b) in \cite{Jantzen} yields
	\[ \Ext_G^i\big( L(w\Cdot0) , \nabla(x\Cdot0) \big) \cong \Ext_G^{i-1}\big( L(w\Cdot0) , \nabla(xs\Cdot0) \big) \]
	for all $i>0$.
	The proof of Proposition II.7.19 in \cite{Jantzen} also implies that $\Hom_G\big( L(w\Cdot0) , \nabla(x\Cdot0) \big) = 0$, and we conclude that $a_{xs} = -a_x$, as required.
\end{proof}

In order to prove the next results about characters, we will need the following elementary lemma about the set $W_\mathrm{aff}^+$ of minimal length $W_\mathrm{fin}$-coset representatives in $W_\mathrm{aff}$.

\begin{Lemma} \label{lem:cosetrepresentativespermutation}
	Fix an element $y \in W_\mathrm{aff}$.
	For all $x \in W_\mathrm{aff}^+$, there is a unique element $w = w(x,y) \in W_\mathrm{fin}$ such that $wxy \in W_\mathrm{aff}^+$.
	Furthermore, the map $x \mapsto w(x,y)  x y$ is a permutation of $W_\mathrm{aff}^+$.
\end{Lemma}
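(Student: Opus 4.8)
The plan is to recognize the two maps in the statement as shadows, on the set of minimal-length coset representatives, of the bijection of $W_\mathrm{aff}$ given by right multiplication by $y$. First I would recall that $W_\mathrm{fin} = \langle S_\mathrm{fin} \rangle$ is a standard parabolic subgroup of the Coxeter group $W_\mathrm{aff} = \langle S_\mathrm{aff} \rangle$, so that, as already recorded in Subsection~\ref{subsec:alcoves}, every coset $W_\mathrm{fin} z$ contains a unique element of minimal length. Consequently the assignment $\pi \colon W_\mathrm{aff}^+ \to W_\mathrm{fin}\backslash W_\mathrm{aff}$, $x \mapsto W_\mathrm{fin} x$, is a bijection; I write $\sigma$ for its inverse, which sends a coset to its minimal-length representative in $W_\mathrm{aff}^+$.

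For the first assertion I would fix $x \in W_\mathrm{aff}^+$ and examine the coset $W_\mathrm{fin}(xy)$. Its minimal-length element $z \coloneqq \sigma(W_\mathrm{fin} xy)$ lies in $W_\mathrm{fin}(xy)$, so $z = wxy$ with $w \coloneqq z(xy)^{-1} \in W_\mathrm{fin}$, which gives existence. For uniqueness, if some $w^\prime \in W_\mathrm{fin}$ also satisfies $w^\prime xy \in W_\mathrm{aff}^+$, then $w^\prime xy$ is the minimal-length element of its coset $W_\mathrm{fin}(w^\prime xy) = W_\mathrm{fin}(xy)$, forcing $w^\prime xy = z = wxy$ and hence $w^\prime = w$. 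This defines $w(x,y)$ and shows that $w(x,y)\,xy = \sigma(W_\mathrm{fin} xy) \in W_\mathrm{aff}^+$.

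For the permutation statement I would observe that right multiplication by $y$ preserves left $W_\mathrm{fin}$-cosets, since $(W_\mathrm{fin} x)y = W_\mathrm{fin}(xy)$, and therefore descends to a bijection $R_y \colon W_\mathrm{fin}\backslash W_\mathrm{aff} \to W_\mathrm{fin}\backslash W_\mathrm{aff}$, $W_\mathrm{fin} x \mapsto W_\mathrm{fin} xy$, whose inverse is $R_{y^{-1}}$. By the previous paragraph the map $x \mapsto w(x,y)\,xy$ is exactly $\sigma \circ R_y \circ \pi$, a composite of bijections, and hence a permutation of $W_\mathrm{aff}^+$; its inverse is $\sigma \circ R_{y^{-1}} \circ \pi$, the analogous map built from $y^{-1}$.

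The argument is essentially formal once this coset picture is in place, and I expect no genuine obstacle. The one point that requires care is the handedness of the cosets: $W_\mathrm{fin}$ acts on the left while $y$ multiplies on the right, so the two operations commute and right multiplication by $y$ really does induce a well-defined bijection of $W_\mathrm{fin}\backslash W_\mathrm{aff}$. This is what makes $R_y$ available and lets the whole claim reduce to composing the bijections $\pi$, $R_y$ and $\sigma$.
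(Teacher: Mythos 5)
Your proof is correct and follows essentially the same route as the paper: both arguments rest on the standard fact that $W_\mathrm{fin} x$ has a unique representative in $W_\mathrm{aff}^+$ (equivalently, the multiplication map $W_\mathrm{fin} \times W_\mathrm{aff}^+ \to W_\mathrm{aff}$ is a bijection, which the paper cites from Bj\"orner--Brenti), and then deduce both claims formally. Your phrasing via the coset bijections $\pi$, $\sigma$ and $R_y$ just makes explicit the "immediate" step that the paper leaves to the reader.
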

\begin{proof}
	By Proposition 2.4.4 in \cite{BjoernerBrentiCoxeter}, the multiplication in $W_\mathrm{aff}$ induces a bijection
	\begin{equation} \label{eq:multiplicationWfinWaff+}
	W_\mathrm{fin} \times W_\mathrm{aff}^+ \xrightarrow{~1:1~} W_\mathrm{aff} , \qquad (w,x) \longmapsto wx .
	\end{equation}
	Thus, for all $x \in W_\mathrm{aff}$, there is a unique pair of elements $u \in W_\mathrm{fin}$ and $z \in W_\mathrm{aff}^+$ with $u z = x y$, and the first claim follows with $w = u^{-1}$.
	The second claim is immediate from the fact that the multiplication map \eqref{eq:multiplicationWfinWaff+} is a bijection.
\end{proof}

Recall that $\{ \chi(\lambda) \mid \lambda \in X^+ \}$ is a basis of $\Z[X]^{W_\mathrm{fin}}$.
For $\lambda \in \overline{C}_0 \cap X$, we write $\Z[X]^{W_\mathrm{fin}}_\lambda$ for the $\Z$-submodule of $\Z[X]^{W_\mathrm{fin}}$ that is spanned by $\{ \chi(w\Cdot\lambda) \mid w \in W_\mathrm{aff} \text{ such that } w \Cdot\lambda \in X^+\}$.
Then the characters of all $G$-modules in the linkage class $\Rep_\lambda(G)$ belong to $\Z[X]^{W_\mathrm{fin}}_\lambda$.

\begin{Lemma} \label{lem:charactertranslationfunctor}
	Suppose that $p \geq h$, and for $\lambda \in \overline{C}_0 \cap X$, consider the $\Z$-linear map
	\[ \mathrm{tr}_\lambda \colon \Z[X]^{W_\mathrm{fin}}_0 \longrightarrow \Z[X]^{W_\mathrm{fin}}_\lambda \qquad \text{with} \quad \chi(x\Cdot0) \mapsto \chi(x\Cdot\lambda) \quad \text{for } x \in W_\mathrm{aff}^+ . \]
	\begin{enumerate}
		\item For every $G$-module $M$ in $\Rep_0(G)$, we have $\mathrm{tr}_\lambda(\ch M) = \ch(T_0^\lambda M)$.
		\item For all $y \in W_\mathrm{aff}$, we have $\mathrm{tr}_\lambda \chi(y\Cdot0) = \chi(y\Cdot\lambda)$.
	\end{enumerate}
\end{Lemma}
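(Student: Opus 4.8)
The plan is to treat the two statements separately. Part (2) is a purely formal consequence of the defining property of $\mathrm{tr}_\lambda$ together with the sign rule $\chi(w\Cdot\nu) = (-1)^{\ell(w)}\chi(\nu)$ for $w \in W_\mathrm{fin}$, whereas part (1) reduces, by additivity, to the single case of Weyl modules, where it follows from the translation formula \eqref{eq:translationWeylsimple}. Throughout I use that $0$ is $p$-regular for $p \geq h$, so that $x\Cdot0$ ranges over the regular dominant weights in $W_\mathrm{aff}\Cdot0$ as $x$ ranges over $W_\mathrm{aff}^+$; in particular $\{\chi(x\Cdot0) \mid x \in W_\mathrm{aff}^+\}$ is a $\Z$-basis of $\Z[X]^{W_\mathrm{fin}}_0$, so that $\mathrm{tr}_\lambda$ is well defined.

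For part (2), I would fix $y \in W_\mathrm{aff}$ and use the bijection \eqref{eq:multiplicationWfinWaff+} to write $y = wx$ uniquely with $w \in W_\mathrm{fin}$ and $x \in W_\mathrm{aff}^+$. Then $y\Cdot0 = w\Cdot(x\Cdot0)$, and since $x\Cdot0 \in X$ the sign rule gives $\chi(y\Cdot0) = (-1)^{\ell(w)}\chi(x\Cdot0)$. Applying the $\Z$-linear map $\mathrm{tr}_\lambda$ and invoking its definition on the basis element $\chi(x\Cdot0)$ yields $\mathrm{tr}_\lambda\chi(y\Cdot0) = (-1)^{\ell(w)}\chi(x\Cdot\lambda)$. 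On the other hand, the same sign rule applied to $y\Cdot\lambda = w\Cdot(x\Cdot\lambda)$ (with $x\Cdot\lambda \in X$) gives $\chi(y\Cdot\lambda) = (-1)^{\ell(w)}\chi(x\Cdot\lambda)$, and comparing the two expressions proves the claim.

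For part (1), I would first observe that both $M \mapsto \mathrm{tr}_\lambda(\ch M)$ and $M \mapsto \ch(T_0^\lambda M)$ are additive on short exact sequences in $\Rep_0(G)$: the former because $\ch$ is additive and $\mathrm{tr}_\lambda$ is linear, the latter because $T_0^\lambda$ is exact. As the characters $\chi(x\Cdot0) = \ch\Delta(x\Cdot0)$ with $x \in W_\mathrm{aff}^+$ form a $\Z$-basis of $\Z[X]^{W_\mathrm{fin}}_0$, the classes of the Weyl modules $\Delta(x\Cdot0)$ span the Grothendieck group of $\Rep_0(G)$, so it suffices to verify the identity for $M = \Delta(x\Cdot0)$. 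For such $M$ the left-hand side is $\mathrm{tr}_\lambda\chi(x\Cdot0) = \chi(x\Cdot\lambda)$ by definition, while \eqref{eq:translationWeylsimple} gives $T_0^\lambda\Delta(x\Cdot0) \cong \Delta(x\Cdot\lambda)$, whose character is again $\chi(x\Cdot\lambda)$; the two sides agree.

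The step requiring care—and where I expect the only real subtlety—is ensuring that \eqref{eq:translationWeylsimple} produces $\Delta(x\Cdot\lambda)$ rather than $0$, i.e.\ that $x\Cdot\lambda \in X^+$ for every $x \in W_\mathrm{aff}^+$ and $\lambda \in \overline{C}_0$. This holds because $x \in W_\mathrm{aff}^+$ means the alcove $x\Cdot C_0$ is dominant, so its closure $x\Cdot\overline{C}_0$ lies in the dominant cone; since $\lambda \in \overline{C}_0$ we get $x\Cdot\lambda \in x\Cdot\overline{C}_0 \cap X \subseteq X^+$. This is exactly what makes parts (1) and (2) consistent: were $x\Cdot\lambda$ ever non-dominant, the left-hand side $\chi(x\Cdot\lambda)$ would be forced to vanish, which dominance prevents. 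Once this point is secured, both halves of the lemma follow from the elementary manipulations above.
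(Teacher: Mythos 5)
Your proof of part (2) is correct and is essentially the paper's own argument: the paper also decomposes $y$ into a finite part and a minimal-length coset representative and pushes the sign $(-1)^{\ell(w)}$ through $\mathrm{tr}_\lambda$. For part (1) you take a genuinely different route: the paper disposes of this claim by citing Proposition II.7.8 in Jantzen, whereas you reduce it, via exactness of $T_0^\lambda$ and additivity of characters, to the single verification on Weyl modules $\Delta(x\Cdot 0)$ with $x \in W_\mathrm{aff}^+$, whose classes span the Grothendieck group of $\Rep_0(G)$. That reduction is sound, and it has the merit of making the lemma self-contained given \eqref{eq:translationWeylsimple}.

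However, the step you single out as the crux is false as stated. It is not true that $x \Cdot \lambda \in X^+$ for every $x \in W_\mathrm{aff}^+$ and $\lambda \in \overline{C}_0 \cap X$: dominance of the alcove $x \Cdot C_0$ only yields $(x\Cdot\lambda + \rho, \alpha^\vee) \geq 0$ for all $\alpha \in \Pi$, i.e.\ $(x\Cdot\lambda,\alpha^\vee) \geq -1$, and the value $-1$ genuinely occurs for $p$-singular $\lambda$. For example, $\lambda = -\rho$ lies in $\overline{C}_0 \cap X$, and already $x = e \in W_\mathrm{aff}^+$ gives $e \Cdot \lambda = -\rho \notin X^+$. (For $\lambda \in C_0$ your claim is fine, since $x \Cdot C_0$ lies in the open dominant cone.) Your remark that non-dominance of $x\Cdot\lambda$ is something ``dominance prevents'' therefore has it backwards: vanishing does occur, on both sides, and is harmless. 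The repair is one line: if $x \in W_\mathrm{aff}^+$ and $x\Cdot\lambda \notin X^+$, then by the inequality above $(x\Cdot\lambda,\alpha^\vee) = -1$ for some $\alpha \in \Pi$, so $\chi(x\Cdot\lambda) = 0$ by the paper's convention on $\chi$, while \eqref{eq:translationWeylsimple} gives $T_0^\lambda \Delta(x\Cdot 0) = 0$; hence both sides of the identity vanish and the verification on Weyl modules goes through in every case. With that substitution, your argument is a complete and correct proof of the lemma.
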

\begin{proof}
	The first claim follows from Proposition II.7.8 in \cite{Jantzen}.
	For $y \in W_\mathrm{aff}$, there is a unique element $w \in W_\mathrm{fin}$ such that $wy \in W_\mathrm{aff}^+$, and so $wy \Cdot 0 \in X^+$ and $\chi(y\Cdot0) = (-1)^{\ell(w)} \cdot \chi(wy \Cdot0)$.
	We conclude that
	\[ \mathrm{tr}_\lambda \chi(y\Cdot0) = (-1)^{\ell(w)} \cdot \mathrm{tr}_\lambda \chi(wy\Cdot0) = (-1)^{\ell(w)} \cdot \chi(wy\Cdot\lambda) = \chi(y\Cdot\lambda) , \]
	as required. 
\end{proof}

The key technical result that we will need in order to prove Theorem \ref{thm:reflectionsmalltensorproduct} is the following proposition.

\begin{Proposition} \label{prop:charactersimplechishifted}
	Suppose that $p \geq h$, let $w \in W_\mathrm{aff}^+$ and write $C = w \Cdot C_0$ and
	\[ \ch L(w\Cdot0) = \sum_{x \in W_\mathrm{aff}^+} a_x \cdot \chi(x\Cdot0) , \]
	with $a_x \in \Z$ for all $x \in W_\mathrm{aff}^+$.
	For all $u \in \langle s \in S_\mathrm{aff} \mid ws \Cdot C_0 \uparrow w \Cdot C_0 \rangle$ and $\lambda \in \overline{C}_0 \cap X$, we have
	\[ \sum_{x \in W_\mathrm{aff}^+} a_x \cdot \chi(xu\Cdot\lambda) = \begin{cases} (-1)^{\ell(u)} \cdot \ch L(w\Cdot\lambda) & \text{if } w \Cdot \lambda \in \widehat{C} \\ 0 & \text{otherwise} . \end{cases} \]
\end{Proposition}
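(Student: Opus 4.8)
The plan is to reduce to the case $\lambda = 0$ and then apply the translation map $\mathrm{tr}_\lambda$. Write $\Theta_u := \sum_{x \in W_\mathrm{aff}^+} a_x \cdot \chi(xu \Cdot 0)$ and $\Omega := \langle s \in S_\mathrm{aff} \mid ws \Cdot C_0 \uparrow w \Cdot C_0 \rangle$, so that $\Theta_e = \ch L(w \Cdot 0)$ by the definition of the coefficients $a_x$. I would first prove the identity $\Theta_u = (-1)^{\ell(u)} \cdot \ch L(w \Cdot 0)$ for all $u \in \Omega$; note that here the ``otherwise'' case does not arise, because $p \geq h$ forces $0 \in C_0$ and hence $w \Cdot 0 \in w \Cdot C_0 = C \subseteq \widehat{C}$. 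Granting this, the general statement follows by applying the $\Z$-linear map $\mathrm{tr}_\lambda$ from Lemma \ref{lem:charactertranslationfunctor}: part (2) gives $\mathrm{tr}_\lambda \Theta_u = \sum_x a_x \cdot \chi(xu \Cdot \lambda)$, the left-hand side of the claim, while part (1) gives $\mathrm{tr}_\lambda \ch L(w \Cdot 0) = \ch\big( T_0^\lambda L(w \Cdot 0) \big)$, which equals $\ch L(w \Cdot \lambda)$ if $w \Cdot \lambda \in \widehat{C}$ and $0$ otherwise by \eqref{eq:translationWeylsimple}.

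To prove the $\lambda = 0$ identity I would induct on $\ell(u)$, the case $u = e$ being trivial. For the inductive step, write $u = u^\prime s$ with $s$ a generator of $\Omega$ and $\ell(u) = \ell(u^\prime) + 1$. Applying Lemma \ref{lem:cosetrepresentativespermutation} with $y = u^\prime$ and reindexing the sum by the permutation $x \mapsto z(x) := w(x,u^\prime) \cdot xu^\prime$ of $W_\mathrm{aff}^+$, one rewrites $\Theta_{u^\prime} = \sum_z c_z \cdot \chi(z \Cdot 0)$ and, carrying the extra factor $s$ through the same substitution, $\Theta_{u^\prime s} = \sum_z c_z \cdot \chi(zs \Cdot 0)$, with the \emph{same} coefficients $c_z = a_{x(z)} \cdot (-1)^{\ell(w(x(z),u^\prime))}$. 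The induction hypothesis together with the linear independence of $\{ \chi(z \Cdot 0) \mid z \in W_\mathrm{aff}^+ \}$ forces $c_z = (-1)^{\ell(u^\prime)} \cdot a_z$, so that $\Theta_{u^\prime s} = (-1)^{\ell(u^\prime)} \cdot \Theta_s$, where $\Theta_s = \sum_z a_z \cdot \chi(zs \Cdot 0)$. The inductive step thus reduces to the single-generator identity $\Theta_s = -\Theta_e$.

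For this, reindex $\Theta_s = \sum_x a_x \cdot \chi(xs \Cdot 0)$ by $x \mapsto z(x) = w(x,s) \cdot xs$ and compute the coefficient of each $\chi(z \Cdot 0)$, splitting $W_\mathrm{aff}^+$ according to whether $xs \in W_\mathrm{aff}^+$. If $xs \in W_\mathrm{aff}^+$, then $w(x,s) = e$ and $z(x) = xs$, and Proposition \ref{prop:charactersimplecoefficientsnegative} (whose hypothesis $ws \Cdot C_0 \uparrow w \Cdot C_0$ is exactly that $s$ generates $\Omega$) gives $a_{xs} = -a_x$; these terms therefore contribute $-a_z$ to the coefficient of $\chi(z \Cdot 0)$. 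If $xs \notin W_\mathrm{aff}^+$, then by a standard property of minimal coset representatives (see \cite{BjoernerBrentiCoxeter}) there is a simple reflection $t \in S_\mathrm{fin}$ with $xs = tx$, so that $w(x,s) = t$ and $z(x) = x$; since $t$ is a reflection we have $(-1)^{\ell(t)} = -1$, and these terms contribute $-a_x$ to the coefficient of $\chi(x \Cdot 0)$. In both cases the coefficient of $\chi(z \Cdot 0)$ in $\Theta_s$ is $-a_z$, giving $\Theta_s = -\Theta_e$.

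I expect the single-generator identity to be the main obstacle, and within it the terms with $xs \notin W_\mathrm{aff}^+$: Proposition \ref{prop:charactersimplecoefficientsnegative} only relates $a_x$ and $a_{xs}$ when both $x$ and $xs$ lie in $W_\mathrm{aff}^+$, so these remaining terms must be handled by the coset combinatorics, the crucial point being that the correcting element $w(x,s)$ is a single reflection and so contributes precisely the sign $-1$ needed to match the other terms. By comparison, the reduction to $\lambda = 0$ via $\mathrm{tr}_\lambda$ and the induction on $\ell(u)$ are formal.
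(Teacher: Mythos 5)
Your proposal is correct and follows essentially the same route as the paper's proof: reduction to $\lambda = 0$ via the map $\mathrm{tr}_\lambda$ of Lemma \ref{lem:charactertranslationfunctor} together with \eqref{eq:translationWeylsimple}, then induction on $\ell(u)$ using the reindexing permutation of Lemma \ref{lem:cosetrepresentativespermutation} and the sign relation of Proposition \ref{prop:charactersimplecoefficientsnegative}, with the same two-case split according to whether the right-multiplied element stays in $W_\mathrm{aff}^+$. The only organizational differences are that you reduce the inductive step to the single-generator identity $\Theta_s = -\Theta_e$ by invoking linear independence of the $\chi(z\Cdot 0)$ (the paper instead inducts directly on the coefficient identity $a_{w(x,u)xu} = (-1)^{\ell(u)+\ell(w(x,u))}\cdot a_x$), and that you justify the sign in the non-dominant case via Deodhar's lemma, where the paper obtains the finite reflection from the adjacency of a dominant and a non-dominant alcove; both are valid and carry the same content.
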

\begin{proof}
	We first prove the claim for $\lambda = 0$.
	Using the notation from Lemma \ref{lem:cosetrepresentativespermutation}, we can write
	\[ \ch L(w\Cdot0) = \sum_{x \in W_\mathrm{aff}^+} a_x \cdot \chi(x\Cdot0) = \sum_{x \in W_\mathrm{aff}^+} a_{w(x,u) x u} \cdot \chi\big( w(x,u) x u \Cdot 0  \big) , \]
	and we also have
	\[ \sum_{x \in W_\mathrm{aff}^+} a_x \cdot \chi(xu\Cdot0) = \sum_{x \in W_\mathrm{aff}^+} (-1)^{\ell(w(x,u))} \cdot a_x \cdot \chi\big( w(x,u) x u \Cdot 0  \big) . \]
	Therefore, the claim will follow (for $\lambda = 0$) if we prove the equality
	\begin{equation} \label{eq:coefficientsequaluptosign}
		a_{w(x,u) x u} = (-1)^{\ell(u) + \ell(w(x,u))} \cdot a_x
	\end{equation}
	for all $u \in \langle s \in S_\mathrm{aff} \mid ws \Cdot C_0 \uparrow w \Cdot C_0 \rangle$ and $x \in W_\mathrm{aff}^+$.
	
	We prove \eqref{eq:coefficientsequaluptosign} by induction on $\ell(u)$.
	If $\ell(u) = 0$ then $u = e$ and $w(x,u) = e$, and the equation is trivially satisfied.
	Now suppose that we have $u \in \langle s \in S_\mathrm{aff} \mid ws \Cdot C_0 \uparrow w \Cdot C_0 \rangle$ such that \eqref{eq:coefficientsequaluptosign} holds, and let $s \in S_\mathrm{aff}$ such that $ws \Cdot C_0 \uparrow w \Cdot C_0$ and $\ell(u)<\ell(us)$.
	If $w(x,u) x u s \in W_\mathrm{aff}^+$ then $w(x,us) = w(x,u)$, and as $a_{w(x,u)xus} = - a_{w(x,u)xu}$ by Proposition \ref{prop:charactersimplecoefficientsnegative}, it follows that
	\[ a_{w(x,us)xus} = a_{w(x,u)xus} = - a_{w(x,u)xu} = - (-1)^{\ell(u) + \ell(w(x,u))} \cdot a_x = (-1)^{\ell(us) + \ell(w(x,us))} \cdot a_x . \]
	Now suppose that $w(x,u)xus \notin W_\mathrm{aff}^+$.
	As the dominant alcove $C = w(x,u)xu \Cdot C_0$ and the non-dominant alcove $C^\prime = w(x,u)xus \Cdot C_0$ are adjacent, the reflection $\tilde s \coloneqq w(x,u)xus(w(x,u)xu)^{-1}$
	belongs to $W_\mathrm{fin}$, and as $\tilde s w(x,u) x u s = w(x,u) x u \in W_\mathrm{aff}^+$, we conclude that $w(x,us) = \tilde s w(x,u)$.
	In particular, we have $(-1)^{\ell(w(x,us))} = - (-1)^{\ell(w(x,u))}$ and therefore
	\[ a_{w(x,us)xus} = a_{w(x,u)xu} = (-1)^{\ell(u) + \ell(w(x,u))} \cdot a_x = (-1)^{\ell(us) + \ell(w(x,us))} \cdot a_x , \]
	as required.
	This completes the proof of \eqref{eq:coefficientsequaluptosign}, and so the claim follows for $\lambda = 0$.
	
	For an arbitrary weight $\lambda \in \overline{C}_0 \cap X$, let $\mathrm{tr}_\lambda \colon \Z[X]^{W_\mathrm{fin}}_0 \to \Z[X]^{W_\mathrm{fin}}_\lambda$ be the $\Z$-linear map from Lemma~\ref{lem:charactertranslationfunctor}, with $\chi(x\Cdot0) \mapsto \chi(x\Cdot\lambda)$ for all $x \in W_\mathrm{aff}^+$.
	Using Lemma \ref{lem:charactertranslationfunctor} and the validity of the claim for $\lambda = 0$, we obtain
	\[ \sum\nolimits_x a_x \cdot \chi(xu\Cdot\lambda) = \mathrm{tr}_\lambda\Big( \sum\nolimits_x a_x \cdot \chi(xu\Cdot0) \Big) = (-1)^{\ell(u)} \cdot \mathrm{tr}_\lambda\big( \ch L(w\Cdot0) \big) = (-1)^{\ell(u)} \cdot \ch T_0^\lambda L(w\Cdot0) . \]
	By \eqref{eq:translationWeylsimple}, we have
	\[ T_0^\lambda L(w\Cdot0) \cong \begin{cases} L(w\Cdot\lambda) & \text{if } w \Cdot \lambda \in \widehat{C} , \\ 0 & \text{otherwise} , \end{cases} \]
	 and the claim is immediate from the two last equations.
\end{proof}

Now we are ready to prove the main theorem of this section.
Recall that for an alcove $C \subseteq X_\R$, we write $S_C$ for the set of reflections in the walls of $C$ and $W_C = \langle s \in S_C \mid s \Cdot C \uparrow C \rangle$.

\begin{Theorem} \label{thm:reflectionsmalltensorproduct}
	Let $\lambda,\mu \in X^+$ such that $\mu$ is reflection small with respect to $\lambda$.
	\begin{enumerate}
	\item The tensor product $L(\lambda) \otimes L(\mu)$ is completely reducible.
	\item Let $C$ be the unique alcove whose upper closure contains $\lambda$ and let $\nu \in X^+$ such that $L(\nu)$ is a composition functor of $L(\lambda) \otimes L(\mu)$.
	Then $\nu$ belong to the upper closure of $C$.
	\item Let $\nu \in X^+$ be a weight in the upper closure of $C$.
	Then the composition multiplicity of $L(\nu)$ in $L(\lambda) \otimes L(\mu)$ is given by
	\[ \big[ L(\lambda) \otimes L(\mu) : L( \nu ) \big] = \sum_{u \in W_C} (-1)^{\ell(u)} \cdot \dim L(\mu)_{u\Cdot\nu-\lambda} . \]
	\end{enumerate}
\end{Theorem}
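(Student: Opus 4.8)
The plan is to prove all three statements at once through an explicit character computation, reducing everything to the key evaluation in Proposition \ref{prop:charactersimplechishifted}. By Remark \ref{rem:reflectionsmallpregular} we may assume $\mu \neq 0$ (the case $\mu = 0$ being trivial), so that $p \geq h$ and $\lambda$ is $p$-regular; writing $C = w \Cdot C_0$ with $w \in W_\mathrm{aff}^+$, regularity gives $\lambda = w \Cdot \lambda_0$ for a unique $\lambda_0 \in C_0 \cap X$. First I would expand $\ch L(\lambda) = \sum_{x \in W_\mathrm{aff}^+} a_x \cdot \chi(x \Cdot \lambda_0)$, where the $a_x$ are the coefficients attached to $L(w \Cdot 0)$: indeed $\mathrm{tr}_{\lambda_0}(\ch L(w \Cdot 0)) = \ch(T_0^{\lambda_0} L(w \Cdot 0)) = \ch L(\lambda)$ by Lemma \ref{lem:charactertranslationfunctor} and \eqref{eq:translationWeylsimple}. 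Multiplying by $\ch L(\mu)$ and using the standard identity $\chi(\kappa) \cdot \ch L(\mu) = \sum_\nu \dim L(\mu)_\nu \cdot \chi(\kappa+\nu)$ in the Weyl character ring, together with the $W_\mathrm{fin}$-invariance of weight multiplicities to reindex the inner sum, I would arrive at
\[ \ch\big( L(\lambda) \otimes L(\mu) \big) = \sum_\eta \dim L(\mu)_\eta \cdot F(\eta), \qquad F(\eta) = \sum_{x \in W_\mathrm{aff}^+} a_x \cdot \chi\big( x \Cdot (\lambda_0 + \eta) \big), \]
the outer sum running over the weights $\eta$ of $L(\mu)$.

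The next step is to evaluate each $F(\eta)$. Setting $W = \langle s \in S_\mathrm{aff} \mid ws \Cdot C_0 \uparrow w \Cdot C_0 \rangle$, so that $W_C = w W w^{-1}$, I would translate Corollary \ref{cor:reflectionsmallorbit} from the $C$-frame to the $C_0$-frame (conjugating by $w$ and using $x \Cdot (\lambda_0 + \eta) = x \Cdot \lambda_0 + \bar x(\eta)$ for the finite part $\bar x$ of $x$) to produce, for each weight $\eta$, an element $v_\eta \in W$ with $\mu'_\eta := v_\eta \Cdot (\lambda_0 + \eta) \in \overline{C}_0 \cap X$. Then $\chi(x \Cdot (\lambda_0 + \eta)) = \chi(x v_\eta^{-1} \Cdot \mu'_\eta)$, and Proposition \ref{prop:charactersimplechishifted} gives $F(\eta) = (-1)^{\ell(v_\eta)} \cdot \ch L(w \Cdot \mu'_\eta)$ when $w \Cdot \mu'_\eta \in \widehat{C}$, and $F(\eta) = 0$ otherwise. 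Since every nonzero $F(\eta)$ is $\pm \ch L(\nu)$ with $\nu = w \Cdot \mu'_\eta \in \widehat{C}$, statement (2) follows at once.

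For statement (1) I would argue that the simple modules whose highest weights lie in $\widehat{C} \subseteq \overline{C}$ are pairwise non-linked: as $\overline{C}$ is a strict fundamental domain for the dot action of $W_\mathrm{aff}$, distinct dominant weights in $\overline{C}$ lie in distinct $W_\mathrm{aff}$-orbits, hence in distinct blocks. By (2) the module $L(\lambda) \otimes L(\mu)$ therefore decomposes, along the block decomposition, into pieces each having a single simple composition factor $L(\nu)$; since $\Ext^1_G(L(\nu),L(\nu)) = 0$, each such piece is isotypic and semisimple, and complete reducibility follows.

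Finally, for the multiplicity formula (3), fix $\nu = w \Cdot \kappa \in \widehat{C} \cap X^+$ with $\kappa \in \overline{C}_0$. The coefficient of $\ch L(\nu)$ in the expansion above is $\sum_{\eta \,:\, \mu'_\eta = \kappa} (-1)^{\ell(v_\eta)} \dim L(\mu)_\eta$, and this equals the composition multiplicity since the $\ch L(\cdot)$ form a basis. To match it with $\sum_{u \in W_C} (-1)^{\ell(u)} \dim L(\mu)_{u \Cdot \nu - \lambda}$, I would pass to the $C_0$-frame via $u = w u' w^{-1}$, using that $(-1)^{\ell(\cdot)}$ is a homomorphism (hence conjugation-invariant) and that $\dim L(\mu)_{u \Cdot \nu - \lambda} = \dim L(\mu)_{u' \Cdot \kappa - \lambda_0}$ by $W_\mathrm{fin}$-invariance, reducing the target to $\sum_{u' \in W} (-1)^{\ell(u')} \dim L(\mu)_{u' \Cdot \kappa - \lambda_0}$. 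The main obstacle is the bookkeeping of the fibers of $u' \mapsto u' \Cdot \kappa$: here I expect to use again that $(-1)^{\ell(\cdot)}$ is a homomorphism, so the sum over a coset of $\Stab_W(\kappa)$ collapses to $(-1)^{\ell(v_\eta^{-1})} \cdot \sum_{t \in \Stab_W(\kappa)} (-1)^{\ell(t)}$, which is $1$ when $\Stab_W(\kappa)$ is trivial and $0$ otherwise. The point that makes this work is that $\nu \in \widehat{C}$ forces $\Stab_W(\kappa) = 1$: if some reflection $t \in W$ fixed $\kappa$, then evaluating Proposition \ref{prop:charactersimplechishifted} at both $u = e$ and $u = t$ would yield $\ch L(\nu) = -\ch L(\nu)$, a contradiction. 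This simultaneously rules out overcounting for regular $\kappa$ and forces any singular $\kappa$ to contribute zero on both sides, completing the identification.
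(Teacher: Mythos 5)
Your proposal is correct and follows essentially the same route as the paper's proof: the same reduction via Remark \ref{rem:reflectionsmallpregular}, the same character expansion through Lemma \ref{lem:charactertranslationfunctor} and the product formula, the same use of Corollary \ref{cor:reflectionsmallorbit} (merely transported to the $C_0$-frame by conjugation), the same key evaluation via Proposition \ref{prop:charactersimplechishifted}, and the same extraction of parts (1)--(3) from the resulting character identity using the linkage principle and vanishing of self-extensions. The only cosmetic difference is in part (3), where you re-derive the triviality of $\Stab_{W_C}(\nu)$ for $\nu \in \widehat{C}$ by a character-theoretic contradiction (implicitly using that a non-trivial point stabilizer in a finite reflection group contains a reflection), whereas the paper simply invokes Remark \ref{rem:stabilizerupperclosure}.
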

\begin{proof}
	If $\mu = 0$ then all three claims are trivially satisfied, so we assume that $\mu \neq 0$, and Remark \ref{rem:reflectionsmallpregular} yields that $p \geq h$ and $\lambda$ is $p$-regular.
	Let $\lambda^\prime \in C_0$ and $w \in W_\mathrm{aff}^+$ such that $\lambda = w \Cdot \lambda^\prime$, and write
	\[ \ch L(w\Cdot0) = \sum_{x \in W_\mathrm{aff}^+} a_x \cdot \chi(x\Cdot0) . \]
	with $a_x \in \Z$ for $x \in W_\mathrm{aff}^+$.
	By Lemma \ref{lem:charactertranslationfunctor}, we have
	\[ \ch L(\lambda) = \ch T_0^\lambda L(w\Cdot0) = \sum_{x \in W_\mathrm{aff}^+} a_x \cdot \chi(x\Cdot\lambda^\prime) , \]
	and the last displayed equation in Section II.7.5 in \cite{Jantzen} yields
	\[ \ch\big( L(\lambda) \otimes L(\mu) \big)  = \sum_{\delta \in \Lambda(\mu)} \dim L(\mu)_\delta \cdot \sum_{x \in W_\mathrm{aff}^+} a_x \cdot \chi\big( x\Cdot (\lambda^\prime + \delta) \big) . \]
	Fix a weight $\delta \in \Lambda(\mu)$ and write $w = t_\gamma \tilde w$ with $\gamma \in \Z\Phi$ and $\tilde w \in W_\mathrm{fin}$.
	By Corollary \ref{cor:reflectionsmallorbit} (applied to the weights $\lambda$ and $\tilde w \delta \in \Lambda(\delta)$), we can choose an element $u_\delta \in W_C$ such that
	\[ \nu_\delta \coloneqq u_\delta^{-1} \Cdot (\lambda + \tilde w \delta) \in \overline{C} = w \Cdot \overline{C}_0 , \qquad \mu_\delta \coloneqq \nu_\delta - \lambda \in \Lambda(\mu) , \]
	and it follows that $\lambda + \tilde w \delta = u_\delta \Cdot \nu_\delta = u_\delta \Cdot (\lambda + \mu_\delta)$ and
	\[ \lambda^\prime + \delta = w^{-1} \Cdot ( \lambda + \tilde w \delta ) = w^{-1} u_\delta \Cdot \nu_\delta = w^{-1} u_\delta w \Cdot \nu_\delta^\prime , \qquad \nu_\delta^\prime \coloneqq w^{-1} \Cdot \nu_\delta \in \overline{C}_0 . \]
	Observe that $w^{-1} u_\delta w \in w^{-1} W_C w = \langle s \in W_\mathrm{aff} \mid ws \Cdot C_0 \uparrow w \Cdot C_0 \rangle$, so Proposition \ref{prop:charactersimplechishifted} implies that
	\[ \sum_{x \in W_\mathrm{aff}^+} a_x \cdot \chi( x\Cdot (\lambda^\prime + \delta) ) = \sum_{x \in W_\mathrm{aff}^+} a_x \cdot \chi( x w^{-1} u_\delta w \Cdot \nu_\delta^\prime ) = \begin{cases} (-1)^{\ell(u_\delta)} \cdot \ch L(\nu_\delta) & \text{if } \nu_\delta \in \widehat{C} , \\ 0 & \text{otherwise} , \end{cases} \]
	and we obtain
	\begin{align*}
	\ch\big( L(\lambda) \otimes L(\mu) \big)
	& = \sum_{\delta \in \Lambda(\mu)} \dim L(\mu)_\delta \cdot \sum_{x \in W_\mathrm{aff}^+} a_x \cdot \chi( x\Cdot (\lambda^\prime + \delta) ) \\
	& = \sum_{\delta \in \Lambda(\mu) \, : \, \nu_\delta \in \widehat C} \dim L(\mu)_\delta \cdot (-1)^{\ell(u_\delta)} \cdot \ch L(\nu_\delta) .
	\end{align*}	
	Further note that $\tilde w(\delta) = u_\delta \Cdot \nu_\delta - \lambda$, so
	\[ \dim L(\mu)_\delta = \dim L(\mu)_{\tilde w(\delta)} = \dim L(\mu)_{u_\delta \Cdot \nu_\delta - \lambda} , \]
	and recall from Remark \ref{rem:stabilizerupperclosure} that the $W_C$-orbit of $\nu_\delta$ is regular if $\nu_\delta \in \widehat C$.	
	Thus, for any given weight $\nu \in \widehat{C}$, we have
	\[ \sum_{\delta \in \Lambda(\mu) \, : \, \nu_\delta = \nu} (-1)^{\ell(u_\delta)} \cdot \dim L(\mu)_\delta = \sum_{u \in W_C} (-1)^{\ell(u)} \cdot \dim L(\mu)_{u\Cdot\nu-\lambda} , \]
	and we conclude that
	\begin{align*}
	\ch\big( L(\lambda) \otimes L(\mu) \big)
	& = \sum_{\delta \in \Lambda(\mu) \, : \, \nu_\delta \in \widehat C} \dim L(\mu)_\delta \cdot (-1)^{\ell(u_\delta)} \cdot \ch L(\nu_\delta) \\
	& = \sum_{\nu \in \widehat{C}} \Big( \sum_{u \in W_C} \dim L(\mu)_{u\Cdot\nu-\lambda} \cdot (-1)^{\ell(u)} \Big) \cdot \ch L(\nu) .
	\end{align*}	
	Parts (2) and (3) of the theorem are immediate from this character formula.
	Part (1) follows from part (2) together with the observation that there cannot be any non-split extensions between simple $G$-modules with highest weights in $\overline{C}$, by the linkage principle,
	and the fact that simple $G$-modules have no non-trivial self-extensions (see Section II.2.12 in \cite{Jantzen}).
\end{proof}

\section{Further criteria for complete reducibility and multiplicity freeness}
\label{sec:CRMFcriteria}

In this section, we establish further tools and criteria to decide whether a tensor product of two simple $G$-modules is completely reducible or multiplicity free, based on Steinberg's tensor product theorem (Subsection \ref{subsec:prestricted}), the notion of good filtration dimension (Subsection \ref{subsec:GFD}) and the tensor ideal of singular $G$-modules (Subsection \ref{subsec:singular}).
We also discuss how composition multiplicities in tensor products can sometimes be bounded by dimensions of weight spaces (Subsection \ref{subsec:weightspaces}) or by multiplicities arising from tensor product decompositions in characteristic zero (Subsection \ref{subsec:char0}).

\subsection{Reduction to \texorpdfstring{$p$}{p}-restricted weights}
\label{subsec:prestricted}

Recall that we write
\[ X_1 = \{ \lambda \in X^+ \mid (\lambda,\alpha^\vee) < p \text{ for all } \alpha \in \Pi \} \]
for the set of $p$-restricted weights in $X^+$.
Since we assume that $G$ is simply connected, every dominant weight $\lambda \in X^+$ can be written uniquely as $\lambda = \lambda_0 + p \lambda_1 + \cdots + p^m \lambda_m$ with $\lambda_1,\ldots,\lambda_m \in X_1$, and Steinberg's tensor product theorem states that the simple $G$-module $L(\lambda)$ admits a tensor product decomposition
\[ L(\lambda) \cong L(\lambda_0) \otimes L(\lambda_1)^{[1]} \otimes \cdots \otimes L(\lambda_m)^{[m]} , \]
where $M \mapsto M^{[r]}$ denotes the $r$-th Frobenius twist functor on $\Rep(G)$.
See Subsections II.3.16--17 in \cite{Jantzen} for more details.
Using Steinberg's tensor product theorem, the problem of classifying pairs of simple modules whose tensor product is completely reducible can be reduced to pairs of simple modules with $p$-restricted highest weights; see Corollary 9.3 in \cite{GruberCompleteReducibility}.

\begin{Theorem}
	Let $\lambda,\mu \in X^+$ and write $\lambda = \lambda_0 + \cdots + p^m \lambda_m$ and $\mu = \mu_0 + \cdots + p^m \mu_m$, with $\lambda_i,\mu_i \in X_1$ for all $i = 0 , \ldots , m$.
	Then $L(\lambda) \otimes L(\mu)$ is completely reducible if any only if $L(\lambda_i) \otimes L(\mu_i)$ is completely reducible for $i=0,\ldots,m$.
\end{Theorem}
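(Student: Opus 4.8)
The plan is to reduce the statement to \cite[Theorem C]{GruberCompleteReducibility} (equivalently \cite[Corollary 9.3]{GruberCompleteReducibility}) after a preliminary rearrangement. First I would apply Steinberg's tensor product theorem to both $L(\lambda)$ and $L(\mu)$, reorder the resulting tensor factors, and use that each Frobenius twist functor $(-)^{[i]}$ is a tensor functor, to obtain
\[ L(\lambda) \otimes L(\mu) \cong \bigotimes_{i=0}^m \bigl( L(\lambda_i) \otimes L(\mu_i) \bigr)^{[i]} . \]
Writing $T_i \coloneqq L(\lambda_i) \otimes L(\mu_i)$, this recasts the claim as the assertion that the iterated twisted tensor product $\bigotimes_i T_i^{[i]}$ is completely reducible if and only if every $T_i$ is.

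For the forward implication I would extract the individual factors $T_i$ from $L(\lambda)\otimes L(\mu)$, using that subquotients and direct summands of a completely reducible module are again completely reducible. Concretely, restricting $\bigotimes_i T_i^{[i]}$ to the Frobenius kernel $G_1$ collapses the higher twists to trivial modules and leaves a direct sum of copies of $T_0|_{G_1}$, and analogous restrictions to the kernels $G_r$ isolate the contribution of each $p$-adic scale $p^i$; propagating a hypothetical nonsplit extension among the composition factors of some $T_i$ through the twisted product then contradicts complete reducibility of the whole. This direction is essentially formal once one knows that complete reducibility over $G$ is inherited by the infinitesimal subgroups.

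The backward implication is the main obstacle. After distributing the tensor product over the decompositions $T_i = \bigoplus_k L(\sigma_{i,k})$, one is reduced to showing that each summand $\bigotimes_i L(\sigma_{i,k_i})^{[i]}$ is completely reducible; this is genuinely delicate because the weights $\sigma_{i,k_i}$ need not be $p$-restricted, so these products are neither simple by Steinberg nor completely reducible by any general tensor-product principle. The decisive point is that complete reducibility of each $T_i$ constrains the highest weights of its composition factors through the alcove geometry of Section \ref{sec:alcovegeometryCR}, keeping the relevant weights separated across the distinct scales $p^i$ so that no new extensions can arise. Rather than reproving this constraint here, I would invoke \cite[Theorem C]{GruberCompleteReducibility} directly with the displayed isomorphism in hand, which settles both implications at once.
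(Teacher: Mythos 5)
Your proposal is correct and takes essentially the same route as the paper: the paper states this theorem as an imported result, citing \cite[Theorem C]{GruberCompleteReducibility} (equivalently Corollary 9.3 there) with no independent proof, and your argument likewise settles both implications by invoking that same theorem. The preliminary Steinberg rearrangement and the Frobenius-kernel sketch of the forward implication are therefore redundant for the logic of the proof, since the cited result already is the statement being proven.
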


In order to obtain an analogous reduction to $p$-restricted weights for classifying pairs of simple modules whose tensor product is multiplicity free, we will use the following result from Lemma 4.12 in \cite{GruberCompleteReducibility}, which relates multiplicity freeness and complete reducibility.

\begin{Lemma}\label{lem:MFimpliesCR}
	Let $\lambda,\mu\in X^+$.
	If $L(\lambda)\otimes L(\mu)$ is multiplicity free then $L(\lambda)\otimes L(\mu)$ is completely reducible.
\end{Lemma}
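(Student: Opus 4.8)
The plan is to exploit the self-duality of tensor products of simple modules under the contravariant duality on $\Rep(G)$. Recall that $\Rep(G)$ carries an exact contravariant functor $M \mapsto M^\tau$, induced by an anti-automorphism of $G$ fixing the maximal torus pointwise, which fixes every simple module, $L(\nu)^\tau \cong L(\nu)$, and is compatible with tensor products, $(M \otimes N)^\tau \cong M^\tau \otimes N^\tau$ (equivalently, every $L(\nu)$ carries a nondegenerate contravariant form, and the tensor product of the forms on $L(\lambda)$ and $L(\mu)$ is again nondegenerate contravariant). Writing $N = L(\lambda) \otimes L(\mu)$, it follows that $N^\tau \cong L(\lambda)^\tau \otimes L(\mu)^\tau \cong N$, so $N$ is self-dual. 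Since $\tau$ is exact, contravariant, and fixes simples (hence fixes all semisimple modules), applying it to the surjection $N \twoheadrightarrow N/\mathrm{rad}(N)$ yields $\mathrm{soc}(N) \cong \mathrm{soc}(N^\tau) \cong (N/\mathrm{rad}(N))^\tau \cong N/\mathrm{rad}(N)$. In particular $\mathrm{soc}(N)$ and the head $N/\mathrm{rad}(N)$ have exactly the same composition factors.

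Now assume $N$ is multiplicity free; I would prove it is semisimple by showing $N = \mathrm{soc}(N)$. Set $Q = N/\mathrm{soc}(N)$ and suppose for contradiction that $Q \neq 0$, so that $Q/\mathrm{rad}(Q) \neq 0$. On one hand, $Q/\mathrm{rad}(Q)$ is a semisimple quotient of $N$ (via $N \twoheadrightarrow Q \twoheadrightarrow Q/\mathrm{rad}(Q)$), hence a quotient of $N/\mathrm{rad}(N)$; by the previous paragraph its composition factors therefore lie among the composition factors of $\mathrm{soc}(N)$. On the other hand, the composition factors of $Q = N/\mathrm{soc}(N)$ are precisely those composition factors of $N$ obtained after deleting the factors of $\mathrm{soc}(N)$; since $N$ is multiplicity free, each composition factor of $\mathrm{soc}(N)$ occurs in $N$ exactly once, so no composition factor of $\mathrm{soc}(N)$ is a composition factor of $Q$. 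Thus any composition factor of $Q/\mathrm{rad}(Q)$ would have to be both a factor of $\mathrm{soc}(N)$ and a factor of $Q$, which is impossible. Hence $Q/\mathrm{rad}(Q) = 0$, forcing $Q = 0$ and $N = \mathrm{soc}(N)$, i.e.\ $L(\lambda) \otimes L(\mu)$ is completely reducible.

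The only genuinely delicate ingredient is the existence of the simple-fixing contravariant duality $\tau$ and its compatibility with tensor products; once this is available, the remainder is a formal socle/head argument that uses multiplicity-freeness solely through the observation that a composition factor of $\mathrm{soc}(N)$ cannot reappear in $N/\mathrm{soc}(N)$. I would therefore take some care to record that $\tau$ fixes each $L(\nu)$ and that the tensor product of nondegenerate contravariant forms is again nondegenerate contravariant (both standard facts, cf.\ \cite{Jantzen}), since this is precisely what upgrades the self-duality of the individual factors to self-duality of $N$. I expect this verification to be the main obstacle, while the combinatorial conclusion is routine.
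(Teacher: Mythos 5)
Your proof is correct. Note first that the paper does not actually prove this lemma: it is quoted from Lemma 4.12 of \cite{GruberCompleteReducibility}, so your argument serves as a self-contained replacement for that citation rather than a parallel to a proof in the text. The mechanism you use — the contravariant (Chevalley) duality $M \mapsto {}^\tau M$ fixes every $L(\nu)$, commutes with tensor products, hence $N = L(\lambda)\otimes L(\mu)$ is contravariantly self-dual, after which a socle/head comparison plus multiplicity-freeness forces $N = \mathrm{soc}(N)$ — is exactly the standard mechanism underlying the cited result; the usual phrasing is that a multiplicity-free module carrying a non-degenerate contravariant form is semisimple (often proved by splitting off simple submodules via orthogonal complements and inducting), whereas you run the equivalent head/socle argument. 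Both versions rest on precisely the two facts you isolate (that ${}^\tau$ fixes simples and that the tensor product of non-degenerate contravariant forms is again non-degenerate contravariant, cf.\ Section II.2.12 in \cite{Jantzen}), and both use multiplicity-freeness only through the observation that a composition factor of $\mathrm{soc}(N)$ cannot recur in $N/\mathrm{soc}(N)$. One small remark: your step $\mathrm{soc}(N^\tau) \cong (N/\mathrm{rad}(N))^\tau$ is the full ``socle of the dual equals dual of the head'' identity, which requires that ${}^\tau$ is a genuine duality (i.e.\ ${}^{\tau\tau}M \cong M$), not merely an exact contravariant functor; however, your argument only needs the inclusion of $(N/\mathrm{rad}(N))^\tau$ into $\mathrm{soc}(N^\tau)$, which follows immediately by dualizing the surjection $N \twoheadrightarrow N/\mathrm{rad}(N)$ and noting that the image is semisimple, so this is a presentational point rather than a gap.
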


\begin{Lemma}
	Let $\lambda,\mu \in X^+$ and write $\lambda = \lambda_0 + \cdots + p^m \lambda_m$ and $\mu = \mu_0 + \cdots + p^m \mu_m$, with $\lambda_i,\mu_i \in X_1$ for all $i = 0 , \ldots , m$.
	Then $L(\lambda) \otimes L(\mu)$ is multiplicity free if any only if $L(\lambda_i) \otimes L(\mu_i)$ is multiplicity free for $i=0,\ldots,m$.
\end{Lemma}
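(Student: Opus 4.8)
The plan is to reduce the multiplicity-freeness statement to the complete-reducibility statement (the Theorem preceding Lemma~\ref{lem:MFimpliesCR}) by exploiting the fact that, once complete reducibility is established, the tensor product decomposes as a direct sum of simples and the Steinberg tensor product theorem turns the whole question into a statement about Frobenius-twisted modules. First I would establish the \emph{only if} direction, which is the easy half: if $L(\lambda_i) \otimes L(\mu_i)$ fails to be multiplicity free for some~$i$, I want to produce a repeated composition factor in $L(\lambda) \otimes L(\mu)$. Using Steinberg's theorem, write
\[ L(\lambda) \otimes L(\mu) \cong \bigotimes_{i=0}^{m} \big( L(\lambda_i) \otimes L(\mu_i) \big)^{[i]} , \]
and note that a repeated factor $L(\nu_i)$ in the $i$-th tensorand, twisted by Frobenius, yields a repeated factor $L\big( \sum_j p^j \nu_j \big)$ in the total product whenever the $\nu_j$ are chosen $p$-restricted; the uniqueness of the $p$-adic expansion guarantees these assembled highest weights are distinct, so multiplicities genuinely add up and a multiplicity $\geq 2$ in one factor forces multiplicity $\geq 2$ in the product.

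For the \emph{if} direction I would assume each $L(\lambda_i) \otimes L(\mu_i)$ is multiplicity free. By Lemma~\ref{lem:MFimpliesCR} each such factor is then completely reducible, hence by the preceding Theorem the full tensor product $L(\lambda) \otimes L(\mu)$ is completely reducible. This is the crucial leverage: complete reducibility means $L(\lambda) \otimes L(\mu)$ is an actual direct sum of its composition factors, so computing composition multiplicities reduces to decomposing a direct sum of tensor products of simples. Since each factor $L(\lambda_i) \otimes L(\mu_i)$ is completely reducible and multiplicity free, I can write $L(\lambda_i) \otimes L(\mu_i) \cong \bigoplus_{\nu \in P_i} L(\nu)$ for a set $P_i$ of \emph{distinct} $p$-restricted weights (the weights are $p$-restricted because they are highest weights occurring in a product of two $p$-restricted simples, whose weights lie in the appropriate bounded region). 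Applying the Frobenius twist and distributing the tensor product over the direct sums gives
\[ L(\lambda) \otimes L(\mu) \cong \bigoplus_{(\nu_0,\ldots,\nu_m) \in P_0 \times \cdots \times P_m} L(\nu_0) \otimes L(\nu_1)^{[1]} \otimes \cdots \otimes L(\nu_m)^{[m]} \cong \bigoplus_{(\nu_0,\ldots,\nu_m)} L\Big( \sum\nolimits_{i} p^i \nu_i \Big) , \]
where the last isomorphism is again Steinberg's theorem applied in reverse. The summands on the right are pairwise non-isomorphic precisely because the $p$-adic expansion $\sum_i p^i \nu_i$ with $\nu_i \in X_1$ is unique, so the whole direct sum is multiplicity free.

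The main obstacle I anticipate is the bookkeeping around $p$-restrictedness of the composition factors $\nu$ of $L(\lambda_i) \otimes L(\mu_i)$: the clean reassembly via Steinberg's theorem requires that every highest weight appearing in the $i$-th slot be $p$-restricted, so that the resulting weights $\sum_i p^i \nu_i$ are genuine $p$-adic expansions with uniquely determined digits. I would address this by invoking the standard fact that all weights of $L(\lambda_i) \otimes L(\mu_i)$ lie in $\Lambda(\lambda_i) + \Lambda(\mu_i)$, and with $\lambda_i, \mu_i \in X_1$ the relevant highest weights stay within the $p$-restricted region, or alternatively by appealing directly to the structural results underlying \cite[Theorem C]{GruberCompleteReducibility}, which already organize the complete-reducibility reduction along exactly these Frobenius-twisted summands. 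Once that point is secured, both directions are formal consequences of the uniqueness of Steinberg decompositions, and no delicate character computation is needed.
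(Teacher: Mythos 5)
Your overall strategy coincides with the paper's: decompose via Steinberg's theorem, use Lemma \ref{lem:MFimpliesCR} to get complete reducibility of each factor, write each $L(\lambda_i)\otimes L(\mu_i)$ as a direct sum of pairwise distinct simples with $p$-restricted highest weights, reassemble with Steinberg in reverse, and conclude by uniqueness of $p$-adic expansions. However, there is a genuine gap at exactly the point you flag as ``the main obstacle.'' Your primary justification for $p$-restrictedness of the summands --- that the weights of $L(\lambda_i)\otimes L(\mu_i)$ lie in $\Lambda(\lambda_i)+\Lambda(\mu_i)$ and hence ``stay within the $p$-restricted region'' --- is false. The weight $\lambda_i+\mu_i$ always supports a composition factor (it occurs with multiplicity one in the tensor product), and for $\lambda_i,\mu_i\in X_1$ it is in general not $p$-restricted: take $\lambda_i=\mu_i=(p-1)\cdot\varpi_1$. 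So no containment of weights in a bounded region can deliver what you need. What is actually true, and what the paper invokes, is Theorem A of \cite{GruberCompleteReducibility}: if a tensor product of simple modules with $p$-restricted highest weights is \emph{completely reducible}, then all of its composition factors have $p$-restricted highest weights. This is a nontrivial theorem whose hypothesis is precisely the complete reducibility you have just established via Lemma \ref{lem:MFimpliesCR}; it is not bookkeeping about weight polytopes. Your fallback (``structural results underlying Theorem C'') gestures at the right source but does not identify this statement, so as written the step is unproven.

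A smaller issue: in the ``only if'' direction you also require the composition factors $L(\nu_j)$ of the tensorands with $j\neq i$ to be $p$-restricted in order to reassemble $L\big(\sum_j p^j\nu_j\big)$ via Steinberg. In that direction no complete reducibility is available, so this cannot be arranged --- and it is also unnecessary. If $[L(\lambda_i)\otimes L(\mu_i):L(\nu)]\geq 2$, then $\big(L(\lambda_i)\otimes L(\mu_i)\big)^{[i]}$ has the repeated composition factor $L(p^i\nu)$, and tensoring any module possessing a repeated composition factor with any nonzero module again yields a module with a repeated composition factor: filter the first module by a composition series, tensor the filtration with the second module, and observe that two of the sections are isomorphic, hence contribute every composition factor of that section at least twice. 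This elementary filtration argument is all the first direction needs, and it is presumably what the paper leaves implicit.
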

\begin{proof}
	First observe that by Steinberg's tensor product theorem, we have
	\[ L(\lambda) \otimes L(\mu) \cong \big( L(\lambda_0) \otimes L(\mu_0) \big) \otimes \big( L(\lambda_1) \otimes L(\mu_1) \big)^{[1]} \otimes \cdots \otimes \big( L(\lambda_m) \otimes L(\mu_m) \big)^{[m]} . \]
	Thus, if $L(\lambda_i) \otimes L(\mu_i)$ is not multiplicity free for some $i \in \{0,\ldots,m\}$ then neither is $L(\lambda) \otimes L(\mu)$.
	Now suppose that $L(\lambda_i) \otimes L(\mu_i)$ is multiplicity free for $i=0,\ldots,m$.
	Then $L(\lambda_i) \otimes L(\mu_i)$ is completely reducible by Lemma \ref{lem:MFimpliesCR}, and furthermore, all composition factors of $L(\lambda_i) \otimes L(\mu_i)$ have $p$-restricted highest weights by Theorem A in \cite{GruberCompleteReducibility}, for $i=0,\ldots,m$.
	Let us write
	\[ L(\lambda_i) \otimes L(\mu_i) = L(\nu_{i,1}) \oplus L(\nu_{i,2}) \oplus \cdots \oplus L(\nu_{i,m_i}) , \]
	where the weights $\nu_{i,1},\nu_{i,2},\ldots,\nu_{i,m_i} \in X_1$ are $p$-restricted and pairwise distinct.
	Then we have
	\begin{align*}
	L(\lambda) \otimes L(\mu) & \cong \big( L(\lambda_0) \otimes L(\mu_0) \big) \otimes \big( L(\lambda_1) \otimes L(\mu_1) \big)^{[1]} \otimes \cdots \otimes \big( L(\lambda_m) \otimes L(\mu_m) \big)^{[m]} \\
	& \cong \bigotimes_{i=0}^m \big( L(\nu_{i,1}) \oplus L(\nu_{i,2}) \oplus \cdots \oplus L(\nu_{i,m_i}) \big)^{[i]} \\
	& \cong \bigoplus_{\mathbf{j} = (j_0,\ldots,j_m)} L(\nu_{0,j_0}) \otimes L(\nu_{1,j_1})^{[1]} \otimes \cdots \otimes L(\nu_{m,j_m})^{[m]} \\
	& \cong \bigoplus_{\mathbf{j} = (j_0,\ldots,j_m)} L(\nu_{0,j_0} + p\nu_{1,j_1} + \cdots + p^m \nu_{m,j_m}) ,
	\end{align*}
	where $\mathbf{j} = (j_0,\ldots,j_m)$ runs over the tuples with $1 \leq j_i \leq m_i$ for $i=0,\ldots,m$.
	Now it is straightforward to see that the highest weights $\nu_\mathbf{j} = \nu_{0,j_0} + p\nu_{1,j_1} + \cdots + p^m \nu_{m,j_m}$ of the composition factors of $L(\lambda) \otimes L(\mu)$ are pairwise distinct, and so $L(\lambda) \otimes L(\mu)$ is multiplicity free, as required.
\end{proof}

\subsection{Good filtration dimension}
\label{subsec:GFD}

Following \cite[Section 3]{FriedlanderParshallGFD}, we define the \emph{good filtration dimension} of a non-zero $G$-module $M$ as
\[ \gfd\, M = \max\big\{ d>0 \mathrel{\big|} \Ext_G^d( \Delta(\lambda) , M ) \neq 0 \text{ for some  } \lambda \in X^+ \big\} . \]
Note that $\gfd\, M = 0$ if and only if $M$ admits a good filtration, by Proposition II.4.16 in \cite{Jantzen}.
The following lemma shows that the good filtration dimension is well-behaved with respect to tensor products and direct sums of $G$-modules.

\begin{Lemma} \label{lem:GFDtensordirectsum}
	For $G$-modules $M$ and $N$, we have
	\[ \gfd(M \otimes N) \leq \gfd(M) + \gfd(N) \qquad \text{and} \qquad \gfd(M \oplus N) = \max\{ \gfd(M) , \gfd(N) \} . \]
\end{Lemma}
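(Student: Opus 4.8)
The equality for direct sums is immediate from the definition: since $\Ext_G^d(\Delta(\lambda), M \oplus N) \cong \Ext_G^d(\Delta(\lambda), M) \oplus \Ext_G^d(\Delta(\lambda), N)$, this $\Ext$-group is non-zero for some $\lambda \in X^+$ precisely when one of the two summands is non-zero, so the largest degree $d$ in which this happens is $\max\{\gfd M, \gfd N\}$. The work lies in the tensor product inequality, and we may assume $M, N \neq 0$.

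The first key step is an auxiliary bound: if $R$ admits a good filtration, then $\gfd(M \otimes R) \leq \gfd M$ for any $M$. To see this I would use the adjunction isomorphism $\Ext_G^i(\Delta(\lambda), M \otimes R) \cong \Ext_G^i(\Delta(\lambda) \otimes R^*, M)$, valid because $R$ is finite-dimensional. Since $R$ has a good filtration, its dual $R^*$ has a Weyl filtration (using $\nabla(\nu)^* \cong \Delta(-w_0 \nu)$), and hence $\Delta(\lambda) \otimes R^*$ has a Weyl filtration by the tensor closure of the class of modules with a Weyl filtration recalled in Subsection \ref{subsec:representationscharacters}. A straightforward induction on the length of a Weyl filtration, using the long exact $\Ext$-sequences together with the defining property of $\gfd M$, shows that $\Ext_G^i(W, M) = 0$ for every $W$ with a Weyl filtration and every $i > \gfd M$; applying this to $W = \Delta(\lambda) \otimes R^*$ gives the claimed bound.

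The second ingredient is the existence of a finite good-filtration coresolution $0 \to N \to R^0 \to \cdots \to R^b \to 0$ of length $b = \gfd N$, with each $R^j$ admitting a good filtration. This is the coresolution characterization of the good filtration dimension from \cite{FriedlanderParshallGFD}, obtained by repeatedly embedding cokernels into finite-dimensional modules with a good filtration and shifting dimension. Tensoring this coresolution with $M$ over $\kk$ (an exact operation) yields an exact coresolution $0 \to M \otimes N \to M \otimes R^0 \to \cdots \to M \otimes R^b \to 0$. Feeding it into the hyper-$\Ext$ spectral sequence $E_1^{p,q} = \Ext_G^q(\Delta(\lambda), M \otimes R^p) \Rightarrow \Ext_G^{p+q}(\Delta(\lambda), M \otimes N)$, the auxiliary bound forces $E_1^{p,q} = 0$ whenever $q > \gfd M$, while $E_1^{p,q} = 0$ for $p \notin \{0,\ldots,b\}$ by construction. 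Hence every term of total degree greater than $\gfd M + \gfd N$ vanishes, and so $\Ext_G^d(\Delta(\lambda), M \otimes N) = 0$ for all $d > \gfd M + \gfd N$ and all $\lambda \in X^+$, which is exactly $\gfd(M \otimes N) \leq \gfd M + \gfd N$. One can equally well replace the spectral sequence by an iterated dimension shift along the coresolution.

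I expect the main obstacle to be the coresolution characterization, namely the existence of finite good-filtration coresolutions of the correct length $\gfd N$, since this is the point where one must verify that the good filtration dimension genuinely measures coresolution length rather than merely bounding $\Ext$-vanishing. By contrast, the auxiliary tensor bound and the Weyl-filtration $\Ext$-vanishing are routine once the duality and tensor-closure facts from Subsection \ref{subsec:representationscharacters} are in hand.
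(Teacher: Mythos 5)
Your proposal is correct, but it takes a different route from the paper in the sense that the paper does not actually prove the tensor product inequality at all: it simply cites part (c) of Proposition 3.4 in \cite{FriedlanderParshallGFD}, and notes (as you do) that the direct sum equality is immediate from the definition. What you have written is essentially a self-contained reconstruction of the argument behind that citation: the coresolution characterization of $\gfd$, the tensor-closure of the class of modules with good (equivalently, dually, Weyl) filtrations, and a dimension shift or hyper-$\Ext$ spectral sequence along the coresolution tensored with $M$. All of your steps check out: the adjunction $\Ext_G^i(\Delta(\lambda), M \otimes R) \cong \Ext_G^i(\Delta(\lambda) \otimes R^*, M)$ is valid since $- \otimes R$ is exact with exact (two-sided) adjoint, hence preserves injectives; the vanishing $\Ext_G^i(W,M)=0$ for $i > \gfd M$ and $W$ with a Weyl filtration follows by induction on the filtration length with base case the defining property of $\gfd M$; and the downward induction along the cokernels of the coresolution (or the collapse of the spectral sequence outside the range $0 \leq p \leq \gfd N$, $0 \leq q \leq \gfd M$) yields exactly $\gfd(M \otimes N) \leq \gfd M + \gfd N$. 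Your self-assessment of where the real content lies is also accurate: the existence of a finite good-filtration coresolution of length $\gfd N$ is the one ingredient that is not formal, since it rests on the fact that every finite-dimensional $G$-module embeds into a finite-dimensional module admitting a good filtration (plus dimension shifting); this is precisely what the cited Friedlander--Parshall results provide. In short, your proof buys self-containedness at the cost of redoing the reference's work, while the paper's one-line proof buys brevity at the cost of opacity about what the inequality actually rests on.
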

\begin{proof}
	The first claim is proven in part (c) of \cite[Proposition 3.4]{FriedlanderParshallGFD}.
	The second claim is immediate from the definition of the good filtration dimension.
\end{proof}

The good filtration dimension of simple $G$-modules with $p$-regular highest weights was determined by A.\ Parker in \cite[Corollary 4.5]{ParkerGFD}.

\begin{Lemma} \label{lem:GFDsimple}
	For $\lambda \in C_0 \cap X$ and $x \in W_\mathrm{aff}^+$, we have $\gfd \, L(x\Cdot\lambda) = \ell(x)$.
\end{Lemma}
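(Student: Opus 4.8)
The plan is to compute the good filtration dimension of $L(x \Cdot \lambda)$ by relating $\Ext$-groups against Weyl modules to the length function $\ell(x)$, using the translation principle to reduce to the principal block $\lambda = 0$ and then an induction on $\ell(x)$ governed by the alcove geometry. First I would observe that for $\lambda \in C_0 \cap X$ the translation functor $T_0^\lambda \colon \Rep_0(G) \to \Rep_\lambda(G)$ is an equivalence of categories with quasi-inverse $T_\lambda^0$, by Proposition II.7.9 in \cite{Jantzen}, and it sends $L(x\Cdot 0)$ to $L(x \Cdot \lambda)$ and $\Delta(y \Cdot 0)$ to $\Delta(y \Cdot \lambda)$ for $x,y \in W_\mathrm{aff}^+$. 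Since equivalences of abelian categories preserve $\Ext$-groups, we have
\[ \Ext_G^d\big( \Delta(y \Cdot \lambda) , L(x \Cdot \lambda) \big) \cong \Ext_G^d\big( \Delta(y \Cdot 0) , L(x \Cdot 0) \big) \]
for all $d$, and the only dominant weights linked to $x \Cdot \lambda$ are of the form $y \Cdot \lambda$ with $y \in W_\mathrm{aff}^+$. This reduces the computation of $\gfd\, L(x \Cdot \lambda)$ to the case $\lambda = 0$, so it suffices to prove $\gfd\, L(x \Cdot 0) = \ell(x)$.

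Next I would set up the induction on $\ell(x)$. The base case $\ell(x) = 0$ gives $x = e$, where $L(0) = \Delta(0) = \nabla(0)$ admits a good filtration and so has good filtration dimension $0$. For the inductive step, I would choose a simple reflection $s \in S_\mathrm{aff}$ with $\ell(xs) < \ell(x)$ and write $x = x' s$ with $\ell(x') = \ell(x) - 1$, arranging that $xs \Cdot C_0 \uparrow x \Cdot C_0$ so that the wall-crossing moves downward in the linkage order. The technical heart is the long exact sequence in $\Ext$ coming from the wall-crossing (translation onto and off a wall): choosing $\mu \in \overline{C}_0 \cap X$ with $\Stab_{W_\mathrm{aff}}(\mu) = \{e,s\}$, the composite functor $\Theta_s = T_\mu^0 T_0^\mu$ fits into short exact sequences relating $L(x \Cdot 0)$ to $L(xs \Cdot 0)$, and Proposition II.7.19 in \cite{Jantzen} yields dimension-shifting isomorphisms
\[ \Ext_G^i\big( \Delta(y \Cdot 0) , L(x \Cdot 0) \big) \cong \Ext_G^{i-1}\big( \Delta(ys \Cdot 0) , L(x \Cdot 0) \big) \]
of the same kind already invoked in the proof of Proposition \ref{prop:charactersimplecoefficientsnegative}. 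This degree shift is exactly what promotes the good filtration dimension by one as $\ell(x)$ increases by one.

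Concretely, I would argue that the top nonvanishing $\Ext$-degree against some $\Delta(y \Cdot 0)$ equals $\ell(x)$ by combining two inequalities. For the upper bound $\gfd\, L(x \Cdot 0) \le \ell(x)$, I would use that $L(x \Cdot 0)$ is a composition factor of $\nabla(x \Cdot 0)$ with the remaining factors having strictly smaller length parameter, together with the long exact $\Ext$-sequence of $0 \to L(x\Cdot 0) \to \nabla(x \Cdot 0) \to Q \to 0$ and the inductive hypothesis applied to the composition factors of $Q$; since $\nabla(x \Cdot 0)$ has good filtration dimension $0$, the dimension of $L(x \Cdot 0)$ can exceed that of $Q$ by at most one. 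For the lower bound $\gfd\, L(x \Cdot 0) \ge \ell(x)$, I would exhibit a nonzero class in $\Ext_G^{\ell(x)}\big( \Delta(e \Cdot 0) , L(x \Cdot 0)\big)$ by iterating the dimension-shift isomorphism $\ell(x)$ times down a reduced word for $x$, starting from the nonzero $\Hom_G\big( \Delta(x\Cdot 0), L(x \Cdot 0)\big)$. The main obstacle I anticipate is bookkeeping the alcove-geometric hypotheses that make the dimension-shift isomorphisms of Proposition II.7.19 applicable at each step, namely verifying that each simple reflection in the chosen reduced word crosses a wall in the correct (upward) direction relative to the current weight so that the relevant $\Hom$-term vanishes and the $\Ext$-degree genuinely increases; this is precisely the point where one must match the Coxeter-length combinatorics of $W_\mathrm{aff}^+$ with the geometry of the dominant alcoves, but it is handled cleanly by Parker's analysis, which is why I would ultimately cite \cite[Corollary 4.5]{ParkerGFD} for the full statement.
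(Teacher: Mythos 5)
The paper contains no proof of this lemma at all: it is stated as a known result of A.~Parker, with the citation \cite[Corollary 4.5]{ParkerGFD} standing in place of an argument. Since you ultimately defer to exactly that citation, your bottom line agrees with the paper, and two of the three ingredients of your sketch are sound: the reduction to the principal block via the translation equivalence $T_0^\lambda$ (equivalences preserve $\Ext$-groups, and the linkage principle confines the relevant Weyl modules to the orbit $W_\mathrm{aff}\Cdot\lambda$), and the upper bound $\gfd\, L(x\Cdot 0) \leq \ell(x)$ obtained from the exact sequence $0 \to L(x\Cdot 0) \to \nabla(x\Cdot 0) \to Q \to 0$, strong linkage, and induction on length.

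Your lower bound, however, would fail as described, and the failure is structural rather than bookkeeping. The dimension-shift isomorphism of Proposition II.7.19(b) in \cite{Jantzen}, in your dualized form
\[ \Ext_G^i\big( \Delta(y\Cdot 0) , L(x\Cdot 0) \big) \cong \Ext_G^{i-1}\big( \Delta(ys\Cdot 0) , L(x\Cdot 0) \big) \qquad \big( y\Cdot 0 < ys\Cdot 0 , ~ i > 0 \big) , \]
carries a hypothesis on the \emph{fixed} simple module, not on the pair of weights being shifted: one needs $T_0^{\mu_s} L(x\Cdot 0) = 0$ for $\mu_s \in \overline{C}_0$ with stabilizer $\{ e , s \}$, equivalently that the $s$-wall of $x \Cdot C_0$ lies in its lower closure, equivalently that $s$ is a right descent of $x$. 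Along a reduced word $x = s_1 \cdots s_n$ only the last letter is guaranteed to be a right descent, so your first shift, $\Ext_G^1\big( \Delta(xs_n\Cdot 0) , L(x\Cdot 0) \big) \cong \Hom_G\big( \Delta(x\Cdot 0) , L(x\Cdot 0) \big) = \kk$, is legitimate, but the second shift already requires $\ell(xs_{n-1}) < \ell(x)$, which fails in general: every $x \in W_\mathrm{aff}^+$ of length $2$ has the form $x = s_0 s$ with $s \in S_\mathrm{fin}$ and $s_0 s \neq s s_0$ (otherwise $s$ would be a left descent lying in $S_\mathrm{fin}$), whence $\ell(x s_0) = 3 > \ell(x)$. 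No choice of reduced word repairs this, since the only right descent of such an $x$ is $s$ and $x \notin \langle s \rangle$; one genuinely must change the simple module at each stage of the induction, via wall-crossing functors, which is what Parker's argument does. So your sketch is not an independent proof, and the correct conclusion --- which you reach --- is to quote \cite[Corollary 4.5]{ParkerGFD}, exactly as the paper does.
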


Using the two preceding lemmas, we obtain a new necessary condition for the complete reducibility of tensor products of simple $G$-modules with $p$-regular highest weights.

\begin{Corollary} \label{cor:nonCRcriterionGFD}
	Let $\lambda,\mu,\nu \in C_0 \cap X$ and $x,y,z \in W_\mathrm{aff}^+$, and suppose that $L(z\Cdot\nu)$ is a composition factor of $L(x\Cdot\lambda) \otimes L(y\Cdot\lambda)$.
	If $L(x\Cdot\lambda) \otimes L(y\Cdot\lambda)$ is completely reducible then $\ell(z) \leq \ell(x) + \ell(y)$.
\end{Corollary}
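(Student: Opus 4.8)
The plan is to combine the behaviour of the good filtration dimension under tensor products and direct sums (Lemma \ref{lem:GFDtensordirectsum}) with the explicit computation of the good filtration dimension of simple modules with $p$-regular highest weight (Lemma \ref{lem:GFDsimple}). First I would record, using Lemma \ref{lem:GFDsimple} together with the hypotheses $\lambda \in C_0 \cap X$ and $x,y \in W_\mathrm{aff}^+$, that $\gfd L(x\Cdot\lambda) = \ell(x)$ and $\gfd L(y\Cdot\lambda) = \ell(y)$. The subadditivity of the good filtration dimension under tensor products from Lemma \ref{lem:GFDtensordirectsum} then gives the upper bound
\[ \gfd\big( L(x\Cdot\lambda) \otimes L(y\Cdot\lambda) \big) \leq \ell(x) + \ell(y) , \]
and I would emphasise that this half of the argument requires no assumption on the tensor product.

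The role of complete reducibility is to produce a matching lower bound. Since $L(x\Cdot\lambda) \otimes L(y\Cdot\lambda)$ is completely reducible by hypothesis, every composition factor occurs as a direct summand; in particular $L(z\Cdot\nu)$ is a direct summand. Applying the additivity formula $\gfd(M \oplus N) = \max\{ \gfd M , \gfd N \}$ from Lemma \ref{lem:GFDtensordirectsum} to the decomposition of the tensor product into simple summands, its good filtration dimension equals the maximum of the good filtration dimensions of those summands, and is therefore at least $\gfd L(z\Cdot\nu)$. As $\nu \in C_0 \cap X$ and $z \in W_\mathrm{aff}^+$, Lemma \ref{lem:GFDsimple} yields $\gfd L(z\Cdot\nu) = \ell(z)$, so chaining the two bounds gives $\ell(z) \leq \ell(x) + \ell(y)$, as claimed.

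The one place where complete reducibility is genuinely needed---and hence the only subtle point---is the passage from ``$L(z\Cdot\nu)$ is a composition factor'' to ``$\gfd$ of the tensor product is at least $\ell(z)$''. For a general module this implication fails, since the good filtration dimension is not controlled by composition factors alone (it is detected by $\Ext$-groups against Weyl modules, which can collapse in non-split extensions); but for a semisimple module it is immediate from the direct-sum formula. I expect no computational obstacle whatsoever: the whole argument is a two-line estimate once Lemmas \ref{lem:GFDtensordirectsum} and \ref{lem:GFDsimple} are in place.
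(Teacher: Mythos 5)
Your proof is correct and follows essentially the same route as the paper: complete reducibility makes $L(z\Cdot\nu)$ a direct summand, the direct-sum and tensor-product bounds from Lemma \ref{lem:GFDtensordirectsum} sandwich the good filtration dimension of the tensor product, and Lemma \ref{lem:GFDsimple} converts everything into lengths. Your added remark on where complete reducibility is genuinely needed is a fair observation, but the argument itself is identical to the paper's.
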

\begin{proof}
	If $L(x\Cdot\lambda) \otimes L(y\Cdot\mu)$ is completely reducible then $L(z\Cdot\nu)$ is a direct summand of $L(x\Cdot\lambda) \otimes L(y\Cdot\mu)$, and using Lemmas \ref{lem:GFDtensordirectsum} and \ref{lem:GFDsimple}, we obtain
	\[ \ell(z) = \gfd \, L(z\Cdot\nu) \leq \gfd\big( L(x\Cdot\lambda) \otimes L(y\Cdot\mu) \big) \leq \gfd \, L(x\Cdot\lambda) + \gfd \, L(y\Cdot\mu) = \ell(x) + \ell(y) , \]
	as claimed.
\end{proof}

\subsection{Singular and regular modules}
\label{subsec:singular}

A tilting module $T = \bigoplus_{\mu \in X^+} T(\mu)^{\oplus m_\mu}$ is called \emph{negligible} if we have $m_\mu = 0$ for all $\mu \in C_0$.
Following Section~3 in \cite{GruberLinkageTranslation}, we call a $G$-module $M$ \emph{singular} if there exists a bounded complex
\[ C = ( \cdots \to T_{-1} \to T_0 \to T_1 \to \cdots ) \]
of negligible tilting modules such that $H^0(C) \cong M$ and $H^i(C) = 0$ for $i \neq 0$, and we say that $M$ is \emph{regular} if $M$ is not singular.
The singular $G$-modules form a thick tensor ideal in $\Rep(G)$, that is, for $G$-modules $M$ and $N$, we have
\begin{enumerate}
	\item if $M$ is singular then $M \otimes N$ is singular;
	\item $M \oplus N$ is singular if and only if both $M$ and $N$ are singular.
\end{enumerate}
Furthermore, for $\lambda \in X^+$, the simple $G$-module $L(\lambda)$ is singular if and only if the weight $\lambda$ is $p$-singular, see Lemma 3.3 in \cite{GruberLinkageTranslation}.
This observation gives rise to the following necessary condition for complete reducibility of tensor products of simple $G$-modules.

\begin{Lemma} \label{lem:nonCRcriterionpregularweight}
	Let $\lambda,\mu,\nu \in X^+$ such that $L(\nu)$ is a composition factor of $L(\lambda) \otimes L(\mu)$.
	If $\lambda$ is $p$-singular and $\nu$ is $p$-regular then $L(\lambda) \otimes L(\mu)$ is not completely reducible.
\end{Lemma}
\begin{proof}
	If $\lambda$ is $p$-singular then $L(\lambda)$ is singular, and as the singular $G$-modules form a tensor ideal, it follows that all indecomposable direct summands of $L(\lambda) \otimes L(\mu)$ are singular.
	In particular, the regular $G$-module $L(\nu)$ cannot be a direct summand of $L(\lambda) \otimes L(\mu)$, and as $L(\nu)$ is a composition factor of $L(\lambda) \otimes L(\mu)$, it follows that $L(\lambda) \otimes L(\mu)$ is not completely reducible.
\end{proof}

For every $G$-module $M$, we can write
\[ M = M_\mathrm{sing} \oplus M_\mathrm{reg} , \]
where $M_\mathrm{sing}$ is the direct sum of all singular indecomposable direct summands of $M$ and $M_\mathrm{reg}$ is the direct sum of all regular indecomposable direct summands of $M$, for a fixed Krull--Schmidt decomposition of $M$.
This direct sum decomposition into a \emph{singular part} and a \emph{regular part} has been used in \cite{GruberLinkageTranslation} to describe how tensor products of $G$-modules interact with the decomposition of $G$ into linkage classes and with translation functors, on the level of regular parts.
In order to state these results, we define the \emph{Verlinde coefficient} $c_{\lambda,\mu}^\nu$, for $\lambda,\mu,\nu \in C_0 \cap X$, as the multiplicity
\[ c_{\lambda,\mu}^\nu = \big[ T(\lambda) \otimes T(\mu) : T(\nu) \big]_\oplus \]
of $T(\nu)$ in a Krull--Schmidt decomposition of $T(\lambda) \otimes T(\mu)$.
We have the following linkage principle and translation principle for tensor products; see Lemma 3.12 and Theorem 3.14 in \cite{GruberLinkageTranslation}.

\begin{Theorem}  \label{thm:translationprincipletensorproducts}
	Let $M$ and $N$ be $G$-modules in the principal block $\Rep_0(G)$ of $G$.
	\begin{enumerate}
	\item The regular part $(M \otimes N)_\mathrm{reg}$ belongs to $\Rep_0(G)$.
	\item For $\lambda,\mu \in C_0 \cap X$, we have
	\[ \big( T_0^\lambda M \otimes T_0^\mu N \big)_\mathrm{reg} \cong \bigoplus_{\nu \in C_0 \cap X} T_0^\nu (M \otimes N)_\mathrm{reg}^{\oplus c_{\lambda,\mu}^\nu} . \]
	\end{enumerate}
\end{Theorem}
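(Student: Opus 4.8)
The plan is to reduce both assertions to the decomposition of a tensor product of tilting modules governed by the Verlinde coefficients, using the tensor ideal of singular modules to discard everything that does not contribute to regular parts. I would repeatedly invoke two formal reductions. First, since the singular modules form a tensor ideal closed under direct summands, any singular tensor factor produces only singular summands, so for all $G$-modules $X,Y$ one has $(X \otimes Y)_\mathrm{reg} \cong (X_\mathrm{reg} \otimes Y_\mathrm{reg})_\mathrm{reg}$; in particular I may replace $M$ and $N$ by $M_\mathrm{reg}$ and $N_\mathrm{reg}$ throughout. Second, for $\lambda,\mu \in C_0 \cap X$ the modules $T(\lambda) = L(\lambda)$ and $T(\mu) = L(\mu)$ are tilting, and a negligible tilting module is singular (it is the zeroth cohomology of the one-term complex formed by itself); hence the definition of the Verlinde coefficients gives $T(\lambda) \otimes T(\mu) \cong \bigoplus_{\nu \in C_0 \cap X} T(\nu)^{\oplus c_{\lambda,\mu}^\nu} \oplus T^{\mathrm{neg}}$ with $T^{\mathrm{neg}}$ singular.

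The crux, and the step I expect to be the main obstacle, is a localization principle for regular parts: for $\lambda,\mu \in C_0 \cap X$, $M \in \Rep_\lambda(G)$ and $N \in \Rep_\mu(G)$, every regular indecomposable summand of $M \otimes N$ should lie in a block $\Rep_\nu(G)$ with $c_{\lambda,\mu}^\nu \neq 0$. Part (1) is precisely the case $\lambda = \mu = 0$, where $T(0) = \kk$ gives $c_{0,0}^\nu = \delta_{\nu,0}$, so that the regular part cannot leave the principal block. The difficulty is that singularity is a structural (homological) condition, invisible on composition factors, so this confinement cannot be read off characters: a block component $\pr_\nu(M \otimes N)$ with $c_{\lambda,\mu}^\nu = 0$ may well have regular simple composition factors, and one must show it nonetheless has no regular direct summand. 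I would first prove the principle for tilting $M,N$, by decomposing $T(x \Cdot \lambda) \otimes T(y \Cdot \mu)$ into indecomposable tilting modules and checking that every summand $T(\sigma)$ with $\sigma$ regular but not in a fusion-allowed block is absorbed into the negligible (hence singular) part; translation functors, which satisfy $T_0^\nu T(w \Cdot 0) \cong T(w \Cdot \nu)$ and relabel regular blocks without disturbing negligibility, let me move freely between regular blocks while doing so. Extending from tilting modules to arbitrary $M,N$ is the genuinely technical part and is where the machinery of \cite{GruberLinkageTranslation} is needed.

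Granting the localization principle, I would finish as follows, taking $M,N$ regular by the first reduction. The easy half of the principle identifies the allowed block component with a translation functor: $\pr_\nu(T(\nu) \otimes P) = T_0^\nu P$ for $P \in \Rep_0(G)$, and since $T_0^\nu$ is an equivalence sending tilting modules to tilting modules (via $T_0^\nu T(w \Cdot 0) \cong T(w \Cdot \nu)$) and preserving negligibility, it maps regular modules to regular modules, so $(T(\nu) \otimes P)_\mathrm{reg} \cong T_0^\nu(P_\mathrm{reg})$. Applying this with $\nu = \lambda$ and $\nu = \mu$ gives $(T(\lambda) \otimes M)_\mathrm{reg} \cong T_0^\lambda M$ and $(T(\mu) \otimes N)_\mathrm{reg} \cong T_0^\mu N$, whence by the first reduction and commutativity of the tensor product
\[ \big( T_0^\lambda M \otimes T_0^\mu N \big)_\mathrm{reg} \cong \big( (T(\lambda) \otimes T(\mu)) \otimes (M \otimes N) \big)_\mathrm{reg} . \]
Substituting the Verlinde decomposition and discarding the singular term $T^{\mathrm{neg}} \otimes (M \otimes N)$ yields
\[ \big( T_0^\lambda M \otimes T_0^\mu N \big)_\mathrm{reg} \cong \bigoplus_{\nu \in C_0 \cap X} \big( T(\nu) \otimes (M \otimes N) \big)_\mathrm{reg}^{\oplus c_{\lambda,\mu}^\nu} . \]
Finally, part (1) gives $(M \otimes N)_\mathrm{reg} \in \Rep_0(G)$, so the first reduction and the localization principle with $P = (M \otimes N)_\mathrm{reg}$ give $\big( T(\nu) \otimes (M \otimes N) \big)_\mathrm{reg} \cong T_0^\nu\big( (M \otimes N)_\mathrm{reg} \big)$, and the formula of part (2) follows. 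The only points to verify carefully are that the reduction $(X \otimes Y)_\mathrm{reg} \cong (X_\mathrm{reg} \otimes Y_\mathrm{reg})_\mathrm{reg}$ and the block-component identifications are applied consistently, but these are formal once the localization principle is in hand.
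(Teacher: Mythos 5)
Your derivation of part (2) is formally sound, but it rests entirely on what you call the localization principle, and that principle is never proved: you defer it to ``the machinery of \cite{GruberLinkageTranslation}''. Note that part (1) of the theorem \emph{is} your localization principle at $\lambda = \mu = 0$, so your proposal contains no proof of part (1) at all; and the only place where part (2) goes beyond formal bookkeeping --- showing that for $P \in \Rep_0(G)$ the block components $\pr_\sigma\big( T(\nu) \otimes P \big)$ with $\sigma \neq \nu$ are singular, i.e.\ contribute no regular summands --- is again the unproved principle. This step genuinely cannot be carried out with the tools you use elsewhere: $\pr_\sigma\big( T(\nu) \otimes - \big)$ is not a translation functor for $\sigma \neq \nu$, singularity is a homological condition invisible on characters (as you yourself observe), and an arbitrary module in $\Rep_0(G)$ has no evident resolution by tilting modules compatible with the singular/regular decomposition. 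So the ``extension from tilting modules to arbitrary $M,N$'' that you set aside is not a technical afterthought; it is the entire content of the theorem.

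For comparison: the paper does not prove this statement either --- it quotes it verbatim from Lemma 3.12 and Theorem 3.14 of \cite{GruberLinkageTranslation}, which are precisely the linkage principle (your localization principle, hence part (1)) and the translation principle (part (2)). What you have done, in effect, is reconstruct how the translation principle follows from the linkage principle together with standard facts: the reduction $(X \otimes Y)_\mathrm{reg} \cong (X_\mathrm{reg} \otimes Y_\mathrm{reg})_\mathrm{reg}$ via the tensor ideal property, the Verlinde decomposition of $T(\lambda) \otimes T(\mu)$ with singular negligible remainder, the identification $\big( T(\nu) \otimes P \big)_\mathrm{reg} \cong T_0^\nu(P_\mathrm{reg})$, and the preservation of regularity under the equivalences $T_0^\nu$ are all correct. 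That is a useful reduction, but as a proof of the theorem it has a genuine gap, and the gap coincides exactly with the result being cited.
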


The coefficients $c_{\lambda,\mu}^\nu$ are the structure coefficients of the so called \emph{Verlinde category} $\mathrm{Ver}(G)$ of $G$, i.e.\ the quotient of the subcategory $\Tilt(G)$ of tilting $G$-modules by the thick tensor ideal of negligible tilting modules
\cite{GeorgievMathieuFusion,AndersenParadowskiFusionCategories,EtingofOstrikSemisimplification}.
The category $\mathrm{Ver}(G)$ is semisimple, with simple objects $T(\delta)$ for $\delta \in C_0 \cap X$.
We next recall two elementary results that will be useful for computing Verlinde coefficients in concrete examples.
Let us write $W_\mathrm{ext} = X \rtimes W_\mathrm{fin}$ for the extended affine Weyl group and $\Omega = \Stab_{W_\mathrm{ext}}(C_0)$, so that
\[ \Omega \cong W_\mathrm{ext} / W_\mathrm{aff} \cong X / \Z\Phi . \]
The Verlinde coefficients are invariant under the action of $\Omega$ on $C_0$ by Lemma 1.1 in \cite{GruberLinkageTranslation}.

\begin{Lemma} \label{lem:VerlindecoefficientsFundamentalgroup}
	For $\lambda,\mu,\nu \in C_0 \cap X$ and $\omega \in \Omega$, we have
	\[ c_{\omega \Cdot \lambda , \mu}^{\omega\Cdot\nu} = c_{\lambda , \omega \Cdot \mu}^{\omega\Cdot\nu} = c_{\lambda,\mu}^\nu . \]
\end{Lemma}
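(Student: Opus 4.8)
The plan is to carry out the entire argument inside the semisimple Verlinde category $\mathrm{Ver}(G)$, where by definition $c_{\lambda,\mu}^\nu$ is the multiplicity of the simple object $\overline{T(\nu)}$ in the decomposition of $\overline{T(\lambda)} \otimes \overline{T(\mu)}$, for $\lambda,\mu,\nu \in C_0 \cap X$. The one structural input I would invoke is that the combinatorial action of $\Omega$ on $C_0 \cap X$ is implemented by tensoring with invertible objects: for each $\omega \in \Omega$ the object $J_\omega := \overline{T(\omega \Cdot 0)}$ satisfies $J_\omega \otimes \overline{T(\delta)} \cong \overline{T(\omega \Cdot \delta)}$ for all $\delta \in C_0 \cap X$. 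This is precisely (the content of) Lemma 1.1 in \cite{GruberLinkageTranslation}; in particular $J_\omega$ is invertible, since the action property with $\delta = \omega^{-1}\Cdot 0$ gives $J_\omega \otimes J_{\omega^{-1}} \cong \overline{T(0)}$, the tensor unit.

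For the first equality I would apply $J_\omega \otimes (-)$ to the fusion decomposition $\overline{T(\lambda)} \otimes \overline{T(\mu)} \cong \bigoplus_{\nu \in C_0 \cap X} \overline{T(\nu)}^{\oplus c_{\lambda,\mu}^\nu}$. Using associativity of the tensor product and the action property, the left-hand side becomes $\overline{T(\omega\Cdot\lambda)} \otimes \overline{T(\mu)}$, while the right-hand side becomes $\bigoplus_{\nu} \overline{T(\omega\Cdot\nu)}^{\oplus c_{\lambda,\mu}^\nu}$. Since $\delta \mapsto \omega\Cdot\delta$ is a bijection of $C_0 \cap X$, comparing the multiplicity of $\overline{T(\omega\Cdot\nu)}$ on the two sides yields $c_{\omega\Cdot\lambda,\mu}^{\omega\Cdot\nu} = c_{\lambda,\mu}^\nu$.

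The second equality I would deduce from the first together with the commutativity of the tensor product on $\Rep(G)$: since $T(\lambda) \otimes T(\mu) \cong T(\mu) \otimes T(\lambda)$, Krull--Schmidt uniqueness gives $c_{\lambda,\mu}^\nu = c_{\mu,\lambda}^\nu$, and hence $c_{\lambda,\omega\Cdot\mu}^{\omega\Cdot\nu} = c_{\omega\Cdot\mu,\lambda}^{\omega\Cdot\nu} = c_{\mu,\lambda}^\nu = c_{\lambda,\mu}^\nu$, where the middle step is the first equality applied to the pair $(\mu,\lambda)$.

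The only non-formal ingredient — and thus the main obstacle, if one does not simply cite it — is the action property $J_\omega \otimes \overline{T(\delta)} \cong \overline{T(\omega\Cdot\delta)}$. Proving it from scratch amounts to showing that $T(\omega\Cdot0) \otimes T(\delta)$ decomposes as $T(\omega\Cdot\delta)$ together with a negligible tilting module, so that the two objects agree in $\mathrm{Ver}(G)$; this reflects the fact that $\omega\Cdot0$ is a vertex of $\overline{C}_0$ of minuscule type and that tensoring with the corresponding tilting module permutes the $p$-regular indecomposable tilting modules up to negligibles. Once this is in place, everything else is the formal bookkeeping with associativity and symmetry described above.
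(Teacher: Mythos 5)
Your proposal is correct and rests on the same foundation as the paper: the paper offers no proof of this lemma at all, justifying it instead by the sentence immediately preceding it, which attributes the $\Omega$-invariance of the Verlinde coefficients to Lemma 1.1 of \cite{GruberLinkageTranslation} --- the very input you package as the simple-current property $J_\omega \otimes \overline{T(\delta)} \cong \overline{T(\omega \Cdot \delta)}$. Granting that input, your bookkeeping is sound: the identification of $c_{\lambda,\mu}^\nu$ with multiplicities of simple objects in the semisimple category $\mathrm{Ver}(G)$ is exactly how the paper itself uses these coefficients (cf.\ its proof of Lemma \ref{lem:Verlindecoefficientsflipping}), tensoring the fusion decomposition with $J_\omega$ and using that $\delta \mapsto \omega \Cdot \delta$ permutes $C_0 \cap X$ gives the first equality, and the second follows from the first applied to the pair $(\mu,\lambda)$ together with the symmetry $c_{\lambda,\mu}^\nu = c_{\mu,\lambda}^\nu$. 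One small correction to your closing aside: $\omega \Cdot 0$ is an interior point of $C_0$, not a vertex of $\overline{C}_0$ (for $G = \mathrm{Sp}_4(\kk)$ it equals $(p-4)\varpi_2$, cf.\ Remark \ref{rem:B2stabC0}); the connection to minuscule weights is rather that $\omega = t_\varpi w$ with $\varpi$ minuscule, but since you defer the proof of the action property to the citation, exactly as the paper does, this imprecision does not affect your argument.
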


Recall that we write $w_0$ for the longest element in $W_\mathrm{fin}$.
The Verlinde coefficients also have the following exchange property:

\begin{Lemma} \label{lem:Verlindecoefficientsflipping}
	Let $\lambda,\mu,\nu \in C_0 \cap X$.
	Then we have $c_{\lambda,\mu}^\nu = c_{\nu,-w_0(\mu)}^\lambda$
\end{Lemma}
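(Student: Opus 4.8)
The plan is to reinterpret the Verlinde coefficients as Hom-space dimensions in the semisimple category $\mathrm{Ver}(G)$ and to exploit its rigidity. Since $\lambda,\mu,\nu \in C_0 \cap X$, the indecomposable tilting modules $T(\lambda)$, $T(\mu)$, $T(\nu)$ are the simple objects of $\mathrm{Ver}(G)$, and the definition of $c_{\lambda,\mu}^\nu$ as the summand multiplicity of $T(\nu)$ in $T(\lambda)\otimes T(\mu)$ coincides with the fusion multiplicity computed in $\mathrm{Ver}(G)$, because every negligible summand $T(\delta)$ with $\delta \notin C_0$ becomes zero in the quotient. Using semisimplicity of $\mathrm{Ver}(G)$ together with Schur's lemma for its pairwise non-isomorphic simple objects, this gives
\[ c_{\lambda,\mu}^\nu = \dim \Hom_{\mathrm{Ver}(G)}\big( T(\lambda) \otimes T(\mu) , T(\nu) \big) . \]

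Next I would record that $\mathrm{Ver}(G)$ is rigid (indeed symmetric monoidal with duals): this follows from the fact that $\Tilt(G)$ is closed under taking duals and that the tensor ideal of negligible tilting modules is stable under duality, so duals descend to the quotient. From the duality isomorphism $\nabla(\mu)^* \cong \Delta(-w_0\mu)$ stated above, the dual of $T(\mu)$ is again a tilting module, of highest weight $-w_0\mu$, whence $T(\mu)^* \cong T(-w_0\mu)$; moreover $-w_0$ preserves $C_0 \cap X$, so $T(-w_0\mu)$ is once more a simple object of $\mathrm{Ver}(G)$. The rigidity adjunction then provides a natural isomorphism
\[ \Hom_{\mathrm{Ver}(G)}\big( T(\lambda) \otimes T(\mu) , T(\nu) \big) \cong \Hom_{\mathrm{Ver}(G)}\big( T(\lambda) , T(\nu) \otimes T(-w_0\mu) \big) . \]

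Applying the same semisimplicity argument to the right-hand side, I would identify the dimension of this Hom space with the summand multiplicity of the simple object $T(\lambda)$ in $T(\nu) \otimes T(-w_0\mu)$, which is by definition $c_{\nu,-w_0(\mu)}^\lambda$. Combining the three displayed relations then yields the exchange property $c_{\lambda,\mu}^\nu = c_{\nu,-w_0(\mu)}^\lambda$.

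The main obstacle is not combinatorial but categorical bookkeeping: one must verify carefully that the duality on $\Tilt(G)$ passes to $\mathrm{Ver}(G)$ and realizes $T(\mu)^*$ as $T(-w_0\mu)$, and that passing to the quotient genuinely converts the summand-multiplicity definition of $c$ into the Hom-dimension computation, i.e.\ that only the summands $T(\delta)$ with $\delta \in C_0 \cap X$ survive and that $\dim \Hom_{\mathrm{Ver}(G)}(T(\delta),T(\nu))$ vanishes unless $\delta = \nu$. Once these two points are secured, the rigidity adjunction delivers the exchange property immediately.
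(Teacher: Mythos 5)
Your proposal is correct and follows essentially the same route as the paper's own proof: both identify $c_{\lambda,\mu}^\nu$ with $\dim \Hom_{\mathrm{Ver}(G)}\big( T(\lambda) \otimes T(\mu) , T(\nu) \big)$ via semisimplicity of the Verlinde category, use $T(\mu)^* \cong T(-w_0(\mu))$, and apply the duality adjunction to move $T(\mu)$ to the other side. The additional bookkeeping you flag (stability of the negligible ideal under duality, vanishing of cross-Homs between non-isomorphic simples) is exactly what the paper leaves implicit, so your write-up is just a more detailed version of the same argument.
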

\begin{proof}
	The dual of $T(\mu)$ is $T(\mu)^* \cong T(-w_0(\mu))$, and as the Verlinde category is semisimple with simple objects $T(\delta)$ for $\delta \in C_0 \cap X$, we have
	\[ c_{\lambda,\mu}^\nu = \dim \Hom_{\mathrm{Ver}(G)}\big( T(\lambda) \otimes T(\mu) , T(\nu) \big) = \dim \Hom_{\mathrm{Ver}(G)}\big( T(\lambda) , T( - w_0(\mu) ) \otimes T(\nu) \big) = c_{\nu,-w_0(\mu)}^\lambda , \]
	as claimed.
\end{proof}

The following lemma relates the Verlinde coefficients to multiplicity freeness of tensor products of simple $G$-modules.

\begin{Lemma} \label{lem:MFboundVerlindecoeff}
	For $\lambda,\mu,\nu \in C_0 \cap X$ and $x \in W_\mathrm{aff}^+$, the multiplicity of $L(x\Cdot\nu)$ in a Krull--Schmidt decomposition of $L(x\Cdot\lambda) \otimes L(\mu)$ is given by
	\[ \big[ L(x\Cdot\lambda) \otimes L(\mu) : L(x\Cdot\nu) \big]_\oplus = c_{\lambda,\mu}^\nu . \]
	In particular, if $L(x\Cdot\lambda) \otimes L(\mu)$ is multiplicity free then $c_{\lambda,\mu}^\nu \leq 1$ for all $\nu \in C_0 \cap X$.
\end{Lemma}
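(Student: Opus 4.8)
The plan is to realize both tensor factors as translates of modules in the principal block $\Rep_0(G)$ and then to invoke the translation principle for tensor products, Theorem \ref{thm:translationprincipletensorproducts}(2). Since $\lambda,\mu,\nu \in C_0 \cap X$, the fundamental alcove contains integral weights, which forces $p \geq h$ and in particular $0 \in C_0 \cap X$ and $x\Cdot0 \in X^+$. As the translation functor $T_0^\lambda \colon \Rep_0(G) \to \Rep_\lambda(G)$ is an equivalence for $\lambda \in C_0$, with $T_0^\lambda L(w\Cdot0) \cong L(w\Cdot\lambda)$ for $w \in W_\mathrm{aff}^+$, I would write $L(x\Cdot\lambda) \cong T_0^\lambda L(x\Cdot0)$ and, taking $w = e$, $L(\mu) \cong T_0^\mu L(0) = T_0^\mu \kk$. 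Thus $L(x\Cdot\lambda) \otimes L(\mu) \cong T_0^\lambda M \otimes T_0^\mu N$ with $M = L(x\Cdot0)$ and $N = \kk$, both objects of $\Rep_0(G)$.

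Next I would apply Theorem \ref{thm:translationprincipletensorproducts}(2), which gives
\[ \big( L(x\Cdot\lambda) \otimes L(\mu) \big)_\mathrm{reg} \cong \bigoplus_{\nu^\prime \in C_0 \cap X} T_0^{\nu^\prime} (M \otimes N)_\mathrm{reg}^{\oplus c_{\lambda,\mu}^{\nu^\prime}} . \]
Here $M \otimes N = L(x\Cdot0) \otimes \kk \cong L(x\Cdot0)$, and since $x\Cdot0$ lies in the alcove $x \Cdot C_0$ it is $p$-regular, so $L(x\Cdot0)$ is a regular $G$-module by Lemma 3.3 in \cite{GruberLinkageTranslation}; hence $(M \otimes N)_\mathrm{reg} = L(x\Cdot0)$. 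Applying the translation equivalence once more gives $T_0^{\nu^\prime} L(x\Cdot0) \cong L(x\Cdot\nu^\prime)$, so the regular part simplifies to $\bigoplus_{\nu^\prime \in C_0 \cap X} L(x\Cdot\nu^\prime)^{\oplus c_{\lambda,\mu}^{\nu^\prime}}$.

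To extract the asserted multiplicity, I would observe that $L(x\Cdot\nu)$ is regular (as $x\Cdot\nu$ is $p$-regular) and that a regular indecomposable summand of a $G$-module occurs only in its regular part; therefore the multiplicity of $L(x\Cdot\nu)$ as a direct summand of $L(x\Cdot\lambda)\otimes L(\mu)$ agrees with its multiplicity in the regular part. Since $x$ acts injectively under the dot action, the weights $x\Cdot\nu^\prime$ are pairwise distinct for distinct $\nu^\prime \in C_0\cap X$, so the displayed decomposition of the regular part is a genuine Krull--Schmidt decomposition into pairwise non-isomorphic simple summands, and the multiplicity of $L(x\Cdot\nu)$ is exactly $c_{\lambda,\mu}^\nu$, which proves the multiplicity formula. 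Finally, the ``in particular'' clause is immediate: the multiplicity of a simple module as an indecomposable direct summand is bounded above by its composition multiplicity, so if $L(x\Cdot\lambda) \otimes L(\mu)$ is multiplicity free then $c_{\lambda,\mu}^\nu = [L(x\Cdot\lambda)\otimes L(\mu) : L(x\Cdot\nu)]_\oplus \leq [L(x\Cdot\lambda)\otimes L(\mu) : L(x\Cdot\nu)] \leq 1$ for every $\nu \in C_0 \cap X$.

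The argument is essentially a bookkeeping exercise once the translation principle is in hand, and the only genuinely delicate point will be the regular/singular splitting — specifically, the claim that a regular summand cannot be concealed in the singular part $M_\mathrm{sing}$. This I would justify using the fact recalled in Subsection \ref{subsec:singular} that the singular $G$-modules form a thick tensor ideal, so that $M_\mathrm{sing}$ is by construction a direct sum of singular indecomposables only and hence contains no copy of the regular module $L(x\Cdot\nu)$.
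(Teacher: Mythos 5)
Your proposal is correct and follows essentially the same route as the paper's proof: both apply Theorem \ref{thm:translationprincipletensorproducts}(2) with $M = L(x\Cdot 0)$ and $N = L(0)$, use $\big(L(x\Cdot 0)\otimes L(0)\big)_\mathrm{reg} \cong L(x\Cdot 0)$ and $T_0^\nu L(x\Cdot 0)\cong L(x\Cdot\nu)$, and conclude by noting that the regular module $L(x\Cdot\nu)$ can only occur as a summand of the regular part. The additional details you supply (injectivity of the dot action, the bound of summand multiplicity by composition multiplicity) are points the paper leaves implicit, and they are justified correctly.
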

\begin{proof}
	We clearly have $\big( L(x\Cdot0) \otimes L(0) \big)_\mathrm{reg} \cong L(x\Cdot0)$, and so Theorem \ref{thm:translationprincipletensorproducts} yields
	\[ \big( L(x\Cdot\lambda) \otimes L(\mu) \big)_\mathrm{reg} \cong \bigoplus_{\nu \in C_0 \cap X} T_0^\nu \big( L(x\Cdot0) \otimes L(0) \big)_\mathrm{reg}^{\oplus c_{\lambda,\mu}^\nu} \cong \bigoplus_{\nu \in C_0 \cap X} T_0^\nu L(x\Cdot0)^{\oplus c_{\lambda,\mu}^\nu} \cong \bigoplus_{\nu \in C_0 \cap X} L(x\Cdot\nu)^{\oplus c_{\lambda,\mu}^\nu} . \]
	As $L(x\Cdot\nu)$ is regular, the multiplicity of $L(x\Cdot\nu)$ in a Krull--Schmidt decomposition of $L(x\Cdot\lambda) \otimes L(\mu)$ coincides with the multiplicity of $L(x\Cdot\nu)$ in a Krull--Schmidt decomposition of $\big( L(x\Cdot\lambda) \otimes L(\mu) \big)_\mathrm{reg}$, and the claim follows.
\end{proof}

\subsection{Tensor products and weight space dimensions}
\label{subsec:weightspaces}

In this subsection, we prove some results that relate multiplicities in tensor products to dimensions of weight spaces.
This will be useful later on to establish the multiplicity freeness of certain tensor products of simple $G$-modules.

\begin{Lemma}\label{lem:boundformultiplicityintensorproductifCR}
	Let $M$ be a $G$-module and $\lambda\in X^+$ such that $L(\lambda)\otimes M$ is completely reducible.
	For all weights $\mu \in X^+$, we have
	\[ [ L(\lambda) \otimes M : L(\mu) ] \leq \dim M_{\mu-\lambda} . \]
\end{Lemma}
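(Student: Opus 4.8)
The plan is to bound the multiplicity $[L(\lambda)\otimes M : L(\mu)]$ by the dimension of a single weight space of $M$, using the hypothesis of complete reducibility to convert a statement about composition multiplicities into a statement about the $\mu$-weight space of the tensor product. The first observation I would make is that for any $G$-module $N$, the multiplicity of $L(\mu)$ as a \emph{direct summand} of $N$ is bounded by the dimension of the $\Hom$-space $\Hom_G(L(\mu), N)$, and when $N$ is completely reducible the composition multiplicity $[N:L(\mu)]$ equals precisely the number of summands isomorphic to $L(\mu)$, i.e.\ $\dim\Hom_G(L(\mu),N)$. So under the hypothesis that $N = L(\lambda)\otimes M$ is completely reducible, I would rewrite
\[ [L(\lambda)\otimes M : L(\mu)] = \dim\Hom_G\big(L(\mu), L(\lambda)\otimes M\big). \]

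The key step is then a tensor-hom adjunction. Since $L(\lambda)^* \cong L(-w_0\lambda)$ (recalled in Subsection~\ref{subsec:representationscharacters}), I would use the adjunction
\[ \Hom_G\big(L(\mu), L(\lambda)\otimes M\big) \cong \Hom_G\big(L(\mu)\otimes L(\lambda)^*, M\big) \cong \Hom_G\big(L(-w_0\lambda)\otimes L(\mu), M\big), \]
or, more directly suited to the weight-space bound, I would instead map into $M$ from $L(\mu)$ after tensoring with $L(\lambda)^*$. The cleanest route is to observe that a nonzero $G$-homomorphism $L(\mu)\to L(\lambda)\otimes M$ is determined by where it sends the highest weight vector of $L(\mu)$; its image must be a highest weight vector of weight $\mu$ in $L(\lambda)\otimes M$. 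Hence $\dim\Hom_G(L(\mu), L(\lambda)\otimes M)$ is bounded by the dimension of the space of highest weight vectors of weight $\mu$ in $L(\lambda)\otimes M$, which in turn is at most $\dim(L(\lambda)\otimes M)_\mu$.

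To finish, I would bound this weight space. The $\mu$-weight space of $L(\lambda)\otimes M$ decomposes as
\[ (L(\lambda)\otimes M)_\mu = \bigoplus_{\nu} L(\lambda)_\nu \otimes M_{\mu-\nu}, \]
and I would isolate the contribution from the highest weight $\nu = \lambda$ of $L(\lambda)$. Since $\lambda$ is the highest weight of $L(\lambda)$, the summand $L(\lambda)_\lambda\otimes M_{\mu-\lambda}$ is the component most naturally matched to a highest weight vector of weight $\mu$: any highest weight vector of weight $\mu$ must, after projecting onto the $\lambda$-isotypic piece of the $L(\lambda)$-factor, land in $L(\lambda)_\lambda\otimes M_{\mu-\lambda}$, and since $\dim L(\lambda)_\lambda = 1$ this projection is injective on highest weight vectors. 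This yields the desired bound $\dim M_{\mu-\lambda}$.

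The main obstacle will be justifying rigorously that the projection onto the top weight component is injective on the space of $\mu$-highest-weight vectors, rather than merely bounding by the full weight space dimension $\dim(L(\lambda)\otimes M)_\mu$ (which would be too weak). The careful argument is that if a highest weight vector $v$ of weight $\mu$ had trivial component in $L(\lambda)_\lambda\otimes M_{\mu-\lambda}$, then writing $v$ in terms of a weight basis of $L(\lambda)$ with strictly lower weights $\nu < \lambda$, one applies raising operators $e_\alpha$ for the simple roots: since $v$ is killed by all $e_\alpha$ and each lower-weight basis vector of $L(\lambda)$ is moved by some $e_\alpha$ back towards $\lambda$, a minimality/highest-weight argument forces $v=0$. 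Thus the space of $\mu$-highest weight vectors injects into $L(\lambda)_\lambda\otimes M_{\mu-\lambda}\cong M_{\mu-\lambda}$, completing the proof.
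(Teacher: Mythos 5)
Your reduction of the multiplicity to $\dim\Hom_G\big(L(\mu),L(\lambda)\otimes M\big)$ via complete reducibility, and then to the space of highest weight vectors of weight $\mu$ in $L(\lambda)\otimes M$, is sound, and your route is genuinely different from the paper's: the paper bounds $\dim \Hom_G\big( L(\mu) , L(\lambda) \otimes M \big)$ by $\dim \Hom_G\big( \Delta(\mu) , \nabla(\lambda) \otimes M \big)$ (using $\Delta(\mu)\twoheadrightarrow L(\mu)$ and $L(\lambda)\otimes M \hookrightarrow \nabla(\lambda)\otimes M$) and then quotes a theorem of Brundan--Kleshchev for the weight-space bound, whereas you prove that bound directly. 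However, your final injectivity argument has a genuine characteristic-$p$ gap: you work only with the first-order raising operators $e_\alpha$ of $\mathrm{Lie}(G)$, and the assertion that ``each lower-weight basis vector of $L(\lambda)$ is moved by some $e_\alpha$ back towards $\lambda$'' is false in positive characteristic. For instance, on any Frobenius twist such as $L(p\varpi_1) \cong L(\varpi_1)^{[1]}$ the Lie algebra acts trivially, so \emph{every} vector is annihilated by all $e_\alpha$; vectors killed by the simple raising operators need not lie on the highest weight line, and your minimality argument does not close as written.

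The gap is repairable, because the vectors you actually need to control are images of the highest weight vector of $L(\mu)$ under $G$-homomorphisms, and such vectors are fixed by the full unipotent radical $U^+$, equivalently killed by all divided powers $e_\alpha^{(n)}$, $n \geq 1$, in the hyperalgebra $\mathrm{Dist}(G)$ --- the first powers alone do not suffice in characteristic $p$. Rerun your maximal-weight argument with these operators: if $v \in \big( L(\lambda)\otimes M \big)_\mu$ is $U^+$-fixed and $\nu_0$ is maximal among the $L(\lambda)$-weights occurring in $v$, then comparing the components of $L(\lambda)$-weight $\nu_0 + n\alpha$ in the identity $e_\alpha^{(n)} v = 0$ shows that the $L(\lambda)_{\nu_0}$-components of $v$ (written against linearly independent vectors of $M$) are killed by all $e_\alpha^{(n)}$, i.e.\ lie in $L(\lambda)^{U^+}$. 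Now use that $L(\lambda)^{U^+} = L(\lambda)_\lambda$ for a simple module in any characteristic: a $U^+$-fixed weight vector of weight $\nu_0 < \lambda$ would generate the submodule $\mathrm{Dist}(U^-)\, v$, all of whose weights are $\leq \nu_0$, contradicting simplicity. This forces $\nu_0 = \lambda$, so the projection to $L(\lambda)_\lambda \otimes M_{\mu-\lambda} \cong M_{\mu-\lambda}$ is injective on the space of $U^+$-fixed vectors of weight $\mu$, and with this replacement your proof is complete.
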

\begin{proof}
	Since $L(\lambda) \otimes M$ is completely reducible, we have
	\begin{align*}
		[ L(\lambda) \otimes M : L(\mu) ] & = \dim \Hom_G\big( L(\mu) , L(\lambda) \otimes M \big) \\
		& \leq \dim \Hom_G\big( \Delta(\mu) , \nabla(\lambda) \otimes M \big) \\
		& \leq \dim M_{\mu-\lambda} ,
	\end{align*}
	where the second inequality follows from Theorem 2.5 in \cite{BrundanKleshchevBranchhing}.
\end{proof}

\begin{Corollary} \label{cor:onedimensionalweightspacesMF}
	Let $\lambda \in X^+$ and let $M$ be a $G$-module such that all weight spaces of $M$ are at most one-dimensional.
	If $L(\lambda) \otimes M$ is completely reducible then $L(\lambda) \otimes M$ is multiplicity free.
\end{Corollary}
\begin{proof}
	This is immediate from Lemma \ref{lem:boundformultiplicityintensorproductifCR}.
\end{proof}

Recall that a weight $\varpi \in X^+$ is called \emph{minuscule} if all weights of $\nabla(\varpi)$ belong to the same $W_\mathrm{fin}$-orbit, and that $L(\varpi) = \nabla(\varpi)$ if $\varpi \in X^+$ is minuscule (see Section II.2.15 in \cite{Jantzen}).
The following two results describe tensor products where one factor is simple with minuscule highest weight.

\begin{Lemma} \label{lem:minusculetensorproducttranslation}
	Let $\lambda \in \overline{C}_0 \cap X$ and let $\varpi \in X^+$ be minuscule.
	Then for every $G$-module $M$ in the linkage class $\Rep_\lambda(G)$, we have
	\[ L(\varpi) \otimes M \cong \bigoplus_{\varpi^\prime} T_\lambda^{\lambda+\varpi^\prime} M , \]
	where $\varpi^\prime$ runs over the weights in $W_\mathrm{fin} \varpi$ such that $\lambda+\varpi^\prime \in \overline{C}_0$.
\end{Lemma}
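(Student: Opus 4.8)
The plan is to peel the summands on the right-hand side off as block components of $L(\varpi)\otimes M$ and then to show that no other blocks occur. First I would record the minuscule input: since $\varpi$ is minuscule we have $L(\varpi)=\nabla(\varpi)$, so the set of weights of $L(\varpi)$ is the single orbit $W_\mathrm{fin}\varpi=\Lambda(\varpi)$, each with multiplicity one, and $\ch L(\varpi)=\chi(\varpi)$. Using the block decomposition $L(\varpi)\otimes M=\bigoplus_{\eta\in\overline{C}_0\cap X}\pr_\eta(L(\varpi)\otimes M)$, I would observe that whenever $\eta=\lambda+\varpi'$ with $\varpi'\in W_\mathrm{fin}\varpi$ and $\lambda+\varpi'\in\overline{C}_0$, the dominant representative of $W_\mathrm{fin}(\eta-\lambda)=W_\mathrm{fin}\varpi'$ is $\varpi$, so by the very definition of translation functors $\pr_\eta(L(\varpi)\otimes M)=T_\lambda^{\eta}M$. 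As $\overline{C}_0$ is a fundamental domain, distinct such $\varpi'$ yield distinct blocks $\eta$, so these are genuinely independent direct summands. Thus the lemma reduces to the single claim that $\pr_\eta(L(\varpi)\otimes M)=0$ for every $\eta\in\overline{C}_0\cap X$ with $\eta-\lambda\notin W_\mathrm{fin}\varpi$.

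To establish this vanishing I would pass to characters. Writing $\ch M$ as a $\Z$-linear combination $\sum_x d_x\,\chi(x\Cdot\lambda)$, with $x\in W_\mathrm{aff}$ ranging over elements with $x\Cdot\lambda\in X^+$ (legitimate since $\ch M\in\Z[X]^{W_\mathrm{fin}}_\lambda$), the classical multiplication formula for the minuscule character, $\chi(\varpi)\cdot\chi(\theta)=\sum_{\varpi'\in W_\mathrm{fin}\varpi}\chi(\theta+\varpi')$ for $\theta\in X^+$ (valid because $\nabla(\varpi)$ has a single orbit of weights, all of multiplicity one), gives
\[ \ch\big(L(\varpi)\otimes M\big)=\sum_x d_x\sum_{\varpi'\in W_\mathrm{fin}\varpi}\chi(x\Cdot\lambda+\varpi'). \]
The key manipulation is the identity $x\Cdot\lambda+\varpi'=x\Cdot(\lambda+\bar x^{-1}\varpi')$, where $\bar x\in W_\mathrm{fin}$ denotes the finite part of $x$; reindexing by $\varpi''=\bar x^{-1}\varpi'\in W_\mathrm{fin}\varpi$ rewrites each inner sum as $\sum_{\varpi''\in W_\mathrm{fin}\varpi}\chi\big(x\Cdot(\lambda+\varpi'')\big)$, so that every summand is $\chi$ of a $W_\mathrm{aff}$-translate of a weight $\lambda+\varpi''$ with $\varpi''\in\Lambda(\varpi)$.

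Now I would invoke Corollary \ref{cor:shiftedweightsystemorbit} with $C=C_0$ (its proof via Proposition \ref{prop:shiftedweightsystemsequence} only uses $\lambda\in\overline{C}$, which covers the possibly non-dominant weights of $\overline{C}_0\cap X$) to conclude that the representative in $\overline{C}_0\cap X$ of the block of $\lambda+\varpi''$ is again of the form $\lambda+\varpi'''$ for some $\varpi'''\in W_\mathrm{fin}\varpi$. Hence $\chi(x\Cdot(\lambda+\varpi''))\in\Z[X]^{W_\mathrm{fin}}_{\lambda+\varpi'''}$, and therefore $\ch(L(\varpi)\otimes M)$ is supported only on the blocks indexed by weights $\lambda+\varpi'''\in\overline{C}_0$. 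Since $\Z[X]^{W_\mathrm{fin}}=\bigoplus_{\eta\in\overline{C}_0\cap X}\Z[X]^{W_\mathrm{fin}}_\eta$ and $\ch\pr_\eta(L(\varpi)\otimes M)\in\Z[X]^{W_\mathrm{fin}}_\eta$, comparing block components forces $\ch\pr_\eta(L(\varpi)\otimes M)=0$, hence $\pr_\eta(L(\varpi)\otimes M)=0$, for all $\eta$ not of the form $\lambda+\varpi'$; assembling the surviving summands gives the asserted isomorphism. The step I expect to require the most care is the bookkeeping when $\lambda$ is $p$-singular: several weights $\varpi''\in W_\mathrm{fin}\varpi$ may then feed into the same block $\lambda+\varpi'\in\overline{C}_0$, and it is precisely Corollary \ref{cor:shiftedweightsystemorbit} — rather than the naive (and false) expectation that every $\lambda+\varpi''$ already lie in $\overline{C}_0$ — that keeps all contributions within the blocks indexed by $W_\mathrm{fin}\varpi$; one must also check that the minuscule multiplication formula and the reindexing are applied correctly to the non-dominant representatives in $\overline{C}_0\cap X$.
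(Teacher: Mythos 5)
Your proof is correct, and its skeleton matches the paper's: both start from the linkage-principle decomposition $L(\varpi) \otimes M \cong \bigoplus_{\nu} \pr_\nu\big(L(\varpi)\otimes M\big)$, both identify $\pr_{\lambda+\varpi'}\big(L(\varpi)\otimes M\big) \cong T_\lambda^{\lambda+\varpi'}M$ directly from the definition of translation functors (since $\varpi$ is the dominant representative of $W_\mathrm{fin}\varpi'$), and both use Corollary \ref{cor:shiftedweightsystemorbit} to control which linkage classes can occur. Where you genuinely diverge is in the key vanishing step. The paper invokes Jantzen's Lemma II.7.5 as a black box: if $\pr_\nu\big(L(\varpi)\otimes M\big)\neq 0$, then $x\Cdot\nu-\lambda$ is a weight of $L(\varpi)$ for some $x\in W_\mathrm{aff}$, after which Corollary \ref{cor:shiftedweightsystemorbit} and the fundamental-domain property force $\nu-\lambda\in W_\mathrm{fin}\varpi$. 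You instead prove the needed support statement from scratch at the level of characters, combining Brauer's formula (Proposition \ref{prop:productWeylcharacters}) for the minuscule factor, the reindexing identity $x\Cdot\lambda+\varpi' = x\Cdot(\lambda+\bar x^{-1}\varpi')$, Corollary \ref{cor:shiftedweightsystemorbit}, and the block decomposition $\Z[X]^{W_\mathrm{fin}}=\bigoplus_{\eta\in\overline{C}_0\cap X}\Z[X]^{W_\mathrm{fin}}_\eta$, then conclude that the off-support blocks have vanishing character and hence vanish. In effect you inline a self-contained proof of exactly the case of Jantzen's lemma that is needed; this costs some bookkeeping but makes the argument independent of that citation and yields, as a byproduct, the character of each block. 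One point where your write-up is actually more careful than the paper: both proofs must apply Corollary \ref{cor:shiftedweightsystemorbit} with base weight $\lambda\in\overline{C}_0\cap X$, which need not be dominant, whereas the corollary is stated for $\lambda\in X^+$; you explicitly note that the proof of Proposition \ref{prop:shiftedweightsystemsequence} only uses $\lambda\in\overline{C}$, which legitimizes this application — the paper performs it silently.
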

\begin{proof}
	By the linkage principle, we have
	\[ L(\varpi) \otimes M \cong \bigoplus_{\nu \in \overline{C}_0 \cap X} \pr_\nu\big( L(\varpi) \otimes M \big) . \]
	For a weight $\nu \in \overline{C}_0 \cap X$ such that $\pr_\nu\big( L(\varpi) \otimes M \big) \neq 0$, there exists $x \in W_\mathrm{aff}$ such that $x \Cdot\nu-\lambda$ is a weight of $L(\varpi)$ by Lemma II.7.5 in \cite{Jantzen}, and by Corollary \ref{cor:shiftedweightsystemorbit}, there further exists $y \in W_\mathrm{aff}$ such that $yx\Cdot\nu-\lambda$ is a weight of $L(\varpi)$ and $yx\Cdot\nu \in \overline{C}_0$.
	This forces $yx\Cdot\nu = \nu$ because $\overline{C}_0$ is a fundamental domain for the $p$-dilated dot action of $W_\mathrm{aff}$ on $X_\R$, and we conclude that $\pr_\nu\big( L(\varpi) \otimes M \big) \neq 0$ only if $\nu-\lambda$ is a weight of $L(\varpi)$.
	For a weight $\varpi^\prime$ of $L(\varpi)$ such that $\lambda + \varpi^\prime \in \overline{C}_0$, we have $\pr_\nu\big( L(\varpi) \otimes M \big) \cong T_\lambda^{\lambda+\varpi^\prime} M$ because $\varpi$ is the unique dominant weight in the $W_\mathrm{fin}$-orbit of $\varpi^\prime$, and the claim follows.
\end{proof}

\begin{Corollary} \label{cor:minusculetensorproductMFCR}
	Let $\lambda \in X^+$ be a $p$-regular weight and let $C$ be the unique alcove containing $\lambda$.
	For a minuscule weight $\varpi \in X^+$, we have
	\[ L(\lambda) \otimes L(\varpi) \cong \bigoplus_{\varpi^\prime} L(\lambda+\varpi^\prime) , \]
	where $\varpi^\prime$ runs over the weights in $W_\mathrm{fin} \varpi$ such that $\lambda +\varpi^\prime \in \widehat{C}$.
\end{Corollary}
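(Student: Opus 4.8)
The plan is to deduce Corollary \ref{cor:minusculetensorproductMFCR} from Lemma \ref{lem:minusculetensorproducttranslation} by specializing the general module $M$ to a simple module $L(\lambda)$ in the regular case and then computing the relevant translation functors explicitly. First I would fix $\lambda^\prime \in C_0$ and $w \in W_\mathrm{aff}^+$ with $\lambda = w \Cdot \lambda^\prime$, which is possible since $\lambda$ is $p$-regular (and dominant, so $w \in W_\mathrm{aff}^+$). Then $L(\lambda) = L(w\Cdot\lambda^\prime)$ lies in the linkage class $\Rep_{\lambda^\prime}(G)$, and Lemma \ref{lem:minusculetensorproducttranslation} applies with the base point $\lambda^\prime$ in place of $\lambda$, giving
\[ L(\lambda) \otimes L(\varpi) \cong \bigoplus_{\varpi^\prime} T_{\lambda^\prime}^{\lambda^\prime+\varpi^\prime} L(w\Cdot\lambda^\prime) , \]
where $\varpi^\prime$ runs over the $W_\mathrm{fin}$-orbit of $\varpi$ subject to $\lambda^\prime + \varpi^\prime \in \overline{C}_0$.

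Next I would evaluate each summand using the translation formula \eqref{eq:translationWeylsimple}. Since $\lambda^\prime \in C_0$ and $\lambda^\prime + \varpi^\prime \in \overline{C}_0$, that formula gives
\[ T_{\lambda^\prime}^{\lambda^\prime+\varpi^\prime} L(w\Cdot\lambda^\prime) \cong \begin{cases} L\big(w\Cdot(\lambda^\prime+\varpi^\prime)\big) & \text{if } w\Cdot(\lambda^\prime+\varpi^\prime) \in \widehat{w\Cdot C_0} , \\ 0 & \text{otherwise} . \end{cases} \]
The remaining work is bookkeeping about the orbit: I would rewrite the target weight $w\Cdot(\lambda^\prime+\varpi^\prime)$ as $\lambda + w_0^\prime(\varpi^\prime)$ for the appropriate linear part of $w$, and match it with a weight of the form $\lambda + \varpi^{\prime\prime}$ with $\varpi^{\prime\prime} \in W_\mathrm{fin}\varpi$. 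Because $\varpi$ is minuscule, its weights form a single $W_\mathrm{fin}$-orbit, so $\varpi^{\prime\prime}$ again ranges over $W_\mathrm{fin}\varpi$; the condition $\lambda + \varpi^{\prime\prime} \in \widehat{C}$ (where $C = w\Cdot C_0$ is the alcove containing the regular weight $\lambda$) translates precisely into the upper-closure condition in \eqref{eq:translationWeylsimple}.

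The main obstacle, and the step that needs the most care, is verifying that the indexing translation is a clean bijection: I must confirm that as $\varpi^\prime$ ranges over the orbit weights with $\lambda^\prime + \varpi^\prime \in \overline{C}_0$ and the summand is nonzero, the resulting highest weights $w\Cdot(\lambda^\prime+\varpi^\prime)$ coincide exactly with the weights $\lambda + \varpi^{\prime\prime}$, $\varpi^{\prime\prime} \in W_\mathrm{fin}\varpi$, lying in $\widehat{C}$, each occurring once. The subtlety is that the two descriptions use different fundamental domains ($\overline{C}_0$ versus $\widehat{C}$) and different orbit representatives, so I would argue that applying $w\Cdot$ identifies $\overline{C}_0$ with $\overline{C}$ and, on the level of the open upper-closure condition, $\widehat{C_0}$ with $\widehat{C}$, while the minuscule hypothesis guarantees that every weight of $L(\varpi)$ is a single $W_\mathrm{fin}$-translate, preventing any weight from being double-counted. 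Once this compatibility of fundamental domains under the dot action is established, the stated direct sum decomposition follows immediately.
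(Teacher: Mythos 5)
Your proposal is correct and follows essentially the same route as the paper's own proof: both apply Lemma \ref{lem:minusculetensorproducttranslation} after conjugating $\lambda$ into $C_0$ (the paper writes $x\Cdot C_0 = C$, $\lambda_0 = x^{-1}\Cdot\lambda$, and factors $x = w t_\gamma$ to get $x\Cdot(\lambda_0+\varpi^\prime) = \lambda + w(\varpi^\prime)$), then evaluate each summand via \eqref{eq:translationWeylsimple} and re-index using the fact that the finite linear part $w \in W_\mathrm{fin}$ permutes $W_\mathrm{fin}\varpi$. The bookkeeping step you flag as the main obstacle is handled in the paper by exactly the observation you propose, so there is no gap.
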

\begin{proof}
	Let $x \in W_\mathrm{aff}$ such that $x \Cdot C_0 = C$ and set $\lambda_0 = x^{-1} \Cdot \lambda \in C_0$.
	By Lemma \ref{lem:minusculetensorproducttranslation}, we have
	\[ L(\lambda) \otimes L(\varpi) = L(x\Cdot\lambda_0) \otimes L(\varpi) \cong \bigoplus_{\varpi^\prime} T_{\lambda_0}^{\lambda_0+\varpi^\prime} L(x\Cdot\lambda_0) , \]
	where $\varpi^\prime$ runs over the weights in $W_\mathrm{fin} \varpi$ such that $\lambda_0+\varpi^\prime \in \overline{C}_0$.
	Further let $w \in W_\mathrm{fin}$ and $\gamma \in \Z\Phi$ such that $x = w t_\gamma$, and observe that $x \Cdot (\lambda_0+\varpi^\prime) = x\Cdot\lambda_0 + w(\varpi^\prime) = \lambda + w(\varpi^\prime)$.
	We have
	\[ T_{\lambda_0}^{\lambda_0+\varpi^\prime} L(x\Cdot\lambda_0) \cong \begin{cases} L( \lambda + w(\varpi^\prime) ) & \text{if } \lambda + w(\varpi^\prime) \in \widehat{C} \\ 0 & \text{otherwise} \end{cases} \]
	by \eqref{eq:translationWeylsimple}, and the claim follows because $w$ permutes the weight in $W_\mathrm{fin} \varpi$.
\end{proof}

In Section \ref{sec:B2} below, we will need to explicitly compute the characters of certain tensor products of the form $\Delta(\lambda) \otimes \Delta(\mu)$, with $\lambda,\mu \in X^+$.
We will use the following formula in terms of weight space dimensions; see Lemma II.5.8 in \cite{Jantzen}.

\begin{Proposition} \label{prop:productWeylcharacters}
	For every $G$-module $M$ and all $\lambda \in X^+$, we have
	\[ \ch M \cdot \chi(\lambda) = \sum_{\nu\in X} \dim M_\nu \cdot \chi(\lambda+\nu) . \]
\end{Proposition}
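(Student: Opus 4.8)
The plan is to reduce the identity to the Weyl character formula together with the elementary fact that $\ch M$ is a $W_\mathrm{fin}$-invariant element of $\Z[X]$. Concretely, I would introduce the antisymmetrization operator $J \colon \Z[X] \to \Z[X]$ defined on the monomial basis by $J(e^\mu) = \sum_{w \in W_\mathrm{fin}} (-1)^{\ell(w)} \cdot e^{w(\mu)}$. The Weyl character formula (cf.\ Subsections II.5.10--11 in \cite{Jantzen}) then reads $\chi(\lambda) = J(e^{\lambda+\rho}) / J(e^\rho)$ for $\lambda \in X^+$. Moreover, the quotient $J(e^{\mu+\rho})/J(e^\rho)$ computes $\chi(\mu)$ for \emph{all} $\mu \in X$ in accordance with the extended definition recalled in Subsection \ref{subsec:representationscharacters}: for $w \in W_\mathrm{fin}$ we have $J(e^{w\Cdot\mu+\rho}) = J(e^{w(\mu+\rho)}) = (-1)^{\ell(w)} \cdot J(e^{\mu+\rho})$, matching $\chi(w\Cdot\mu) = (-1)^{\ell(w)} \cdot \chi(\mu)$, and $J(e^{\mu+\rho}) = 0$ whenever $\mu+\rho$ is fixed by some reflection in $W_\mathrm{fin}$, which covers the vanishing condition $(\mu,\alpha^\vee) = -1$ for $\alpha \in \Pi$ (since then $(\mu+\rho,\alpha^\vee) = 0$).

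The key step is that $J$ is linear over the subring $\Z[X]^{W_\mathrm{fin}}$ of $W_\mathrm{fin}$-invariants: for $f \in \Z[X]^{W_\mathrm{fin}}$ and $\mu \in X$, using $w(f) = f$, we have
\[ J(f \cdot e^\mu) = \sum_{w \in W_\mathrm{fin}} (-1)^{\ell(w)} \cdot w(f) \cdot e^{w(\mu)} = f \cdot \sum_{w \in W_\mathrm{fin}} (-1)^{\ell(w)} \cdot e^{w(\mu)} = f \cdot J(e^\mu) . \]
Since $\ch M = \sum_{\nu} \dim M_\nu \cdot e^\nu$ is a non-negative integer combination of the $W_\mathrm{fin}$-invariant characters $\ch L(\nu)$, it lies in $\Z[X]^{W_\mathrm{fin}}$, so I would apply the above with $f = \ch M$ and $\mu = \lambda + \rho$ to obtain
\[ \ch M \cdot J(e^{\lambda+\rho}) = J\big( \ch M \cdot e^{\lambda+\rho} \big) = \sum_{\nu \in X} \dim M_\nu \cdot J(e^{\lambda+\nu+\rho}) . \]

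Finally I would divide both sides by $J(e^\rho)$ and apply the Weyl character formula, in the extended form noted in the first paragraph, term by term: $J(e^{\lambda+\rho})/J(e^\rho) = \chi(\lambda)$ and $J(e^{\lambda+\nu+\rho})/J(e^\rho) = \chi(\lambda+\nu)$. This yields exactly the asserted identity $\ch M \cdot \chi(\lambda) = \sum_{\nu \in X} \dim M_\nu \cdot \chi(\lambda+\nu)$. The only point requiring genuine care — and the closest thing to an obstacle — is confirming that the quotient $J(e^{\lambda+\nu+\rho})/J(e^\rho)$ agrees with the extended $\chi(\lambda+\nu)$ even when $\lambda+\nu$ is non-dominant or $\lambda+\nu+\rho$ is singular; this is precisely the compatibility recorded in the first paragraph, so no real difficulty arises. (Alternatively, the statement is Lemma II.5.8 in \cite{Jantzen} and may simply be cited.)
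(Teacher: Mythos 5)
Your proof is correct. The paper gives no argument of its own for this statement — it simply cites Lemma II.5.8 of \cite{Jantzen}, as you note in your final parenthetical — and your antisymmetrization argument is precisely the standard proof of that cited lemma: $W_\mathrm{fin}$-invariance of $\ch M$, the identity $J(f \cdot e^\mu) = f \cdot J(e^\mu)$ for $f \in \Z[X]^{W_\mathrm{fin}}$, and the compatibility of $J(e^{\mu+\rho})/J(e^\rho)$ with the extended definition of $\chi$. The one point you pass over quickly is the legitimacy of ``dividing by $J(e^\rho)$''; this is harmless, since $\Z[X]$ is an integral domain and each $J(e^{\lambda+\nu+\rho})$ is literally the multiple $\chi(\lambda+\nu) \cdot J(e^\rho)$ in $\Z[X]$, so one may instead multiply the desired identity by $J(e^\rho)$, verify it there, and cancel the nonzero factor $J(e^\rho)$.
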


\subsection{Comparison with characteristic zero}
\label{subsec:char0}

Let us write $G_\C$ for the unique simply connected simple algebraic group over the complex numbers $\C$ with root system $\Phi$.
In this subsection, we explain how the composition multiplicities in tensor products of simple $G_\C$-modules can be used to study multiplicity freeness of tensor products of simple $G$-modules.
For $\lambda \in X^+$, denote by $L_\C(\lambda)$ the simple $G_\C$-module of highest weight $\lambda$, and note that the character $\ch L_\C(\lambda) = \chi(\lambda) = \ch \nabla(\lambda)$ is given by Weyl's character formula.
Therefore, we can write
\[ \sum_{\nu \in X^+} \big[ L_\C(\lambda) \otimes L_\C(\mu) : L_\C(\nu) \big] \cdot \chi(\nu) = \chi(\lambda) \cdot \chi(\mu) = \sum_{\nu \in X^+} \big[ \nabla(\lambda) \otimes \nabla(\mu) : \nabla(\nu) \big]_\nabla \cdot \chi(\nu) , \]
and it follows that
\[ \big[ L_\C(\lambda) \otimes L_\C(\mu) : L_\C(\nu) \big] = \big[ \nabla(\lambda) \otimes \nabla(\mu) : \nabla(\nu) \big]_\nabla = \dim \Hom_G\big( \Delta(\nu) , \nabla(\lambda) \otimes \nabla(\mu) \big) \]
for all $\lambda,\mu,\nu \in X^+$.
This observation is crucial for the proofs of the two following results.

\begin{Lemma}\label{lem:MFchar0boundVerlindecoeff}
	For all $\lambda,\mu,\nu\in C_0 \cap X$, we have $c_{\lambda,\mu}^\nu \leq [L_{\C}(\lambda)\otimes L_{\C}(\mu):L_{\C}(\nu)]$.
	In particular, if the tensor product $L_{\C}(\lambda)\otimes L_{\C}(\mu)$ is multiplicity free then $c_{\lambda,\mu}^\nu\leq 1$ for all $\nu\in C_0 \cap X$.
\end{Lemma}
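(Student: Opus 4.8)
The plan is to interpret the Verlinde coefficient $c_{\lambda,\mu}^\nu$ as a good-filtration multiplicity and to compare it with the right-hand side, which by the computation preceding the lemma equals $[\nabla(\lambda) \otimes \nabla(\mu) : \nabla(\nu)]_\nabla$. The starting point is the observation that for weights in $C_0 \cap X$ the relevant module classes coincide: since every $\xi \in C_0 \cap X$ is dominant and lies in $\overline{C}_0$, we have $L(\xi) \cong \Delta(\xi) \cong \nabla(\xi) \cong T(\xi)$. In particular $T(\lambda) \otimes T(\mu) \cong \nabla(\lambda) \otimes \nabla(\mu)$ and $T(\nu) \cong \nabla(\nu)$.

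First I would write the Krull--Schmidt decomposition of the tilting module $T(\lambda) \otimes T(\mu)$ into indecomposable tilting summands, $T(\lambda) \otimes T(\mu) \cong \bigoplus_{\xi \in X^+} T(\xi)^{\oplus m_\xi}$, so that by definition $c_{\lambda,\mu}^\nu = m_\nu$. Next I would compute the multiplicity of $\nabla(\nu)$ in a good filtration of both sides. Since good-filtration multiplicities are additive over direct sums, $[T(\lambda) \otimes T(\mu) : \nabla(\nu)]_\nabla = \sum_{\xi} m_\xi \cdot [T(\xi) : \nabla(\nu)]_\nabla$, where every summand on the right is a nonnegative integer.

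The key point is then to isolate the contribution of the summand $T(\nu)$: because $T(\nu) \cong \nabla(\nu)$, its good filtration consists of the single layer $\nabla(\nu)$, whence $[T(\nu):\nabla(\nu)]_\nabla = 1$. Discarding all the other nonnegative terms in the sum therefore yields the bound $[\nabla(\lambda) \otimes \nabla(\mu) : \nabla(\nu)]_\nabla \geq m_\nu = c_{\lambda,\mu}^\nu$. Combining this with the identity $[\nabla(\lambda) \otimes \nabla(\mu) : \nabla(\nu)]_\nabla = [L_\C(\lambda) \otimes L_\C(\mu) : L_\C(\nu)]$ established just before the statement gives the first inequality, and the second assertion follows immediately by specializing to the case where the right-hand side is at most $1$.

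Since this is essentially an accounting argument, I do not expect a serious obstacle; the only point requiring care is to check that the summand $T(\nu)$ contributes exactly $1$ to the good-filtration count of $\nabla(\nu)$, while all remaining indecomposable tilting summands $T(\xi)$ (which can only contain $\nabla(\nu)$ when $\xi > \nu$) contribute nonnegatively. This is precisely what makes the comparison an inequality rather than an equality.
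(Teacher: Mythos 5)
Your proof is correct and follows essentially the same route as the paper: the paper's proof is exactly the chain $c_{\lambda,\mu}^\nu = [T(\lambda)\otimes T(\mu):T(\nu)]_\oplus \leq [T(\lambda)\otimes T(\mu):\nabla(\nu)]_\nabla = [\nabla(\lambda)\otimes\nabla(\mu):\nabla(\nu)]_\nabla = [L_\C(\lambda)\otimes L_\C(\mu):L_\C(\nu)]$, and your Krull--Schmidt accounting argument is just the detailed justification of the middle inequality. No gap; your only extra care (that $T(\nu)\cong\nabla(\nu)$ contributes exactly one $\nabla(\nu)$ and all other summands contribute nonnegatively) is precisely what makes the paper's inequality valid.
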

\begin{proof}
	For all $\nu\in C_0 \cap X$, we have 
	\begin{multline*}
		\qquad c_{\lambda,\mu}^\nu = [ T(\lambda) \otimes T(\mu) : T(\nu) ]_\oplus \leq [ T(\lambda) \otimes T(\mu) : \nabla(\nu) ]_\nabla \\
		=[ \nabla(\lambda) \otimes \nabla(\mu) : \nabla(\nu) ]_\nabla = [L_{\C}(\lambda)\otimes L_{\C}(\mu):L_{\C}(\nu)]. \qquad
	\end{multline*}
	Thus if $[L_{\C}(\lambda)\otimes L_{\C}(\mu):L_{\C}(\nu)]\leq 1$ for all $\nu \in C_0 \cap X$, we conclude that $c_{\lambda,\mu}^\nu\leq 1$ for all $\nu\in C_0 \cap X$.
\end{proof}

\begin{Proposition} \label{prop:CRandMFchar0impliesMF}
	Let $\lambda,\mu \in X^+$ and suppose that $L(\lambda) \otimes L(\mu)$ is completely reducible.
	If the tensor product $L_\C(\lambda) \otimes L_\C(\mu)$ of simple $G_\C$-modules is multiplicity free then so is $L(\lambda) \otimes L(\mu)$.
\end{Proposition}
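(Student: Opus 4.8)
The plan is to show that complete reducibility forces the composition multiplicities of $L(\lambda)\otimes L(\mu)$ to be bounded above by the corresponding characteristic-zero multiplicities, so that multiplicity freeness over $\C$ transfers to multiplicity freeness over $\kk$. Concretely, I would prove that
\[ \big[ L(\lambda)\otimes L(\mu) : L(\nu) \big] \leq \big[ L_\C(\lambda)\otimes L_\C(\mu) : L_\C(\nu) \big] \]
for every $\nu\in X^+$; since the right-hand side is at most $1$ by the hypothesis on $L_\C(\lambda)\otimes L_\C(\mu)$, this immediately gives that $L(\lambda)\otimes L(\mu)$ is multiplicity free.

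First I would use the assumption that $L(\lambda)\otimes L(\mu)$ is completely reducible to rewrite the left-hand side as a dimension of a Hom-space: since $\kk$ is algebraically closed, Schur's lemma gives $\End_G(L(\nu))\cong\kk$, and for a semisimple module the composition multiplicity of $L(\nu)$ equals $\dim\Hom_G\big(L(\nu), L(\lambda)\otimes L(\mu)\big)$. Next I would compare this with the corresponding Hom-space for induced modules. The canonical inclusions $L(\lambda)\hookrightarrow\nabla(\lambda)$ and $L(\mu)\hookrightarrow\nabla(\mu)$ tensor (over the field $\kk$) to an inclusion $L(\lambda)\otimes L(\mu)\hookrightarrow\nabla(\lambda)\otimes\nabla(\mu)$, and applying the left-exact functor $\Hom_G(L(\nu),-)$ yields $\dim\Hom_G\big(L(\nu), L(\lambda)\otimes L(\mu)\big)\leq\dim\Hom_G\big(L(\nu),\nabla(\lambda)\otimes\nabla(\mu)\big)$. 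Finally, precomposition with the canonical surjection $\Delta(\nu)\twoheadrightarrow L(\nu)$ is injective on Hom-spaces, giving $\dim\Hom_G\big(L(\nu),\nabla(\lambda)\otimes\nabla(\mu)\big)\leq\dim\Hom_G\big(\Delta(\nu),\nabla(\lambda)\otimes\nabla(\mu)\big)$.

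It then remains to identify the last quantity with the characteristic-zero multiplicity, and this is exactly the identity recorded at the beginning of this subsection: $\dim\Hom_G\big(\Delta(\nu),\nabla(\lambda)\otimes\nabla(\mu)\big) = \big[\nabla(\lambda)\otimes\nabla(\mu):\nabla(\nu)\big]_\nabla = \big[L_\C(\lambda)\otimes L_\C(\mu):L_\C(\nu)\big]$, where the good-filtration multiplicity makes sense because $\nabla(\lambda)\otimes\nabla(\mu)$ has a good filtration. Chaining the three steps together proves the desired bound. There is no substantial obstacle here; the argument is a short diagram chase, and the only points requiring care are that the complete reducibility hypothesis is used precisely to turn a composition multiplicity into a Hom-multiplicity, and that the two comparison maps are arranged so that all the inequalities point in the same direction.
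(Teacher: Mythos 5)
Your proof is correct and follows essentially the same route as the paper: complete reducibility converts the composition multiplicity into $\dim\Hom_G\big(L(\nu),L(\lambda)\otimes L(\mu)\big)$, which is bounded by $\dim\Hom_G\big(\Delta(\nu),\nabla(\lambda)\otimes\nabla(\mu)\big)$ and identified with the characteristic-zero multiplicity via the identity recorded at the start of the subsection. The only difference is that you spell out the middle inequality (tensoring the inclusions into induced modules and precomposing with $\Delta(\nu)\twoheadrightarrow L(\nu)$), which the paper leaves implicit.
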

\begin{proof}
	If $L(\lambda) \otimes L(\mu)$ is completely reducible then we have
	\begin{multline*}
		\quad [ L(\lambda) \otimes L(\mu) : L(\nu) ] = \dim \Hom_G\big( L(\nu) , L(\lambda) \otimes L(\mu) \big) \\ \leq \dim \Hom_G\big( \Delta(\nu) , \nabla(\lambda) \otimes \nabla(\mu) \big) = [ L_\C(\lambda) \otimes L_\C(\mu) : L_\C(\nu) ] \quad
	\end{multline*}
	for all $\nu \in X^+$, and the claim is immediate.
\end{proof}

The comparison with multiplicities in characteristic zero also allows us to observe the following monotonicity property for good filtration multiplicities in tensor products of induced modules and Weyl filtration multiplicities in tensor products of Weyl modules.

\begin{Lemma} \label{lem:tensormultiplicitiesmonotonous}
	For $\lambda,\mu,\nu,\delta \in X^+$, we have
	\begin{align*}
		\big[ \nabla(\lambda+\delta) \otimes \nabla(\mu) : \nabla(\nu+\delta) \big]_\nabla & \geq \big[ \nabla(\lambda) \otimes \nabla(\mu) : \nabla(\nu) \big]_\nabla , \\[3pt]
		\big[ \Delta(\lambda+\delta) \otimes \Delta(\mu) : \Delta(\nu+\delta) \big]_\Delta & \geq \big[ \Delta(\lambda) \otimes \Delta(\mu) : \Delta(\nu) \big]_\Delta .
	\end{align*}
\end{Lemma}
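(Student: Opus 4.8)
The plan is to reduce the statement to the characteristic-zero multiplicities via the identity established just above this lemma, namely
\[ \big[ \nabla(\lambda) \otimes \nabla(\mu) : \nabla(\nu) \big]_\nabla = \big[ L_\C(\lambda) \otimes L_\C(\mu) : L_\C(\nu) \big] . \]
Since the two displayed inequalities are dual to one another (apply $M \mapsto M^*$, which sends $\nabla(\kappa)$ to $\Delta(-w_0\kappa)$ and preserves Weyl/good filtration multiplicities up to the involution $\kappa \mapsto -w_0\kappa$ on $X^+$), it suffices to prove the first inequality. By the identity above, this amounts to showing the monotonicity statement
\[ \big[ L_\C(\lambda+\delta) \otimes L_\C(\mu) : L_\C(\nu+\delta) \big] \geq \big[ L_\C(\lambda) \otimes L_\C(\mu) : L_\C(\nu) \big] \]
for the simple modules of the complex group $G_\C$, where everything is now governed by the semisimple representation theory of $G_\C$ and Weyl's character formula.

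\textbf{Reducing to a statement about Littlewood--Richardson-type coefficients.}
First I would rewrite the tensor product multiplicity as a structure constant. Writing $\ch L_\C(\mu) = \sum_{\beta} d_\beta \cdot e^\beta$ for the weight multiplicities $d_\beta = \dim L_\C(\mu)_\beta$, Proposition \ref{prop:productWeylcharacters} gives
\[ \chi(\lambda)\cdot\chi(\mu) = \sum_{\beta} d_\beta \cdot \chi(\lambda+\beta) , \]
so that the multiplicity $[L_\C(\lambda)\otimes L_\C(\mu):L_\C(\nu)]$ is obtained by expanding each $\chi(\lambda+\beta)$ in the basis $\{\chi(\kappa) \mid \kappa \in X^+\}$ using the straightening rule $\chi(w\Cdot\kappa) = (-1)^{\ell(w)}\chi(\kappa)$ and collecting the coefficient of $\chi(\nu)$. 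Thus the multiplicity is an alternating sum over $W_\mathrm{fin}$ of weight multiplicities of $L_\C(\mu)$:
\[ \big[ L_\C(\lambda) \otimes L_\C(\mu) : L_\C(\nu) \big] = \sum_{w \in W_\mathrm{fin}} (-1)^{\ell(w)} \cdot \dim L_\C(\mu)_{w\Cdot(\nu+\rho)-(\lambda+\rho)} , \]
where $w\Cdot\kappa = w(\kappa+\rho)-\rho$ is the (non-dilated, $p=1$) dot action of the finite Weyl group. The key observation is then that shifting both $\lambda \mapsto \lambda+\delta$ and $\nu \mapsto \nu+\delta$ by the same dominant weight $\delta$ shifts the argument $w\Cdot(\nu+\rho)-(\lambda+\rho)$ in a controlled way, and the monotonicity should drop out.

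\textbf{The main step.}
I expect the decisive input to be a comparison of the summand for the identity element $w = e$ against the whole alternating sum. For $w = e$ the term is $\dim L_\C(\mu)_{\nu-\lambda}$, which is unchanged by the simultaneous shift $\lambda,\nu \mapsto \lambda+\delta, \nu+\delta$; but the nontrivial terms of the alternating sum can change. The cleaner route avoids the alternating sum entirely: I would instead argue via a \emph{highest weight / injection} argument. Since $L_\C(\delta)$ appears with multiplicity one as the top composition factor of both $\nabla(\delta)$ and $\Delta(\delta)$ in the semisimple category, tensoring a $G_\C$-module homomorphism with the identity on $L_\C(\delta)$ and projecting onto the appropriate isotypic component should exhibit an injection
\[ \Hom_{G_\C}\big(L_\C(\nu), L_\C(\lambda)\otimes L_\C(\mu)\big) \hookrightarrow \Hom_{G_\C}\big(L_\C(\nu+\delta), L_\C(\lambda+\delta)\otimes L_\C(\mu)\big) , \]
realized concretely by the Cartan component embedding $L_\C(\lambda+\delta) \hookrightarrow L_\C(\lambda)\otimes L_\C(\delta)$ (the ``highest'' constituent, appearing with multiplicity one). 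Composing $L_\C(\nu) \to L_\C(\lambda)\otimes L_\C(\mu)$ with this Cartan embedding tensored with $L_\C(\mu)$, and using the multiplicity-one appearance of $L_\C(\nu+\delta)$ at the top, yields the desired injection of Hom-spaces, hence the inequality of dimensions.

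\textbf{Where the difficulty lies.}
The main obstacle is verifying that the constructed map is genuinely injective and lands in the correct isotypic piece: one must check that the composite is nonzero on each summand and that distinct Hom's map to linearly independent images, which relies precisely on the multiplicity-one property of the Cartan component $L_\C(\lambda+\delta)$ inside $L_\C(\lambda)\otimes L_\C(\delta)$ and of $L_\C(\nu+\delta)$ inside $L_\C(\nu)\otimes L_\C(\delta)$. Once injectivity is secured, both displayed inequalities of the lemma follow: the first directly from the identity relating $[\nabla(\cdot)\otimes\nabla(\cdot):\nabla(\cdot)]_\nabla$ to characteristic-zero multiplicities, and the second by applying the duality $M \mapsto M^*$ as noted at the outset.
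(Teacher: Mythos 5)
Your reduction is the same as the paper's: both proofs pass to characteristic zero through the identity $[\nabla(\lambda)\otimes\nabla(\mu):\nabla(\nu)]_\nabla=[L_\C(\lambda)\otimes L_\C(\mu):L_\C(\nu)]$ established just before the lemma, and both obtain the Weyl-filtration inequality from the good-filtration one by duality. Where you genuinely diverge is the characteristic-zero monotonicity itself: the paper disposes of it in one line by citing Proposition 2.9 in \cite{Stembridge}, whereas you sketch a self-contained proof via the Cartan component $L_\C(\lambda+\delta)\subseteq L_\C(\lambda)\otimes L_\C(\delta)$. Your route buys independence from that reference and is the standard stretching argument for tensor multiplicities; the cost is that the injectivity of the induced map of $\Hom$-spaces, which you correctly single out as the crux, actually has to be proved. (The alternating-sum formula in your intermediate paragraph is also slightly off --- the relevant weight is $w(\nu+\rho)-(\lambda+\rho)$ rather than $w\Cdot(\nu+\rho)-(\lambda+\rho)$ in your notation --- but you abandon that route, so this is harmless.)

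The one point needing repair: injectivity does not follow from the multiplicity-one property of the Cartan components alone, as you assert; it needs a highest-weight-vector argument. Let $v_\lambda,v_\delta,v_\nu$ denote highest weight vectors, let $\pi\colon L_\C(\lambda)\otimes L_\C(\delta)\to L_\C(\lambda+\delta)$ be the Cartan projection, and set $j\colon L_\C(\lambda)\to L_\C(\lambda+\delta)$, $j(x)=\pi(x\otimes v_\delta)$. Since $v_\delta$ is fixed by the unipotent radical $N^+$ of the Borel subgroup and $\pi$ is $G_\C$-equivariant, $j$ is $N^+$-equivariant, so $\ker j$ is an $N^+$-stable subspace of $L_\C(\lambda)$; were it nonzero, it would contain a nonzero $N^+$-fixed vector, hence the line $\C v_\lambda$ --- impossible, because $v_\lambda\otimes v_\delta$ is a highest weight vector of weight $\lambda+\delta$ and therefore generates the Cartan component (this is where multiplicity one does enter), so that $j(v_\lambda)\neq0$. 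Thus $j$ is injective. Now take $0\neq\phi\in\Hom_{G_\C}\big(L_\C(\nu),L_\C(\lambda)\otimes L_\C(\mu)\big)$: the $(\nu+\delta)$-weight space of $L_\C(\nu)\otimes L_\C(\delta)$ is spanned by $v_\nu\otimes v_\delta$, so your composite map sends $v_{\nu+\delta}$, up to a nonzero scalar, to $\sum_i j(x_i)\otimes y_i$, where $\phi(v_\nu)=\sum_i x_i\otimes y_i$ with the $y_i$ linearly independent in $L_\C(\mu)$; injectivity of $j$ makes this nonzero, so the map of $\Hom$-spaces is injective and the inequality follows. With this supplement your argument is complete and gives a correct, more self-contained alternative to the paper's citation.
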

\begin{proof}
	As observed at the beginning of the subsection, we have
	\[ \big[ L_\C(\lambda) \otimes L_\C(\mu) : L_\C(\nu) \big] = \big[ \nabla(\lambda) \otimes \nabla(\mu) : \nabla(\nu) \big]_\nabla \]
	for all $\lambda,\mu,\nu \in X^+$,
	and the first inequality follows from Proposition 2.9 in \cite{Stembridge}.
	The second inequality follows by taking duals.
\end{proof}

\section{Type \texorpdfstring{$\mathrm{A}_2$}{A2}}
\label{sec:A2}

In this section, we consider the group $G = \SL_3(\kk)$ and give a complete classification of the pairs of simple $G$-modules whose tensor product is completely reducible or multiplicity free.
We start by setting up our notation for the affine Weyl group and alcove geometry for $G$ in Subsection \ref{subsec:A2setup}, then we state the main results in Subsection \ref{subsec:A2results}.
In Subsection \ref{subsec:A2Weylmodulestiltingmodules}, we recall some well-known results about the submodule structure of certain Weyl modules and tilting modules, and the proofs of the main theorems from Subsection \ref{subsec:A2results} are given in Subsections \ref{subsec:A2proofsCR} and \ref{subsec:A2proofsMF}.

\subsection{Setup}
\label{subsec:A2setup}

The positive roots for $G = \SL_3(\kk)$ can be written as $\Phi^+ = \{ \alpha_1 , \alpha_2 , \alpha_\mathrm{h} \}$, where $\alpha_1$ and $\alpha_2$ are the simple roots and $\alpha_\mathrm{h} = \alpha_1 + \alpha_2$ is the highest root.
The weight lattice $X$ is spanned by the fundamental dominant weights $\varpi_1$ and $\varpi_2$.
The weights $\varpi_1$ and $\varpi_2$ are minuscule, and we have
\[ \alpha_1 = 2 \varpi_1 - \varpi_2 , \qquad \alpha_2 = - \varpi_1 + 2 \varpi_2 , \qquad \alpha_\mathrm{h} = \varpi_1 + \varpi_2 = \rho . \]
We write $S_\mathrm{fin} = \{t,u\}$ for the set of simple reflections in $W_\mathrm{fin}$, with $t = s_{\alpha_1}$ and $u = s_{\alpha_2}$.
The affine simple reflection is denoted by $s = s_0 = t_{\alpha_\mathrm{h}} s_{\alpha_\mathrm{h}}$, so that $S_\mathrm{aff} = \{ s,t,u \}$ is the set of simple reflections in $W_\mathrm{aff}$.
We fix labelings for certain alcoves as in the figure below, so that
\[ C_1 = s \Cdot C_0 , \qquad C_{2a} = su \Cdot C_0 , \qquad C_{2b} = st \Cdot C_0 . \]

\begin{center}
\begin{tikzpicture}[scale=1.5]
	\pgftransformcm{cos(60)}{sin(60)}{cos(120)}{sin(120)}{\pgfpoint{0cm}{0cm}}
	\draw[thick] (0,0) -- (2,0);
	\draw[thick] (0,0) -- (0,2);
	\draw[thick] (0,1) -- (1,0);
	\draw[thick] (0,1) -- (1,1);
	\draw[thick] (1,0) -- (1,1);
	\draw[thick] (0,2) -- (2,0);
	
	\node at (.325,.325) {0};
	\node at (.675,.675) {1};
	\node at (.325,1.325) {$2a$};
	\node at (1.325,.325) {$2b$};
\end{tikzpicture}
\end{center}
Alternatively, the labeled alcoves can be described as follows:
\begin{align*}
	C_0 & = \{ a \varpi_1 + b \varpi_2 \mid a > -1 , ~ b > -1 , ~ a+b < p-2 \} , \\
	C_1 & = \{ a \varpi_1 + b \varpi_2 \mid a+b > p-2 , ~ a < p-1 , ~ b < p-1 \} , \\
	C_{2a} & = \{ a \varpi_1 + b \varpi_2 \mid a > p-1 , ~ b > -1 , ~ a+b < 2p-2 \} , \\
	C_{2b} & = \{ a \varpi_1 + b \varpi_2 \mid a > -1 , ~ b > p-1 , ~ a+b < 2p-2 \} .
\end{align*}
Observe that all $p$-restricted weights belong to the upper closure of one of the alcoves $C_0$ and $C_1$.
The wall separating two alcoves $C_x$ and $C_y$ will be denoted by $F_{x,y} = F_{y,x}$, for $x,y \in \{0,1,2a,2b\}$.
In the following, we give a concrete example of what the reflection smallness condition from Section \ref{sec:alcovegeometryCR} means for a weight in one of the alcoves $C_0$ and $C_1$.

\begin{Example} \label{ex:A2reflectionsmall}
	Let $\lambda,\mu \in X^+$ and write $\lambda = a \varpi_1 + b \varpi_2$ and $\mu = a^\prime \varpi_1 + b^\prime \varpi_2$.
	Then we have
	\[ W_\mathrm{fin} \mu = \big\{ c \varpi_1 + d \varpi_2 \mathrel{\big|} \pm (c,d) \in \{ (a^\prime,b^\prime) , (a^\prime+b^\prime,-b^\prime) , (-a^\prime,a^\prime+b^\prime) \} \big \} . \]
	The alcove $C_0$ has a unique wall $F_{0,1} = H_{\alpha_\mathrm{h},1}$ that belongs to its upper closure, and so if $\lambda \in \widehat{C}_0$ then $\mu$ is reflection small with respect to $\lambda$ if and only if $\lambda+w\mu \leq s_{\alpha_\mathrm{h},1} \Cdot (\lambda+w\mu)$ for all $w \in W_\mathrm{fin}$, or equivalently, $(\lambda+w\mu+\rho,\alpha_\mathrm{h}^\vee) \leq p$ for all $w \in W_\mathrm{fin}$.
	As $(\lambda+\rho,\alpha_\mathrm{h}^\vee) = a+b+2$ and $(w\mu,\alpha_\mathrm{h}^\vee) \in \{ \pm (a'+b') , \pm a' , \pm b' \}$, we conclude that $\mu$ is reflection small with respect to $\lambda$ if and only if
	\[ a + b + a^\prime + b^\prime \leq p-2 . \]
	Similarly, the walls that belong to the upper closure of $C_1$ are precisely $F_{1,2a} = H_{\alpha_1,1}$ and $F_{1,2b} = H_{\alpha_2,1}$, and so if $\lambda \in \widehat{C}_1$ then $\mu$ is reflection small with respect to $\lambda$ if and only if $(\lambda+w\mu+\rho,\alpha_1^\vee) \leq p$ and $(\lambda+w\mu+\rho,\alpha_2^\vee) \leq p$ for all $w \in W_\mathrm{fin}$.
	This is easily seen to be equivalent to the conditions
	\[ a+a^\prime+b^\prime \leq p-1 \qquad \text{and} \qquad b+a^\prime+b^\prime \leq p-1 . \]
\end{Example}

\subsection{Main results}
\label{subsec:A2results}

We are now ready to state our main results about complete reducibility and multiplicity freeness of tensor products of simple modules for $G = \SL_3(\kk)$.
The proofs will be given at the end of Subsections \ref{subsec:A2proofsCR} and \ref{subsec:A2proofsMF}, respectively.
Recall from Subsection \ref{subsec:prestricted} that it suffices to consider pairs of simple $G$-modules with $p$-restricted highest weights.

\begin{Theorem} \label{thm:A2CR}
	For $\lambda , \mu \in X_1 \setminus \{0\}$, the tensor product $L(\lambda) \otimes L(\mu)$ is completely reducible if and only if $\lambda$ and $\mu$ satisfy one of the conditions in Table \ref{tab:A2CR}, up to interchanging $\lambda$ and $\mu$.
	
	\begin{table}[htbp]
	\centering
	\caption{Weights $\lambda,\mu \in X_1 \setminus \{ 0 \}$ such that $L(\lambda) \otimes L(\mu)$ is completely reducible, for $G$ of type $\mathrm{A}_2$.}
	\label{tab:A2CR}
	\begin{tabular}{|l||l|}
	\hline
	$\mathbf{\#}$ & \textbf{conditions} \\
	\hline\hline
	1 & $\lambda = a\varpi_1$ and $\mu = a^\prime \varpi_1$, where $a+a^\prime = p-1$ \\
	\hline
	1* & $\lambda = b\varpi_2$ and $\mu = b^\prime \varpi_2$, where $b+b^\prime = p-1$ \\
	\hline
	2 & $\lambda = (p-1) \cdot \varpi_1$ and $\mu = \varpi_2$ \\
	\hline
	2* & $\lambda = (p-1) \cdot \varpi_2$ and $\mu = \varpi_1$ \\
	\hline
	3 & $\lambda = a \varpi_1 + b \varpi_2 \in C_1$ and $\mu = (b+1) \cdot \varpi_2$, where $a+b = p-1$ and $b<a$ \\
	\hline
	3* & $\lambda = a \varpi_1 + b \varpi_2 \in C_1$ and $\mu = (a+1) \cdot \varpi_1$, where $a+b = p-1$ and $a<b$ \\
	\hline
	4 & $\lambda \in C_0 \cup C_1$ and $\mu$ is reflection small with respect to $\lambda$ \\
	\hline
	\end{tabular}
	\end{table}
\end{Theorem}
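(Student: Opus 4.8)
The plan is to prove the two implications separately, isolating the single ``generic'' condition~4 from the six ``sporadic'' conditions 1--3*, and then settling necessity by a bounded case analysis driven by the good filtration dimension and the singularity obstruction. Throughout I use the fact, recorded in Subsection~\ref{subsec:A2setup}, that every $p$-restricted weight lies in $\widehat{C}_0 \cup \widehat{C}_1$; in particular a $p$-regular $p$-restricted weight lies in $C_0$ or $C_1$, so Lemma~\ref{lem:GFDsimple} gives $\gfd L(\nu) \le 1$ for all such $\nu$. This is the ``ceiling'' that powers the whole necessity argument.

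\textbf{Sufficiency.} Condition~4 is immediate from Theorem~\ref{thm:reflectionsmalltensorproduct}. For the sporadic conditions I would compute the relevant tensor products by hand. In conditions~1 and~1* the two factors are symmetric powers $L(a\varpi_1)=\nabla(a\varpi_1)$ and $L(a'\varpi_1)=\nabla(a'\varpi_1)$ with $a+a'=p-1$ (both lying in $\overline{C}_0$), so I would expand $\chi(a\varpi_1)\cdot\chi(a'\varpi_1)$ via Proposition~\ref{prop:productWeylcharacters}, observe that the resulting highest weights are $(p-1-2k)\varpi_1+k\varpi_2$ for $0\le k\le\min(a,a')$, check that each associated $\nabla$ is simple using Subsection~\ref{subsec:A2Weylmodulestiltingmodules}, and conclude complete reducibility from the linkage principle (the composition factors sit in pairwise distinct blocks or in $\overline{C}_0$, leaving no room for extensions). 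Conditions~2 and~2* have $\mu$ minuscule but $\lambda=(p-1)\varpi_1$ $p$-singular, so Corollary~\ref{cor:minusculetensorproductMFCR} does not apply directly; instead I would analyse $\nabla((p-1)\varpi_1)\otimes L(\varpi_2)$ via Lemma~\ref{lem:minusculetensorproducttranslation} and Proposition~\ref{prop:productWeylcharacters}, noting that one of the three Weyl characters vanishes, and again read off two simple, non-linked factors. Conditions~3 and~3* have $\lambda\in C_1$ ($p$-regular) and $\mu=(b+1)\varpi_2$, and I would treat them by the same character-plus-linkage method, now using the tilting-module data of Subsection~\ref{subsec:A2Weylmodulestiltingmodules}.

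\textbf{Necessity.} Arguing contrapositively, suppose $L(\lambda)\otimes L(\mu)$ is completely reducible with $\lambda,\mu\neq 0$. If both $\lambda$ and $\mu$ are $p$-regular, write $\lambda=x\Cdot\lambda_0$, $\mu=y\Cdot\mu_0$ with $\lambda_0,\mu_0\in C_0\cap X$ and $\ell(x),\ell(y)\le 1$; then Corollary~\ref{cor:nonCRcriterionGFD} forces every $p$-regular composition factor $L(z\Cdot\nu_0)$ to satisfy $\ell(z)\le 2$, confining such factors to $C_0,C_1,C_{2a},C_{2b}$. Since $\lambda+\mu$ is bounded (both weights are $p$-restricted), only finitely many alcoves are in play, and I would run through the position of $\lambda$ in $C_0$ or $C_1$ and the size of $\mu$, using Example~\ref{ex:A2reflectionsmall} to mark the reflection-small boundary and a character computation via Proposition~\ref{prop:productWeylcharacters} to exhibit, whenever none of conditions 1--4 holds, a composition factor with $\ell(z)\ge 3$. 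If instead $\lambda$ (or $\mu$) is $p$-singular, Lemma~\ref{lem:nonCRcriterionpregularweight} forbids any $p$-regular composition factor, and a direct inspection of the weights of $L(\lambda)\otimes L(\mu)$ shows that this can happen only for the $p$-singular instances already covered by conditions 1, 1*, 2, 2*.

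\textbf{Main obstacle.} The hard part will be the sporadic cases 1--3*, where $\mu$ is \emph{not} reflection small yet complete reducibility still holds: these sit precisely on the boundary between the reflection-small regime of Theorem~\ref{thm:reflectionsmalltensorproduct} and the regime excluded by the good filtration dimension, so neither clean criterion applies and one must control the composition series and the vanishing of extensions for specific Weyl and tilting modules of $\SL_3$, in particular verifying that the relevant $\nabla(\nu)$ are simple and that the surviving factors lie in distinct blocks. A secondary difficulty, on the necessity side, is the bookkeeping needed to confirm that the ``escaping'' factor in a deep alcove genuinely occurs with nonzero multiplicity, so that the obstruction criteria truly apply; this again reduces to character computations together with the type-$\mathrm{A}_2$ linkage data.
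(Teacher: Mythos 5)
Your sufficiency argument is workable, but it takes a genuinely different route from the paper's for the sporadic conditions: you propose direct character expansions via Proposition \ref{prop:productWeylcharacters}, followed by the observation that the resulting factors are pairwise non-linked (each relevant $\nabla(\nu)$ being simple by Subsection \ref{subsec:A2Weylmodulestiltingmodules}, and distinct weights in the closure of a single alcove never being $W_\mathrm{aff}$-conjugate). The paper instead bootstraps: for condition 1 it realizes $L(a\varpi_1)$ as a summand of $L((a-1)\varpi_1)\otimes L(\varpi_1)$, applies Theorem \ref{thm:reflectionsmalltensorproduct} to $L((a-1)\varpi_1)\otimes L(a'\varpi_1)$, and reduces to tensoring simples with the minuscule module (Lemma \ref{lem:A2api1tensoraprimepi1}); condition 3 is handled the same way (Proposition \ref{prop:A2lambdaabovewall}). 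Your method is essentially the one the paper uses in type $\mathrm{B}_2$ (e.g.\ Proposition \ref{prop:B2MFpairsinC1}), so it should go through, at the cost of a longer computation in condition 3, where $\ch L(\lambda)=\chi(\lambda)-\chi(s\Cdot\lambda)$.

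The necessity direction, however, has a fatal gap: the two obstructions you rely on---the good filtration dimension bound (Corollary \ref{cor:nonCRcriterionGFD}) and the singularity criterion (Lemma \ref{lem:nonCRcriterionpregularweight})---cannot detect the failure of complete reducibility in exactly the boundary cases your case analysis must exclude. Take $p=7$, $\lambda=3\varpi_1$ and $\mu=2\varpi_1+\varpi_2$, both in $C_0$. Then $\mu$ is not reflection small with respect to $\lambda$ (one needs $a+b+a'+b'\leq p-2=5$, but the sum is $6$), and none of conditions 1--3* holds, so the theorem asserts that $L(\lambda)\otimes L(\mu)$ is \emph{not} completely reducible. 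Yet every dominant weight $\nu\leq\lambda+\mu=5\varpi_1+\varpi_2$ satisfies $(\nu+\rho,\alpha_1^\vee)<p$, $(\nu+\rho,\alpha_2^\vee)<p$ and $(\nu+\rho,\alpha_\mathrm{h}^\vee)<2p$, so every composition factor has highest weight in $\overline{C}_0\cup\overline{C}_1$ and hence $\gfd\leq 1$ by Lemma \ref{lem:GFDsimple}: there is no factor with $\ell(z)\geq 3$ (or even $\geq 2$) to exhibit, Corollary \ref{cor:nonCRcriterionGFD} is vacuous, and both weights are $p$-regular, so Lemma \ref{lem:nonCRcriterionpregularweight} does not apply either. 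The same problem occurs throughout the regime $\lambda,\mu\in C_0$ with $\lambda+\mu$ just above the wall $F_{0,1}$, and again in the ``only if'' half of condition 3.

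The missing idea is the paper's use of tilting module structure, which sees the submodule structure that characters cannot. For $\lambda,\mu\in\widehat{C}_0$ one has $L(\lambda)\otimes L(\mu)\cong T(\lambda)\otimes T(\mu)$, so the indecomposable tilting module $T(\lambda+\mu)$ (and, after the Levi-truncation constraints of Remark \ref{rem:LeviprestrctedconditionA2}, also $T(\lambda+\mu-\alpha_1)$ or $T(\lambda+\mu-\alpha_2)$) is a direct summand; when such a highest weight lies in $C_1$ this summand is uniserial $[L(\nu_0),L(s\Cdot\nu_0),L(\nu_0)]$, hence non-simple, which kills complete reducibility (Proposition \ref{prop:A2CRweightsinC0}). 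Likewise, ruling out $\mu=b'\varpi_2$ with $b'>b+1$ in condition 3 uses Proposition \ref{prop:A2directsumandsC1tilting} (every indecomposable summand of $L(\lambda)\otimes T$, $\lambda\in C_1$, is a negligible tilting module or simple with highest weight in $C_1$), combined with producing a nonzero $\Hom$ from $L(\lambda)\otimes L(\mu)$ to a non-negligible $L(\nu)$, $\nu\in C_0$. Without a substitute for these arguments, your necessity proof cannot be completed: knowing the multiset of composition factors, which is all your proposed tools extract, genuinely underdetermines complete reducibility here.
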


\begin{Theorem} \label{thm:A2MF}
	For $\lambda , \mu \in X_1 \setminus \{0\}$, the tensor product $L(\lambda) \otimes L(\mu)$ is multiplicity free if and only if $\lambda$ and $\mu$ satisfy one of the conditions in Table \ref{tab:A2MF}, up to interchanging $\lambda$ and $\mu$.
	\medskip

	\begin{table}[htbp]
	\centering
	\caption{Weights $\lambda,\mu \in X_1 \setminus \{ 0 \}$ such that $L(\lambda) \otimes L(\mu)$ is multiplicity free, for $G$ of type $\mathrm{A}_2$.}
	\label{tab:A2MF}
	\begin{tabular}{|l||l|}
	\hline
	$\mathbf{\#}$ & \textbf{conditions} \\
	\hline\hline
	1 & $\lambda = a\varpi_1$ and $\mu = a^\prime \varpi_1$, where $a+a^\prime = p-1$ \\
	\hline
	1* & $\lambda = b\varpi_2$ and $\mu = b^\prime \varpi_2$, where $b+b^\prime = p-1$ \\
	\hline
	2 & $\lambda = (p-1) \cdot \varpi_1$ and $\mu = \varpi_2$ \\
	\hline
	2* & $\lambda = (p-1) \cdot \varpi_2$ and $\mu = \varpi_1$ \\
	\hline
	3 & $\lambda = a \varpi_1 + b \varpi_2 \in C_1$ and $\mu = (b+1) \cdot \varpi_2$, where $a+b = p-1$ and $b<a$ \\
	\hline
	3* & $\lambda = a \varpi_1 + b \varpi_2 \in C_1$ and $\mu = (a+1) \cdot \varpi_1$, where $a+b = p-1$ and $a<b$ \\
	\hline
	4a & $\lambda \in C_0 \cup C_1$ and $\mu = a^\prime \varpi_1$ is reflection small with respect to $\lambda$; \\
	\hline
	4a* & $\lambda \in C_0 \cup C_1$ and $\mu = b^\prime \varpi_2$ is reflection small with respect to $\lambda$; \\
	\hline
	4b & $\lambda = a \varpi_1 + b \varpi_2 \in C_1$, where $a+b=p-1$, and $\mu$ is reflection small with respect to $\lambda$; \\
	\hline
	\end{tabular}
	\end{table}
\end{Theorem}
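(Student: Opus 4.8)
The plan is to bootstrap off the complete reducibility classification of Theorem~\ref{thm:A2CR} together with Lemma~\ref{lem:MFimpliesCR}: since multiplicity freeness implies complete reducibility, every multiplicity free pair already appears in Table~\ref{tab:A2CR}, and it remains only to decide, for each of the conditions 1--4 there, whether the (already completely reducible) tensor product is in fact multiplicity free. Because $L(\lambda)^*\cong L(-w_0\lambda)$ interchanges the roles of $\varpi_1$ and $\varpi_2$, it suffices to treat the unstarred conditions. For conditions 1, 2, 3 one of the two factors is a simple module of the form $L(c\varpi_1)$ or $L(c\varpi_2)$ with $c<p$; such a module is a composition factor of the Weyl module $\Delta(c\varpi_1)$, the $c$-th symmetric power of the natural module, all of whose weight spaces are at most one-dimensional, and hence $L(c\varpi_1)$ also has one-dimensional weight spaces. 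As the tensor product is completely reducible by Theorem~\ref{thm:A2CR}, Corollary~\ref{cor:onedimensionalweightspacesMF} immediately yields multiplicity freeness. This settles conditions 1--3* in the ``if'' direction.

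The substantive case is condition 4, where $\mu$ is reflection small with respect to $\lambda\in C_0\cup C_1$. Here I would use the explicit multiplicity formula of Theorem~\ref{thm:reflectionsmalltensorproduct}(3), namely $[L(\lambda)\otimes L(\mu):L(\nu)]=\sum_{u\in W_C}(-1)^{\ell(u)}\dim L(\mu)_{u\Cdot\nu-\lambda}$ for $\nu\in\widehat C$. A preliminary observation is that the reflection smallness inequalities of Example~\ref{ex:A2reflectionsmall} force $(\mu,\alpha_\mathrm{h}^\vee)=a'+b'\le p-2$, so $\mu\in\overline C_0$ and $L(\mu)\cong\Delta(\mu)$; consequently the weight multiplicities in the formula are the classical type $\mathrm{A}_2$ multiplicities, equal to $1$ on the boundary of $\mathrm{conv}(W_\mathrm{fin}\mu)$ and increasing by exactly $1$ per shell towards the centre, where they stabilise at $\min(a',b')+1$ for $\mu=a'\varpi_1+b'\varpi_2$. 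The two relevant reflection subgroups are $W_{C_0}=\{e\}$ and $W_{C_1}=\langle s_{\alpha_\mathrm{h},1}\rangle$. For $\mu=a'\varpi_1$ or $\mu=b'\varpi_2$ (conditions 4a, 4a*) the module $L(\mu)$ has one-dimensional weight spaces, so multiplicity freeness again follows from Corollary~\ref{cor:onedimensionalweightspacesMF}, using that reflection small implies completely reducible by Theorem~\ref{thm:reflectionsmalltensorproduct}(1).

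It remains to analyse reflection small $\mu$ with $\min(a',b')\ge 1$. For $\lambda\in C_0$ the formula collapses to $[L(\lambda)\otimes L(\mu):L(\nu)]=\dim L(\mu)_{\nu-\lambda}$; taking $\nu=\lambda+\mu-\rho$, which lies in $\widehat C_0$ by Lemma~\ref{lem:reflectionsmallconditionallweights}, gives a composition factor of multiplicity $\dim L(\mu)_{\mu-\rho}=2$, so this case is never multiplicity free. For $\lambda=a\varpi_1+b\varpi_2\in C_1$ the formula reads $\dim L(\mu)_{\nu-\lambda}-\dim L(\mu)_{s_{\alpha_\mathrm{h},1}\Cdot\nu-\lambda}$, and a direct computation with the dot action yields $s_{\alpha_\mathrm{h},1}\Cdot\nu-\lambda=s_{\alpha_\mathrm{h}}(\nu-\lambda)+(p-c)\alpha_\mathrm{h}$ with $c=(\lambda+\rho,\alpha_\mathrm{h}^\vee)=a+b+2$. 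When $a+b=p-1$ we have $p-c=-1$, so by $W_\mathrm{fin}$-invariance the multiplicity equals $\dim L(\mu)_\eta-\dim L(\mu)_{\eta-\alpha_\mathrm{h}}$ for $\eta=s_{\alpha_\mathrm{h}}(\nu-\lambda)$; since consecutive multiplicities along a root string differ by at most $1$ and the expression is non-negative, it lies in $\{0,1\}$, establishing condition 4b. When $a+b>p-1$, taking $\nu=\lambda$ gives $\dim L(\mu)_0-\dim L(\mu)_{(a+b+2-p)\alpha_\mathrm{h}}$, which is at least $2$ whenever $\min(a',b')\ge 1$, so the tensor product fails to be multiplicity free.

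The main obstacle is precisely this condition-4 analysis: it rests on the fine shell structure of the type $\mathrm{A}_2$ weight multiplicities (the central value $\min(a',b')+1$, the fact that $\dim L(\mu)_{\mu-\rho}=2$ for regular $\mu$, and the step-by-at-most-one behaviour along $\alpha_\mathrm{h}$-strings), together with careful bookkeeping of which weights $\nu$ lie in the upper closure $\widehat C$ and therefore contribute. One must also verify that the ``boundary'' completely reducible families 2, 2*, 3, 3* are genuinely disjoint from the reflection smallness regime---for instance $\lambda=(p-1)\varpi_1$ is $p$-singular, so by Remark~\ref{rem:reflectionsmallpregular} no nonzero $\mu$ is reflection small with respect to it---so that the one-dimensional-weight-space argument and the multiplicity formula together account for every case exactly once; the handful of degenerate cases with $p$ close to the Coxeter number $h=3$ should be checked separately by hand.
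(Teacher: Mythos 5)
Your overall skeleton (reduce via Lemma~\ref{lem:MFimpliesCR} and Theorem~\ref{thm:A2CR}, settle conditions 1--3 and 4a/4a*/4b by one\-/dimensional weight spaces and Corollary~\ref{cor:onedimensionalweightspacesMF}) agrees with the paper, but your analysis of the remaining part of condition 4 contains a fatal error: $W_{C_0}$ is not trivial. The reflections $s_{\alpha_1}=s_{\alpha_1,0}$ and $s_{\alpha_2}=s_{\alpha_2,0}$ in the two lower walls of $C_0$ satisfy $s\Cdot C_0\uparrow C_0$, so $W_{C_0}=\langle s_{\alpha_1},s_{\alpha_2}\rangle=W_\mathrm{fin}$ (only the affine reflection $s_0$ in the upper wall is excluded). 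Hence for $\lambda\in C_0$ the formula of Theorem~\ref{thm:reflectionsmalltensorproduct}(3) is the full alternating sum over $W_\mathrm{fin}$ -- the classical Brauer--Klimyk formula -- and does not collapse to the single term $\dim L(\mu)_{\nu-\lambda}$. Consequently your conclusion that ``$\lambda\in C_0$ with $\min(a',b')\geq 1$ is never multiplicity free'' is false, and it contradicts the very theorem you are proving: for $\lambda=a\varpi_1\in C_0$ and $\mu$ regular and reflection small, the pair is case 4a of Table~\ref{tab:A2MF} after interchanging $\lambda$ and $\mu$, and the tensor product \emph{is} multiplicity free. Concretely, for $p\geq 5$ one has $L(\varpi_1)\otimes L(\varpi_1+\varpi_2)\cong L(2\varpi_1+\varpi_2)\oplus L(2\varpi_2)\oplus L(\varpi_1)$, even though $\dim L(\varpi_1+\varpi_2)_{0}=2$; the terms with $u\neq e$ in the alternating sum produce exactly the cancellation your collapsed formula ignores. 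The correct necessary statement is that one of the \emph{four} coordinates $a,b,a',b'$ vanishes, and the paper proves it by a route that survives this cancellation: the good-filtration/Verlinde multiplicity $c_{\lambda,\mu}^{\lambda+\mu-\alpha_\mathrm{h}}=[\nabla(\lambda)\otimes\nabla(\mu):\nabla(\lambda+\mu-\alpha_\mathrm{h})]_\nabla\geq 2$, obtained from a rank-one character computation and Stembridge's monotonicity (Lemma~\ref{lem:A2VerlindesuminC0}, Corollary~\ref{cor:A2reflectionsmallC0MF}), which is valid precisely when all four coordinates are positive.

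There are two further gaps in the $\lambda\in C_1$, $a+b>p-1$ case. First, your witness $\nu=\lambda$ is vacuous whenever $\mu\notin\Z\Phi$: then neither $0$ nor $(a+b+2-p)\alpha_\mathrm{h}$ lies in $\mu+\Z\Phi$, both weight spaces vanish, and the formula yields $0-0=0$, giving no contradiction. You would need a weight $\nu$ with $\nu-\lambda\in\mu+\Z\Phi$ (for instance $\nu=\lambda+\mu-\rho$ can be made to work, since the reflected weight then leaves $\Lambda(\mu)$), and even then the lower bound ``$\geq 2$'' rests on the shell structure of type $\mathrm{A}_2$ weight multiplicities, which you assert but never prove; the same remark applies to the root-string step in your 4b argument. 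The paper instead handles $\lambda\in C_1$ by translating to the wall and exploiting the Verlinde-coefficient identities (Lemmas~\ref{lem:VerlindecoefficientsFundamentalgroup}, \ref{lem:Verlindecoefficientsflipping} and \ref{lem:MFboundVerlindecoeff}, Corollary~\ref{cor:A2reflectionsmallC1MF}), and it disposes of the sufficiency of 4b in one line: $a+b=p-1$ forces all weight spaces of $L(\lambda)$ to be at most one-dimensional (Lemma~\ref{lem:A2onedimensionalweightspaces}), so Corollary~\ref{cor:onedimensionalweightspacesMF} applies directly.
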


\subsection{Structure of Weyl modules and tilting modules} \label{subsec:A2Weylmodulestiltingmodules}

In order to prove our main results, we will need some information about the submodule structure of certain Weyl modules and indecomposable tilting modules for $G = \SL_3(\kk)$.
The structure of these modules has been computed by Bowman, Doty and Martin in \cite[Theorem B and Section 4]{BowmanDotyMartin}, for $p \geq 5$.
We recall some of their results below, noting that our statements remain valid for $p \leq 3$ by \cite[Sections 3.1, 4.1, 4.2]{BowmanDotyMartinsmallprimes} (with the exception that there are no $p$-regular weights for $p=2$).

The Weyl modules $\Delta(w\Cdot\lambda)$, with $\lambda \in C_0 \cap X$ and $w \in \{ e , s , st , su \}$, are all uniserial\footnote{A $G$-module $M$ is called uniserial if it admits a unique composition series $0 = M_0 \subseteq \cdots \subseteq M_r = M$.
We depict the structure of $M$ by writing $M = [L_r,\ldots,L_1]$, where $L_i = M_i / M_{i-1}$ for $1 \leq i \leq r$.},
with respective composition series given by $\Delta(\lambda) = [ L(\lambda) ]$ and
\[ \Delta(s\Cdot\lambda) = [ L(s\Cdot\lambda) , L(\lambda) ] , \qquad \Delta(st \Cdot \lambda) = [ L(st \Cdot\lambda) , L(s \Cdot\lambda) ] , \qquad \Delta(su\Cdot\lambda) = [ L(su \Cdot\lambda) , L(s \Cdot\lambda) ] . \]
Using the translation principle, it follows that the Weyl modules with $p$-singular $p$-restricted highest weights are all simple, and so the indecomposable tilting modules with $p$-singular $p$-restricted highest weights are also simple.
Furthermore, we have $T(\lambda) = \Delta(\lambda) = L(\lambda)$, and the tilting module $T(s\Cdot\lambda)$ is uniserial with composition series $T(s\Cdot\lambda) = [ L(\lambda) , L(s\Cdot\lambda) , L(\lambda) ]$.
The structure of the tilting modules $T(st\Cdot\lambda)$ and $T(su\Cdot\lambda)$ with highest weights in $C_{2a}$ and $C_{2b}$ is displayed in the following diagrams, where we replace a simple $G$-module $L(w\Cdot\lambda)$ by the label $w \in W_\mathrm{aff}^+$:
\[ T(st\Cdot\lambda) = 
	\begin{tikzpicture}[scale=.6,baseline={([yshift=-.5ex]current bounding box.center)}]
	\node (C1) at (0,0) {\small $s$};
	\node (C2) at (-1.5,1.5) {\small $st$};
	\node (C3) at (1.5,1.5) {\small $e$};
	\node (C4) at (0,3) {\small $s$};
	
	\draw (C1) -- (C2);
	\draw (C1) -- (C3);
	\draw (C2) -- (C4);
	\draw (C3) -- (C4);
	\end{tikzpicture} \hspace{2cm}
	T(su\Cdot\lambda) = 
	\begin{tikzpicture}[scale=.6,baseline={([yshift=-.5ex]current bounding box.center)}]
	\node (C1) at (0,0) {\small $s$};
	\node (C2) at (-1.5,1.5) {\small $su$};
	\node (C3) at (1.5,1.5) {\small $e$};
	\node (C4) at (0,3) {\small $s$};
	
	\draw (C1) -- (C2);
	\draw (C1) -- (C3);
	\draw (C2) -- (C4);
	\draw (C3) -- (C4);
	\end{tikzpicture} \]

\subsection{Proofs: complete reducibility} \label{subsec:A2proofsCR}

In this subsection, we establish all the necessary preliminary results for proving Theorem \ref{thm:A2CR}.
The proof of the theorem is given at the end of the subsection (see page \pageref{proof:A2CR}).

\begin{Remark} \label{rem:LeviprestrctedconditionA2}
	Let $\lambda,\mu \in X_1$ and write $\lambda = a\varpi_1 + b\varpi_2$ and $\mu = a^\prime\varpi_1 + b^\prime\varpi_2$, with $0 \leq a,b,a^\prime,b^\prime < p$.
	Suppose that $L(\lambda) \otimes L(\mu)$ is completely reducible.
	Then all composition factors of $L(\lambda) \otimes L(\mu)$ have $p$-restricted highest weights by Theorem A in \cite{GruberCompleteReducibility}.
	By truncation to the two Levi subgroups of type $\mathrm{A}_1$ corresponding to the simple roots $\alpha_1$ and $\alpha_2$ respectively, we see that $L(\lambda+\mu - c\alpha_1)$ and $L(\lambda+\mu-d\alpha_2)$ are composition factors of $L(\lambda) \otimes L(\mu)$ for all $0 \leq c \leq \min\{b,b^\prime\}$ and $0 \leq d \leq \min\{a,a^\prime\}$, each appearing with composition multiplicity one (cf.\ Remark 4.13 in \cite{GruberCompleteReducibility}).
	In particular, the weights $\lambda+\mu - c \alpha_1$ and $\lambda+\mu - d \alpha_2$ are $p$-restricted, and it follows that
	\begin{align*}
	a + a^\prime + \min\{ b , b^\prime \} & \leq p-1 , \\
	b + b^\prime + \min\{ a , a^\prime \} & \leq p-1 .
	\end{align*}
\end{Remark}

We next compute two explicit direct sum decompositions of tensor products.
These will be used in Lemma \ref{lem:A2api1tensoraprimepi1} below to establish the complete reducibility of tensor products of simple $G$-modules that satisfy the condition 1 in Table \ref{tab:A2CR}

\begin{Lemma} \label{lem:A2wallstensorpi1}
	We have the following direct sum decompositions of tensor products:
	\begin{enumerate}
		\item \quad $L( (p-1) \cdot \varpi_2 ) \otimes L(\varpi_1) \cong L( \varpi_1 + (p-1) \cdot \varpi_2 ) \oplus L( (p-2) \cdot \varpi_2 )$;
		\item \quad If $p \geq 3$ then $L( (p-2) \cdot \varpi_1 ) \otimes L(\varpi_1) \cong L( (p-1) \cdot \varpi_1 ) \oplus L( (p-3) \cdot \varpi_1 + \varpi_2 )$.
	\end{enumerate}
\end{Lemma}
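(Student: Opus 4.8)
The plan is to exploit that $L(\varpi_1)$ is minuscule, so that $L(\varpi_1) = \nabla(\varpi_1) = \Delta(\varpi_1) = T(\varpi_1)$ is a tilting module whose weights form the single $W_\mathrm{fin}$-orbit $\{\varpi_1,\,-\varpi_1+\varpi_2,\,-\varpi_2\}$, each occurring with multiplicity one. Both left-hand highest weights, $(p-1)\varpi_2$ and $(p-2)\varpi_1$, are $p$-restricted and $p$-singular: they satisfy $((p-1)\varpi_2+\rho,\alpha_2^\vee)=p$ and $((p-2)\varpi_1+\rho,\alpha_\mathrm{h}^\vee)=p$, so they lie on $H_{\alpha_2,1}$ and $H_{\alpha_\mathrm{h},1}$ respectively. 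By the structural results recalled in Subsection \ref{subsec:A2Weylmodulestiltingmodules}, the associated Weyl modules are simple, whence $L((p-1)\varpi_2)=\Delta((p-1)\varpi_2)=\nabla((p-1)\varpi_2)=T((p-1)\varpi_2)$, with character $\chi((p-1)\varpi_2)$, and similarly for $(p-2)\varpi_1$. In particular both tensor products in question are tensor products of tilting modules, hence tilting.

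Next I would compute the characters via Proposition \ref{prop:productWeylcharacters}, multiplying $\chi((p-1)\varpi_2)$ by the three weights of $L(\varpi_1)$:
\[ \ch\big(L((p-1)\varpi_2)\otimes L(\varpi_1)\big) = \chi(\varpi_1+(p-1)\varpi_2) + \chi(-\varpi_1+p\varpi_2) + \chi((p-2)\varpi_2). \]
The key simplification is that the middle term vanishes, since $(-\varpi_1+p\varpi_2,\alpha_1^\vee)=-1$ forces $\chi(-\varpi_1+p\varpi_2)=0$ by the defining properties of $\chi$. The analogous computation for part (2) produces $\chi((p-1)\varpi_1)+\chi((p-3)\varpi_1+\varpi_2)+\chi((p-2)\varpi_1-\varpi_2)$, where now the last term vanishes because $((p-2)\varpi_1-\varpi_2,\alpha_2^\vee)=-1$; the hypothesis $p\geq 3$ guarantees that $(p-3)\varpi_1+\varpi_2$ is dominant.

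To conclude, I would check that the two surviving weights on each right-hand side are again $p$-restricted and $p$-singular: in part (1) one has $(\varpi_1+(p-1)\varpi_2+\rho,\alpha_2^\vee)=p$ and $((p-2)\varpi_2+\rho,\alpha_\mathrm{h}^\vee)=p$, and in part (2) one has $((p-1)\varpi_1+\rho,\alpha_1^\vee)=p$ and $((p-3)\varpi_1+\varpi_2+\rho,\alpha_\mathrm{h}^\vee)=p$. Hence each of these four simple modules is simultaneously a Weyl, induced and tilting module, with character $\chi(\cdot)$. Thus in part (1) the module $L(\varpi_1+(p-1)\varpi_2)\oplus L((p-2)\varpi_2)$ is a tilting module whose character agrees with that of $L((p-1)\varpi_2)\otimes L(\varpi_1)$, and likewise in part (2). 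Since a tilting module is determined up to isomorphism by its character (the characters of the indecomposable tilting modules being linearly independent), the claimed direct sum decompositions follow.

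I do not expect a genuine obstacle here: the argument is essentially a bookkeeping exercise, and the only points demanding care are identifying which weights are $p$-singular, verifying that exactly one character in each product vanishes, and confirming that the structural input from Subsection \ref{subsec:A2Weylmodulestiltingmodules} remains valid at the small primes $p=2$ (relevant only for part (1)) and $p=3$.
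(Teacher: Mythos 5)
Your proof is correct, but it takes a genuinely different route from the paper. The paper argues at the level of modules: since $L((p-1)\varpi_2)=\Delta((p-1)\varpi_2)$, it applies Lemma \ref{lem:minusculetensorproducttranslation} together with Proposition II.7.13 in \cite{Jantzen} (translation functors send Weyl modules to Weyl modules or to zero) to obtain directly
$L((p-1)\varpi_2)\otimes L(\varpi_1)\cong \Delta(\varpi_1+(p-1)\varpi_2)\oplus\Delta((p-2)\varpi_2)$,
and then identifies each Weyl summand as simple via Subsection \ref{subsec:A2Weylmodulestiltingmodules}; part (2) is analogous. You instead work at the level of characters, using Proposition \ref{prop:productWeylcharacters} and the vanishing rule $\chi(\nu)=0$ when $(\nu,\alpha^\vee)=-1$, and then upgrade the character identity to a module isomorphism by observing that both sides are tilting modules and that a tilting module is determined by its character. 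That last fact (linear independence of the characters $\ch T(\lambda)$, via unitriangularity with respect to the $\chi(\mu)$) is standard and correct, but it is not recorded anywhere in the paper; if you wanted to stay entirely within the paper's toolkit you could close the argument instead as the paper does for the analogous $\mathrm{Sp}_4$ computations (e.g.\ Proposition \ref{prop:B2MFCRp-1_1}): your character identity exhibits $L(\lambda)\otimes L(\mu)$ as having exactly two composition factors, each of multiplicity one, so it is multiplicity free, hence completely reducible by Lemma \ref{lem:MFimpliesCR}, hence equal to the direct sum of its composition factors. The trade-off is that the paper's route needs translation-functor machinery but yields the module decomposition in one step, whereas yours is elementary bookkeeping plus a rigidity statement for tilting modules; all your weight and singularity computations check out, including the small-prime cases covered by the $p\leq 3$ remark in Subsection \ref{subsec:A2Weylmodulestiltingmodules}.
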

\begin{proof}
	By Subsection \ref{subsec:A2Weylmodulestiltingmodules}, we have $L( (p-1) \cdot \varpi_2 ) \cong \Delta( (p-1) \cdot \varpi_2 )$, and using Lemma \ref{lem:minusculetensorproducttranslation} and Proposition II.7.13 in \cite{Jantzen}, it follows that
	\[ L( (p-1) \cdot \varpi_2 ) \otimes L(\varpi_1) \cong \Delta( (p-1) \cdot \varpi_1 + \varpi_2 ) \oplus \Delta( (p-2) \cdot \varpi_2 ) . \]
	Now the first claim follows because
	\[ \Delta( (p-1) \cdot \varpi_1 + \varpi_2 ) = L( (p-1) \cdot \varpi_1 + \varpi_2 ) , \qquad \Delta( (p-2) \cdot \varpi_2 ) = L( (p-2) \cdot \varpi_2 ) , \]
	again by Subsection \ref{subsec:A2Weylmodulestiltingmodules}.
	The proof of (2) is analogous.
\end{proof}

\begin{Lemma} \label{lem:A2api1tensoraprimepi1}
	Let $0 \leq a \leq p-1$ and $a^\prime = p-1-a$.
	Then $L(a\varpi_1) \otimes L(a^\prime \varpi_1)$ is completely reducible.
\end{Lemma}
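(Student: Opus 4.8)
The plan is to exhibit $M \coloneqq L(a\varpi_1) \otimes L(a^\prime\varpi_1)$ as a tilting module all of whose indecomposable summands are simple. Assume without loss of generality that $a \leq a^\prime$, so that $a \leq (p-1)/2$. Since $0 \leq a, a^\prime \leq p-1$, each of the weights $a\varpi_1$ and $a^\prime\varpi_1$ lies either in the interior of $C_0$ (when its coefficient is at most $p-3$) or is $p$-singular and $p$-restricted (when the coefficient is $p-2$ or $p-1$, the value $(\lambda+\rho,\alpha^\vee)$ meeting the wall $F_{0,1}=H_{\alpha_\mathrm{h},1}$ or $H_{\alpha_1,1}$, respectively). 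In either case $L = \Delta = \nabla = T$, by the preliminaries for $C_0$-weights and by Subsection \ref{subsec:A2Weylmodulestiltingmodules} for the $p$-singular $p$-restricted ones. Hence $L(a\varpi_1) = T(a\varpi_1)$ and $L(a^\prime\varpi_1) = T(a^\prime\varpi_1)$ are indecomposable tilting modules, and since $\Tilt(G)$ is closed under tensor products, $M$ is a tilting module.

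Next I would pin down the composition factors of $M$. As a tensor product of induced modules, $M$ has a good filtration, and by the discussion in Subsection \ref{subsec:char0} its good filtration multiplicities are computed in characteristic zero via $[M : \nabla(\nu)]_\nabla = [L_\C(a\varpi_1) \otimes L_\C(a^\prime\varpi_1) : L_\C(\nu)]$. The classical decomposition of a product of symmetric powers of the natural $\SL_3$-module (Pieri's rule $h_a h_{a^\prime} = \sum_{j=0}^{a} s_{(a+a^\prime-j,\,j)}$), together with $a+a^\prime = p-1$, yields
\[ L_\C(a\varpi_1) \otimes L_\C(a^\prime\varpi_1) \cong \bigoplus_{j=0}^{a} L_\C(\nu_j) , \qquad \nu_j = (p-1-2j)\varpi_1 + j\varpi_2 , \]
each summand occurring exactly once. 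Thus the good filtration factors of $M$ are precisely the $\nabla(\nu_j)$ for $0 \leq j \leq a$, and every $\nu_j$ is dominant and $p$-restricted (as $j \leq a \leq (p-1)/2$).

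The heart of the matter is then to verify that every $\nu_j$ yields a \emph{simple} tilting module. A direct computation gives $(\nu_j+\rho,\alpha_1^\vee) = p-2j$, $(\nu_j+\rho,\alpha_2^\vee) = j+1$, and $(\nu_j+\rho,\alpha_\mathrm{h}^\vee) = p-j+1$. Hence $\nu_0 = (p-1)\varpi_1$ lies on $H_{\alpha_1,1}$ and $\nu_1 = (p-3)\varpi_1 + \varpi_2$ lies on $F_{0,1}$, so both are $p$-singular and $p$-restricted; while for $2 \leq j \leq a \leq (p-1)/2$ all three values lie strictly between $0$ and $p$, so $\nu_j \in C_0$ is $p$-regular. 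In every case $T(\nu_j) \cong L(\nu_j)$, again by Subsection \ref{subsec:A2Weylmodulestiltingmodules} for the two singular weights and by $T(\nu)=L(\nu)$ for $\nu \in C_0$.

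Finally, writing the tilting module $M$ as a direct sum of indecomposable tilting modules, the highest weights occurring are among the highest weights of composition factors of $M$, i.e.\ among the $\nu_j$; since each $T(\nu_j) \cong L(\nu_j)$ is simple, $M$ is a direct sum of simple modules and therefore completely reducible. (As a sanity check, the case $a=1$, $a^\prime=p-2$ recovers decomposition (2) of Lemma \ref{lem:A2wallstensorpi1}.) I expect the only genuinely delicate point to be the alcove bookkeeping of the third paragraph—confirming that no $\nu_j$ is $p$-regular outside $C_0$ nor $p$-singular outside $X_1$—but this reduces entirely to the three elementary inequalities recorded there.
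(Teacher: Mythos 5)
Your proof is correct, but it takes a genuinely different route from the paper's. The paper argues by a bootstrap: it realizes $L(a\varpi_1)$ as a direct summand of $L((a-1)\varpi_1)\otimes L(\varpi_1)$, notes that $(a-1)\varpi_1$ is reflection small with respect to $a^\prime\varpi_1$ so that Theorem \ref{thm:reflectionsmalltensorproduct} makes $L((a-1)\varpi_1)\otimes L(a^\prime\varpi_1)$ a direct sum of simples with highest weights in $\widehat{C}_0$ (only $(p-2)\varpi_1$ lying outside $C_0$), and then reduces everything to the complete reducibility of $L(\nu)\otimes L(\varpi_1)$ for $\nu\in C_0\cup\{(p-2)\varpi_1\}$, handled by Corollary \ref{cor:minusculetensorproductMFCR} and Lemma \ref{lem:A2wallstensorpi1}. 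You instead observe that both tensor factors are simple tilting modules, so the product is tilting; you compute its good filtration factors in characteristic zero via Pieri's rule, and check that every candidate highest weight $\nu_j$ of an indecomposable tilting summand satisfies $T(\nu_j)\cong L(\nu_j)$ (the two $p$-singular weights $\nu_0,\nu_1$ by Subsection \ref{subsec:A2Weylmodulestiltingmodules}, the rest because they lie in $C_0$). Your route avoids the reflection-smallness machinery entirely and yields strictly more: the explicit multiplicity-free decomposition $L(a\varpi_1)\otimes L(a^\prime\varpi_1)\cong\bigoplus_{j=0}^{a}L\big((p-1-2j)\varpi_1+j\varpi_2\big)$, whereas the paper deduces multiplicity freeness of this case only later, in Theorem \ref{thm:A2MF}, from one-dimensionality of weight spaces (Lemma \ref{lem:A2onedimensionalweightspaces} and Corollary \ref{cor:onedimensionalweightspacesMF}). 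What your approach costs is its reliance on both factors being tilting modules — a feature special to weights on the $\varpi_1$-axis, whereas the paper's bootstrap pattern recurs in its other arguments (e.g.\ Proposition \ref{prop:A2lambdaabovewall}) — together with the imported characteristic-zero Pieri computation. The alcove bookkeeping you flagged as the delicate point is indeed the only place needing care, and your three inequalities settle it correctly, including the boundary cases $\nu_0=(p-1)\varpi_1$ on $H_{\alpha_1,1}$ and $\nu_1=(p-3)\varpi_1+\varpi_2$ on $F_{0,1}$.
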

\begin{proof}
	If $a=0$ or $a=p-1$ then the claim is trivially satisfied.
	Now suppose that $0<a<p-1$, and observe that $L(a\varpi_1)$ is a direct summand of $L((a-1)\cdot\varpi_1) \otimes L(\varpi_1)$ by Corollary \ref{cor:minusculetensorproductMFCR}.
	Let $M$ be an indecomposable direct summand of $L(a\varpi_1) \otimes L(a^\prime\varpi_1)$; then $M$ is also a direct summand of the tensor product $L((a-1)\cdot\varpi_1) \otimes L(a^\prime\varpi_1) \otimes L(\varpi_1)$.
	The weight $(a-1) \cdot \varpi_1$ is reflection small with respect to $a^\prime \varpi_1$ by Example \ref{ex:A2reflectionsmall}, so $L((a-1)\cdot\varpi_1) \otimes L(a^\prime\varpi_1)$ is a direct sum of simple $G$-modules with highest weights in $\widehat{C}_0$ by Theorem \ref{thm:reflectionsmalltensorproduct}.
	It is straightforward to see by weight considerations that the only simple direct summand of $L((a-1)\cdot\varpi_1) \otimes L(a^\prime\varpi_1)$ with highest weight in $\widehat{C}_0 \setminus C_0$ is $L((p-2)\cdot\varpi_1)$, and therefore $M$ is a direct summand of $L(\nu) \otimes L(\varpi_1)$ for some $\nu \in C_0 \cup \{(p-2) \cdot \varpi_1\}$.
	Now $L(\nu) \otimes L(\varpi_1)$ is completely reducible by Corollary \ref{cor:minusculetensorproductMFCR} (for $\nu \in C_0$) and Lemma \ref{lem:A2wallstensorpi1} (for $\nu = (p-2) \cdot \varpi_1$), so $M$ is simple and $L(\lambda) \otimes L(\mu)$ is completely reducible, as claimed.
\end{proof}

Our next goal is to prove that a tensor product $L(\lambda) \otimes L(\mu)$ with $\lambda \in \widehat{C}_1$ and $\mu \in X_1$ is completely reducible if and only if one of the conditions 2--4 or 2*--3* from Table \ref{tab:A2CR} are satisfied.
This will follow from Lemma \ref{lem:A2lambdainC1reflectionsmall} and Proposition \ref{prop:A2lambdaabovewall} below.

\begin{Lemma} \label{lem:A2lambdainC1reflectionsmall}
	Let $\lambda = a\varpi_1 + b\varpi_2 \in \widehat{C}_1 \cap X$ and $\mu = a^\prime \varpi_1 + b^\prime \varpi_2 \in X_1 \setminus \{0\}$ such that $L(\lambda) \otimes L(\mu)$ is completely reducible.
	Then one of the following holds:
	\begin{enumerate}
		\item $\mu$ is reflection small with respect to $\lambda$;
		\item $a+b = p-1$ and $a^\prime=0$ and $b < b^\prime \leq a$;
		\item $a+b = p-1$ and $b^\prime=0$ and $a < a^\prime \leq b$.
	\end{enumerate}
\end{Lemma}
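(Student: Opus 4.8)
The plan is to analyze the constraints imposed by complete reducibility of $L(\lambda) \otimes L(\mu)$ when $\lambda = a\varpi_1 + b\varpi_2 \in \widehat{C}_1 \cap X$, and to show that if $\mu$ is \emph{not} reflection small with respect to $\lambda$, then we are forced into one of the two boundary cases (2) or (3). The key tools are the explicit description of reflection smallness for $\widehat{C}_1$ from Example~\ref{ex:A2reflectionsmall} (namely $a + a^\prime + b^\prime \leq p-1$ and $b + a^\prime + b^\prime \leq p-1$), together with the two numerical inequalities from Remark~\ref{rem:LeviprestrctedconditionA2} that follow from complete reducibility:
\[ a + a^\prime + \min\{b, b^\prime\} \leq p-1 \qquad \text{and} \qquad b + b^\prime + \min\{a, a^\prime\} \leq p-1. \]

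**The main case analysis.**
First I would observe that since $\lambda \in \widehat{C}_1 \cap X$, we have $a + b \geq p-1$ (the defining inequality for the upper closure of $C_1$, together with $a, b \leq p-1$). Suppose that $\mu$ is not reflection small with respect to $\lambda$; then at least one of the two reflection-smallness inequalities fails. By the symmetry $\alpha_1 \leftrightarrow \alpha_2$ (i.e.\ interchanging the roles of the two simple roots, which swaps $a \leftrightarrow b$ and $a^\prime \leftrightarrow b^\prime$), I may assume without loss of generality that the first one fails, so that
\[ a + a^\prime + b^\prime \geq p. \]
Comparing this with the first Levi inequality $a + a^\prime + \min\{b, b^\prime\} \leq p-1$ immediately forces $\min\{b, b^\prime\} < b^\prime$, hence $b < b^\prime$, and moreover $a + a^\prime + b \leq p-1$. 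The hard part is to squeeze these inequalities together with $a + b \geq p-1$ to pin down $a^\prime$ and the remaining relations. From $a + a^\prime + b \leq p-1 \leq a + b$ I get $a^\prime \leq 0$, hence $a^\prime = 0$; and then $a + b \leq p-1$ combined with $a + b \geq p-1$ gives $a + b = p-1$, which is condition (2)'s first clause. It remains to verify $b^\prime \leq a$: this should follow from the second reflection-smallness inequality or the second Levi inequality, since with $a^\prime = 0$ the condition $b + b^\prime + \min\{a, a^\prime\} \leq p-1$ reads $b + b^\prime \leq p-1 = a + b$, giving $b^\prime \leq a$ as required.

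**Anticipated obstacle and symmetry bookkeeping.**
I expect the main obstacle to be the careful bookkeeping of which inequalities are strict and confirming that the failure of reflection smallness genuinely lands us in case (2) or (3) rather than in some excluded degenerate configuration (for instance, I must rule out $b = b^\prime$ or $b^\prime \leq b$ once the first smallness inequality fails). The symmetry argument reducing the second failure case to the first via $\alpha_1 \leftrightarrow \alpha_2$ must be stated cleanly, since swapping the simple roots sends condition (2) to condition (3), so handling one case suffices. One subtlety I would double-check is whether both reflection-smallness inequalities can fail simultaneously: if so, applying the above argument to each would force both $a^\prime = 0$ and $b^\prime = 0$, contradicting $\mu \neq 0$ together with $a + a^\prime + b^\prime \geq p \geq 1$; so in fact exactly one of the two fails, which is consistent with landing in precisely one of (2), (3). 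Assembling these deductions yields the trichotomy, completing the proof.
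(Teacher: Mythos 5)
Your proposal is correct and follows essentially the same route as the paper's proof: both combine the Levi-truncation inequalities of Remark \ref{rem:LeviprestrctedconditionA2} with the explicit reflection-smallness criterion of Example \ref{ex:A2reflectionsmall} and the bound $a+b \geq p-1$ from $\lambda \in \widehat{C}_1$ to force $a'=0$, $a+b=p-1$ and $b < b' \leq a$ (resp.\ the symmetric case). The only difference is organizational: the paper splits on whether $a' \leq a$ and $b' \leq b$ versus $b' > b$ (resp.\ $a' > a$), whereas you split on failure of a reflection-smallness inequality and deduce $b < b'$ from it, which amounts to the same computation.
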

\begin{proof}
	If $a^\prime \leq a$ and $b^\prime \leq b$ then $a + a^\prime + b^\prime \leq p-1$ and $b + a^\prime + b^\prime \leq p-1$ by Remark \ref{rem:LeviprestrctedconditionA2}, and so $\mu$ is reflection small with respect to $\lambda$ (see Example \ref{ex:A2reflectionsmall}).
	If $b^\prime>b$ then Remark \ref{rem:LeviprestrctedconditionA2} and the fact that $\lambda$ belongs to the upper closure of $C_1$ imply that
	\[ p-1 \leq a+b \leq a+b+a^\prime \leq p-1 , \] and so $a^\prime = 0$ and $a+b = p-1$.
	The inequality $b^\prime \leq p-1-b = a$ also follows from Remark \ref{rem:LeviprestrctedconditionA2}, and the case $a^\prime>a$ is analogous.
\end{proof}

\begin{Proposition} \label{prop:A2directsumandsC1tilting}
	Let $\lambda \in C_1 \cap X$ and let $T$ be a tilting module.
	If $M$ is an indecomposable direct summand of $L(\lambda) \otimes T$ then either $M$ is a negligible tilting module or $M \cong L(\mu)$ for some $\mu \in C_1$.
\end{Proposition}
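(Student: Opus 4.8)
The plan is to split the indecomposable summands of $L(\lambda) \otimes T$ into those that are singular and those that are regular, and to treat the two classes with the tensor ideal of singular modules and with the translation principle, respectively. Write $\lambda = s \Cdot \lambda_0$ with $\lambda_0 \in C_0 \cap X$, so that $C_1 = s \Cdot C_0$, and let $M$ be an indecomposable summand. I would first dispose of the singular summands using the key structural input that the $p$-singular blocks of $\SL_3(\kk)$ are semisimple: by Subsection \ref{subsec:A2Weylmodulestiltingmodules} every Weyl module with $p$-singular $p$-restricted highest weight is simple, and passing from $p$-restricted to arbitrary $p$-singular weights via the translation principle gives $\Delta(\tau) \cong L(\tau) \cong \nabla(\tau)$ for every $p$-singular $\tau \in X^+$, whence $\Rep_\sigma(G)$ is semisimple for each $p$-singular $\sigma \in \overline{C}_0 \cap X$. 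Thus, if $M$ is singular it lies in such a block and, being indecomposable, is simple: $M \cong L(\tau)$ with $\tau$ $p$-singular. Since $\tau$ is $p$-singular we have $L(\tau) = T(\tau)$, a negligible tilting module, which settles the singular case.

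For the regular summands I would combine the tensor ideal property with the translation principle. Decompose $T = T_\mathrm{reg} \oplus T_\mathrm{sing}$, where $T_\mathrm{sing}$ is the negligible part of $T$ (a direct sum of indecomposable tilting modules $T(\sigma)$ with $\sigma \notin C_0$, hence singular) and $T_\mathrm{reg} \cong \bigoplus_{\eta \in C_0 \cap X} L(\eta)^{\oplus m_\eta}$. As $T_\mathrm{sing}$ is singular, so is $L(\lambda) \otimes T_\mathrm{sing}$, and all of its indecomposable summands are singular; therefore every regular summand $M$ is already a summand of $L(\lambda) \otimes T_\mathrm{reg} \cong \bigoplus_{\eta} \big( L(\lambda) \otimes L(\eta) \big)^{\oplus m_\eta}$, and hence of some $L(s \Cdot \lambda_0) \otimes L(\eta)$ with $\eta \in C_0 \cap X$. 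Now the translation principle for tensor products (Theorem \ref{thm:translationprincipletensorproducts}, exactly as exploited in the proof of Lemma \ref{lem:MFboundVerlindecoeff}) yields $\big( L(s \Cdot \lambda_0) \otimes L(\eta) \big)_\mathrm{reg} \cong \bigoplus_{\nu \in C_0 \cap X} L(s \Cdot \nu)^{\oplus c_{\lambda_0,\eta}^\nu}$, and every $s \Cdot \nu$ lies in $s \Cdot C_0 = C_1$. Hence $M \cong L(\mu)$ for some $\mu \in C_1$, as required, and combining the two cases proves the proposition.

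The main obstacle is precisely the control of the singular summands, that is, the semisimplicity of the $p$-singular blocks of $\SL_3(\kk)$; once this is granted, the regular summands are handled by a formal argument using only that singular modules form a tensor ideal and the translation principle. I would therefore invest the most care in justifying that all $p$-singular Weyl modules are simple, either by the translation argument reducing an arbitrary $p$-singular weight to the $p$-restricted case treated in Subsection \ref{subsec:A2Weylmodulestiltingmodules}, or by invoking the known block structure for $\SL_3(\kk)$. I would also note for completeness that the two alternatives in the conclusion are genuinely distinct: for $\mu \in C_1$ the tilting module $T(\mu)$ is uniserial of length three, so $L(\mu)$ is not tilting, and no regular summand is ever a negligible tilting module.
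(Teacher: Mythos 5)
Your handling of the regular summands is correct and is a genuinely different (and slicker) route than the paper's for that half: splitting off the negligible part of $T$ and applying Theorem \ref{thm:translationprincipletensorproducts} exactly as in the proof of Lemma \ref{lem:MFboundVerlindecoeff} does show that every regular indecomposable summand is isomorphic to $L(s\Cdot\nu)$ for some $\nu \in C_0 \cap X$, hence is of the form $L(\mu)$ with $\mu \in C_1$. The gap is in the singular case, which is where the actual content of the proposition lies. You conflate two distinct notions: a \emph{singular module} in the sense of Subsection \ref{subsec:singular} (membership in the thick tensor ideal generated by negligible tilting modules) and a module lying in a \emph{$p$-singular linkage class}. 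These do not coincide: the tilting module $T(s\Cdot\lambda_0)$ for $\lambda_0 \in C_0$ is negligible, hence singular, yet it lies in the $p$-regular linkage class $\Rep_{\lambda_0}(G)$ and is indecomposable of length three. So the inference ``$M$ singular indecomposable $\Rightarrow$ $M$ lies in a $p$-singular block $\Rightarrow$ $M$ is simple'' fails precisely for the class of modules the proposition is about, and your argument never excludes the possibility that a singular indecomposable summand of $L(\lambda) \otimes T$ is some non-tilting singular module. Ruling that out is the nontrivial assertion; it is not a formal consequence of the tensor ideal property.

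Moreover, the structural input you propose to lean on is itself false: the $p$-singular blocks of $\SL_3(\kk)$ are not all semisimple, and your translation argument only covers $p$-singular weights in the closures of the lowest alcoves. At a special point one has $\Delta\big( (p-1)\rho + p\nu \big) \cong L\big( (p-1)\rho \big) \otimes \Delta(\nu)^{[1]}$, which is non-simple whenever $\Delta(\nu)$ is (take $\nu \in C_1$, which is possible for every $p \geq 3$, i.e.\ whenever $C_1 \cap X \neq \emptyset$). The paper's proof is engineered to sidestep both problems: it argues by induction on the highest weight of $T$, so that one only ever tensors with $T(\varpi_1) = L(\varpi_1)$, and the dichotomy (negligible tilting module, or $L(\mu)$ with $\mu \in C_1$) is preserved at each step because negligible tilting modules form a thick tensor ideal in $\Tilt(G)$, while $L(\mu) \otimes L(\varpi_1)$ is completely reducible with summands having highest weights in $\widehat{C}_1$ by Corollary \ref{cor:minusculetensorproductMFCR}, and the summands on $\widehat{C}_1 \setminus C_1$ are themselves negligible tilting. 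That inductive bookkeeping is exactly what replaces the control of the singular summands that is missing from your proposal; as it stands, your proof establishes only the statement about regular summands.
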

\begin{proof}
	We may assume without loss of generality that $T = T(\nu)$ for some $\nu = a \varpi_1 + b \varpi_2 \in X^+$, and we prove the claim by induction on $a+b$.
	The case $a+b=0$ is trivial since $L(\lambda) \otimes T(0) \cong L(\lambda)$.
	Now suppose that $a+b \neq 0$.
	By symmetry, we may further assume that $a \neq 0$, whence $T(\nu)$ is a direct summand of $T(\nu-\varpi_1) \otimes T(\varpi_1)$, and every indecomposable direct summand $M$ of $T(\nu) \otimes L(\lambda)$ is a direct summand of $N \otimes T(\varpi_1)$, for some indecomposable direct summand $N$ of $T(\nu-\varpi_1) \otimes L(\lambda)$.
	By induction, we know that either $N$ is a negligible tilting module or $N \cong L(\mu)$ for some $\mu \in C_1$.
	If $N$ is a negligible tilting module then so is $M$ because the negligible tilting modules form a thick tensor ideal in $\Tilt(G)$ (cf.\ Subsection \ref{subsec:singular}).
	If $N \cong L(\mu)$ for some $\mu \in C_1$ then $T(\varpi_1) \otimes N \cong L(\varpi_1) \otimes L(\mu)$ is a direct sum of simple $G$-modules with highest weights in the upper closure of $C_1$ by Corollary \ref{cor:minusculetensorproductMFCR}, and so $M \cong L(\mu^\prime)$ for some weight $\mu^\prime \in \widehat{C}_1$.
	Now it only remains to observe that for $\mu \in \widehat{C}_1 \setminus C_1$, the simple $G$-module $L(\mu) = T(\mu)$ is a negligible tilting module; see Subsection \ref{subsec:A2Weylmodulestiltingmodules}.
\end{proof}

\begin{Proposition} \label{prop:A2lambdaabovewall}
	Let $\lambda = a \varpi_1 + b\varpi_2 \in \widehat{C}_1 \cap X$ and $\mu = b^\prime \varpi_2$ with $a+b = p-1$ and $b < b' \leq a$.
	Then the tensor product $L(\lambda) \otimes L(\mu)$ is completely reducible if and only if $b^\prime = b+1$.
\end{Proposition}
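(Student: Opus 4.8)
The plan is to reduce the statement to a computation of composition multiplicities and then resolve it by an explicit character calculation. I begin with two structural observations about $L(\mu)$. Since $\mu = b'\varpi_2$ with $b' \leq a \leq p-1$, the weight $\mu$ lies in $\overline{C}_0 \cap X$ when $b' \leq p-2$ and is $p$-singular when $b' = p-1$; in either case Subsection~\ref{subsec:A2Weylmodulestiltingmodules} gives $L(\mu) \cong \nabla(\mu) \cong \Delta(\mu)$, so $\ch L(\mu) = \chi(b'\varpi_2)$. Moreover all weight spaces of $L(\mu)$ are at most one-dimensional, because $\chi(b'\varpi_2)$ is the character of $\mathrm{Sym}^{b'}$ of the three-dimensional module $\nabla(\varpi_2)$, whose monomials of degree $b'$ have pairwise distinct weights. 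By Corollary~\ref{cor:onedimensionalweightspacesMF} and Lemma~\ref{lem:MFimpliesCR} it follows that $L(\lambda) \otimes L(\mu)$ is completely reducible if and only if it is multiplicity free, so it suffices to compute the composition multiplicities and to show that they are all at most $1$ exactly when $b' = b+1$.

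To compute them I would use $\ch\big(L(\lambda) \otimes L(\mu)\big) = \ch L(\lambda) \cdot \chi(b'\varpi_2)$, where $\ch L(\lambda) = \chi(\lambda) - \chi(\lambda_0)$ with $\lambda_0 = \lambda - \alpha_\mathrm{h} = (a-1)\varpi_1 + (b-1)\varpi_2 \in C_0$ when $\lambda \in C_1$ (by the structure $\Delta(\lambda) = [L(\lambda), L(\lambda_0)]$ from Subsection~\ref{subsec:A2Weylmodulestiltingmodules}), and $\ch L(\lambda) = \chi(\lambda)$ when $\lambda = (p-1)\varpi_1$ is $p$-singular. Expanding by Proposition~\ref{prop:productWeylcharacters} turns this into $\sum_{\nu} \chi(\lambda + \nu) - \sum_\nu \chi(\lambda_0 + \nu)$, with $\nu$ ranging over the (one-dimensional) weights of $\nabla(b'\varpi_2)$, and each summand is straightened to a dominant Weyl character using $\chi(w \Cdot \kappa) = (-1)^{\ell(w)}\chi(\kappa)$ for $w \in W_\mathrm{fin}$ together with the vanishing of $\chi$ on the finite reflection hyperplanes. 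The result is a $\Z$-combination of characters $\chi(\kappa)$ with $\kappa \in \overline{C}_0 \cup \overline{C}_1$, which I would convert into composition multiplicities by inverting the unitriangular relations $\ch L(\kappa) = \chi(\kappa)$ for $\kappa \in \overline{C}_0$ and $\ch L(\kappa) = \chi(\kappa) - \chi(s_0 \Cdot \kappa)$ for $\kappa \in C_1$.

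The decisive quantity is the multiplicity of $L(\lambda_0)$. Since the straightening leaves only characters $\chi(\kappa)$ with $\kappa \in \overline{C}_0 \cup \overline{C}_1$, and the only surviving factors whose characters involve $\chi(\lambda_0)$ are $L(\lambda_0)$ and $L(\lambda)$ (recall $s_0 \Cdot \lambda = \lambda_0$), one obtains $[L(\lambda) \otimes L(\mu) : L(\lambda_0)] = [L(\lambda)\otimes L(\mu) : L(\lambda)] + (\text{coefficient of } \chi(\lambda_0))$. The computation shows that this vanishes when $b' = b+1$, in which case all multiplicities are $\leq 1$ and the tensor product is multiplicity free, hence completely reducible; and that it is at least $2$ when $b' \geq b+2$. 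In the latter case $\dim L(\mu)_{\lambda_0 - \lambda} = \dim L(\mu)_{-\alpha_\mathrm{h}} \leq 1 < 2$, so Lemma~\ref{lem:boundformultiplicityintensorproductifCR} rules out complete reducibility. The remaining case $b' = 1$ forces $b = 0$, and there complete reducibility follows directly from the $\varpi_1 \leftrightarrow \varpi_2$ symmetric version of Lemma~\ref{lem:A2wallstensorpi1}.

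The main obstacle is the straightening bookkeeping: one must control which shifted weights $\lambda + \nu$ and $\lambda_0 + \nu$ land on a finite reflection hyperplane (contributing $0$) or must be reflected (contributing a sign), verify the systematic cancellations that keep all surviving factors in $\overline{C}_0 \cup \overline{C}_1$ (in particular that the weights crossing the wall $F_{1,2a}$ cancel, so that no factor in a higher alcove survives), and pin down the jump of the multiplicity of $L(\lambda_0)$ from $0$ to $\geq 2$ as $b'$ passes from $b+1$ to $b+2$. As a sanity anchor, note that $b\varpi_2$ \emph{is} reflection small with respect to $\lambda$ by Example~\ref{ex:A2reflectionsmall}, so $L(\lambda) \otimes L(b\varpi_2)$ is completely reducible by Theorem~\ref{thm:reflectionsmalltensorproduct}; the value $b' = b+1$ is precisely the boundary case in which the single additional wall-crossing weight falls on a finite reflection hyperplane and contributes $0$, which is exactly the accident that preserves complete reducibility.
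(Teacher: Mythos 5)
Your reduction at the start is correct and even elegant: since all weight spaces of $L(b'\varpi_2)$ are one-dimensional, Corollary \ref{cor:onedimensionalweightspacesMF} together with Lemma \ref{lem:MFimpliesCR} does show that complete reducibility of $L(\lambda)\otimes L(\mu)$ is equivalent to multiplicity freeness, so the proposition becomes a statement about composition multiplicities, and the character identity $\ch\big(L(\lambda)\otimes L(\mu)\big)=\big(\chi(\lambda)-\chi(\lambda_0)\big)\cdot\chi(b'\varpi_2)$ (for $\lambda\in C_1$) is the right starting point. The problem is that everything after this point is asserted rather than proven. The entire content of the proposition is exactly the dichotomy you state as ``the computation shows'': that $[L(\lambda)\otimes L(\mu):L(\lambda_0)]$ vanishes and all multiplicities are $\leq 1$ when $b'=b+1$, and that this multiplicity is $\geq 2$ when $b'\geq b+2$. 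You never perform the straightening, never verify the claimed cancellations across the walls $F_{1,2a}$, $F_{1,2b}$, and never establish the jump from $0$ to $\geq 2$ -- indeed you explicitly list all of this as ``the main obstacle'' and leave it unresolved. Spot checks (e.g.\ $p=5$, $\lambda=3\varpi_1+\varpi_2$, $b'=2,3$) confirm the dichotomy, so the strategy could very likely be completed, but as written this is a plan with the decisive step left as a black box, not a proof.

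There is also a structural gap in your case division. Your ``decisive quantity'' $[L(\lambda)\otimes L(\mu):L(\lambda_0)]$ only makes sense for $\lambda\in C_1$, i.e.\ $b\geq 1$, since for $b=0$ the weight $\lambda_0=\lambda-\alpha_\mathrm{h}=(p-2)\varpi_1-\varpi_2$ is not dominant and $\ch L(\lambda)=\chi(\lambda)$. Your fallback for $p$-singular $\lambda=(p-1)\varpi_1$ is the symmetric version of Lemma \ref{lem:A2wallstensorpi1}, but that only treats $b'=1$ (the complete reducibility direction); the case $b=0$, $2\leq b'\leq p-1$, where non-complete-reducibility must be shown, is covered by neither branch of your argument, and there the multiplicity-two factor is a different simple module, so the argument needs a genuine modification rather than a remark. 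For comparison, the paper avoids all character computations: for $b'>b+1$ it produces a nonzero homomorphism $L(\lambda)\otimes L(\mu)\to L(\nu)$ with $\nu=(p-1-b'+b)\varpi_1\in C_0$ by an adjunction and tilting-module argument, and rules out $L(\nu)$ as a direct summand via Proposition \ref{prop:A2directsumandsC1tilting}; for $b'=b+1$ it realizes $L(\mu)$ inside $L(b\varpi_2)\otimes L(\varpi_2)$ and combines reflection smallness of $b\varpi_2$ (Theorem \ref{thm:reflectionsmalltensorproduct}) with the minuscule tensor product decomposition (Corollary \ref{cor:minusculetensorproductMFCR}) -- a route that handles the singular case $b=0$ uniformly.
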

\begin{proof}
	First suppose that $b^\prime > b+1$ and consider the weight $\nu = (p-1-b^\prime+b) \cdot \varpi_1 \in C_0$.
	By the results in Subsection \ref{subsec:A2Weylmodulestiltingmodules}, we have $L(\mu) = T(\mu)$ and $L(\nu) = T(\nu)$, and it is straightforward to see using weight considerations that the indecomposable tilting module of highest weight
	\[ -w_0(\mu)+\nu = (p-1+b) \cdot \varpi_1 = sus \Cdot \lambda \in C_{2a} \]
	is a direct summand of $L(\mu)^* \otimes L(\nu)$.
	Since $L(\lambda)$ is a submodule of $T(sus\Cdot\lambda)$ by Subsection \ref{subsec:A2Weylmodulestiltingmodules}, we conclude that
	\[ 0 \neq \Hom_G\big( L(\lambda) , L(\mu)^* \otimes L(\nu) \big) \cong \Hom_G\big( L(\lambda) \otimes
	 L(\mu) , L(\nu) \big) . \]
	The non-negligible tilting module $T(\nu) \cong L(\nu)$ cannot be a direct summand of $L(\lambda) \otimes L(\mu)$ by Proposition \ref{prop:A2directsumandsC1tilting}, and we conclude that $L(\lambda) \otimes L(\mu)$ is not completely reducible.
	 
	 It remains to show that $L(\lambda) \otimes L(\mu)$ is completely reducible if $b^\prime=b+1$.
	 To that end, observe that $L(\mu) = L( (b+1) \cdot \varpi_2 )$ is a direct summand of $L(b\varpi_2) \otimes L(\varpi_2)$, so every indecomposable direct summand $M$ of $L(\lambda) \otimes L(\mu)$ is also a direct summand of $L(\lambda) \otimes L(b\varpi_2) \otimes L(\varpi_2)$.
	 Now $b \varpi_2$ is reflection small with respect to $\lambda$, so $L(\lambda) \otimes L(b\varpi_2)$ is a direct sum of simple $G$-modules with highest weights in $\widehat{C}_1$ by Theorem \ref{thm:reflectionsmalltensorproduct}, and by weight considerations, it is straightforward to see that the only simple direct summand of $L(\lambda) \otimes L(b\varpi_2)$ with highest weight in $\widehat{C}_1 \setminus C_1$ is $L((p-1)\cdot \varpi_1)$.
	 We conclude that $M$ is a direct summand of $L(\nu) \otimes L(\varpi_1)$ for some weight $\nu \in C_1 \cup\{ (p-1) \cdot \varpi_1 \}$.
	Now $L(\nu) \otimes L(\varpi_1)$ is completely reducible by by Corollary \ref{cor:minusculetensorproductMFCR} (for $\nu \in C_1$) and Lemma \ref{lem:A2wallstensorpi1} (for $\nu = (p-1) \cdot \varpi_2$), so $M$ is simple and $L(\lambda) \otimes L(\mu)$ is completely reducible, as required.
\end{proof}

Before we can give the proof of Theorem \ref{thm:A2CR}, it remains to consider tensor products of simple $G$-modules with highest weights in the upper closure of $C_0$.

\begin{Proposition} \label{prop:A2CRweightsinC0}
	Let $\lambda,\mu \in \widehat{C}_0 \cap X$ and suppose that the tensor product $L(\lambda) \otimes L(\mu)$ is completely reducible.
	Then one of the following holds:
	\begin{enumerate}
	\item $\lambda+\mu \in \widehat{C}_0$ (i.e.\ $\mu$ is reflection small with respect to $\lambda$);
	\item $\lambda = a \varpi_1$ and $\mu = a^\prime \varpi_1$, where $a+a^\prime = p-1$;
	\item $\lambda = b \varpi_2$ and $\mu = b^\prime \varpi_2$, where $b+b^\prime = p-1$.
\end{enumerate}
\end{Proposition}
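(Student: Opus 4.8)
\textit{Proof proposal.} The plan is to exploit that, since $\lambda,\mu \in \widehat{C}_0 \cap X$ are dominant, we have $L(\lambda) = \Delta(\lambda) = T(\lambda)$ and $L(\mu) = \Delta(\mu) = T(\mu)$ by the results recalled in Subsection~\ref{subsec:A2Weylmodulestiltingmodules}, so that $L(\lambda) \otimes L(\mu)$ is a tilting module. Writing $\lambda = a\varpi_1 + b\varpi_2$ and $\mu = a^\prime\varpi_1 + b^\prime\varpi_2$ and setting $A = a + a^\prime$, $B = b + b^\prime$, Example~\ref{ex:A2reflectionsmall} shows that $\mu$ is reflection small with respect to $\lambda$ precisely when $A + B \leq p - 2$, that is, when $\lambda + \mu \in \overline{C}_0$. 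I therefore assume that $\mu$ is \emph{not} reflection small, i.e.\ $A + B \geq p - 1$, and aim to land in case (2) or case (3).

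First I would pin down the top composition factor. As $L(\lambda) \otimes L(\mu)$ is tilting and $\lambda + \mu$ is its highest weight, occurring with weight-space dimension $1$, the indecomposable tilting module $T(\lambda + \mu)$ occurs as a direct summand with multiplicity exactly $1$. If $L(\lambda) \otimes L(\mu)$ is completely reducible then, by the Krull--Schmidt theorem, every indecomposable tilting summand is simple, so in particular $T(\lambda + \mu)$ is simple. Since $A + B \leq (a+b) + (a^\prime+b^\prime) \leq 2p-4$, the weight $\lambda + \mu$ lies below the hyperplane $H_{\alpha_\mathrm{h},2}$, and by Subsection~\ref{subsec:A2Weylmodulestiltingmodules} a tilting module with highest weight in this range is simple only if that weight lies in $\overline{C}_0$ or is $p$-singular. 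As $A + B \geq p - 1$ rules out $\lambda + \mu \in \overline{C}_0$, the weight $\lambda + \mu$ must be $p$-singular; checking it against the hyperplanes $H_{\alpha_1,r}$, $H_{\alpha_2,r}$, $H_{\alpha_\mathrm{h},r}$ under the bound $A + B \leq 2p-4$ shows that this forces $A = p - 1$ or $B = p - 1$. Moreover $A = B = p-1$ is impossible, since it would give $A + B = 2p - 2 > 2p - 4$.

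By the symmetry exchanging $\varpi_1$ and $\varpi_2$ I may assume $A = p - 1$. Applying Remark~\ref{rem:LeviprestrctedconditionA2} to the completely reducible tensor product yields $a + a^\prime + \min\{b,b^\prime\} \leq p - 1$, so that $\min\{b,b^\prime\} = 0$; interchanging $\lambda$ and $\mu$ if necessary, I arrange $b = 0$, whence $\lambda = a\varpi_1$. It then remains only to show that $b^\prime = 0$, since this forces $\mu = a^\prime\varpi_1$ with $a + a^\prime = p-1$, i.e.\ case (2) (case (3) arising from the symmetric situation $B = p-1$).

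The crux is thus to rule out $b^\prime \geq 1$, and here I would compare with characteristic zero. Since $L(\lambda)\otimes L(\mu) = \Delta(\lambda)\otimes\Delta(\mu)$ has a Weyl filtration, Subsection~\ref{subsec:char0} gives $[\Delta(\lambda)\otimes\Delta(\mu) : \Delta(\kappa)]_\Delta = [L_\C(\lambda)\otimes L_\C(\mu) : L_\C(\kappa)]$ for all $\kappa \in X^+$; if this is positive for some $p$-regular $\kappa \in C_1$, then $L(\kappa)$, being the head of $\Delta(\kappa)$, is a composition factor of $L(\lambda)\otimes L(\mu)$. Under complete reducibility $L(\kappa)$ would be a direct summand, forcing $T(\kappa) = L(\kappa)$, which contradicts the non-simplicity of $T(\kappa)$ for $p$-regular $\kappa \in C_1$ (Subsection~\ref{subsec:A2Weylmodulestiltingmodules}). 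So it suffices to exhibit one $p$-regular $\kappa \in C_1$ with positive characteristic-zero multiplicity in $L_\C(a\varpi_1)\otimes L_\C(a^\prime\varpi_1 + b^\prime\varpi_2)$. When $a^\prime \geq 1$ one has $a \geq 2$ and $b^\prime \leq p-3$, and the weight $\lambda+\mu - \alpha_1 = (p-3)\varpi_1 + (b^\prime+1)\varpi_2$ lies in $C_1$ and occurs in the classical $\mathrm{SL}_3$ Clebsch--Gordan decomposition; when $a^\prime = 0$ (so $\mu = b^\prime\varpi_2$ with $b^\prime \geq 2$) the explicit decomposition of $L_\C((p-1)\varpi_1) \otimes L_\C(b^\prime\varpi_2)$ contains $L_\C\big((p-2)\varpi_1 + (b^\prime-1)\varpi_2\big)$, whose highest weight again lies in $C_1$. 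This verification, checking uniformly across the range $1 \leq b^\prime$ and the boundary configurations that a genuine $C_1$-constituent occurs in characteristic zero, is the step that requires the most care and is the main obstacle; the reduction to it is formal.
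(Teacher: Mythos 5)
Your proof is correct and takes essentially the same route as the paper's: both exploit that $L(\lambda)\otimes L(\mu)\cong T(\lambda)\otimes T(\mu)$ is a tilting module whose indecomposable summands must all be simple under complete reducibility, use the structure results of Subsection \ref{subsec:A2Weylmodulestiltingmodules} to force $\lambda+\mu$ onto $F_{1,2a}\cup F_{1,2b}$, and then derive a contradiction from a composition factor of highest weight $\lambda+\mu-\alpha_1\in C_1$ (the paper produces this factor via the characteristic-$p$ Levi truncation of Remark \ref{rem:LeviprestrctedconditionA2}, you via the characteristic-zero comparison of Subsection \ref{subsec:char0} together with the Weyl filtration of the tensor product --- an interchangeable step). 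One remark: your subcase $a'=0$ is vacuous, since $\lambda,\mu\in\widehat{C}_0$ and $a+a'=p-1$ force $a,a'\geq 1$; this is worth stating explicitly, because your parenthetical assertion that $a'=0$ entails $b'\geq 2$ is otherwise unjustified, and the configuration $a'=0$, $b'=1$ --- that is, $\lambda=(p-1)\varpi_1$, $\mu=\varpi_2$ --- is genuinely completely reducible (case 2 of Table \ref{tab:A2CR}), so no contradiction argument could have disposed of it.
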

\begin{proof}
	As $\lambda,\mu \in \widehat{C}_0$, we have $L(\lambda) \cong T(\lambda)$ and $L(\mu) \cong T(\mu)$, and it follows that $T(\lambda+\mu)$ is a direct summand of $L(\lambda) \otimes L(\mu) \cong T(\lambda) \otimes T(\mu)$.
	As the indecomposable tilting modules with highest weights in $C_1$ are non-simple (see Subsection \ref{subsec:A2Weylmodulestiltingmodules}) and as all composition factors of $L(\lambda) \otimes L(\mu)$ have $p$-restricted highest weights (see Remark \ref{rem:LeviprestrctedconditionA2}), this implies that either $\lambda+\mu$ belongs to the upper closure of $C_0$ or $\lambda+\mu$ belongs to one of the walls $F_{1,2a}$ and $F_{1,2b}$ of $C_1$.
	Observe that the first condition is equivalent to $\mu$ being reflection small with respect to $\lambda$ by Example \ref{ex:A2reflectionsmall}.
	
	Now assume without loss of generality that $\lambda + \mu \in F_{1,2a}$ (the case $\lambda+\mu \in F_{1,2b}$ being analogous).
	If we write $\lambda = a \varpi_1 + b \varpi_2$ and $\mu = a^\prime \varpi_1 + b^\prime \varpi_2$ then this means that $a+a^\prime = p-1$.
	Since we assume that $\lambda$ and $\mu$ belong to the upper closure of $C_0$, we further have $a+b \leq p-2$ and $a^\prime+b^\prime \leq p-2$, and it follows that $a,a^\prime \neq 0$ and
	\[ b + b^\prime \leq 2p-4 - (a+a^\prime) = p-3 . \]
	By Remark \ref{rem:LeviprestrctedconditionA2}, the tensor product $L(\lambda) \otimes L(\mu)$ has a composition factor of highest weight
	\[ \nu = \lambda+\mu - \alpha_1 = (p-3) \cdot \varpi_1 + (b+b^\prime+1) \cdot \varpi_2 , \]
	and by weight considerations, this implies that $T(\nu)$ is a direct summand of $L(\lambda) \otimes L(\mu)$.
	If $b+b^\prime \neq 0$ then we have $\nu \in C_1$ and $T(\nu)$ is non-simple, contradicting the complete reducibility of $L(\lambda) \otimes L(\mu)$.
	We conclude that $b+b^\prime = 0$, and $\lambda = a \varpi_1$ and $\mu = a^\prime \varpi_1$ with $a+a^\prime = p-1$, as required.
\end{proof}

Now we are ready to give the proof of our classification of pairs of simple $G$-modules whose tensor product is completely reducible, for $G = \SL_3(\kk)$.

\begin{proof}[Proof of Theorem \ref{thm:A2CR}]
\label{proof:A2CR}
	Let $\lambda,\mu \in X_1 \setminus \{0\}$.
	If $\lambda$ and $\mu$ satisfy one of the conditions in Table \ref{tab:A2CR} then the tensor product $L(\lambda) \otimes L(\mu)$ is completely reducible by the following results.
	\begin{itemize}
		\item Condition 1: $L(\lambda) \otimes L(\mu)$ is completely reducible by Lemma \ref{lem:A2api1tensoraprimepi1}.
		\item Condition 2: $L(\lambda) \otimes L(\mu)$ is completely reducible by Lemma \ref{lem:A2wallstensorpi1}.
		\item Condition 3: $L(\lambda) \otimes L(\mu)$ is completely reducible by Proposition \ref{prop:A2lambdaabovewall}.
		\item Condition 4: $L(\lambda) \otimes L(\mu)$ is completely reducible by Theorem \ref{thm:reflectionsmalltensorproduct}.
	\end{itemize}
	For the remaining cases 1*--3* in Table \ref{tab:A2CR}, the complete reducibility of $L(\lambda) \otimes L(\mu)$ follows from the cases 1--3 by taking duals.
	
	Now suppose that $L(\lambda) \otimes L(\mu)$ is completely reducible, and recall that all $p$-restricted weights belong to the upper closure of one of the alcoves $C_0$ and $C_1$.
	If $\lambda$ belongs to the upper closure of $C_1$ then either $\lambda \in C_1$ and $\mu$ is reflection small with respect to $\lambda$ (so we are in case 4 of Table \ref{tab:A2CR}) or we are in one of the cases 2, 2*, 3 or 3* of Table \ref{tab:A2CR}, by Lemma \ref{lem:A2lambdainC1reflectionsmall} and Proposition \ref{prop:A2lambdaabovewall}.
	If $\lambda$ belongs to the upper closure of $C_0$ then we may further assume that $\mu$ belongs to the upper closure of $C_0$ (by the preceding case), and Proposition \ref{prop:A2CRweightsinC0} implies that we are in one of the cases 1, 1* or 4 of Table \ref{tab:A2CR}.
\end{proof}

\subsection{Proofs: multiplicity freeness} \label{subsec:A2proofsMF}

In this subsection, we give the proof of Theorem \ref{thm:A2MF}; see page~\pageref{proof:A2MF}.
We start with preliminary results about weight space dimensions and Verlinde coefficients.

\begin{Lemma} \label{lem:A2onedimensionalweightspaces}
	Let $\lambda = a\varpi_1 + b \varpi_2 \in X_1$.
	Then all weight spaces of the simple $G$-module $L(\lambda)$ are at most one-dimensional if and only if $a=0$ or $b=0$ or $a+b=p-1$.
\end{Lemma}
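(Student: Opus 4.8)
The plan is to compute the character of $L(\lambda)$ explicitly in each of the regions into which the $p$-restricted weights fall, and to read off the weight multiplicities. Writing $\lambda = a\varpi_1 + b\varpi_2$, recall that every $p$-restricted weight lies either in $\widehat C_0$ (when $a+b \le p-2$) or in $\widehat C_1$ (when $a+b \ge p-1$).

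First I would dispose of all cases in which $L(\lambda) \cong \Delta(\lambda)$. This happens when $\lambda \in \overline C_0$, i.e.\ $a+b \le p-2$, and also when $\lambda$ is $p$-singular and $p$-restricted, i.e.\ $a = p-1$ or $b = p-1$, by the results recalled in Subsection~\ref{subsec:A2Weylmodulestiltingmodules}. In these cases $\ch L(\lambda) = \chi(\lambda)$, so the weight multiplicities of $L(\lambda)$ agree with those of the simple $\SL_3(\C)$-module $L_\C(\lambda)$. By the standard description of the latter (concentric hexagonal shells on which the multiplicity grows by $1$ from the outside inward, stabilising at $\min(a,b)+1$ once the shells degenerate to triangles; see \cite[\S13]{FultonHarris}), all multiplicities are $\le 1$ precisely when $\min(a,b) = 0$, i.e.\ $a = 0$ or $b = 0$. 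I would then check that this matches the asserted condition in each sub-case: when $a+b \le p-2$ the condition ``$a+b = p-1$'' is vacuous, and when $a = p-1$ (resp.\ $b = p-1$) it forces $b = 0$ (resp.\ $a = 0$), so in every instance the criterion reduces to $a = 0$ or $b = 0$.

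The remaining, and main, case is $\lambda \in C_1$ with $a, b \le p-2$ (so $\lambda$ is $p$-regular and $a+b \ge p-1$). Here ``$a = 0$ or $b = 0$'' is impossible, so I must show that all multiplicities are $\le 1$ exactly when $a+b = p-1$. Writing $\lambda = s \Cdot \lambda_0$ with $\lambda_0 \in C_0$, the structure $\Delta(s\Cdot\lambda_0) = [L(\lambda), L(\lambda_0)]$ from Subsection~\ref{subsec:A2Weylmodulestiltingmodules} gives $\ch L(\lambda) = \chi(\lambda) - \chi(\lambda_0)$, and a short computation identifies $\lambda_0 = (p-b-2)\varpi_1 + (p-a-2)\varpi_2$, so that $\lambda - \lambda_0 = k\rho$ with $k = a+b-p+2 \ge 1$; note that $\max(a,b) \le p-2$ forces $\min(a,b) \ge k$. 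The key input is the telescoping identity
\[ \chi(\mu) - \chi(\mu - \rho) = \sum_{\nu \in \Lambda(\mu)} e^\nu, \]
valid for $\mu = a'\varpi_1 + b'\varpi_2$ with $a', b' \ge 1$, which again follows from the hexagonal shell description: passing from $\mu$ to $\mu - \rho = \mu - \alpha_\mathrm{h}$ removes the outermost shell and lowers every remaining multiplicity by exactly~$1$. Iterating it gives $\ch L(\lambda) = \sum_{j=0}^{k-1} \sum_{\nu \in \Lambda(\lambda - j\rho)} e^\nu$.

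If $a + b = p-1$ then $k = 1$ and $\ch L(\lambda) = \sum_{\nu \in \Lambda(\lambda)} e^\nu$, so every multiplicity equals $1$. If instead $a + b \ge p$ then $k \ge 2$, and since $\min(a,b) \ge k$ the innermost set $\Lambda(\lambda - (k-1)\rho)$ is nonempty; by the nesting $\Lambda(\lambda - (k-1)\rho) \subseteq \cdots \subseteq \Lambda(\lambda)$ any weight in it occurs in all $k$ summands, hence with multiplicity $k \ge 2$. This settles the main case and completes the equivalence. I expect the main obstacle to be the clean justification of the telescoping identity and of the stabilised central multiplicity $\min(a,b)+1$; both rest on the explicit $\mathrm{A}_2$ weight-multiplicity combinatorics, which must be invoked carefully (and checked to remain valid for the small primes $p \le 3$ via \cite{BowmanDotyMartinsmallprimes}).
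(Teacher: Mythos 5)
Your proposal is correct, but it takes a genuinely different route from the paper's proof. The paper disposes of the lemma essentially by citation: the forward implication (one-dimensional weight spaces force $a=0$, $b=0$ or $a+b=p-1$) is quoted from Section 6.1 of \cite{SeitzMemoir}, and the converse from the proof of Proposition 1.2 in \cite{SuprunenkoZalesskii}, with an alternative direct argument for the converse by embedding: for $a=0$ or $b=0$ the module $L(\lambda)$ is a quotient of some $\Delta(c\varpi_i)$, and for $a+b=p-1$ it is a submodule of $\Delta\big((p-1+b)\varpi_1\big)$, both of which have one-dimensional weight spaces. You instead compute $\ch L(\lambda)$ outright: in every case where $L(\lambda)=\Delta(\lambda)$ (namely $\lambda\in\overline{C}_0$, $a=p-1$ or $b=p-1$, which together with $C_1\cap X_1$ exhaust $X_1$) you reduce to the classical characteristic-zero $\mathrm{A}_2$ shell pattern, where the criterion is $\min(a,b)=0$ and agrees with the stated condition; and for $\lambda\in C_1$ you combine $\ch L(\lambda)=\chi(\lambda)-\chi(\lambda_0)$ from Subsection \ref{subsec:A2Weylmodulestiltingmodules} with the telescoping identity $\chi(\mu)-\chi(\mu-\rho)=\sum_{\nu\in\Lambda(\mu)}e^\nu$ to obtain $\ch L(\lambda)=\sum_{j=0}^{k-1}\sum_{\nu\in\Lambda(\lambda-j\rho)}e^\nu$ with $k=a+b-p+2$, from which both directions are immediate. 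I checked the points on which your argument hinges and they all hold: $\min(a,b)\geq k$ (from $a,b\leq p-2$) legitimizes iterating the identity; the nesting $\Lambda(\lambda-(k-1)\rho)\subseteq\cdots\subseteq\Lambda(\lambda)$ uses $\rho=\alpha_\mathrm{h}\in\Z\Phi$, which is special to $\mathrm{A}_2$ but available here; and the telescoping identity itself does follow from the shell description, including in the triangular region, where the multiplicities of $\Delta(\mu)$ and $\Delta(\mu-\rho)$ stabilize at $\min(a',b')+1$ and $\min(a',b')$ on the same set of weights. What your approach buys is self-containedness (no recourse to Seitz or Suprunenko--Zalesski\u{\i}) and strictly more information, namely an explicit weight-multiplicity formula for $L(\lambda)$ with $\lambda\in C_1$ (the multiplicity of $\nu$ is the number of shells $\Lambda(\lambda-j\rho)$ containing it); what it costs is the careful verification of the characteristic-zero combinatorics, which is exactly the work the paper outsources. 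Both arguments rest on the same structural input from Subsection \ref{subsec:A2Weylmodulestiltingmodules}, which also covers the small primes $p\leq 3$ as you note.
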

\begin{proof}
	If all weight spaces of $L(\lambda)$ are at most one-dimensional then $a=0$ or $b=0$ or $a+b=p-1$ by Section~6.1 in \cite{SeitzMemoir}.
	The converse is established in the proof of Proposition 1.2 in \cite{SuprunenkoZalesskii}.
	Alternatively, the converse follows from the fact that all weight spaces of the Weyl modules $\Delta(c\varpi_i)$ are at most one-dimensional, for $c \in \Z_{\geq 0}$ and $i=1,2$.
	If $a=0$ or $b=0$ then $L(\lambda)$ is a quotient of one of these Weyl modules, and if $a+b = p-1$ then $L(\lambda)$ is a submodule of $\Delta((p-1+b)\cdot\varpi_1)$ (cf.\ Subsection \ref{subsec:A2Weylmodulestiltingmodules}).
\end{proof}

\begin{Lemma} \label{lem:A2VerlindesuminC0}
	Let $\lambda = a\varpi_1 + b\varpi_2 \in C_0 \cap X$ and $\mu = a^\prime\varpi_1 + b^\prime\varpi_2 \in C_0 \cap X$ such that $\lambda+\mu \in \widehat{C}_0$, and further suppose that $a,a^\prime,b,b^\prime \geq 1$.
	Then we have $c_{\lambda,\mu}^{\lambda+\mu-\alpha_\mathrm{h}} \geq 2$.
\end{Lemma}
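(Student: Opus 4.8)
The plan is to identify the Verlinde coefficient $c_{\lambda,\mu}^{\nu}$, where $\nu := \lambda+\mu-\alpha_\mathrm{h} = (a+a'-1)\varpi_1 + (b+b'-1)\varpi_2$, with a composition multiplicity in characteristic zero, and then to bound that multiplicity from below by the monotonicity result of Lemma~\ref{lem:tensormultiplicitiesmonotonous}. First I would check that $\nu \in C_0 \cap X$: dominance and $p$-regularity are immediate from $a,a',b,b'\geq 1$, while the inequalities $(\nu+\rho,\alpha^\vee)<p$ for $\alpha\in\{\alpha_1,\alpha_2,\alpha_\mathrm{h}\}$ follow from $\lambda+\mu\in\widehat{C}_0$, which gives $a+a'+b+b'\leq p-2$. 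In particular $c_{\lambda,\mu}^{\nu}$ is defined.

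Since $\lambda,\mu\in C_0$ we have $T(\lambda)\cong\nabla(\lambda)$ and $T(\mu)\cong\nabla(\mu)$, so $T(\lambda)\otimes T(\mu)\cong\nabla(\lambda)\otimes\nabla(\mu)$ admits a good filtration. Writing a Krull--Schmidt decomposition $\nabla(\lambda)\otimes\nabla(\mu)\cong\bigoplus_\xi T(\xi)^{\oplus m_\xi}$, so that $m_\nu = c_{\lambda,\mu}^{\nu}$, I would compute the good-filtration multiplicity of $\nabla(\nu)$ via the identity $[\nabla(\lambda)\otimes\nabla(\mu):\nabla(\nu)]_\nabla = \sum_\xi m_\xi\cdot[T(\xi):\nabla(\nu)]_\nabla$. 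The crucial observation is that a summand $T(\xi)$ contributes only if $\nu\leq\xi\leq\lambda+\mu$; as $\lambda+\mu-\nu=\alpha_\mathrm{h}=\alpha_1+\alpha_2$, the only such dominant weights are $\lambda+\mu$, $\lambda+\mu-\alpha_1$, $\lambda+\mu-\alpha_2$ and $\nu$ itself. A short check using $a+a'+b+b'\leq p-2$ and $a,a',b,b'\geq 1$ shows that the first three are $p$-restricted and lie in $\overline{C}_0$ (inside $C_0$ or on its upper wall $F_{0,1}$), so by Subsection~\ref{subsec:A2Weylmodulestiltingmodules} the corresponding indecomposable tilting modules are simple and contribute nothing to $[\,\cdot:\nabla(\nu)]_\nabla$. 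Hence $c_{\lambda,\mu}^{\nu} = m_\nu = [\nabla(\lambda)\otimes\nabla(\mu):\nabla(\nu)]_\nabla = [L_\C(\lambda)\otimes L_\C(\mu):L_\C(\nu)]$, the last equality by Subsection~\ref{subsec:char0}.

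It now suffices to show that this characteristic-zero multiplicity is at least $2$. I would apply Lemma~\ref{lem:tensormultiplicitiesmonotonous} with $\delta = \lambda-\alpha_\mathrm{h} = (a-1)\varpi_1 + (b-1)\varpi_2\in X^+$. Since $\alpha_\mathrm{h}=\rho$, the base triple $(\rho,\mu,\mu)$ satisfies $\rho+\delta=\lambda$ and $\mu+\delta=\lambda+\mu-\alpha_\mathrm{h}=\nu$, so
\[
[\nabla(\lambda)\otimes\nabla(\mu):\nabla(\nu)]_\nabla \;\geq\; [\nabla(\rho)\otimes\nabla(\mu):\nabla(\mu)]_\nabla \;=\; [L_\C(\rho)\otimes L_\C(\mu):L_\C(\mu)].
\]
Finally, $L_\C(\rho)$ is the adjoint representation of $G_\C=\SL_3(\C)$, and as $\mu=a'\varpi_1+b'\varpi_2$ is regular (because $a',b'\geq 1$), the multiplicity of $L_\C(\mu)$ in $L_\C(\rho)\otimes L_\C(\mu)$ equals the rank, namely $2$. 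This I would verify through the Racah--Speiser/Klimyk formula $\sum_{w\in W_\mathrm{fin}}(-1)^{\ell(w)}\dim L_\C(\rho)_{w\Cdot\mu-\mu}$: the term $w=e$ contributes $\dim L_\C(\rho)_0 = 2$, while for $w\neq e$ the weight $w\Cdot\mu-\mu$ is a negative combination of simple roots whose coefficients are too large (since $\mu$ is regular) to be a weight of the adjoint representation, so those terms vanish. Combining the displayed inequality with this value gives $c_{\lambda,\mu}^{\nu}\geq 2$.

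The main obstacle is the passage from the Verlinde coefficient $c_{\lambda,\mu}^{\nu}$, which is a direct-summand multiplicity, to the characteristic-zero composition multiplicity: a priori one only has the \emph{upper} bound $c_{\lambda,\mu}^{\nu}\leq[L_\C(\lambda)\otimes L_\C(\mu):L_\C(\nu)]$ from Lemma~\ref{lem:MFchar0boundVerlindecoeff}, and equality here hinges on the alcove-geometric fact that every dominant weight strictly between $\nu$ and the highest weight $\lambda+\mu$ lies in $\overline{C}_0$ and therefore indexes a simple tilting module, so that no larger non-simple tilting summand can absorb a copy of $\nabla(\nu)$ in the good filtration. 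Once this reduction is secured, the lower bound is a clean monotonicity argument anchored at the adjoint representation.
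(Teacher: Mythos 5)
Your proposal is correct and follows essentially the same route as the paper's proof: both first identify $c_{\lambda,\mu}^{\lambda+\mu-\alpha_\mathrm{h}}$ with the good filtration multiplicity $[\nabla(\lambda)\otimes\nabla(\mu):\nabla(\lambda+\mu-\alpha_\mathrm{h})]_\nabla$ (using that every indecomposable tilting summand has highest weight in $\overline{C}_0$ and is therefore simple), and then bound that multiplicity below by the monotonicity Lemma \ref{lem:tensormultiplicitiesmonotonous}, anchored at the adjoint weight $\rho = \varpi_1+\varpi_2 = \alpha_\mathrm{h}$. The only (minor) difference is the anchor: the paper starts from the single explicit character computation $[\nabla(\rho)\otimes\nabla(\rho):\nabla(\rho)]_\nabla = 2$ and shifts both tensor factors, whereas you shift only the first factor from the base case $[\nabla(\rho)\otimes\nabla(\mu):\nabla(\mu)]_\nabla = 2$, which you then must verify for all regular $\mu$ via the Klimyk formula (your vanishing argument for $w \neq e$, using $a',b'\geq 1$, is correct) --- a slightly more general base computation in exchange for one fewer application of the monotonicity lemma.
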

\begin{proof}
	As $\lambda,\mu \in C_0$ and $\lambda + \mu \in \widehat{C}_0$, we have $T(\lambda) = \nabla(\lambda)$ and $T(\mu) = \nabla(\mu)$, and $T(\lambda) \otimes T(\mu)$ is a direct sum of tilting modules $T(\nu) = \nabla(\nu)$ with highest weights $\nu$ in the upper closure of $C_0$.
	This implies that
	\[ c_{\lambda,\mu}^\nu = [ T(\lambda) \otimes T(\mu) : T(\nu) ]_\oplus  = [ \nabla(\lambda) \otimes \nabla(\mu) : \nabla(\nu) ]_\nabla \]
	for all $\nu \in C_0 \cap X$.
	A straightforward computation at the level of characters shows that
	\[ [ \nabla(\varpi_1+\varpi_2) \otimes \nabla(\varpi_1+\varpi_2) : \nabla(\varpi_1+\varpi_2) ]_\nabla = 2 , \]
	and since $a,a^\prime,b,b^\prime \geq 1$, Lemma \ref{lem:tensormultiplicitiesmonotonous} yields
	\[ c_{\lambda,\mu}^{\lambda+\mu-\alpha_\mathrm{h}} = [ \nabla(\lambda) \otimes \nabla(\mu) : \nabla(\lambda+\mu-\alpha_\mathrm{h}) ]_\nabla \geq [ \nabla(\varpi_1+\varpi_2) \otimes \nabla(\varpi_1+\varpi_2) : \nabla(\varpi_1+\varpi_2) ]_\nabla = 2 , \]
	as claimed.
\end{proof}

Before giving the proof of Theorem \ref{thm:A2MF}, we record two corollaries of Lemma \ref{lem:A2VerlindesuminC0}.

\begin{Corollary} \label{cor:A2reflectionsmallC0MF}
	Let $\lambda = a\varpi_1+b\varpi_2 \in X^+$ and $\mu = a^\prime\varpi_1+b^\prime\varpi_2 \in X^+$ such that $\lambda+\mu \in \widehat{C}_0$.
	If the tensor product $L(\lambda) \otimes L(\mu)$ is multiplicity free then $0 \in \{a,b,a^\prime,b^\prime\}$.
\end{Corollary}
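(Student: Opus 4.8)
The plan is to prove the contrapositive: assuming $a,b,a^\prime,b^\prime \geq 1$, I will show that $L(\lambda)\otimes L(\mu)$ is not multiplicity free. The first step is to observe that the hypotheses already pin down the alcove geometry precisely. Since $\lambda+\mu \in \widehat{C}_0$, we have $(a+a^\prime)+(b+b^\prime) \leq p-2$, and because $a^\prime,b^\prime \geq 1$ this forces $a+b \leq p-4$; together with $a,b \geq 0$ this shows $\lambda \in C_0$, and by symmetry $\mu \in C_0$. In particular both $\lambda$ and $\mu$ are $p$-regular and lie in the fundamental alcove, which is exactly the setting needed to bring the Verlinde coefficient machinery to bear.

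Next I would invoke Lemma \ref{lem:A2VerlindesuminC0}, whose hypotheses ($\lambda,\mu \in C_0$, $\lambda+\mu\in\widehat{C}_0$, and $a,a^\prime,b,b^\prime\geq 1$) are now all verified, to conclude that $c_{\lambda,\mu}^{\lambda+\mu-\alpha_\mathrm{h}} \geq 2$. To interpret this coefficient as a genuine multiplicity, I first check that $\nu \coloneqq \lambda+\mu-\alpha_\mathrm{h} = (a+a^\prime-1)\varpi_1 + (b+b^\prime-1)\varpi_2$ lies in $C_0 \cap X$: its coefficients are at least $1$, and their sum equals $(a+a^\prime+b+b^\prime)-2 \leq p-4$, so $\nu$ is a dominant $p$-regular weight in the fundamental alcove.

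Finally I would apply Lemma \ref{lem:MFboundVerlindecoeff} with $x = e$ (the identity, which lies in $W_\mathrm{aff}^+$ and satisfies $e\Cdot\lambda=\lambda$): the multiplicity of $L(\nu)$ in a Krull--Schmidt decomposition of $L(\lambda)\otimes L(\mu)$ equals $c_{\lambda,\mu}^{\nu}\geq 2$, so $L(\nu)$ occurs at least twice as a direct summand and hence at least twice as a composition factor. Therefore $L(\lambda)\otimes L(\mu)$ is not multiplicity free, which is the desired contrapositive. I do not expect a serious obstacle here: the content is carried entirely by Lemmas \ref{lem:A2VerlindesuminC0} and \ref{lem:MFboundVerlindecoeff}, and the only point requiring care is the elementary bookkeeping confirming that $\lambda$, $\mu$, and $\nu$ all lie in the open alcove $C_0$, so that both lemmas genuinely apply.
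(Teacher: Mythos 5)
Your proof is correct and takes essentially the same route as the paper: both arguments prove the contrapositive by applying Lemma \ref{lem:A2VerlindesuminC0} to the weight $\nu = \lambda+\mu-\alpha_\mathrm{h}$ to get $c_{\lambda,\mu}^{\nu} \geq 2$. The only cosmetic difference is in the final conversion step, where the paper uses $L(\lambda) \otimes L(\mu) \cong T(\lambda) \otimes T(\mu)$ and bounds the composition multiplicity of $L(\nu)$ below by the Krull--Schmidt multiplicity of $T(\nu)$, while you invoke Lemma \ref{lem:MFboundVerlindecoeff} with $x = e$; both steps are valid.
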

\begin{proof}
	If $\lambda=0$ or $\mu=0$ then the claim is trivially satisfied.
	Now suppose that $\lambda \neq 0$ and $\mu \neq 0$, and observe that $\lambda + \mu \in \widehat{C}_0$ implies that $\lambda \in C_0$ and $\mu \in C_0$, in particular $L(\lambda) = T(\lambda)$ and $L(\mu) = T(\mu)$.
	If $a,a^\prime,b,b^\prime \geq 1$ then
	\[ [ L(\lambda) \otimes L(\mu) : L(\lambda+\mu-\alpha_\mathrm{h}) ] \geq [ T(\lambda) \otimes T(\mu) : T(\lambda+\mu-\alpha_\mathrm{h}) ]_\oplus = c_{\lambda,\mu}^{\lambda+\mu-\alpha_\mathrm{h}} \geq 2 \]
	by Lemma \ref{lem:A2VerlindesuminC0}, and we conclude that $L(\lambda) \otimes L(\mu)$ is multiplicity free only if $0 \in \{a,b,a^\prime,b^\prime\}$.
\end{proof}

\begin{Corollary} \label{cor:A2reflectionsmallC1MF}
	Let $\lambda = a\varpi_1+b\varpi_2 \in C_1 \cap X$ and $\mu = a^\prime\varpi_1+b^\prime\varpi_2 \in X^+$ such that $\mu$ is reflection small with respect to $\lambda$.
	If $L(\lambda) \otimes L(\mu)$ is multiplicity free then $a^\prime=0$ or $b^\prime=0$ or $a+b=p-1$.
\end{Corollary}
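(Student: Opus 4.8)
The plan is to prove the contrapositive: assuming $a^\prime\geq 1$, $b^\prime\geq 1$ and $a+b\neq p-1$, I will exhibit a single weight $\nu\in\widehat{C}_1$ whose composition multiplicity in $L(\lambda)\otimes L(\mu)$ is at least $2$. Since $\mu\neq 0$, Remark \ref{rem:reflectionsmallpregular} guarantees $p\geq h=3$, and since $\lambda\in C_1$ forces $a+b\geq p-1$, the assumption $a+b\neq p-1$ strengthens to $a+b\geq p$. This last inequality will be the crucial input, and the whole argument runs through the explicit multiplicity formula of Theorem \ref{thm:reflectionsmalltensorproduct}(3).

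First I would record the combinatorial data. The only wall in the lower closure of $C_1$ is $F_{0,1}=H_{\alpha_\mathrm{h},1}$, so $W_{C_1}=\{e,s_0\}$ with $s_0=s_{\alpha_\mathrm{h},1}$, and Theorem \ref{thm:reflectionsmalltensorproduct}(3) then reduces the composition multiplicity of any $L(\nu)$ with $\nu\in\widehat{C}_1$ to the single difference
\[ \big[ L(\lambda)\otimes L(\mu) : L(\nu) \big] = \dim L(\mu)_{\nu-\lambda} - \dim L(\mu)_{s_0\Cdot\nu-\lambda} . \]
Next, from Example \ref{ex:A2reflectionsmall} the reflection smallness of $\mu$ gives $a+a^\prime+b^\prime\leq p-1$ and $b+a^\prime+b^\prime\leq p-1$; adding these and using $a+b\geq p-1$ yields $a^\prime+b^\prime\leq (p-1)/2\leq p-2$. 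Hence $\mu\in\overline{C}_0\cap X$, so $L(\mu)\cong\Delta(\mu)$ and the weight multiplicities of $L(\mu)$ become accessible through Weyl's character formula and the $\alpha_\mathrm{h}$-string structure of $\Delta(\mu)$.

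The decisive step is the choice $\nu=\lambda+\mu-\alpha_\mathrm{h}=(a+a^\prime-1)\varpi_1+(b+b^\prime-1)\varpi_2$. A short verification of the defining inequalities of $\widehat{C}_1$ (each of which reduces to one of the reflection smallness bounds, with $p-1\leq(\nu+\rho,\alpha_\mathrm{h}^\vee)$ using exactly $a+b\geq p$) shows $\nu\in\widehat{C}_1$. For the first term, $\nu-\lambda=\mu-\alpha_\mathrm{h}$, and since $a^\prime,b^\prime\geq 1$ the weight space $\Delta(\mu)_{\mu-\alpha_\mathrm{h}}$ is two-dimensional (the standard $\mathrm{A}_2$ computation underlying Lemma \ref{lem:A2onedimensionalweightspaces}). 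For the second term, a direct computation gives $s_0\Cdot\nu-\lambda=\mu-k\alpha_\mathrm{h}$ with $k=(a+b)+(a^\prime+b^\prime)-p+1$; since the lowest weight of $\Delta(\mu)$ is $w_0\mu=\mu-(a^\prime+b^\prime)\alpha_\mathrm{h}$ and $k-(a^\prime+b^\prime)=(a+b)-(p-1)\geq 1$, the weight $\mu-k\alpha_\mathrm{h}$ lies strictly below $w_0\mu$ and so does not occur in $\Delta(\mu)$. Thus the multiplicity equals $2-0=2$, and $L(\lambda)\otimes L(\mu)$ is not multiplicity free.

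The main obstacle is controlling the two weight space dimensions in the multiplicity formula, because for general $\mu$ the module $L(\mu)$ has large weight multiplicities and the formula is a \emph{difference} of such dimensions, so neither term is under control a priori. The reflection smallness hypothesis resolves both difficulties at once: it forces $\mu$ to be small enough that $L(\mu)=\Delta(\mu)$ (making the multiplicities computable), and through the sharpened inequality $a+b\geq p$ it pushes the second weight $\mu-k\alpha_\mathrm{h}$ below the lowest weight of $\Delta(\mu)$, killing the subtracted term. It is precisely in locating $\mu-k\alpha_\mathrm{h}$ relative to $w_0\mu$ that the hypothesis $a+b\neq p-1$ enters, which is why that case is excluded from the conclusion.
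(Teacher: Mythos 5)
Your proof is correct, and it takes a genuinely different route from the paper's. The paper works through the Verlinde/fusion machinery of Section \ref{subsec:singular}: it translates $\lambda$ down to $\lambda_0 = s\Cdot\lambda \in C_0$, invokes Lemma \ref{lem:MFboundVerlindecoeff} to identify the Krull--Schmidt multiplicity of $L(\lambda+\mu-\alpha_\mathrm{h})$ in $L(\lambda)\otimes L(\mu)$ with a Verlinde coefficient, and then bounds that coefficient below by $2$ using the exchange property (Lemma \ref{lem:Verlindecoefficientsflipping}) together with Lemma \ref{lem:A2VerlindesuminC0}, which itself rests on a characteristic-zero character computation and Stembridge's monotonicity (Lemma \ref{lem:tensormultiplicitiesmonotonous}). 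You instead apply the alternating-sum formula of Theorem \ref{thm:reflectionsmalltensorproduct}(3) directly in the alcove $C_1$, where $W_{C_1} = \{e, s_0\}$ makes the formula a single difference, and you target the same composition factor $L(\lambda+\mu-\alpha_\mathrm{h})$: the leading term is $\dim \Delta(\mu)_{\mu-\alpha_\mathrm{h}} = 2$ (legitimate since reflection smallness forces $\mu \in \overline{C}_0$, hence $L(\mu) = \Delta(\mu)$), and the subtracted term vanishes because $s_0\Cdot\nu - \lambda = \mu - k\alpha_\mathrm{h}$ lies strictly below the lowest weight $w_0\mu$ exactly when $a+b \geq p$. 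Both proofs ultimately exploit the same two-dimensional characteristic-zero weight space, but yours is more self-contained — it needs nothing beyond Theorem \ref{thm:reflectionsmalltensorproduct} and elementary weight combinatorics, bypassing singular/regular modules and Verlinde coefficients entirely — and it makes transparent why $a+b = p-1$ must be excluded: there the subtracted term becomes $\dim\Delta(\mu)_{w_0\mu} = 1$ and the multiplicity drops to $1$. What the paper's approach buys is uniformity: Lemma \ref{lem:A2VerlindesuminC0} simultaneously serves Corollary \ref{cor:A2reflectionsmallC0MF}, and the translation-principle technique recurs throughout Section \ref{sec:B2}. One small imprecision in your write-up: the verification $\nu \in \widehat{C}_1$ needs $(\nu+\rho,\alpha_\mathrm{h}^\vee) > p$, which already follows from $a+b \geq p-1$ and $a'+b' \geq 2$; the strengthened bound $a+b \geq p$ is needed only for killing the subtracted term, as your closing paragraph correctly states.
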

\begin{proof}
	We assume that $a^\prime,b^\prime \geq 1$ and show that $L(\lambda) \otimes L(\mu)$ is multiplicity free only if $a+b=p-1$.
	Since $\mu$ is reflection small with respect to $\lambda$, we have $a+a'+b' \leq p-1$ and $b+a'+b' \leq p-1$ by Example \ref{ex:A2reflectionsmall} and $\lambda+\mu \in \widehat{C}_1$.
	The assumption that $a^\prime,b^\prime \geq 1$ further implies that $\lambda+\mu-\alpha_\mathrm{h} \in C_1$ and
	\begin{multline*}
		\qquad a'+b' \leq 2 \cdot (a'+b') - 2 \leq 2 \cdot (a'+b') - 2 + (a+b) - (p-1) \\ = (a+a'+b') + (b+a'+b') - (p+1) \leq p-3 , \qquad
	\end{multline*}
	whence $\mu \in C_0$.
	Consider the weights $\lambda_0 = s\Cdot\lambda \in C_0$ and $\nu = s \Cdot (\lambda + \mu - \alpha_\mathrm{h}) = \lambda_0 + w_0(\mu) + \alpha_\mathrm{h} \in C_0$ (note that $w_0 = s_{\alpha_\mathrm{h}}$), and observe that
	\[ \lambda_0 = (p-2-b) \cdot \varpi_1 + (p-2-a) \cdot \varpi_2 , \qquad \nu = (p-1-b-b^\prime) \cdot \varpi_1 + (p-1-a-a^\prime) \cdot \varpi_2 , \] where $a+a^\prime < p-1$ and $b+b^\prime<p-1$ because $\mu$ is reflection small with respect to $\lambda$.
	If $a+b>p-1$ then we have $\lambda_0+\alpha_\mathrm{h} \in \widehat{C}_0$, and using Lemmas \ref{lem:Verlindecoefficientsflipping} and \ref{lem:A2VerlindesuminC0}, we compute
	\[ c_{\lambda_0,\mu}^\nu = c_{\nu,-w_0(\mu)}^{\lambda_0} \geq 2 \]
	because $\lambda_0 = \nu - w_0(\mu) - \alpha_\mathrm{h}$.
	Now Lemma \ref{lem:MFboundVerlindecoeff} implies that $L(\lambda) \otimes L(\mu) = L(s\Cdot\lambda_0) \otimes L(\mu)$ is not multiplicity free, and we conclude that $L(\lambda) \otimes L(\mu)$ is multiplicity free only if $a+b=p-1$.
\end{proof}

Now we are ready to give the proof of our classification of pairs of simple $G$-modules whose tensor product is multiplicity free, for $G = \SL_3(\kk)$.

\begin{proof}[Proof of Theorem \ref{thm:A2MF}]
\label{proof:A2MF}
	Let $\lambda,\mu \in X_1 \setminus \{0\}$.
	If $\lambda$ and $\mu$ satisfy one of the conditions in Table \ref{tab:A2MF} then the tensor product $L(\lambda) \otimes L(\mu)$ is completely reducible by Theorem \ref{thm:A2CR}.
	Furthermore, the conditions imply all weight spaces are at most one-dimensional for one of the simple $G$-modules $L(\lambda)$ and $L(\mu)$ by Lemma \ref{lem:A2onedimensionalweightspaces}, and so $L(\lambda) \otimes L(\mu)$ is multiplicity free by Corollary \ref{cor:onedimensionalweightspacesMF}.
	
	Now suppose that $L(\lambda) \otimes L(\mu)$ is multiplicity free.
	Then $L(\lambda) \otimes L(\mu)$ is completely reducible by Lemma \ref{lem:MFimpliesCR}, and so $\lambda$ and $\mu$ satisfy one of the conditions in Table \ref{tab:A2CR} by Theorem \ref{thm:A2CR}.
	As the cases 1--3 and 1*--3* in Table \ref{tab:A2CR} match the cases 1--3 and 1*--3* in Table \ref{tab:A2MF}, it remains to consider the weights $\lambda$ and $\mu$ such that $\lambda \in C_0 \cup C_1$ and $\mu$ is reflection small with respect to $\lambda$ (i.e.\ case 4 in Table \ref{tab:A2CR}).
	
	If $\lambda \in C_0$ and $\mu$ is reflection small with respect to $\lambda$ then $\lambda + \mu \in \widehat{C}_0$, and by Corollary \ref{cor:A2reflectionsmallC0MF}, we are in one of the cases 4a or 4a* in Table \ref{tab:A2MF}.
	If $\lambda \in C_1$ and $\mu$ is reflection small with respect to $\lambda$ then we are in one of the cases 4a, 4a* or 4b in Table \ref{tab:A2MF}, by Corollary \ref{cor:A2reflectionsmallC1MF}.
\end{proof}

\section{Type \texorpdfstring{$\mathrm{B}_2$}{B2}}
\label{sec:B2}

In this section, we consider the group $G = \mathrm{Sp}_4(\kk)$ and give a complete classification of the pairs of simple $G$-modules whose tensor product is completely reducible or multiplicity free.
We start by setting up our notation for the affine Weyl group and alcove geometry for $G$ in Subsection \ref{subsec:B2setup}, then we state the main results in Subsection \ref{subsec:B2results}.
In Subsection \ref{subsec:B2Weylmodulestiltingmodules}, we recall some well-known results about the submodule structure of certain Weyl modules and tilting modules.
The proofs of the main theorems are split into sufficient and necessary conditions for complete reducibility (in Subsections \ref{subsec:B2proofsCRsufficient} and \ref{subsec:B2proofsCRnecessary}) and multiplicity freeness (in Subsections \ref{subsec:B2proofsMFsufficient} and \ref{subsec:B2proofsMFnecessary}) of tensor products of simple $G$-modules.

\subsection{Setup}
\label{subsec:B2setup}

The positive roots for $G = \mathrm{Sp}_4(\kk)$ can be written as $\Phi^+ = \{ \alpha_1 , \alpha_2 , \alpha_\mathrm{hs} , \alpha_\mathrm{h} \}$, where $\alpha_1$ is the long simple root, $\alpha_2$ is the short simple root, $\alpha_\mathrm{hs} = \alpha_1 + \alpha_2$ is the highest short root and $\alpha_\mathrm{h} = \alpha_1 + 2 \alpha_2$ is the highest root.
The weight lattice $X$ is spanned by the fundamental dominant weights $\varpi_1$ and $\varpi_2$.
The weight $\varpi_2$ is minuscule, and we have
\[ \alpha_1 = 2 \varpi_1 - 2 \varpi_2 , \qquad \alpha_2 = - \varpi_1 + 2 \varpi_2 , \qquad \alpha_\mathrm{hs} = \varpi_1 , \qquad \alpha_\mathrm{h} = 2 \varpi_2 . \]
We write $S_\mathrm{fin} = \{t,u\}$ for the set of simple reflections in $W_\mathrm{fin}$, with $t = s_{\alpha_1}$ and $u = s_{\alpha_2}$.
The affine simple reflection is denoted by $s = s_0 = t_{\alpha_\mathrm{hs}} s_{\alpha_\mathrm{hs}}$, so that $S_\mathrm{aff} = \{ s,t,u \}$ is the set of simple reflections in $W_\mathrm{aff}$.
We fix labelings for certain alcoves as in the figure below, so that
\begin{gather*}
	C_1 = s \Cdot C_0 , \qquad C_2 = st \Cdot C_0 , \qquad C_3 = stu \Cdot C_0 , \\
	C_{3a} = sts \Cdot C_0 , \qquad C_{4a} = stut \Cdot C_0 , \qquad C_{4b} = stus \Cdot C_0 .
\end{gather*}

\begin{center}
\begin{tikzpicture}[scale=1.5]
	\draw[thick] (0,0) -- (0,3);
	\draw[thick] (0,0) -- (3,3);
	\draw[thick] (0,1) -- (1,1);
	\draw[thick] (1,1) -- (0,2);
	\draw[thick] (1,1) -- (1,3);
	\draw[thick] (0,2) -- (2,2);
	\draw[thick] (0,2) -- (1,3);
	\draw[thick] (2,2) -- (1,3);
	\draw[thick] (2,2) -- (2,3);
	\draw[thick] (0,3) -- (3,3);
	
	\node at (.325,.675) {$0$};
	\node at (.325,1.325) {$1$};
	\node at (.675,1.675) {$2$};
	\node at (.675,2.325) {$3$};
	\node at (.325,2.675) {$4a$};
	\node at (1.325,1.675) {$3a$};
	\node at (1.325,2.325) {$4b$};
\end{tikzpicture}
\end{center}
Note that all $p$-restricted weights belong to the upper closure of one of the alcoves $C_0$, $C_1$, $C_2$ and $C_3$.
Alternatively, these alcoves can be described as follows:
\begin{align*}
	C_0 & = \{ a \varpi_1 + b \varpi_2 \mid a > -1 , ~ b > -1 , ~ 2a+b < p-3 \} , \\
	C_1 & = \{ a \varpi_1 + b \varpi_2 \mid b > -1 , ~ 2a+b > p-3 , ~ a+b < p-2 \} , \\
	C_2 & = \{ a \varpi_1 + b \varpi_2 \mid a+b > p-2 , ~ b < p-1 , ~ 2a+b < 2p-3 \} , \\
	C_3 & = \{ a \varpi_1 + b \varpi_2 \mid 2a+b > 2p-3 , ~ a < p-1 , ~ b < p-1 \} .
\end{align*}
The wall separating two alcoves $C_x$ and $C_y$ is denoted by $F_{x,y} = F_{y,x}$, for $x,y \in \{0,1,2,3,3a,4a,4b\}$.
In the following, we give a concrete example of what the reflection smallness condition from Section \ref{sec:alcovegeometryCR} means for a weight in one of the alcoves $C_0$, $C_1$, $C_2$ and $C_3$.

\begin{Example} \label{ex:B2reflectionsmall}
	Let $\lambda,\mu \in X^+$ and write $\lambda = a \varpi_1 + b \varpi_2$ and $\mu = a^\prime \varpi_1 + b^\prime \varpi_2$.
	Then we have
	\[ W_\mathrm{fin} \mu = \big\{ c \varpi_1 + d \varpi_2 \mathrel{\big|} \pm (c,d) \in \{ (a^\prime,b^\prime) , (a^\prime+b^\prime,-b^\prime) , (-a^\prime,2a^\prime+b^\prime) , (-a^\prime-b^\prime,2a^\prime+b^\prime) \} \big \} . \]
	The alcove $C_0$ has a unique wall $F_{0,1} = H_{\alpha_\mathrm{hs},1}$ that belongs to its upper closure, and so if $\lambda \in \widehat{C}_0$ then $\mu$ is reflection small with respect to $\lambda$ if and only if $(\lambda+w\mu,\alpha_\mathrm{hs}^\vee) \leq p-3$ for all $w \in W_\mathrm{fin}$.
	Using the description of $W_\mathrm{fin}\mu$ above, one easily verifies that this is equivalent to the condition
	\[ 2a + b + 2a^\prime + b^\prime \leq p-3 . \]
	Similarly, the alcove $C_1$ has a unique wall $F_{1,2} = H_{\alpha_\mathrm{h},1}$ that belongs to its upper closure, and so if $\lambda \in \widehat{C}_1$ then $\mu$ is reflection small with respect to $\lambda$ if and only if $(\lambda+w\mu,\alpha_\mathrm{h}^\vee) \leq p-2$ for all $w \in W_\mathrm{fin}$.
	This is easily seen to be equivalent to the condition
	\[ a+b+a^\prime+b^\prime \leq p-2 . \]
	The alcove $C_2$ has two walls $F_{2,3} = H_{\alpha_\mathrm{hs},2}$ and $F_{2,3a} = H_{\alpha_2,1}$ that belong to its upper closure, and so for $\lambda \in \widehat{C}_2$, the weight $\mu$ is reflection small with respect to $\lambda$ if and only if
	\[ 2a + b + 2a^\prime + b^\prime \leq 2p-3 \qquad \text{and} \qquad b + 2a^\prime + b^\prime \leq p-1 . \]
	Finally, for $\lambda \in \widehat{C}_3$, the weight $\mu$ is reflection small with respect to $\lambda$ if and only if
	\[ a + a^\prime + b^\prime \leq p-1 \qquad \text{and} \qquad b + 2a^\prime + b^\prime \leq p-1 . \]
\end{Example}

\begin{Remark} \label{rem:B2stabC0}
	The stabilizer $\Omega \coloneqq \Stab_{W_\mathrm{ext}}(C_0)$ is cyclic of order $2$, and it is generated by the affine reflection $\omega \coloneqq t_{\varpi_2} s_{\alpha_\mathrm{h}}$.
	For $\lambda \in X$ and $a,b \in \Z$ such that $\lambda = a\varpi_1+b\varpi_2$, we have
	\[ \omega\Cdot\lambda = a \varpi_1 + (p - 2a - b - 4) \cdot \varpi_2 . \]
\end{Remark}

\begin{Remark}\label{rem:B2weightsfundamentalmodules}
	Since $\varpi_2$ is minuscule, we have $L(\varpi_2) = \Delta(\varpi_2)$, and so $\dim L(\varpi_2)=4$ and the non-zero weight spaces of $L(\varpi_2)$ correspond to the weights $\{\varpi_2, \varpi_1-\varpi_2,-\varpi_1+\varpi_2,-\varpi_2\}$ by Weyl's character formula.
	If $p\geq 3$ then we further have $L(\varpi_1) = \Delta(\varpi_1)$, whence $\dim L(\varpi_1)=5$ and the non zero weight spaces of $L(\varpi_1)$ correspond to the weights $\{\varpi_1, -\varpi_1+2\varpi_2,0,\varpi_1-2\varpi_2, -\varpi_1\}$.
\end{Remark}

\subsection{Main results}
\label{subsec:B2results}

We are now ready to state our main results about complete reducibility and multiplicity freeness of tensor products for $G = \mathrm{Sp}_4(\kk)$.
The proofs will be split into necessary conditions and sufficient conditions; see Subsections \ref{subsec:B2proofsCRsufficient} and \ref{subsec:B2proofsCRnecessary} for the proofs of the complete reducibility theorem and Subsection \ref{subsec:B2proofsMFsufficient} and \ref{subsec:B2proofsMFnecessary} for the proofs of the multiplicity freeness theorem.
Also recall from Subsection \ref{subsec:prestricted} that it suffices to consider pairs of simple $G$-modules with $p$-restricted highest weights.

\begin{Theorem}\label{thm:B2CR}
	For $\lambda , \mu \in X_1 \setminus \{0\}$, the tensor product $L(\lambda) \otimes L(\mu)$ is completely reducible if and only if $\lambda$ and $\mu$ satisfy one of the conditions in Table \ref{tab:B2CR}, up to interchanging $\lambda$ and $\mu$.
	\begin{table}[htbp]
	\centering
	\caption{Weights $\lambda,\mu \in X_1 \setminus \{ 0 \}$ such that $L(\lambda) \otimes L(\mu)$ is completely reducible, for $G$ of type $\mathrm{B}_2$.}
	\label{tab:B2CR}
	\begin{tabular}{|l||l|}
	\hline
	$\mathbf{\#}$ & \textbf{conditions} \\
	\hline\hline
	\\[-1.05em]
	1 & $p \geq 3$ and $\{\lambda, \mu\}\subseteq \{\frac{p-1}{2}\varpi_1,\frac{p-3}{2}\varpi_1+\varpi_2\}$\\
	\\[-1.05em]
	\hline
	2 & $\lambda=(p-1)\varpi_1$ and $\mu=\varpi_2$\\
	\hline
	3 & $\lambda=(p-1)\varpi_2$ and $\mu=\varpi_1$\\
	\hline
	4 & $p \neq 3$ and $\lambda=(p-2)\varpi_1+\varpi_2$ and $\mu=\varpi_1$\\
	\hline
	5 & $p \geq 3$ and $\lambda=(p-2)\varpi_2$ and $\mu=\varpi_2$\\
	\hline
	6 & $p \geq 3$ and $\lambda=(p-2)\varpi_1$ and $\mu=\varpi_2$\\
	\hline
	7 & $p \geq 5$ and $\lambda=(p-3)\varpi_2$ and $\mu=\varpi_1$\\
	\hline
	8 & $\lambda\in C_0\cup C_1\cup C_2\cup C_3$ and $\mu$ is reflection small with respect to $\lambda$\\
	\hline
	\end{tabular}
	\end{table}
\end{Theorem}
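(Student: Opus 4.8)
The plan is to follow the same two-part strategy as in the proof of Theorem \ref{thm:A2CR}: first one verifies that each condition in Table \ref{tab:B2CR} is sufficient for the complete reducibility of $L(\lambda) \otimes L(\mu)$ (Subsection \ref{subsec:B2proofsCRsufficient}), and then one shows that a completely reducible tensor product necessarily satisfies one of these conditions (Subsection \ref{subsec:B2proofsCRnecessary}). Since $w_0 = -1$ in type $\mathrm{B}_2$, every simple $G$-module is self-dual, so there is no duality reduction and the only symmetry to exploit is the interchange of $\lambda$ and $\mu$; this is why Table \ref{tab:B2CR} has no starred rows.

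For the \emph{sufficiency}, condition 8 is immediate from Theorem \ref{thm:reflectionsmalltensorproduct}. In the sporadic conditions 1--7 the weight $\lambda$ typically lies on a wall (so that $L(\lambda)$ is $p$-singular) while $\mu$ is one of the fundamental weights $\varpi_1, \varpi_2$. Since $\varpi_2$ is minuscule, the tensor products with $\mu = \varpi_2$ (conditions 2, 5, 6) can be decomposed using the translation-functor formula of Lemma \ref{lem:minusculetensorproducttranslation}, together with the Weyl and tilting module data of Subsection \ref{subsec:B2Weylmodulestiltingmodules}, exactly as in Lemma \ref{lem:A2wallstensorpi1}. For the conditions with $\mu = \varpi_1$ (conditions 3, 4, 7), where $\varpi_1$ is not minuscule, I would either realize $L(\varpi_1)$ as a direct summand of $L(\varpi_2)^{\otimes 2}$ and iterate the minuscule decomposition (as in Lemma \ref{lem:A2api1tensoraprimepi1} and Corollary \ref{cor:minusculetensorproductMFCR}), or compute the character of $L(\lambda) \otimes L(\varpi_1)$ directly from Remark \ref{rem:B2weightsfundamentalmodules} and Proposition \ref{prop:productWeylcharacters} and identify the simple summands via the linkage principle. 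The precise positions of the weights relative to the walls of $C_0, \ldots, C_3$ then dictate the prime restrictions ($p \neq 3$ in condition 4, $p \geq 5$ in condition 7). Condition 1 is the most delicate, with both $\lambda$ and $\mu$ lying deep inside the restricted region; here I expect to rely on an explicit character computation combined with the tilting module structure of Subsection \ref{subsec:B2Weylmodulestiltingmodules}.

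For the \emph{necessity}, I would run a case analysis according to which of the closures $\widehat{C}_0, \widehat{C}_1, \widehat{C}_2, \widehat{C}_3$ contains $\lambda$, taking $\lambda$ to lie in the deeper alcove after interchanging $\lambda$ and $\mu$ if necessary. The obstructions to complete reducibility are: the Levi truncation argument of Remark \ref{rem:LeviprestrctedconditionA2}, which forces $p$-restrictedness inequalities on the weights $\lambda + \mu - c\alpha_i$; the good filtration dimension bound of Corollary \ref{cor:nonCRcriterionGFD}, which rules out composition factors lying too deep in the linkage order; the singular module criterion of Lemma \ref{lem:nonCRcriterionpregularweight}, needed whenever $\lambda$ is $p$-singular; and, when both $L(\lambda)$ and $L(\mu)$ are tilting, the observation that $T(\lambda + \mu)$ is a direct summand of $L(\lambda) \otimes L(\mu)$ and must therefore be simple. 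For the deeper alcoves I would prove analogues of Proposition \ref{prop:A2directsumandsC1tilting}, controlling the indecomposable summands of $L(\lambda) \otimes T$ for $\lambda$ in a fixed alcove, and use $\Hom$-space arguments as in Proposition \ref{prop:A2lambdaabovewall} to force the appearance of a non-simple tilting summand in every forbidden case.

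The hard part will be the necessity direction for $\lambda \in \widehat{C}_2 \cup \widehat{C}_3$, together with the $p$-singular wall cases. In type $\mathrm{B}_2$ there are four relevant alcoves rather than two, the indecomposable tilting modules have a more involved Loewy structure, and the $p$-restricted region is bounded by more reflection hyperplanes, so that ruling out every value of $\mu$ outside the sporadic list and outside the reflection-small range requires careful bookkeeping of the translated simple modules and tilting summands that can occur. I expect the bulk of the work to lie in establishing these auxiliary decompositions and non-complete-reducibility obstructions in Subsection \ref{subsec:B2proofsCRnecessary}, after which the theorem follows by matching the surviving pairs $(\lambda, \mu)$ against the rows of Table \ref{tab:B2CR}.
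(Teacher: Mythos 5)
Your overall two-part strategy and most of your toolkit match the paper's actual proof (Subsections \ref{subsec:B2proofsCRsufficient} and \ref{subsec:B2proofsCRnecessary}), but there is a genuine gap in the necessity direction for the case $\lambda,\mu \in C_1$, i.e.\ in pinning down condition 1. The obstructions you list (Levi truncation, the good filtration dimension bound of Corollary \ref{cor:nonCRcriterionGFD}, the singular-weight criterion of Lemma \ref{lem:nonCRcriterionpregularweight}, tilting-summand and Hom-space arguments) do not suffice to show that complete reducibility with $\lambda,\mu\in C_1$ forces $\lambda,\mu \in \{\tfrac{p-1}{2}\varpi_1,\tfrac{p-3}{2}\varpi_1+\varpi_2\}$. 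The reason is that the $\mathrm{B}_2$-analogue of Proposition \ref{prop:A2directsumandsC1tilting} that you propose to prove (Proposition \ref{prop:B2directsummandsC1} in the paper) explicitly \emph{allows} simple direct summands with highest weight in $C_2$; so manufacturing a composition factor such as $L(\lambda+\mu-\alpha_2)$ with highest weight in $C_2$ is not by itself a contradiction, unlike in type $\mathrm{A}_2$. To rule out that this factor occurs as a regular simple direct summand, the paper computes that the relevant Verlinde coefficient vanishes (using Lemmas \ref{lem:VerlindecoefficientsFundamentalgroup} and \ref{lem:Verlindecoefficientsflipping} together with Remark \ref{rem:B2stabC0}) and invokes the translation principle for tensor products, Theorem \ref{thm:translationprincipletensorproducts}, from the singular/regular module machinery of Subsection \ref{subsec:singular}; only then does the tilting-socle Hom argument produce a non-simple tilting summand and the desired contradiction (see Lemma \ref{lem:B2weightsinC1conditions} and Proposition \ref{prop:B2_CR_C1}). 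Your plan never mentions this machinery, and without it the $C_1\times C_1$ case cannot be closed.

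Two smaller points. First, in the $p$-singular wall cases the ``bad'' composition factors that violate Lemma \ref{lem:nonCRcriterionpregularweight} are mostly produced via the characteristic-zero monotonicity result, Lemma \ref{lem:tensormultiplicitiesmonotonous}, which is absent from your list (and the relevant Levi truncation statement is Remark \ref{rem:LeviprestrctedconditionB2}, not the type $\mathrm{A}_2$ remark you cite). Second, for the sufficiency of condition 1 your proposed ``explicit character computation'' does cover the mixed product $L(\tfrac{p-1}{2}\varpi_1)\otimes L(\tfrac{p-3}{2}\varpi_1+\varpi_2)$, which is exactly what the paper does, but the tensor squares $L(\lambda)\otimes L(\lambda)$ and $L(\mu)\otimes L(\mu)$ are not feasibly handled this way: the paper instead deduces their complete reducibility indirectly, by realizing each factor as a summand of $L(\varpi_2)$ tensored with the other and chasing the possible indecomposable summands, and obtains multiplicity freeness from the one-dimensional weight space results (Corollaries \ref{cor:B2onedimensionalweightspaces} and \ref{cor:onedimensionalweightspacesMF}). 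You would need to add an argument of this kind to complete condition 1 even in the sufficiency direction.
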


Recall from Proposition \ref{prop:CRandMFchar0impliesMF} that for $\lambda,\mu \in X^+$ such that $L(\lambda) \otimes L(\mu)$ is completely reducible and the $G_\C$-module $L_\C(\lambda) \otimes L_\C(\mu)$ is multiplicity free, the tensor product $L(\lambda) \otimes L(\mu)$ is also multiplicity free.
In order to state and prove our multiplicity freeness theorem, we will make recourse to the classification of pairs of simple $G_\C$ modules whose tensor product is multiplicity free, due to J.\ Stembridge; see Theorem 1.1B in \cite{Stembridge}.
In fact, J.\ Stembridge has classified the pairs of simple modules whose tensor product is multiplicity free for all complex simple algebraic groups.
We will only need his results for the group $G = \mathrm{Sp}_4(\kk)$, and these can be summarized as follows:

\begin{Theorem}\label{thm:StembridgeB2}
	For $\lambda , \mu \in X^+ \setminus \{0\}$, the tensor product $L_{\C}(\lambda) \otimes L_{\C}(\mu)$ is multiplicity free if and only if $\lambda$ and $\mu$ satisfy one of the conditions in Table \ref{tab:B2MFchar0}, up to interchanging $\lambda$ and $\mu$.
	\begin{table}[htbp]
	\centering
	\caption{Weights $\lambda,\mu \in X_1 \setminus \{ 0 \}$ such that $L_\C(\lambda) \otimes L_\C(\mu)$ is multiplicity free, for $G$ of type $\mathrm{B}_2$.}
	\label{tab:B2MFchar0}
	\begin{tabular}{|l||l|}
		\hline
		$\mathbf{\#}$ & \textbf{conditions} \\
		\hline\hline
		1a& $\mu=\varpi_1$\\
		\hline
		1b& $\mu=\varpi_2$\\
		\hline
		2a& $\lambda=a\varpi_1$ and $\mu=a'\varpi_1$\\
		\hline
		2b&$\lambda=b\varpi_2$ and $\mu=b'\varpi_2$\\
		\hline
		3&$\lambda=a\varpi_1$ and $\mu=b'\varpi_2$\\
		\hline
		4& $\lambda=a\varpi_1+\varpi_2$ and $\mu=a'\varpi_1$\\
		\hline
	\end{tabular}
	\end{table}
\end{Theorem}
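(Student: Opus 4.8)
The plan is to deduce this statement as the specialization of Stembridge's general classification \cite[Theorem 1.1B]{Stembridge} to the root system $B_2 \cong C_2$, organizing the verification into the two usual directions and using the monotonicity of tensor multiplicities as the main engine for the converse. As recalled at the start of Subsection \ref{subsec:char0}, the multiplicity $[L_\C(\lambda)\otimes L_\C(\mu):L_\C(\nu)]$ equals the good-filtration multiplicity $[\nabla(\lambda)\otimes\nabla(\mu):\nabla(\nu)]_\nabla$, so every assertion is a statement about the product $\chi(\lambda)\cdot\chi(\mu)$ expanded in the basis $\{\chi(\nu)\}_{\nu\in X^+}$ and can in principle be checked at the level of characters.

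For the ``if'' direction I would treat each family in Table \ref{tab:B2MFchar0} separately. The case $\mu=\varpi_2$ (1b) is immediate because $\varpi_2$ is minuscule: the characteristic-zero analogue of Corollary \ref{cor:minusculetensorproductMFCR} shows that $L_\C(\lambda)\otimes L_\C(\varpi_2)$ is a multiplicity-free sum of those $L_\C(\lambda+w\varpi_2)$ that are dominant. The case $\mu=\varpi_1$ (1a) uses that $\varpi_1$ is quasi-minuscule, with dominant weights only $\varpi_1$ and $0$ (Remark \ref{rem:B2weightsfundamentalmodules}), so the resulting tensor rule again produces no repetitions. The one-parameter families 2a, 2b and the mixed family 3 reduce to explicit Clebsch--Gordan-type decompositions, and the two-parameter family 4 is checked by a direct character computation (or by reading off the corresponding entry of Stembridge's list).

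For the converse I would exploit Lemma \ref{lem:tensormultiplicitiesmonotonous}: for fixed $\mu$ and any dominant $\delta$, the multiplicity of $L_\C(\nu+\delta)$ in $L_\C(\lambda+\delta)\otimes L_\C(\mu)$ is at least that of $L_\C(\nu)$ in $L_\C(\lambda)\otimes L_\C(\mu)$, and symmetrically in $\mu$. Hence non-multiplicity-freeness is preserved under adding a dominant weight to either factor, so the multiplicity-free locus is an order ideal in $X^+\times X^+$. Since $L_\C(\lambda)\otimes L_\C(\varpi_1)$ and $L_\C(\lambda)\otimes L_\C(\varpi_2)$ are always multiplicity free (families 1a, 1b), I may restrict, up to interchanging $\lambda$ and $\mu$, to pairs with $\lambda,\mu\notin\{0,\varpi_1,\varpi_2\}$. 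It then suffices to exhibit a finite list of componentwise-minimal non-multiplicity-free pairs whose up-sets cover the entire complement of Table \ref{tab:B2MFchar0}. Concretely, I would verify by direct character computation that $L_\C(\varpi_1+\varpi_2)\otimes L_\C(\varpi_1+\varpi_2)$, $L_\C(2\varpi_2)\otimes L_\C(\varpi_1+\varpi_2)$ and $L_\C(\varpi_1+2\varpi_2)\otimes L_\C(2\varpi_1)$ each contain some $L_\C(\nu)$ with multiplicity $\geq 2$; these pairs, which sit exactly at the boundaries of families 3 and 4, are minimal because all of their immediate predecessors lie in the table. A short combinatorial check then shows that every pair outside the table (with both factors not equal to $0,\varpi_1,\varpi_2$) dominates one of these minimal pairs in the product order, so monotonicity forces a repeated constituent.

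The main obstacle is the bookkeeping in the converse: pinning down the exact finite set of minimal non-multiplicity-free pairs and confirming that their up-sets, together with the six families of Table \ref{tab:B2MFchar0}, account for all of $X^+\times X^+$. A secondary but genuine source of error is the translation between Stembridge's conventions and the $B_2$-labeling used here (with $\alpha_1$ long and $\alpha_2$ short), since the duality $B_2\cong C_2$ interchanges the roles of the two fundamental weights; I would resolve this by matching dimensions via Remark \ref{rem:B2weightsfundamentalmodules} ($\dim L(\varpi_1)=5$, $\dim L(\varpi_2)=4$) before reading off Stembridge's entries.
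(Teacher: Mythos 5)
Your proposal is correct and takes essentially the same approach as the paper: the paper offers no independent proof of this statement, but simply imports Stembridge's classification (Theorem 1.1B in \cite{Stembridge}) specialized to type $\mathrm{B}_2$ and translates conventions, which your dimension check via Remark \ref{rem:B2weightsfundamentalmodules} ($\dim L_\C(\varpi_1)=5$, $\dim L_\C(\varpi_2)=4$) resolves correctly — entry 4 must indeed pair $a\varpi_1+\varpi_2$ with multiples of $\varpi_1$, since e.g.\ $L_\C(\varpi_1+2\varpi_2)\otimes L_\C(2\varpi_2)$ contains $L_\C(\varpi_1+2\varpi_2)$ twice. Your supplementary converse argument (monotonicity via Lemma \ref{lem:tensormultiplicitiesmonotonous} plus the three minimal non-multiplicity-free pairs, each of which does carry a repeated constituent and whose up-sets do cover the complement of Table \ref{tab:B2MFchar0}) is sound and goes beyond what the paper records, though your verification of the ``if'' direction for the infinite families 2a--4 still ultimately leans on Stembridge's list.
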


Now we can state our multiplicity freeness theorem for $G = \mathrm{Sp}_4(\kk)$.

\begin{Theorem}\label{thm:B2MF}
	For $\lambda , \mu \in X_1 \setminus \{0\}$, the tensor product $L(\lambda) \otimes L(\mu)$ is multiplicity free if and only if $\lambda$ and $\mu$ satisfy one of the conditions in Table \ref{tab:B2MF}, up to interchanging $\lambda$ and $\mu$.
	
	\begin{table}[ht]
	\centering
	\caption{Weights $\lambda,\mu \in X_1 \setminus \{ 0 \}$ such that $L(\lambda) \otimes L(\mu)$ is multiplicity free, for $G$ of type $\mathrm{B}_2$.}
	\label{tab:B2MF}
	\begin{tabular}{|l||p{15.5cm}|}
		\hline
		$\mathbf{\#}$ & \textbf{conditions} \\
		\hline\hline
		\\[-1.05em]
		1 & $p \geq 3$ and $\{\lambda, \mu\}\subseteq \{\frac{p-1}{2}\varpi_1,\frac{p-3}{2}\varpi_1+\varpi_2\}$\\
		\\[-1.05em]
		\hline
		2 & $\lambda=(p-1)\varpi_1$ and $\mu=\varpi_2$\\
			\hline
			3 & $\lambda=(p-1)\varpi_2$ and $\mu=\varpi_1$\\
			\hline
			4 & $p \neq 3$ and $\lambda=(p-2)\varpi_1+\varpi_2$ and $\mu=\varpi_1$\\
			\hline
			5 & $p \geq 3$ and $\lambda=(p-2)\varpi_2$ and $\mu=\varpi_2$\\
			\hline
			6 & $p \geq 3$ and $\lambda=(p-2)\varpi_1$ and $\mu=\varpi_2$\\
			\hline
			7 & $p \geq 5$ and $\lambda=(p-3) \varpi_2$ and $\mu=\varpi_1$\\
			\hline
			8a & $\lambda\in C_0$, $\mu$ is reflection small with respect to $\lambda$ and $L_{\C}(\lambda)\otimes L_{\C}(\mu)$ is multiplicity free as a $G_{\C}$-module (cf.\ Table \ref{tab:B2MFchar0})\\
			\hline
			8b & $\lambda\in C_1\cup C_2\cup C_3$ and $\mu\in\{\varpi_1,\varpi_2\}$ is reflection small with respect to $\lambda$\\
			\hline
			8c & $\lambda=a\varpi_1+\varpi_2\in C_1$ and $\mu=a'\varpi_1$ is reflection small with respect to $\lambda$\\
			\hline
			8c\om & $\lambda=a\varpi_1+b\varpi_2\in C_1$ with $2a+b=p-1$ and $\mu=a'\varpi_1$ is reflection small with respect to $\lambda$\\
			\hline
			8d &  $\lambda=a\varpi_1\in C_1$, $\mu=a'\varpi_1+b'\varpi_2$ is reflection small with respect to $\lambda$ and $b'\in\{0,1\}$\\
			\hline
			8d\om & $\lambda=a\varpi_1+b\varpi_2\in C_1$ with $2a+b=p-2$, $\mu=a'\varpi_1+b'\varpi_2$ is reflection small with respect to $\lambda$ and $b'\in\{0,1\}$\\
			\hline
			8e&$\lambda=a\varpi_1\in C_1$ and $\mu=b'\varpi_2$ is reflection small with respect to $\lambda$\\
			\hline
			8e\om & $\lambda=a\varpi_1+b\varpi_2\in C_1$ with $2a+b=p-2$ and $\mu=b'\varpi_2$ is reflection small with respect to $\lambda$ \\
			\hline
			\\[-1.05em]
			8f&$\lambda=\frac{p-1}{2}\varpi_1$ and $\mu$ is reflection small with respect to $\lambda$\\
			\\[-1.05em]
			\hline
			\\[-1.05em]
			8f\om &$\lambda=\frac{p-3}{2}\varpi_1+\varpi_2$ and $\mu$ is reflection small with respect to $\lambda$\\
			\\[-1.05em]
			\hline
			8g & $\lambda=a\varpi_1+b\varpi_2\in C_2$ with $a+b=p-1$ and $\mu=a'\varpi_1$ is reflection small with respect to $\lambda$\\
			\hline
			8h & $\lambda=a\varpi_1+b\varpi_2\in C_2$ with $a+b=p-1$ and $\mu=b'\varpi_2$ is reflection small with respect to $\lambda$\\
			\hline
			8i & $\lambda=a\varpi_1+b\varpi_2\in C_3$ with $2a+b=2p-2$ and $\mu=b'\varpi_2$ is reflection small with respect to $\lambda$\\
			\hline
			8j & $\lambda=a\varpi_1+b\varpi_2\in C_3$ with $2a+b=2p-2$ and $\mu=a'\varpi_1+b'\varpi_2$ is reflection small with respect to $\lambda$ with $b'\in \{0,1\}$\\
			\hline
			8k & $\lambda=a\varpi_1+b\varpi_2\in C_3$ with $2a+b=2p-1$ and $\mu=a'\varpi_1$ is reflection small with respect to $\lambda$\\
			\hline
		\end{tabular}
	\end{table}
\end{Theorem}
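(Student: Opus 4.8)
The plan is to run the same two-sided strategy as in the proof of Theorem \ref{thm:A2MF}, built on the fact that multiplicity freeness implies complete reducibility (Lemma \ref{lem:MFimpliesCR}). Every multiplicity free pair therefore already occurs in the complete reducibility classification of Theorem \ref{thm:B2CR}, so it suffices to decide, line by line of Table \ref{tab:B2CR}, which completely reducible tensor products are in fact multiplicity free. The reduction to $p$-restricted weights from Subsection \ref{subsec:prestricted} is in force, so I work with $\lambda,\mu \in X_1 \setminus \{0\}$. Conditions 1--7 of Table \ref{tab:B2CR} are isolated small cases and coincide verbatim with conditions 1--7 of Table \ref{tab:B2MF}; all of the refinement happens inside the reflection small case (condition 8 of Table \ref{tab:B2CR}), which splits into the sublines 8a--8k together with their $\omega$-twins.

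For sufficiency I would upgrade complete reducibility to multiplicity freeness by two mechanisms. First, whenever $\mu \in \{\varpi_1,\varpi_2\}$ (covering 8b and the small cases 2--7), all weight spaces of $L(\mu)$ are at most one-dimensional by Remark \ref{rem:B2weightsfundamentalmodules}, so Corollary \ref{cor:onedimensionalweightspacesMF} gives the result immediately. Second, for a reflection small pair for which the complex tensor product $L_\C(\lambda) \otimes L_\C(\mu)$ is already multiplicity free, Proposition \ref{prop:CRandMFchar0impliesMF} transfers multiplicity freeness to characteristic $p$; combined with Stembridge's list (Theorem \ref{thm:StembridgeB2}) this yields condition 8a verbatim and the remaining cases 8c--8e in which $\mu$ has a Stembridge-admissible shape (for instance $\mu = a'\varpi_1 + b'\varpi_2$ with $b' \in \{0,1\}$ in 8d, matching Stembridge's families 2a and 4). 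Condition 1 is handled the same way, since $\{\tfrac{p-1}{2}\varpi_1,\tfrac{p-3}{2}\varpi_1+\varpi_2\}$ falls under Stembridge's cases 2a and 4.

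The remaining reflection small sublines are genuinely positive-characteristic, and both their sufficiency and the overall necessity are driven by the explicit composition multiplicity formula of Theorem \ref{thm:reflectionsmalltensorproduct}(3): writing $C$ for the alcove with $\lambda \in \widehat C$, every composition factor has highest weight $\nu \in \widehat C$ and multiplicity $\sum_{u\in W_C}(-1)^{\ell(u)}\dim L(\mu)_{u\Cdot\nu-\lambda}$. I would evaluate this alcove by alcove. For $\lambda \in C_0$, Example \ref{ex:B2reflectionsmall} shows reflection smallness forces $\mu \in \overline{C}_0$, so $L(\mu) = \Delta(\mu)$ carries the characteristic zero weight multiplicities, and since $W_C = W_\mathrm{fin}$ the formula returns the characteristic zero multiplicity exactly; thus for $\lambda \in C_0$ multiplicity freeness is equivalent to that of $L_\C(\lambda)\otimes L_\C(\mu)$, which is precisely condition 8a. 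For $\lambda \in C_1$, $C_2$ or $C_3$ the subgroup $W_C$ is a proper reflection subgroup of $W_\mathrm{fin}$ and $\mu$ may leave $\overline{C}_0$, so the truncation to $\widehat C$ in part (2) can collapse a characteristic zero multiplicity of $2$ to a characteristic $p$ multiplicity of $1$. This collapse is exactly what makes sublines 8c\om, 8f, 8f\om and 8g--8k multiplicity free while their complex counterparts are not, and it occurs precisely when $\lambda$ sits on the distinguished wall given by the relevant equality ($2a+b = p-1$, $a+b = p-1$, $2a+b = 2p-2$ or $2a+b = 2p-1$). The order-two symmetry $\omega = t_{\varpi_2}s_{\alpha_\mathrm{h}}$ of Remark \ref{rem:B2stabC0}, under which Verlinde coefficients are invariant (Lemma \ref{lem:VerlindecoefficientsFundamentalgroup}), pairs each base subline with its $\omega$-twin (8c with 8c\om, 8d with 8d\om, 8e with 8e\om, 8f with 8f\om), cutting the work roughly in half.

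The main obstacle is exactly this positive-characteristic bookkeeping in $C_1$, $C_2$ and $C_3$. There Stembridge's classification is unavailable, so for sufficiency one must check that the alternating sum $\sum_{u\in W_C}(-1)^{\ell(u)}\dim L(\mu)_{u\Cdot\nu-\lambda}$ never exceeds $1$, and for necessity one must, for every reflection small pair absent from lines 8a--8k, exhibit an explicit $\nu \in \widehat C$ whose alternating sum equals $2$. Both demand the genuine weight multiplicities of the simple module $L(\mu)$—which for $p$-restricted $\mu$ need not agree with those of $\Delta(\mu)$—together with careful tracking of which shifted weights $u\Cdot\nu-\lambda$ lie inside $\widehat C$. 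Verifying that the four wall equalities are the sharp thresholds, and organizing this finite but sizeable case analysis without omission, is where essentially all of the difficulty resides.
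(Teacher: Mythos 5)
Your overall architecture coincides with the paper's: by Lemma \ref{lem:MFimpliesCR} every multiplicity free pair already appears in Theorem \ref{thm:B2CR}, conditions 1--7 transfer unchanged, and the real work is deciding which reflection small pairs are multiplicity free, alcove by alcove. Your handling of 8a (characteristic-zero comparison, as in Proposition \ref{prop:B2MFC0C0}), of 8b--8e via Proposition \ref{prop:CRandMFchar0impliesMF} and Theorem \ref{thm:StembridgeB2}, and of conditions 2--7 via one-dimensional weight spaces is sound, and the $\omega$-pairing you invoke is exactly the paper's device (Lemma \ref{lem:omega_reflection_small} combined with Lemmas \ref{lem:VerlindecoefficientsFundamentalgroup} and \ref{lem:MFboundVerlindecoeff}). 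However, there is already one concrete error in the part you do argue: condition 1 is \emph{not} covered by Stembridge. For $p \geq 5$ the pair $\lambda = \mu = \tfrac{p-3}{2}\varpi_1 + \varpi_2$ lies on no line of Table \ref{tab:B2MFchar0}, so $L_\C(\lambda) \otimes L_\C(\mu)$ has multiplicity and Proposition \ref{prop:CRandMFchar0impliesMF} cannot be applied to this tensor square. The paper instead shows (Corollary \ref{cor:B2onedimensionalweightspaces}) that all weight spaces of $L(\tfrac{p-1}{2}\varpi_1)$ and $L(\tfrac{p-3}{2}\varpi_1+\varpi_2)$ are at most one-dimensional and concludes with Corollary \ref{cor:onedimensionalweightspacesMF}; the same result also disposes of 8f and 8f\om{} immediately, which you instead consign to the hard alternating-sum analysis.

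The decisive gap is that the heart of the theorem --- sufficiency for 8c\om, 8d\om, 8e\om{} and 8g--8k, and the necessity statement that no other reflection small pair is multiplicity free --- is announced rather than proven: you say one ``must check that the alternating sum never exceeds $1$'' and ``must exhibit an explicit $\nu$ whose alternating sum equals $2$,'' and you locate essentially all of the difficulty there, but that \emph{is} the proof, and the proposal does not contain it. Moreover, the route you propose is not executable as described, because the formula in Theorem \ref{thm:reflectionsmalltensorproduct}(3) involves weight multiplicities of the \emph{simple} module $L(\mu)$, which, as you note yourself, need not agree with those of $\Delta(\mu)$, and you offer no way to compute them. The paper makes this tractable only through additional structural results you never supply: Lemma \ref{lem:B2_C1_reflectionsmall} (for $\lambda \in C_1$, reflection smallness forces $\mu \in \overline{C}_0$, hence $L(\mu) = \Delta(\mu)$) together with Cavallin's theorem (Remark \ref{rem:Cavallin}) for the explicit $C_1$ computations; and for $C_2$ and $C_3$ it abandons the alternating sum altogether, proving necessity by descending to the translated weight $\lambda_0 \in C_0$ (Lemmas \ref{lem:B2_reflectionsmallC2C3}, \ref{lem:B2_multiplicityC0C0}, \ref{lem:B2_C2C3multiplicity}) and applying Stembridge there, and proving sufficiency by splitting composition factors into $p$-regular ones, bounded by Verlinde coefficients via Lemma \ref{lem:MFboundVerlindecoeff}, and $p$-singular ones, bounded by Lemmas \ref{lem:boundformultiplicityintensorproductifCR} and \ref{lem:B2ondimensionalweightspacesboundary}. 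Without these ingredients, or substitutes for them, the ``finite but sizeable case analysis'' you point to cannot actually be carried out, so the proposal establishes only the easy half of the classification.
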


\subsection{Structure of Weyl modules and tilting modules}
\label{subsec:B2Weylmodulestiltingmodules}

In order to prove our main results, we will need some information about the submodule structure of certain Weyl modules and indecomposable tilting modules for $G = \mathrm{Sp}_4(\kk)$.
The structure of the Weyl modules with $p$-restricted highest weights has been determined by J.C.\ Jantzen in \cite[Section 7]{JantzenKontravarianteFormen}.
We first assume that $p \geq h = 4$, so that $C_0 \cap X$ is non-empty.
The Weyl modules $\Delta(w\Cdot\lambda)$, with $\lambda \in C_0 \cap X$ and $w \in \{ e , s , st , stu \}$, are all uniserial, with respective composition series given by $\Delta(\lambda) = [ L(\lambda) ]$ and
\[ \Delta(s\Cdot\lambda) = [ L(s\Cdot\lambda) , L(\lambda) ] , \qquad \Delta(st \Cdot \lambda) = [ L(st \Cdot\lambda) , L(s \Cdot\lambda) ] , \qquad \Delta(stu\Cdot\lambda) = [ L(stu \Cdot\lambda) , L(st \Cdot\lambda) ] . \]
Using the translation principle, it follows that the Weyl modules with $p$-singular $p$-restricted highest weights are simple, except for those with highest weights in $\widehat{C}_3 \setminus (C_3 \cup F_{3,4a})$ (note that this set is contained in the wall $F_{3,4b}$).
The weights in $\widehat{C}_3 \setminus (C_3 \cup F_{3,4a})$ are of the form $stu\Cdot \nu$ for $\nu \in \overline{C}_0$ with $\Stab_{W_\mathrm{aff}}(\nu) = \{ e , s \}$, and the Weyl module $\Delta(stu\Cdot\nu)$ is uniserial, with composition series
\[ \Delta(stu\Cdot\nu) = [ L(stu\Cdot\nu) , L(st\Cdot\nu) ] . \]
Furthermore, we have $T(\lambda) = \Delta(\lambda) = L(\lambda)$ for $\lambda \in C_0 \cap X$, and the tilting modules with highest weights in the alcoves $C_1$, $C_2$ and $C_3$ can be obtained by applying translation functors to the simple tilting modules with highest weights on the walls $F_{0,1}$, $F_{1,2}$ and $F_{2,3}$, respectively; see Section II.E.11 in \cite{Jantzen}.
Using elementary properties of translation functors (Sections II.7.19--20 in \cite{Jantzen}) and the structure of the Weyl modules described above, one sees that the tilting module $T(s\Cdot\lambda)$ is uniserial with composition series $T(s\Cdot\lambda) = [ L(\lambda) , L(s\Cdot\lambda) , L(\lambda) ]$;
the structure of the indecomposable tilting modules with $p$-regular highest weights in $C_2$ and $C_3$ is displayed in the following diagrams, where we replace a simple $G$-module $L(w\Cdot\lambda)$ by the label $w \in W_\mathrm{aff}^+$:
\[ T(st\Cdot\lambda) = 
	\begin{tikzpicture}[scale=.6,baseline={([yshift=-.5ex]current bounding box.center)}]
	\node (C1) at (0,0) {\small $s$};
	\node (C2) at (-1.5,1.5) {\small $st$};
	\node (C3) at (1.5,1.5) {\small $e$};
	\node (C4) at (0,3) {\small $s$};
	
	\draw (C1) -- (C2);
	\draw (C1) -- (C3);
	\draw (C2) -- (C4);
	\draw (C3) -- (C4);
	\end{tikzpicture} \hspace{2cm}
	T(stu\Cdot\lambda) = 
	\begin{tikzpicture}[scale=.6,baseline={([yshift=-.5ex]current bounding box.center)}]
	\node (C1) at (0,0) {\small $st$};
	\node (C2) at (-1.5,1.5) {\small $stu$};
	\node (C3) at (1.5,1.5) {\small $s$};
	\node (C4) at (0,3) {\small $st$};
	
	\draw (C1) -- (C2);
	\draw (C1) -- (C3);
	\draw (C2) -- (C4);
	\draw (C3) -- (C4);
	\end{tikzpicture} \]
If $p=3$ then the Weyl module $\Delta(\varpi_1+2\varpi_2)$ is uniserial with composition series $[ L(\varpi_1+2\varpi_2) , L(2\varpi_2) ]$, and for $\lambda \in X_1 \setminus \{ \varpi_1 + 2\varpi_2 \}$, we have $\Delta(\lambda) = L(\lambda)$.
If $p=2$ then the Weyl module $\Delta(\varpi_1)$ is uniserial with composition series $[ L(\varpi_1) , L(0) ]$, and for $\lambda \in X_1 \setminus \{ \varpi_1 \}$, we have $\Delta(\lambda) = L(\lambda)$.

\subsection{Proofs: complete reducibility, sufficient conditions}
\label{subsec:B2proofsCRsufficient}

In this subsection, we show that the tensor product $L(\lambda) \otimes L(\mu)$ is completely reducible for all pairs of weights $\lambda , \mu \in X_1$ that satisfy one of the conditions from Table \ref{tab:B2CR}.
We consider the different conditions in turn and summarize the proof at the end of the subsection (see page \pageref{proof:B2CRsufficient}).

We first establish that the tensor product $L(\lambda) \otimes L(\mu)$ is completely reducible for all $\lambda,\mu \in X_1$ that satisfy one of the conditions 2--7 in Table \ref{tab:B2CR}, in the six propositions below.
In all of these cases, we can explicitly write the character of $L(\lambda) \otimes L(\mu)$ as a sum of characters of pairwise non-isomorphic simple $G$-modules using Proposition \ref{prop:productWeylcharacters} and the results in Subsection \ref{subsec:B2Weylmodulestiltingmodules}, so $L(\lambda) \otimes L(\mu)$ is multiplicity free, and also completely reducible by Lemma \ref{lem:MFimpliesCR}.
We only give a detailed proof for Proposition \ref{prop:B2MFCRp-1_1} below (i.e.\ condition 2 in Table \ref{tab:B2CR}), since the proofs for the conditions 3--7 are completely analogous.

\begin{Proposition}\label{prop:B2MFCRp-1_1}
	Let $\lambda=(p-1) \cdot \varpi_1$ and $\mu=\varpi_2$.
	Then $L(\lambda)\otimes L(\mu)$ is completely reducible and multiplicity free.
	More precisely, if $p = 2$ then we have $L(\lambda) \otimes L(\mu) \cong L(\varpi_1+\varpi_2)$, and if $p \geq 3$ then we have
	\[ L(\lambda) \otimes L(\mu) = L\big( (p-1) \cdot \varpi_1 + \varpi_2 \big) \oplus L\big( (p-2) \cdot \varpi_1 + \varpi_2 \big) . \]
\end{Proposition}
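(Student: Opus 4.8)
The plan is to compute the character of $L(\lambda)\otimes L(\mu)$ explicitly and recognise it as a sum of distinct simple characters, exactly as announced in the paragraph preceding the proposition; multiplicity freeness then gives complete reducibility via Lemma \ref{lem:MFimpliesCR}. Since $\mu=\varpi_2$ is minuscule we have $\ch L(\varpi_2)=\chi(\varpi_2)$, and by Remark \ref{rem:B2weightsfundamentalmodules} its weights are $\varpi_2,\ \varpi_1-\varpi_2,\ -\varpi_1+\varpi_2,\ -\varpi_2$, each with multiplicity one. The first genuine step is to identify $\ch L((p-1)\varpi_1)$. For $p\geq 5$ I would check, by evaluating $(x+\rho,\alpha^\vee)$ at $\lambda=(p-1)\varpi_1$, that $\lambda$ lies in the upper closure of $C_3$ on the wall $H_{\alpha_1,1}$; a short computation of the $stu$-translates of the walls of $C_0$ shows $F_{3,4a}=H_{\alpha_1,1}$ and $F_{3,4b}=H_{\alpha_2,1}$, so $\lambda\in F_{3,4a}$ and not on the exceptional wall $F_{3,4b}$. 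By the structure results in Subsection \ref{subsec:B2Weylmodulestiltingmodules} the Weyl module $\Delta((p-1)\varpi_1)$ is then simple, so $\ch L((p-1)\varpi_1)=\chi((p-1)\varpi_1)$. For $p=3$ the same conclusion follows directly from the small-prime structure results, since $(p-1)\varpi_1=2\varpi_1\neq\varpi_1+2\varpi_2$.

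With $\ch L((p-1)\varpi_1)=\chi((p-1)\varpi_1)$ in hand, I would apply Proposition \ref{prop:productWeylcharacters} with $M=L(\varpi_2)$ and $\lambda=(p-1)\varpi_1$ to get
\[ \ch\big(L(\lambda)\otimes L(\mu)\big)=\chi\big((p-1)\varpi_1+\varpi_2\big)+\chi\big(p\varpi_1-\varpi_2\big)+\chi\big((p-2)\varpi_1+\varpi_2\big)+\chi\big((p-1)\varpi_1-\varpi_2\big). \]
The second and fourth summands vanish, since $p\varpi_1-\varpi_2$ and $(p-1)\varpi_1-\varpi_2$ both pair to $-1$ with $\alpha_2^\vee$. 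It then remains to see that the two surviving weights give simple characters: $(p-1)\varpi_1+\varpi_2$ lies on $F_{3,4a}=H_{\alpha_1,1}$ and $(p-2)\varpi_1+\varpi_2$ on $F_{2,3}=H_{\alpha_\mathrm{hs},2}$ (for $p=3$ one invokes the small-prime structure again, as neither equals $\varpi_1+2\varpi_2$), so both associated Weyl modules are simple and $\chi=\ch L$. Hence $\ch(L(\lambda)\otimes L(\mu))=\ch L((p-1)\varpi_1+\varpi_2)+\ch L((p-2)\varpi_1+\varpi_2)$ is a sum of two distinct simple characters; the tensor product is multiplicity free, completely reducible by Lemma \ref{lem:MFimpliesCR}, and therefore isomorphic to $L((p-1)\varpi_1+\varpi_2)\oplus L((p-2)\varpi_1+\varpi_2)$.

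The case $p=2$ must be handled separately, because there $\Delta(\varpi_1)$ is not simple. Using its composition series $[L(\varpi_1),L(0)]$ from Subsection \ref{subsec:B2Weylmodulestiltingmodules} gives $\ch L(\varpi_1)=\chi(\varpi_1)-\chi(0)$, and the same application of Proposition \ref{prop:productWeylcharacters} (using $\chi(\varpi_1)\chi(\varpi_2)=\chi(\varpi_1+\varpi_2)+\chi(\varpi_2)$ after the analogous cancellations) yields $\ch(L(\varpi_1)\otimes L(\varpi_2))=\chi(\varpi_1+\varpi_2)$. Since $\varpi_1+\varpi_2=(p-1)\rho$ is the Steinberg weight, $\chi(\varpi_1+\varpi_2)=\ch L(\varpi_1+\varpi_2)$, whence $L(\varpi_1)\otimes L(\varpi_2)\cong L(\varpi_1+\varpi_2)$.

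The main obstacle I anticipate is the bookkeeping that pins down the simplicity of the three Weyl modules involved: one must locate each of $(p-1)\varpi_1$, $(p-1)\varpi_1+\varpi_2$ and $(p-2)\varpi_1+\varpi_2$ precisely in the alcove geometry and confirm that the $p$-singular ones sit on $F_{3,4a}$ and $F_{2,3}$, rather than on the exceptional wall $F_{3,4b}$ where Weyl modules fail to be simple. The small primes $p=2,3$, for which $C_0\cap X^+$ is empty or degenerate, are the only place where the uniform argument breaks down and the separate small-prime structure results are needed.
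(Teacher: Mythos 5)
Your proof is correct and follows essentially the same route as the paper's: both compute $\ch\big(L(\lambda)\otimes L(\mu)\big)$ via Proposition \ref{prop:productWeylcharacters}, discard the terms $\chi(p\varpi_1-\varpi_2)$ and $\chi((p-1)\varpi_1-\varpi_2)$ using the $(\nu,\alpha_2^\vee)=-1$ convention, identify the surviving Weyl characters as simple characters via the structure results of Subsection \ref{subsec:B2Weylmodulestiltingmodules}, and conclude with Lemma \ref{lem:MFimpliesCR}. The only difference is that you spell out the alcove-geometric bookkeeping (locating $(p-1)\varpi_1$ and $(p-1)\varpi_1+\varpi_2$ on $F_{3,4a}=H_{\alpha_1,1}$ and $(p-2)\varpi_1+\varpi_2$ on $F_{2,3}$) that the paper leaves implicit, and your identifications are accurate.
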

\begin{proof}
	For $p=2$, we have $\lambda = \varpi_1$, and using Proposition \ref{prop:productWeylcharacters} and the results in Subsection \ref{subsec:B2Weylmodulestiltingmodules}, we compute that
	\[ \ch\big( L(\varpi_1) \otimes L(\varpi_2) \big) = \big( \chi(\varpi_1) - \chi(0) \big) \cdot \chi(\varpi_2) = \chi(\varpi_1 + \varpi_2) = \ch L(\varpi_1+\varpi_2) , \]
	so $L(\varpi_1) \otimes L(\varpi_2)$ is simple.
	For $p \geq 3$, we have $L(\lambda) = \Delta(\lambda)$ and $L(\mu) = \Delta(\mu)$ by Subsection \ref{subsec:B2Weylmodulestiltingmodules}, and using Proposition \ref{prop:productWeylcharacters} and Remark \ref{rem:B2weightsfundamentalmodules}, we compute that
	\begin{align*}
		\ch\big( L(\lambda) \otimes L(\mu) \big) & = \chi\big( (p-1) \cdot \varpi_1 \big) \cdot \chi(\varpi_2) \\
		& \hspace{-1.5cm} = \chi\big( (p-1) \cdot \varpi_1 + \varpi_2 \big) + \chi( p \varpi_1 - \varpi_2 ) + \chi\big( (p-2) \cdot \varpi_1 + \varpi_2 \big) + \chi\big( (p-1) \cdot \varpi_1 - \varpi_2 \big) \\
		& \hspace{-1.5cm} = \chi\big( (p-1) \cdot \varpi_1 + \varpi_2 \big) + \chi\big( (p-2) \cdot \varpi_1 + \varpi_2 \big) .
	\end{align*}
	The Weyl modules with highest weights $(p-1) \cdot \varpi_1 + \varpi_2 \in F_{3,4a}$ and $(p-2) \cdot \varpi_1 + \varpi_2 \in F_{2,3}$ are simple, again by Subsection \ref{subsec:B2Weylmodulestiltingmodules}, and it follows that
	\[ \ch\big( L(\lambda) \otimes L(\mu) \big) = \ch L\big( (p-1) \cdot \varpi_1 + \varpi_2 \big) + \ch L\big( (p-2) \cdot \varpi_1 + \varpi_2 \big) . \]
	In particular, $L(\lambda) \otimes L(\mu)$ is multiplicity free, and also completely reducible by Lemma \ref{lem:MFimpliesCR}.
\end{proof}

\begin{Proposition}\label{prop:B2MFCRp-1_2}
	Let $\lambda = (p-1) \cdot \varpi_2$ and $\mu = \varpi_1$.
	Then $L(\lambda)\otimes L(\mu)$ is completely reducible and multiplicity free.
	More precisely, if $p = 2$ then we have $L(\lambda) \otimes L(\mu) \cong L(\varpi_1+\varpi_2)$, and if $p \geq 3$ then we have
	\[ L(\lambda) \otimes L(\mu) \cong L\big( \varpi_1+(p-1) \cdot \varpi_2 \big) \oplus L\big( (p-1) \cdot \varpi_2 \big) \oplus L\big( \varpi_1 + (p-3) \cdot \varpi_2 \big) . \]
\end{Proposition}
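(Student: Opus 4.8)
The plan is to follow the same strategy as in the proof of Proposition \ref{prop:B2MFCRp-1_1}: compute $\ch\big(L(\lambda)\otimes L(\mu)\big)$ explicitly as a $\Z$-linear combination of the Weyl characters $\chi(\nu)$ by means of Proposition \ref{prop:productWeylcharacters}, simplify this combination using the vanishing relation $\chi(\nu)=0$ whenever $(\nu,\alpha^\vee)=-1$ for a simple root $\alpha$, and then recognise each surviving term as the character of a simple $G$-module via Subsection \ref{subsec:B2Weylmodulestiltingmodules}. Once $\ch\big(L(\lambda)\otimes L(\mu)\big)$ is written as a sum of characters of pairwise non-isomorphic simple modules, the tensor product is multiplicity free, and it is then completely reducible by Lemma \ref{lem:MFimpliesCR}.

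First I would dispose of the case $p=2$, where $\lambda=\varpi_2$ and $\mu=\varpi_1$. Here $L(\varpi_2)=\Delta(\varpi_2)$ is minuscule, while $\ch L(\varpi_1)=\chi(\varpi_1)-\chi(0)$ by the $p=2$ structure recorded at the end of Subsection \ref{subsec:B2Weylmodulestiltingmodules}. Expanding $\chi(\varpi_2)\cdot\chi(\varpi_1)$ with Proposition \ref{prop:productWeylcharacters} and the weights of $L(\varpi_2)$ from Remark \ref{rem:B2weightsfundamentalmodules}, the two terms $\chi(2\varpi_1-\varpi_2)$ and $\chi(\varpi_1-\varpi_2)$ vanish (each pairs to $-1$ with $\alpha_2^\vee$), leaving $\chi(\varpi_2)\chi(\varpi_1)=\chi(\varpi_1+\varpi_2)+\chi(\varpi_2)$. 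Subtracting $\chi(\varpi_2)$ gives $\ch\big(L(\varpi_2)\otimes L(\varpi_1)\big)=\chi(\varpi_1+\varpi_2)=\ch L(\varpi_1+\varpi_2)$, since $\varpi_1+\varpi_2$ has a simple Weyl module at $p=2$, as claimed.

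For $p\ge 3$ I would use that $L(\mu)=L(\varpi_1)=\Delta(\varpi_1)$ and $L(\lambda)=\Delta\big((p-1)\varpi_2\big)$ (the latter because $(p-1)\varpi_2$ lies on the wall $F_{2,3a}=H_{\alpha_2,1}$ and hence has a simple Weyl module). Applying Proposition \ref{prop:productWeylcharacters} with $M=L(\varpi_1)$ and the five weights of $L(\varpi_1)$ from Remark \ref{rem:B2weightsfundamentalmodules}, the two contributions $\chi\big(-\varpi_1+(p+1)\varpi_2\big)$ and $\chi\big(-\varpi_1+(p-1)\varpi_2\big)$ vanish (each pairs to $-1$ with $\alpha_1^\vee$), and I am left with
\[ \ch\big(L(\lambda)\otimes L(\mu)\big)=\chi\big(\varpi_1+(p-1)\varpi_2\big)+\chi\big((p-1)\varpi_2\big)+\chi\big(\varpi_1+(p-3)\varpi_2\big). \]
It then remains to identify the three (pairwise distinct) weights $\varpi_1+(p-1)\varpi_2$, $(p-1)\varpi_2$ and $\varpi_1+(p-3)\varpi_2$ as carrying simple Weyl modules, so that each $\chi$ is a simple character and the asserted multiplicity free decomposition follows.

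The main obstacle is precisely this last identification, i.e.\ controlling the submodule structure of the three Weyl modules from their alcove positions. For $p\ge 5$ the first two weights lie on the upper wall $F_{2,3a}$ of $C_2$ and the third on $F_{1,2}$, all away from the exceptional set $\widehat{C}_3\setminus(C_3\cup F_{3,4a})$, so Subsection \ref{subsec:B2Weylmodulestiltingmodules} yields simplicity directly and the argument is clean. The delicate point is the prime $p=3$: there $\varpi_1+(p-1)\varpi_2=\varpi_1+2\varpi_2$ migrates into the $C_3$-region, where by Subsection \ref{subsec:B2Weylmodulestiltingmodules} the Weyl module $\Delta(\varpi_1+2\varpi_2)$ is no longer simple, so this prime must be analysed separately using the small-prime structure results, and one must check carefully how the resulting composition factors assemble. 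I expect the treatment of $p=3$ to be where essentially all the remaining work lies.
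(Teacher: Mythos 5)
Your strategy is exactly the paper's intended one: the paper proves Proposition \ref{prop:B2MFCRp-1_1} in detail and declares the proofs of conditions 3--7 of Table \ref{tab:B2CR} ``completely analogous'', and your computation is the correct execution of that plan. Your $p=2$ case is right, the identity
\[ \ch\big(L(\lambda)\otimes L(\mu)\big)=\chi\big(\varpi_1+(p-1)\cdot\varpi_2\big)+\chi\big((p-1)\cdot\varpi_2\big)+\chi\big(\varpi_1+(p-3)\cdot\varpi_2\big) \qquad (p\geq 3) \]
is right, and for $p\geq5$ your identification of the three highest weights (on $F_{2,3a}$, $F_{2,3a}$ and $F_{1,2}$, away from the exceptional set $\widehat{C}_3\setminus(C_3\cup F_{3,4a})$) as carrying simple Weyl modules is right, so for $p=2$ and $p\geq 5$ your proof is complete and multiplicity freeness plus Lemma \ref{lem:MFimpliesCR} finishes the argument.

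The gap is the case $p=3$, which you flag but postpone, and you should be aware that no further work can close it: the statement is \emph{false} at $p=3$. By the paper's own structure results (Subsection \ref{subsec:B2Weylmodulestiltingmodules}), at $p=3$ the Weyl module $\Delta(\varpi_1+2\varpi_2)$ is uniserial with composition series $[L(\varpi_1+2\varpi_2),L(2\varpi_2)]$, so your character identity becomes
\[ \ch\big(L(2\varpi_2)\otimes L(\varpi_1)\big)=\ch L(\varpi_1+2\varpi_2)+2\cdot\ch L(2\varpi_2)+\ch L(\varpi_1) , \]
i.e.\ $[L(\lambda)\otimes L(\mu):L(2\varpi_2)]=2$, so the tensor product is not multiplicity free; a dimension count says the same, since $\dim L(2\varpi_2)\cdot\dim L(\varpi_1)=10\cdot 5=50$ while the three claimed summands have dimensions $25+10+5=40$. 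It is not completely reducible either: at $p=3$ both factors are simple tilting modules, $L(2\varpi_2)=T(2\varpi_2)$ and $L(\varpi_1)=T(\varpi_1)$, so the tensor product is a tilting module in which the highest weight $\varpi_1+2\varpi_2$ occurs with multiplicity one, forcing the non-simple indecomposable tilting module $T(\varpi_1+2\varpi_2)$ to split off; block considerations then give $L(2\varpi_2)\otimes L(\varpi_1)\cong T(\varpi_1+2\varpi_2)\oplus L(\varpi_1)$. So your suspicion about $p=3$ is vindicated in the strongest sense: rather than ``assembling'' the composition factors at $p=3$, your write-up should prove the proposition for $p=2$ and $p\geq5$ and record the $p=3$ counterexample, which shows that the proposition, and condition 3 of Tables \ref{tab:B2CR} and \ref{tab:B2MF}, require the hypothesis $p\neq 3$ (exactly as the paper already imposes in condition 4).
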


\begin{Proposition}\label{prop:B2MFCRp-21}
	Let $p \neq 3$ and $\lambda = (p-2) \cdot \varpi_1 + \varpi_2$ and $\mu=\varpi_1$.
	Then $L(\lambda)\otimes L(\mu)$ is completely reducible and multiplicity free.
	More precisely, if $p=2$ then we have $L(\lambda) \otimes L(\mu) \cong L(\varpi_1+\varpi_2)$, and if $p \geq 5$ then we have
	\[ L(\lambda) \otimes L(\mu) \cong L\big( (p-1) \cdot \varpi_1 + \varpi_2 \big) \oplus L\big( (p-3) \cdot \varpi_1 + 3 \varpi_2 \big) \oplus L\big( (p-2) \cdot \varpi_1 + \varpi_2 \big) \oplus L\big( (p-2) \varpi_1 \big) . \]
\end{Proposition}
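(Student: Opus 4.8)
The plan is to reproduce the character computation used in the proof of Proposition \ref{prop:B2MFCRp-1_1}: express $\ch\big(L(\lambda) \otimes L(\mu)\big)$ as an explicit $\Z$-linear combination of Weyl characters $\chi(\nu)$ by means of Proposition \ref{prop:productWeylcharacters}, and then identify the surviving terms with the characters of pairwise non-isomorphic simple $G$-modules using the structural results of Subsection \ref{subsec:B2Weylmodulestiltingmodules}. Once this is achieved, multiplicity freeness is immediate because all summands are distinct, and complete reducibility follows from Lemma \ref{lem:MFimpliesCR}.

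First I would pin down the characters of the two tensor factors. Since $\mu = \varpi_1$, Remark \ref{rem:B2weightsfundamentalmodules} gives $L(\varpi_1) = \Delta(\varpi_1)$ for $p \geq 3$, with the five explicit weights $\varpi_1$, $-\varpi_1+2\varpi_2$, $0$, $\varpi_1-2\varpi_2$, $-\varpi_1$, each of multiplicity one; for $p=2$ one has instead $\ch L(\varpi_1) = \chi(\varpi_1) - \chi(0)$. For the factor $L(\lambda)$ I would locate $\lambda = (p-2)\varpi_1 + \varpi_2$ in the alcove geometry: a short computation shows $(\lambda+\rho,\alpha_\mathrm{hs}^\vee) = 2p$, so $\lambda$ lies on the wall $F_{2,3} = H_{\alpha_\mathrm{hs},2}$, and by Subsection \ref{subsec:B2Weylmodulestiltingmodules} the corresponding Weyl module is simple, i.e.\ $L(\lambda) = \Delta(\lambda)$ and $\ch L(\lambda) = \chi(\lambda)$. (When $p=2$ one has $\lambda = \varpi_2$, which is minuscule, and the computation is a direct analogue of the $p=2$ case in Proposition \ref{prop:B2MFCRp-1_1}.)

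Next I would apply Proposition \ref{prop:productWeylcharacters} with $M = L(\varpi_1)$ to write $\chi(\lambda)\cdot\chi(\varpi_1) = \sum_\nu \dim L(\varpi_1)_\nu \cdot \chi(\lambda+\nu)$, the sum ranging over the five weights of $L(\varpi_1)$. Evaluating the shifts $\lambda+\nu$ and applying the vanishing and reflection rules for $\chi$ (namely $\chi(\kappa)=0$ whenever $(\kappa,\alpha^\vee) = -1$ for a simple root $\alpha$, and $\chi(w\Cdot\kappa) = (-1)^{\ell(w)}\chi(\kappa)$) removes the terms landing in lower-closure walls and collapses the remaining ones. This presents $\ch\big(L(\lambda)\otimes L(\mu)\big)$ as a sum of Weyl characters $\chi(\nu)$ indexed by the dominant shifts; in the $p=2$ case the same computation applies after subtracting the contribution of $\chi(0)$, and one checks directly that it reduces to a single simple character.

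The final step, which I expect to be the main bookkeeping obstacle, is to verify that each surviving weight $\nu$ is either $p$-regular in a position where $\Delta(\nu)$ is simple, or $p$-singular and $p$-restricted but \emph{not} in the exceptional family of Subsection \ref{subsec:B2Weylmodulestiltingmodules} (which is confined to $F_{3,4b}$), so that $\chi(\nu) = \ch L(\nu)$; one must confirm that none of the shifted weights falls into that exceptional family. Matching each $\chi(\nu)$ with a single $\ch L(\nu)$ and observing that the resulting highest weights are pairwise distinct then yields the direct-sum decomposition asserted in the proposition, whence $L(\lambda)\otimes L(\mu)$ is multiplicity free and, by Lemma \ref{lem:MFimpliesCR}, completely reducible. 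As a consistency check one can confirm that the Weyl dimension formula balances $\dim L(\lambda) \cdot \dim L(\varpi_1)$ against the sum of the dimensions of the simple summands.
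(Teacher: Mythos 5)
Your overall strategy is exactly the paper's: the paper does not write out a proof of this proposition at all, stating only that it is ``completely analogous'' to the character computation in Proposition \ref{prop:B2MFCRp-1_1}, and that computation is precisely what you describe. Your preliminary steps are also correct: $(\lambda+\rho,\alpha_\mathrm{hs}^\vee)=2p$, so $\lambda$ lies on $F_{2,3}$ and $L(\lambda)=\Delta(\lambda)$ by Subsection \ref{subsec:B2Weylmodulestiltingmodules}; for $p\geq 5$ one has $L(\varpi_1)=\Delta(\varpi_1)$ with the five listed weights; and your $p=2$ discussion is fine, giving $\ch\big(L(\varpi_2)\otimes L(\varpi_1)\big)=\chi(\varpi_1+\varpi_2)$.

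However, the final matching step of your plan cannot be completed as you state it, and your own proposed dimension check is exactly what exposes the problem. Carrying out the computation for $p\geq 5$, the five shifted weights $\lambda+\nu$ are
\[ (p-1)\varpi_1+\varpi_2,\qquad (p-3)\varpi_1+3\varpi_2,\qquad (p-2)\varpi_1+\varpi_2,\qquad (p-1)\varpi_1-\varpi_2,\qquad (p-3)\varpi_1+\varpi_2 , \]
of which the fourth satisfies $\big((p-1)\varpi_1-\varpi_2,\alpha_2^\vee\big)=-1$ and contributes $0$, so
\[ \ch\big(L(\lambda)\otimes L(\varpi_1)\big) = \chi\big((p-1)\varpi_1+\varpi_2\big)+\chi\big((p-3)\varpi_1+3\varpi_2\big)+\chi\big((p-2)\varpi_1+\varpi_2\big)+\chi\big((p-3)\varpi_1+\varpi_2\big) . \]
These four weights lie on $F_{3,4a}$, $F_{2,3}$, $F_{2,3}$ and $F_{1,2}$ respectively, so none of them falls into the exceptional family of Subsection \ref{subsec:B2Weylmodulestiltingmodules} (which is confined to $F_{3,4b}$), and each $\chi$ equals the character of the corresponding simple module. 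The resulting decomposition therefore has fourth summand $L\big((p-3)\varpi_1+\varpi_2\big)$, coming from the weight $-\varpi_1$ of $L(\varpi_1)$, and \emph{not} $L\big((p-2)\varpi_1\big)$ as printed in the proposition; no shift $\lambda+\nu$ ever produces the weight $(p-2)\varpi_1$. The discrepancy is visible in dimensions: for $p=5$ one has $\dim\big(L(3\varpi_1+\varpi_2)\otimes L(\varpi_1)\big)=80\cdot 5=400$, while the printed decomposition has total dimension $140+140+80+30=390$; replacing $L(3\varpi_1)$ by $L(2\varpi_1+\varpi_2)$, of dimension $40$, balances the count. So your method does establish both main assertions (the character is a sum of characters of four pairwise distinct simple modules, hence $L(\lambda)\otimes L(\mu)$ is multiplicity free and, by Lemma \ref{lem:MFimpliesCR}, completely reducible), but the explicit direct sum in the statement appears to contain an error, and your claim that the computation ``yields the direct-sum decomposition asserted in the proposition'' would fail as literally written; it should be replaced by the decomposition displayed above.
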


\begin{Proposition}\label{prop:B2MFCRp-2_2}
	Let $p \geq 3$ and $\lambda = (p-2) \varpi_2$ and $\mu=\varpi_2$.
	Then $L(\lambda)\otimes L(\mu)$ is completely reducible and multiplicity free.
	More precisely, we have
	\[L(\lambda)\otimes L(\mu)\cong L\big((p-1)\varpi_2\big)\oplus L\big(\varpi_1+(p-3)\varpi_2\big)\oplus L\big((p-3)\varpi_2\big).\]
\end{Proposition}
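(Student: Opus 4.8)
The plan is to follow the same character-theoretic strategy as in the proof of Proposition \ref{prop:B2MFCRp-1_1}. First I would locate the highest weight $\lambda = (p-2)\varpi_2$ in the alcove geometry: a direct computation gives $(\lambda+\rho,\alpha_\mathrm{h}^\vee) = a+b+2 = p$, so $\lambda$ is $p$-singular and lies on the wall $F_{1,2} = H_{\alpha_\mathrm{h},1}$. By the description of the Weyl modules in Subsection \ref{subsec:B2Weylmodulestiltingmodules}, this wall is not contained in the exceptional region $\widehat{C}_3 \setminus (C_3 \cup F_{3,4a})$, so $\Delta(\lambda)$ is simple and $\ch L(\lambda) = \chi(\lambda)$. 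Since $\varpi_2$ is minuscule we also have $\ch L(\mu) = \chi(\varpi_2)$, and the weights of $L(\varpi_2)$ are $\varpi_2,\ \varpi_1-\varpi_2,\ -\varpi_1+\varpi_2,\ -\varpi_2$, each with multiplicity one (Remark \ref{rem:B2weightsfundamentalmodules}).

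Next I would apply Proposition \ref{prop:productWeylcharacters} with $M = L(\varpi_2)$, obtaining
\[ \ch\big( L(\lambda) \otimes L(\mu) \big) = \chi\big( (p-1)\varpi_2 \big) + \chi\big( \varpi_1 + (p-3)\varpi_2 \big) + \chi\big( -\varpi_1 + (p-1)\varpi_2 \big) + \chi\big( (p-3)\varpi_2 \big). \]
The key simplification is that the third summand vanishes: since $(-\varpi_1 + (p-1)\varpi_2,\alpha_1^\vee) = -1$, the convention for $\chi$ recalled in Subsection \ref{subsec:representationscharacters} forces $\chi(-\varpi_1 + (p-1)\varpi_2) = 0$. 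For the three remaining (dominant) weights I would check, via the pairings $(\nu+\rho,\alpha^\vee)$, that each is $p$-singular but lies on a wall inside $\widehat{C}_0 \cup \widehat{C}_1 \cup \widehat{C}_2$ on which the corresponding Weyl module is simple --- concretely $(p-3)\varpi_2 \in F_{0,1}$, $\varpi_1 + (p-3)\varpi_2 \in F_{1,2}$ and $(p-1)\varpi_2 \in F_{2,3a}$ --- so that $\chi(\nu) = \ch L(\nu)$ in each case by Subsection \ref{subsec:B2Weylmodulestiltingmodules}.

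This yields $\ch(L(\lambda) \otimes L(\mu)) = \ch L((p-1)\varpi_2) + \ch L(\varpi_1 + (p-3)\varpi_2) + \ch L((p-3)\varpi_2)$ with three pairwise distinct highest weights, so every composition multiplicity equals one and the tensor product is multiplicity free; complete reducibility then follows from Lemma \ref{lem:MFimpliesCR}, and together these give the asserted direct sum decomposition. I do not expect any serious difficulty here: the whole argument is a finite character manipulation, and the only real work is the alcove-geometric bookkeeping that decides which $\chi(\nu)$ vanish and which are simple characters --- this is the step most prone to sign or wall-placement errors, and is where I would be most careful.
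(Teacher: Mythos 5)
Your proposal is correct and is precisely the argument the paper intends: the paper omits the proof of this proposition, declaring it ``completely analogous'' to the detailed character computation in Proposition \ref{prop:B2MFCRp-1_1}, and your expansion via Proposition \ref{prop:productWeylcharacters}, the vanishing of $\chi(-\varpi_1+(p-1)\varpi_2)$, and the identification of the three surviving $\chi(\nu)$ with simple characters on the walls $F_{0,1}$, $F_{1,2}$, $F_{2,3a}$ reproduces it exactly. The only nitpick is that for $p=3$ the alcove-geometric description of which Weyl modules are simple is stated in Subsection \ref{subsec:B2Weylmodulestiltingmodules} only for $p \geq h$, so in that case you should instead cite the paper's separate $p=3$ statement that $\Delta(\nu)=L(\nu)$ for all $\nu \in X_1 \setminus \{\varpi_1+2\varpi_2\}$, which covers all weights occurring here.
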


\begin{Proposition}\label{prop:B2MFCRp-2_1}
	Let $p \geq 3$ and $\lambda = (p-2) \cdot \varpi_1$ and $\mu=\varpi_2$.
	Then $L(\lambda)\otimes L(\mu)$ is completely reducible and multiplicity free.
	More precisely, we have
	\[ L(\lambda) \otimes L(\mu) \cong L\big( (p-2) \cdot \varpi_1 + \varpi_2 \big) \oplus L\big( (p-3) \cdot \varpi_1 + \varpi_2 \big) . \]
\end{Proposition}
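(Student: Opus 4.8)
The plan is to follow verbatim the template of Proposition \ref{prop:B2MFCRp-1_1}: I would reduce the statement to a single character computation and read off the decomposition. First I would record that both tensor factors are (simple) Weyl modules. Since $\varpi_2$ is minuscule, Remark \ref{rem:B2weightsfundamentalmodules} gives $L(\varpi_2) = \Delta(\varpi_2)$ with the four one-dimensional weight spaces at $\varpi_2$, $\varpi_1-\varpi_2$, $-\varpi_1+\varpi_2$, $-\varpi_2$. For the other factor I would check that $\lambda = (p-2)\varpi_1$ is $p$-singular: one computes $(\lambda+\rho,\alpha_\mathrm{h}^\vee) = p$, so $\lambda$ lies on the wall $F_{1,2} = H_{\alpha_\mathrm{h},1}$. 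As this $p$-restricted weight does not lie in the exceptional set $\widehat{C}_3 \setminus (C_3 \cup F_{3,4a})$, Subsection \ref{subsec:B2Weylmodulestiltingmodules} yields that its Weyl module is simple, i.e.\ $L(\lambda) = \Delta(\lambda)$ and $\ch L(\lambda) = \chi\big((p-2)\varpi_1\big)$.

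With this in hand I would apply Proposition \ref{prop:productWeylcharacters} with $M = L(\varpi_2)$ to obtain
\begin{align*}
	\ch\big( L(\lambda) \otimes L(\mu) \big)
	& = \chi\big((p-2)\varpi_1\big)\cdot\chi(\varpi_2) \\
	& = \chi\big((p-2)\varpi_1 + \varpi_2\big) + \chi\big((p-1)\varpi_1 - \varpi_2\big) \\
	& \quad + \chi\big((p-3)\varpi_1 + \varpi_2\big) + \chi\big((p-2)\varpi_1 - \varpi_2\big) .
\end{align*}
Two of these four terms vanish by the defining sign rule for $\chi$: both $(p-1)\varpi_1 - \varpi_2$ and $(p-2)\varpi_1 - \varpi_2$ satisfy $(\nu,\alpha_2^\vee) = -1$, so $\chi(\nu) = 0$. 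This leaves $\chi\big((p-2)\varpi_1 + \varpi_2\big) + \chi\big((p-3)\varpi_1 + \varpi_2\big)$.

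Finally I would locate the two surviving highest weights in the alcove geometry. A short computation shows $(p-2)\varpi_1 + \varpi_2 \in F_{2,3} = H_{\alpha_\mathrm{hs},2}$ (since $(\,\cdot+\rho,\alpha_\mathrm{hs}^\vee) = 2p$) and $(p-3)\varpi_1 + \varpi_2 \in F_{1,2} = H_{\alpha_\mathrm{h},1}$ (since $(\,\cdot+\rho,\alpha_\mathrm{h}^\vee) = p$); both are $p$-singular $p$-restricted weights outside the exceptional set, so their Weyl modules are simple by Subsection \ref{subsec:B2Weylmodulestiltingmodules}. Hence $\chi\big((p-2)\varpi_1 + \varpi_2\big) = \ch L\big((p-2)\varpi_1 + \varpi_2\big)$ and $\chi\big((p-3)\varpi_1 + \varpi_2\big) = \ch L\big((p-3)\varpi_1 + \varpi_2\big)$, which gives the asserted decomposition with all multiplicities equal to one. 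Thus $L(\lambda) \otimes L(\mu)$ is multiplicity free, and complete reducibility follows from Lemma \ref{lem:MFimpliesCR}. I expect no serious obstacle here; the only care needed is the alcove bookkeeping, namely correctly identifying which of the four translated weights are $W_\mathrm{fin}$-singular (so that $\chi$ vanishes) and on which walls the two survivors lie (so that the relevant Weyl modules are simple), and checking that these conclusions remain valid in the small-prime case $p=3$ via the $p=3$ description in Subsection \ref{subsec:B2Weylmodulestiltingmodules}.
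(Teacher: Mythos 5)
Your proposal is correct and is exactly the argument the paper intends: the paper proves Proposition \ref{prop:B2MFCRp-1_1} in detail via Proposition \ref{prop:productWeylcharacters}, the vanishing convention for $\chi$, and the simplicity of Weyl modules on the relevant walls from Subsection \ref{subsec:B2Weylmodulestiltingmodules}, and declares the proof of this proposition ``completely analogous''---your write-up is precisely that analogue, with the correct wall identifications ($\lambda \in F_{1,2}$, and the surviving weights on $F_{2,3}$ and $F_{1,2}$) and the correct handling of $p=3$.
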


\begin{Proposition}\label{prop:B2MFCRp-3_2}
	Let $p \geq 5$ and $\lambda=(p-3) \cdot \varpi_2$ and $\mu=\varpi_1$.
	Then $L(\lambda)\otimes L(\mu)$ is completely reducible and multiplicity free.
	More precisely, we have
	\[ L(\lambda) \otimes L(\mu) \cong L\big( \varpi_1 + (p-3) \cdot \varpi_2 \big) \oplus L\big( (p-3) \cdot \varpi_2 \big) \oplus L\big( \varpi_1 + (p-5) \cdot \varpi_2 \big) . \]
\end{Proposition}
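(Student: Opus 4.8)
The plan is to follow verbatim the strategy of Proposition \ref{prop:B2MFCRp-1_1}: compute $\ch\big(L(\lambda) \otimes L(\mu)\big)$ explicitly as a sum of characters of pairwise non-isomorphic simple $G$-modules, which immediately yields multiplicity freeness, and then invoke Lemma \ref{lem:MFimpliesCR} to deduce complete reducibility. Since $p \geq 5$, the results of Subsection \ref{subsec:B2Weylmodulestiltingmodules} show that the $p$-singular weight $\lambda = (p-3)\varpi_2$, which lies on the wall $F_{0,1} = H_{\alpha_\mathrm{hs},1}$, satisfies $L(\lambda) = \Delta(\lambda)$, and Remark \ref{rem:B2weightsfundamentalmodules} gives $L(\mu) = L(\varpi_1) = \Delta(\varpi_1)$; hence $\ch L(\lambda) = \chi(\lambda)$ and $\ch L(\mu) = \chi(\varpi_1)$.

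First I would apply Proposition \ref{prop:productWeylcharacters} with $M = L(\varpi_1)$ and the weight $(p-3)\varpi_2$, using the five (one-dimensional) weight spaces of $L(\varpi_1)$ corresponding to $\{\varpi_1, -\varpi_1+2\varpi_2, 0, \varpi_1-2\varpi_2, -\varpi_1\}$ recorded in Remark \ref{rem:B2weightsfundamentalmodules}, to expand
\[ \chi\big((p-3)\varpi_2\big)\cdot\chi(\varpi_1) = \sum_{\delta} \chi\big((p-3)\varpi_2 + \delta\big) \]
into a sum of five Weyl characters. Next I would simplify the right-hand side using the conventions for $\chi$ recalled in Subsection \ref{subsec:representationscharacters}: the two summands corresponding to $\delta = -\varpi_1 + 2\varpi_2$ and $\delta = -\varpi_1$ yield the weights $-\varpi_1 + (p-1)\varpi_2$ and $-\varpi_1 + (p-3)\varpi_2$, which both pair to $-1$ with $\alpha_1^\vee$ and therefore have vanishing character, while the remaining three summands are already dominant.

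This leaves the three dominant weights $\varpi_1 + (p-3)\varpi_2$, $(p-3)\varpi_2$ and $\varpi_1 + (p-5)\varpi_2$, where the last requires precisely the hypothesis $p \geq 5$ to be dominant. I would then locate these in the alcove geometry — they lie on $F_{1,2}$, $F_{0,1}$ and $F_{0,1}$ respectively — and use Subsection \ref{subsec:B2Weylmodulestiltingmodules} to confirm that none of them lies in the exceptional set $\widehat{C}_3 \setminus (C_3 \cup F_{3,4a})$, so that the corresponding Weyl modules are simple and $\chi(\nu) = \ch L(\nu)$ for each. Consequently $\ch\big(L(\lambda)\otimes L(\mu)\big)$ is the sum of the characters of three pairwise distinct simple modules, which gives the asserted decomposition and multiplicity freeness, and hence complete reducibility via Lemma \ref{lem:MFimpliesCR}.

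The only genuine point of care, rather than a real obstacle, is the bookkeeping of which of the five shifted weights are dominant and which vanish, together with the verification that each surviving Weyl module is simple; all of this is controlled by the explicit structure results of Subsection \ref{subsec:B2Weylmodulestiltingmodules} and the standard character conventions, exactly as in the proof of Proposition \ref{prop:B2MFCRp-1_1}.
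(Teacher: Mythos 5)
Your proposal is correct and follows exactly the paper's approach: the paper gives the detailed character computation only for Proposition \ref{prop:B2MFCRp-1_1} and states that the proofs of conditions 3--7 (including this one) are completely analogous, which is precisely the argument you carry out. All the details check out — the two shifted weights $-\varpi_1+(p-1)\varpi_2$ and $-\varpi_1+(p-3)\varpi_2$ indeed pair to $-1$ with $\alpha_1^\vee$ and vanish, and the three surviving weights lie on $F_{1,2}$, $F_{0,1}$, $F_{0,1}$ (hence outside $\widehat{C}_3 \setminus (C_3 \cup F_{3,4a})$), so their Weyl modules are simple and the stated decomposition follows via Lemma \ref{lem:MFimpliesCR}.
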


\vspace{.6cm}

In order to show that $L(\lambda) \otimes L(\mu)$ is completely reducible for $p \geq 3$ and $\lambda,\mu \in \{ \frac{p-1}{2} \cdot \varpi_1 , \frac{p-3}{2} \cdot \varpi_1 + \varpi_2 \}$ (as in case 1 in Table \ref{tab:B2CR}), we need to establish some preliminary results about the dimensions of weight spaces in $L(\lambda)$ and $L(\mu)$.

\begin{Lemma} \label{lem:B2_weightspacedimensions_api1}
	Let $a \in \Z_{\geq 0}$ and $\lambda = a \varpi_1$.
	For all $\mu \in X^+$ with $\mu \leq \lambda$, we have $\mu = \lambda - d \varpi_1 - e \alpha_1$ for some $d,e \geq 0$ with $d \leq a$ and $2e \leq a-d$.
	Furthermore, we have $\dim \Delta(\lambda)_\mu = \lfloor  \frac{d}{2} \rfloor + 1$.
\end{Lemma}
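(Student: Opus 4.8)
The plan is to reduce the computation of $\dim \Delta(\lambda)_\mu$ to a characteristic-zero calculation with symmetric powers of the $5$-dimensional module $V \coloneqq \Delta(\varpi_1)$, whose character $\chi(a\varpi_1) = \ch\Delta(a\varpi_1)$ is independent of $p$. I would introduce the orthonormal coordinates $\varepsilon_1 = \varpi_1$ and $\varepsilon_2 = -\varpi_1 + 2\varpi_2$, so that $\alpha_1 = \varepsilon_1 - \varepsilon_2$, $\alpha_2 = \varepsilon_2$, and the weights of $V$ are $\{ 0 , \pm\varepsilon_1 , \pm\varepsilon_2 \}$ (cf.\ Remark \ref{rem:B2weightsfundamentalmodules}). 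The key input is the character identity
\[ \chi(a\varpi_1) = \ch \mathrm{Sym}^a V - \ch \mathrm{Sym}^{a-2} V \qquad (a \geq 0) , \]
with the convention $\mathrm{Sym}^{-1}V = \mathrm{Sym}^{-2}V = 0$. Since $\mathrm{Sp}_4(\kk) \cong \mathrm{Spin}_5(\kk)$ and $L(\varpi_1)$ is the $5$-dimensional orthogonal representation, $V$ carries a non-degenerate invariant quadratic form, and over $\C$ the classical harmonic decomposition gives $\mathrm{Sym}^a V \cong L_\C(a\varpi_1) \oplus \mathrm{Sym}^{a-2}V$, where multiplication by the invariant form embeds $\mathrm{Sym}^{a-2}V$ as the non-harmonic part (see e.g.\ \cite{FultonHarris}). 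As $\ch \mathrm{Sym}^a V$ and $\chi(a\varpi_1)$ are elements of $\Z[X]$ that do not depend on the characteristic, the identity then holds in general.

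Next I would prove the first assertion by a direct change of coordinates. Writing $\mu = \lambda - d\varpi_1 - e\alpha_1$ and using $\alpha_1 = 2\varpi_1 - 2\varpi_2$ gives $\mu = (a - d - 2e)\varpi_1 + 2e\varpi_2$, so that $(\mu,\alpha_1^\vee) = a - d - 2e$ and $(\mu,\alpha_2^\vee) = 2e$. Conversely, $\mu \leq \lambda$ means $\lambda - \mu = (d+e)\alpha_1 + d\alpha_2 \in \Z_{\geq 0}\alpha_1 + \Z_{\geq 0}\alpha_2$, which recovers $d$ and $e$ uniquely from $\mu$. Dominance of $\mu$ is then equivalent to $e \geq 0$ and $2e \leq a-d$ (which forces $d \leq a$), while $d \geq 0$ is automatic; this establishes the claimed bijection between dominant weights $\mu \leq \lambda$ and pairs $(d,e)$ with $d,e \geq 0$, $d \leq a$ and $2e \leq a-d$.

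Finally, for the multiplicity I would count monomials. Fix a weight basis $v_{\varepsilon_1}, v_{\varepsilon_2}, v_0, v_{-\varepsilon_2}, v_{-\varepsilon_1}$ of $V$; then the $\mu$-weight space of $\mathrm{Sym}^a V$, with $\mu = (a-d-e)\varepsilon_1 + e\varepsilon_2$, has a basis of monomials $v_{\varepsilon_1}^{n_1} v_{\varepsilon_2}^{n_2} v_0^{n_0} v_{-\varepsilon_2}^{m_2} v_{-\varepsilon_1}^{m_1}$ of total degree $a$ whose weight is $\mu$. Solving the resulting system shows that such monomials are parametrized by the pairs $(s,t) = (m_2,m_1) \in \Z_{\geq 0}^2$ with $n_0 = d - 2(s+t) \geq 0$, i.e.\ $s+t \leq \lfloor d/2 \rfloor$, whence $\dim \mathrm{Sym}^a V_\mu = \binom{\lfloor d/2\rfloor + 2}{2}$. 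Replacing $a$ by $a-2$ leaves $\mu$ unchanged but replaces $d$ by $d-2$, so $\dim \mathrm{Sym}^{a-2}V_\mu = \binom{\lfloor d/2\rfloor + 1}{2}$. Subtracting via the character identity and using $\binom{N+2}{2} - \binom{N+1}{2} = N+1$ with $N = \lfloor d/2\rfloor$ yields $\dim \Delta(\lambda)_\mu = \lfloor d/2\rfloor + 1$, as required.

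The only ingredient that is not elementary lattice-point bookkeeping is the character identity in the first paragraph, so I expect the main (minor) obstacle to be its clean justification: identifying $V$ with the orthogonal representation via $\mathrm{Sp}_4(\kk) \cong \mathrm{Spin}_5(\kk)$ and invoking the harmonic decomposition in characteristic zero. Once that identity is in place, the two remaining steps are routine.
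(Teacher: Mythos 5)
Your proof is correct, and it differs from the paper's in a meaningful way. For the first assertion you do exactly what the paper does: write $\lambda-\mu = c\alpha_1 + d\alpha_2$, set $e = c-d$ (equivalently $\mu = \lambda - d\varpi_1 - e\alpha_1$, using $\varpi_1 = \alpha_1+\alpha_2$), and read off the constraints $e \geq 0$, $2e \leq a-d$ from dominance. The difference is in the weight multiplicity formula: the paper does not prove it but cites Exercise 16.11 of \cite{FultonHarris} and Theorem 4.1 of \cite{CaglieroTirao}, whereas you give a self-contained derivation. Your route is sound: since $\ch\Delta(a\varpi_1) = \chi(a\varpi_1)$ and $\ch\mathrm{Sym}^a\Delta(\varpi_1)$ are both determined by Weyl's character formula (hence characteristic-independent elements of $\Z[X]$), the identity $\chi(a\varpi_1) = \ch\mathrm{Sym}^a V - \ch\mathrm{Sym}^{a-2}V$ only needs to be checked over $\C$, where it is the classical harmonic decomposition of symmetric powers of the vector representation of $\mathrm{SO}_5 \cong \mathrm{Sp}_4(\C)/\{\pm1\}$; I also verified the monomial count $\dim(\mathrm{Sym}^a V)_\mu = \binom{\lfloor d/2\rfloor+2}{2}$ (the constraint $n_1 = a-d-e+t \geq 0$ is automatic from $2e \leq a-d$, and the shifted count $\binom{\lfloor d/2\rfloor+1}{2}$ correctly vanishes in the degenerate cases $d \in \{0,1\}$, where $\mu \nleq (a-2)\varpi_1$), and the telescoping $\binom{N+2}{2}-\binom{N+1}{2} = N+1$ with $N = \lfloor d/2\rfloor$ gives the claimed formula. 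What your approach buys is independence from the two references and an argument entirely in the spirit of the paper's own toolkit (characters and weight combinatorics); what the paper's citation buys is brevity, and the references cover more general multiplicity formulas (e.g.\ Lemma \ref{lem:B2_weightspacedimensions_api1+pi2} is handled by the same citation, whereas your method would require a separate, slightly messier count there).
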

\begin{proof}
	Let $c,d \geq 0$ such that $\mu = \lambda - c \alpha_1 - d \alpha_2$, and note that $\mu = \lambda - c \alpha_1 - d \alpha_2 = \lambda - d \varpi_1 - (c-d) \cdot \alpha_1$.
	As $\mu$ is dominant, we have $a - d - 2 \cdot (c-d) \geq 0$ and $2 \cdot (c-d) \geq 0$, and with $e \coloneqq c-d$, it follows that $e \geq 0$, $d \leq a$ and $2e \leq a-d$, as claimed.
	For the claim about dimensions of weight spaces, see Exercise 16.11 in \cite{FultonHarris} or Theorem 4.1 in \cite{CaglieroTirao}.
\end{proof}

\begin{Lemma} \label{lem:B2_weightspacedimensions_api1+pi2}
	Let $a \in \Z_{\geq 0}$ and $\lambda = a \varpi_1 + \varpi_2$.
	For all $\mu \in X^+$ with $\mu \leq \lambda$, we have $\mu = \lambda - d \varpi_1 - e \alpha_1$ for some $d,e \geq 0$ with $d \leq a$ and $2e \leq a-d$.
	Furthermore, we have $\dim \Delta(\lambda)_\mu = d$.
\end{Lemma}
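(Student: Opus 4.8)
The plan is to prove the two assertions separately, in close parallel with the proof of Lemma \ref{lem:B2_weightspacedimensions_api1}. For the parametrization I would start from an arbitrary dominant $\mu \leq \lambda$ and write $\mu = \lambda - m\alpha_1 - n\alpha_2$ with $m,n \in \Z_{\geq 0}$. Using $\alpha_1 = 2\varpi_1 - 2\varpi_2$ and $\alpha_2 = -\varpi_1 + 2\varpi_2$ from Subsection \ref{subsec:B2setup}, together with the relation $\alpha_1 + \alpha_2 = \alpha_\mathrm{hs} = \varpi_1$, a direct substitution gives $\mu = \lambda - n\varpi_1 - (m-n)\alpha_1$, exactly as in Lemma \ref{lem:B2_weightspacedimensions_api1}. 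Setting $d = n$ and $e = m-n$, the weight becomes $\mu = (a-d-2e)\varpi_1 + (1+2e)\varpi_2$, so dominance $(\mu,\alpha_1^\vee) \geq 0$ and $(\mu,\alpha_2^\vee) \geq 0$ translates into $a-d-2e \geq 0$ and $1+2e \geq 0$; since $e$ is an integer the latter forces $e \geq 0$, and then $2e \leq a-d$ together with $d = n \geq 0$ yields $d \leq a$. This establishes the asserted parametrization.

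For the dimension formula, the key point is that $\Delta(\lambda)$ carries the Weyl character $\chi(\lambda)$, so its weight multiplicities agree with those of the irreducible highest-weight module of highest weight $\lambda$ in characteristic zero, and may therefore be computed entirely inside $\Z[X]^{W_\mathrm{fin}}$. The cleanest self-contained route is a recursion in $a$. Applying Proposition \ref{prop:productWeylcharacters} with the minuscule module $L(\varpi_2)$, whose four weights are recorded in Remark \ref{rem:B2weightsfundamentalmodules}, to the character $\chi(a\varpi_1)$ gives $\ch L(\varpi_2) \cdot \chi(a\varpi_1) = \chi(a\varpi_1+\varpi_2) + \chi((a+1)\varpi_1 - \varpi_2) + \chi((a-1)\varpi_1 + \varpi_2) + \chi(a\varpi_1 - \varpi_2)$. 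The second and fourth summands vanish because their $\varpi_2$-coefficient equals $-1$, i.e.\ $(\nu,\alpha_2^\vee) = -1$, so for $a \geq 1$ one obtains the recursion $\chi(a\varpi_1+\varpi_2) = \ch L(\varpi_2)\cdot\chi(a\varpi_1) - \chi((a-1)\varpi_1+\varpi_2)$. Reading off the coefficient of $e^\mu$ and inserting the already-known multiplicities $\lfloor d/2\rfloor + 1$ of $\Delta(a\varpi_1)$ from Lemma \ref{lem:B2_weightspacedimensions_api1}, an induction on $a$ then evaluates the multiplicity and gives the stated value $\dim\Delta(\lambda)_\mu = d$; alternatively, the same multiplicity can be read off directly from the explicit weight-multiplicity formula for $\mathrm{Sp}_4$ cited in the proof of Lemma \ref{lem:B2_weightspacedimensions_api1}, namely Exercise 16.11 in \cite{FultonHarris} or Theorem 4.1 in \cite{CaglieroTirao}.

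I expect the main obstacle to be the bookkeeping in the inductive step: one must track how the convolution of the four $L(\varpi_2)$-weights against the $\Delta(a\varpi_1)$-multiplicities interacts with the subtraction of $\dim\Delta((a-1)\varpi_1+\varpi_2)_\mu$, and in particular handle the boundary weights $\mu$ with $2e = a-d$, which lie on a wall, separately from the interior ones. Verifying that these contributions combine to the recorded answer \emph{uniformly in} $e$ (the formula depends only on $d$) is the computational heart of the argument, whereas the base case $\lambda = \varpi_2$ and the reduction of the non-dominant Weyl characters are routine. Because the whole computation takes place among Weyl characters, no positive-characteristic input beyond the character identity $\ch\Delta(\lambda) = \chi(\lambda)$ is needed, which is why the citation-based route mirroring Lemma \ref{lem:B2_weightspacedimensions_api1} is the most economical way to close the proof.
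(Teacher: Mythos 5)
Your strategy coincides with the paper's own proof: the same rewriting $\mu = \lambda - c\alpha_1 - d\alpha_2 = \lambda - d\varpi_1 - (c-d)\alpha_1$ with dominance forcing $e = c-d \geq 0$, $d \leq a$ and $2e \leq a-d$; the same character identity $\chi(a\varpi_1)\cdot\chi(\varpi_2) = \chi(a\varpi_1+\varpi_2) + \chi\big((a-1)\varpi_1+\varpi_2\big)$ obtained from Proposition \ref{prop:productWeylcharacters} after discarding the two summands $\nu$ with $(\nu,\alpha_2^\vee) = -1$; the same induction on $a$ against Lemma \ref{lem:B2_weightspacedimensions_api1}; and the same fallback citation to Theorem 4.1 of \cite{CaglieroTirao}. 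Methodologically there is nothing to object to.

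However, the one step you did not execute --- the inductive evaluation, which you yourself single out as the computational heart --- does not produce the value you assert. The recursion yields $\dim \Delta(a\varpi_1+\varpi_2)_\mu = d+1$, not $d$. Your proposed base case already contradicts the claimed formula: for $a=0$ the only dominant $\mu \leq \varpi_2$ is $\mu = \varpi_2$ itself, with $d=0$ but $\dim \Delta(\varpi_2)_{\varpi_2} = 1$. Likewise for $a=1$ one has $\dim \Delta(\varpi_1+\varpi_2)_{\varpi_2} = 2$ while $d=1$: the $16$-dimensional Weyl module consists of the $8$-element orbit of $\varpi_1+\varpi_2$ together with the $4$-element orbit of $\varpi_2$ taken with multiplicity $2$, and this value $2$ is exactly what the paper itself uses, via Remark \ref{rem:Cavallin}, in the proof of Proposition \ref{prop:B2_C1_multiplicity_1}. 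In other words, the printed conclusion ``$\dim \Delta(\lambda)_\mu = d$'' is an off-by-one slip for $d+1$, and by declaring that the induction ``gives the stated value'' without carrying it out, your proposal inherits the error at precisely the point where running the convolution bookkeeping would have detected it. Replacing $d$ by $d+1$ and actually performing the inductive evaluation (or simply quoting Theorem 4.1 of \cite{CaglieroTirao}) repairs the argument; the parametrization half of your write-up is correct as it stands.
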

\begin{proof}
	Let $c,d \geq 0$ such that $\mu = \lambda - c \alpha_1 - d \alpha_2$, and note that $\mu = \lambda - c \alpha_1 - d \alpha_2 = \lambda - d \varpi_1 - (c-d) \cdot \alpha_1$.
	As $\mu$ is dominant, we have $a - d - 2 \cdot (c-d) \geq 0$ and $1 + 2 \cdot (c-d) \geq 0$, and with $e \coloneqq c-d$, it follows that $e \geq 0$, $d \leq a$ and $2e \leq a-d$, as claimed.
	Using Proposition \ref{prop:productWeylcharacters}, one further checks that
	\[ \chi(a\varpi_1) \cdot \chi(\varpi_2) = \chi(\lambda) + \chi\big( (a-1) \varpi_1 + \varpi_2 \big) , \]
	and the claim about dimensions of weight spaces easily follows from Lemma \ref{lem:B2_weightspacedimensions_api1} by induction on $a$.
	Alternatively, the claim also follows from Theorem 4.1 in \cite{CaglieroTirao}.
\end{proof}

The following result is due to I.\ Suprunenko and A.\ Zalesskiĭ \cite{SuprunenkoZalesskiiSymplectic} (for symplectic groups of arbitrary rank).
Since their article is only available in Russian, we provide a proof here for $G = \mathrm{Sp}_4(\kk)$.

\begin{Corollary} \label{cor:B2onedimensionalweightspaces}
	Suppose that $p \geq 3$.
	Then all weight spaces of $L\big(\frac{p-1}{2}\varpi_1\big)$ and of $L\big(\frac{p-3}{2}\varpi_1+\varpi_2\big)$ are at most one-dimensional.
\end{Corollary}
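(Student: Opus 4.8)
The plan is to compute the weight multiplicities of the two simple modules exactly, by realising each of them as a quotient of a Weyl module whose structure is recorded in Subsection \ref{subsec:B2Weylmodulestiltingmodules}, and then feeding the resulting difference of Weyl characters into the weight–multiplicity formulas of Lemmas \ref{lem:B2_weightspacedimensions_api1} and \ref{lem:B2_weightspacedimensions_api1+pi2}. Since weight multiplicities are $W_\mathrm{fin}$-invariant, it suffices to bound $\dim L(\lambda)_\mu$ for dominant weights $\mu$. First I would dispose of the case $p=3$, where the two weights specialise to $\frac{p-1}{2}\varpi_1=\varpi_1$ and $\frac{p-3}{2}\varpi_1+\varpi_2=\varpi_2$: here Remark \ref{rem:B2weightsfundamentalmodules} exhibits $L(\varpi_1)$ and $L(\varpi_2)$ as having five resp.\ four pairwise distinct weights, each occurring with multiplicity one, so the claim is immediate. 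From now on I assume $p\geq 5$, which is exactly the range in which the relevant reflection of $\lambda$ lands in $C_0$.

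For $p\geq 5$ I would check, using the inequalities describing the alcoves in Subsection \ref{subsec:B2setup}, that both $\lambda=\frac{p-1}{2}\varpi_1$ and $\lambda=\frac{p-3}{2}\varpi_1+\varpi_2$ lie in the alcove $C_1=s\Cdot C_0$, and that $\nu\coloneqq s\Cdot\lambda=\lambda-\big((\lambda+\rho,\alpha_\mathrm{hs}^\vee)-p\big)\alpha_\mathrm{hs}$ equals $\frac{p-5}{2}\varpi_1$ resp.\ $\frac{p-5}{2}\varpi_1+\varpi_2$, which lies in $C_0$; concretely $\nu=\lambda-2\varpi_1$ in the first case and $\nu=\lambda-\varpi_1$ in the second, since $\alpha_\mathrm{hs}=\varpi_1$. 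By the description of the uniserial Weyl modules $\Delta(s\Cdot\nu)$ in Subsection \ref{subsec:B2Weylmodulestiltingmodules}, the module $\Delta(\lambda)$ has exactly the two composition factors $L(\lambda)$ and $L(\nu)$, and since $\nu\in C_0$ we have $L(\nu)=\Delta(\nu)$. Hence
\[ \ch L(\lambda)=\ch\Delta(\lambda)-\ch\Delta(\nu), \qquad\text{so}\qquad \dim L(\lambda)_\mu=\dim\Delta(\lambda)_\mu-\dim\Delta(\nu)_\mu \]
for every weight $\mu$.

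It then remains to compare the two Weyl-module multiplicities at each dominant $\mu\leq\lambda$ via Lemmas \ref{lem:B2_weightspacedimensions_api1} and \ref{lem:B2_weightspacedimensions_api1+pi2}. Writing such a $\mu$ as $\mu=\lambda-d\varpi_1-e\alpha_1$, the shift $\nu=\lambda-k\varpi_1$ (with $k=2$ resp.\ $k=1$) expresses the same $\mu$ relative to $\nu$ with parameter $d-k$, so I would simply substitute $d-k$ into the cited formula to obtain $\dim\Delta(\nu)_\mu$, treating the boundary range $d<k$ (where $\mu$ fails to be a weight of $\Delta(\nu)$, giving multiplicity $0$) separately. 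In both cases one then reads off that $\dim\Delta(\lambda)_\mu$ and $\dim\Delta(\nu)_\mu$ differ by exactly $1$ at every dominant $\mu$, whence $\dim L(\lambda)_\mu=1$; passing back through the $W_\mathrm{fin}$-action gives $\dim L(\lambda)_\mu\leq 1$ for all $\mu$, as required. The conceptual heart is the identification of $\nu=s\Cdot\lambda\in C_0$ together with the two–composition-factor structure of $\Delta(\lambda)$; the only genuine obstacle is the purely mechanical alignment of the two parametrizations under the shift by $k\varpi_1$ and the handling of the small-$d$ boundary values, which I expect to be routine.
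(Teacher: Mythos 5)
Your proposal is correct and is essentially the paper's own proof: the paper likewise disposes of $p=3$ via $L(\varpi_i)=\Delta(\varpi_i)$, and for $p\geq 5$ writes $\ch L\big(\tfrac{p-1}{2}\varpi_1\big)=\chi\big(\tfrac{p-1}{2}\varpi_1\big)-\chi\big(\tfrac{p-5}{2}\varpi_1\big)$ and $\ch L\big(\tfrac{p-3}{2}\varpi_1+\varpi_2\big)=\chi\big(\tfrac{p-3}{2}\varpi_1+\varpi_2\big)-\chi\big(\tfrac{p-5}{2}\varpi_1+\varpi_2\big)$ using Subsection \ref{subsec:B2Weylmodulestiltingmodules}, then invokes Lemmas \ref{lem:B2_weightspacedimensions_api1} and \ref{lem:B2_weightspacedimensions_api1+pi2}. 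You merely spell out the shift-by-$k\varpi_1$ alignment and the small-$d$ boundary cases that the paper leaves implicit, and your computation of the identifications $s\Cdot\lambda$ is accurate.
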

\begin{proof}
	For $p=3$, we have $L(\varpi_1) = \Delta(\varpi_1)$ and $L(\varpi_2) = \Delta(\varpi_2)$ by Subsection \ref{subsec:B2Weylmodulestiltingmodules}, and the claim is easily verified using Weyl's character formula.
	For $p \geq 5$, we have
	\[ \ch L\big( \tfrac{p-1}{2}\varpi_1 \big) = \chi\big( \tfrac{p-1}{2}\varpi_1\big) - \chi\big( \tfrac{p-5}{2}\varpi_1 \big) , \qquad \ch L\big( \tfrac{p-3}{2}\varpi_1 + \varpi_2 \big) = \chi\big( \tfrac{p-3}{2}\varpi_1 + \varpi_2 \big) - \chi\big( \tfrac{p-5}{2} \varpi_1 + \varpi_2 \big) \]
	by Subsection \ref{subsec:B2Weylmodulestiltingmodules}, and the claim follows from Lemmas \ref{lem:B2_weightspacedimensions_api1} and \ref{lem:B2_weightspacedimensions_api1+pi2}.
\end{proof}

\begin{Remark} \label{rem:charactersimpleapi1}
	From the proofs of Lemma \ref{lem:B2_weightspacedimensions_api1} and Corollary \ref{cor:B2onedimensionalweightspaces}, it is straightforward to see that the character of the simple $G$-module of highest weight $\lambda = \frac{p-1}{2} \varpi_1$ is given by
	\[ \ch L(\lambda) = \sum_{ \substack{c,d \in \Z \\ |c| + |d| \leq \frac{p-1}{2} } } e^{c \varpi_1 + d \alpha_2} . \]
\end{Remark}

Now we are ready to show that $L(\lambda) \otimes L(\mu)$ is completely reducible for all weights $\lambda,\mu \in X_1$ that satisfy condition 1 in Table \ref{tab:B2CR}.

\begin{Proposition} \label{prop:B2MFpairsinC1}
	Suppose that $p \geq 3$ and let $\lambda,\mu \in \{ \frac{p-1}{2} \cdot \varpi_1 , \frac{p-3}{2} \cdot \varpi_1 + \varpi_2 \}$.
	Then the tensor product $L(\lambda) \otimes L(\mu)$ is multiplicity free and completely reducible.
\end{Proposition}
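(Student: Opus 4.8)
The plan is to show that $\ch\big(L(\lambda)\otimes L(\mu)\big) = \ch L(\lambda)\cdot\ch L(\mu)$ is a sum of pairwise distinct simple characters; this proves that $L(\lambda)\otimes L(\mu)$ is multiplicity free, and complete reducibility then follows from Lemma \ref{lem:MFimpliesCR}, exactly as in the treatment of conditions 2--7 (Propositions \ref{prop:B2MFCRp-1_1}--\ref{prop:B2MFCRp-3_2}). Up to interchanging $\lambda$ and $\mu$ there are only three cases to consider, namely $(\lambda,\mu)=(\tfrac{p-1}{2}\varpi_1,\tfrac{p-1}{2}\varpi_1)$, $(\tfrac{p-3}{2}\varpi_1+\varpi_2,\tfrac{p-3}{2}\varpi_1+\varpi_2)$ and $(\tfrac{p-1}{2}\varpi_1,\tfrac{p-3}{2}\varpi_1+\varpi_2)$, and the inputs we need are precisely the weight space information assembled above (Lemmas \ref{lem:B2_weightspacedimensions_api1} and \ref{lem:B2_weightspacedimensions_api1+pi2}, Corollary \ref{cor:B2onedimensionalweightspaces} and Remark \ref{rem:charactersimpleapi1}).

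First I would record the characters of the two factors as differences of Weyl characters, using the uniserial structure of the relevant Weyl modules from Subsection \ref{subsec:B2Weylmodulestiltingmodules}: since $\tfrac{p-1}{2}\varpi_1 = s\Cdot\tfrac{p-5}{2}\varpi_1$ and $\tfrac{p-3}{2}\varpi_1+\varpi_2 = s\Cdot(\tfrac{p-5}{2}\varpi_1+\varpi_2)$, one has $\ch L(\tfrac{p-1}{2}\varpi_1) = \chi(\tfrac{p-1}{2}\varpi_1) - \chi(\tfrac{p-5}{2}\varpi_1)$ and $\ch L(\tfrac{p-3}{2}\varpi_1+\varpi_2) = \chi(\tfrac{p-3}{2}\varpi_1+\varpi_2) - \chi(\tfrac{p-5}{2}\varpi_1+\varpi_2)$ for $p \geq 5$. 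I would then expand the product by applying Proposition \ref{prop:productWeylcharacters} with $M = L(\tfrac{p-1}{2}\varpi_1)$, whose weights are the explicit one-dimensional weight spaces $c\varpi_1 + d\alpha_2$ with $|c|+|d|\leq\tfrac{p-1}{2}$ from Remark \ref{rem:charactersimpleapi1}, against the difference-of-Weyl-characters expression for the other factor.

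The key simplification is that the two Weyl characters in each difference have highest weights differing by a multiple of $\varpi_1$ (by $\varpi_1$ in the mixed case and by $2\varpi_1$ in the case of two equal factors $\tfrac{p-1}{2}\varpi_1$), while the weights of $L(\tfrac{p-1}{2}\varpi_1)$ form full symmetric strings in the $\varpi_1$-direction; consequently the resulting alternating sum of Weyl characters telescopes along $\varpi_1$, collapsing to a short list of boundary Weyl characters indexed by the $\alpha_2$-coordinate $d$. I would then clean up these surviving terms using the sign rule $\chi(w\Cdot\nu) = (-1)^{\ell(w)}\chi(\nu)$ and the vanishing $\chi(\nu)=0$ for singular $\nu$, reflecting each non-dominant boundary weight into the dominant chamber, and regroup the remaining alternating sum into simple characters by means of the composition series of the Weyl modules in Subsection \ref{subsec:B2Weylmodulestiltingmodules} (each relevant $\ch L(\nu)$ being $\chi(\nu)$ minus the character of its radical). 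Verifying that every simple character occurs with coefficient at most $1$ gives multiplicity freeness. A convenient cross-check for the two cases involving $\tfrac{p-3}{2}\varpi_1+\varpi_2$ is that $L(\tfrac{p-3}{2}\varpi_1+\varpi_2)$ is a direct summand of $L(\tfrac{p-1}{2}\varpi_1)\otimes L(\varpi_2)$ by Corollary \ref{cor:minusculetensorproductMFCR}, so once complete reducibility is known for $L(\tfrac{p-1}{2}\varpi_1)^{\otimes 2}$ these cases can alternatively be obtained by tensoring with the minuscule module $L(\varpi_2)$ and invoking Corollary \ref{cor:minusculetensorproductMFCR} again on each $p$-regular summand.

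The main obstacle is the bookkeeping in the final step: one must track exactly which telescoped boundary weights become singular (so that their Weyl character vanishes) and which must be reflected into the dominant chamber (introducing signs), and then confirm that every negative Weyl-character contribution is absorbed precisely as the radical of one of the surviving simple modules, so that no composition multiplicity exceeds $1$. As in Propositions \ref{prop:B2MFCRp-1_1}--\ref{prop:B2MFCRp-3_2}, the small primes need separate attention: for $p=3$ the two weights degenerate to $\varpi_1$ and $\varpi_2$, so the products can be computed directly from Remark \ref{rem:B2weightsfundamentalmodules} and Proposition \ref{prop:productWeylcharacters}, while $p=5$ (where $\tfrac{p-5}{2}\varpi_1 = 0$ and several boundary weights coincide or land on walls) should be checked by hand.
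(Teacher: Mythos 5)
Your primary route---computing $\ch L(\lambda)\cdot\ch L(\mu)$ explicitly and exhibiting it as a sum of pairwise distinct simple characters, then invoking Lemma \ref{lem:MFimpliesCR}---is sound, and for the mixed pair $\big(\tfrac{p-1}{2}\varpi_1,\tfrac{p-3}{2}\varpi_1+\varpi_2\big)$ it is essentially what the paper does (the paper organizes the cancellation via reflections and vanishing in the $\alpha_2$-direction rather than telescoping along $\varpi_1$, but that is cosmetic). Where you genuinely diverge is on the two tensor squares: the paper performs no further character computation there. Instead it proves complete reducibility structurally---since $L(\mu)$ is a direct summand of $L(\varpi_2)\otimes L(\lambda)$ and $L(\lambda)$ of $L(\varpi_2)\otimes L(\mu)$ via wall-crossing translation functors, every indecomposable summand of $L(\lambda)^{\otimes 2}$ or $L(\mu)^{\otimes 2}$ is a summand of $L(\varpi_2)\otimes L(\nu)$ for some simple summand $L(\nu)$ of the already-understood mixed product---and then obtains multiplicity freeness for free from the one-dimensionality of weight spaces (Corollaries \ref{cor:B2onedimensionalweightspaces} and \ref{cor:onedimensionalweightspacesMF}). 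Your approach buys uniformity (one method for all three products) at the cost of two more lengthy computations; note in particular that for $L(\mu)^{\otimes 2}$ with $\mu=\tfrac{p-3}{2}\varpi_1+\varpi_2$ you need the explicit support of $L(\mu)$, which Remark \ref{rem:charactersimpleapi1} does not supply---it must first be extracted from $\chi(\mu)-\chi(s\Cdot\mu)$ and Lemma \ref{lem:B2_weightspacedimensions_api1+pi2}.

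One warning about your ``convenient cross-check'': as stated it has a gap, and the gap is exactly the delicate point of the paper's proof. Tensoring the summands of the mixed product (or of $L(\lambda)^{\otimes 2}$) with $L(\varpi_2)$ and ``invoking Corollary \ref{cor:minusculetensorproductMFCR} again on each $p$-regular summand'' silently skips the $p$-singular summands, and these do occur: the mixed product has summands $L\big((p-2)\varpi_1+\varpi_2\big)$ and $L\big((p-2)\varpi_2\big)$. The second is handled by Proposition \ref{prop:B2MFCRp-2_2}, but $L(\varpi_2)\otimes L\big((p-2)\varpi_1+\varpi_2\big)$ is \emph{not} completely reducible: it contains the non-simple indecomposable tilting module $T\big((p-2)\varpi_1+2\varpi_2\big)$ (highest weight in $C_3$) as a direct summand. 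To conclude, one must rule this summand out of the tensor squares by a weight argument, namely $(p-2)\varpi_1+2\varpi_2=2\lambda+\alpha_2=2\mu+\alpha_{\mathrm{hs}}$ exceeds both highest weights $2\lambda$ and $2\mu$; this is precisely the extra step the paper supplies. Without it, the minuscule-tensoring route does not close, so if you take that alternative rather than the full character computation, this exclusion argument must be added.
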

\begin{proof}
	First suppose that $p = 3$, so that $\lambda,\mu \in \{ \varpi_1 , \varpi_2 \}$.
	If at least one of $\lambda$ and $\mu$ is equal to $\varpi_2$ then the claim follows from Propositions \ref{prop:B2MFCRp-2_2} and \ref{prop:B2MFCRp-2_1}.
	For $\lambda = \mu = \varpi_1$, one easily computes (using Proposition \ref{prop:productWeylcharacters} and the results from Subsection \ref{subsec:B2Weylmodulestiltingmodules}) that
	\[ \ch\big( L(\varpi_1) \otimes L(\varpi_1) \big) = \chi(\varpi_1) \cdot \chi(\varpi_1) = \chi(2 \varpi_1) + \chi(2 \varpi_2) + \chi(0) = \ch L(2 \varpi_1) + \ch L(2 \varpi_2) + \ch L(0) . \]
	In particular, the tensor product $L(\varpi_1) \otimes L(\varpi_1)$ is multiplicity free, and also completely reducible by Lemma \ref{lem:MFimpliesCR}.
	Now suppose that $p \geq 5$ and let
	\[ \lambda \coloneqq \tfrac{p-1}{2} \cdot \varpi_1 \in C_1 , \qquad \mu \coloneqq \tfrac{p-3}{2} \cdot \varpi_1 + \varpi_2 \in C_1 . \]
	We first show that the tensor product $L(\lambda) \otimes L(\mu)$ is multiplicity free, by explicitly writing the character of $L(\lambda) \otimes L(\mu)$ as a sum of characters of pairwise non-isomorphic simple $G$-modules.
	(Then the complete reducibility of $L(\lambda) \otimes L(\mu)$ will follow from Lemma \ref{lem:MFimpliesCR}.)
	To that end, recall from Subsection \ref{subsec:B2Weylmodulestiltingmodules} that we have
	\[ \ch L(\mu) = \chi(\mu) - \chi(s\Cdot\mu) = \chi\big( \tfrac{p-3}{2} \varpi_1 + \varpi_2 \big) - \chi\big( \tfrac{p-5}{2} \varpi_1 + \varpi_2 \big) . \]
	By Proposition \ref{prop:productWeylcharacters} and Remark \ref{rem:charactersimpleapi1}, we further have
	\[ \ch L(\lambda) \cdot \chi(\mu) = \sum_{|c| + |d| \leq \frac{p-1}{2}} \chi(\mu + c \varpi_1 + d \alpha_2) . \]
	Recall from Subsection \ref{subsec:representationscharacters} that $\chi(w\Cdot\nu) = (-1)^{\ell(w)} \cdot \chi(\nu)$ for all $w \in W_\mathrm{fin}$ and $\nu \in X$, and $\chi(\nu) = 0$ if $(\nu,\alpha^\vee) = -1$ for some $\alpha \in \Pi$.
	For $c,d \in \Z$ with $|c| + |d| \leq \frac{p-1}{2}$ and $d<-1$, we have
	\begin{align*}
		\mu + c \varpi_1 + d \alpha_2 & = ( \tfrac{p-3}{2} +c ) \cdot \varpi_1 + \varpi_2 + d \alpha_2 \\
		& = u \Cdot \big( ( \tfrac{p-3}{2} +c ) \cdot \varpi_1 + \varpi_2 + (-d-2) \cdot \alpha_2 \big) \\
		& = u \Cdot \big( \mu + c \varpi_1 + (-d-2) \cdot \alpha_2 \big)
	\end{align*}
	and therefore
	\[ \chi( \mu + c \varpi_1 + d \alpha_2 ) = - \chi\big( \mu + c \varpi_1 + (-d-2) \cdot \alpha_2 \big) , \]
	where $|c| + |-d-2| \leq \frac{p-5}{2}$ and $-d-2 \geq 0$.
	Furthermore, for $c,d \in \Z$ with $|c| + |d| \leq \frac{p-1}{2}$ and $d = -1$, we have $( \mu + c \varpi_1 + d \alpha_2 , \alpha_2^\vee ) = -1$ and therefore $\chi(\mu + c \varpi_1 + d \alpha_2) = 0$, and we conclude that
	\begin{align*}
		\ch L(\lambda) \cdot \chi(\mu) & = \sum_{ \substack{ |c| + |d| = \frac{p-1}{2} \\ d \geq 0 } } \chi(\mu + c \varpi_1 + d \alpha_2) + \sum_{ \substack{ |c| + |d| = \frac{p-3}{2} \\ d \geq 0 } } \chi(\mu + c \varpi_1 + d \alpha_2) .
	\end{align*}
	This character formula can be rewritten as
	\begin{align*}
		\ch L(\lambda) \cdot \chi(\mu) & = \sum_{ 0 \leq d \leq \frac{p-3}{2} } \chi\big( (p-2) \cdot \varpi_1 + \varpi_2 - d \alpha_1 \big) + \sum_{ 0 \leq d \leq \frac{p-1}{2} } \chi\big( -\varpi_1 + (2d+1) \cdot \varpi_2 \big) \\
		& \hspace{2.4cm} + \sum_{ 0 \leq d \leq \frac{p-5}{2} } \chi\big( (p-3) \cdot \varpi_1 + \varpi_2 - d \alpha_1 \big) + \sum_{ 0 \leq d \leq \frac{p-3}{2} } \chi\big( (2d+1) \cdot \varpi_2 \big) ,
	\end{align*}
	and an analogous computation shows that
	\begin{align*}
		\ch L(\lambda) \cdot \chi(s\Cdot\mu) & = \sum_{ 0 \leq d \leq \frac{p-3}{2} } \chi\big( (p-3) \cdot \varpi_1 + \varpi_2 - d \alpha_1 \big) + \sum_{ 0 \leq d \leq \frac{p-1}{2} } \chi\big( -2\varpi_1 + (2d+1) \cdot \varpi_2 \big) \\
		& \hspace{1.75cm} + \sum_{ 0 \leq d \leq \frac{p-5}{2} } \chi\big( (p-4) \cdot \varpi_1 + \varpi_2 - d \alpha_1 \big) + \sum_{ 0 \leq d \leq \frac{p-3}{2} } \chi\big( - \varpi_1 + (2d+1) \cdot \varpi_2 \big) .
	\end{align*}
	Next observe that we have $\chi(-\varpi_1+a\varpi_2) = 0$ and $\chi(-2\varpi_1+a\varpi_2) = - \chi\big( (a-2) \cdot \varpi_2 \big)$ for all $a \in \Z$, and in particular $\chi(-2\varpi_1+\varpi_2)=\chi(-\varpi_2)=0$.
	Thus, we can further rewrite the character formulas above as follows:
	\begin{align*}
		\ch L(\lambda) \cdot \chi(\mu) & = \sum_{ 0 \leq d \leq \frac{p-3}{2} } \chi\big( (p-2) \cdot \varpi_1 + \varpi_2 - d \alpha_1 \big) \\
		& \hspace{2.4cm} + \sum_{ 0 \leq d \leq \frac{p-5}{2} } \chi\big( (p-3) \cdot \varpi_1 + \varpi_2 - d \alpha_1 \big) + \sum_{ 0 \leq d \leq \frac{p-3}{2} } \chi\big( (2d+1) \cdot \varpi_2 \big)  \\[.2cm]
	\end{align*}
	\begin{align*}
		\ch L(\lambda) \cdot \chi(s\Cdot\mu) & = \sum_{ 0 \leq d \leq \frac{p-3}{2} } \chi\big( (p-3) \cdot \varpi_1 + \varpi_2 - d \alpha_1 \big) - \sum_{ 0 \leq d \leq \frac{p-3}{2} } \chi\big( (2d+1) \cdot \varpi_2 \big) \\
		& \hspace{2.4cm} + \sum_{ 0 \leq d \leq \frac{p-5}{2} } \chi\big( (p-4) \cdot \varpi_1 + \varpi_2 - d \alpha_1 \big)
	\end{align*}
	By combining the two character formulas, we obtain
	\begin{align*}
	\ch\big( L(\lambda) \otimes L(\mu) \big) & = \ch L(\lambda) \cdot \chi(\mu) - \ch L(\lambda) \cdot \chi(s\Cdot\mu) \\
	& = \sum_{ 0 \leq d \leq \frac{p-3}{2} } \chi\big( (p-2) \cdot \varpi_1 + \varpi_2 - d \alpha_1 \big) + \chi\big( (p-2) \cdot \varpi_2 \big) - \chi\big( (p-2) \cdot \varpi_2 \big) \\
	& \hspace{2cm} + 2 \cdot \sum_{ 0 \leq d \leq \frac{p-3}{2} } \chi\big( (2d+1) \cdot \varpi_2 \big) - \sum_{ 0 \leq d \leq \frac{p-5}{2} } \chi\big( (p-4) \cdot \varpi_1 + \varpi_2 - d \alpha_1 \big) \\
	& = \chi\big( (p-2) \cdot \varpi_1 + \varpi_2 \big) + \sum_{ 0 \leq d \leq \frac{p-3}{2} } \chi\big( (2d+1) \cdot \varpi_2 \big) + R ,
	\end{align*}
	where
	\[ R \coloneqq \sum_{ 0 \leq d \leq \frac{p-5}{2} } \Big( \chi\big( (p-2) \cdot \varpi_1 + \varpi_2 - (d+1) \cdot \alpha_1 \big) - \chi\big( (p-4) \cdot \varpi_1 + \varpi_2 - d \alpha_1 \big) + \chi\big( (2d+1) \cdot \varpi_2 \big) \Big) . \]
	By Subsection \ref{subsec:B2Weylmodulestiltingmodules}, we have $\ch L(st\Cdot\nu) = \chi(st\Cdot\nu) - \chi(s\Cdot\nu) + \chi(\nu)$ for all weights $\nu \in C_0 \cap X$, and it follows that
	\[ R = \sum_{0 \leq d \leq \frac{p-5}{2}} \ch L\big( (p-1) \cdot \varpi_1 + \varpi_2 - (d+1) \cdot \alpha_1 \big) . \]
	Furthermore, we have $\chi\big( (p-2) \cdot \varpi_1 + \varpi_2 \big) = \ch L\big( (p-2) \cdot \varpi_1 + \varpi_2 \big)$ and $\chi(b \varpi_2) = \ch L(b \varpi_2)$ for $0 \leq b \leq p-1$, again by Subsection \ref{subsec:B2Weylmodulestiltingmodules}, and we conclude that
	\[ \ch\big( L(\lambda) \otimes L(\mu) \big) = \sum_{0 \leq d \leq \frac{p-3}{2}} \ch L\big( (p-2) \cdot \varpi_1 + \varpi_2 - d \alpha_1 \big) + \sum_{0 \leq d \leq \frac{p-3}{2}} \ch L\big( (2d+1) \cdot \varpi_2 \big) . \]
	In particular, the tensor product $L(\lambda) \otimes L(\mu)$ is multiplicity free, and also completely reducible by Lemma \ref{lem:MFimpliesCR}, as claimed.
	For later use, we note that all simple direct summands of $L(\lambda) \otimes L(\mu)$ have highest weights in $C_0 \cup C_2 \cup \{ (p-2) \cdot \varpi_1 + \varpi_2 , (p-2) \cdot \varpi_2 \}$.
	
	Now it remains to show that the tensor squares $L(\lambda) \otimes L(\lambda)$ and $L(\mu) \otimes L(\mu)$ are multiplicity free and completely reducible.
	To that end, observe that we have
	\[ L(\mu) \cong T_{s \Cdot \lambda}^{s \Cdot \mu} L(\lambda) = \pr_{s \Cdot \mu}\big( L(\varpi_2) \otimes L(\lambda) \big) , \qquad L(\lambda) \cong T_{s \Cdot \mu}^{s \Cdot \lambda} L(\mu) = \pr_{s \Cdot \lambda}\big( L(\varpi_2) \otimes L(\mu) \big) , \]
	whence $L(\mu)$ is a direct summand of $L(\varpi_2) \otimes L(\lambda)$ and $L(\lambda)$ is a direct summand of $L(\varpi_2) \otimes L(\mu)$.
	Thus, if $M$ is an indecomposable direct summand of one of the tensor squares $L(\mu) \otimes L(\mu)$ or $L(\lambda) \otimes L(\lambda)$ then $M$ is also a direct summand of $L(\varpi_2) \otimes L(\lambda) \otimes L(\mu)$, and it follows that $M$ is a direct summand of $L(\varpi_2) \otimes L(\nu)$ for some weight $\nu \in X^+$ such that $L(\nu)$ is a direct summand of $L(\lambda) \otimes L(\mu)$.
	We consider the different possibilities for $\nu$ in turn.
	\begin{enumerate}
		\item If $\nu \in C_0 \cup C_2$ then $L(\varpi_2) \otimes L(\nu)$ is completely reducible by Corollary \ref{cor:minusculetensorproductMFCR}, and it follows that $M$ is simple.
		\item If $\nu = (p-2) \cdot \varpi_2$ then $L(\varpi_2) \otimes L(\nu)$ is completely reducible by Proposition \ref{prop:B2MFCRp-2_2}, and it follows that $M$ is simple.
		 \item If $\nu = (p-2) \cdot \varpi_1 + \varpi_2$ then $L(\nu) = \Delta(\nu) = T(\nu)$ by Subsection \ref{subsec:B2Weylmodulestiltingmodules}, and using Lemma \ref{lem:minusculetensorproducttranslation} and standard properties of translation functors, we obtain 
		\[ L(\varpi_2) \otimes L(\nu) \cong T\big( (p-2) \cdot \varpi_1 + 2 \varpi_2 \big) \oplus L\big( (p-1) \cdot \varpi_1 \big) \oplus L\big( (p-2) \cdot \varpi_1 \big) . \]
		As $(p-2) \cdot \varpi_1 + 2 \varpi_2 = 2 \mu + \alpha_\mathrm{hs} = 2 \lambda + \alpha_2$, the tilting module $T\big( (p-2) \cdot \varpi_1 + 2 \varpi_2 \big)$ cannot be a direct summand of $L(\mu) \otimes L(\mu)$ or $L(\lambda) \otimes L(\lambda)$, and we conclude that $M$ is simple. 
	\end{enumerate}
	In all cases, the indecomposable direct summand $M$ of $L(\mu) \otimes L(\mu)$ or of $L(\lambda) \otimes L(\lambda)$ is simple.
	In particular, the tensor squares $L(\mu) \otimes L(\mu)$ and $L(\lambda) \otimes L(\lambda)$ are completely reducible.
	As all weight spaces of $L(\mu)$ and $L(\lambda)$ are at most one-dimensional by Corollary \ref{cor:B2onedimensionalweightspaces}, the tensor squares $L(\mu) \otimes L(\mu)$ and $L(\lambda) \otimes L(\lambda)$ are also multiplicity free by Corollary \ref{cor:onedimensionalweightspacesMF}.
\end{proof}

Using the preceding proposition, we can classify the indecomposable direct summands of $L(\lambda) \otimes L(\mu)$, for weights $\lambda,\mu \in C_1 \cap X$.
We will use this result in the next subsection to establish necessary conditions for the complete reducibility of $L(\lambda) \otimes L(\mu)$.

\begin{Proposition} \label{prop:B2directsummandsC1}
	Let $\lambda,\mu \in C_1 \cap X$ and let $M$ be an indecomposable direct summand of $L(\lambda) \otimes L(\mu)$.
	Then either $M$ is a tilting module or $M \cong L(\delta)$ for some $\delta \in C_2 \cap X$.
\end{Proposition}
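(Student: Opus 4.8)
The plan is to reduce everything to the single ``central'' weight $\lambda_0 \coloneqq \frac{p-1}{2}\varpi_1 \in C_1$, whose tensor square is controlled by Proposition \ref{prop:B2MFpairsinC1}, by repeatedly tensoring with the minuscule simple module $L(\varpi_2)$. (For $p \leq 3$ one has $C_1 \cap X = \emptyset$ and there is nothing to prove, so I may assume $p \geq 5$, which guarantees $\lambda_0 \in C_1 \cap X$.) The engine of the argument is the following closure property, which I would establish first: \emph{if $N$ is a $G$-module all of whose indecomposable direct summands are tilting modules or isomorphic to $L(\delta)$ for some $\delta \in C_2 \cap X$, then $N \otimes L(\varpi_2)$ has the same property.} Since $\varpi_2$ is minuscule we have $L(\varpi_2) = T(\varpi_2)$, so tensoring a tilting summand with $L(\varpi_2)$ yields a tilting module; and for a summand $L(\delta)$ with $\delta \in C_2$, Corollary \ref{cor:minusculetensorproductMFCR} gives $L(\delta) \otimes L(\varpi_2) \cong \bigoplus_{\varpi'} L(\delta + \varpi')$ with $\delta + \varpi' \in \widehat{C}_2$. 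As the upper closure of $C_2$ meets only the walls $F_{2,3}$ and $F_{2,3a}$ (Example \ref{ex:B2reflectionsmall}), each $\delta + \varpi'$ lies in $C_2$ or on one of these walls, where the relevant Weyl module is simple and hence $L(\delta + \varpi') = T(\delta + \varpi')$ is tilting (Subsection \ref{subsec:B2Weylmodulestiltingmodules}).

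Next I would record the base case and a reachability statement. For the base case, Proposition \ref{prop:B2MFpairsinC1} (together with the explicit decompositions in its proof) shows that every indecomposable summand of $L(\lambda_0) \otimes L(\lambda_0)$ is a tilting module or of the form $L(\delta)$ with $\delta \in C_2 \cap X$. For reachability, I claim that for each $\lambda \in C_1 \cap X$ there is a $k \geq 0$ with $L(\lambda)$ a direct summand of $L(\lambda_0) \otimes L(\varpi_2)^{\otimes k}$. This follows by a walking argument: whenever $\delta, \delta' \in C_1 \cap X$ differ by a weight of $L(\varpi_2)$, say $\delta' = \delta + \varpi'$ with $\varpi' \in W_\mathrm{fin}\varpi_2$, Corollary \ref{cor:minusculetensorproductMFCR} (applied to $\delta \in C_1$, using $\delta' \in C_1 \subseteq \widehat{C}_1$) exhibits $L(\delta')$ as a direct summand of $L(\delta) \otimes L(\varpi_2)$; chaining such steps along a path from $\lambda_0$ to $\lambda$ inside $C_1 \cap X$ yields the claim.

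Granting these two ingredients, the proof concludes quickly: given $\lambda, \mu \in C_1 \cap X$, I choose $k, l \geq 0$ so that $L(\lambda)$ and $L(\mu)$ are summands of $L(\lambda_0) \otimes L(\varpi_2)^{\otimes k}$ and $L(\lambda_0) \otimes L(\varpi_2)^{\otimes l}$ respectively; then $L(\lambda) \otimes L(\mu)$ is a direct summand of $\big( L(\lambda_0) \otimes L(\lambda_0) \big) \otimes L(\varpi_2)^{\otimes (k+l)}$. Applying the closure property $k+l$ times to the base case shows that every indecomposable summand of this module is a tilting module or an $L(\delta)$ with $\delta \in C_2$, and by the Krull--Schmidt theorem the same holds for the indecomposable summand $M$ of $L(\lambda) \otimes L(\mu)$. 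The main obstacle will be the reachability step: one must check that $C_1 \cap X$ is connected under the moves $\pm\varpi_2$ and $\pm(\varpi_1 - \varpi_2)$ (which form a $\Z$-basis of $X$) \emph{and} that a connecting path can be kept inside $C_1$, so that Corollary \ref{cor:minusculetensorproductMFCR} genuinely produces the required summand rather than pushing a weight onto a wall. Because $C_1$ is a convex triangular alcove this can be verified directly---for instance by first moving an arbitrary weight onto the layers $b \in \{0,1\}$ and then sliding along them towards $\lambda_0$---but it is the one point where the geometry of $C_1$, rather than formal manipulation of tensor products, is essential.
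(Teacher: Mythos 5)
Your proof is correct, and its inductive core coincides with the paper's: both arguments rest on the closure property that tensoring with $L(\varpi_2) = T(\varpi_2)$ preserves the class of modules whose indecomposable summands are tilting or simple with highest weight in $C_2$ (Corollary \ref{cor:minusculetensorproductMFCR} together with the fact that simple modules with $p$-singular highest weights in $\widehat{C}_2$ are tilting). The difference lies in the reduction to the base case. The paper uses translation functors: $L(\lambda) \cong T_{s\Cdot\nu}^{s\Cdot\lambda}L(\nu)$ and $L(\mu) \cong T_{s\Cdot\nu'}^{s\Cdot\mu}L(\nu')$, with $\nu = \frac{p-1}{2}\varpi_1$ and $\nu' = \frac{p-3}{2}\varpi_1+\varpi_2$, exhibit $L(\lambda)\otimes L(\mu)$ as a summand of $L(\nu)\otimes L(\nu')\otimes T$ for a tilting module $T$, which is then replaced by a power of $T(\varpi_2)$ via $T(\varpi_2)^{\otimes 2}\cong T(2\varpi_2)\oplus T(\varpi_1)\oplus T(0)$; the base case is the \emph{mixed} product $L(\nu)\otimes L(\nu')$, whose summands are recorded explicitly at the end of the proof of Proposition \ref{prop:B2MFpairsinC1}. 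You instead realize $L(\lambda)$ as a summand of $L(\lambda_0)\otimes L(\varpi_2)^{\otimes k}$ by walking inside $C_1\cap X$, with the tensor \emph{square} $L(\lambda_0)^{\otimes 2}$ as base case. This works, but two points should be spelled out in a write-up. First, the connectivity of $C_1\cap X$ under the moves $\pm\varpi_2$, $\pm(\varpi_1-\varpi_2)$ genuinely needs the verification you only sketch; it is easy (for $b\geq 1$ the move $+(\varpi_1-\varpi_2)$ preserves $a+b$ and increases $2a+b$, hence never leaves $C_1$, reducing everything to the layer $b=0$, which is then traversed by zig-zags through $b=1$), but it is exactly the alcove-specific combinatorics that the paper's translation-functor route avoids altogether. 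Second, Proposition \ref{prop:B2MFpairsinC1} as stated only gives complete reducibility of $L(\lambda_0)^{\otimes 2}$; to see that its summands are tilting or simple with highest weight in $C_2$ you must, as you indicate, go into its proof, where each summand of the square is shown to be a summand of $L(\varpi_2)\otimes L(\nu)$ with $\nu\in C_0\cup C_2\cup\{(p-2)\varpi_1+\varpi_2,(p-2)\varpi_2\}$, and then run the same case analysis as in your closure property — the statement explicitly flagged "for later use" there concerns the mixed product, not the square. In exchange, your route is carried entirely by explicit minuscule tensor decompositions, whereas the paper's buys uniformity: translation between $p$-regular weights in the same alcove requires no path and no case-checking.
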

\begin{proof}
	Recall that the assumption that $C_1 \cap X$ is non-empty implies that $p \geq 5$, and consider the weights $\nu = \frac{p-1}{2} \cdot \varpi_1 \in C_1$ and $\nu' = \frac{p-3}{2} \cdot \varpi_1 + \varpi_2 \in C_1$.
	By the translation principle, we have
	\[ L(\lambda) \cong T_{s\Cdot\nu}^{s\Cdot\lambda} L(\nu) \qquad \text{and} \qquad L(\mu) \cong T_{s\Cdot\nu'}^{s\Cdot\mu} L(\nu') , \]
	and it follows that $L(\lambda) \otimes L(\mu)$ is a direct summand of $L(\nu) \otimes L(\nu') \otimes T$ for some tilting module $T$.
	In particular, every indecomposable direct summand of $L(\lambda) \otimes L(\mu)$ also appears as a direct summand in $L(\nu) \otimes L(\nu') \otimes T(\gamma)$ for some $\gamma \in X^+$.
	Next observe that for $a,b \geq 0$ such that $\gamma = a \varpi_1 + b \varpi_2$, the tilting module $T(\gamma)$ is a direct summand of $T(\varpi_1)^{\otimes a} \otimes T(\varpi_2)^{\otimes b}$, and since
	\[ T(\varpi_2) \otimes T(\varpi_2) \cong T(2\varpi_2) \oplus T(\varpi_1) \oplus T(0) , \]
	we conclude that $T(\gamma)$ is also a direct summand of $T(\varpi_2)^{\otimes (2a+b)}$.
	Thus, every indecomposable direct summand of $L(\lambda) \otimes L(\mu)$ appears as a direct summand in $L(\nu) \otimes L(\nu') \otimes T(\varpi_2)^{\otimes c}$ for some $c \geq 0$, and it suffices to prove that every indecomposable direct summand $N$ of $L(\nu) \otimes L(\nu') \otimes T(\varpi_2)^{\otimes c}$ is either a tilting module or a simple $G$-module with highest weight in $C_2 \cap X$.
	
	If $c=0$ then $N$ is a direct summand of $L(\nu) \otimes L(\nu')$, and $L(\nu) \otimes L(\nu')$ is a direct sum of simple $G$-module with highest weights in $C_0 \cup C_2 \cup \{ (p-2) \cdot \varpi_1 + \varpi_2 , (p-2) \cdot \varpi_2 \}$ by the proof of Proposition \ref{prop:B2MFpairsinC1}.
	As the simple $G$-modules with highest weights in $C_0 \cup \{ (p-2) \cdot \varpi_1 + \varpi_2 , (p-2) \cdot \varpi_2 \}$ are tilting modules by Subsection \ref{subsec:B2Weylmodulestiltingmodules}, we conclude that either $N$ is a tilting module or $N \cong L(\delta)$ for some $\delta \in C_2 \cap X$.
	Now suppose that $c>0$. 
	Then $N$ is a direct summand of $N' \otimes T(\varpi_2)$, for some indecomposable direct summand $N'$ of $L(\nu) \otimes L(\nu') \otimes T(\varpi_2)^{\otimes (c-1)}$, and by induction, we may assume that either $N'$ is a tilting module or $N' \cong L(\delta')$ for some $\delta' \in C_2 \cap X$.
	If $N'$ is a tilting module then so is the direct summand $N$ of $N' \otimes T(\varpi_2)$.
	If $N' \cong L(\delta')$ for some $\delta' \in C_2 \cap X$ then $N' \otimes T(\varpi_2)$ is a direct sum of simple $G$-modules with highest weights in the upper closure of $C_2$ by Corollary \ref{cor:minusculetensorproductMFCR}, and since the simple $G$-modules with highest weights in $\widehat{C}_2 \setminus C_2$ are tilting modules by Subsection \ref{subsec:B2Weylmodulestiltingmodules}, we conclude that either $N$ is a tilting module or $N \cong L(\delta)$ for some $\delta \in C_2$, as required.
\end{proof}

Now we are ready to establish the sufficient conditions in Theorem \ref{thm:B2CR}, that is, we prove that for weights $\lambda,\mu \in X_1 \setminus \{ 0 \}$ that satisfy one of the conditions from Table \ref{tab:B2CR}, the tensor product $L(\lambda) \otimes L(\mu)$ is completely reducible.

\begin{proof}[Proof of Theorem \ref{thm:B2CR}, sufficient conditions]
\label{proof:B2CRsufficient}
	Let $\lambda,\mu \in X_1 \setminus \{0\}$ and suppose that $\lambda$ and $\mu$ satisfy one of the conditions from Table \ref{tab:B2CR}.
	We consider the different conditions in turn.
	\begin{itemize}
	\item Condition 1: $L(\lambda) \otimes L(\mu)$ is completely reducible by Proposition \ref{prop:B2MFpairsinC1}.
	\item Condition 2: $L(\lambda) \otimes L(\mu)$ is completely reducible by Proposition \ref{prop:B2MFCRp-1_1}.
	\item Condition 3: $L(\lambda) \otimes L(\mu)$ is completely reducible by Proposition \ref{prop:B2MFCRp-1_2}.
	\item Condition 4: $L(\lambda) \otimes L(\mu)$ is completely reducible by Proposition \ref{prop:B2MFCRp-21}.
	\item Condition 5: $L(\lambda) \otimes L(\mu)$ is completely reducible by Proposition \ref{prop:B2MFCRp-2_2}.
	\item Condition 6: $L(\lambda) \otimes L(\mu)$ is completely reducible by Proposition \ref{prop:B2MFCRp-2_1}.
	\item Condition 7: $L(\lambda) \otimes L(\mu)$ is completely reducible by Proposition \ref{prop:B2MFCRp-3_2}.
	\item Condition 8: $L(\lambda) \otimes L(\mu)$ is completely reducible by Theorem \ref{thm:reflectionsmalltensorproduct}.
	\end{itemize}
	Thus, if $\lambda$ and $\mu$ satisfy one of the conditions from Table \ref{tab:B2CR} then $L(\lambda) \otimes L(\mu)$ is completely reducible.
\end{proof}

\subsection{Proofs: complete reduciblity, necessary conditions}
\label{subsec:B2proofsCRnecessary}

In this subsection, we show that for all $\lambda,\mu \in X_1 \setminus \{0\}$ such that the tensor product $L(\lambda) \otimes L(\mu)$ is completely reducible, the weights $\lambda$ and $\mu$ must satisfy one of the conditions from Table \ref{tab:B2CR}, up to interchanging $\lambda$ and $\mu$.
The proof, which is a case-by-case analysis based on which alcoves or walls the weights $\lambda$ and $\mu$ belong to, will be given at the end of the subsection (see page \pageref{proof:B2CRnecessary}).
We will repeatedly use the following elementary observation:

\begin{Remark} \label{rem:LeviprestrctedconditionB2}
	Let $\lambda,\mu \in X_1$ and write $\lambda = a\varpi_1 + b\varpi_2$ and $\mu = a^\prime\varpi_1 + b^\prime\varpi_2$, with $0 \leq a,b,a^\prime,b^\prime < p$.
	Suppose that $L(\lambda) \otimes L(\mu)$ is completely reducible.
	Then all composition factors of $L(\lambda) \otimes L(\mu)$ have $p$-restricted highest weights by Theorem A in \cite{GruberCompleteReducibility}.
	By truncation to the two Levi subgroups of type $\mathrm{A}_1$ corresponding to the simple roots $\alpha_1$ and $\alpha_2$ respectively, we see that $L(\lambda+\mu - c\alpha_1)$ and $L(\lambda+\mu-d\alpha_2)$ are composition factors of $L(\lambda) \otimes L(\mu)$ for all $0 \leq c \leq \min\{a,a^\prime\}$ and $0 \leq d \leq \min\{b,b^\prime\}$, each appearing with composition multiplicity one (cf.\ Remark 4.13 in \cite{GruberCompleteReducibility}).
	In particular, the weights $\lambda+\mu - c \alpha_1$ and $\lambda+\mu - d \alpha_2$ are $p$-restricted, and it follows that
	\begin{align*}
	a + a^\prime + \min\{ b , b^\prime \} & \leq p-1 , \\
	b + b^\prime + 2 \cdot \min\{ a , a^\prime \} & \leq p-1 .
	\end{align*}
\end{Remark}

In the five propositions below, we consider tensor products $L(\lambda) \otimes L(\mu)$ where $\lambda,\mu \in X_1 \setminus \{0\}$ and $\lambda$ is $p$-singular.
We consider the different walls of the $p$-restricted alcoves in turn and show that $L(\lambda) \otimes L(\mu)$ is completely reducible only if $\lambda$ and $\mu$ satisfy one of the conditions 2--7 in Table \ref{tab:B2CR}.

\begin{Proposition}\label{prop:B2_CR_F34a}
	Let $\lambda\in F_{3,4a}\cap X_1$ and $\mu \in X_1\setminus\{0\}$ such that $L(\lambda)\otimes L(\mu)$ is completely reducible.
	Then $\lambda=(p-1)\varpi_1$ and $\mu=\varpi_2$.
\end{Proposition}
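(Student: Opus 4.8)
The plan is to reduce the pair $(\lambda,\mu)$ drastically using the Levi-truncation inequalities, and then to rule out the remaining possibilities by exhibiting a $p$-regular composition factor. Write $\lambda = (p-1)\varpi_1 + b\varpi_2$, where the $\varpi_1$-coefficient is forced to equal $p-1$ because $F_{3,4a} = H_{\alpha_1,1}$, and write $\mu = a^\prime\varpi_1 + b^\prime\varpi_2$. Assuming $L(\lambda)\otimes L(\mu)$ is completely reducible, the first inequality of Remark \ref{rem:LeviprestrctedconditionB2} becomes $(p-1) + a^\prime + \min\{b,b^\prime\} \le p-1$, which forces $a^\prime = 0$ and $\min\{b,b^\prime\} = 0$. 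Since $\mu \ne 0$ we get $b^\prime \ge 1$ and hence $b = 0$, so $\lambda = (p-1)\varpi_1$ and $\mu = b^\prime\varpi_2$ with $1 \le b^\prime \le p-1$. It then remains to prove that $L((p-1)\varpi_1)\otimes L(b^\prime\varpi_2)$ fails to be completely reducible once $b^\prime \ge 2$; the case $b^\prime = 1$ is precisely condition 2 in Table \ref{tab:B2CR} and is completely reducible by Proposition \ref{prop:B2MFCRp-1_1}.

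The weight $\lambda = (p-1)\varpi_1$ lies on $F_{3,4a}$ and is therefore $p$-singular, so by Lemma \ref{lem:nonCRcriterionpregularweight} it suffices to produce a single $p$-regular composition factor of $L(\lambda)\otimes L(\mu)$. By Subsection \ref{subsec:B2Weylmodulestiltingmodules} both factors are simple Weyl modules, $L((p-1)\varpi_1) = \Delta((p-1)\varpi_1)$ and $L(b^\prime\varpi_2) = \Delta(b^\prime\varpi_2)$ for $1 \le b^\prime \le p-1$; hence $L(\lambda)\otimes L(\mu) \cong \Delta((p-1)\varpi_1)\otimes\Delta(b^\prime\varpi_2)$ admits a Weyl filtration whose multiplicities $[\,\cdot : \Delta(\sigma)]_\Delta$ coincide with the characteristic-zero tensor multiplicities $[\nabla((p-1)\varpi_1)\otimes\nabla(b^\prime\varpi_2):\nabla(\sigma)]_\nabla$ of Subsection \ref{subsec:char0}. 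As $\Delta(\sigma)$ has simple head $L(\sigma)$, any $\sigma$ of positive Weyl multiplicity is the highest weight of a composition factor, so I only need to locate a $p$-regular $\sigma$ occurring in this Weyl filtration.

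For $2 \le b^\prime \le p-2$ I would take $\sigma = (p-2)\varpi_1 + b^\prime\varpi_2$. Applying the monotonicity of Lemma \ref{lem:tensormultiplicitiesmonotonous} with $\delta = (p-2)\varpi_1$ gives
\[ \big[\nabla((p-1)\varpi_1)\otimes\nabla(b^\prime\varpi_2):\nabla((p-2)\varpi_1+b^\prime\varpi_2)\big]_\nabla \ge \big[\nabla(\varpi_1)\otimes\nabla(b^\prime\varpi_2):\nabla(b^\prime\varpi_2)\big]_\nabla = 1 , \]
where the last equality comes from the zero weight of $L(\varpi_1)$ (in Klimyk's formula all terms with $w \ne e$ vanish). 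A direct check of the four conditions $(\sigma+\rho,\alpha^\vee)\not\equiv 0 \pmod p$ for $\alpha \in \{\alpha_1,\alpha_2,\alpha_\mathrm{hs},\alpha_\mathrm{h}\}$ shows that $\sigma$ is $p$-regular precisely when $2 \le b^\prime \le p-2$. Thus $L(\sigma)$ is a $p$-regular composition factor and complete reducibility fails in this range.

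The main obstacle is the boundary value $b^\prime = p-1$: there the uniform choice $(p-2)\varpi_1 + (p-1)\varpi_2$ lands on $H_{\alpha_2,1}$ and is $p$-singular, so a different, interior $p$-regular weight $\sigma$ must be found. I expect to locate one by an explicit Brauer--Klimyk computation of $\nabla((p-1)\varpi_1)\otimes\nabla((p-1)\varpi_2)$, using the weight multiplicities of $\nabla(b^\prime\varpi_2)$ (whose character is that of $\mathrm{Sym}^{b^\prime} L(\varpi_2)$, cf.\ Remark \ref{rem:B2weightsfundamentalmodules}); the delicate point is to control the alternating-sign cancellation in Klimyk's formula so as to guarantee a strictly positive multiplicity, which the generic monotonicity argument sidesteps. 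Finally, the small primes $p \in \{2,3\}$, where the alcove geometry degenerates, would be treated directly from the explicit module structures recorded in Subsection \ref{subsec:B2Weylmodulestiltingmodules}.
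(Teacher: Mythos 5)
Your reduction to $\lambda = (p-1)\varpi_1$ and $\mu = b'\varpi_2$ via Remark \ref{rem:LeviprestrctedconditionB2} is exactly the paper's first step, and your argument for $2 \le b' \le p-2$ is correct: the monotonicity bound of Lemma \ref{lem:tensormultiplicitiesmonotonous} puts $\Delta(\sigma)$, $\sigma = (p-2)\varpi_1+b'\varpi_2$, into a Weyl filtration of $\Delta((p-1)\varpi_1)\otimes\Delta(b'\varpi_2)$, its head $L(\sigma)$ is a $p$-regular composition factor, and Lemma \ref{lem:nonCRcriterionpregularweight} kills complete reducibility. The genuine gap is the case $b'=p-1$, which you leave as an expectation about a Brauer--Klimyk computation. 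This is not a routine verification: for $p=3$ your strategy is provably hopeless. Indeed, for $p=3$ one computes $\chi(2\varpi_1)\cdot\chi(2\varpi_2) = \chi(2\varpi_1+2\varpi_2)+\chi(\varpi_1+2\varpi_2)+\chi(2\varpi_2)+\chi(2\varpi_1)$, and all four highest weights are $3$-singular; moreover $\Delta(2\varpi_1+2\varpi_2)$ is the Steinberg module, $\Delta(\varpi_1+2\varpi_2)$ has factors $L(\varpi_1+2\varpi_2)$ and $L(2\varpi_2)$, and the other two Weyl modules are simple, so \emph{every} composition factor of $L(2\varpi_1)\otimes L(2\varpi_2)$ has $3$-singular highest weight. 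Hence Lemma \ref{lem:nonCRcriterionpregularweight} can never be applied, no matter how you control the sign cancellations; and note that the $p=3$ case, which you also defer, is precisely this case $b'=p-1$. (For $p\ge 5$ a $p$-regular constituent such as $(p-2)\varpi_1+(p-3)\varpi_2$ does occur, but proving its multiplicity is positive requires the explicit computation you postponed, since the naive monotonicity bound degenerates to $0$ for that weight.)

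The idea you are missing — and the paper's actual argument — is that $p$-regularity is irrelevant here: keep your weight $\sigma = (p-2)\varpi_1+b'\varpi_2$ and your bound $[\Delta(\lambda)\otimes\Delta(\mu):\Delta(\sigma)]_\Delta \ge 1$, but use instead that $\Delta(\sigma)$ is a \emph{non-simple} Weyl module for every $2\le b'\le p-1$: for $2\le b'\le p-2$ its highest weight lies in $C_3$, for $b'=p-1$ it lies in $\widehat{C}_3\setminus(C_3\cup F_{3,4a})$, and for $p=3$ it equals $\varpi_1+2\varpi_2$; in all cases Subsection \ref{subsec:B2Weylmodulestiltingmodules} gives a uniserial length-two structure. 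A completely reducible module admitting a Weyl filtration must be a direct sum of simple Weyl modules (each simple summand inherits a Weyl filtration, hence equals its Weyl module), so all Weyl factors of $L(\lambda)\otimes L(\mu)=\Delta(\lambda)\otimes\Delta(\mu)$ would be simple — contradicting the occurrence of $\Delta(\sigma)$. This single argument handles all $b'\ge 2$ and all primes uniformly, with no case distinction according to where $\sigma$ sits relative to the reflection hyperplanes.
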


\begin{proof}
	Write $\lambda=(p-1)\varpi_1+b\varpi_2$ and $mu=a'\varpi_1+b'\varpi_2$.
	By Remark \ref{rem:LeviprestrctedconditionB2} we have 
	\[ (p-1) + a' + \min\{b,b'\} \leq p-1 , \]
	and it follows that $a'=0$ and $\min\{b,b'\}=0$.
	Since $\mu \neq 0$, we must have $b=0$, and it remains to prove that $b'=1$.
	
	Suppose for contradiction that $b'\geq 2$, and let $\nu \coloneqq \lambda+\mu - \varpi_1$. 
	By Lemma \ref{lem:tensormultiplicitiesmonotonous}, we have
	\[ [ \Delta(\lambda) \otimes \Delta(\mu) : \Delta(\nu) ]_\Delta \geq [ \Delta(\varpi_1) \otimes \Delta(\varpi_2) : \Delta(\varpi_2) ]_\Delta = 1 . \]
	Now using the results from Subsection \ref{subsec:B2Weylmodulestiltingmodules}, we see that $L(\lambda) = \Delta(\lambda)$ and $L(\mu) = \Delta(\mu)$, whereas the Weyl module $\Delta(\nu)$ is non-simple.
	Hence the non-simple Weyl module $\Delta(\nu)$ appears in a Weyl filtration of $L(\lambda) \otimes L(\mu)$, contradicting the assumption that $L(\lambda) \otimes L(\mu)$ is completely reducible.
\end{proof}

\begin{Proposition}\label{prop:B2_CR_F23a}
	Let $\lambda\in F_{2,3a} \cap X_1 = F_{3,4b} \cap X_1$ and $\mu \in X_1 \setminus \{ 0 \}$ such that $L(\lambda) \otimes L(\mu)$ is completely reducible.
	Then $\lambda=(p-1)\varpi_2$ and $\mu=\varpi_1$.
\end{Proposition}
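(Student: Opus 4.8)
The plan is to first reduce, via the Levi‑truncation constraints, to the case $\lambda=(p-1)\varpi_2$ and $\mu=a'\varpi_1$, and then to force $a'=1$ by locating a non‑simple Weyl module in the Weyl filtration of $\Delta(\lambda)\otimes\Delta(\mu)$.

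For the reduction, observe that $F_{2,3a}=H_{\alpha_2,1}$ consists of the weights with $(\lambda+\rho,\alpha_2^\vee)=p$, i.e.\ of the form $\lambda=a\varpi_1+(p-1)\varpi_2$. Writing $\mu=a'\varpi_1+b'\varpi_2$ and applying the second inequality of Remark \ref{rem:LeviprestrctedconditionB2}, namely $b+b'+2\min\{a,a'\}\le p-1$ with $b=p-1$, I get $b'+2\min\{a,a'\}\le 0$, hence $b'=0$ and $\min\{a,a'\}=0$. Since $\mu\neq0$ this forces $\mu=a'\varpi_1$ with $a'\ge1$, and then $\min\{a,a'\}=0$ gives $a=0$, so $\lambda=(p-1)\varpi_2$. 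It remains to prove $a'=1$.

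Suppose $a'\ge2$. Since $\lambda=(p-1)\varpi_2$ lies on $F_{2,3a}$, the Weyl module $\Delta(\lambda)=L(\lambda)$ is simple (Subsection \ref{subsec:B2Weylmodulestiltingmodules}). Starting from the identity $[\Delta(\varpi_1)\otimes\Delta(2\varpi_2):\Delta(\varpi_1)]_\Delta=1$ (a short computation with Proposition \ref{prop:productWeylcharacters} and Remark \ref{rem:B2weightsfundamentalmodules}, noting $\varpi_1=2\varpi_2+\varpi_1-\alpha_\mathrm{h}$) and adding $(p-3)\varpi_2$ to the $\Delta(2\varpi_2)$‑factor and $(a'-1)\varpi_1$ to the $\Delta(\varpi_1)$‑factor, Lemma \ref{lem:tensormultiplicitiesmonotonous} yields $[\Delta(\lambda)\otimes\Delta(\mu):\Delta(\nu)]_\Delta\ge1$ with $\nu=a'\varpi_1+(p-3)\varpi_2=\lambda+\mu-\alpha_\mathrm{h}$. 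A direct check of the defining inequalities in Subsection \ref{subsec:B2setup} shows $\nu\in C_2$ for $2\le a'\le\tfrac{p-3}{2}$ and $\nu\in C_3$ for $\tfrac{p+1}{2}\le a'\le p-2$, so in these ranges $\Delta(\nu)$ is non‑simple. When $\Delta(\mu)$ is itself simple—which happens precisely when $a'\varpi_1\notin C_1$—we have $L(\lambda)\otimes L(\mu)=\Delta(\lambda)\otimes\Delta(\mu)$, a module with a Weyl filtration; a non‑simple Weyl module cannot occur in the Weyl filtration of a completely reducible module (a direct sum of simple Weyl modules), and this contradiction gives $a'=1$. The borderline value $a'=p-1$, where $\lambda+\mu=(p-1)\rho$ is the Steinberg weight and the above $\nu$ lands on $F_{3,4a}$, is handled identically after replacing $\nu$ by $(p-2)\varpi_1+(p-3)\varpi_2\in C_3$, obtained from a parallel monotonicity estimate.

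The main obstacle is the remaining range $\tfrac{p-1}{2}\le a'\le p-3$, where $\mu=a'\varpi_1$ lies in the interior of $C_1$; here $\Delta(\mu)$ is non‑simple, so $L(\mu)\neq\Delta(\mu)$, the module $L(\lambda)\otimes L(\mu)$ need not have a Weyl filtration, and the clean contradiction above breaks down. To treat this I would use that $\mathrm{rad}\,\Delta(\mu)\cong\Delta(\lambda_0)$ with $\lambda_0=s\Cdot\mu\in C_0$ (Subsection \ref{subsec:B2Weylmodulestiltingmodules}), giving a short exact sequence $0\to\Delta(\lambda)\otimes\Delta(\lambda_0)\to\Delta(\lambda)\otimes\Delta(\mu)\to L(\lambda)\otimes L(\mu)\to0$ whose outer terms are Weyl‑filtered and whose Weyl multiplicities are again accessible through Lemma \ref{lem:tensormultiplicitiesmonotonous}. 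Applying $\Hom_G(-,\nabla(\nu))$ and using $\Ext^1_G(\Delta(\lambda)\otimes\Delta(\mu),\nabla(\nu))=0$, I would compare $\dim\Hom_G(L(\lambda)\otimes L(\mu),\nabla(\nu))$ with the composition multiplicity $[L(\lambda)\otimes L(\mu):L(\nu)]$ for $\nu=a'\varpi_1+(p-3)\varpi_2$: for a completely reducible module these two numbers must coincide, whereas the surviving non‑simple Weyl constituent $\Delta(\nu)$ forces them to differ. The delicate point, and the crux of the whole argument, is the bookkeeping showing that the non‑split extension carried by $\Delta(\nu)$ is not cancelled by the submodule $\Delta(\lambda)\otimes\Delta(\lambda_0)$; I expect this to follow by taking $\nu$ maximal among the non‑simple‑Weyl weights that occur, so that no higher Weyl layer contributes $L(\nu)$ and the multiplicity comparison becomes clean.
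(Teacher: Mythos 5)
Your reduction to $\lambda=(p-1)\varpi_2$, $\mu=a'\varpi_1$ is exactly the paper's, and your Weyl-filtration contradiction is sound whenever $\Delta(\mu)$ is simple (the paper uses this very style of argument in Proposition \ref{prop:B2_CR_F34a}); your base case $[\Delta(\varpi_1)\otimes\Delta(2\varpi_2):\Delta(\varpi_1)]_\Delta=1$ is also correct. But there is a genuine gap, and it is the one you flag yourself: the range $\tfrac{p-1}{2}\le a'\le p-3$, where $\mu\in C_1$, is only sketched, with the crux (that the extension carried by $\Delta(\nu)$ is not cancelled by the submodule $\Delta(\lambda)\otimes\Delta(s\Cdot\mu)$) left unresolved. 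The paper closes this range with two observations you are missing, and they make the whole case distinction by the alcove of $\mu$ unnecessary. First, no Weyl filtration of $L(\lambda)\otimes L(\mu)$ is needed to produce the composition factor: since $\Delta(\mu)$ is uniserial with factors $[L(\mu),L(s\Cdot\mu)]$ and $\lambda+s\Cdot\mu\le\lambda+\mu-2\varpi_1\ngeq\nu$, one gets $[L(\lambda)\otimes L(s\Cdot\mu):L(\nu)]=0$ by weight considerations, hence
\[ [L(\lambda)\otimes L(\mu):L(\nu)] \;=\; [\Delta(\lambda)\otimes\Delta(\mu):L(\nu)] \;\ge\; [\Delta(\lambda)\otimes\Delta(\mu):\Delta(\nu)]_\Delta \;\ge\; 1 . \]
Second, the contradiction is then supplied not by Weyl filtrations but by Lemma \ref{lem:nonCRcriterionpregularweight}: $\lambda$ is $p$-singular while $\nu=\lambda+\mu-\alpha_\mathrm{h}$ is $p$-regular for all $2\le a'\le p-2$, so the mere presence of the composition factor $L(\nu)$ rules out complete reducibility, because the singular $G$-modules form a tensor ideal. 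This argument is uniform in $a'$ and is the key idea your proposal lacks; your Ext-bookkeeping plan would have to reprove a weaker form of it by hand.

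A second, smaller gap is your Steinberg case $a'=p-1$. Your replacement weight $\nu'=(p-2)\varpi_1+(p-3)\varpi_2$ lies in $C_3$ only for $p\ge 5$ (and the requisite monotonicity estimate, from the base case $[\Delta(3\varpi_2)\otimes\Delta(2\varpi_1):\Delta(\varpi_1+\varpi_2)]_\Delta=1$, likewise needs $p\ge5$). For $p=3$ one has $\nu'=\varpi_1\notin C_3$, $\Delta(\varpi_1)$ is simple, and in fact $[\Delta(2\varpi_2)\otimes\Delta(2\varpi_1):\Delta(\varpi_1)]_\Delta=0$, so your argument yields no contradiction — yet for $p=3$ this is the \emph{only} remaining case ($\lambda=2\varpi_2$, $\mu=2\varpi_1$ must be excluded), so the proof genuinely fails there. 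The paper disposes of $a'=p-1$ for all $p$ at once by citing Proposition \ref{prop:B2_CR_F34a} with the roles of $\lambda$ and $\mu$ exchanged ($\mu=(p-1)\varpi_1\in F_{3,4a}$ forces $\lambda=\varpi_2$, hence $p=2$); you should do the same.
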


\begin{proof}
	Write $\lambda=a\varpi_1+(p-1)\varpi_2$ and $\mu=a'\varpi_1+b'\varpi_2$.
	By Remark \ref{rem:LeviprestrctedconditionB2}, we have 
	\[ (p-1) + b' + 2 \min\{a,a'\} \leq p-1 , \]
	and it follows that $b'=0$ and $\min\{a,a'\}=0$.
	As $\mu \neq 0$, we conclude that $a=0$, and it remains to show that $a'=1$. 
	
	Suppose for contradiction that $a'\geq 2$. 
	If $a'=p-1$ then $\mu \in F_{3,4a}$, and Proposition \ref{prop:B2_CR_F34a} implies that $p=2$ with $\lambda = \varpi_2$ and $\mu = \varpi_1$, as claimed.
	For $2\leq a'\leq p-2$, we claim that $L(\lambda)\otimes L(\mu)$ has a composition factor of highest weight $\nu \coloneqq \lambda+\mu-2\varpi_2$.
	Indeed, the weight $\mu$ belongs to the closure of one of the alcoves $C_0$ and $C_1$, and by Subsection \ref{subsec:B2Weylmodulestiltingmodules}, we have $L(\lambda) = \Delta(\lambda)$ and either $L(\mu) = \Delta(\mu)$ (for $\mu \in \overline{C}_0 \cup F_{1,2}$) or $\Delta(\mu)$ is uniserial with composition series $[ L(\mu) , L(s\Cdot\mu) ]$ (for $\mu \in C_1$).
	If $\mu \in C_1$ then we further have $\lambda + s\Cdot\mu \leq \lambda \mu - 2 \varpi_1 \nleq \nu$, whence $[ L(\lambda) \otimes L(s\Cdot\mu) : L(\nu) ] = 0$, and in either case, we conclude using Lemma \ref{lem:tensormultiplicitiesmonotonous} that
	\begin{multline*}
		\qquad [ L(\lambda) \otimes L(\mu) : L(\nu) ] = [ \Delta(\lambda) \otimes \Delta(\mu) : L(\nu) ]
		\\ \geq [ \Delta(\lambda) \otimes \Delta(\mu) : \Delta(\nu) ]_\Delta \geq [ \Delta(2\varpi_2) \otimes \Delta(\varpi_1) : \Delta(\varpi_1) ]_\Delta = 1 , \qquad
	\end{multline*}
	as claimed.
	Now the weight $\nu$ is $p$-regular for $2 \leq a' \leq p-2$, contradicting the complete reducibility of $L(\lambda) \otimes L(\mu)$ by Lemma \ref{lem:nonCRcriterionpregularweight}.
\end{proof}

\begin{Proposition}\label{prop:B2_CR_F23}
	Let $\lambda\in F_{2,3}\cap X_1$ and $\mu \in X_1\setminus\{0\}$ such that $L(\lambda)\otimes L(\mu)$ is completely reducible.
	Then $p \neq 3$ and $\lambda=(p-2)\varpi_1+\varpi_2$ and $\mu=\varpi_1$.
\end{Proposition}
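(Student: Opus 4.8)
The plan is to leverage the $p$-singularity of $\lambda$ together with the explicit composition factors supplied by Remark \ref{rem:LeviprestrctedconditionB2}. First I would fix the geometry: by Example \ref{ex:B2reflectionsmall} the wall $F_{2,3}$ is $H_{\alpha_\mathrm{hs},2}$, so writing $\lambda = a\varpi_1 + b\varpi_2$ the condition $\lambda \in F_{2,3}$ reads $2a+b = 2p-3$; since such a weight does not lie in the exceptional set $\widehat{C}_3 \setminus (C_3 \cup F_{3,4a})$, Subsection \ref{subsec:B2Weylmodulestiltingmodules} gives $L(\lambda) = \Delta(\lambda) = T(\lambda)$. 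Writing $\mu = a'\varpi_1 + b'\varpi_2$, I would then combine the two inequalities of Remark \ref{rem:LeviprestrctedconditionB2} with $2a+b = 2p-3$: the inequality $b + b' + 2\min\{a,a'\} \leq p-1$ forces $\min\{a,a'\} = a'$ (i.e.\ $a' < a$) whenever $p \geq 3$, because $\min\{a,a'\} = a$ would give $2p-3 + b' \leq p-1$, i.e.\ $b' \leq 2-p < 0$. The surviving inequalities $a + a' + \min\{b,b'\} \leq p-1$ and $b + b' + 2a' \leq p-1$ then confine $\mu$ to a small region; in particular $0 < 2a'+b' \leq p-2$.

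The engine of the argument is Lemma \ref{lem:nonCRcriterionpregularweight}: as $\lambda$ is $p$-singular, complete reducibility of $L(\lambda)\otimes L(\mu)$ forces \emph{every} composition factor to be $p$-singular. I would exploit the two strings of multiplicity-one composition factors from Remark \ref{rem:LeviprestrctedconditionB2}, namely $\lambda+\mu-c\alpha_1$ for $0 \leq c \leq a'$ and $\lambda+\mu-d\alpha_2$ for $0 \leq d \leq b'$, and compute the pairings $(\,\cdot+\rho,\beta^\vee)$ for $\beta \in \{\alpha_1,\alpha_2,\alpha_\mathrm{hs},\alpha_\mathrm{h}\}$ along them. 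The key observation is that $(\lambda+\mu+\rho,\alpha_\mathrm{hs}^\vee) = 2p + 2a'+b' \not\equiv 0 \pmod p$, so the highest weight $\lambda+\mu$ can be $p$-singular only by lying on $H_{\alpha_1,1}$ (that is, $a+a' = p-1$), on some $H_{\alpha_2,\cdot}$, or on some $H_{\alpha_\mathrm{h},\cdot}$. Running through these three possibilities and descending a step along the appropriate string, I expect to produce a $p$-regular composition factor in every configuration except the target, which by Lemma \ref{lem:nonCRcriterionpregularweight} rules out complete reducibility.

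The delicate stratum consists of the configurations where $\lambda+\mu$ is pushed onto $H_{\alpha_1,1}$: there $a+a' = p-1$ together with $a + a' + \min\{b,b'\} \leq p-1$ forces $b'=0$ and $a' = (b+1)/2$, i.e.\ $\mu = \tfrac{b+1}{2}\varpi_1$. For these I would examine the next composition factor $\lambda+\mu-\alpha_1 = (p-3)\varpi_1 + (b+2)\varpi_2$ and verify, using $b \leq (p-2)/2$ (from $b + b' + 2a' \leq p-1$), that it is $p$-regular as soon as $b \geq 3$; this leaves only $b=1$, giving $\lambda = (p-2)\varpi_1+\varpi_2$ and $\mu = \varpi_1$. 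The small primes must be dispatched by hand: for $p=3$ the only weight on $F_{2,3}$ is $\varpi_1+\varpi_2$ and the Levi inequalities leave $\mu=\varpi_2$ as the sole candidate, but then $L(\lambda)\otimes L(\mu) = T(\lambda)\otimes T(\varpi_2)$ is a tilting module containing the non-simple summand $T(\varpi_1+2\varpi_2)$ and so is not completely reducible (whence $p\neq 3$); the case $p=2$ reduces to $\lambda=\varpi_2$, $\mu=\varpi_1$, matching the target, after a direct inspection.

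The main obstacle is exactly this boundary behaviour: the configurations in which $\lambda+\mu$ and all of the easily accessible composition factors of Remark \ref{rem:LeviprestrctedconditionB2} are simultaneously $p$-singular, so that Lemma \ref{lem:nonCRcriterionpregularweight} gives no leverage. For $p \geq 5$ I expect these to be resolved by descending far enough along the relevant string (one or two further steps suffice, as the pairings shift by $1$ or $2$ at each step), but organizing this descent uniformly over all surviving $(a,b,a',b')$ is where the genuine bookkeeping lies. In any residual case where a $p$-regular weight cannot be located directly, I would abandon the regular-weight criterion and argue structurally: since $L(\lambda)=\Delta(\lambda)$ carries a Weyl filtration, Lemma \ref{lem:tensormultiplicitiesmonotonous} bounds the Weyl-filtration multiplicities of $\Delta(\lambda)\otimes\Delta(\mu)$ below by characteristic-zero multiplicities and thereby exhibits a non-simple Weyl module in a Weyl filtration of $L(\lambda)\otimes L(\mu)$, exactly as in the proofs of Propositions \ref{prop:B2_CR_F34a} and \ref{prop:B2_CR_F23a}. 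Controlling these degenerate alignments, while keeping the small-prime exceptions in check, is the crux of the proof.
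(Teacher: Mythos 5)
Your proposal is correct and follows essentially the same route as the paper's proof: the Levi inequalities of Remark \ref{rem:LeviprestrctedconditionB2} together with Lemma \ref{lem:nonCRcriterionpregularweight} force $\lambda+\mu$ onto $F_{3,4a}$ or $F_{3,4b}$, the $F_{3,4a}$ stratum is settled by the $p$-regularity of $\lambda+\mu-\alpha_1$ exactly as you describe, and your ``residual'' stratum $b+b'=p-1$ (where your inequality $b+b'+2a'\leq p-1$ immediately forces $\mu=b'\varpi_2$, so both factors are Weyl modules) is dispatched at once by precisely the structural argument you name, since the non-simple Weyl module $\Delta(\lambda+\mu)\in F_{3,4b}\setminus F_{3,4a}$ occurs with multiplicity one in a Weyl filtration of $\Delta(\lambda)\otimes\Delta(\mu)$, as in Propositions \ref{prop:B2_CR_F34a} and \ref{prop:B2_CR_F23a} — so the uniform bookkeeping you worry about is not needed. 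The only cosmetic difference is that the paper folds $p=3$ into this case analysis (using that $b$ is odd) rather than using your separate, also valid, tilting-summand argument with $T(\varpi_1+2\varpi_2)$.
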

\begin{proof}
	If $p=2$ then $F_{2,3} \cap X_1 = \{ \varpi_2 \}$, so $\lambda = \varpi_2$, and Remark \ref{rem:LeviprestrctedconditionB2} forces $\mu = \varpi_1$.
	Now suppose that $p>2$ and write $\lambda=a\varpi_1+b\varpi_2$ and $\mu=a'\varpi_1+b'\varpi_2$.
	As $\lambda \in F_{2,3} \cap X_1$, we have $2a+b=2p-3$ and $b \leq p-1$, and it follows that $b$ is odd and $a \neq 0$.
	The weight $\lambda+\mu$ is $p$-restricted by Remark \ref{rem:LeviprestrctedconditionB2} and $p$-singular by Lemma \ref{lem:nonCRcriterionpregularweight}, and it follows that
	\[ \lambda+\mu \in \widehat{C}_3 \setminus C_3 \subseteq F_{3,4a} \cup F_{3,4b} . \]
	First suppose for a contradiction that $\lambda+\mu \in F_{3,4b} \setminus F_{3,4a}$, so $b+b'=p-1$, and as $b$ is odd, it follows that $b'$ is also odd.
	Using Remark \ref{rem:LeviprestrctedconditionB2} again, we obtain $b+b'+ 2 \min\{a,a'\}\leq p-1=b+b'$, and as $a>0$, it follows that $0=\min\{a,a'\}=a'$ and $\mu=b'\varpi_2$.
	By Subsection \ref{subsec:B2Weylmodulestiltingmodules}, we have $L(\lambda) = \Delta(\lambda)$ and $L(\mu) = \Delta(\mu)$, whereas the Weyl module $\Delta(\lambda+\mu)$ is non-simple.
	As $\Delta(\lambda+\mu)$ appears with multiplicity one in a Weyl filtration of $L(\lambda) \otimes L(\mu)$, this contradicts the complete reducibility of $L(\lambda) \otimes L(\mu)$.

	Now suppose that $\lambda+\mu\in F_{3,4a}$, so $a+a'=(p-1)$.
	By Remark \ref{rem:LeviprestrctedconditionB2}, we have
	\[ a+a'+\min\{b,b'\} \leq p-1 = a+a' , \]
	and as $b > 0$, it follows that $0 = \min\{b,b'\} = b'$ and $\mu = a' \varpi_1$, with $a' > 0$.
	Again by Remark \ref{rem:LeviprestrctedconditionB2}, the simple $G$-module $L(\lambda+\mu-\alpha_1)$ is a composition factor of $L(\lambda) \otimes L(\mu)$.
	If $b = p-2$ then the weight $\lambda+\mu-\alpha_1$ is non-$p$-restricted, contradicting Remark \ref{rem:LeviprestrctedconditionB2}, and if $1 < b < p-3$ then $\lambda+\mu-\alpha_1 \in C_3$ is $p$-regular, contradicting Lemma \ref{lem:nonCRcriterionpregularweight}.
	As $b$ is odd, we conclude that $1 = b \neq p-2$, and it follows that $p \neq 3$ and $\lambda = (p-2) \varpi_1 + \varpi_2$ and $\mu = \varpi_1$, as claimed.
\end{proof}

\begin{Proposition}\label{prop:B2_CR_F12}
	Let $\lambda\in F_{1,2}\cap ( X_1 \setminus \{0\} )$ and $\mu \in X_1\setminus\{0\}$ such that $L(\lambda)\otimes L(\mu)$ is completely reducible.
	Then one of the following holds: 
	\begin{enumerate}
		\item $p \geq 3$ and $\lambda\in\{(p-2)\varpi_1,(p-2)\varpi_2\}$ and $\mu=\varpi_2$;
		\item $p=3$ and $\lambda = \varpi_2$ and $\mu = \varpi_1$;
		\item $p=3$ and $\lambda = \varpi_2$ and $\mu = 2 \varpi_1$;
		\item $p=3$ and $\lambda = \varpi_1$ and $\mu = 2 \varpi_2$;
		\item $p=3$ and $\lambda = \mu = \varpi_1$.
	\end{enumerate}
\end{Proposition}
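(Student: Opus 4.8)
The plan is to use systematically that $\lambda$ is $p$-singular. Write $\lambda = a\varpi_1 + b\varpi_2$ and $\mu = a'\varpi_1 + b'\varpi_2$; the condition $\lambda \in F_{1,2} = H_{\alpha_\mathrm{h},1}$ means $a + b = p-2$ (cf.\ Example \ref{ex:B2reflectionsmall}), which forces $\lambda = 0$ when $p = 2$, so the statement is vacuous for $p = 2$ and I may assume $p \ge 3$. By Subsection \ref{subsec:B2Weylmodulestiltingmodules} the Weyl module $\Delta(\lambda)$ is simple, whence $L(\lambda) = \Delta(\lambda) = \nabla(\lambda)$ and $\ch L(\lambda) = \chi(\lambda)$. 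Since $\lambda$ is $p$-singular, Lemma \ref{lem:nonCRcriterionpregularweight} shows that if $L(\lambda)\otimes L(\mu)$ is completely reducible then every one of its composition factors is $p$-singular. I will combine this with a second obstruction: when $L(\mu) = \Delta(\mu)$ is a simple Weyl module, complete reducibility makes $L(\lambda)\otimes L(\mu)$ a direct sum of simple Weyl modules, so no non-simple Weyl module may appear in a Weyl filtration of $\Delta(\lambda)\otimes\Delta(\mu)$; for $\mu \in C_1$ the same idea applies after subtracting the contribution of the second composition factor of $\Delta(\mu)$, exactly as in the proof of Proposition \ref{prop:B2_CR_F23a}.

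First I would extract the numerical constraints. Remark \ref{rem:LeviprestrctedconditionB2} gives $a + a' + \min\{b,b'\} \le p-1$ and $b + b' + 2\min\{a,a'\} \le p-1$, and supplies the composition factors $L(\lambda+\mu-c\alpha_1)$ for $0 \le c \le \min\{a,a'\}$ and $L(\lambda+\mu-d\alpha_2)$ for $0 \le d \le \min\{b,b'\}$, each of which must be $p$-singular. Evaluating the four coroot pairings of these weights and using $a+b = p-2$, one sees that along both strings the $\alpha_\mathrm{h}$-pairing is constant and congruent to $a'+b' \pmod p$, while the $\alpha_1$-, $\alpha_2$- and $\alpha_\mathrm{hs}$-pairings change by $2$ at each step. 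Hence either $a'+b' = p$, so that each string lies entirely on a hyperplane $H_{\alpha_\mathrm{h},r}$, or the strings are forced to be very short, because each of the three remaining pairings can be divisible by $p$ at most once over the relevant range. Together with the two displayed inequalities, this restricts $\mu$ to a short explicit list of candidates.

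Next I would eliminate every candidate $\mu$ not occurring in the conclusion by producing one of the two obstructions of the first paragraph: either a $p$-regular weight among the Levi-string factors $\lambda+\mu-c\alpha_1$ or $\lambda+\mu-d\alpha_2$, or---when these strings happen to stay $p$-singular---a non-simple Weyl module $\Delta(\nu)$ in a Weyl filtration of $\Delta(\lambda)\otimes\Delta(\mu)$, exhibited by bounding $[\Delta(\lambda)\otimes\Delta(\mu):\Delta(\nu)]_\Delta$ from below with Lemma \ref{lem:tensormultiplicitiesmonotonous} and reading off the non-simplicity of $\Delta(\nu)$ from Subsection \ref{subsec:B2Weylmodulestiltingmodules}. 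For $p \ge 5$ the only surviving candidate is $\mu = \varpi_2$; since $\varpi_2$ is minuscule one computes that $\lambda+\varpi_2$ is $p$-regular whenever $a,b \ge 1$, so Lemma \ref{lem:nonCRcriterionpregularweight} forces $a = 0$ or $b = 0$, placing the pair in case $1$ with $\lambda \in \{(p-2)\varpi_1,(p-2)\varpi_2\}$.

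Finally, $p = 3$ must be handled on its own, for there $F_{1,2}\cap X_1 = \{\varpi_1,\varpi_2\}$ and the small modulus creates coincidences that yield the additional cases 2--5. As $\mu$ then ranges over the finitely many nonzero $3$-restricted weights, I would simply decide complete reducibility for each of the finitely many pairs $(\lambda,\mu)$ directly, using the identity $\ch\big(L(\lambda)\otimes L(\mu)\big) = \sum_\nu \dim L(\mu)_\nu\cdot\chi(\lambda+\nu)$ from Proposition \ref{prop:productWeylcharacters} together with the explicit $p=3$ module structure in Subsection \ref{subsec:B2Weylmodulestiltingmodules}. The step I expect to be the main obstacle is the organization of the elimination in the third paragraph: several borderline weights $\mu$ (for example those with $a'+b' = p$, and weights such as $\mu = \varpi_1 + b'\varpi_2$ when $\lambda = (p-2)\varpi_1$) pass all the cheap singularity tests, and each of these has to be excluded individually by locating a suitable non-simple Weyl module through the monotonicity bound.
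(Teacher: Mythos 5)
Your high-level setup does match the paper's: both arguments run on Lemma \ref{lem:nonCRcriterionpregularweight} (no composition factor may have $p$-regular highest weight), the Levi-string factors and inequalities of Remark \ref{rem:LeviprestrctedconditionB2}, and the obstruction that a non-simple Weyl module in a Weyl filtration of $\Delta(\lambda)\otimes\Delta(\mu)$ (exhibited via Lemma \ref{lem:tensormultiplicitiesmonotonous}) is incompatible with complete reducibility. However, the computation on which your candidate-list step rests is wrong. In the paper's conventions for $\mathrm{Sp}_4(\kk)$ one has $\alpha_1 = 2\varpi_1-2\varpi_2$ and $\alpha_2 = -\varpi_1+2\varpi_2$, hence $(\alpha_2,\alpha_1^\vee)=-1$, $(\alpha_2,\alpha_{\mathrm{hs}}^\vee)=0$ and $(\alpha_2,\alpha_{\mathrm{h}}^\vee)=1$. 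So along the string $\lambda+\mu-d\alpha_2$ it is the $\alpha_{\mathrm{hs}}$-pairing that is constant (congruent to $a+2a'+b'+1 \bmod p$, using $a+b=p-2$), while the $\alpha_{\mathrm{h}}$- and $\alpha_1$-pairings change by $1$ at each step; your claim that the $\alpha_{\mathrm{h}}$-pairing is constant along \emph{both} strings and that the three other pairings all change by $2$ holds only for the $\alpha_1$-string. Consequently the dichotomy ``either $a'+b'=p$ or the strings are very short'' is not established, and the ``short explicit list of candidates'' is never actually derived.

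Second, even granting a candidate list, the elimination in your third paragraph --- which is the real content of the proposition --- is only promised, not performed. The paper makes it tractable with a different organization: $\lambda+\mu$ is $p$-restricted by Remark \ref{rem:LeviprestrctedconditionB2}, $p$-singular by the regularity obstruction, and lies strictly above $H_{\alpha_{\mathrm{h}},1}$ because $\mu\neq 0$, so it must lie on one of the walls $F_{2,3}$, $F_{3,4a}$ or $F_{2,3a}=F_{3,4b}$, giving the clean linear constraints $2(a+a')+(b+b')=2p-3$, $a+a'=p-1$ or $b+b'=p-1$ from which the case analysis proceeds. Inside that analysis one still needs several non-obvious moves that your sketch does not supply: invoking Propositions \ref{prop:B2_CR_F34a}, \ref{prop:B2_CR_F23a} and \ref{prop:B2_CR_F23} at borderline weights, choosing specific comparison weights such as $\nu=\lambda+\mu-2\alpha_1-\alpha_2$ (for $\lambda=(p-2)\varpi_1$, $\mu=\varpi_1+\varpi_2$) in the monotonicity bound, and a direct character verification at $p=5$ where that bound is not available. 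Your plan to brute-force $p=3$ is feasible (there are only finitely many pairs), but for general $p$ the proposal as written has a genuine gap in both the derivation of the candidate list and its elimination.
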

\begin{proof}
	First observe that the set $F_{1,2} \cap ( X_1 \setminus \{0\} )$ is empty for $p=2$, so we may assume that $p \geq 3$.
	As $\lambda$ is $p$-singular and $L(\lambda) \otimes L(\mu)$ is completely reducible, Lemma \ref{lem:nonCRcriterionpregularweight} implies that
	\begin{flalign} \label{eq:diamond}
		\text{the tensor product } L(\lambda) \otimes L(\mu) \text{ has no composition factors with $p$-regular highest weight} .&& \tag{$\diamond$}
	\end{flalign}
	Write $\lambda=a\varpi_1+b\varpi_2$ and $\mu=a'\varpi_1+b'\varpi_2$, and note that $a+b = p-2$ because $\lambda \in F_{1,2}$.
	The weight $\lambda+\mu$ is $p$-restricted by Remark \ref{rem:LeviprestrctedconditionB2} and $p$-singular by \eqref{eq:diamond}, and it follows that $\lambda+\mu$ belongs to one of the walls $F_{2,3}$, $F_{3,4a}$ or $F_{2,3a} = F_{3,4b}$.
	We consider the three walls in turn.
	\begin{itemize}
	\item First suppose that $\lambda+\mu\in F_{2,3a} = F_{3,4b}$, so $b+b'=p-1$ and $b'>0$ because $a+b=p-2$.
	If $b=0$ then $b'=p-1$ and $\mu \in F_{2,3a}$, and Proposition \ref{prop:B2_CR_F23a} forces that $\lambda = \varpi_1$, $p=3$ and $\mu = 2 \varpi_2$, as in case (4) above.
	Now additionally suppose that $b>0$.
	Then Remark \ref{rem:LeviprestrctedconditionB2} implies that $L(\lambda+\mu-\alpha_2)$ is a composition factor of $L(\lambda)\otimes L(\mu)$.
	The weight $\lambda+\mu-\alpha_2$ is $p$-regular for $1 \leq a + a' \leq p-3$ and non-$p$-restricted for $a+a'=p-1$, and so by \eqref{eq:diamond} and Remark \ref{rem:LeviprestrctedconditionB2}, we must have $a+a' \in \{0,p-2\}$.
	Again by Remark \ref{rem:LeviprestrctedconditionB2}, we have 
	\[ b+b'+2\min\{a,a'\}\leq p-1 = b + b' \]
	and $\min\{a,a'\}=0$.
	As $a+b=p-2$ and $b>0$, we conclude that $a=0$ and $a' \in \{ 0 , p-2 \}$, whence $\lambda = (p-2) \varpi_2$ and $b'=1$.
	Now Proposition \ref{prop:B2_CR_F23} implies that $\mu \notin F_{2,3}$ (because $\lambda \neq \varpi_1$) and so $a' \neq p-2$.
	We conclude that $a'=0$ and $\mu=\varpi_2$, as in case (1) above.
	
	\item Next suppose that $\lambda+\mu\in F_{3,4a}$, so $a+a'=p-1$.
	Using Remark \ref{rem:LeviprestrctedconditionB2}, it follows that 
	\[ a + a' + \min\{b,b'\} \leq p-1 = a + a' \]
	and so $\min\{b,b'\}=0$.
	First assume that $b=0$, so $\lambda=(p-2) \varpi_1$ and $\mu=\varpi_1 + b'\varpi_2$.
	Again by Remark \ref{rem:LeviprestrctedconditionB2}, the simple $G$-module $L(\lambda+\mu-\alpha_1)$ is a composition factor of $L(\lambda) \otimes L(\mu)$, and we have
	\[ b' + 2 \min\{a,a'\} = b'+ 2 \leq  p-1 , \]
	whence $b' \leq p-3$.
	If
	$1<b'<p-3$ then $\lambda+\mu-\alpha_1 \in C_3$ is $p$-regular, and so \eqref{eq:diamond} implies that we have $b' \in \{0,1,p-3\}$.
	For $b'=0$ and $p \geq 5$, the weight $\lambda+\mu-\alpha_1 \in C_2$ is again $p$-regular, so \eqref{eq:diamond} implies that we have $p=3$ and $\mu = \varpi_1$, as in case (5) above.
	If $b'=1$ then we must have $p \geq 5$ and $\mu=\varpi_1+\varpi_2$, and we claim that $L(\lambda) \otimes L(\mu)$ has a composition factor of highest weight $\nu \coloneqq \lambda + \mu - 2 \alpha_1 - \alpha_2$.
	Indeed, if $p \geq 7$ then $L(\lambda) = \Delta(\lambda)$ and $L(\mu) = \Delta(\mu)$ by Subsection \ref{subsec:B2Weylmodulestiltingmodules}, and using Lemma \ref{lem:tensormultiplicitiesmonotonous}, we obtain
	\[ [ L(\lambda) \otimes L(\mu) : L(\nu) ] \geq [ \Delta(\lambda) \otimes \Delta(\mu) : \Delta(\nu) ]_\Delta \geq [ \Delta(2\varpi_1) \otimes \Delta(\varpi_1+\varpi_2) : \Delta(3\varpi_2) ]_\Delta = 1 , \]
	as claimed.
	For $p=5$, the claim can be verified by a direct character computation using the results from Subsection \ref{subsec:B2Weylmodulestiltingmodules}.
	For all $p \geq 5$, the weight $\nu = (p-4) \varpi_1 + 3 \varpi_2$ is $p$-regular, contradicting \eqref{eq:diamond}.
	If $b'=p-3$ then we have $\mu = \varpi_1+(p-3)\varpi_2 \in F_{1,2}$, and we claim that $L(\lambda) \otimes L(\mu)$ has a composition factor of highest weight $\nu \coloneqq \lambda + \mu - \varpi_1$.
	Indeed, we have $L(\mu) = \Delta(\mu)$ by Subsection \ref{subsec:B2Weylmodulestiltingmodules}, and using Lemma \ref{lem:tensormultiplicitiesmonotonous} twice, we obtain
	\begin{multline*}
		\qquad [ L(\lambda) \otimes L(\mu) : L(\nu) ] = [ \Delta(\lambda) \otimes \Delta(\mu) : L(\nu) ] \geq [ \Delta(\lambda) \otimes \Delta(\mu) : \Delta(\nu) ]_\Delta \\
		 \geq [ \Delta(\varpi_1) \otimes \Delta(\mu) : \Delta(\mu) ]_\Delta \geq [ \Delta(\varpi_1) \otimes \Delta(\varpi_2) : \Delta(\varpi_2) ]_\Delta = 1 , \qquad
	\end{multline*}
	as claimed.
	If $p \geq 5$ then the weight $\nu$ is $p$-regular, contradicting \eqref{eq:diamond}, and if $p=3$ then $\mu = \varpi_1$, as in case (1) above.
	
	Next assume that $b>0$ and $b'=0$, so $\mu = a' \varpi_1$.
	If $a=0$ then $\mu=(p-1)\varpi_1 \in F_{3,4a}$ and $\lambda=(p-2)\varpi_2$, so Proposition \ref{prop:B2_CR_F34a} forces that $p=3$ and we are in case (3) above.
	Now additionally assume that $a>0$.
	Then $L(\lambda+\mu-\alpha_1)$ is a composition factor of $L(\lambda)\otimes L(\mu)$ by Remark \ref{rem:LeviprestrctedconditionB2}, and as before, the condition \eqref{eq:diamond} implies that we have $b \in \{1,p-3\}$.
	If $b=p-3$ then $\lambda=\varpi_1+(p-3)\varpi_2$ and $\mu=(p-2)\varpi_1 \in F_{1,2}$, and this case has already been considered before (with the roles of $\lambda$ and $\mu$ interchanged).
	If $b=1$ then we have $a=p-3$ and $a'=2$, and so $L(\lambda+\mu-2\alpha_1)$ is a composition factor of $L(\lambda)\otimes L(\mu)$ for $p \geq 5$ by Remark \ref{rem:LeviprestrctedconditionB2}.
	As the weight $\lambda+\mu-2\alpha_1$ is $p$-regular for $p \geq 5$, condition \eqref{eq:diamond} implies that $p=3$ and we are in case (3) above.
	
	\item Finally suppose that $\lambda+\mu\in F_{2,3}$, so $2 (a+a') + (b+b') = 2p - 3$.
	If $a > 0$ and $a' > 0$ then we must have $p>3$, and the simple $G$-module $L(\lambda+\mu-\alpha_1)$ is a composition factor of $L(\lambda)\otimes L(\mu)$ by Remark \ref{rem:LeviprestrctedconditionB2}.
	The weight $\lambda+\mu-\alpha_1$ is $p$-regular for $p \geq 5$, contradicting \eqref{eq:diamond}, and we conclude that either $a=0$ or $a'=0$.
	
	If $a=0$ then we have $\lambda=(p-2)\varpi_2$ and $\mu=\frac{p-1}{2}\varpi_1$. 
	For $p = 3$, we are in case (2) above, and for $p \geq 5$, we claim that $L(\lambda)\otimes L(\mu)$ has a composition factor of highest weight $\nu \coloneqq \lambda+\mu-\varpi_1$.
	Indeed, by Subsection \ref{subsec:B2Weylmodulestiltingmodules}, we have $L(\lambda) = \Delta(\lambda)$, and the Weyl module $\Delta(\mu)$ is uniserial with composition series $[ L(\mu), L(s\Cdot\mu)]$ because $\mu \in C_1$.
	Further observe that $s\Cdot\mu=\mu-2\varpi_1$, whence $[ L(\lambda) \otimes  L(s\Cdot \mu) : L(\nu) ]=0$, and using Lemma \ref{lem:tensormultiplicitiesmonotonous}, it follows that
	\[
		[ L(\lambda) \otimes L(\mu) : L(\nu) ] = [ \Delta(\lambda) \otimes \Delta(\mu) : L(\nu) ] \geq [ \Delta(\lambda) \otimes \Delta(\mu) : \Delta(\nu) ]_\Delta \geq [ \Delta(\varpi_2) \otimes \Delta(\varpi_1) : \Delta(\varpi_2) ]_\Delta = 1 ,
	\]
	as claimed.
	The weight $\nu$ is $p$-regular, contradicting \eqref{eq:diamond}.
	
	Now assume that $a>0$ and $a'=0$, so $\mu=b'\varpi_2$.
	If $p=3$ then $\lambda = \varpi_1$ and $\mu = \varpi_2$, as in case (1) above.
	For $p \geq 5$, we claim that $L(\lambda) \otimes L(\mu)$ has a composition factor of highest weight $\nu = \lambda+\mu-\varpi_1$.
	Indeed, by Subsection \ref{subsec:B2Weylmodulestiltingmodules}, we have $L(\lambda) = \Delta(\lambda)$ and $L(\mu) = \Delta(\mu)$, and using Lemma \ref{lem:tensormultiplicitiesmonotonous} twice, we obtain
	\begin{multline*}
		[ L(\lambda) \otimes L(\mu) : L(\nu) ] = [ \Delta(\lambda) \otimes \Delta(\mu) : L(\nu) ] \geq [ \Delta(\lambda) \otimes \Delta(\mu) : \Delta(\nu) ]_\Delta \\ \geq [\Delta(\varpi_1)\otimes \Delta(\mu) : \Delta(\mu) ]_\Delta \geq [\Delta(\varpi_1) \otimes \Delta(\varpi_2) : \Delta(\varpi_2) ]_\Delta = 1 ,
	\end{multline*}
	as claimed.
	The weight $\nu$ is $p$-regular unless $b+b'=1$, so \eqref{eq:diamond} implies that $b+b'=1$.
	Since we assume that $\mu \neq 0$, it follows that $\mu = \varpi_2$ and $\lambda = (p-2) \varpi_1$, as in case (1) above.
	\qedhere
	\end{itemize}
\end{proof}

\begin{Proposition}\label{prop:B2_CR_F01}
	Let $\lambda\in F_{0,1} \cap (X_1 \setminus \{0\} )$ and $\mu \in X_1\setminus\{0\}$ such that $L(\lambda)\otimes L(\mu)$ is completely reducible.
	Then one of the following holds:
	\begin{enumerate}
		\item $p \geq 5$ and $\lambda=(p-3)\varpi_2$ and $\mu=\varpi_1$;
		\item $\mu\in C_1$ and $\lambda$ is reflection small with respect to $\mu$;
		\item $p=5$ and $\{ \lambda , \mu \} = \{ \varpi_1 , 2 \varpi_2 \}$;
		\item $p=5$ and $\lambda = \varpi_1$ and $\mu = 4 \varpi_2$.
	\end{enumerate}
\end{Proposition}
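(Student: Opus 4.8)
The plan is to fix coordinates $\lambda = a\varpi_1 + b\varpi_2$ with $2a+b = p-3$ (the defining equation of the wall $F_{0,1} = H_{\alpha_\mathrm{hs},1}$) and $\mu = a'\varpi_1 + b'\varpi_2$, and to open with two reductions. Since $F_{0,1} \cap (X_1 \setminus \{0\})$ is empty for $p \leq 3$, I may assume $p \geq 5$. Since $\lambda$ is $p$-singular, Lemma \ref{lem:nonCRcriterionpregularweight} shows that complete reducibility forces \emph{every} composition factor of $L(\lambda) \otimes L(\mu)$ to have $p$-singular highest weight; as in the proof of Proposition \ref{prop:B2_CR_F12} I will exploit this repeatedly. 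In particular the top weight $\lambda+\mu$, which always occurs as a composition factor, must be $p$-singular and, by Remark \ref{rem:LeviprestrctedconditionB2}, $p$-restricted. Computing $(\mu,\alpha_\mathrm{hs}^\vee) = 2a'+b'$ shows $\lambda+\mu$ cannot stay in $\widehat{C}_0$ unless $\mu=0$, so $\mu$ pushes the highest weight into a higher alcove. I also record that $L(\lambda) = \Delta(\lambda)$ by Subsection \ref{subsec:B2Weylmodulestiltingmodules}, as $F_{0,1}$ avoids the exceptional region $\widehat{C}_3 \setminus (C_3 \cup F_{3,4a})$.

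The argument then splits according to whether $\mu$ is $p$-regular or $p$-singular, and the core is the $p$-regular case $\mu \in C_0 \cup C_1 \cup C_2 \cup C_3$. Following the template of Propositions \ref{prop:B2_CR_F23} and \ref{prop:B2_CR_F12}, the strategy is to exhibit an explicit $p$-regular composition factor whenever $(\lambda,\mu)$ fails to match a listed case, contradicting the absence of $p$-regular factors. Concretely, I would begin from a Weyl-filtration multiplicity $[\Delta(\lambda)\otimes\Delta(\mu):\Delta(\nu)]_\Delta$ for a suitable down-shift $\nu$ of $\lambda+\mu$, reduce it to a base case involving $\Delta(\varpi_1)$ or $\Delta(\varpi_2)$ via the monotonicity of Lemma \ref{lem:tensormultiplicitiesmonotonous}, and pass from Weyl multiplicities to genuine composition factors using the two-step structure of $\Delta(\mu)$ for $\mu \in C_1$ recalled in Subsection \ref{subsec:B2Weylmodulestiltingmodules}. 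For $\mu \in C_1$ the decisive line is the reflection smallness of $\lambda$ with respect to $\mu$: by Example \ref{ex:B2reflectionsmall} this reads $a+b+a'+b' \leq p-2$, i.e.\ $(\lambda+\mu+\rho,\alpha_\mathrm{h}^\vee) \leq p$, so failure pushes $\lambda+\mu$ strictly above $F_{1,2}$ and yields a $p$-regular factor, while success gives complete reducibility by Theorem \ref{thm:reflectionsmalltensorproduct} applied to the pair $(\mu,\lambda)$ (using symmetry of $\otimes$); this is case (2). The inequalities of Remark \ref{rem:LeviprestrctedconditionB2} already bound $\mu$ enough to eliminate most of $C_2 \cup C_3$, and the residual positions produce $p$-regular factors; the alcove $C_0$ survives only for the exceptional pair $\lambda = (p-3)\varpi_2$, $\mu = \varpi_1$, giving case (1) (whose complete reducibility is Proposition \ref{prop:B2MFCRp-3_2}).

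For the $p$-singular case I would use $L(\lambda)\otimes L(\mu) \cong L(\mu)\otimes L(\lambda)$ to transfer the analysis to the wall on which $\mu$ lies, invoking Propositions \ref{prop:B2_CR_F34a}, \ref{prop:B2_CR_F23a}, \ref{prop:B2_CR_F23} and \ref{prop:B2_CR_F12} with the roles of $\lambda$ and $\mu$ interchanged. Each of these pins $\mu$ (and the partner weight) to a short explicit list; imposing the constraint $\lambda \in F_{0,1}$, which for $\lambda \in \{\varpi_1,2\varpi_2\}$ forces $p = 5$, collapses these to the small-prime pairs, and this is the source of cases (3) and (4) (the latter being the interchange of Proposition \ref{prop:B2_CR_F23a}/\ref{prop:B2MFCRp-1_2}). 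The remaining sub-case $\mu \in F_{0,1}$, where both weights satisfy $2a+b = 2a'+b' = p-3$, I would treat directly by the composition-factor method of the previous paragraph, showing a $p$-regular factor appears unless $p=5$ and $\{\lambda,\mu\} = \{\varpi_1,2\varpi_2\}$.

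The main obstacle I anticipate is the $p$-regular case, and within it the verification that the chosen down-shift $\nu$ of $\lambda+\mu$ is \emph{simultaneously} a genuine composition factor (not a Weyl-filtration subquotient that could cancel) and $p$-regular, uniformly across the positions of $\mu$ and the residues of $p$. This is where the bookkeeping is heaviest: one must track which of the walls $F_{1,2}$, $F_{2,3}$, $F_{2,3a}$, $F_{3,4a}$ the weight $\lambda+\mu$ and its relevant reflections approach, handle the boundary residues where a prospective $p$-regular factor degenerates to a singular or non-restricted weight, and isolate the finitely many small-prime exceptions. The reformulation of reflection smallness through $\alpha_\mathrm{h}$ is the simplification that keeps the generic $C_1$ analysis clean and funnels all remaining configurations into the explicit low-weight computations and the earlier wall propositions.
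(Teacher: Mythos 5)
Your reorganization of the case analysis (splitting on the position of $\mu$, with the $p$-singular positions handled by applying Propositions \ref{prop:B2_CR_F34a}, \ref{prop:B2_CR_F23a}, \ref{prop:B2_CR_F23}, \ref{prop:B2_CR_F12} with roles interchanged) is genuinely different from the paper, which instead runs its analysis on which of the walls $F_{1,2}$, $F_{2,3}$, $F_{3,4a}$, $F_{2,3a}$ contains the top weight $\lambda+\mu$; the tools are the same, and the reorganization is not the problem. The problem is your claim that the $p$-singular sub-cases ``collapse to the small-prime pairs \ldots{} the source of cases (3) and (4).'' Run your own reduction for $\mu \in F_{2,3}$: Proposition \ref{prop:B2_CR_F23}, with roles interchanged, pins the pair to $\mu = (p-2)\varpi_1+\varpi_2$ and $\lambda = \varpi_1$, and the constraint $\lambda = \varpi_1 \in F_{0,1}$ forces $p=5$. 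This surviving pair is neither a contradiction nor in the target list: $L(\varpi_1)\otimes L(3\varpi_1+\varpi_2)$ \emph{is} completely reducible at $p=5$ by Proposition \ref{prop:B2MFCRp-21} (it is case 4 of Table \ref{tab:B2CR}), while $3\varpi_1+\varpi_2 \notin C_1$ and none of the conclusions (1)--(4) holds for it. So your argument cannot terminate as you claim --- and in fact no argument can, because this pair is a counterexample to the proposition as stated.

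Your $p$-regular branch has the same defect. At $p=5$ the weight $\varpi_1$ is reflection small with respect to $3\varpi_1+2\varpi_2 \in C_3$ (both inequalities of Example \ref{ex:B2reflectionsmall} read $4 \leq 4$), so $L(\varpi_1)\otimes L(3\varpi_1+2\varpi_2)$ is completely reducible by Theorem \ref{thm:reflectionsmalltensorproduct}, contradicting your assertion that the residual positions of $\mu$ in $C_2\cup C_3$ always produce a $p$-regular composition factor; this pair also satisfies none of (1)--(4). (A separate, but repairable, imprecision: for $\mu\in C_1$, failure of reflection smallness does not directly ``yield a $p$-regular factor,'' since $\lambda+\mu$ can land on a higher wall --- e.g.\ $p=7$, $\lambda=2\varpi_1$, $\mu=3\varpi_1+\varpi_2$ gives $\lambda+\mu\in F_{2,3}$ --- so one must pass to deeper factors, as the paper does.) For what it is worth, the paper's own proof stumbles at exactly the corresponding spot: in its case $\lambda+\mu\in F_{3,4a}$ with $b=0$, it claims for $b'=1$, $p=5$ a contradiction from Proposition \ref{prop:B2_CR_F23} (which in fact permits the pair at hand), and for $b'=2$ it asserts that $\lambda+\mu-\alpha_1$ is $p$-regular, which fails at $p=5$ where $\lambda+\mu-\alpha_1 = 2\varpi_1+4\varpi_2$ lies on $F_{3,4b}$. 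The statement needs two additional cases at $p=5$, namely $\lambda=\varpi_1$ with $\mu=3\varpi_1+\varpi_2$ and $\lambda=\varpi_1$ with $\mu=3\varpi_1+2\varpi_2$; Theorem \ref{thm:B2CR} itself is unaffected, since both pairs occur in Table \ref{tab:B2CR} (cases 4 and 8). A faithful execution of your program would surface these two pairs rather than discharge them, so you should amend the statement accordingly rather than try to close the gap.
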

\begin{proof}
	First observe that the set $F_{0,1} \cap (X_1 \setminus \{0\} )$ is empty for $p \leq 3$, so we may assume that $p \geq 5$.
	As $\lambda$ is $p$-singular and $L(\lambda) \otimes L(\mu)$ is completely reducible, Lemma \ref{lem:nonCRcriterionpregularweight} implies that
	\begin{flalign} \label{eq:ast}
		\text{the tensor product } L(\lambda) \otimes L(\mu) \text{ has no composition factors with $p$-regular highest weight} . && \tag{$\ast$}
	\end{flalign}
	Write $\lambda = a \varpi_1 + b \varpi_2$ and $\mu=a'\varpi_1+b'\varpi_2$, and note that $2a+b=p-3$ because $\lambda \in F_{1.2}$.
	The weight $\lambda+\mu$ is $p$-restricted by Remark \ref{rem:LeviprestrctedconditionB2} and $p$-singular by \eqref{eq:ast}, and it follows that $\lambda+\mu$ belongs to one of the walls $F_{1,2}$, $F_{2,3}$, $F_{3,4a}$ or $F_{2,3a} = F_{3,4b}$.
	We consider each of the four walls in turn.
	
	\begin{itemize}
	\item First suppose that $\lambda+\mu\in F_{2,3a} = F_{3,4b}$, so $b+b'=p-1$, and Remark \ref{rem:LeviprestrctedconditionB2} implies that 
	\[ b + b' + 2 \min\{a,a'\} \leq p-1 = b + b' \]
	and $\min\{a,a'\}=0$.
	If $a=0$ then $\lambda=(p-3)\varpi_2$ and $b'=2$, and again by Remark \ref{rem:LeviprestrctedconditionB2}, the simple $G$-modules $L(\lambda+\mu-\alpha_2)$ and $L(\lambda+\mu-2\alpha_2)$ are composition factors of $L(\lambda)\otimes L(\mu)$.
	If $1\leq a'\leq p-3$ then $\lambda+\mu-\alpha_2$ is $p$-regular, contradicting \eqref{eq:ast}, and if $a' \geq p-2$ then $a'+\min\{b,b'\} \geq p$, contradicting the complete reducibility of $L(\lambda) \otimes L(\mu)$ by Remark \ref{rem:LeviprestrctedconditionB2}.
	If $a = a' = 0$ then $\lambda+\mu - 2 \alpha_2 = 2 \varpi_1 + (p-5) \varpi_2$ is $p$-regular, again contradicting \eqref{eq:ast}.
	Thus we must have $a>0$ and $a' = 0$, whence $\mu = b' \varpi_2$.
	If $b=0$ then $b' = p-1$ and $\mu = (p-1) \varpi_2 \in F_{3,4b}$, and Proposition \ref{prop:B2_CR_F23a} implies that we must have $\lambda = \varpi_1$ and $p=5$, as in case (4) above.
	If $b>0$ then $L(\lambda+\mu-\alpha_2)$ is a composition factor of $L(\lambda)\otimes L(\mu)$ by Remark \ref{rem:LeviprestrctedconditionB2}.
	As $2a+b=p-3$, we further have $a \leq \frac{p-5}{2}$, and so $\lambda+\mu - \alpha_2 \in C_3$ is $p$-regular, contradicting \eqref{eq:ast}.
	
	\item Next suppose that $\lambda+\mu\in F_{3,4a}$, so $a+a'=p-1$, and by Remark \ref{rem:LeviprestrctedconditionB2}, we have 
	\[ a	+a'+\min\{b,b'\}\leq p-1=a+a' \]
	and $\min\{b,b'\}=0$.
	First assume that $b=0$, so $\lambda=\frac{p-3}{2}\varpi_1$ and $\mu=\frac{p+1}{2}\varpi_1+b'\varpi_2$.
	By Remark \ref{rem:LeviprestrctedconditionB2}, the simple $G$-module $L(\lambda+\mu-i\alpha_1)$ is a composition factor of $L(\lambda)\otimes L(\mu)$ for $1\leq i\leq \frac{p-3}{2}$, and we further have
	\[ b'+2\min\{a,a'\}=b'+p-3\leq p-1 , \]
	whence $b'\leq 2$.
	If $b'\neq 1$ then $\lambda+\mu-\alpha_1$ is $p$-regular, contradicting \eqref{eq:ast}.
	If $b'=1$ then either $p=5$ and $\mu = 3 \varpi_1 + \varpi_2 \in F_{2,3}$, contradicting the complete reducibility of $L(\lambda) \otimes L(\mu)$ by Proposition \ref{prop:B2_CR_F23}, or $p \geq 7$ and the simple $G$-module $L(\lambda+\mu - 2\alpha_2)$ is a composition factor with $p$-regular highest weight in $L(\lambda) \otimes L(\mu)$, contradicting \eqref{eq:ast}.
	
	Hence we must have $b>0$ and $b'=0$.
	If $a>0$ then Remark \ref{rem:LeviprestrctedconditionB2} implies that $L(\lambda+\mu - \alpha_1)$ is a composition factor of $L(\lambda) \otimes L(\mu)$, and as $b = p - 3 - 2a$ is even, we have $2 \leq b \leq p-5$ and $\lambda+\mu-\alpha_1$ is $p$-regular, contradicting \eqref{eq:ast}.
	Thus we further have $a=0$ and $\mu = (p-1) \varpi_1 \in F_{3,4a}$, contradicting the complete reducibility of $L(\lambda) \otimes L(\mu)$ by Proposition \ref{prop:B2_CR_F34a}.
		
	\item Next suppose that $\lambda+\mu\in F_{2,3}$, so that $2(a+a')+(b+b')=2p-3$.
	As $2a+b = p-3$, we further have $2a'+b'= p$, and it follows that $a'>0$ and $b'$ is odd.
	If $a > 0$ then Remark \ref{rem:LeviprestrctedconditionB2} implies that $L(\lambda+\mu-\alpha_1)$ is a composition factor of $L(\lambda)\otimes L(\mu)$, but $\lambda+\mu-\alpha_1 \in C_2 \cup C_{3a}$ is $p$-regular (note that $\lambda+\mu - \alpha_1$ cannot belong to the wall $F_{2,3a}$ because $b+b'$ is odd), contradicting \eqref{eq:ast}.
	Thus we have $a=0$ and $\lambda=(p-3)\varpi_2$, and as $b'$ is odd, it follows that $b'=1$ (otherwise $\lambda+\mu$ is not $p$-restricted) and $\mu=\frac{p-1}{2}\varpi_1+\varpi_2$.
	We claim that the tensor product $L(\lambda)\otimes L(\mu)$ has a composition factor of highest weight $\nu \coloneqq \lambda+\mu - \varpi_1$.
	Indeed, as $\mu\in C_1$, the Weyl module $\Delta(\mu)$ is uniserial with composition series $[L(\mu),L(s\Cdot\mu)]$ by Subsection \ref{subsec:B2Weylmodulestiltingmodules}, and so we have
	\[ [ L(\lambda) \otimes L(\mu) : L(\nu) ] = [ L(\lambda) \otimes \Delta(\mu) : L(\nu) ] - [ L(\lambda) \otimes L(s\Cdot\mu) : L(\nu) ] . \]
	Now $s\Cdot\mu=\mu-3\varpi_1$ and therefore $\nu = \lambda+\mu-\varpi_1 \nleq \lambda+s\Cdot\mu$ and $[ L(\lambda) \otimes L(s\Cdot\mu) : L(\nu) ] = 0$.
	Again by Subsection \ref{subsec:B2Weylmodulestiltingmodules}, we have $\Delta(\lambda) = L(\lambda)$, and using Lemma \ref{lem:tensormultiplicitiesmonotonous} in the second inequality, we obtain
	\[ [ L(\lambda) \otimes L(\mu) : L(\nu) ] = [ \Delta(\lambda) \otimes \Delta(\mu) : L(\nu) ] \geq [ \Delta(\lambda) \otimes \Delta(\mu) : L(\nu) ] \geq [ \Delta(\varpi_2) \otimes \Delta(\varpi_1) :\Delta(\varpi_2) ]_\Delta = 1 , \]
	as claimed.
	The weight $\nu$ is $p$-regular, contradicting \eqref{eq:ast}.
	
	\item Finally, suppose that $\lambda+\mu\in F_{1,2}$, so that $a+b+a'+b'=p-2$.
	If $b > 0$ and $b' > 0$ then $L(\lambda+\mu-\alpha_2)$ is a composition factor of $L(\lambda)\otimes L(\mu)$ by Remark \ref{rem:LeviprestrctedconditionB2}, but $\lambda+\mu-\alpha_2$ is $p$-regular, contradicting \eqref{eq:ast}.
	Thus, we must have $b=0$ or $b'=0$.
	
	First assume that $b=0$, whence $\lambda = \frac{p-3}{2} \varpi_1$ and $a' = \frac{p-1}{2}-b'$.
	For $b' \leq 1$, we have
	\[ \mu \in \big\{ \tfrac{p-1}{2} \varpi_1 , \tfrac{p-3}{2} \varpi_1 + \varpi_2 \big\} \]
	and $\lambda$ is reflection small with respect to $\mu$, as in case (2) above.
	For $b' \geq 2$, we claim that the simple $G$-module of highest weight $\nu = \lambda+\mu - \varpi_1$ is a composition factor of $L(\lambda) \otimes L(\mu)$.
	Indeed, we have $\Delta(\mu) = L(\mu)$ because $2a'+b' \leq p-1-b' \leq p-3$ (so $\mu$ belongs to the upper closure of $C_0$), and using Lemma \ref{lem:tensormultiplicitiesmonotonous}, we obtain
	\[ [ L(\lambda) \otimes L(\mu) : L(\nu) ] \geq [ \Delta(\lambda) \otimes \Delta(\mu) : \Delta(\nu) ]_\Delta \geq [ \Delta(\varpi_1) \otimes \Delta(\varpi_2) : \Delta(\varpi_2) ]_\Delta = 1 , \]
	as claimed.
	The weight $\nu$ is $p$-regular if $p \geq 7$, contradicting \eqref{eq:ast}, and if $p=5$ then we have $\lambda = \varpi_1$ and $\mu = 2\varpi_2$ as in case (3) above.
	
	Next assume that $b>0$ and $b' = 0$.
	Then we have $a \leq \frac{p-5}{2}$ and
	\[ a' = (p-2)- (a+b) = (p-3) - (2a+b) + (a+1) = a+1 \leq \tfrac{p-3}{2} , \]
	whence $\mu = a' \varpi_1$ belongs to the upper closure of $C_0$.
	As before, the simple $G$-module with highest weigh $\nu \coloneqq \lambda+\mu - \varpi_1$ is a composition factor of $L(\lambda) \otimes L(\mu)$ because
	\[ [ L(\lambda) \otimes L(\mu) : L(\nu) ] \geq [ \Delta(\lambda) \otimes \Delta(\mu) : \Delta(\nu) ]_\Delta \geq [ \Delta(\varpi_1) \otimes \Delta(\varpi_2) : \Delta(\varpi_2) ]_\Delta = 1 . \]
	If $a > 0$ then $\nu$ is $p$-regular, contradicting \eqref{eq:ast}.
	We conclude that $a=0$, so $\lambda = (p-3) \varpi_2$ and $\mu = \varpi_1$, as in case (1) above.
	\qedhere
	\end{itemize}
\end{proof}

Now we consider tensor products $L(\lambda) \otimes L(\mu)$ where $\lambda,\mu \in X_1 \setminus \{0\}$ are $p$-regular.
Our first goal is to prove that for $\lambda,\mu \in C_1 \cap X$ such that $L(\lambda) \otimes L(\mu)$ is completely reducible, the weights $\lambda$ and $\mu$ must satisfy condition 1 in Table \ref{tab:B2CR}.
This will follow from the lemma and proposition below.

\begin{Lemma} \label{lem:B2weightsinC1conditions}
	Let $\lambda,\mu \in C_1 \cap X$  and write $\lambda = a\varpi_1 + b\varpi_2$ and $\mu = a^\prime\varpi_1 + b^\prime\varpi_2$, with $a,b,a^\prime,b^\prime \geq 0$.
	Suppose that tensor product $L(\lambda) \otimes L(\mu)$ is completely reducible.
	Then up to interchanging $\lambda$ and $\mu$, one of the following holds:
	\begin{enumerate}
	\item $2a+b = 2a^\prime + b^\prime = p-2$,
	\item $2a+b = p-1$ and $2a^\prime+b^\prime = p-2$,
	\item $a = a^\prime = \frac{p-1}{2}$ and $b = b^\prime = 0$.
	\end{enumerate}
\end{Lemma}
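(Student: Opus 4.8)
The plan is to combine the coordinate description of $C_1$ with the numerical constraints of Remark \ref{rem:LeviprestrctedconditionB2} and the structural description of the summands from Proposition \ref{prop:B2directsummandsC1}. Throughout I would write $S_1 = 2a+b$ and $S_2 = 2a'+b'$. Since $\lambda,\mu \in C_1 \cap X$ we have $S_1,S_2 \geq p-2$ and $a+b,\,a'+b' \leq p-3$, whence also $a,a' \geq 1$. Recall from Remark \ref{rem:LeviprestrctedconditionB2} that complete reducibility forces $a+a'+\min\{b,b'\} \leq p-1$ and $b+b'+2\min\{a,a'\} \leq p-1$. First I would shrink $S_1$: after interchanging $\lambda$ and $\mu$ if necessary, assume $a \leq a'$; then the second inequality reads $2a+b+b' \leq p-1$, so $S_1 \leq p-1$ and $b' \leq p-1-S_1 \leq 1$. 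Combined with $S_1 \geq p-2$ this gives $S_1 \in \{p-2,p-1\}$, and it remains to control $S_2$ and the borderline case $S_1=S_2=p-1$.

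The key structural input is that, since $L(\lambda)\otimes L(\mu)$ is completely reducible, every composition factor is a direct summand and hence, by Proposition \ref{prop:B2directsummandsC1}, is either a simple tilting module or $L(\delta)$ for some $\delta \in C_2 \cap X$. Using Subsection \ref{subsec:B2Weylmodulestiltingmodules} (or, alternatively, Corollary \ref{cor:nonCRcriterionGFD}, since $\lambda,\mu \in C_1$ correspond to length-one elements), this means that every $p$-regular composition factor has highest weight in $C_0 \cup C_2$, and none lies in $C_1$, $C_3$ or $C_{3a}$. I would apply this first to the highest weight $\lambda+\mu$, whose coordinates $A=a+a'$, $B=b+b'$ satisfy $2A+B = S_1+S_2 \geq 2p-4$, so $\lambda+\mu$ is too large to lie in $\overline{C}_0 \cup \overline{C}_1$. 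If $\lambda+\mu$ is $p$-regular it must then lie in $C_2$, forcing $2A+B \leq 2p-4$ and hence $S_1=S_2=p-2$ (case (1)). If $\lambda+\mu$ is $p$-singular it lies on a wall, and since the top summand $T(\lambda+\mu)$ must be simple, the part of $F_{3,4b}$ outside its corner with $F_{3,4a}$ is excluded (it is the locus where $T(\lambda+\mu)\neq L(\lambda+\mu)$ by Subsection \ref{subsec:B2Weylmodulestiltingmodules}), the corner itself being ruled out by $a+a'+\min\{b,b'\}\leq p-1$. This leaves only $F_{1,2}$ (which forces $B=0$ and $S_1=S_2=p-2$, case (1)), $F_{2,3}$ (where $S_1+S_2=2p-3$, so $\{S_1,S_2\}=\{p-2,p-1\}$, case (2)), and $F_{3,4a}$, i.e.\ $a+a'=p-1$.

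The final and hardest step is the wall $F_{3,4a}$, where I must prove $B=b+b'=0$; then $S_1=2a\in\{p-2,p-1\}$ together with $a+a'=p-1$ forces $a=a'=\frac{p-1}{2}$ and $b=b'=0$ (case (3)). The idea is to feed the composition factors $\lambda+\mu-c\alpha_1$ (available for $0\leq c\leq \min\{a,a'\}$ by Remark \ref{rem:LeviprestrctedconditionB2}) into the structural constraint: for $B\geq 2$ one checks by a direct coordinate computation that some such weight is $p$-regular and lies in the interior of $C_3$, contradicting the conclusion of the previous paragraph. The main obstacle is the residual borderline family with $B=1$, namely the pairs obtained from $\{\tfrac{p-1}{2}\varpi_1,\ \tfrac{p-3}{2}\varpi_1+\varpi_2\}$ by adding $\varpi_2$ or $\varpi_1$ to one factor: here $\min\{b,b'\}=0$, so Remark \ref{rem:LeviprestrctedconditionB2} produces no factor along $\alpha_2$, and every factor $\lambda+\mu-c\alpha_1$ stays inside $\overline{C}_2\cup F_{3,4a}$, so neither the numerical bounds nor the alcove position alone yield a forbidden factor.

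To overcome this I expect to need finer information, exhibiting a non-simple tilting summand $T(\gamma)$ with $\gamma\in C_2$ so as to contradict complete reducibility directly. I would do this either by writing out the character of $L(\lambda)\otimes L(\mu)$ explicitly via Proposition \ref{prop:productWeylcharacters} and the structure results of Subsection \ref{subsec:B2Weylmodulestiltingmodules} (using the explicit character of $L(\tfrac{p-1}{2}\varpi_1)$ from Remark \ref{rem:charactersimpleapi1}), or by showing $\Hom_G(\Delta(\gamma), L(\lambda)\otimes L(\mu))\neq 0$ for a suitable $\gamma\in C_2$ together with the monotonicity of Lemma \ref{lem:tensormultiplicitiesmonotonous}, which forces the corresponding indecomposable summand to be the non-simple $T(\gamma)$. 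Assembling all cases then shows that $\{S_1,S_2\}$ equals $\{p-2,p-2\}$, $\{p-1,p-2\}$, or $\{p-1,p-1\}$ with $b=b'=0$, which are precisely the alternatives (1), (2) and (3).
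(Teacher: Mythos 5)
Your treatment of the easy cases tracks the paper's: Remark \ref{rem:LeviprestrctedconditionB2} plus the fact that under complete reducibility every composition factor is a direct summand, hence by Proposition \ref{prop:B2directsummandsC1} either a simple tilting module or simple with highest weight in $C_2$, pins $\lambda+\mu$ down, and you correctly isolate $a+a'=p-1$, $b+b'\in\{0,1\}$ as the crux. But there are two genuine problems. The smaller one: your exclusion of the hyperplane $\{b+b'=p-1\}$ rests on the claim that, away from its corner with $F_{3,4a}$, it is ``the locus where $T(\lambda+\mu)\neq L(\lambda+\mu)$.'' That is false on the portion of this hyperplane lying in $\widehat{C}_2$ (e.g.\ $\tfrac{p-3}{2}\varpi_1+(p-1)\varpi_2$, which satisfies your constraint $2A+B\geq 2p-4$): there the Weyl module, hence the tilting module, \emph{is} simple by Subsection \ref{subsec:B2Weylmodulestiltingmodules}, so the top-summand argument yields no contradiction. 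The wall is in fact excluded for a different, one-line reason, which is what the paper does: $b+b'=p-1$ forces $\min\{a,a'\}=0$ by the second inequality of Remark \ref{rem:LeviprestrctedconditionB2}, contradicting $a,a'\geq 1$.

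The serious gap is the borderline case $\lambda=\tfrac{p+1}{2}\varpi_1$, $\mu=\tfrac{p-3}{2}\varpi_1+\varpi_2$ (with $p\geq 7$), which you flag as the obstacle but do not resolve. Your method (b) cannot work as stated: under complete reducibility, $\Hom_G\big(\Delta(\gamma),L(\lambda)\otimes L(\mu)\big)\neq 0$ with $\gamma\in C_2$ only says that $L(\gamma)$ is a direct summand, which Proposition \ref{prop:B2directsummandsC1} explicitly permits; nothing ``forces'' the summand to be $T(\gamma)$, and Lemma \ref{lem:tensormultiplicitiesmonotonous} does not apply to $L(\lambda)\otimes L(\mu)$ since $L(\lambda)\neq\Delta(\lambda)$ for $\lambda\in C_1$. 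Your method (a) is mis-aimed: a character computation can never exhibit a \emph{summand}; it could only serve if redirected to produce a composition factor with highest weight in $C_1$ (which under complete reducibility would be a summand, contradicting Proposition \ref{prop:B2directsummandsC1}), but you neither state this nor carry out the computation. The paper closes this case with machinery absent from your sketch: it introduces the auxiliary weight $\nu=\varpi_1+(p-4)\varpi_2\in C_1$, shows $L(sts\Cdot\lambda)$ is a composition factor of $L(\mu)\otimes L(\nu)$, rules it out as a direct summand by computing that the Verlinde coefficient $c_{\mu_0,\nu_0}^{\lambda_0}$ vanishes (Lemma \ref{lem:VerlindecoefficientsFundamentalgroup}, using $\nu_0=\omega\Cdot 0$) together with Theorem \ref{thm:translationprincipletensorproducts}, concludes from Proposition \ref{prop:B2directsummandsC1} that the non-simple tilting module $T(sts\Cdot\lambda)$ is a summand of $L(\mu)\otimes L(\nu)$, and then uses its simple socle $L(\lambda)$ and adjunction to get $\Hom_G\big(L(\lambda)\otimes L(\mu),L(\nu)\big)\neq 0$ --- a contradiction, since $L(\nu)$ with $\nu\in C_1$ is not tilting. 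Without this (or an equivalent) argument, the lemma is unproven in exactly the case where conclusion (3) is at stake.
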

\begin{proof}
	First note that $L(\lambda) \otimes L(\mu)$ cannot have any composition factors of the form $L(\nu)$ with $\nu \in C_3$ by Corollary \ref{cor:nonCRcriterionGFD}; in particular $\lambda+\mu \notin C_3$.
	Since $\lambda+\mu$ must also be $p$-restricted, we have
	\[ a+a^\prime = p-1 \qquad \text{or} \qquad b+b^\prime = p-1 \qquad \text{or} \qquad 2 \cdot (a+a^\prime) + (b+b^\prime) \leq 2p-3 . \]
	As $\lambda,\mu \in C_1$, we also have $2a+b \geq p-2$ and $2a^\prime + b^\prime \geq p-2$, and so the third condition would imply that either (1) or (2) holds.
	Next suppose that $b+b^\prime = p-1$.
	By Remark \ref{rem:LeviprestrctedconditionB2}, we may assume without loss of generality that $a = 0$, but then $\lambda = b \varpi_2$, contradicting the assumption that $\lambda \in C_1$.
	
	Finally, suppose that $a+a^\prime = p-1$.
	As $\lambda,\mu \in C_1$, we have $a+b \leq p-3$ and $a'+b' \leq p-3$, and it follows that $a \geq 2$ and $a' \geq 2$.
	By Remark \ref{rem:LeviprestrctedconditionB2}, we have $b+b' \leq p-5$ and $L(\lambda+\mu-\alpha_1)$ is a composition factor of $L(\lambda) \otimes L(\mu)$, and as $\lambda+\mu - \alpha_1 \in C_3$ for $2 \leq b+b' \leq p-4$, we conclude that $b+b' \in \{ 0,1 \}$ (using again Corollary \ref{cor:nonCRcriterionGFD}).
	Suppose for a contradiction that $b+b' = 1$, and without loss of generality, let $b=0$ and $b'=1$.
	Then we have $a \geq \frac{p-1}{2}$ and $a' \geq \frac{p-3}{2}$, and Remark \ref{rem:LeviprestrctedconditionB2} forces that $a' = \frac{p-3}{2}$ and $p-3 \geq a = \frac{p+1}{2}$, so $\lambda = \frac{p+1}{2} \cdot \varpi_1$ and $\mu = \frac{p-3}{2} \cdot \varpi_1 + \varpi_2$ and $p \geq 7$.
	Consider the weight $\nu \coloneqq \varpi_1 + (p-4) \varpi_2 \in C_1$, and observe that the tensor product $L(\mu) \otimes L(\nu)$ has a composition factor of highest weight
	\[ \mu + \nu - \alpha_2 = \tfrac{p+1}{2} \cdot \varpi_1 + (p-5) \cdot \varpi_2 =sts \Cdot \lambda \in C_2 \]
	by Remark \ref{rem:LeviprestrctedconditionB2}.
	Further consider the weights
	\[ \lambda_0 = s \Cdot \lambda = \tfrac{p-7}{2} \cdot \varpi_1 \in C_0 , \qquad \mu_0 = s \Cdot \mu = \tfrac{p-5}{2} \cdot \varpi_1 + \varpi_2 , \qquad \nu_0 = s \Cdot \nu = (p-4) \cdot \varpi_2 \in C_0 , \]
	and note that $\nu_0 = \omega \Cdot 0$; cf.\ Remark \ref{rem:B2stabC0}.
	As $\omega \Cdot \mu_0 = \frac{p-5}{2} \cdot \varpi_1 \neq \lambda_0$, we have
	\[ c_{\mu_0,\nu_0}^{\lambda_0} = c_{\mu_0,\omega\Cdot0}^{\lambda_0} = c_{\mu_0,0}^{\omega\Cdot\lambda_0} = c_{\omega\Cdot\mu_0,0}^{\lambda_0} = \big[ T(\omega\Cdot\mu_0) \otimes T(0) : T(\lambda_0) \big]_\oplus = 0 \]
	by Lemma \ref{lem:VerlindecoefficientsFundamentalgroup}, and so the regular simple $G$-module $L(\mu+\nu-\alpha_2) = L(st\Cdot\lambda_0)$ cannot be a direct summand of $L(\mu) \otimes L(\nu) = L(s\Cdot\mu_0) \otimes L(s\Cdot\nu_0)$ by Theorem \ref{thm:translationprincipletensorproducts}.
	Using Proposition \ref{prop:B2directsummandsC1} and weight considerations, it follows that the tilting module $T(st\Cdot\lambda_0)$ is a direct summand of $L(\mu) \otimes L(\nu)$, and as $T(st\Cdot\lambda_0)$ has simple socle isomorphic to $L(s\Cdot\lambda_0) = L(\lambda)$ by Subsection \ref{subsec:B2Weylmodulestiltingmodules}, we obtain
	\[ 0 \neq \Hom_G\big( L(\lambda) , L(\mu) \otimes L(\nu) \big) \cong \Hom_G\big( L(\lambda) \otimes L(\mu) , L(\nu) \big) . \]
	As $L(\lambda) \otimes L(\mu)$ is completely reducible, it follows that $L(\lambda) \otimes L(\mu)$ has a simple direct summand of highest weight $\nu \in C_1$, contradicting Proposition \ref{prop:B2directsummandsC1} (because $L(\nu)$ is not a tilting module by Subsection \ref{subsec:B2Weylmodulestiltingmodules}).
	We conclude that $b+b'=0$ and it follows that $a=a'=\frac{p-1}{2}$, as in case (3) above.
\end{proof}

\begin{Proposition}\label{prop:B2_CR_C1}
	Let $\lambda , \mu \in C_1 \cap X$ such that $L(\lambda) \otimes L(\mu)$ is completely reducible.
	Then we have
	\[ \lambda,\mu \in \{ \tfrac{p-1}{2} \cdot \varpi_1 , \tfrac{p-3}{2} \cdot \varpi_1 + \varpi_2 \} . \]
\end{Proposition}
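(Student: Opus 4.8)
The plan is to take Lemma \ref{lem:B2weightsinC1conditions} as the starting point, since it already reduces the situation to the three cases (1)--(3) listed there. Case (3) gives $\lambda = \mu = \frac{p-1}{2}\varpi_1$, which is exactly the desired conclusion, so only cases (1) and (2) require work. In both of these I would first sharpen the numerical conclusion using Remark \ref{rem:LeviprestrctedconditionB2}. Writing $\lambda = a\varpi_1 + b\varpi_2$, $\mu = a'\varpi_1 + b'\varpi_2$ and substituting $b = (2a+b) - 2a$ (and similarly for $\mu$), the inequality $b + b' + 2\min\{a,a'\} \leq p-1$ rearranges to $(2a+b) + (2a'+b') - 2\max\{a,a'\} \leq p-1$. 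In case (1) this reads $2p-4 - 2\max\{a,a'\} \leq p-1$, forcing $\max\{a,a'\} \geq \frac{p-3}{2}$; in case (2) it reads $2p-3 - 2\max\{a,a'\} \leq p-1$, forcing $\max\{a,a'\} \geq \frac{p-1}{2}$. Since membership in $C_1$ already gives $a \leq \frac{p-1}{2}$ (and $a \leq \frac{p-3}{2}$ whenever $2a+b = p-2$), these bounds pin down the weight with the larger value of $2a+b$: it must equal $\frac{p-1}{2}\varpi_1$ in case (2) and $\frac{p-3}{2}\varpi_1 + \varpi_2$ in case (1). Thus in every case one of the two factors is already a special weight, and it remains to exclude a non-special partner $\mu = a'\varpi_1 + b'\varpi_2$ with $2a'+b' = p-2$ and $b' \geq 3$.

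To exclude such a $\mu$ I would argue, as in the final paragraph of the proof of Lemma \ref{lem:B2weightsinC1conditions}, that completely reducibility is impossible because an indecomposable \emph{non-simple} tilting module is forced to split off. Concretely, set $\lambda_0 = s\Cdot\lambda$, $\mu_0 = s\Cdot\mu \in C_0$, so that $\delta := sts\Cdot\mu = st\Cdot\mu_0 = a'\varpi_1 + (p-2)\varpi_2 \in C_2$, and recall from Subsection \ref{subsec:B2Weylmodulestiltingmodules} that $T(\delta)$ has simple socle $L(s\Cdot\mu_0) = L(\mu)$. The goal is to find an auxiliary weight $\nu \in C_1$ such that $L(\delta)$ is a composition factor of $L(\lambda) \otimes L(\nu)$ — which can be arranged by solving $\delta = \lambda + \nu - c\,\alpha_1$ for $\nu$ and invoking Remark \ref{rem:LeviprestrctedconditionB2}, the congruence $\delta \equiv \mu \equiv \lambda + \nu \pmod{\Z\Phi}$ being automatically satisfied — while the Verlinde coefficient $c_{\lambda_0,\nu_0}^{\mu_0}$ vanishes, where $\nu_0 = s\Cdot\nu$. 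Given both, Theorem \ref{thm:translationprincipletensorproducts} (through Lemma \ref{lem:MFboundVerlindecoeff}) shows that the regular simple module $L(\delta)$ is \emph{not} a direct summand of $L(\lambda) \otimes L(\nu)$; since by Proposition \ref{prop:B2directsummandsC1} every indecomposable summand of $L(\lambda) \otimes L(\nu)$ is a tilting module or a simple module with highest weight in $C_2$, the composition factor $L(\delta)$ must lie inside a copy of $T(\delta)$. Its socle $L(\mu)$ then embeds into $L(\lambda) \otimes L(\nu)$, whence $\Hom_G\big( L(\lambda) \otimes L(\mu), L(\nu) \big) \cong \Hom_G\big( L(\mu), L(\lambda) \otimes L(\nu) \big) \neq 0$ (using that all $G$-modules are self-dual for $\mathrm{Sp}_4$). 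If $L(\lambda) \otimes L(\mu)$ were completely reducible this would exhibit $L(\nu)$, with $\nu \in C_1$, as a direct summand, contradicting Proposition \ref{prop:B2directsummandsC1} since the indecomposable tilting modules with highest weight in $C_1$ are non-simple.

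The main obstacle, and the technical heart of the argument, is the vanishing of the Verlinde coefficient $c_{\lambda_0,\nu_0}^{\mu_0}$ for a suitable $\nu$: a choice of $\nu$ that makes $L(\delta)$ a composition factor need not make this coefficient vanish, so the auxiliary weight must be selected with care, respecting both the root-lattice congruence (here $\lambda$ and $\mu$ lie in the same $\Z\Phi$-coset, so $\nu$ must lie in the trivial coset, which rules out the convenient choice $\nu_0 = \omega \Cdot 0$ used in Lemma \ref{lem:B2weightsinC1conditions}) and the vanishing requirement. I expect the correct $\nu$ to be found by exploiting the $\Omega$-symmetry of Lemma \ref{lem:VerlindecoefficientsFundamentalgroup} together with the exchange property of Lemma \ref{lem:Verlindecoefficientsflipping} and the explicit structure of the semisimple Verlinde category of $\mathrm{Sp}_4$; alternatively, one may bypass the auxiliary module altogether and detect the forced summand $T\big(\lambda+\mu-\alpha_2\big)$ inside $L(\lambda)\otimes L(\mu)$ directly by an explicit character computation in the spirit of Proposition \ref{prop:B2MFpairsinC1}. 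Carrying this out uniformly over all admissible $b' \geq 3$ in both cases (1) and (2) is the step that closes the argument; combined with case (3), it completes the proof.
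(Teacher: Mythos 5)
Your reduction to the exclusion problem is correct and coincides with the paper's own first step: combining Lemma \ref{lem:B2weightsinC1conditions} with the rearranged inequality from Remark \ref{rem:LeviprestrctedconditionB2} does pin down $\lambda = \tfrac{p-1}{2}\varpi_1$ in case (2) and $\lambda = \tfrac{p-3}{2}\varpi_1+\varpi_2$ in case (1), leaving a partner $\mu = a'\varpi_1+b'\varpi_2$ with $2a'+b'=p-2$ and $b'\geq 3$ to be ruled out. But that exclusion is the entire content of the proposition, and you do not prove it: you describe the intended mechanism (auxiliary weight $\nu \in C_1$, a vanishing Verlinde coefficient, a forced non-simple tilting summand via Proposition \ref{prop:B2directsummandsC1}, then socle plus Hom-duality) and then explicitly defer both the construction of $\nu$ and the verification of the vanishing, which you yourself call the technical heart. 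This is a genuine gap, not a routine verification, and your role-swapped setup makes it worse than you may expect. You propose to detect $T(sts\Cdot\mu)$ (socle $L(\mu)$) inside $L(\lambda)\otimes L(\nu)$, which requires $c_{\lambda_0,\nu_0}^{\mu_0}=0$ where $\lambda_0 = s\Cdot\lambda$, $\mu_0 = s\Cdot\mu$, $\nu_0 = s\Cdot\nu$. In case (2), the minimal admissible choice $\nu = (\tfrac{p-1}{2}-a')\varpi_1+(2a'-1)\varpi_2$ (the smallest $\nu$ for which $sts\Cdot\mu = \lambda+\nu-c\alpha_1$ with $c\leq\min\{a_\lambda,a_\nu\}$) gives $\omega\Cdot\mu_0 = (a'-1)\varpi_1$ and $\omega\Cdot\nu_0 = (\tfrac{p-3}{2}-a')\varpi_1$, and the symmetries of Lemmas \ref{lem:VerlindecoefficientsFundamentalgroup} and \ref{lem:Verlindecoefficientsflipping} then yield
\[ c_{\lambda_0,\nu_0}^{\mu_0} = \big[ T\big( (a'-1)\varpi_1 \big) \otimes T\big( (\tfrac{p-3}{2}-a')\varpi_1 \big) : T\big( \tfrac{p-5}{2}\varpi_1 \big) \big]_\oplus = 1 , \]
because the highest weight of this product is exactly $\tfrac{p-5}{2}\varpi_1$. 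So with this $\nu$ the required vanishing fails and your argument collapses — confirming your own caveat that composition-factor-producing choices of $\nu$ need not make the coefficient vanish.

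The paper closes the gap by an asymmetric, carefully engineered choice. In case (2) it keeps the roles opposite to yours: it takes $\nu = (\tfrac{p-3}{2}-a')\varpi_1+(2a'+1)\varpi_2$, tensors it with $L(\mu)$ (not with $L(\lambda)$), and detects $T(sts\Cdot\lambda)$, whose socle is $L(\lambda)$; the point is that after the $\Omega$-symmetry the relevant coefficient becomes $\big[ T\big((a'-1)\varpi_1\big)\otimes T\big((\tfrac{p-5}{2}-a')\varpi_1\big) : T\big(\tfrac{p-5}{2}\varpi_1\big) \big]_\oplus$, which vanishes for highest-weight reasons since $\tfrac{p-7}{2} < \tfrac{p-5}{2}$. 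In case (1) no auxiliary module is needed at all: there the composition factor $L(\lambda+\mu-\alpha_2)$ of $L(\lambda)\otimes L(\mu)$ itself is shown not to be a direct summand, via $c_{\lambda_0,\mu_0}^{\nu_0}=0$ and Theorem \ref{thm:translationprincipletensorproducts}, which contradicts complete reducibility directly. Two further slips in your write-up: your coset claim is wrong in case (2), where $\lambda = \tfrac{p-1}{2}\varpi_1 \in \Z\Phi$ but $\mu \equiv \varpi_2 \pmod{\Z\Phi}$ (as $b'$ is odd), so $\lambda$ and $\mu$ lie in \emph{different} $\Z\Phi$-cosets and the auxiliary weight must lie in the nontrivial one; and Lemma \ref{lem:MFboundVerlindecoeff} does not apply to $L(\lambda)\otimes L(\nu)$ with both highest weights in $C_1$ (it requires one factor with highest weight in $C_0$), so the non-summand conclusion must be drawn from Theorem \ref{thm:translationprincipletensorproducts} itself, as the paper does.
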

\begin{proof}
	Let us write $\lambda = a\varpi_1 + b\varpi_2$ and $\mu = a^\prime\varpi_1 + b^\prime\varpi_2$, with $a,b,a^\prime,b^\prime \geq 0$.
	By Lemma \ref{lem:B2weightsinC1conditions}, we may assume that up to reordering of $\lambda$ and $\mu$, 
	one of the following holds:
	\begin{enumerate}
	\item $2a+b = 2a^\prime + b^\prime = p-2$,
	\item $2a+b = p-1$ and $2a^\prime+b^\prime = p-2$,
	\item $a = a^\prime = \frac{p-1}{2}$ and $b = b^\prime = 0$.
	\end{enumerate}
	In case (3), we have $\lambda = \mu = \tfrac{p-1}{2} \cdot \varpi_1$, as required.
	
	Now suppose that $2a+b = p-1$ and $2a^\prime+b^\prime = p-2$ as in (2).
	Then Remark \ref{rem:LeviprestrctedconditionB2} yields
	\[ p-1 \geq (p-1-2a) + (p-2-2a^\prime) + 2 \cdot \min\{ a , a^\prime \} = 2p - 3 - 2 \cdot \max\{ a , a^\prime \} , \]
	or equivalently $p-2 \leq 2 \cdot \max\{ a , a^\prime \}$.
	As $p$ is odd, it follows that $\frac{p-1}{2} \leq \max\{ a , a^\prime \}$, and this forces that $a = \frac{p-1}{2}$ and $b=0$.
	Now suppose for a contradiction that $b^\prime = p-2-2a^\prime > 1$ and $2a^\prime < p-3$, note that we have $\mu = a^\prime \cdot \varpi_1 + (p-2-2a^\prime) \cdot \varpi_2$, and consider the weights
	\begin{align*}
	\nu & = (\tfrac{p-3}{2}-a^\prime) \cdot \varpi_1 + (2a^\prime+1) \cdot \varpi_2 \in C_1 , \\
	\nu_0 & = s \Cdot \nu = \nu - \varpi_1 = (\tfrac{p-5}{2}-a^\prime) \cdot \varpi_1 + (2a^\prime+1) \cdot \varpi_2 \in C_0 , \\
	\mu_0 & = s \Cdot \mu = \mu - \varpi_1 =  ( a^\prime - 1 ) \cdot \varpi_1 + ( p-2-2a^\prime ) \cdot \varpi_2 \in C_0 	\\
	\lambda_0 & = s\Cdot\lambda = \lambda - 2 \cdot \varpi_1 = \tfrac{p-5}{2} \cdot \varpi_1 \in C_0 , \\
	\lambda_2 & = sts \Cdot \lambda = st \Cdot \lambda_0 = \tfrac{p-1}{2} \cdot \varpi_1 + (p-3) \cdot \varpi_2 \in C_2
	\end{align*}
	Also observe that $\omega\Cdot\mu_0 = (a^\prime-1) \cdot \varpi_1$ and $\omega\Cdot\nu_0 = ( \tfrac{p-5}{2} - a^\prime ) \cdot \varpi_1$ by Remark \ref{rem:B2stabC0}.
	Using Lemma \ref{lem:VerlindecoefficientsFundamentalgroup}, we compute
	\[ c_{\mu_0,\nu_0}^{\lambda_0} = c_{\omega\Cdot\mu_0,\nu_0}^{\omega\Cdot\lambda_0} = c_{\omega\Cdot\mu_0,\omega\Cdot\nu_0}^{\lambda_0} = \big[ T\big( (a^\prime-1) \cdot \varpi_1 \big) \otimes T\big( ( \tfrac{p-5}{2} - a^\prime ) \cdot \varpi_1 \big) : T\big( \tfrac{p-5}{2} \cdot \varpi_1 \big) \big]_\oplus = 0 , \]
	so Theorem \ref{thm:translationprincipletensorproducts} implies that the tensor product $L(\mu) \otimes L(\nu) = L(s\Cdot\mu_0) \otimes L(s\Cdot\nu_0)$ has no regular indecomposable direct summands which belong to the linkage class of $\lambda_0$.
	On the other hand, it is straightforward to see that $L(\lambda_2) = L(st\Cdot\lambda_0) = L(\mu+\nu-\alpha_2)$ is a composition factor of $L(\mu) \otimes L(\nu)$, and since $L(\lambda_2)$ is regular, Proposition \ref{prop:B2directsummandsC1} implies that there is a tilting module $T(\delta)$, with $\delta \in X^+$, such that $T(\delta)$ is a direct summand of $L(\mu) \otimes L(\nu)$ and $L(\lambda_2)$ is a composition factor of $T(\delta)$.
	Now it is straightforward to see using weight considerations that we must have $\delta = \lambda_2$.
	Since $T(\lambda_2)$ has socle isomorphic to $L(\lambda)$ (see Subsection \ref{subsec:B2Weylmodulestiltingmodules}) and $T(\lambda_2)$ is a direct summand of $L(\mu) \otimes L(\nu)$, it follows that
	\[ 0 \neq \Hom_G\big( L(\lambda) , L(\mu) \otimes L(\nu) \big) \cong \Hom_G\big( L(\nu) , L(\lambda) \otimes L(\mu) \big) , \]
	and $L(\nu)$ cannot be a direct summand of $L(\lambda) \otimes L(\mu)$ by Proposition \ref{prop:B2directsummandsC1}.
	This contradicts the complete reducibility of $L(\lambda) \otimes L(\mu)$, and we conclude that $b^\prime = 1$ and $\mu = \frac{p-3}{2} \cdot \varpi_1 + \varpi_2$.
	
	Now suppose that $2a+b = 2a^\prime+b^\prime = p-2$ as in (1), and note that $b > 0$ and $b^\prime > 0$ as $p$ is odd.
	We may assume without loss of generality that $a^\prime \leq a$, so Remark \ref{rem:LeviprestrctedconditionB2} yields $p-2 = 2a^\prime +b^\prime \leq 2a^\prime + b^\prime + b \leq p-1$, and we conclude that $b = 1$ and $a = \frac{p-3}{2}$.
	Suppose for a contradiction that $b^\prime > 1$, let $c = \frac{p-3}{2}-a^\prime$ so that $\mu = ( \frac{p-3}{2} - c ) \cdot \varpi + (2c+1) \cdot \varpi_2$, and consider the weights
	\begin{align*}
	\nu & = (p-2-c) \cdot \varpi_1 \in C_1 , \\
	\nu_0 & = s \Cdot \nu = \nu - (p-1-2c) \cdot \varpi_1 = (c-1) \cdot \varpi_1 \in C_0 , \\
	\nu_2 & = sts \Cdot \nu = st \Cdot \nu_0 = (p-2-c) \cdot \varpi_1 + 2c \cdot \varpi_2 \in C_2 , \\
	\lambda_0 & = s \Cdot \lambda = \lambda - \varpi_1 = \tfrac{p-5}{2} \cdot \varpi_1 + \varpi_2 \in C_0 , \\
	\mu_0 & = s \Cdot \mu = \mu - \varpi_1 = ( \tfrac{p-5}{2} - c ) \cdot \varpi + (2c+1) \cdot \varpi_2 \in C_0 .
	\end{align*}
	We further have $\omega\Cdot\lambda_0 = \frac{p-5}{2} \cdot \varpi_1$ and $\omega\Cdot\mu_0 = ( \tfrac{p-5}{2} - c ) \cdot \varpi_1$ by Remark \ref{rem:B2stabC0}, and using Lemmas \ref{lem:VerlindecoefficientsFundamentalgroup} and \ref{lem:Verlindecoefficientsflipping}, we compute that
	\[ c_{\lambda_0,\mu_0}^{\nu_0} = c_{\omega\Cdot\lambda_0,\omega\Cdot\mu_0}^{\nu_0} = c_{\omega\Cdot\mu_0,\nu_0}^{\omega\Cdot\lambda_0} = \big[ T\big( (\tfrac{p-5}{2}-c) \cdot \varpi_1 \big) \otimes T\big( (c-1) \cdot \varpi_1 \big) : T\big( \tfrac{p-5}{2} \cdot \varpi_1 \big) \big]_\oplus = 0 . \]
	As before, this implies that the regular $G$-module $L(\nu_2) = L(st\Cdot\nu_0)$ cannot be a direct summand of the tensor product $L(\lambda) \otimes L(\mu) = L(s\Cdot\lambda_0) \otimes L(s\Cdot\mu_0)$ (using Theorem \ref{thm:translationprincipletensorproducts}).
	On the other hand, it is straightforward to see that $L(\lambda) \otimes L(\mu)$ has a composition factor of highest weight $\nu_2 = \lambda+\mu - \alpha_2$.
	This contradicts the complete reducibility of $L(\lambda) \otimes L(\mu)$, and we conclude that $b^\prime = 1$ and $\mu = \frac{p-3}{2} \cdot \varpi_1 + \varpi_2$, as required.
\end{proof}

Next we want to show that for $\lambda , \mu \in X_1 \setminus \{ 0 \}$ such that $\lambda \in C_2 \cup C_3$ and $L(\lambda) \otimes L(\mu)$ is completely reducible, the weight $\mu$ must be reflection small with respect to $\lambda$ (i.e.\ we are in case 8 of Table \ref{tab:B2CR}).
This is a consequence of the three following lemmas.

\begin{Lemma}\label{lem:CR_C2_C3_implies_C0}
	Let $\lambda \in ( \widehat{C}_2 \cup \widehat{C}_3 ) \cap X$ and $\mu \in X_1 \setminus \{0\}$ such that $L(\lambda)\otimes L(\mu)$ is completely reducible.
	Then one of the following holds:
	\begin{enumerate}
		\item $\mu \in \widehat{C}_0$;
		\item $p \leq 3$ and $\lambda = (p-1) \varpi_2$ and $\mu = \varpi_1$;
		\item $p \leq 3$ and $\lambda = (p-1) \varpi_1$ and $\mu = \varpi_2$;
	\end{enumerate}
\end{Lemma}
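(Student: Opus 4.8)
The plan is to treat the cases $\lambda$ $p$-regular and $\lambda$ $p$-singular separately, writing $\lambda = a\varpi_1 + b\varpi_2$ and $\mu = a'\varpi_1 + b'\varpi_2$ with $0 \le a,b,a',b' < p$ throughout. As recorded in Subsection \ref{subsec:B2setup} (and Example \ref{ex:B2reflectionsmall}), a dominant weight $\mu$ lies in $\widehat{C}_0$ if and only if $2a' + b' \le p-3$, so conclusion (1) is really an upper bound on $\mu$, and the whole statement says that either this bound holds or we land in one of two exceptional small-prime configurations.

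For the $p$-regular case, where $\lambda \in C_2 \cup C_3$, I would prove that conclusion (1) \emph{always} holds, arguing by contradiction from $2a' + b' \ge p-2$. Since $L(\lambda)\otimes L(\mu)$ is completely reducible, Remark \ref{rem:LeviprestrctedconditionB2} supplies the inequalities $a + a' + \min\{b,b'\} \le p-1$ and $b + b' + 2\min\{a,a'\} \le p-1$. The first step is to show $\min\{a,a'\} = a'$: if instead $a \le a'$, the second inequality reads $(2a+b) + b' \le p-1$, which contradicts $2a+b \ge 2p-2$ in $C_3$, and in $C_2$ (where $2a+b = a + (a+b) \ge a + (p-1)$) forces $a = b' = 0$ and hence $b \ge p-1$, against $b \le p-2$. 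With $\min\{a,a'\} = a'$, the second inequality becomes $b + (2a'+b') \le p-1$, so $2a'+b' \ge p-2$ gives $b \le 1$. Now the defining inequalities of the alcove close the argument: in $C_3$ one has $2a+b \ge 2p-2$, so $b \le 1$ forces $a \ge p-1$, contradicting $a \le p-2$; in $C_2$ one has $a+b \ge p-1$ and $2a+b \le 2p-4$, so $b\le1$ forces $a = p-2$ and $b = 1$, whence $2a+b = 2p-3 > 2p-4$, again a contradiction.

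For the $p$-singular case I would fall back on the wall propositions already proved. A $p$-singular weight of $\widehat{C}_2 \cup \widehat{C}_3$ lies on one of the upper walls $F_{2,3}$, $F_{2,3a} = F_{3,4b}$ or $F_{3,4a}$, and Propositions \ref{prop:B2_CR_F23}, \ref{prop:B2_CR_F23a}, \ref{prop:B2_CR_F34a} then force $(\lambda,\mu)$ to be $((p-2)\varpi_1+\varpi_2, \varpi_1)$ with $p \ne 3$, or $((p-1)\varpi_2, \varpi_1)$, or $((p-1)\varpi_1, \varpi_2)$, respectively. In each case $\mu \in \{\varpi_1,\varpi_2\}$, and the bound $2a'+b' \le p-3$ shows $\mu \in \widehat{C}_0$ as soon as $p \ge 5$ (conclusion (1)); for $p \in \{2,3\}$ the pairs with $\mu = \varpi_1$ give conclusion (2) and those with $\mu = \varpi_2$ give conclusion (3), using that $(p-2)\varpi_1+\varpi_2 = (p-1)\varpi_2$ when $p = 2$.

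I expect the alcove bookkeeping in the $p$-regular case to be routine; the main point needing care is the $p$-singular case, where I must verify that every $p$-restricted singular weight of $\widehat{C}_2 \cup \widehat{C}_3$ genuinely sits on one of the three walls covered by the cited propositions, and that the small-prime outputs are correctly matched to conclusions (2) and (3) rather than being overlooked.
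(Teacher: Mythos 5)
Your proof is correct, and it rests on exactly the same two ingredients as the paper's own proof: the truncation inequalities of Remark \ref{rem:LeviprestrctedconditionB2} and the wall Propositions \ref{prop:B2_CR_F34a}, \ref{prop:B2_CR_F23a} and \ref{prop:B2_CR_F23}. The difference is organizational. The paper never splits into $p$-regular and $p$-singular cases: from $a+b\geq p-1$ it deduces $b'+2\min\{a,a'\}\leq a$, hence either $a=b'=0$ or $a'<a$, and it invokes the wall propositions precisely in those sub-cases where the inequalities pin $\lambda$ to a wall (namely $a=b'=0$, giving $\lambda=(p-1)\varpi_2\in F_{2,3a}$; $a=p-1$, giving $\lambda\in F_{3,4a}$; and $2a'+b'=a=p-2$, which forces $b=1$, $\lambda=(p-2)\varpi_1+\varpi_2\in F_{2,3}$ and then a contradiction via Proposition \ref{prop:B2_CR_F23}). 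Your regular/singular dichotomy buys a cleaner division of labour: for $\lambda\in C_2\cup C_3$ the alcove inequalities alone yield conclusion (1), with no appeal to the wall propositions, while for $p$-singular $\lambda$ those propositions apply verbatim and no inequality work is needed. The point you flag as needing care does go through: since the paper's walls $F_{x,y}$ are entire hyperplanes ($F_{2,3}=H_{\alpha_\mathrm{hs},2}$, $F_{2,3a}=F_{3,4b}=H_{\alpha_2,1}$, $F_{3,4a}=H_{\alpha_1,1}$), the explicit inequalities defining $\widehat{C}_2$ and $\widehat{C}_3$ show that every $p$-singular weight there satisfies $2a+b=2p-3$, $b=p-1$ or $a=p-1$; the only other conceivable degeneracies, $(x+\rho,\alpha_\mathrm{hs}^\vee)=3p$ or $(x+\rho,\alpha_\mathrm{h}^\vee)=2p$ in $\widehat{C}_3$, occur only at the corner $(p-1)\rho$, which already lies on $F_{3,4a}$. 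Your matching of the small-prime outputs to conclusions (2) and (3), including the identification $(p-2)\varpi_1+\varpi_2=(p-1)\varpi_2$ for $p=2$, agrees with the paper's.
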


\begin{proof}
	Write $\lambda=a\varpi_1+b\varpi_2$ and $\mu=a'\varpi_1+b'\varpi_2$, and observe that $a+b \geq p-1$ by our assumption on $\lambda$.
	By Remark \ref{rem:LeviprestrctedconditionB2}, we further have
	\[ a + a' + \min\{ b , b' \} \leq p-1 , \qquad b + b' + 2\min\{ a , a' \} \leq p-1 , \]
	and it follows that $b' + 2 \min\{ a , a' \} \leq a$, so either $a = b' = 0$ or $a' < a$.
	
	If $a=b'=0$ then $\lambda = (p-1) \varpi_2 \in F_{2,3a}$, and Proposition \ref{prop:B2_CR_F23a} implies that $\mu = \varpi_1$.
	Note that we have $\varpi_1 \in \overline{C}_0$ for $p \geq 5$, so we are in one of the cases (1) or (2) above, as claimed.
	
	Now suppose that $a' < a$, and therefore $2a' + b' \leq a \leq p-1$.
	We consider three cases.
	\begin{itemize}
	\item If $2a'+b' \leq p-3$ then $\mu \in \overline{C}_0$, and we are in case (1) above.
	\item If $a = p-1$ then $\lambda \in F_{3,4a}$, and by Proposition \ref{prop:B2_CR_F34a}, we have $\lambda = (p-1) \varpi_1$ and $\mu = \varpi_2$.
	Note that $\varpi_2 \in C_0$ for $p \geq 5$, so we are in one of the cases (1) or (3) above.
	\item If $2a' + b' = a = p-2$ then we have
	\[ a' + \min\{ b , b' \} \leq p-1-a = 1 , \qquad b \geq p-1-a = 1 \]
	For $a' = 1$, it follows that $0 = \min\{ b , b' \} = b'$ and therefore $p-2 = 2a' = 2$, a contradiction.
	For $a' = 0$, we further have $b' = p-2$ and $b \leq p-1-b' = 1$, and we conclude that $b=1$.
	Then $\lambda = (p-2) \cdot \varpi_1 + \varpi_2 \in F_{2,3}$, and Proposition \ref{prop:B2_CR_F23} implies that $\mu = \varpi_1$, contradicting the assumption that $a'=0$.
	\end{itemize}
	We conclude that the weights $\lambda$ and $\mu$ satisfy one of the conditions (1)--(3) above, as claimed.
\end{proof}

\begin{Lemma}\label{lem:B2_C3_CR_reflectionsmall}
	Let $\lambda \in C_3 \cap X$ and $\mu \in \widehat{C}_0 \cap X$ such that $L(\lambda)\otimes L(\mu)$ is completely reducible.
	Then $\mu$ is reflection small with respect to $\lambda$.
\end{Lemma}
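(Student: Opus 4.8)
The plan is to reduce everything to the two inequalities characterising reflection smallness for a weight in $\widehat{C}_3$, as recorded in Example \ref{ex:B2reflectionsmall}. Writing $\lambda = a\varpi_1 + b\varpi_2$ and $\mu = a'\varpi_1 + b'\varpi_2$, I must show
\[ a + a' + b' \leq p-1 \qquad \text{and} \qquad b + 2a' + b' \leq p-1 . \]
First I would record the defining inequalities of the two alcoves: $\lambda \in C_3$ gives $2a+b > 2p-3$, hence $2a+b \geq 2p-2$, together with $a \leq p-2$ and $b \leq p-2$; while $\mu \in \widehat{C}_0$ gives $a',b' \geq 0$ and $2a'+b' \leq p-3$. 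In particular both $\lambda$ and $\mu$ are $p$-restricted, so the completely reducible tensor product $L(\lambda)\otimes L(\mu)$ is governed by Remark \ref{rem:LeviprestrctedconditionB2}, which yields
\[ a + a' + \min\{b,b'\} \leq p-1 \qquad \text{and} \qquad b + b' + 2\min\{a,a'\} \leq p-1 . \]

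Next I would dispose of the minima. From $2a+b \geq 2p-2$ and $b \leq p-2$ one obtains $2a \geq p$, whereas $2a' \leq 2a' + b' \leq p-3 < p \leq 2a$; hence $a' < a$ and $\min\{a,a'\} = a'$. Substituting this into the second inequality above gives precisely $b + 2a' + b' \leq p-1$, the second reflection-smallness inequality. For the first inequality it therefore suffices to prove $b' \leq b$, because then $\min\{b,b'\} = b'$ and the first inequality of Remark \ref{rem:LeviprestrctedconditionB2} reads $a + a' + b' \leq p-1$, as required.

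The only real point is thus to rule out $b' > b$, and here the constraint coming from membership in $C_3$ does the work. Assuming $b' > b$, we have $\min\{b,b'\} = b$, so Remark \ref{rem:LeviprestrctedconditionB2} gives $a + a' + b \leq p-1$. Subtracting this from $2a + b \geq 2p-2$ yields $a - a' \geq p-1$, and since $a' \geq 0$ this forces $a \geq p-1$, contradicting $a \leq p-2$. Hence $b' \leq b$, and combining the two established inequalities with Example \ref{ex:B2reflectionsmall} shows that $\mu$ is reflection small with respect to $\lambda$.

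I expect no serious obstacle here: the whole argument is a matter of combining the alcove inequalities with the $p$-restrictedness constraints supplied by Remark \ref{rem:LeviprestrctedconditionB2}. The one step that at first glance looks as though it might require the good filtration dimension bound of Corollary \ref{cor:nonCRcriterionGFD} or a Verlinde coefficient computation is the case $b' > b$; what makes it elementary is the observation that the lower bound $2a+b \geq 2p-2$ forced by $\lambda \in C_3$ is simply incompatible with $a + a' + b \leq p-1$ once $a \leq p-2$, so this case cannot occur at all.
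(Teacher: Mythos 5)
Your proof is correct and follows essentially the same route as the paper's: both reduce to the two inequalities of Example \ref{ex:B2reflectionsmall}, feed in the Levi-truncation bounds of Remark \ref{rem:LeviprestrctedconditionB2}, and resolve the two minima using the alcove inequalities for $C_3$ and $\widehat{C}_0$. Your explicit contradiction ruling out $b' > b$ (deriving $a - a' \geq p-1$ against $a \leq p-2$) is a slightly tidier rendering of the paper's inequality chain $b \geq p-1-a \geq p-1-(a+a') \geq \min\{b,b'\}$, but the substance is identical.
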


\begin{proof}
	Write $\lambda=a\varpi_1+b\varpi_2$ and $\mu=a'\varpi_1+b'\varpi_2$, and recall from Example \ref{ex:B2reflectionsmall} that it suffices to prove that $a+a'+b'\leq p-1$ and $b+b'+2a'\leq p-1$.
	By Remark \ref{rem:LeviprestrctedconditionB2}, we have
	\[ a + a' + \min\{ b , b' \} \leq p-1 , \qquad b + b' + 2 \min\{ a , a' \} \leq p-1 . \]
	As $\lambda \in C_3$ and $\mu \in \overline{C}_0$, we further have $a \geq \frac{2p-2-b}{2} \geq \frac{p-1}{2} > a'$, and it follows that $b + b' + 2a' \leq p-1$, as required.
	Again using the fact that $\lambda \in C_3$, we have $a+b \geq p-1$ and therefore
	\[ b \geq p-1-a \geq p-1 - (a+a') \geq \min\{ b , b' \} , \]
	and we conclude that $b' \leq b$ and $a + a' + b' \leq p-1$, as required.
\end{proof}

\begin{Lemma}\label{lem:B2_C2_CR_reflectionsmall}
	Let $\lambda\in C_2 \cap X$ and $\mu\in C_0 \cap X$ such that $L(\lambda) \otimes L(\mu)$ is completely reducible.
	Then $\mu$ is reflection small with respect to $\lambda$.
\end{Lemma}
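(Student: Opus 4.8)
The plan is to reduce the statement to the two explicit inequalities characterizing reflection smallness for a weight in $\widehat{C}_2$. Writing $\lambda = a\varpi_1 + b\varpi_2$ and $\mu = a'\varpi_1 + b'\varpi_2$, Example \ref{ex:B2reflectionsmall} tells us that $\mu$ is reflection small with respect to $\lambda$ if and only if
\[ 2a + b + 2a' + b' \leq 2p-3 \qquad \text{and} \qquad b + 2a' + b' \leq p-1 . \]
First I would record the constraints coming from the hypotheses. From $\lambda \in C_2$ one gets $a \geq 1$, $b \geq 2$ and $2a+b \leq 2p-4$ (the values $a=0$ and $b \in \{0,1\}$ being incompatible with the defining inequalities of $C_2$), while $\mu \in C_0$ gives $2a' + b' \leq p-4$. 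Since $L(\lambda) \otimes L(\mu)$ is completely reducible, Remark \ref{rem:LeviprestrctedconditionB2} supplies the Levi inequalities $a + a' + \min\{b,b'\} \leq p-1$ and $b + b' + 2\min\{a,a'\} \leq p-1$, and all composition factors of $L(\lambda) \otimes L(\mu)$ have $p$-restricted highest weights; in particular $\lambda + \mu$ is $p$-restricted.

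For the second inequality $b + 2a' + b' \leq p-1$ I would argue directly. If $a' \leq a$, then $\min\{a,a'\} = a'$ and the second Levi inequality is exactly the desired bound. If instead $a' > a$, then $\min\{a,a'\} = a$ gives $b + b' + 2a \leq p-1$, whence $b \leq p-1-2a$; combined with $b \geq p-1-a$ (from $\lambda \in C_2$) this forces $a \leq 0$, i.e.\ $a = 0$, contradicting $a \geq 1$. Hence the second inequality always holds.

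The first inequality I would prove by contradiction, using the good filtration dimension bound. Write $\lambda = st \Cdot \lambda_0$ with $\lambda_0 \in C_0$ and note $\mu = e \Cdot \mu$, so that $\ell(st) + \ell(e) = 2$; by Corollary \ref{cor:nonCRcriterionGFD}, complete reducibility forbids any composition factor lying in $C_3 = stu \Cdot C_0$, where the minimal coset representative has length $3$. Now suppose the first inequality fails, so $2(a+a') + (b+b') \geq 2p-2$ and $\lambda + \mu$ lies strictly above the wall $F_{2,3} = H_{\alpha_\mathrm{hs},2}$. Together with $a+a' \leq p-1$ and $b+b' \leq p-1$ there are three possibilities. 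If $a+a' < p-1$ and $b+b' < p-1$, then $\lambda + \mu \in C_3$ is $p$-regular and $L(\lambda+\mu)$ is a composition factor, a contradiction. If $b + b' = p-1$, the second inequality forces $a' = 0$, and then (using $2a \geq p-1$ from the failure of the first inequality and $b' \geq 2$) the weight $\lambda + \mu - \alpha_2 = (a+1)\varpi_1 + (p-3)\varpi_2$ lies in $C_3$; since $\min\{b,b'\} \geq 1$, Remark \ref{rem:LeviprestrctedconditionB2} makes $L(\lambda+\mu-\alpha_2)$ a composition factor, again a contradiction. Finally if $a + a' = p-1$, the first Levi inequality forces $\min\{b,b'\} = 0$, hence $b' = 0$ (as $b \geq 2$); then $2a+b \leq 2p-4$ yields $a' \geq 2$ and $2 \leq b \leq p-6$, so that $\lambda + \mu - \alpha_1 = (p-3)\varpi_1 + (b+2)\varpi_2 \in C_3$, and as $1 \leq \min\{a,a'\} = a'$ this too is a composition factor, contradicting the good filtration dimension bound.

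The main obstacle is the first inequality, and specifically its two boundary cases $a+a' = p-1$ and $b+b' = p-1$: there $\lambda + \mu$ is $p$-singular, so the length bound applied to $L(\lambda+\mu)$ gives nothing, and the resolution is to descend along an appropriate root string ($\alpha_1$ or $\alpha_2$) to produce a genuinely $p$-regular composition factor inside $C_3$. I expect the routine part to be the verification of the alcove membership inequalities, all of which reduce to the elementary estimates collected at the outset. This mirrors the argument for $C_3$ in Lemma \ref{lem:B2_C3_CR_reflectionsmall}, but the presence of two upper walls of $C_2$ (rather than the single upper wall of $C_3$) is what necessitates the additional case analysis for the first inequality.
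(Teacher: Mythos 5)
Your proposal is correct and takes essentially the same approach as the paper: the second inequality is extracted from the Levi bounds of Remark \ref{rem:LeviprestrctedconditionB2} together with $a+b \geq p-1$ and $a \geq 1$, and the first inequality is forced by Corollary \ref{cor:nonCRcriterionGFD} (no composition factors in $C_3$) combined with the composition factors $\lambda+\mu$, $\lambda+\mu-\alpha_1$, $\lambda+\mu-\alpha_2$ produced via Levi truncation. The only difference is organizational rather than mathematical: the paper splits according to whether $\lambda+\mu$ lies in $\overline{C}_2$, $F_{3,4a}$ or $F_{3,4b}=F_{2,3a}$ and in the last case verifies the inequality directly, whereas you run the same case analysis in contrapositive form (assuming the inequality fails and splitting on $a+a'=p-1$, $b+b'=p-1$, or neither).
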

\begin{proof}
	Write $\lambda=a\varpi_1+b\varpi_2$ and $\mu=a'\varpi_1+b'\varpi_2$, and recall from Example \ref{ex:B2reflectionsmall} that it suffices to prove that $2(a+a')+(b+b')\leq 2p-3$ and $b+b'+2a'\leq p-1$.
	By Remark \ref{rem:LeviprestrctedconditionB2}, we have
	\[ a + a' + \min\{ b , b' \} \leq p-1 , \qquad b + b' + 2 \min\{ a , a' \} \leq p-1 , \]
	and as $\lambda \in C_2$, we further have $a+b \geq p-1$ and $a>0$.
	This implies that
	\[ b + b' + 2 \min\{ a , a' \} \leq p-1 \leq a+b , \]
	and as $a>0$, we conclude that $a' \leq a$ and $b + b' + 2a' \leq p-1$, as required.
	Next observe that
	\begin{flalign} \label{eq:triangle}
		L(\lambda) \otimes L(\mu) \text{ has no composition factors with highest weights in } C_3 && \tag{$\vartriangle$}
	\end{flalign}
	by Corollary \ref{cor:nonCRcriterionGFD}.
	As $\lambda+\mu$ is $p$-restricted by Remark \ref{rem:LeviprestrctedconditionB2}, it follows that $\lambda+\mu \in \overline{C}_2 \cup F_{3,4a} \cup F_{3,4b}$.
	
	First suppose for a contradiction that $\lambda+\mu \in F_{3,4a}$, so $a + a' = p-1$ and $\min\{b,b'\} = 0$.
	As $\lambda \in C_2$, we have $ 1 \leq a \leq p-3$ and $2 \leq b < p-1$, and it follows that $a' \geq 2$ and $b' = 0$.
	Further note that we have $b \leq p-1 - 2 \min\{ a , a' \} \leq p-3$.
	By Remark \ref{rem:LeviprestrctedconditionB2}, the tensor product $L(\lambda) \otimes L(\mu)$ has a composition factor of highest weight $\lambda + \mu - \alpha_1$, and for $2 \leq b < p-3$, we have $\lambda + \mu - \alpha_1 \in C_3$, contradicting \eqref{eq:triangle}.
	For $b = p-3$, we have $a \leq \frac{p-1}{2}$ because $\lambda \in C_2$, and it follows that $a' \geq \frac{p-1}{2}$, contradicting the assumption that $\mu \in C_0$.
	
	Now suppose that $\lambda+\mu \in F_{3,4a} = F_{2,3a}$, so $b+b' = p-1$ and $\min\{ a , a' \}= 0$.
	As before, we must have $1 \leq a \leq p-3$ and $2 \leq b$ because $\lambda \in C_2$, and it follows that $a'=0$ and $b'>0$ (because $\mu \neq 0$).
	By Remark \ref{rem:LeviprestrctedconditionB2}, the simple $G$-module $L(\lambda+\mu-\alpha_2)$ is a composition factor of $L(\lambda)\otimes L(\mu)$, and as we have $\lambda+\mu-\alpha_2 \in C_3$ for $\frac{p-1}{2} \leq a \leq p-3$, the condition \eqref{eq:triangle} implies that $a \leq \frac{p-3}{2}$ and
	\[ 2a + b + b' \leq (p-3) + (p-1) < 2p-3 , \]
	as required.
	Finally, if $\lambda+\mu \in \overline{C}_3$ then we also have $2 ( a + a' ) + (b + b') \leq 2p-3$, as required.
\end{proof}

In order to complete the proof of the necessary conditions for complete reducibility in the case where $\lambda$ and $\mu$ are $p$-regular, it now remains to consider pairs of weights $\lambda , \mu \in X_1$ such that $\lambda \in C_0 \cup C_1$ and $\mu\in C_0$.

\begin{Proposition}\label{prop:B2_CR_C1_C0}
	Let $\lambda\in C_1 \cap X$ and $\mu \in C_0 \cap X$ such that $L(\lambda) \otimes L(\mu)$ is completely reducible.
	Then $\mu$ is reflection small with respect to $\lambda$.
\end{Proposition}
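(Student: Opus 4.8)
The goal is to show that $\mu$ is reflection small with respect to $\lambda$. Writing $\lambda = a\varpi_1 + b\varpi_2$ and $\mu = a'\varpi_1 + b'\varpi_2$, Example \ref{ex:B2reflectionsmall} reduces this to the single inequality $a + b + a' + b' \leq p-2$. The plan is to assume for a contradiction that $a + b + a' + b' \geq p-1$ and to produce a $p$-regular composition factor of $L(\lambda) \otimes L(\mu)$ with highest weight in $C_2 \cup C_3$, which is forbidden as follows: since $\lambda \in C_1 = s \Cdot C_0$ and $\mu \in C_0$ correspond to the elements $s$ and $e$ of $W_\mathrm{aff}^+$, of lengths $1$ and $0$, Corollary \ref{cor:nonCRcriterionGFD} shows that every $p$-regular composition factor of $L(\lambda) \otimes L(\mu)$ has highest weight in $C_0 \cup C_1$; call this observation $(\star)$. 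Throughout I would freely use the inequalities $a \geq 1$ (from $\lambda \in C_1$) and $2a' + b' \leq p-4$ (from $\mu \in C_0$), as well as the two inequalities of Remark \ref{rem:LeviprestrctedconditionB2}.

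First I would locate $\lambda + \mu$. Since $(\lambda + \mu, \alpha_\mathrm{h}^\vee) = a + b + a' + b' \geq p - 1$, the weight $\lambda + \mu$ lies strictly above the wall $F_{1,2}$ and hence outside $C_0 \cup C_1$; as $L(\lambda + \mu)$ is a composition factor, $(\star)$ forces $\lambda + \mu$ to be $p$-singular. It is also $p$-restricted by Remark \ref{rem:LeviprestrctedconditionB2}, and a short enumeration of the reflection hyperplanes meeting the $p$-restricted region above $F_{1,2}$ shows that $\lambda + \mu$ must lie on one of $H_{\alpha_1,1}$ (that is, $a + a' = p-1$), $H_{\alpha_2,1}$ (that is, $b + b' = p-1$), or $H_{\alpha_\mathrm{hs},2}$ (that is, $2(a+a') + (b+b') = 2p-3$).

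In the first two cases I would subtract a positive root. If $b + b' = p-1$, then Remark \ref{rem:LeviprestrctedconditionB2} forces $a' = 0$, and a direct computation places the composition factor $\lambda + \mu - \alpha_2 = (a+1)\varpi_1 + (p-3)\varpi_2$ in $C_2 \cup C_3$ as a $p$-regular weight, contradicting $(\star)$. If $a + a' = p-1$, then $a' \geq 2$ (using $a \leq p-3$) and $\min\{b,b'\} = 0$, so $\lambda + \mu - \alpha_1 = (p-3)\varpi_1 + (b+b'+2)\varpi_2$ is a composition factor lying in $C_2 \cup C_3$, unless $b+b' = 1$, in which case $\lambda + \mu - 2\alpha_1 = (p-5)\varpi_1 + 5\varpi_2 \in C_2$ serves instead (the subtraction being legitimate since $\min\{a,a'\} \geq 2$ in that configuration); the remaining boundary values $b + b' \in \{p-3,p-2,p-1\}$ are excluded outright because they violate $\lambda \in C_1$ or $\mu \in C_0$. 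The same single-root subtraction handles $H_{\alpha_\mathrm{hs},2}$ whenever $a' \geq 1$, since then Remark \ref{rem:LeviprestrctedconditionB2} gives $b + b' \leq p-4$ and $\lambda + \mu - \alpha_1 \in C_2$ is $p$-regular.

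The hard part will be the wall $H_{\alpha_\mathrm{hs},2}$ when $a' = 0$, where both $\min\{a,a'\}$ and $\min\{b,b'\}$ may vanish and Remark \ref{rem:LeviprestrctedconditionB2} yields no usable composition factor. For this case I would argue with Weyl filtration multiplicities. Since $\mu \in C_0$ we have $L(\mu) = \Delta(\mu)$, and since $\lambda \in C_1$ the Weyl module $\Delta(\lambda)$ is uniserial with $\ch L(\lambda) = \chi(\lambda) - \chi(s\Cdot\lambda)$ and $s\Cdot\lambda = \lambda - m\varpi_1 \in C_0$ for $m = (\lambda+\rho,\alpha_\mathrm{hs}^\vee) - p$, by Subsection \ref{subsec:B2Weylmodulestiltingmodules}. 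Setting $\nu = \lambda + \mu - \varpi_1$, I would check that $\nu$ is $p$-regular and lies in $C_2$, and compute
\[ \big[ L(\lambda) \otimes L(\mu) : L(\nu) \big] = \big[ \Delta(\lambda) \otimes \Delta(\mu) : L(\nu) \big] - \big[ \Delta(s\Cdot\lambda) \otimes \Delta(\mu) : L(\nu) \big] . \]
The bound $2a' + b' \leq p-4$ forces $m \geq 2$, so the highest weight $s\Cdot\lambda + \mu = \nu - (m-1)\varpi_1$ of the second tensor product is strictly below $\nu$ and the subtracted term vanishes; the first term is at least $[\Delta(\lambda) \otimes \Delta(\mu) : \Delta(\nu)]_\Delta \geq [\Delta(\varpi_1) \otimes \Delta(\mu) : \Delta(\mu)]_\Delta = 1$ by the monotonicity of Lemma \ref{lem:tensormultiplicitiesmonotonous}, the base multiplicity being $1$ by Proposition \ref{prop:productWeylcharacters} and Remark \ref{rem:B2weightsfundamentalmodules}. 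Thus $L(\nu)$ is a $p$-regular composition factor in $C_2$, contradicting $(\star)$. I expect the conceptual core to be the combination of $(\star)$ with this monotonicity argument, while the genuine labor lies in the bookkeeping of the three walls and in excluding the degenerate boundary values of $b+b'$.
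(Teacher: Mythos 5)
Your proposal is correct and follows essentially the same route as the paper's proof: the same use of Corollary \ref{cor:nonCRcriterionGFD} to exclude composition factors in $C_2 \cup C_3$, the same three-wall case analysis for $\lambda+\mu$ (on $F_{2,3a}$, $F_{3,4a}$, $F_{2,3}$) with root subtractions via Remark \ref{rem:LeviprestrctedconditionB2}, and the same character/monotonicity argument (Lemma \ref{lem:tensormultiplicitiesmonotonous} applied to $\nu = \lambda+\mu-\varpi_1$, after killing the $\Delta(s\Cdot\lambda)$ contribution) in the hard subcase $\lambda+\mu \in F_{2,3}$ with $a'=0$. The only differences are cosmetic, e.g.\ you stop the monotonicity reduction at $[\Delta(\varpi_1)\otimes\Delta(\mu):\Delta(\mu)]_\Delta$ rather than at $[\Delta(\varpi_1)\otimes\Delta(\varpi_2):\Delta(\varpi_2)]_\Delta$, and you make explicit some boundary exclusions that the paper leaves implicit.
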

\begin{proof}
	Write $\lambda=a\varpi_1+b\varpi_2$ and $\mu=a'\varpi_1+b'\varpi_2$.
	All composition factors of $L(\lambda) \otimes L(\mu)$ have $p$-restricted highest weights by Remark \ref{rem:LeviprestrctedconditionB2}, and Corollary \ref{cor:nonCRcriterionGFD} implies that
	\begin{flalign} \label{eq:star}
		L(\lambda) \otimes L(\mu) \text{ has no composition factors with highest weights in } C_2 \cup C_3 . && \tag{$\star$}
	\end{flalign}
	In particular, we have $\lambda+\mu \in \overline{C}_1 \cup F_{2,3} \cup F_{3,4a} \cup F_{2,3a}$.
	(Recall that $F_{2,3a} = F_{3,4b}$.)
	
	First suppose for a contradiction that $\lambda+\mu\in F_{2,3a} = F_{3,4b}$.
	Then we must have $b\geq 1$ and $b'\geq 1$, and the simple $G$-module $L(\lambda+\mu-\alpha_2)$ is a composition factor of $L(\lambda)\otimes L(\mu)$ by Remark \ref{rem:LeviprestrctedconditionB2}.
	However, we also have $\lambda+\mu-\alpha_2 \in C_2 \cup C_3$, contradicting \eqref{eq:star}.
	
	Next suppose for a contradiction that $\lambda+\mu\in F_{3,4a}$.
	Then we must have $a\geq 2$ and $a'\geq 2$, whence the simple $G$-modules $L(\lambda+\mu-\alpha_1)$ and $L(\lambda+\mu-2\alpha_1)$ are composition factors of $L(\lambda)\otimes L(\mu)$  by Remark \ref{rem:LeviprestrctedconditionB2}.
	If $b+b' > 1$ then $\lambda + \mu - \alpha_1 \in C_3$, contradicting \eqref{eq:star}.
	If $b+b'=1$ then $\lambda+\mu-2\alpha_1\in C_2$ and if $b+b' = 0$ then $\lambda + \mu - \alpha_1 \in C_2$, again contradicting \eqref{eq:star}.
	
	Finally suppose for a contradiction that $\lambda + \mu \in F_{2,3}$, and note that $a \geq 1$ because $\lambda \in C_1$.
	If we further have $a'\geq 1$ then $p-1 \geq b + b' + 2 \min\{a,a'\} \geq b + b'+ 2$ and $L(\lambda)\otimes L(\mu)$ has a composition factor of highest weight $\lambda + \mu - \alpha_1 \in C_2$ by Remark \ref{rem:LeviprestrctedconditionB2}, contradicting \eqref{eq:star}. 
	For $a'=0$, we claim that $L(\lambda)\otimes L(\mu)$ has a composition factor of highest weight $\nu \coloneqq \lambda+\mu-\varpi_1$.
	Indeed, we have $L(\lambda) = \Delta(\lambda)$ and the Weyl module $\Delta(\mu)$ is uniserial with composition series $[ L(\mu) , L(s\Cdot\mu) ]$ by Subsection \ref{subsec:B2Weylmodulestiltingmodules}.
	As $\lambda+\mu \in F_{2,3}$ and $\mu \in C_0$, we have
	\[ 2(a+a')+(b+b')=2p-3 , \qquad 2a'+b'\leq p-4 , \]
	whence $2a+b \geq p+1$ and $s\Cdot\mu \leq \mu - 4 \varpi_1$.
	This implies that $[ L(\lambda) \otimes L(s\Cdot\mu) : L(\nu) ] = 0$, and using Lemma \ref{lem:tensormultiplicitiesmonotonous}, we obtain
	\[ [ L(\lambda) \otimes L(\mu) : L(\nu) ] = [ \Delta(\lambda) \otimes \Delta(\mu) : L(\nu) ] \geq [ \Delta(\lambda) \otimes \Delta(\mu) : \Delta(\nu) ]_\Delta \geq [ \Delta(\varpi_1) \otimes \Delta(\varpi_2) : \Delta(\varpi_2) ]_\Delta = 1 , \]
	as claimed.
	Now we have $\nu \in C_2$, contradicting \eqref{eq:star}, and we conclude that $\lambda + \mu \in \overline{C}_1$.
	In particular, the weight $\mu$ is reflection small with respect to $\lambda$ by Example \ref{ex:B2reflectionsmall}, as claimed.
\end{proof}

\begin{Proposition}\label{prop:B2_CR_C0_C0}
	Let $\lambda,\mu\in ( C_0 \cap X ) \setminus \{0\}$ such that $L(\lambda)\otimes L(\mu)$ is completely reducible.
	Then $\mu$ is reflection small with respect to $\lambda$.
\end{Proposition}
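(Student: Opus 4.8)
The plan is to argue by contradiction. Write $\lambda = a\varpi_1 + b\varpi_2$ and $\mu = a'\varpi_1 + b'\varpi_2$; since $\lambda,\mu \in C_0 \cap X$ we have $2a+b \le p-4$ and $a+b \le p-3$ together with the analogous inequalities for $a',b'$, and by Example \ref{ex:B2reflectionsmall} the weight $\mu$ fails to be reflection small with respect to $\lambda$ precisely when $2(a+a') + (b+b') \ge p-2$, equivalently when $\lambda+\mu$ lies strictly above the wall $F_{0,1}$. I would assume this and derive a contradiction. The two tools I would rely on are: (i) by Corollary \ref{cor:nonCRcriterionGFD} applied with $x=y=e$ (legitimate because $\lambda,\mu \in C_0$), every $p$-regular composition factor of $L(\lambda)\otimes L(\mu)$ has highest weight in $C_0 \cap X$; and (ii) since $L(\lambda) = \Delta(\lambda)$ and $L(\mu)=\Delta(\mu)$ by Subsection \ref{subsec:B2Weylmodulestiltingmodules}, the tensor product has a Weyl filtration, so any weight $\nu$ with $[\Delta(\lambda)\otimes\Delta(\mu):\Delta(\nu)]_\Delta \ge 1$ contributes $L(\nu)$ as a composition factor. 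Thus it suffices to exhibit a single $p$-regular composition factor with highest weight outside $C_0$.

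I would first pin down the position of $\lambda+\mu$. It is $p$-restricted by Remark \ref{rem:LeviprestrctedconditionB2} and is itself a composition factor (the highest weight), so by (i) it cannot be $p$-regular outside $C_0$; being strictly above $F_{0,1}$ it must therefore be $p$-singular. Writing $h_\alpha(\nu) = (\nu+\rho,\alpha^\vee)$, the bounds $h_{\alpha_1}(\lambda+\mu) = (a+a')+1 \le p-3$, $h_{\alpha_\mathrm{hs}}(\lambda+\mu) \le 2p-5$ and $h_{\alpha_\mathrm{h}}(\lambda+\mu) \le 2p-4$ coming from $\lambda,\mu \in C_0$ then force the only reflection hyperplanes that $\lambda+\mu$ can meet to be $F_{1,2}$ (where $(a+a')+(b+b') = p-2$) and $F_{2,3a}$ (where $b+b' = p-1$), and one checks these two cases are disjoint.

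It then remains to treat the two walls. When $\lambda+\mu\in F_{1,2}$: if $\min\{b,b'\}\ge1$ then $\lambda+\mu-\alpha_2 \in C_1$ is $p$-regular and is a composition factor by Remark \ref{rem:LeviprestrctedconditionB2}, contradicting (i). When $\lambda+\mu\in F_{2,3a}$ we have $\min\{b,b'\}\ge 3$ (as $b,b'\le p-4$), and either $a+a'\ge 1$, giving the $p$-regular composition factor $\lambda+\mu-\alpha_2\in C_2$, or $a=a'=0$, giving $\lambda+\mu-2\alpha_2\in C_1$; both contradict (i).

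The main obstacle is the single remaining sub-case, where $\lambda+\mu\in F_{1,2}$ and $\min\{b,b'\}=0$, say $b'=0$, so that $\mu = a'\varpi_1$ with $a'\ge1$ and necessarily $b\ge1$ (the case $b=b'=0$ is impossible, as it would force $a+a'=p-2 > p-4 \ge a+a'$). Here Levi truncation produces only factors that stay singular on $F_{1,2}$, so a regular composition factor must be produced indirectly. I would take $\nu = \lambda+\mu-\varpi_1$, verify that $\nu \in C_1$ is $p$-regular, and apply the monotonicity Lemma \ref{lem:tensormultiplicitiesmonotonous} with the shift $\delta = a\varpi_1+(b-1)\varpi_2 \in X^+$ to obtain
\[ \big[\Delta(\lambda)\otimes\Delta(\mu):\Delta(\nu)\big]_\Delta \ge \big[\Delta(\varpi_2)\otimes\Delta(a'\varpi_1):\Delta((a'-1)\varpi_1+\varpi_2)\big]_\Delta . \]
The right-hand side equals $1$ by the direct character computation $\chi(\varpi_2)\chi(a'\varpi_1) = \chi(a'\varpi_1+\varpi_2) + \chi((a'-1)\varpi_1+\varpi_2)$, which follows from Proposition \ref{prop:productWeylcharacters} and the weights of $L(\varpi_2)$ recorded in Remark \ref{rem:B2weightsfundamentalmodules}. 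By (ii) this yields $L(\nu)$ with $\nu\in C_1$ as a composition factor, contradicting (i) and completing the argument.
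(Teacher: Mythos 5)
Your proposal is correct and follows essentially the same route as the paper's proof: both use Corollary \ref{cor:nonCRcriterionGFD} to force all $p$-regular composition factors into $C_0$, reduce the contradiction hypothesis to the two walls $F_{1,2}$ and $F_{2,3a}$, dispose of most cases via the Levi-truncation factors $\lambda+\mu-\alpha_2$, $\lambda+\mu-2\alpha_2$ from Remark \ref{rem:LeviprestrctedconditionB2}, and settle the remaining sub-case $b'=0$ by exhibiting the composition factor $L(\lambda+\mu-\varpi_1)$ with $\lambda+\mu-\varpi_1 \in C_1$ via Lemma \ref{lem:tensormultiplicitiesmonotonous}. The only cosmetic difference is that you apply the monotonicity lemma once and evaluate $\big[\Delta(\varpi_2)\otimes\Delta(a'\varpi_1):\Delta((a'-1)\varpi_1+\varpi_2)\big]_\Delta=1$ directly by Proposition \ref{prop:productWeylcharacters}, whereas the paper shifts all the way down to $\big[\Delta(\varpi_2)\otimes\Delta(\varpi_1):\Delta(\varpi_2)\big]_\Delta=1$.
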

\begin{proof}
	Write $\lambda=a\varpi_1+b\varpi_2$ and $\mu=a'\varpi_1+b'\varpi_2$, and let $\nu \in X^+$ such that $L(\nu)$ is a composition factor of $L(\lambda) \otimes L(\mu)$.
	As $\lambda,\mu \in C_0$, we have $2 \cdot (a+a')+(b+b')\leq 2(p-4)=2p-8$, and it follows that $\nu \in \overline{C}_0 \cup \overline{C}_1 \cup C_2 \cup F_{2,3a}$.
	By Corollary \ref{cor:nonCRcriterionGFD}, we further have $\nu \notin C_1\cup C_2$, and we conclude that
	\begin{flalign} \label{eq:dagger}
		\text{all composition factors of } L(\lambda) \otimes L(\mu) \text{ have highest weights in } \overline{C}_0 \cup F_{1,2} \cup F_{2,3a} .&& \tag{$\dagger$}
	\end{flalign}
	In particular, we have $\lambda+\mu \in \overline{C}_0 \cup F_{1,2} \cup F_{2,3a}$.
	
	First suppose for a contradiction that $\lambda+\mu\in F_{2,3a}$.
	Then we have $b + b' = p-1$, and as $2a+b \leq p-4$, it follows that $b' \geq 3$ and analogously $b \geq 3$.
	By Remark \ref{rem:LeviprestrctedconditionB2}, the simple $G$-modules $L(\lambda+\mu-\alpha_2)$ and $L(\lambda+\mu-2\alpha_2)$ are composition factors of $L(\lambda)\otimes L(\mu)$, but if $a + a' > 0$ then $\lambda+\mu-\alpha_2\in C_2$, contradicting \eqref{eq:dagger}, and if $a+a'=0$ then $\lambda+\mu-2\alpha_2\in C_1$, contradicting \eqref{eq:dagger}.
	Next suppose for a contradiction that $\lambda+\mu \in F_{1,2}$.
	If $b > 0$ and $b' > 0$ then $L(\lambda+\mu-\alpha_2)$ is a composition factor of $L(\lambda)\otimes L(\mu)$ by Remark \ref{rem:LeviprestrctedconditionB2}, and as before, we have $\lambda+\mu-\alpha_2 \in C_1$, contradicting \eqref{eq:dagger}.
	Therefore, we may assume without loss of generality that $b'=0$, and as $\mu \neq 0$ and $\lambda+\mu \in F_{1,2}$, we further have $a'>0$ and $b>0$.
	Let $\nu \coloneqq \lambda+\mu - \varpi_1$, and observe that by Lemma \ref{lem:tensormultiplicitiesmonotonous}, we have
	\[ [ L(\lambda) \otimes L(\mu) : L(\nu) ] = [ \Delta(\lambda) \otimes \Delta(\mu) : L(\nu) ] \geq [ \Delta(\lambda) \otimes \Delta(\mu) : \Delta(\nu) ]_\Delta \geq [ \Delta(\varpi_2) \otimes \Delta(\varpi_1) : \Delta(\varpi_2) ]_\Delta = 1 . \]
	As before, we have $\nu \in C_1$, contradicting \eqref{eq:dagger}.
	We conclude that $\lambda+\mu \in \overline{C}_0$, whence $\mu$ is reflection small with respect to $\lambda$, as claimed.
\end{proof}

Now we are ready to establish the necessary conditions in Theorem \ref{thm:B2CR}, that is, we prove that for weights $\lambda,\mu \in X_1 \setminus \{ 0 \}$ such that the tensor product $L(\lambda) \otimes L(\mu)$ is completely reducible, the weights $\lambda$ and $\mu$ must satisfy one of the conditions from Table \ref{tab:B2CR}, up to interchanging $\lambda$ and $\mu$.

\begin{proof}[Proof of Theorem \ref{thm:B2CR}, necessary conditions] \label{proof:B2CRnecessary}
	Let $\lambda,\mu \in X_1 \setminus \{0\}$ be such that $L(\lambda) \otimes L(\mu)$ is completely reducible. 
	Suppose first that the weights $\lambda$ and $\mu$ are $p$-regular, and choose indices $i,j \in \{ 0 , 1 , 2 , 3 \}$ such that $\lambda \in C_i$ and $\mu \in C_j$.
	We may assume without loss of generality that $i \geq j$, and we consider the different possibilities for $i$ and $j$ in turn.
	\begin{itemize}
		\item If $\lambda\in C_3$ then $\mu \in C_0$ by Lemma \ref{lem:CR_C2_C3_implies_C0}, and we are in case 8 of Table \ref{tab:B2CR} by Lemma \ref{lem:B2_C3_CR_reflectionsmall}.
		\item If $\lambda\in C_2$ then $\mu \in C_0$ by Lemma \ref{lem:CR_C2_C3_implies_C0}, and we are in case 8 of Table \ref{tab:B2CR} by Lemma \ref{lem:B2_C2_CR_reflectionsmall}.
		\item If $\lambda,\mu\in C_1$ then we are in case 1 of Table \ref{tab:B2CR} by Proposition \ref{prop:B2_CR_C1}.
		\item If $\lambda\in C_1$ and $\mu\in C_0$ then we are in case 8 of Table \ref{tab:B2CR} by Proposition \ref{prop:B2_CR_C1_C0}
		\item If $\lambda,\mu\in C_0$ then we are in case 8 of Table \ref{tab:B2CR} by Proposition \ref{prop:B2_CR_C0_C0}
	\end{itemize}
	Now suppose that $\lambda$ is $p$-singular.
	Then $\lambda$ belongs to a wall of one of the alcoves $C_0$, $C_1$, $C_2$ and $C_3$, and we consider the different walls in turn.
	\begin{itemize}
		\item If $\lambda\in F_{3,4a}$ then we are in case 2 of Table \ref{tab:B2CR} by Proposition \ref{prop:B2_CR_F34a}.
		\item If $\lambda\in F_{2,3a} = F_{3,4b}$ then we are in case 3 of Table \ref{tab:B2CR} by Proposition \ref{prop:B2_CR_F23a}.
		\item If $\lambda\in F_{2,3}$ then we are in case 4 of Table \ref{tab:B2CR} by Proposition \ref{prop:B2_CR_F23}.
		\item If $\lambda\in F_{1,2}$ then we are in cases 5--6 of Table \ref{tab:B2CR} for $p \neq 3$ and in cases 1--3 or 5--6 of Table \ref{tab:B2CR} for $p = 3$ by Proposition \ref{prop:B2_CR_F12}.
		\item If $\lambda\in F_{0,1}$ then we are in cases 7--8 of Table \ref{tab:B2CR} for $p \neq 5$ and in cases 3 or 7--8 of Table \ref{tab:B2CR} for $p = 5$ by Proposition \ref{prop:B2_CR_F01}.
	\end{itemize}
	In all cases, $\lambda$ and $\mu$ satisfy one of the conditions from Table \ref{tab:B2CR}, up to interchanging $\lambda$ and $\mu$.
\end{proof}

\subsection{Proofs: multiplicity freeness, sufficient conditions}
\label{subsec:B2proofsMFsufficient}

In this subsection, we prove that for all pairs of weights $\lambda,\mu \in X_1$ that satisfy one of the conditions from Table \ref{tab:B2MF}, the tensor product $L(\lambda) \otimes L(\mu)$ is multiplicity free (see page \pageref{proof:B2MFsufficient}).
Many cases are in fact already covered by results that were proven in Subsection \ref{subsec:B2proofsCRsufficient}; it remains to consider the conditions 8a--8k and 8c\om--8f\om.
We first consider the conditions 8a, 8b and 8c.

\begin{Proposition}\label{prop:B2MFC0C0}
	Let $\lambda \in \overline{C}_0 \cap X$ and $\mu \in X^+$ such that $\mu$ is reflection small with respect to $\lambda$.
	Then the tensor product $L(\lambda)\otimes L(\mu)$ is multiplicity free if and only if the tensor product $L_{\C}(\lambda)\otimes L_{\C}(\mu)$ of simple $G_\C$-modules is multiplicity free.
\end{Proposition}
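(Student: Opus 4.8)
The plan is to reduce both directions of the equivalence to a single identity of composition multiplicities, obtained by a character comparison. The key preliminary observation, which I would establish first, is that reflection smallness of $\mu$ with respect to $\lambda \in \overline{C}_0$ already forces $\mu$ to lie in $\overline{C}_0$. Indeed, if $\mu \neq 0$ then $\lambda$ is $p$-regular by Remark \ref{rem:reflectionsmallpregular}, so $\lambda \in C_0$ and the unique alcove whose upper closure contains $\lambda$ is $C_0$ itself, whose only upper wall is $F_{0,1} = H_{\alpha_\mathrm{hs},1}$. Taking $w = e$ in the reflection smallness condition for this wall gives $(\lambda + \mu + \rho, \alpha_\mathrm{hs}^\vee) \leq p$, and since $(\lambda, \alpha_\mathrm{hs}^\vee) \geq 0$ this yields $(\mu + \rho, \alpha_\mathrm{hs}^\vee) \leq p$, that is, $\mu \in \overline{C}_0 \cap X$ (alternatively one reads this off directly from Example \ref{ex:B2reflectionsmall}). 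The case $\mu = 0$ is trivial, as both tensor products are then simple.

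Granting $\lambda, \mu \in \overline{C}_0 \cap X$, both weights are dominant, so $L(\lambda) \cong \nabla(\lambda)$ and $L(\mu) \cong \nabla(\mu)$, whence $\ch L(\lambda) = \chi(\lambda) = \ch L_\C(\lambda)$ and $\ch L(\mu) = \chi(\mu) = \ch L_\C(\mu)$. I would then invoke Theorem \ref{thm:reflectionsmalltensorproduct}(2): every composition factor $L(\nu)$ of $L(\lambda) \otimes L(\mu)$ has $\nu$ in the upper closure of $C_0$, and since $\nu$ is dominant this places $\nu \in \overline{C}_0 \cap X$, where again $\ch L(\nu) = \chi(\nu)$. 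Expanding $\ch(L(\lambda) \otimes L(\mu)) = \chi(\lambda)\chi(\mu)$ along a composition series therefore gives
\[ \chi(\lambda)\chi(\mu) = \sum_{\nu} \big[ L(\lambda) \otimes L(\mu) : L(\nu) \big] \cdot \chi(\nu), \]
while the discussion opening Subsection \ref{subsec:char0} expresses the same product as $\sum_\nu [L_\C(\lambda) \otimes L_\C(\mu) : L_\C(\nu)] \cdot \chi(\nu)$. Comparing coefficients in the basis $\{\chi(\nu) \mid \nu \in X^+\}$ of $\Z[X]^{W_\mathrm{fin}}$ yields $[L(\lambda) \otimes L(\mu) : L(\nu)] = [L_\C(\lambda) \otimes L_\C(\mu) : L_\C(\nu)]$ for every $\nu \in X^+$, from which the claimed equivalence of multiplicity freeness is immediate.

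The only genuinely substantive point is the first one—that reflection smallness confines $\mu$ to $\overline{C}_0$—after which the argument is a formal character comparison resting on $L(\lambda) = \nabla(\lambda)$, $L(\mu) = \nabla(\mu)$, and the containment of composition factors in $\overline{C}_0$ from Theorem \ref{thm:reflectionsmalltensorproduct}. I should be mildly careful to dispatch the $p$-singular (equivalently $\mu = 0$) case separately, so that the appeal to Theorem \ref{thm:reflectionsmalltensorproduct}(2) with $C = C_0$ is legitimate; but no weight-space computation is required, and the equality of multiplicities handles both implications of the ``if and only if'' simultaneously.
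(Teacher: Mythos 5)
Your proof is correct, and it arrives at the same key identity as the paper's proof, namely $[L(\lambda)\otimes L(\mu):L(\nu)] = [L_\C(\lambda)\otimes L_\C(\mu):L_\C(\nu)]$ for all $\nu \in X^+$, but by a different mechanism. The paper never invokes Theorem \ref{thm:reflectionsmalltensorproduct}: it observes that reflection smallness gives $\lambda+\mu \in \overline{C}_0$, that any composition factor $L(\nu)$ satisfies $\nu \leq \lambda+\mu$ and hence $\nu \in \overline{C}_0$, and then applies the good filtration theorem — $L(\lambda)\otimes L(\mu) = \nabla(\lambda)\otimes\nabla(\mu)$ has a good filtration whose sections $\nabla(\nu)$ are all simple, so composition multiplicities equal good filtration multiplicities, which equal the characteristic-zero multiplicities by Subsection \ref{subsec:char0}. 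You instead obtain the containment of composition factors in $\widehat{C}_0$ from Theorem \ref{thm:reflectionsmalltensorproduct}(2) — which is what forces your case split between $\mu = 0$ and $\lambda$ $p$-regular via Remark \ref{rem:reflectionsmallpregular}, since that theorem is phrased in terms of the alcove whose upper closure contains $\lambda$ — and then you replace the good filtration argument by a comparison of coefficients in the basis $\{\chi(\nu) \mid \nu \in X^+\}$ of $\Z[X]^{W_\mathrm{fin}}$. Both routes are sound; yours trades the external Donkin--Mathieu--Wang good filtration theorem for the paper's own (heavier) Theorem \ref{thm:reflectionsmalltensorproduct}. Note that you could have had the most economical proof of all by deriving $\nu \in \overline{C}_0$ from the elementary dominance argument ($\nu \leq \lambda+\mu \in \overline{C}_0$, pairing against $\alpha_\mathrm{hs}^\vee$) as the paper does: your character comparison would then go through with no case split and no appeal to Theorem \ref{thm:reflectionsmalltensorproduct} at all.
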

\begin{proof}
	Since $\mu$ is reflection small with respect to $\lambda$, we have $\lambda+\mu \in \overline{C}_0$, and it follows that $\mu \in \overline{C}_0$ and $\nu \in \overline{C}_0$ for all $\nu \in X^+$ with $\nu \leq \lambda+\mu$.
	This implies that $L(\nu) = \nabla(\nu)$ for all $\nu \in X^+$ such that the simple $G$-module $L(\nu)$ is a composition factor of $L(\lambda) \otimes L(\mu)$.
	As we further have $L(\lambda) = \nabla(\lambda)$ and $L(\mu) = \nabla(\mu)$, it follows that $L(\lambda) \otimes L(\mu)$ has a good filtration with
	\[ \big[ L(\lambda) \otimes L(\mu) : L(\nu) \big] = \big[ \nabla(\lambda) \otimes \nabla(\mu) : \nabla(\nu) \big]_\nabla = \big[ L_\C(\lambda) \otimes L_\C(\mu) : L_\C(\nu) \big] \]
	 for all $\nu \in X^+$; cf.\ Subsection \ref{subsec:char0}.
	 The claim is immediate from this equality.
\end{proof}

\begin{Remark} \label{rem:C0reflectionsmalltiltingmultiplicity}
	Let $\lambda \in \overline{C}_0 \cap X$ and $\mu \in X^+$ such that $\mu$ is reflection small with respect to $\lambda$.
	As in the proof of Proposition \ref{prop:B2MFC0C0}, we see that $\mu \in \overline{C}_0$ and that all composition factors of $L(\lambda) \otimes L(\mu)$ have highest weights in $\overline{C}_0 \cap X$.
	In particular, we have $L(\lambda) \otimes L(\mu) \cong T(\lambda) \otimes T(\mu)$ and
	\[ \big[ L(\lambda) \otimes L(\mu) : L(\nu) \big] = \big[ L_\C(\lambda) \otimes L_\C(\mu) : L_\C(\nu) \big] = \big[ T(\lambda) \otimes T(\mu) : T(\nu) \big]_\oplus \]
	for all $\nu \in X^+$.
\end{Remark}

\begin{Proposition}\label{prop:B2MFfundamentalweight}
	Let $\lambda\in ( C_1 \cup C_2 \cup C_3 ) \cap X$ and $\mu\in \{\varpi_1,\varpi_2\}$ such that $\mu$ is reflection small with respect to $\lambda$.
	Then the tensor product $L(\lambda)\otimes L(\mu)$ is multiplicity free.
\end{Proposition}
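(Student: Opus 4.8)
The plan is to deduce the statement directly from the complete reducibility criterion of Section \ref{sec:alcovegeometryCR} together with the weight space bound of Corollary \ref{cor:onedimensionalweightspacesMF}. First I would observe that, since $\mu$ is reflection small with respect to $\lambda$, Theorem \ref{thm:reflectionsmalltensorproduct}(1) immediately yields that the tensor product $L(\lambda) \otimes L(\mu)$ is completely reducible. This is the only place where the position of $\lambda$ in $C_1 \cup C_2 \cup C_3$ and the reflection smallness of $\mu$ enter the argument.

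The second ingredient is that the factor $L(\mu)$ has very small weight spaces. For $\mu = \varpi_2$ this is clear because $\varpi_2$ is minuscule, so $L(\varpi_2) = \nabla(\varpi_2)$ has four distinct one-dimensional weight spaces. For $\mu = \varpi_1$ (and $p \geq 3$) the explicit list of weights recorded in Remark \ref{rem:B2weightsfundamentalmodules} shows that $\dim L(\varpi_1) = 5$ with five distinct weights, each occurring with multiplicity one. Thus in either case all weight spaces of $L(\mu)$ are at most one-dimensional.

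With both facts in hand, Corollary \ref{cor:onedimensionalweightspacesMF}, applied with $M = L(\mu)$, gives at once that $L(\lambda) \otimes L(\mu)$ is multiplicity free, which is the desired conclusion. I would also note that the case $p = 2$ needs no separate treatment: for $p = 2$ the alcoves $C_1$, $C_2$ and $C_3$ contain no integral weights, as one checks directly from the defining inequalities in Subsection \ref{subsec:B2setup} (each forces a strict inequality such as $a + b < 0$ that has no solution with $a,b \geq 0$), so the hypothesis $\lambda \in (C_1 \cup C_2 \cup C_3) \cap X$ is vacuous and the bound $\dim L(\varpi_i)_\nu \leq 1$ is only ever invoked for $p \geq 3$.

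Since every step is a direct appeal to an already established result, there is no substantial obstacle here. The only point requiring a little care is verifying the one-dimensionality of all weight spaces of $L(\varpi_1)$ rather than merely those of the minuscule module $L(\varpi_2)$; this is precisely what the weight list in Remark \ref{rem:B2weightsfundamentalmodules} supplies, and it is the reason the proposition can treat the non-minuscule fundamental weight $\varpi_1$ on the same footing as $\varpi_2$.
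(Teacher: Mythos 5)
Your proof is correct, but it takes a genuinely different route from the paper in its second step. Both arguments begin identically: reflection smallness gives complete reducibility of $L(\lambda) \otimes L(\mu)$ via Theorem \ref{thm:reflectionsmalltensorproduct}. The paper then invokes Stembridge's characteristic-zero classification (Theorem \ref{thm:StembridgeB2}, cases 1a/1b, since $\mu \in \{\varpi_1,\varpi_2\}$) and concludes with Proposition \ref{prop:CRandMFchar0impliesMF}, which converts ``completely reducible in characteristic $p$ plus multiplicity free over $\C$'' into multiplicity freeness over $\kk$. You instead verify that all weight spaces of $L(\mu)$ are at most one-dimensional (minuscule for $\varpi_2$; the explicit weight list of Remark \ref{rem:B2weightsfundamentalmodules} for $\varpi_1$ when $p \geq 3$) and conclude with Corollary \ref{cor:onedimensionalweightspacesMF}. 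Your version is more self-contained -- it avoids citing Stembridge's theorem and only needs the explicit structure of the $4$- and $5$-dimensional fundamental modules -- and it mirrors the pattern the paper itself uses elsewhere (e.g.\ Proposition \ref{prop:B2MFC1_1dimensionalweightspaces}, and the type $\mathrm{A}_2$ argument via Lemma \ref{lem:A2onedimensionalweightspaces}). The paper's characteristic-zero route is the more scalable template: it handles cases such as Proposition \ref{prop:B2MFC1_b=1}, where $\mu = a'\varpi_1$ can have arbitrarily large dimension and a weight-space bound would not suffice. One small remark on your treatment of small primes: your check that $(C_1 \cup C_2 \cup C_3) \cap X = \emptyset$ for $p=2$ is correct, but the cleaner observation is that for $p < h = 4$ (so $p \in \{2,3\}$) no alcove contains any integral weight at all (the paper notes $C_0 \cap X \neq \emptyset$ only for $p \geq h$ in Subsection \ref{subsec:B2Weylmodulestiltingmodules}), so the proposition is vacuous in exactly the range where the hypothesis $p \geq 3$ of Remark \ref{rem:B2weightsfundamentalmodules} could be a concern; this makes your case distinction unnecessary rather than merely handled.
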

\begin{proof}
	The tensor product $L(\lambda)\otimes L(\mu)$ is completely reducible and the tensor product $L_{\C}(\lambda)\otimes L_{\C}(\mu)$ is multiplicity free, by Theorems \ref{thm:reflectionsmalltensorproduct} and \ref{thm:StembridgeB2}.
	Now the claim follows from Proposition \ref{prop:CRandMFchar0impliesMF}. 
\end{proof}

\begin{Proposition}\label{prop:B2MFC1_b=1}
	Let $\lambda=a\varpi_1+\varpi_2\in C_1 \cap X$ and $\mu=a^\prime \varpi_1 \in X^+$ such that $\mu$ is reflection small with respect to $\lambda$.
	Then the tensor product $L(\lambda)\otimes L(\mu)$ is multiplicity free.
\end{Proposition}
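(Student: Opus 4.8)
The plan is to follow the same two-step strategy as in the proof of Proposition~\ref{prop:B2MFfundamentalweight}: first invoke complete reducibility coming from the reflection smallness hypothesis, and then import multiplicity freeness from characteristic zero via Proposition~\ref{prop:CRandMFchar0impliesMF}. Concretely, I would argue that the pair $(\lambda,\mu)$ with $\lambda = a\varpi_1+\varpi_2$ and $\mu = a'\varpi_1$ falls squarely under Stembridge's classification, so that the only genuine content is the matching of our hypotheses to two already-established results.

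First, since $\mu$ is reflection small with respect to $\lambda$ by assumption, Theorem~\ref{thm:reflectionsmalltensorproduct}(1) immediately gives that $L(\lambda)\otimes L(\mu)$ is completely reducible. Second, I would observe that the highest weights $\lambda = a\varpi_1+\varpi_2$ (with the coefficient of $\varpi_2$ equal to $1$) and $\mu = a'\varpi_1$ are precisely of the shape appearing in condition~4 of Table~\ref{tab:B2MFchar0}; hence the tensor product $L_\C(\lambda)\otimes L_\C(\mu)$ of simple $G_\C$-modules is multiplicity free by Theorem~\ref{thm:StembridgeB2}, for \emph{all} values of $a$ and $a'$ and independently of the reflection smallness condition.

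With both ingredients in place, Proposition~\ref{prop:CRandMFchar0impliesMF} applies verbatim and yields that $L(\lambda)\otimes L(\mu)$ is multiplicity free, completing the proof. The point worth flagging is that, in contrast with the neighbouring $p$-regular cases in $C_1$ (where the characteristic-zero tensor product need \emph{not} be multiplicity free and one must instead extract multiplicities from the explicit formula of Theorem~\ref{thm:reflectionsmalltensorproduct}(3) together with the structure of $W_{C_1}$ and the possibly higher-dimensional weight spaces of $L(a'\varpi_1)$), here no such computation is needed: the characteristic-zero input already bounds every composition multiplicity by $1$. Thus there is in effect no real obstacle, and the substance of the argument resides entirely in the cited general results; the only thing to verify with care is that the pattern $\lambda=a\varpi_1+\varpi_2$, $\mu=a'\varpi_1$ is genuinely covered by Stembridge's condition~4 rather than merely by one of the more restrictive conditions of Table~\ref{tab:B2MFchar0}.
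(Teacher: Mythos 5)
Your proof is correct and is essentially identical to the paper's own argument: complete reducibility follows from Theorem \ref{thm:reflectionsmalltensorproduct}, multiplicity freeness of $L_\C(\lambda)\otimes L_\C(\mu)$ follows from condition 4 of Stembridge's classification (Theorem \ref{thm:StembridgeB2}), and the conclusion is obtained from Proposition \ref{prop:CRandMFchar0impliesMF}. Your observation that the pair $\lambda = a\varpi_1+\varpi_2$, $\mu = a'\varpi_1$ fits Stembridge's condition 4 for all $a,a'$ is exactly the point on which the paper's proof rests as well.
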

\begin{proof}
	The tensor product $L(\lambda)\otimes L(\mu)$ is completely reducible and the tensor product $L_{\C}(\lambda)\otimes L_{\C}(\mu)$ is multiplicity free, by Theorems \ref{thm:reflectionsmalltensorproduct} and \ref{thm:StembridgeB2}.
	Now the claim follows from Proposition \ref{prop:CRandMFchar0impliesMF}.
\end{proof}

In order to check that the tensor product $L(\lambda) \otimes L(\mu)$ is multiplicity free for all weights $\lambda,\mu \in X_1$ that satisfy the condition 8c\om{} from Table \ref{tab:B2MF}, we need the following preliminary lemma.
Recall from Remark \ref{rem:B2stabC0} that we write $\Omega = \Stab_{W_\mathrm{ext}}(C_0) = \{ e , \omega \}$.

\begin{Lemma}\label{lem:omega_reflection_small}
	Let $\lambda\in C_0 \cap X$ and let $\mu \in X^+$.
	Then $\mu$ is reflection small with respect to $s \Cdot \lambda$ if and only if $\mu$ is reflection small with respect to $s\omega \Cdot \lambda$.
\end{Lemma}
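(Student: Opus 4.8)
The plan is to reduce the reflection smallness condition to a single linear inequality in each case and to observe that the two inequalities coincide. First I would note that both $s \Cdot \lambda$ and $s\omega \Cdot \lambda$ lie in the alcove $C_1$: indeed $\lambda \in C_0$ by hypothesis and $\omega \Cdot \lambda \in C_0$ since $\omega$ generates $\Omega = \Stab_{W_\mathrm{ext}}(C_0)$, so both weights are carried into $C_1 = s \Cdot C_0$ by $s$. As $\lambda$ is $p$-regular, both $s\Cdot\lambda$ and $s\omega\Cdot\lambda$ lie in the open alcove $C_1$.

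Next I would invoke the description of reflection smallness for weights in $\widehat{C}_1$ from Example \ref{ex:B2reflectionsmall}. The alcove $C_1$ has a unique wall in its upper closure, namely $F_{1,2} = H_{\alpha_\mathrm{h},1}$, so for any $p$-regular $\nu \in C_1$ the weight $\mu$ is reflection small with respect to $\nu$ if and only if $(\nu + w\mu + \rho, \alpha_\mathrm{h}^\vee) \le p$ for all $w \in W_\mathrm{fin}$. Writing $(\nu + w\mu + \rho, \alpha_\mathrm{h}^\vee) = (\nu + \rho, \alpha_\mathrm{h}^\vee) + (w\mu, \alpha_\mathrm{h}^\vee)$, this is equivalent to
\[ (\nu + \rho, \alpha_\mathrm{h}^\vee) \le p - \max_{w \in W_\mathrm{fin}} (w\mu, \alpha_\mathrm{h}^\vee) . \]
The key point is that the right-hand side depends only on $\mu$, so that whether $\mu$ is reflection small with respect to a weight $\nu \in C_1$ depends on $\nu$ only through the single number $(\nu + \rho, \alpha_\mathrm{h}^\vee)$.

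It therefore suffices to prove $(s \Cdot \lambda + \rho, \alpha_\mathrm{h}^\vee) = (s \omega \Cdot \lambda + \rho, \alpha_\mathrm{h}^\vee)$. I would establish this via the identity $(s \Cdot x + \rho, \alpha_\mathrm{h}^\vee) = p - (x + \rho, \alpha_1^\vee)$ for all $x \in X_\R$, which follows from $s \Cdot x + \rho = s_{\alpha_\mathrm{hs}}(x + \rho) + p \alpha_\mathrm{hs}$ together with the elementary facts $(\alpha_\mathrm{hs}, \alpha_\mathrm{h}^\vee) = 1$ and $s_{\alpha_\mathrm{hs}}(\alpha_\mathrm{h}^\vee) = -\alpha_1^\vee$ (geometrically, $s$ carries the upper wall $H_{\alpha_\mathrm{h},1}$ of $C_1$ onto the wall $H_{\alpha_1,0}$ of $C_0$). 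Since Remark \ref{rem:B2stabC0} shows that $\omega \Cdot \lambda$ and $\lambda$ have the same coefficient of $\varpi_1$, they agree upon pairing with $\alpha_1^\vee$, whence $(s \Cdot \lambda + \rho, \alpha_\mathrm{h}^\vee) = (s \omega \Cdot \lambda + \rho, \alpha_\mathrm{h}^\vee)$. Combined with the preceding paragraph, this yields the equivalence of the two reflection smallness conditions.

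The main obstacle is purely a matter of bookkeeping: one must pin down that $C_1$ has exactly one wall in its upper closure and carry out the coroot computation $s_{\alpha_\mathrm{hs}}(\alpha_\mathrm{h}^\vee) = -\alpha_1^\vee$ correctly. Once the reflection smallness condition for $C_1$ is reduced to the value of $(\nu+\rho, \alpha_\mathrm{h}^\vee)$, the equivalence becomes immediate from the fact that $\omega$ preserves the $\varpi_1$-coefficient of a weight.
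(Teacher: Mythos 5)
Your proof is correct and takes essentially the same route as the paper: both arguments reduce reflection smallness for weights in the alcove $C_1$ to the single inequality attached to its unique upper wall $F_{1,2} = H_{\alpha_\mathrm{h},1}$, and then verify the pairing equality $(s\Cdot\lambda + \rho, \alpha_\mathrm{h}^\vee) = (s\omega\Cdot\lambda + \rho, \alpha_\mathrm{h}^\vee)$. The only difference is cosmetic: the paper treats this equality as a direct coordinate computation via Remark \ref{rem:B2stabC0}, while you derive it from the identity $(s\Cdot x + \rho, \alpha_\mathrm{h}^\vee) = p - (x+\rho,\alpha_1^\vee)$ together with the fact that $\omega$ preserves the $\varpi_1$-coefficient.
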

\begin{proof}
	Observe that $F_{1,2} = H_{\alpha_\mathrm{h},1}$ is the unique wall that belongs to the upper closure of $C_1 = s \Cdot C_0$.
	A straightforward computation shows that $( s \Cdot \lambda , \alpha_\mathrm{h}^\vee ) = ( s \omega \Cdot \lambda , \alpha_\mathrm{h}^\vee )$ and therefore
	\[ ( s \Cdot \lambda + w(\mu) , \alpha_\mathrm{h}^\vee ) = ( s \omega \Cdot \lambda + w(\mu) , \alpha_\mathrm{h}^\vee ) \]
	for all $w \in W_\mathrm{fin}$.
	In particular, $\mu$ is reflection small with respect to $s\Cdot\lambda$ if and only if $\mu$ is reflection small with respect to $s\omega\Cdot\lambda$.
\end{proof}

\begin{Lemma} \label{lem:B2ondimensionalweightspacesboundary}
	Let $\lambda = a \varpi_1 + b \varpi_2 \in X^+$ and $\nu = a' \varpi_1 + b' \varpi_2 \in X$.
	If either $b'=2a+b$ or $a'=a+b$ then $\dim L(\lambda)_\nu \leq 1$.
\end{Lemma}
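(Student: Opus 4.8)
The plan is to bound $\dim L(\lambda)_\nu$ by the weight multiplicity in the Weyl module $\Delta(\lambda)$, and then to show that on each of the two lines the dominant $W_\mathrm{fin}$-conjugate of $\nu$ differs from $\lambda$ by a multiple of a single simple root, where the multiplicity is forced to be at most $1$. Since $L(\lambda)$ is a quotient of $\Delta(\lambda)$ we have $\dim L(\lambda)_\nu \leq \dim \Delta(\lambda)_\nu$, and because $\ch \Delta(\lambda) = \chi(\lambda) \in \Z[X]^{W_\mathrm{fin}}$ (Weyl's character formula), the weight multiplicities of $\Delta(\lambda)$ are $W_\mathrm{fin}$-invariant and agree with those of the complex simple module $L_\C(\lambda)$. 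If $\nu$ is not a weight of $L(\lambda)$ the claim is trivial, so I may assume $\dim L(\lambda)_\nu \neq 0$; it then suffices to prove that the dominant conjugate $\nu^+$ of $\nu$ equals $\lambda - k\alpha_2$ (in the case $b' = 2a+b$) or $\lambda - k\alpha_1$ (in the case $a' = a+b$) for some integer $k \geq 0$, since then $\dim L(\lambda)_\nu = \dim \Delta(\lambda)_{\nu^+}$.

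To pin down $\nu^+$ I would pass to the standard realization of $\mathrm{B}_2$, writing $\varpi_1 = \varepsilon_1$, $\varpi_2 = \tfrac12(\varepsilon_1+\varepsilon_2)$, so that $\alpha_1 = \varepsilon_1-\varepsilon_2$, $\alpha_2 = \varepsilon_2$, $\alpha_1^\vee = \varepsilon_1-\varepsilon_2$, $\alpha_2^\vee = 2\varepsilon_2$, and $\lambda = \lambda_1\varepsilon_1 + \lambda_2\varepsilon_2$ with $\lambda_1 = a + \tfrac{b}{2} \geq \lambda_2 = \tfrac{b}{2} \geq 0$. The condition $b' = 2a+b$ translates into $(\nu,\alpha_2^\vee) = 2\lambda_1$, i.e.\ $\nu$ maximizes the coroot functional $(\,\cdot\,,\alpha_2^\vee)$ over $\mathrm{conv}(W_\mathrm{fin}\lambda)$; the maximizing face of this polytope is the segment joining $(\pm\lambda_2,\lambda_1)$, so $\nu = (\nu_1,\lambda_1)$ with $|\nu_1| \leq \lambda_2$, and sorting the absolute values of the coordinates gives $\nu^+ = (\lambda_1,|\nu_1|) = \lambda - (\lambda_2 - |\nu_1|)\,\alpha_2$ with $\lambda_2 - |\nu_1| \geq 0$. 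Symmetrically, $a' = a+b$ translates into $(\nu,\alpha_1^\vee) = \lambda_1+\lambda_2$ maximal, whose maximizing face consists of the weights with $\nu_1 + |\nu_2| = \lambda_1+\lambda_2$; here $\nu^+_1 + \nu^+_2 = \lambda_1+\lambda_2$ forces $\nu^+ = \lambda - c\,\alpha_1$ with $c = \lambda_1 - \nu^+_1 \geq 0$.

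Finally I would invoke the standard bound for highest weight modules: $\dim \Delta(\lambda)_{\lambda - \xi}$ is at most the Kostant partition function $\mathcal{P}(\xi)$, the number of ways of writing $\xi$ as a sum of positive roots. Every positive root other than $\alpha_2$ (namely $\alpha_1$, $\alpha_\mathrm{hs} = \alpha_1+\alpha_2$ and $\alpha_\mathrm{h} = \alpha_1 + 2\alpha_2$) has strictly positive $\alpha_1$-coefficient, so $k\alpha_2$ can only be written as $k$ copies of $\alpha_2$; likewise $c\alpha_1$ can only be $c$ copies of $\alpha_1$, since every positive root except $\alpha_1$ has positive $\alpha_2$-coefficient. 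Hence $\mathcal{P}(k\alpha_2) = \mathcal{P}(c\alpha_1) = 1$, giving $\dim L(\lambda)_\nu = \dim \Delta(\lambda)_{\nu^+} \leq 1$ in both cases. The only real work is the coordinate bookkeeping that identifies $\nu^+$ with $\lambda - k\alpha_2$ or $\lambda - c\alpha_1$; this is the step I expect to require the most care, but it is routine once the maximizing faces of $\mathrm{conv}(W_\mathrm{fin}\lambda)$ are described, and it avoids having to reduce to the special highest weights treated in Lemmas \ref{lem:B2_weightspacedimensions_api1} and \ref{lem:B2_weightspacedimensions_api1+pi2}.
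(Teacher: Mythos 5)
Your proof is correct and takes essentially the same route as the paper, which simply applies the single reflection $t = s_{\alpha_1}$ (resp.\ $u = s_{\alpha_2}$) to $\nu$ and observes that the image lies on the $\alpha_2$-string (resp.\ $\alpha_1$-string) through $\lambda$, where weight multiplicities are at most one --- the same Weyl-invariance-plus-Kostant argument that you spell out via the dominant conjugate in Euclidean coordinates. One cosmetic slip: the claimed equality $\dim L(\lambda)_\nu = \dim \Delta(\lambda)_{\nu^+}$ should be the chain $\dim L(\lambda)_\nu \leq \dim \Delta(\lambda)_\nu = \dim \Delta(\lambda)_{\nu^+}$ (in characteristic $p$ the simple module can have strictly smaller weight multiplicities than the Weyl module), but this is harmless since your opening inequality already supplies the bound you actually use.
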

\begin{proof}
	If $b'=2a+b$ then we have
	\[ t(\nu) = \nu - a' \cdot \alpha_1 = - a' \cdot \varpi_1 + (2a'+b') \cdot \varpi_2 = a \varpi_1 + b \varpi_2 + (a+a') \cdot (-\varpi_1+2\varpi_2) = \lambda + (a+a') \cdot \alpha_2 , \]
	and it follows that $\dim L(\lambda)_\nu = \dim L(\lambda)_{t(\nu)} \leq 1$.
	Similarly, if $a' = a+b$ then
	\[ u(\nu) = \nu - b' \cdot \alpha_2 = (a'+b') \cdot \varpi_1 - b' \cdot \varpi_2 = a \varpi_1 + b \varpi_2 + (b+b') \cdot (\varpi_1-\varpi_2) = \lambda + \tfrac{b+b'}{2} \cdot \alpha_1 , \]
	and therefore $\dim L(\lambda)_\nu = \dim L(\lambda)_{u(\nu)} \leq 1$, as claimed.
\end{proof}

\begin{Proposition}\label{prop:B2MFC1_2a+b=p-1}
	Let $\lambda=a\varpi_1+b\varpi_2\in C_1 \cap X$ with $2a+b=p-1$ and $\mu=a^\prime \varpi_1 \in X^+$ such that $\mu$ is reflection small with respect to $\lambda$.
	Then the tensor product $L(\lambda)\otimes L(\mu)$ is multiplicity free.
\end{Proposition}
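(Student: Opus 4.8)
The plan is to deduce the multiplicity freeness of $L(\lambda)\otimes L(\mu)$ from the already-established case $b=1$ (Proposition \ref{prop:B2MFC1_b=1}) by exploiting the $\Omega$-symmetry of the Verlinde coefficients. First I would record the basic features: by Theorem \ref{thm:reflectionsmalltensorproduct} the tensor product $L(\lambda)\otimes L(\mu)$ is completely reducible and all its composition factors have highest weights in $\widehat{C}_1$, so each composition multiplicity equals the corresponding direct summand multiplicity. Since $\mu$ is reflection small with respect to $\lambda\in\widehat{C}_1$, Example \ref{ex:B2reflectionsmall} gives $a+b+a'\leq p-2$, which together with $2a+b=p-1$ forces $a'\leq a-1$; as $\lambda\in C_1$ requires $a\leq\frac{p-1}{2}$, we get $a'\leq\frac{p-3}{2}$. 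Before the main argument I would dispose of the boundary case $\lambda=\frac{p-1}{2}\varpi_1$ (that is, $b=0$): here all weight spaces of $L(\lambda)$ are at most one-dimensional by Corollary \ref{cor:B2onedimensionalweightspaces}, so complete reducibility already yields multiplicity freeness by Corollary \ref{cor:onedimensionalweightspacesMF}. Thus I may assume $b>0$, whence $a\leq\frac{p-3}{2}$, $a'\leq\frac{p-5}{2}$, and consequently $\mu=a'\varpi_1\in C_0\cap X$ with $L(\mu)=\Delta(a'\varpi_1)$.

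For the $p$-regular composition factors I would set $\lambda_0=s\Cdot\lambda=(a-2)\varpi_1+b\varpi_2\in C_0\cap X$, so that $\lambda=s\Cdot\lambda_0$ with $s\in W_\mathrm{aff}^+$. Using Remark \ref{rem:B2stabC0} and $2a+b=p-1$ one computes $\omega\Cdot\lambda_0=(a-2)\varpi_1+\varpi_2\in C_0$, and hence $\lambda':=s\omega\Cdot\lambda_0=s\Cdot(\omega\Cdot\lambda_0)=(p-a-2)\varpi_1+\varpi_2\in C_1$, which is exactly of the form treated in Proposition \ref{prop:B2MFC1_b=1}. By Lemma \ref{lem:omega_reflection_small}, $\mu$ is also reflection small with respect to $\lambda'$, so $L(\lambda')\otimes L(\mu)$ is multiplicity free by Proposition \ref{prop:B2MFC1_b=1}. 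Now every $p$-regular weight in $\widehat{C}_1$ equals $s\Cdot\nu_0$ for a unique $\nu_0\in C_0\cap X$, and Lemma \ref{lem:MFboundVerlindecoeff} together with the $\Omega$-invariance of Lemma \ref{lem:VerlindecoefficientsFundamentalgroup} gives
\[ \big[L(\lambda)\otimes L(\mu):L(s\Cdot\nu_0)\big]_\oplus = c_{\lambda_0,\mu}^{\nu_0} = c_{\omega\Cdot\lambda_0,\mu}^{\omega\Cdot\nu_0} = \big[L(\lambda')\otimes L(\mu):L(s\Cdot(\omega\Cdot\nu_0))\big]_\oplus \leq 1 , \]
where the last inequality uses the multiplicity freeness of $L(\lambda')\otimes L(\mu)$. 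This bounds the multiplicities of all $p$-regular composition factors by $1$. I should stress here that this detour through $\omega$ is genuinely necessary: for $b\geq 2$ the weight $\lambda$ does not fit any row of Stembridge's Table \ref{tab:B2MFchar0}, so $L_\C(\lambda)\otimes L_\C(\mu)$ is \emph{not} multiplicity free and Proposition \ref{prop:CRandMFchar0impliesMF} cannot be applied directly.

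It remains to bound the $p$-singular composition factors, and this is the step I expect to require the most care. Since all factors lie in $\widehat{C}_1$ and the unique wall in the upper closure of $C_1$ is $F_{1,2}=H_{\alpha_\mathrm{h},1}$, every singular factor $L(\nu)$ has $\nu\in F_{1,2}$, so $(\nu+\rho,\alpha_\mathrm{h}^\vee)=p$ and therefore $(\nu-\lambda,\alpha_\mathrm{h}^\vee)=(p-2)-(a+b)=a-1$. I would then show $\dim L(\mu)_{\nu-\lambda}\leq 1$ as follows: writing $\eta^+$ for the dominant $W_\mathrm{fin}$-conjugate of $\nu-\lambda$ and using that $\alpha_\mathrm{h}^\vee$ is a dominant coweight (so nonnegative on $\Phi^+$), one has $(\eta^+,\alpha_\mathrm{h}^\vee)\geq(\nu-\lambda,\alpha_\mathrm{h}^\vee)=a-1\geq a'$; on the other hand, if $\nu-\lambda\in\Lambda(\mu)$ then $\eta^+\leq a'\varpi_1$ forces $(\eta^+,\alpha_\mathrm{h}^\vee)\leq a'$, so $(\eta^+,\alpha_\mathrm{h}^\vee)=a'$ and Lemma \ref{lem:B2_weightspacedimensions_api1} gives $\dim\Delta(a'\varpi_1)_{\eta^+}=1$ (while $\nu-\lambda\notin\Lambda(\mu)$ gives dimension $0$). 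Hence $\dim L(\mu)_{\nu-\lambda}\leq\dim\Delta(a'\varpi_1)_{\nu-\lambda}\leq 1$ for every singular $\nu\in F_{1,2}$, and Lemma \ref{lem:boundformultiplicityintensorproductifCR} (applicable because $L(\lambda)\otimes L(\mu)$ is completely reducible) bounds the corresponding multiplicity by $1$. Combining the regular and singular bounds shows that every composition multiplicity of $L(\lambda)\otimes L(\mu)$ is at most $1$, so the tensor product is multiplicity free, as required.
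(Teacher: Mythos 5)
Your proof is correct, and its core mechanism is exactly the paper's: you pass to $\lambda_0 = s\Cdot\lambda \in C_0$, twist by $\omega$ to reach $\lambda' = s\omega\Cdot\lambda_0 = (p-a-2)\varpi_1+\varpi_2 \in C_1$ (the paper writes this as $(a+b-1)\varpi_1+\varpi_2$, which is the same weight since $2a+b=p-1$), invoke Lemma \ref{lem:omega_reflection_small} and Proposition \ref{prop:B2MFC1_b=1}, and bound the $p$-regular multiplicities through $c_{\lambda_0,\mu}^{\nu} = c_{\omega\Cdot\lambda_0,\mu}^{\omega\Cdot\nu} \leq 1$ via Lemmas \ref{lem:VerlindecoefficientsFundamentalgroup} and \ref{lem:MFboundVerlindecoeff}. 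Your split of the boundary case ($b=0$, handled by Corollaries \ref{cor:B2onedimensionalweightspaces} and \ref{cor:onedimensionalweightspacesMF}) versus the paper's split ($a'=\tfrac{p-3}{2}$) is only cosmetic, as both reduce to the weight $\tfrac{p-1}{2}\varpi_1$ and both leave $\mu \in C_0$ available for the Verlinde argument in the remaining case. Where you genuinely diverge is the treatment of $p$-singular composition factors on $F_{1,2}$. The paper notes that such a factor forces $\lambda+\mu\in F_{1,2}$ and $\delta=\lambda+\mu-c\alpha_1$ (possible because $(\alpha_1,\alpha_\mathrm{h}^\vee)=0$, so the wall is invariant under shifts by $\alpha_1$), and then quotes the Levi-truncation Remark \ref{rem:LeviprestrctedconditionB2}, which gives multiplicity exactly one. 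You instead bound the multiplicity by $\dim L(\mu)_{\nu-\lambda}$ using Lemma \ref{lem:boundformultiplicityintensorproductifCR}, and show this weight space is at most one-dimensional by comparing values of $(\,\cdot\,,\alpha_\mathrm{h}^\vee)$ on the dominant conjugate against Lemma \ref{lem:B2_weightspacedimensions_api1}; your chain of inequalities is sound (it in fact shows singular factors can only occur when $a'=a-1$). This is precisely the strategy the paper itself deploys for the singular factors in Propositions \ref{prop:B2MFC2_a+b=p-1}, \ref{prop:B2MFC3_2a+b=2p-2} and \ref{prop:B2MFC3_2a+b=2p-1}, there packaged as Lemma \ref{lem:B2ondimensionalweightspacesboundary}. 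Both routes are valid: the paper's is shorter here because the Levi remark applies immediately on $F_{1,2}$, while yours avoids having to identify the singular highest weights explicitly.
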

\begin{proof}
	First observe that the tensor product $L(\lambda) \otimes L(\mu)$ is completely reducible by Theorem \ref{thm:reflectionsmalltensorproduct}.
	Since $\mu$ is reflection small with respect to $\lambda$, we have $a+b+a^\prime \leq p-2$ (cf.\ Example \ref{ex:B2reflectionsmall}), and using the assumption that $2a+b = p-1$, it follows that $a^\prime \leq a-1 \leq \frac{p-3}{2}$.
	If $a^\prime = \frac{p-3}{2}$ then $a = \frac{p-1}{2}$ and $b=0$, so all weight spaces of $L(\lambda)$ are at most one-dimensional by Corollary \ref{cor:B2onedimensionalweightspaces}, and $L(\lambda) \otimes L(\mu)$ is multiplicity free by Corollary \ref{cor:onedimensionalweightspacesMF}.
	Now assume that $a^\prime < \frac{p-3}{2}$, so that $\mu \in C_0$, and consider the weights
	\[ \lambda_0 = s\Cdot\lambda \in C_0  \qquad \lambda' = s\omega \Cdot \lambda_0 = (a+b-1) \cdot \varpi_1 + \varpi_2 \in C_1 ; \]
	cf.\ Remark \ref{rem:B2stabC0}.
	As in the proof of Lemma \ref{lem:MFboundVerlindecoeff}, we have
	\[ \big( L(\lambda)\otimes L(\mu) \big)_\mathrm{reg} \cong \bigoplus_{\nu\in C_0 \cap X} L(s\Cdot \nu)^{\oplus c_{\lambda_0,\mu}^{\nu}} , \]
	and we claim that $c_{\lambda_0,\mu}^\nu \leq 1$ for all $\nu \in C_0 \cap X$.
	Indeed, the weight $\mu$ is reflection small with respect to $\lambda'$ by Lemma \ref{lem:omega_reflection_small}, so $L(\lambda') \otimes L(\mu)$ is multiplicity free by Proposition \ref{prop:B2MFC1_b=1}, and $c_{\lambda_0,\mu}^\nu = c_{\omega\Cdot\lambda_0,\mu}^{\omega\Cdot\nu} \leq 1$ by Lemmas \ref{lem:VerlindecoefficientsFundamentalgroup} and \ref{lem:MFboundVerlindecoeff}.
	As $L(\lambda) \otimes L(\mu)$ is completely reducible, and as all simple $G$-modules with $p$-regular highest weights are regular, it follows that $[ L(\lambda) \otimes L(\mu) : L(\nu) ] \leq 1$ for all weights $\nu \in X^+$ that are $p$-regular.
	Now let $\delta \in X^+$ be a $p$-singular weight and suppose that $L(\delta)$ is a composition factor of $L(\lambda) \otimes L(\mu)$.
	Again by Theorem \ref{thm:reflectionsmalltensorproduct}, we have $\delta \in F_{1,2}$, and this forces that $\lambda+\mu \in F_{1,2}$ and $\delta = \lambda + \mu - c\alpha_1$ for some $c \geq 0$.
	Then Remark \ref{rem:LeviprestrctedconditionB2} shows that $[ L(\lambda) \otimes L(\mu) : L(\nu) ] = 1$, and we conclude that $L(\lambda) \otimes L(\mu)$ is multiplicity free.
\end{proof}

Next we consider the conditions 8d, 8d\om, 8e and 8e\om{} from Table \ref{tab:B2MF}.

\begin{Proposition}\label{prop:B2MFC1_b=0_2a+b=p-2}
	Let $\lambda=a\varpi_1+b\varpi_2\in C_1 \cap X$ with $b=0$ or $2a+b=p-2$, and let $\mu=a'\varpi_1+b'\varpi_2 \in X^+$ such that $\mu$ is reflection small with respect to $\lambda$.
	If $b'\in\{0,1\}$ or $a'=0$ then the tensor product $L(\lambda)\otimes L(\mu)$ is multiplicity free.
\end{Proposition}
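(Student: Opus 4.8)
The plan is to exploit the structure of Proposition \ref{prop:B2MFC1_2a+b=p-1}, whose proof I would mirror, together with the two dimension lemmas just established (Lemmas \ref{lem:B2ondimensionalweightspacesboundary} and \ref{lem:B2_weightspacedimensions_api1}). First I would record that $L(\lambda)\otimes L(\mu)$ is completely reducible by Theorem \ref{thm:reflectionsmalltensorproduct}, since $\mu$ is reflection small with respect to $\lambda$. The strategy then splits into the two sub-cases allowed by the hypothesis ``$b'\in\{0,1\}$ or $a'=0$'', and within each I want to bound $[L(\lambda)\otimes L(\mu):L(\nu)]\le 1$ for every $\nu\in X^+$. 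Because the tensor product is completely reducible, Lemma \ref{lem:boundformultiplicityintensorproductifCR} gives the key reduction: $[L(\lambda)\otimes L(\mu):L(\nu)]\le\dim L(\mu)_{\nu-\lambda}$ when I take $M=L(\mu)$ with $\lambda$ as the distinguished factor, and symmetrically $\le\dim L(\lambda)_{\nu-\mu}$ with the roles reversed. So the whole proposition reduces to showing that the relevant weight spaces are at most one-dimensional.

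Concretely, when $a'=0$ we have $\mu=b'\varpi_2$, and all weight spaces of $L(b'\varpi_2)=\Delta(b'\varpi_2)$ are at most one-dimensional (these are the symmetric-power-type modules on the minuscule $\varpi_2$, as used in Lemma \ref{lem:A2onedimensionalweightspaces}); hence by Lemma \ref{lem:boundformultiplicityintensorproductifCR} applied with the factor $L(\mu)$ we get $[L(\lambda)\otimes L(\mu):L(\nu)]\le\dim L(\mu)_{\nu-\lambda}\le 1$, and multiplicity freeness follows, exactly as in Corollary \ref{cor:onedimensionalweightspacesMF}. When $b'\in\{0,1\}$ I would instead bound via the other factor: $\mu=a'\varpi_1$ or $\mu=a'\varpi_1+\varpi_2$, and here Lemmas \ref{lem:B2_weightspacedimensions_api1} and \ref{lem:B2_weightspacedimensions_api1+pi2} give precise weight-space dimensions for $\Delta(a'\varpi_1)$ and $\Delta(a'\varpi_1+\varpi_2)$ which can be $>1$ in the interior, so a naive bound is not enough and I must use that reflection smallness confines the relevant weights to a region where the bound is sharp. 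This is where the hypothesis $b=0$ or $2a+b=p-2$ enters: it forces $\lambda$ to lie on (or adjacent to) a wall of $C_1$, so that the weights $\nu-\mu$ arising in $L(\lambda)$ are pushed to the boundary of the weight polytope where Lemma \ref{lem:B2ondimensionalweightspacesboundary} applies, giving $\dim L(\lambda)_{\nu-\mu}\le 1$.

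The cleanest uniform route, which I would prefer, is to imitate Proposition \ref{prop:B2MFC1_2a+b=p-1} directly: use the regular-part formula
\[
\big( L(\lambda)\otimes L(\mu) \big)_\mathrm{reg} \cong \bigoplus_{\nu\in C_0 \cap X} L(s\Cdot\nu)^{\oplus c_{\lambda_0,\mu}^{\nu}},
\]
where $\lambda_0=s\Cdot\lambda\in C_0$, and reduce the bound on Verlinde coefficients $c_{\lambda_0,\mu}^\nu\le 1$ to an already-proven multiplicity-free case via Lemma \ref{lem:omega_reflection_small} and the $\Omega$-invariance in Lemma \ref{lem:VerlindecoefficientsFundamentalgroup}. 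When $2a+b=p-2$ the reflection $\omega$ carries $\lambda$ to a weight of the shape $a''\varpi_1+\varpi_2$ or $a''\varpi_1$ covered by Propositions \ref{prop:B2MFC1_b=1} or (after a further reduction) \ref{prop:B2MFC1_2a+b=p-1}, so $c_{\lambda_0,\mu}^\nu\le 1$ follows from Lemma \ref{lem:MFboundVerlindecoeff}. This handles the $p$-regular composition factors; the $p$-singular ones lie in $F_{1,2}$ by Theorem \ref{thm:reflectionsmalltensorproduct}, and there Remark \ref{rem:LeviprestrctedconditionB2} gives multiplicity exactly one, just as at the end of Proposition \ref{prop:B2MFC1_2a+b=p-1}.

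The main obstacle I anticipate is the case $b=1$ (rather than $b=0$) combined with $b'=1$, where neither factor obviously has one-dimensional weight spaces and the boundary Lemma \ref{lem:B2ondimensionalweightspacesboundary} must be invoked with some care about which wall of the weight polytope the weight $\nu-\mu$ or $\nu-\lambda$ lands on. I expect that reflection smallness, via the explicit inequalities $a+b+a'+b'\le p-2$ from Example \ref{ex:B2reflectionsmall}, is exactly strong enough to guarantee $\nu-\lambda$ meets the hypothesis $b''=2a+b$ or $a''=a+b$ of Lemma \ref{lem:B2ondimensionalweightspacesboundary}; verifying this is the one genuinely computational point, and getting the $\omega$-twisted sub-cases $8\mathrm{d}\om$ and $8\mathrm{e}\om$ to align with the untwisted ones through Lemma \ref{lem:omega_reflection_small} is where I would spend the most attention.
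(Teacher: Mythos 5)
There are two genuine gaps in your proposal. First, your treatment of the case $a'=0$ rests on the claim that all weight spaces of $L(b'\varpi_2)=\Delta(b'\varpi_2)$ are at most one-dimensional, ``as used in Lemma \ref{lem:A2reflectionsmall}''\,--\,but that lemma (Lemma \ref{lem:A2onedimensionalweightspaces}) is a type $\mathrm{A}_2$ statement, where $\Delta(c\varpi_i)$ is a symmetric power of the natural module, and it fails for $\mathrm{Sp}_4(\kk)$: here $2\varpi_2=\alpha_\mathrm{h}$, so $\Delta(2\varpi_2)$ is the ten-dimensional adjoint-type Weyl module with $\dim\Delta(2\varpi_2)_0=2$, a fact the paper itself uses in the proof of Proposition \ref{prop:B2_C1_multiplicity_2}. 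So the bound from Lemma \ref{lem:boundformultiplicityintensorproductifCR} does not give $\leq 1$ for free, and the same problem afflicts your fallback via $\dim L(\lambda)_{\nu-\mu}$ together with Lemmas \ref{lem:B2_weightspacedimensions_api1} and \ref{lem:B2ondimensionalweightspacesboundary}: for $\lambda=a\varpi_1\in C_1$ with $a>\frac{p-1}{2}$ the simple module $L(\lambda)$ also has two-dimensional weight spaces, and you never verify that the relevant weights $\nu-\lambda$ or $\nu-\mu$ land in the boundary region where Lemma \ref{lem:B2ondimensionalweightspacesboundary} applies; you only ``expect'' it.

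Second, your ``cleanest uniform route'' has a circularity. For $2a+b=p-2$ one computes from Remark \ref{rem:B2stabC0} that $\lambda_0=s\Cdot\lambda=(a-1)\varpi_1+b\varpi_2$, $\omega\Cdot\lambda_0=(a-1)\varpi_1$ and $\lambda'=s\omega\Cdot\lambda_0=(p-2-a)\varpi_1$; this has $\varpi_2$-coefficient $0$, so it is \emph{not} of the shape covered by Proposition \ref{prop:B2MFC1_b=1}, and it satisfies $2a''+b''=p-1$ only in the single case $b=1$, so Proposition \ref{prop:B2MFC1_2a+b=p-1} does not cover it either. In other words, the twist sends the case $2a+b=p-2$ precisely to the case $b=0$ of the proposition you are proving (and conversely, twisting a weight with $b=0$ gives one with $2a+b=p-2$), so the two sub-cases cannot bootstrap each other through Lemmas \ref{lem:omega_reflection_small}, \ref{lem:VerlindecoefficientsFundamentalgroup} and \ref{lem:MFboundVerlindecoeff}. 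What is missing is an independent argument for $b=0$: since then $\lambda=a\varpi_1$, the hypotheses $b'=0$, $b'=1$, $a'=0$ put the pair $(\lambda,\mu)$ into cases 2a, 4, 3 of Table \ref{tab:B2MFchar0}, so $L_\C(\lambda)\otimes L_\C(\mu)$ is multiplicity free by Theorem \ref{thm:StembridgeB2}, and complete reducibility plus Proposition \ref{prop:CRandMFchar0impliesMF} gives multiplicity freeness \,--\, exactly the argument of Proposition \ref{prop:B2MFC1_b=1}, which is also how the paper proves this case. Once $b=0$ is settled this way, your Verlinde/$\omega$-twist argument (which is the paper's argument for $2a+b=p-2$, modeled on Proposition \ref{prop:B2MFC1_2a+b=p-1}, including the treatment of $p$-singular factors on $F_{1,2}$ via Remark \ref{rem:LeviprestrctedconditionB2}) goes through.
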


\begin{proof}
	If $b=0$ then we can argue as in the proof of Proposition \ref{prop:B2MFC1_b=1}.
	If $2a+b=p-2$ then we can argue as in the proof of Proposition \ref{prop:B2MFC1_2a+b=p-1}.
\end{proof}

The next proposition shows that $L(\lambda) \otimes L(\mu)$ is multiplicity free for all $\lambda,\mu \in X_1$ that satisfy one of the conditions 8f or 8f\om from Table \ref{tab:B2MF}.

\begin{Proposition}\label{prop:B2MFC1_1dimensionalweightspaces}
	Let $\lambda\in\{\frac{p-1}{2}\varpi_1,\frac{p-3}{2}\varpi_1+\varpi_2\}$ and $\mu \in X^+$ such that $\mu$ is reflection small with respect to $\lambda$.
	Then the tensor product $L(\lambda)\otimes L(\mu)$ is multiplicity free.
\end{Proposition}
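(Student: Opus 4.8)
The plan is to combine the complete reducibility furnished by the reflection smallness hypothesis with the fact that $L(\lambda)$ has one-dimensional weight spaces for both allowed values of $\lambda$. First I would invoke Theorem \ref{thm:reflectionsmalltensorproduct} to conclude that $L(\lambda)\otimes L(\mu)$ is completely reducible, since $\mu$ is reflection small with respect to $\lambda$. This is the only point at which the reflection smallness hypothesis on $\mu$ is used.

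Next I would record that for both choices $\lambda = \frac{p-1}{2}\varpi_1$ and $\lambda = \frac{p-3}{2}\varpi_1 + \varpi_2$, all weight spaces of the simple $G$-module $L(\lambda)$ are at most one-dimensional, which is precisely the content of Corollary \ref{cor:B2onedimensionalweightspaces}.

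The key step is then to apply Corollary \ref{cor:onedimensionalweightspacesMF}, but with the two tensor factors taken in the order opposite to the way they are written in the statement: using the isomorphism $L(\lambda)\otimes L(\mu)\cong L(\mu)\otimes L(\lambda)$, I would let the simple first factor be $L(\mu)$ and let the module with one-dimensional weight spaces be $M = L(\lambda)$. Since the tensor product has already been shown to be completely reducible, Corollary \ref{cor:onedimensionalweightspacesMF} (which rests on the Brundan–Kleshchev bound $[L(\mu)\otimes L(\lambda):L(\nu)]\le \dim L(\lambda)_{\nu-\mu}$ of Lemma \ref{lem:boundformultiplicityintensorproductifCR}) then yields immediately that $L(\lambda)\otimes L(\mu)$ is multiplicity free.

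The only point requiring mild care — the ``obstacle'' such as it is — is that the weight $\mu$ is arbitrary here, subject only to reflection smallness, so $L(\mu)$ need not have small weight spaces; one must therefore bound the composition multiplicities using the weight spaces of $L(\lambda)$ rather than those of $L(\mu)$, which is exactly why the two factors have to be interchanged before applying the criterion. Beyond this bookkeeping the argument is a direct chain of citations, with no character computation needed.
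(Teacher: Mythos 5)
Your proof is correct and is essentially identical to the paper's own argument: complete reducibility via Theorem \ref{thm:reflectionsmalltensorproduct}, the one-dimensionality of the weight spaces of $L(\lambda)$ via Corollary \ref{cor:B2onedimensionalweightspaces}, and then Corollary \ref{cor:onedimensionalweightspacesMF} applied with $M = L(\lambda)$ as the factor with small weight spaces and $L(\mu)$ as the simple factor. The commutation of the two tensor factors that you carefully flag is exactly what is implicit in the paper's citation of Corollary \ref{cor:onedimensionalweightspacesMF}, so there is no gap.
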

\begin{proof}
	The tensor product $L(\lambda)\otimes L(\mu)$ is completely reducible by Theorem \ref{thm:reflectionsmalltensorproduct}, and all weight spaces of $L(\lambda)$ are at most one-dimensional by Corollary \ref{cor:B2onedimensionalweightspaces}.
	Now the claim follows from Corollary \ref{cor:onedimensionalweightspacesMF}.
\end{proof}

Finally, it remains to consider the conditions 8g--8k from Table \ref{tab:B2MF}.

\begin{Proposition}\label{prop:B2MFC2_a+b=p-1}
	Let $\lambda=a\varpi_1+b\varpi_2\in C_2 \cap X$ with $a+b=p-1$ and $\mu=a'\varpi_1+b'\varpi_2 \in X^+$ such that $\mu$ is reflection small with respect to $\lambda$.
	If $a'=0$ or $b'=0$ then the tensor product $L(\lambda)\otimes L(\mu)$ is multiplicity free.
\end{Proposition}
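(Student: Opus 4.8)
The tensor product $L(\lambda)\otimes L(\mu)$ is completely reducible by Theorem \ref{thm:reflectionsmalltensorproduct}, and all of its composition factors have highest weights in the upper closure $\widehat{C}_2$ of $C_2$. Note that $C_2 \cap X \neq \emptyset$ forces $p \geq 5$, and that $\lambda \in C_2$ with $a+b = p-1$ gives $1 \leq a \leq p-3$ and $b \geq 2$. The plan is to bound the multiplicities of the $p$-regular and $p$-singular composition factors separately. I would first record that, since $a'=0$ or $b'=0$, the reflection smallness inequalities of Example \ref{ex:B2reflectionsmall} for $\lambda \in \widehat{C}_2$ (namely $2a+b+2a'+b' \leq 2p-3$ and $b+2a'+b' \leq p-1$) combine with $a+b=p-1$ to give $2a' \leq a$ (resp.\ $b'\leq a$), whence a short computation shows that $\mu$ is $p$-regular and lies in $C_0 \cap X$.

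For the $p$-regular factors I would pass to the Verlinde category. Writing $C_2 = st \Cdot C_0$ with $st \in W_\mathrm{aff}^+$, set $\lambda_0 = ts \Cdot \lambda \in C_0 \cap X$, so that $\lambda = st \Cdot \lambda_0$. The key step — an explicit computation with the dot actions of $s = s_{\alpha_\mathrm{hs},1}$ and $t = s_{\alpha_1}$, using the hypothesis $a+b = p-1$ — is that $\lambda_0 = (p-3-a)\varpi_2$ is a \emph{pure multiple of $\varpi_2$}. Every $p$-regular composition factor has the form $L(st\Cdot\nu')$ with $\nu' \in C_0 \cap X$, and by Lemma \ref{lem:MFboundVerlindecoeff} its multiplicity equals the Verlinde coefficient $c_{\lambda_0,\mu}^{\nu'}$. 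Since $\lambda_0$ is a multiple of $\varpi_2$ and $\mu$ is a multiple of $\varpi_1$ (if $b'=0$) or of $\varpi_2$ (if $a'=0$), the product $L_\C(\lambda_0) \otimes L_\C(\mu)$ is multiplicity free in characteristic zero by Stembridge's classification (Theorem \ref{thm:StembridgeB2}, condition 3 resp.\ 2b). Hence $c_{\lambda_0,\mu}^{\nu'} \leq 1$ by Lemma \ref{lem:MFchar0boundVerlindecoeff}, and as $L(\lambda)\otimes L(\mu)$ is completely reducible, each $p$-regular factor occurs with multiplicity at most one.

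For the $p$-singular factors I would use the multiplicity formula $\big[ L(\lambda)\otimes L(\mu) : L(\nu) \big] = \sum_{u \in W_{C_2}} (-1)^{\ell(u)} \dim L(\mu)_{u\Cdot\nu-\lambda}$ from Theorem \ref{thm:reflectionsmalltensorproduct}(3). The two walls of $C_2$ in its upper closure are $F_{2,3} = H_{\alpha_\mathrm{hs},2}$ and $F_{2,3a} = H_{\alpha_2,1}$, whose roots $\alpha_\mathrm{hs}$ and $\alpha_2$ are orthogonal; consequently $W_{C_2} = \langle s_1, s_2 \rangle$ is abelian of order four, where $s_1, s_2$ denote the reflections in these two walls. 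A $p$-singular weight $\nu \in \widehat{C}_2$ lies on $F_{2,3}$ or $F_{2,3a}$ and is therefore fixed by the corresponding generator $s_i$. Because $W_{C_2}$ is abelian, $(s_i u)\Cdot\nu = (u s_i)\Cdot\nu = u \Cdot (s_i \Cdot \nu) = u\Cdot\nu$ for all $u$, so the summands $\dim L(\mu)_{u\Cdot\nu-\lambda}$ cancel in pairs of opposite sign and the multiplicity of $L(\nu)$ vanishes. Thus there are no $p$-singular composition factors at all, and together with the previous paragraph this shows that $L(\lambda)\otimes L(\mu)$ is multiplicity free.

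I expect the main obstacle to be the pure-$\varpi_2$ identification of $\lambda_0$: it is precisely the boundary hypothesis $a+b=p-1$ that pushes the $C_0$-representative of $\lambda$ onto the $\varpi_2$-axis, and this is what allows the genuinely characteristic-$p$ Verlinde coefficients to be bounded by a characteristic-zero multiplicity-free product. This detour is unavoidable, since $L_\C(\lambda)\otimes L_\C(\mu)$ itself is in general \emph{not} multiplicity free, so Proposition \ref{prop:CRandMFchar0impliesMF} cannot be applied directly to $\lambda$ and $\mu$.
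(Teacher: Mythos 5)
Your handling of the $p$-regular composition factors is correct and is essentially the paper's own argument: all composition factors have highest weights in $\widehat{C}_2$ by Theorem \ref{thm:reflectionsmalltensorproduct}, the $p$-regular ones are of the form $L(st\Cdot\nu')$ with $\nu'\in C_0\cap X$, their multiplicities equal Verlinde coefficients $c_{\lambda_0,\mu}^{\nu'}$ by Lemma \ref{lem:MFboundVerlindecoeff} together with complete reducibility, and these are bounded by $1$ via Lemma \ref{lem:MFchar0boundVerlindecoeff} and Theorem \ref{thm:StembridgeB2}, since $\lambda_0=ts\Cdot\lambda$ is a multiple of a single fundamental weight while $\mu$ is a multiple of $\varpi_1$ or $\varpi_2$. (Your computation $\lambda_0=(b-2)\varpi_2=(p-3-a)\varpi_2$ is in fact the correct one; the paper's proof writes $(b-2)\varpi_1$, which appears to be a typo, and in either case Stembridge's classification applies.)

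The treatment of the $p$-singular factors, however, contains a genuine error. The group $W_C$ is defined as $\langle s\in S_C \mid s\Cdot C\uparrow C\rangle$, i.e.\ it is generated by the reflections in those walls of $C$ that belong to the \emph{lower} closure of $C$ --- equivalently, by the reflections $s$ with $s\Cdot\lambda<\lambda$ for $\lambda\in\widehat{C}$ (cf.\ Corollary \ref{cor:reflectionsmallorbit} and Remark \ref{rem:stabilizerupperclosure}) --- and \emph{not} by the reflections in the walls of the upper closure. For $C_2$ the unique wall in the lower closure is $F_{1,2}=H_{\alpha_\mathrm{h},1}$, so $W_{C_2}=\{e,s_{\alpha_\mathrm{h},1}\}$ has order two; it is not the order-four group generated by the reflections in $F_{2,3}$ and $F_{2,3a}$. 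Consequently a $p$-singular weight $\nu$ on $F_{2,3}$ or $F_{2,3a}$ is not fixed by any nontrivial element of $W_{C_2}$, your pairwise cancellation does not occur, and your conclusion that $L(\lambda)\otimes L(\mu)$ has no $p$-singular composition factors is false. Concretely, reflection smallness permits equality in the inequalities of Example \ref{ex:B2reflectionsmall}, so $\lambda+\mu$ itself may lie on $F_{2,3}$ or $F_{2,3a}$, and then $L(\lambda+\mu)$ is a $p$-singular composition factor of multiplicity one: for $p=7$, $\lambda=\varpi_1+5\varpi_2$ and $\mu=\varpi_2$, the summand $L(\varpi_1+6\varpi_2)$ occurs (Corollary \ref{cor:minusculetensorproductMFCR}) and $\varpi_1+6\varpi_2\in F_{2,3a}$. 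What your argument is missing is a bound of the singular multiplicities by one rather than by zero: the paper achieves this by noting that a factor $L(\delta)$ with $\delta\in F_{2,3}$ must satisfy $\delta=\lambda+\mu-c\alpha_1$, whose multiplicity is one by Remark \ref{rem:LeviprestrctedconditionB2}, while for $\delta\in F_{2,3a}$ it bounds the multiplicity by $\dim L(\mu)_{\delta-\lambda}$ (Lemma \ref{lem:boundformultiplicityintensorproductifCR}) and shows that this weight space is at most one-dimensional using Lemma \ref{lem:B2ondimensionalweightspacesboundary}.
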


\begin{proof}
	The tensor product $L(\lambda) \otimes L(\mu)$ is completely reducible by Theorem \ref{thm:reflectionsmalltensorproduct}, and with
	\[ \lambda_0 = ts \Cdot \lambda = (b-2) \cdot \varpi_1 \in C_0 , \]
	the tensor product $L_\C(\lambda_0) \otimes L_\C(\mu)$ is multiplicity free by Theorem \ref{thm:StembridgeB2}.
	This implies that $c_{\lambda_0,\mu}^\nu \leq 1$ for all $\nu \in C_0 \cap X$ by Lemma \ref{lem:MFboundVerlindecoeff}, and as in the proof of Proposition \ref{prop:B2MFC1_2a+b=p-1}, it follows that
	\[ [L(\lambda) \otimes L(\mu) : L(\nu)] \leq 1 \]
	for all weights $\nu \in X^+$ that are $p$-regular.
	Now let $\delta \in X^+$ be a $p$-singular weight and suppose that $L(\delta)$ is a composition factor of $L(\lambda) \otimes L(\mu)$.
	Again by Theorem \ref{thm:reflectionsmalltensorproduct}, the weight $\delta$ must belong to one of the walls $F_{2,3}$ and $F_{2,3a}$ of $C_2$.
	If $\delta \in F_{2,3}$ then $\lambda+\mu \in F_{2,3}$ and $\delta = \lambda+\mu - c \alpha_1$ for some $c \geq 0$ because $\mu$ is reflection small with respect to $\lambda$, and Remark \ref{rem:LeviprestrctedconditionB2} implies that $[ L(\lambda) \otimes L(\mu) : L(\delta) ] = 1$.
	Now suppose that $\delta \in F_{2,3a}$, and observe that
	\[ 1 \leq [L(\lambda)\otimes L(\mu):L(\delta)] \leq \dim L(\mu)_{\delta-\lambda} \]
	by Lemma \ref{lem:boundformultiplicityintensorproductifCR}.
	Since $\mu$ is reflection small with respect to $\lambda$, we have
	\[ b+2a'+b' \leq p-1 = (\delta,\alpha_2^\vee) = b + (\delta-\lambda,\alpha_2^\vee) , \]
	and as $\delta-\lambda$ is a weight of $L(\mu)$, it follows that $(\delta-\lambda,\alpha_2^\vee) = 2a'+b'$.
	Now Lemmas \ref{lem:boundformultiplicityintensorproductifCR} and \ref{lem:B2ondimensionalweightspacesboundary} imply that
	\[ [ L(\lambda) \otimes L(\mu) : L(\delta) ] \leq \dim L(\mu)_{\delta-\lambda} \leq 1 , \]
	and we conclude that $L(\lambda) \otimes L(\mu)$ is multiplicity free.
\end{proof}

\begin{Proposition}\label{prop:B2MFC3_2a+b=2p-2}
	Let $\lambda=a\varpi_1+b\varpi_2\in C_3 \cap X$ with $2a+b=2p-2$ and $\mu=a'\varpi_1+b'\varpi_2 \in X^+$ such that $\mu$ is reflection small with respect to $\lambda$.
	If $b'\in \{0,1\}$ or $a'=0$ then the tensor product $L(\lambda)\otimes L(\mu)$ is multiplicity free.
\end{Proposition}
\begin{proof}
	The tensor product $L(\lambda) \otimes L(\mu)$ is completely reducible by Theorem \ref{thm:reflectionsmalltensorproduct}, and with
	\[ \lambda_0 = uts \Cdot \lambda = (p-2-a) \cdot \varpi_1 \in C_0 , \]
	the tensor product $L_\C(\lambda_0) \otimes L_\C(\mu)$ is multiplicity free by Theorem \ref{thm:StembridgeB2}.
	This implies that $c_{\lambda_0,\mu}^\nu \leq 1$ for all $\nu \in C_0 \cap X$ by Lemma \ref{lem:MFboundVerlindecoeff}, and as in the proof of Proposition \ref{prop:B2MFC1_2a+b=p-1}, we conclude that
	\[ [ L(\lambda) \otimes L(\mu) : L(\nu) ] \leq 1 \]
	for all weights $\nu \in X^+$ that are $p$-regular.
	Now suppose that $\delta \in X^+$ is a $p$-singular weight such that $L(\delta)$ is a composition factor of $L(\lambda) \otimes L(\mu)$, and note that $\delta$ belongs to one of the walls $F_{3,4a}$ and $F_{3,4b}$ of $C_3$ by Theorem \ref{thm:reflectionsmalltensorproduct}.
	In either case, we can argue as in the proof of Proposition \ref{prop:B2MFC2_a+b=p-1}, using Lemmas \ref{lem:boundformultiplicityintensorproductifCR} and \ref{lem:B2ondimensionalweightspacesboundary} to show that
	\[ [ L(\lambda) \otimes L(\mu) : L(\delta) ] \leq \dim L(\mu)_{\delta-\lambda} \leq 1 , \]
	and it follows that $L(\lambda) \otimes L(\mu)$ is multiplicity free.
\end{proof}

\begin{Proposition}\label{prop:B2MFC3_2a+b=2p-1}
	Let $\lambda=a\varpi_1+b\varpi_2\in C_3 \cap X$ with $2a+b = 2p-1$ and $\mu = a'\varpi_1 \in X^+$ such that $\mu$ is reflection small with respect to $\lambda$.
	Then the tensor product $L(\lambda)\otimes L(\mu)$ is multiplicity free.
\end{Proposition}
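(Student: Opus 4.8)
The plan is to follow the same strategy as in Propositions \ref{prop:B2MFC2_a+b=p-1} and \ref{prop:B2MFC3_2a+b=2p-2}. First I would note that $L(\lambda) \otimes L(\mu)$ is completely reducible by Theorem \ref{thm:reflectionsmalltensorproduct}, and (assuming $a' \geq 1$, the case $a'=0$ being trivial) I would pass to the fundamental alcove by setting $\lambda_0 = uts \Cdot \lambda$. A direct computation with the dot action gives $\lambda_0 = (p-a-2) \cdot \varpi_1 + \varpi_2$, and using $2a+b = 2p-1$ together with $\lambda \in C_3$ (so $a > \frac{p}{2}$ and $a \leq p-2$) one checks that $\lambda_0 \in C_0$ and $\lambda = stu \Cdot \lambda_0$ with $stu \in W_\mathrm{aff}^+$. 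The crucial structural point is that $\lambda_0$ has $\varpi_2$-coefficient exactly $1$: this places the pair $(\lambda_0, \mu)$ in case 4 of Table \ref{tab:B2MFchar0}, so that $L_\C(\lambda_0) \otimes L_\C(\mu)$ is multiplicity free by Theorem \ref{thm:StembridgeB2}. Finally, the reflection smallness conditions from Example \ref{ex:B2reflectionsmall} (namely $a + a' \leq p-1$ and $b + 2a' \leq p-1$, using $b' = 0$) force $2a' \leq p-4$, so that $\mu = a' \varpi_1 \in C_0$ as well.

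With these weights in $C_0$, Lemma \ref{lem:MFchar0boundVerlindecoeff} yields $c_{\lambda_0, \mu}^\nu \leq 1$ for all $\nu \in C_0 \cap X$. Exactly as in the proof of Proposition \ref{prop:B2MFC1_2a+b=p-1}, the regular part formula underlying Lemma \ref{lem:MFboundVerlindecoeff} gives
\[ \big( L(\lambda) \otimes L(\mu) \big)_\mathrm{reg} \cong \bigoplus_{\nu \in C_0 \cap X} L(stu \Cdot \nu)^{\oplus c_{\lambda_0,\mu}^\nu} . \]
Since $L(\lambda) \otimes L(\mu)$ is completely reducible, its $p$-regular composition factors are precisely its regular simple summands, and by Theorem \ref{thm:reflectionsmalltensorproduct}(2) these all lie in $C_3 = stu \Cdot C_0$; hence $[L(\lambda) \otimes L(\mu) : L(\nu')] = c_{\lambda_0,\mu}^\nu \leq 1$ for every $p$-regular weight $\nu'$.

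It then remains to bound the multiplicities of the $p$-singular composition factors. By Theorem \ref{thm:reflectionsmalltensorproduct}(2), any such factor $L(\delta)$ has $\delta$ in the upper closure of $C_3$, hence on one of the walls $F_{3,4a}$ or $F_{3,4b}$. I would treat these two walls as in the proof of Proposition \ref{prop:B2MFC2_a+b=p-1}: if $\delta \in F_{3,4a}$ then $(\delta, \alpha_1^\vee) = p-1$, and combining this with the reflection smallness bound $a + a' \leq p-1$ and the fact that $\delta - \lambda$ is a weight of $L(\mu)$ (so $(\delta-\lambda, \alpha_1^\vee) \leq a'$) forces $(\delta - \lambda, \alpha_1^\vee) = a'$; similarly, if $\delta \in F_{3,4b}$ then $(\delta - \lambda, \alpha_2^\vee) = 2a'$. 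In either case Lemma \ref{lem:B2ondimensionalweightspacesboundary} gives $\dim L(\mu)_{\delta - \lambda} \leq 1$, and Lemma \ref{lem:boundformultiplicityintensorproductifCR} then yields $[L(\lambda) \otimes L(\mu) : L(\delta)] \leq \dim L(\mu)_{\delta - \lambda} \leq 1$. Combining the regular and singular cases shows every composition multiplicity is at most $1$, so $L(\lambda) \otimes L(\mu)$ is multiplicity free.

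The routine but essential verifications are the dot-action computation identifying $\lambda_0$ and the inequalities placing both $\lambda_0$ and $\mu$ in $C_0$ with $\lambda_0$ of the Stembridge form $a\varpi_1 + \varpi_2$; the only genuinely delicate point is the singular boundary analysis, where reflection smallness must be used to pin down $(\delta - \lambda, \alpha_i^\vee)$ exactly so that Lemma \ref{lem:B2ondimensionalweightspacesboundary} applies. No step presents a serious obstacle, as the argument parallels the preceding propositions.
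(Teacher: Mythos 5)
Your proposal is correct and follows essentially the same route as the paper's proof: pass to $\lambda_0 = uts\Cdot\lambda = (p-2-a)\varpi_1+\varpi_2 \in C_0$, invoke Stembridge (case 4 of Table \ref{tab:B2MFchar0}) to bound the Verlinde coefficients, handle the $p$-regular composition factors via the regular-part decomposition as in Proposition \ref{prop:B2MFC1_2a+b=p-1}, and bound the $p$-singular factors on the walls $F_{3,4a}$ and $F_{3,4b}$ using Lemmas \ref{lem:boundformultiplicityintensorproductifCR} and \ref{lem:B2ondimensionalweightspacesboundary}. The only differences are cosmetic: you spell out the wall computations that the paper delegates to the proof of Proposition \ref{prop:B2MFC2_a+b=p-1}, and you cite Lemma \ref{lem:MFchar0boundVerlindecoeff} for the bound $c_{\lambda_0,\mu}^\nu \leq 1$, which is in fact the apt reference.
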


\begin{proof}
	The tensor product $L(\lambda) \otimes L(\mu)$ is completely reducible by Theorem \ref{thm:reflectionsmalltensorproduct}, and with
	\[ \lambda_0 = uts \Cdot \lambda = \tfrac{b-3}{2} \cdot \varpi_1 + \varpi_2 \in C_0 , \]
	the tensor product $L_\C(\lambda_0) \otimes L_\C(\mu)$ is multiplicity free by Theorem \ref{thm:StembridgeB2}.
	This implies that $c_{\lambda_0,\mu}^\nu \leq 1$ for all $\nu \in C_0 \cap X$ by Lemma \ref{lem:MFboundVerlindecoeff}, and as in the proof of Proposition \ref{prop:B2MFC1_2a+b=p-1}, we conclude that
	\[ [ L(\lambda) \otimes L(\mu) : L(\nu) ] \leq 1 \]
	for all weights $\nu \in X^+$ that are $p$-regular.
	Now suppose that $\delta \in X^+$ is a $p$-singular weight such that $L(\delta)$ is a composition factor of $L(\lambda) \otimes L(\mu)$, and note that $\delta$ belongs to one of the walls $F_{3,4a}$ and $F_{3,4b}$ of $C_3$ by Theorem \ref{thm:reflectionsmalltensorproduct}.
	In either case, we can argue as in the proof of Proposition \ref{prop:B2MFC2_a+b=p-1}, using Lemmas \ref{lem:boundformultiplicityintensorproductifCR} and \ref{lem:B2ondimensionalweightspacesboundary} to show that
	\[ [ L(\lambda) \otimes L(\mu) : L(\delta) ] \leq \dim L(\mu)_{\delta-\lambda} \leq 1 , \]
	and it follows that $L(\lambda) \otimes L(\mu)$ is multiplicity free.
\end{proof}

Now we are ready to establish the sufficient conditions in Theorem \ref{thm:B2MF}, that is, we prove that for weights $\lambda,\mu \in X_1 \setminus \{ 0 \}$ that satisfy one of the conditions from Table \ref{tab:B2MF}, the tensor product $L(\lambda) \otimes L(\mu)$ is multiplicity free.

\begin{proof}[Proof of Theorem \ref{thm:B2MF}, sufficient conditions]
\label{proof:B2MFsufficient}
	Let $\lambda,\mu \in X_1 \setminus \{0\}$ and suppose that $\lambda$ and $\mu$ satisfy one of the conditions from Table \ref{tab:B2MF}.
	We consider the different conditions in turn.
	\begin{itemize}
	\item Condition 1: $L(\lambda) \otimes L(\mu)$ is multiplicity free by Proposition \ref{prop:B2MFpairsinC1}.
	\item Condition 2: $L(\lambda) \otimes L(\mu)$ is multiplicity free by Proposition \ref{prop:B2MFCRp-1_1}.
	\item Condition 3: $L(\lambda) \otimes L(\mu)$ is multiplicity free by Proposition \ref{prop:B2MFCRp-1_2}.
	\item Condition 4: $L(\lambda) \otimes L(\mu)$ is multiplicity free by Proposition \ref{prop:B2MFCRp-21}.
	\item Condition 5: $L(\lambda) \otimes L(\mu)$ is multiplicity free by Proposition \ref{prop:B2MFCRp-2_2}.
	\item Condition 6: $L(\lambda) \otimes L(\mu)$ is multiplicity free by Proposition \ref{prop:B2MFCRp-2_1}.
	\item Condition 7: $L(\lambda) \otimes L(\mu)$ is multiplicity free by Proposition \ref{prop:B2MFCRp-3_2}.
	\item Condition 8a: $L(\lambda) \otimes L(\mu)$ is multiplicity free by Proposition \ref{prop:B2MFC0C0}.
	\item Condition 8b: $L(\lambda) \otimes L(\mu)$ is multiplicity free by Proposition \ref{prop:B2MFfundamentalweight}.
	\item Condition 8c: $L(\lambda) \otimes L(\mu)$ is multiplicity free by Proposition \ref{prop:B2MFC1_b=1}.
	\item Condition 8c\om: $L(\lambda) \otimes L(\mu)$ is multiplicity free by Proposition \ref{prop:B2MFC1_2a+b=p-1}.
	\item Conditions 8d, 8d\om, 8e or 8e\om: $L(\lambda) \otimes L(\mu)$ is multiplicity free by Proposition \ref{prop:B2MFC1_b=0_2a+b=p-2}.
	\item Conditions 8f or 8f\om: $L(\lambda) \otimes L(\mu)$ is multiplicity free by Proposition \ref{prop:B2MFC1_1dimensionalweightspaces}.
	\item Conditions 8g or 8h: $L(\lambda) \otimes L(\mu)$ is multiplicity free by Proposition \ref{prop:B2MFC2_a+b=p-1}.
	\item Conditions 8i or 8j: $L(\lambda) \otimes L(\mu)$ is multiplicity free by Proposition \ref{prop:B2MFC3_2a+b=2p-2}.
	\item Condition 8k: $L(\lambda) \otimes L(\mu)$ is multiplicity free by Proposition \ref{prop:B2MFC3_2a+b=2p-1}.
	\end{itemize}
	Thus, if $\lambda$ and $\mu$ satisfy one of the conditions from Table \ref{tab:B2MF} then $L(\lambda) \otimes L(\mu)$ is multiplicity free.
\end{proof}

\subsection{Proofs: multiplicity freeness, necessary conditions}\label{subsec:B2proofsMF}
\label{subsec:B2proofsMFnecessary}

In this subsection, we prove that for all $\lambda,\mu \in X_1 \setminus \{0\}$ such that the tensor product $L(\lambda) \otimes L(\mu)$ is multiplicity free, the weights $\lambda$ and $\mu$ satisfy one of the conditions from Table \ref{tab:B2MF}, up to interchanging $\lambda$ and $\mu$ (see page \pageref{proof:B2MFnecessary}).
Recall from Lemma \ref{lem:MFimpliesCR} that $L(\lambda) \otimes L(\mu)$ is multiplicity free only if $L(\lambda) \otimes L(\mu)$ is completely reducible.
By Theorem \ref{thm:A2CR}, we may then assume that $\lambda$ and $\mu$ satisfy one of the conditions from Table \ref{tab:B2CR}, and as the conditions 1--7 in Table \ref{tab:B2CR} match the conditions 1--7 in Table \ref{tab:B2MF}, it remains to consider pairs of weights $\lambda$ and $\mu$ such that $\lambda \in C_0 \cup C_1 \cup C_2 \cup C_3$ and $\mu$ is reflection small with respect to $\lambda$ (cf.\ condition 8 in Table \ref{tab:B2CR}).
The case $\lambda \in C_0$ has already been considered in Proposition \ref{prop:B2MFC0C0}, and we consider the alcoves $C_1,C_2,C_3$ in turn, starting with $C_1$.
We will need the following preliminary results.

\begin{Lemma} \label{lem:B2_C1_reflectionsmall}
	Let $\lambda \in C_1 \cap X$ and $\mu \in X_1$ such that $\mu$ is reflection small with respect to $\lambda$.
	Then we have $\mu \in \overline{C}_0$ and $L(\mu) = \Delta(\mu)$.
\end{Lemma}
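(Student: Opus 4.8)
The plan is to work entirely with the explicit coordinate descriptions from Subsection \ref{subsec:B2setup} and the translation of reflection smallness from Example \ref{ex:B2reflectionsmall}, and then to invoke the standard identification $L(\mu) \cong \Delta(\mu)$ valid inside $\overline{C}_0$. Write $\lambda = a\varpi_1 + b\varpi_2$ and $\mu = a'\varpi_1 + b'\varpi_2$ with $a,b,a',b' \geq 0$. Since $\lambda \in C_1$, the description of $C_1$ gives $2a+b \geq p-2$ and $a+b \leq p-3$ (both inequalities being the integral strengthenings of the strict ones), and by Example \ref{ex:B2reflectionsmall} the reflection smallness of $\mu$ with respect to $\lambda$ is precisely the inequality $a+b+a'+b' \leq p-2$. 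I would also recall that $C_1 \cap X \neq \emptyset$ forces $p \geq 5$; in particular $p$ is odd, a fact needed at the very end.

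First I would record the auxiliary bound $a'+b' \leq a$, which is immediate from $a+b+a'+b' \leq p-2 \leq 2a+b$. Since a dominant weight lies in $\overline{C}_0$ exactly when $2a'+b' \leq p-3$ (this is the $w=e$, and in fact maximal, instance of the condition $(\mu,\alpha_\mathrm{hs}^\vee)\leq p-3$ appearing in Example \ref{ex:B2reflectionsmall}), it then remains to prove this single inequality. Writing $2a'+b' = a' + (a'+b')$ and using $a' \leq a'+b' \leq a$ together with reflection smallness in the form $a'+b' \leq p-2-a-b$, I obtain $2a'+b' \leq a + (p-2-a-b) = p-2-b$. For $b \geq 1$ this already yields $2a'+b' \leq p-3$, as required.

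The one remaining case $b=0$ is where the only real subtlety lies: the estimate then gives merely $2a'+b' \leq p-2$, and I would rule out equality by a parity argument. Equality $2a'+b' = p-2$ would force simultaneously $a' = a$ and $a'+b' = p-2-a$, hence $b' = p-2-2a$; but $b=0$ and $\lambda \in C_1$ give $2a \geq p-2$, so $b' \leq 0$, forcing $b'=0$ and $2a = p-2$, which is impossible since $p$ is odd. Thus $2a'+b' \leq p-3$ in all cases and $\mu \in \overline{C}_0$. Finally, since $\mu \in X_1 \subseteq X^+$ is a dominant weight lying in $\overline{C}_0 \cap X$, we have $L(\mu) \cong \Delta(\mu)$ by Corollary II.5.6 in \cite{Jantzen}, which completes the proof. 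Everything apart from the $b=0$ borderline is a routine manipulation of the defining linear inequalities, and that borderline is dispatched cleanly by the oddness of $p$.
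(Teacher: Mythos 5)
Your proof is correct and takes essentially the same route as the paper's: both reduce the claim to the single inequality $2a'+b' \leq p-3$, and both derive it from the coordinate form of reflection smallness ($a+b+a'+b' \leq p-2$), the integral inequalities defining $C_1$, and the oddness of $p$ (forced by $p \geq 5$). The only difference is organizational — the paper uses parity at the outset to get $a+b \geq \tfrac{p-1}{2}$ and then bounds $2a'+b' \leq 2(a'+b')$, while you defer parity to the borderline case $b=0$; both arguments are valid and of the same nature.
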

\begin{proof}
	Write $\lambda = a \varpi_1 + b \varpi_2$ and $\mu = a' \varpi_1 + b' \varpi_2$.
	The assumption that $\lambda \in C_1 \cap X$ implies that $p \geq 5$ and $2a+b \geq p-2$, and as $\mu$ is reflection small with respect to $\lambda$, we have $a + b + a' + b' \leq p-2$.
	As $p$ is odd, we must have $b \geq 1$ or $2a \geq p-1$, and in either case, it follows that $2a + 2b \geq p-1$.
	We conclude that $a'+b' \leq p-2 - (a+b) \leq p-2 - \tfrac{p-1}{2} = \tfrac{p-3}{2}$ and $2a'+b' \leq 2a'+2b' \leq p-3$.
	In particular, we have $\mu \in \overline{C}_0$ and $L(\mu) = \Delta(\mu)$ (see Subsection \ref{subsec:B2Weylmodulestiltingmodules}), as claimed.
\end{proof}

For $\lambda \in C_1 \cap X$ and $\mu \in X_1$ such that $\mu$ is reflection small with respect to $\lambda$, the composition multiplicities in $L(\lambda) \otimes L(\mu)$ are alternating sums of dimensions of weight spaces in $L(\mu) = \Delta(\mu)$ (see the preceding lemma) by Theorem \ref{thm:reflectionsmalltensorproduct}.
The task of computing the dimensions of these weight spaces in $\Delta(\mu)$ can be reduced to a computation in a Weyl module with an explicit highest weight (which can easily be done using Weyl's character formula or a computer), using a result of M.\ Cavallin which we recall below in the special case $G = \mathrm{Sp}_4(\kk)$.

\begin{Remark} \label{rem:Cavallin}
	Let $\mu = a' \varpi_1 + b' \varpi_2 \in X^+$ and $\nu = \mu - c \alpha_1 - d \alpha_2$ for some $c,d \geq 0$, so that $\nu \leq \mu$.
	Further let $\tilde a = \min\{ a' , c \}$ and $\tilde b = \min\{ b' , d \}$, and define $\tilde \mu = \tilde a \varpi_1 + \tilde b \varpi_2$ and $\tilde \nu = \nu - (\mu - \tilde \mu)$.
	Then we have $\dim \Delta(\mu)_\nu = \dim \Delta(\tilde \mu)_{\tilde \nu}$ by Proposition A in \cite{Cavallin}.
\end{Remark}

The five following propositions establish that for $\lambda \in C_1$ and $\mu \in X^+$ such that $\mu$ is reflection small with respect to $\lambda$, the tensor product $L(\lambda) \otimes L(\mu)$ is multiplicity free only if the weights $\lambda$ and $\mu$ satisfy one of the conditions 8b--8f or 8c*--8f* in Table \ref{tab:B2MF} (see Corollary \ref{cor:B2MFC1conditions}).
We say that a $G$-module \emph{has multiplicity} if it is not multiplicity free.

\begin{Proposition}\label{prop:B2_C1_multiplicity_1}
	Let $\lambda=a\varpi_1+b\varpi_2\in C_1 \cap X$ with $2a+b\geq p-1$, $b\geq 1$, and $\mu=a^\prime\varpi_1+b^\prime\varpi_2\in X^+$ such that $\mu$ is reflection small with respect to $\lambda$.
	If $a^\prime\geq 1$ and $b^\prime\geq 1$ then $L(\lambda)\otimes L(\mu)$ has multiplicity.
\end{Proposition}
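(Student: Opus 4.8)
The plan is to read off a single composition factor of multiplicity $2$ directly from the character formula of Theorem \ref{thm:reflectionsmalltensorproduct}. Since $\mu$ is reflection small with respect to $\lambda$ and $\lambda\in C_1$, that theorem gives that $L(\lambda)\otimes L(\mu)$ is completely reducible with all composition factors supported in $\widehat{C}_1$, and that for $\nu\in\widehat{C}_1$
\[ \big[L(\lambda)\otimes L(\mu):L(\nu)\big] = \sum_{w\in W_{C_1}}(-1)^{\ell(w)}\dim L(\mu)_{w\Cdot\nu-\lambda} . \]
By Lemma \ref{lem:B2_C1_reflectionsmall} we moreover have $\mu\in\overline{C}_0$ and $L(\mu)=\Delta(\mu)$, so the dimensions on the right are weight multiplicities in a Weyl module, which I would evaluate using the convex-hull description $\Lambda(\mu)=\mathrm{conv}(W_\mathrm{fin}\mu)\cap(\mu+\Z\Phi)$ together with Cavallin's reduction (Remark \ref{rem:Cavallin}). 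The goal is then to exhibit one weight $\nu$ for which the alternating sum equals $2$.

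First I would pin down $W_{C_1}$. The two walls of $C_1$ that lie in its lower closure are $F_{0,1}=H_{\alpha_\mathrm{hs},1}$ and $H_{\alpha_2,0}$, whose reflections are the simple reflections $s=s_{\alpha_\mathrm{hs},1}$ and $u=s_{\alpha_2}$; these commute because their hyperplanes have orthogonal normals $\alpha_\mathrm{hs}=\varpi_1$ and $\alpha_2$, so $W_{C_1}=\{e,s,u,su\}\cong(\Z/2)^2$, with signs $(-1)^{\ell(w)}$ equal to $+,-,-,+$. Next I take the candidate weight $\nu=\lambda+\mu-\varpi_1=\lambda+\mu-\alpha_\mathrm{hs}$. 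Checking the three defining inequalities of $C_1$ (using $a'\ge1$, $b\ge1$, and $\lambda+\mu\in\widehat{C}_1$, which holds since $\mu$ is reflection small) shows $\nu\in C_1$, so $\nu$ is $p$-regular and the formula applies. For the identity term, $\nu-\lambda=\mu-\alpha_1-\alpha_2$, and Cavallin's reduction with $\tilde\mu=\varpi_1+\varpi_2$ (permissible because $a',b'\ge1$) yields $\dim\Delta(\mu)_{\nu-\lambda}=\dim\Delta(\varpi_1+\varpi_2)_{\varpi_2}=2$, the last equality being a direct character computation ($\dim\Delta(\varpi_1+\varpi_2)=16$, with dominant weights $\varpi_1+\varpi_2$ and $\varpi_2$ of multiplicities $1$ and $2$).

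It remains to show that the other three terms vanish, i.e.\ that $s\Cdot\nu-\lambda$, $u\Cdot\nu-\lambda$ and $su\Cdot\nu-\lambda$ are not weights of $\Delta(\mu)$. Setting $k=(2a+b)+(2a'+b')-(p-2)$, the hypotheses $2a+b\ge p-1$ and $a',b'\ge1$ force $k\ge 2a'+b'+1\ (\ge 4)$, and an explicit dot-action computation gives $s\Cdot\nu-\lambda=\mu-k\varpi_1$, while $u\Cdot\nu-\lambda$ and $su\Cdot\nu-\lambda$ are pushed far into the negative-$\alpha_2$ direction. Passing to the Euclidean coordinates in which $\mathrm{conv}(W_\mathrm{fin}\mu)$ is an octagon cut out by $\|\eta\|_\infty\le a'+b'/2$ and $\|\eta\|_1\le a'+b'$, each of the three weights violates one of these bounds: $\mu-k\varpi_1$ breaks the $\|\cdot\|_\infty$ bound precisely because $k>2a'+b'$, and the $u$- and $su$-weights break the $\|\cdot\|_1$ bound with values $a'+b'+b$ and $\ge a'+b'+b+2$, both exceeding $a'+b'$ because $b\ge1$. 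Hence all three terms are zero and $\big[L(\lambda)\otimes L(\mu):L(\nu)\big]=2$, so $L(\lambda)\otimes L(\mu)$ has multiplicity. I expect this last vanishing check to be the crux of the argument: it is exactly where both hypotheses $2a+b\ge p-1$ and $b\ge1$ are needed, and it is what fails at the boundary value $2a+b=p-2$ (where $\mu-k\varpi_1$ lands \emph{on} the boundary of the weight polytope), a case that must be handled separately.
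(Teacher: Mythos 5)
Your proposal is correct and follows essentially the same route as the paper's proof: the same candidate weight $\nu=\lambda+\mu-\varpi_1$, the same alternating sum over $W_{C_1}=\{e,s,u,su\}$ from Theorem \ref{thm:reflectionsmalltensorproduct}, and the same evaluation $\dim\Delta(\mu)_{\mu-\varpi_1}=\dim\Delta(\varpi_1+\varpi_2)_{\varpi_2}=2$ via Lemma \ref{lem:B2_C1_reflectionsmall} and Remark \ref{rem:Cavallin}. The only cosmetic differences are that you check the vanishing of the non-identity terms through the octagonal weight-polytope inequalities, whereas the paper reflects each weight by a finite Weyl group element and observes it is $\nleq\mu$, and that you also kill the $su$-term (pinning the multiplicity at exactly $2$), whereas the paper simply discards it since it enters with positive sign.
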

\begin{proof}
	Consider the weight $\nu=\lambda+\mu-\varpi_1 = (a+a'-1)\varpi_1+(b+b')\varpi_2$, and observe that $\nu \in C_1$ because $\mu$ is reflection small with respect to $\lambda$ and $2(a+a'-1)+(b+b')>2a+b>p-3$.
	We claim that $[L(\lambda)\otimes L(\mu):L(\nu)]\geq 2$.
	Indeed, by Theorem \ref{thm:reflectionsmalltensorproduct}, we have 
	\begin{align*}
		[ L(\lambda) \otimes L(\mu) : L(\nu) ] & = \sum_{w \in W_{C_1}} (-1)^{\ell(w)} \cdot \dim L(\mu)_{w\Cdot\nu-\lambda} \\
		& = \dim L(\mu)_{\nu-\lambda} - \dim L(\mu)_{s\Cdot\nu - \lambda} - \dim L(\mu)_{u\Cdot\nu - \lambda} + \dim L(\mu)_{su\Cdot\nu - \lambda} \\
		&\geq \dim L(\mu)_{\nu-\lambda} - \dim L(\mu)_{s\Cdot\nu - \lambda} - \dim L(\mu)_{u\Cdot\nu - \lambda} .
	\end{align*}
	The weights in this formula are given by
	\begin{align*}
		\nu -\lambda & = \mu-\varpi_1 , \\
		s\Cdot\nu - \lambda & = \nu - \big( 2(a+a'-1) + (b+b') - (p-3) \big) \cdot \varpi_1 - \lambda = (p-2-b-b'-2a-a')\varpi_1+b'\varpi_2 , \\
		u\Cdot\nu - \lambda & = \nu - (b+b'+1) \cdot \alpha_2 - \lambda = (a'+b'+b)\varpi_1+(-2b-b'-2)\varpi_2 .
	\end{align*}
	We observe that
	\[ s_{\alpha_2}(u\Cdot\nu - \lambda)= (a'-b-2) \varpi_1 + (2b+b'+2) \varpi_2 = \mu + b \alpha_2 - \alpha_1 \nleq \mu \]
because $b \geq 1$, and
\[ s_{\alpha_1+\alpha_2}( \delta - \lambda ) = (a'+2a+b+2-p) \varpi_1 + b' \varpi_2 = \mu + (2a+b-(p-2)) (\alpha_1+\alpha_2) \nleq \mu \]
because $2a+b\geq p-1$, and therefore $L(\mu)_{s\Cdot\nu-\lambda} = 0 $ and $L(\mu)_{u\Cdot\nu-\lambda}=0$.
	By Lemma \ref{lem:B2_C1_reflectionsmall}, we further have $L(\mu) = \Delta(\mu)$, and as $\varpi_1 = \alpha_1+\alpha_2$, we can use Remark \ref{rem:Cavallin} to obtain
	\[ [ L(\lambda) \otimes L(\mu) : L(\nu) ] \geq \dim \Delta(\mu)_{\mu-\varpi_1} = \dim \Delta(\varpi_1+\varpi_2)_{\varpi_2} = 2 , \]
	as claimed.
\end{proof}

\begin{Proposition}\label{prop:B2_C1_multiplicity_2}
	Let $\lambda=a\varpi_1+b\varpi_2\in C_1 \cap X$ with $2a+b\geq p-1$, $b\geq 1$, and $\mu=b^\prime\varpi_2\in X^+$ such that $\mu$ is reflection small with respect to $\lambda$.
	If $b^\prime \geq 2$ then $L(\lambda) \otimes L(\mu)$ has multiplicity.
\end{Proposition}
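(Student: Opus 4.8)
The plan is to follow the strategy of Proposition~\ref{prop:B2_C1_multiplicity_1}: exhibit a single weight $\nu \in C_1$ with $[L(\lambda) \otimes L(\mu) : L(\nu)] \geq 2$, computed from the character formula of Theorem~\ref{thm:reflectionsmalltensorproduct}. Since $\mu$ is reflection small with respect to $\lambda \in C_1$, Lemma~\ref{lem:B2_C1_reflectionsmall} gives $L(\mu) = \Delta(\mu)$, so every weight-space dimension occurring below is one in the Weyl module $\Delta(b'\varpi_2)$. Recalling that $\alpha_\mathrm{h} = 2\varpi_2$, I would take
\[ \nu = \lambda + \mu - \alpha_\mathrm{h} = a\varpi_1 + (b+b'-2)\varpi_2 \]
and first verify $\nu \in C_1$. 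The three defining inequalities of $C_1$ become $b+b'-2 > -1$, $2a+b+b'-2 > p-3$, and $a+b+b'-2 < p-2$; these follow from $b \geq 1$ and $b' \geq 2$, from $2a+b \geq p-1$ together with $b' \geq 2$, and from the reflection-smallness bound $a+b+b' \leq p-2$ (cf.\ Example~\ref{ex:B2reflectionsmall}), respectively.

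As $\nu \in C_1 \subseteq \widehat{C}_1$ and $C_1$ is the alcove whose upper closure contains $\lambda$, Theorem~\ref{thm:reflectionsmalltensorproduct}(3) applies. Using $W_{C_1} = \{e,s,u,su\}$ as in the proof of Proposition~\ref{prop:B2_C1_multiplicity_1}, it yields
\[ [L(\lambda) \otimes L(\mu) : L(\nu)] = \dim \Delta(\mu)_{\nu-\lambda} - \dim \Delta(\mu)_{s\Cdot\nu-\lambda} - \dim \Delta(\mu)_{u\Cdot\nu-\lambda} + \dim \Delta(\mu)_{su\Cdot\nu-\lambda} . \]
The leading term is $\dim \Delta(\mu)_{\nu-\lambda} = \dim \Delta(b'\varpi_2)_{(b'-2)\varpi_2}$; applying Remark~\ref{rem:Cavallin} with reduction data $\tilde\mu = 2\varpi_2$ and $\tilde\nu = 0$ identifies this with $\dim \Delta(2\varpi_2)_0 = 2$, which is immediate from Weyl's character formula.

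It then remains to show that the two subtracted terms vanish. A dot-action computation gives $s\Cdot\nu - \lambda = (p-2a-b-b'-1)\varpi_1 + (b'-2)\varpi_2$ and $u\Cdot\nu - \lambda = (b+b'-1)\varpi_1 - (2b+b')\varpi_2$. Since weight multiplicities in $\Delta(\mu)$ are $W_\mathrm{fin}$-invariant and every weight of $\Delta(\mu)$ is $\leq \mu$, it suffices to move each of these weights by some $w \in W_\mathrm{fin}$ to a weight that is not $\leq \mu$. For the first, applying $w_0 = -\mathrm{id}$ and expanding in the root basis gives
\[ \mu + (s\Cdot\nu-\lambda) = (p-2a-b-2)\alpha_1 + (p-2a-b+b'-3)\alpha_2 , \]
whose $\alpha_1$-coefficient is at most $-1$ exactly because $2a+b \geq p-1$; hence $-(s\Cdot\nu-\lambda) \nleq \mu$. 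For the second, applying $s_{\alpha_1}s_{\alpha_2}$ produces $\mu + b\alpha_1 + (b-1)\alpha_2$, whose difference from $\mu$ is $-b\alpha_1 - (b-1)\alpha_2$ with $\alpha_1$-coefficient $-b < 0$; hence this conjugate is $\nleq \mu$. Both subtracted terms are therefore $0$, while $\dim \Delta(\mu)_{su\Cdot\nu-\lambda} \geq 0$, so $[L(\lambda) \otimes L(\mu) : L(\nu)] \geq 2$ and $L(\lambda) \otimes L(\mu)$ has multiplicity.

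The main obstacle is the vanishing of the $s$-correction term: this is precisely where the hypothesis $2a+b \geq p-1$ enters, since the weaker bound $2a+b \geq p-2$ coming from $\lambda \in C_1$ would only give $\alpha_1$-coefficient $\leq 0$, and care is needed to extract the sign of the relevant coefficient after rewriting in the root basis. The $u$-term is more robust and vanishes for every $b \geq 1$. A minor point to record is that $\nu$ may coincide with $\lambda$ when $b'=2$, which causes no difficulty.
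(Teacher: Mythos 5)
Your proof is correct and follows essentially the same route as the paper's: the same weight $\nu = \lambda+\mu-2\varpi_2$, the same multiplicity formula from Theorem \ref{thm:reflectionsmalltensorproduct} with $W_{C_1}=\{e,s,u,su\}$, the same reduction $\dim\Delta(\mu)_{\mu-2\varpi_2}=\dim\Delta(2\varpi_2)_0=2$ via Remark \ref{rem:Cavallin}, and the same vanishing claims for the two subtracted terms. The only (harmless) divergence is in which elements of $W_\mathrm{fin}$ witness that vanishing: you use $w_0=-\mathrm{id}$ and $s_{\alpha_1}s_{\alpha_2}$ where the paper uses $s_{\alpha_\mathrm{hs}}$ and $s_{\alpha_2}$, and your choice for the $u$-term uniformly covers all $b\geq 1$, avoiding the paper's separate argument for $b=1$.
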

\begin{proof}
	Consider the weight $\nu = \lambda + \mu - 2\varpi_2 = a \varpi_1 + (b+b'-2) \varpi_2$, and observe that $\nu \in C_1$ because $\mu$ is reflection small with respect to $\lambda$ and $2a+(b+b'-2)\geq 2a+b>p-3$.
	We claim that $[L(\lambda)\otimes L(\mu):L(\nu)]\geq 2$.
	Indeed, as in the proof of Proposition \ref{prop:B2_C1_multiplicity_1}, we have
	\[ [ L(\lambda) \otimes L(\mu) : L(\nu) ] \geq \dim L(\mu)_{\nu-\lambda} - \dim L(\mu)_{s\Cdot\nu - \lambda} - \dim L(\mu)_{u\Cdot\nu - \lambda} , \]
	and the weights on the right hand side are given by
	\begin{align*}
		\nu -\lambda & = \mu- 2\varpi_2 , \\
		s\Cdot\nu - \lambda & = \nu - \big( 2a + (b+b'-2) - (p-3) \big) \cdot \varpi_1 - \lambda = ( p-1 - 2a - b- b' ) \varpi_1 + (b'-2) \varpi_2 , \\
		u\Cdot\nu - \lambda & = \nu - (b+b'-1) \cdot \alpha_2 - \lambda = ( b + b' - 1) \varpi_1 + (-2b-b')\varpi_2 .
	\end{align*}
	We observe that
	\[ s_{\alpha_2}( u\Cdot\nu - \lambda ) = (a'-b-1) \varpi_1 + (2b+b') \varpi_2 = \mu + (b-1) \alpha_2 - \alpha_1 , \]
	and thus $s_{\alpha_2}(su\Cdot\delta-\lambda)\nleq \mu$ if $b\geq 2$.
	If $b=1$ then $s_{\alpha_2}(su\Cdot\delta-\lambda)=\mu-\alpha_1$ is not a weight of $\Delta(b'\varpi_2)$, and in either case, we conclude that $\dim L(\mu)_{u\Cdot\nu - \lambda}=0$.
	Further observe that
	\[ s_{\alpha_1+\alpha_2}(s\Cdot\nu-\lambda) = (2a+b+3-p) \varpi_1 + (b'-2) \varpi_2 = \mu + (2a+b-(p-2))\alpha_1+(2a+b-(p-2)-1)\alpha_2\nleq \mu\]
	because $2a+b\geq p-1$, and therefore $\dim L(\mu)_{s\Cdot\nu - \lambda}=0$.
	By Lemma \ref{lem:B2_C1_reflectionsmall}, we further have $L(\mu) = \Delta(\mu)$, and as $2 \varpi_2 = \alpha_1 + 2 \varpi_2$, we can use Remark \ref{rem:Cavallin} to obtain
	\[ [ L(\lambda) \otimes L(\mu) : L(\nu) ] \geq \dim \Delta(\mu)_{\mu - 2 \varpi_2} = \dim \Delta(2\varpi_2)_0 = 2 , \]
	as claimed.
\end{proof}

\begin{Proposition}\label{prop:B2_C1_multiplicity_3}
	Let $\lambda=a\varpi_1+b\varpi_2\in C_1 \cap X$ with $2a+b\geq p$, $b\geq 2$, and $\mu=a^\prime\varpi_1\in X^+$ such that $\mu$ is reflection small with respect to $\lambda$.
	If $a^\prime \geq 2$ then $L(\lambda)\otimes L(\mu)$ has multiplicity.
\end{Proposition}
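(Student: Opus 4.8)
The plan is to exhibit a single dominant weight $\nu$ whose simple module occurs in $L(\lambda)\otimes L(\mu)$ with composition multiplicity at least $2$, using the explicit multiplicity formula of Theorem \ref{thm:reflectionsmalltensorproduct}, exactly in the spirit of Propositions \ref{prop:B2_C1_multiplicity_1} and \ref{prop:B2_C1_multiplicity_2}. Concretely, I would take
\[ \nu = \lambda + \mu - 2\varpi_1 = (a+a'-2)\varpi_1 + b\varpi_2 . \]
First I would verify that $\nu \in C_1$: the inequality $2(a+a'-2)+b > p-3$ follows from $2a+b \geq p$ and $a' \geq 2$, while $(a+a'-2)+b < p-2$ follows from the reflection smallness of $\mu$, which gives $a+b+a' \leq p-2$ by Example \ref{ex:B2reflectionsmall} (recall $\mu = a'\varpi_1$). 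Since $\nu \in C_1$ and $W_{C_1} = \{e,s,u,su\}$, Theorem \ref{thm:reflectionsmalltensorproduct} yields
\[ \big[ L(\lambda)\otimes L(\mu):L(\nu) \big] \geq \dim L(\mu)_{\nu-\lambda} - \dim L(\mu)_{s\Cdot\nu-\lambda} - \dim L(\mu)_{u\Cdot\nu-\lambda} . \]

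For the leading term, Lemma \ref{lem:B2_C1_reflectionsmall} gives $L(\mu) = \Delta(\mu) = \Delta(a'\varpi_1)$, and here $\nu-\lambda = (a'-2)\varpi_1$ is dominant because $a' \geq 2$. Writing it in the form $a'\varpi_1 - 2\varpi_1$ of Lemma \ref{lem:B2_weightspacedimensions_api1} (so $d=2$, $e=0$) gives $\dim \Delta(a'\varpi_1)_{\nu-\lambda} = \lfloor 2/2 \rfloor + 1 = 2$; alternatively one reduces to $\dim \Delta(2\varpi_1)_0 = 2$ via Remark \ref{rem:Cavallin}. For the two subtracted terms I would compute the dot actions as in the previous two propositions, namely $s\Cdot\nu = \nu - (2(a+a'-2)+b-(p-3))\varpi_1$ and $u\Cdot\nu = \nu - (b+1)\alpha_2$, obtaining $s\Cdot\nu-\lambda = (p-1-2a-a'-b)\varpi_1$ and $u\Cdot\nu-\lambda = (a'-2)\varpi_1 - (b+1)\alpha_2$.

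To show these are not weights of $\Delta(a'\varpi_1)$, I would use that its set of weights is $W_\mathrm{fin}$-invariant and contained in $\{\gamma \mid \gamma \leq \mu\}$, so it suffices to move each by a suitable finite reflection and check that the image fails $\leq \mu$. Applying $s_{\alpha_\mathrm{hs}}$ to the first weight produces $(2a+b+a'-p+1)\varpi_1$, which is strictly larger than $\mu = a'\varpi_1$ precisely because $2a+b \geq p$; applying $s_{\alpha_2}$ to the second produces a weight $\gamma'$ with $\mu - \gamma' = 2\alpha_1 + (1-b)\alpha_2$, whose $\alpha_2$-coefficient $1-b$ is negative precisely because $b \geq 2$, so $\gamma' \nleq \mu$. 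Hence both subtracted dimensions vanish, and the displayed inequality gives $\big[ L(\lambda)\otimes L(\mu):L(\nu) \big] \geq 2$, so $L(\lambda)\otimes L(\mu)$ has multiplicity.

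I do not expect a serious obstacle, since the argument runs parallel to Propositions \ref{prop:B2_C1_multiplicity_1} and \ref{prop:B2_C1_multiplicity_2}; the only point requiring care is that the vanishing of the two subtracted terms uses the sharp hypotheses $2a+b \geq p$ (rather than merely $\geq p-1$) and $b \geq 2$, which is exactly why these bounds appear in the statement. What remains is to double-check the two dot-action computations and to confirm that the reflection-smallness bound indeed places $\nu$ inside $C_1$.
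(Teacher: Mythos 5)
Your proposal is correct and follows essentially the same route as the paper's proof: the same witness weight $\nu = \lambda+\mu-2\varpi_1$, the same truncated alternating-sum bound from Theorem \ref{thm:reflectionsmalltensorproduct} over $W_{C_1}=\{e,s,u,su\}$, the same dot-action computations, and the same reflections $s_{\alpha_\mathrm{hs}}$ and $s_{\alpha_2}$ to kill the two subtracted weight spaces using $2a+b\geq p$ and $b\geq 2$. The only cosmetic difference is that you evaluate the leading term $\dim\Delta(a'\varpi_1)_{(a'-2)\varpi_1}=2$ via Lemma \ref{lem:B2_weightspacedimensions_api1}, whereas the paper reduces to $\dim\Delta(2\varpi_1)_0=2$ via Remark \ref{rem:Cavallin}; both are valid.
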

\begin{proof}
	Consider the weight $\nu=\lambda+\mu-2\varpi_1 = (a+a'-2) \varpi_1+b\varpi_2$, and observe that $\nu \in C_1$ because $\mu$ is reflection small with respect to $\lambda$ and $2(a+a'-2)+b\geq 2a+b>p-3$.
	We claim that $[L(\lambda)\otimes L(\mu):L(\nu)]\geq 2$.
	Indeed, as in the proof of Proposition \ref{prop:B2_C1_multiplicity_1}, we have
	\[ [ L(\lambda) \otimes L(\mu) : L(\nu) ] \geq \dim L(\mu)_{\nu-\lambda} - \dim L(\mu)_{s\Cdot\nu - \lambda} - \dim L(\mu)_{u\Cdot\nu - \lambda} , \]
	and the weights on the right hand side are given by
	\begin{align*}
		\nu -\lambda & = \mu - 2\varpi_1 , \\
		s\Cdot\nu - \lambda & = \nu - \big( 2 (a+a'-2) + b - (p-3) \big) \cdot \varpi_1 - \lambda = ( p-1 - 2a - a' - b ) \varpi_1 , \\
		u\Cdot\nu - \lambda & = \nu - (b+1) \cdot \alpha_2 - \lambda = ( a' + b - 1 ) \varpi_1 + ( - 2b - 2 ) \varpi_2 .
	\end{align*}
	We observe that
	\[ s_{\alpha_2}(u\Cdot\nu - \lambda ) = (a'-b-3)\varpi_1+(2b+2)\varpi_2=\mu+(b-1)\alpha_2-2\alpha_1\nleq \mu\]
	because $b\geq 2$ and
	\[s_{\alpha_1+\alpha_2}(\delta-\lambda)=(2a+b+a'+1-p)\varpi_1=\mu+(2a+b-(p-1))(\alpha_1+\alpha_2)\nleq \mu\]
	because $2a+b\geq p$, and therefore $L(\mu)_{s\Cdot\nu-\lambda} = 0 $ and $L(\mu)_{u\Cdot\nu-\lambda}=0$.
	By Lemma \ref{lem:B2_C1_reflectionsmall}, we further have $L(\mu) = \Delta(\mu)$, and as $2 \varpi_1 = 2 \alpha_1 + 2 \alpha_2$, we can use Remark \ref{rem:Cavallin} to obtain
	\[ [ L(\lambda) \otimes L(\mu) : L(\nu) ] \geq \dim \Delta(\mu)_{\mu - 2 \varpi_1} = \dim \Delta(2\varpi_1)_0 = 2 , \]
	as claimed.
\end{proof}

\begin{Proposition}\label{prop:B2_C1_multiplicity_3a}
	Let $\lambda=a\varpi_1+b\varpi_2\in C_1 \cap X$ with $2a+b=p-2$, $b\neq 1$, and $\mu=a^\prime\varpi_1+b^\prime\varpi_2 \in X^+$ such that $\mu$ is reflection small with respect to $\lambda$.
	If $b^\prime \geq 2$ and $a^\prime \geq 1$ then $L(\lambda) \otimes L(\mu)$ has multiplicity.
\end{Proposition}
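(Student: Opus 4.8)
The plan is to exhibit a single composition factor of $L(\lambda)\otimes L(\mu)$ occurring with multiplicity at least $2$, following the strategy of Propositions \ref{prop:B2_C1_multiplicity_1}--\ref{prop:B2_C1_multiplicity_3}, but with a different target weight. First I would record two reductions. Since $C_1\cap X$ is non-empty we have $p\geq 5$, so $p$ is odd; as $2a+b=p-2$ is then odd, $b$ is odd, and together with $b\neq 1$ this gives $b\geq 2$ (in fact $b\geq 3$). Moreover, $\mu$ being reflection small with respect to $\lambda\in C_1$ gives $\mu\in\overline{C}_0$ and $L(\mu)=\Delta(\mu)$ by Lemma \ref{lem:B2_C1_reflectionsmall}, so all the weight-space dimensions below may be computed in the Weyl module $\Delta(\mu)$.

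Next I would take as target the weight $\nu=\lambda+\mu-2\varpi_2=(a+a')\varpi_1+(b+b'-2)\varpi_2$ (recall $2\varpi_2=\alpha_\mathrm{h}$). Using $a+b+a'+b'\leq p-2$ (reflection smallness, cf.\ Example \ref{ex:B2reflectionsmall}), together with $2a+b=p-2$, $b'\geq 2$ and $a'\geq 1$, one checks directly that $\nu\in C_1$. Then Theorem \ref{thm:reflectionsmalltensorproduct}(3) applies and yields
\[
[L(\lambda)\otimes L(\mu):L(\nu)]=\dim L(\mu)_{\nu-\lambda}-\dim L(\mu)_{s\Cdot\nu-\lambda}-\dim L(\mu)_{u\Cdot\nu-\lambda}+\dim L(\mu)_{su\Cdot\nu-\lambda},
\]
where $W_{C_1}=\{e,s,u,su\}$ with $s=s_{\alpha_\mathrm{hs},1}$ and $u=s_{\alpha_2,0}$, exactly as in the earlier propositions.

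The core of the argument is the evaluation of these four terms. For the first, $\nu-\lambda=\mu-2\varpi_2=\mu-\alpha_1-2\alpha_2$, and since $a'\geq 1$ and $b'\geq 2$, Remark \ref{rem:Cavallin} reduces it to $\dim\Delta(\varpi_1+2\varpi_2)_{\varpi_1}$, which I would compute (via Weyl's character formula or Freudenthal's recursion) to equal $3$. For the second term, a short computation shows that $s_{\alpha_\mathrm{hs}}(s\Cdot\nu-\lambda)=\mu-\alpha_2$, so by $W_\mathrm{fin}$-invariance of weight multiplicities $\dim L(\mu)_{s\Cdot\nu-\lambda}=\dim\Delta(\mu)_{\mu-\alpha_2}=1$ (as $(\mu,\alpha_2^\vee)=b'\geq 1$). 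For the last two terms I would argue as before: applying $s_{\alpha_2}$ to $u\Cdot\nu-\lambda$ gives a weight $\eta$ with $\mu-\eta=\alpha_1-(b-1)\alpha_2$, which is not a non-negative combination of simple roots because $b\geq 2$, so $u\Cdot\nu-\lambda\notin\Lambda(\mu)$; and passing $su\Cdot\nu-\lambda$ to its dominant $W_\mathrm{fin}$-conjugate shows that this conjugate is not $\leq\mu$, so $su\Cdot\nu-\lambda\notin\Lambda(\mu)$ as well. Hence both of these weight spaces vanish. Combining, $[L(\lambda)\otimes L(\mu):L(\nu)]=3-1-0+0=2$, and therefore $L(\lambda)\otimes L(\mu)$ has multiplicity.

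The main obstacle, and the reason this case is stated separately, is that the target $\lambda+\mu-\varpi_1$ used in Proposition \ref{prop:B2_C1_multiplicity_1} fails on the boundary $2a+b=p-2$: there the reflection $s$ sends $\lambda+\mu-\varpi_1-\lambda$ into the extreme $W_\mathrm{fin}$-orbit of $\mu$, so the subtracted $s$-term equals $1$ rather than $0$ and one only obtains multiplicity $1$. Subtracting $2\varpi_2$ instead is what makes the leading term jump from $\dim\Delta(\varpi_1+\varpi_2)_{\varpi_2}=2$ to the larger value $\dim\Delta(\varpi_1+2\varpi_2)_{\varpi_1}=3$, which still leaves $3-1=2$ after the (now unavoidable) surviving $s$-term. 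Establishing this weight multiplicity $\dim\Delta(\varpi_1+2\varpi_2)_{\varpi_1}=3$ and verifying the vanishing of the $u$- and $su$-terms — where the hypotheses $a'\geq 1$, $b'\geq 2$ (for the Cavallin reduction) and $b\neq 1$ (for the $u$-term) each enter — is the only substantive computation.
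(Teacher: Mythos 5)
Your proposal is correct and follows essentially the same route as the paper's proof: the same target weight $\nu=\lambda+\mu-2\varpi_2$, the same alternating-sum formula from Theorem \ref{thm:reflectionsmalltensorproduct} over $W_{C_1}$, the same Cavallin reduction giving the leading term $\dim \Delta(\varpi_1+2\varpi_2)_{\varpi_1}=3$, the identification $s_{\alpha_\mathrm{hs}}(s\Cdot\nu-\lambda)=\mu-\alpha_2$ giving the surviving $s$-term equal to $1$, and the vanishing of the $u$-term via $b\geq 2$. The only cosmetic difference is that the paper simply drops the nonnegative $su$-term via an inequality rather than showing it vanishes, which changes nothing in the conclusion.
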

\begin{proof}
	Consider the weight $\nu=\lambda+\mu-2\varpi_2 = (a+a') \varpi_1 + (b+b'-2) \varpi_2$, and observe that $\nu \in C_1$ because $\mu$ is reflection small with respect to $\lambda$ and $2(a+a')+(b+b'-2)>2a+b>p-3$.
	We claim that $[L(\lambda)\otimes L(\mu):L(\nu)]\geq 2$.
	Indeed, as in the proof of Proposition \ref{prop:B2_C1_multiplicity_1}, we have
	\[ [ L(\lambda) \otimes L(\mu) : L(\nu) ] \geq \dim L(\mu)_{\nu-\lambda} - \dim L(\mu)_{s\Cdot\nu - \lambda} - \dim L(\mu)_{u\Cdot\nu - \lambda} , \]
	and the weights on the right hand side are given by
	\begin{align*}
		\nu -\lambda & = \mu - 2\varpi_2 , \\
		s\Cdot\nu - \lambda & = \nu - \big( 2 (a+a') + (b+b'-2) - (p-3) \big) \cdot \varpi_1 - \lambda = ( 1 - a' - b' ) \varpi_1 + ( b' - 2 ) \varpi_2 , \\
		u\Cdot\nu - \lambda & = \nu - (b+1) \cdot \alpha_2 - \lambda = ( a' +  b + b' - 1 ) \varpi_1 + ( - 2b - b' ) \varpi_2 .
	\end{align*}
	We observe that
	\[ s_{\alpha_2}(u\Cdot\nu-\lambda) = (a'-b-1) \varpi_1 + (2b+b') \varpi_2 = \mu + (b-1) \alpha_2 - \alpha_1 \nleq \mu \]
	because $b\geq 2$ and
	\[ s_{\alpha_1+\alpha_2}(s\Cdot\nu-\lambda) = (a'+1) \varpi_1 + (b'-2) \varpi_2 = \mu - \alpha_2 , \]
	and as $L(\mu) = \Delta(\mu)$ by Lemma \ref{lem:B2_C1_reflectionsmall}, it follows that $\dim L(\mu)_{u\Cdot\nu-\lambda} = 0$ and
	\[ \dim L(\mu)_{s\Cdot\nu-\lambda} = \dim L(\mu)_{\mu - \alpha_2} = \dim \Delta(\mu)_{\mu - \alpha_2} = 1 . \]
	As $2 \varpi_2 = \alpha_1 + 2 \alpha_2$, we can further use Remark \ref{rem:Cavallin} to obtain
	\[ \dim L(\mu)_{\nu - \lambda} = \dim \Delta(\mu)_{\mu-2\varpi_2} = \dim \Delta(\varpi_1+2\varpi_2)_{\varpi_1} = 3 , \]
	and we conclude that $[ L(\lambda) \otimes L(\mu) : L(\nu) ] \geq 3 - 1 = 2$, as claimed.
\end{proof}

\begin{Corollary} \label{cor:B2_C1_multiplicity_3a}
		Let $\lambda=a\varpi_1+b\varpi_2\in C_1 \cap X$ with $2a+b=p-2$, $b\neq 1$, and $\mu=a^\prime\varpi_1+b^\prime\varpi_2 \in X^+$ such that $\mu$ is reflection small with respect to $\lambda$.
	If $b^\prime \geq 2$ and $a^\prime \geq 1$ then $\mu \in C_0$ and there is a weight $\nu \in C_0 \cap X$ such that $c_{s\Cdot\lambda,\mu}^\nu \geq 2$.
\end{Corollary}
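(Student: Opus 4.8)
The plan is to deduce this from Proposition \ref{prop:B2_C1_multiplicity_3a} together with the translation principle encoded in Lemma \ref{lem:MFboundVerlindecoeff}. Write $\lambda = a\varpi_1 + b\varpi_2$ and $\mu = a'\varpi_1 + b'\varpi_2$ as in the statement. First I would verify that $\mu \in C_0$. Since $\mu$ is reflection small with respect to $\lambda \in C_1$, Example \ref{ex:B2reflectionsmall} gives $a+b+a'+b' \leq p-2$, and the assumption $2a+b = p-2$ yields $a+b = p-2-a$, so that $a'+b' \leq a$. As $b \geq 0$ and $p$ is odd we have $2a \leq p-3$, and using $b' \geq 2$ I would estimate
\[ 2a'+b' = 2(a'+b') - b' \leq 2a - b' \leq (p-3) - 2 = p-5 < p-3 , \]
which together with $a' \geq 1$ and $b' \geq 2$ shows $\mu \in C_0 \cap X$. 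In particular $s \Cdot \lambda \in C_0$ as well, since $\lambda \in C_1 = s \Cdot C_0$, so the Verlinde coefficient $c_{s\Cdot\lambda,\mu}^\nu$ is well-defined for $\nu \in C_0 \cap X$.

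Next I would locate the repeated composition factor produced in the proof of Proposition \ref{prop:B2_C1_multiplicity_3a}. That proof exhibits the weight $\delta \coloneqq \lambda + \mu - 2\varpi_2 \in C_1 \cap X$ and establishes $[L(\lambda) \otimes L(\mu) : L(\delta)] \geq 2$. Since $\mu$ is reflection small with respect to $\lambda$, the tensor product $L(\lambda) \otimes L(\mu)$ is completely reducible by Theorem \ref{thm:reflectionsmalltensorproduct}, so the composition multiplicity of $L(\delta)$ coincides with its multiplicity as a direct summand, i.e.\ $[L(\lambda) \otimes L(\mu) : L(\delta)]_\oplus \geq 2$.

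Finally I would translate this summand multiplicity into a Verlinde coefficient. Set $\lambda_0 \coloneqq s \Cdot \lambda \in C_0 \cap X$ and $\nu \coloneqq s \Cdot \delta \in C_0 \cap X$; the latter lies in $C_0$ because $\delta \in C_1$ and $s \Cdot C_1 = C_0$. As $C_1$ is a dominant alcove we have $s \in W_\mathrm{aff}^+$, and since $s$ is an involution we have $s \Cdot \lambda_0 = \lambda$ and $s \Cdot \nu = \delta$. Applying Lemma \ref{lem:MFboundVerlindecoeff} with $x = s$ then gives
\[ c_{s\Cdot\lambda,\mu}^\nu = c_{\lambda_0,\mu}^\nu = \big[ L(s\Cdot\lambda_0) \otimes L(\mu) : L(s\Cdot\nu) \big]_\oplus = \big[ L(\lambda) \otimes L(\mu) : L(\delta) \big]_\oplus \geq 2 , \]
as required.

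The steps are all routine once the reduction is set up; the only point requiring care is the bookkeeping in the last paragraph, namely keeping track of the involution $s$ so that the factor $L(\delta)$ of $L(\lambda)\otimes L(\mu)$ with $\delta \in C_1$ corresponds under Lemma \ref{lem:MFboundVerlindecoeff} to the Verlinde coefficient indexed by the $C_0$-weight $\nu = s\Cdot\delta$, and confirming that $\mu$ genuinely lands in the open alcove $C_0$ rather than merely its closure. This is where the hypothesis $b' \geq 2$ is essential, since it upgrades the bound $2a'+b' \leq p-3$ (which only places $\mu$ in $\overline{C}_0$) to the strict inequality needed for $\mu \in C_0$.
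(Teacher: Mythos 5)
Your proof is correct and takes essentially the same route as the paper: both extract the weight $\delta=\lambda+\mu-2\varpi_2\in C_1$ with composition multiplicity at least $2$ from the proof of Proposition \ref{prop:B2_C1_multiplicity_3a}, invoke complete reducibility from Theorem \ref{thm:reflectionsmalltensorproduct} to turn this into a direct-summand multiplicity, and convert that into the Verlinde coefficient $c_{s\Cdot\lambda,\mu}^{s\Cdot\delta}$ via Lemma \ref{lem:MFboundVerlindecoeff} applied with $x=s$. The only divergence is how $\mu\in C_0$ is established: the paper combines $\mu\in\overline{C}_0$ (Lemma \ref{lem:B2_C1_reflectionsmall}) with $p$-regularity of $\mu$ deduced from Lemma \ref{lem:nonCRcriterionpregularweight}, whereas you verify the defining inequalities $a'>-1$, $b'>-1$, $2a'+b'<p-3$ of $C_0$ directly from reflection smallness and the hypotheses $2a+b=p-2$, $a'\geq 1$, $b'\geq 2$; your computation is correct and slightly more self-contained.
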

\begin{proof}
	The tensor product $L(\lambda) \otimes L(\mu)$ is completely reducible by Theorem \ref{thm:reflectionsmalltensorproduct}, and by the proof of Proposition \ref{prop:B2_C1_multiplicity_3a}, there is a $p$-regular weight $\nu \in C_0$ such that $[ L(\lambda) \otimes L(\mu) : L(s\Cdot\nu) ] \geq 2$.
	This implies that $\mu$ is $p$-regular by Lemma \ref{lem:nonCRcriterionpregularweight}, and as $\mu \in \overline{C}_0$ by Lemma \ref{lem:B2_C1_reflectionsmall}, we conclude that $\mu \in C_0$.
	Now Lemma \ref{lem:MFboundVerlindecoeff} yields $c_{s\Cdot\lambda,\mu}^\nu = [ L(\lambda) \otimes L(\mu) : L(s\Cdot\nu) ] \geq 2$, as claimed.
\end{proof}

\begin{Proposition}\label{prop:B2_C1_multiplicity_3b}
	Let $\lambda=a\varpi_1\in C_1 \cap X$ with $a\neq \frac{p-1}2$ and $\mu=a^\prime\varpi_1+b^\prime\varpi_2\in X^+$ such that $\mu$ is reflection small with respect to $\lambda$.
	If $b^\prime \geq 2$ and $a^\prime \neq 0$ then $L(\lambda)\otimes L(\mu)$ has multiplicity.
\end{Proposition}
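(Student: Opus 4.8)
The plan is to reduce the statement to Corollary \ref{cor:B2_C1_multiplicity_3a} by exploiting the fundamental-group symmetry $\omega$ of Remark \ref{rem:B2stabC0}. The weight $\lambda = a\varpi_1$ does not itself satisfy the hypothesis $2a+b = p-2$ of that corollary (indeed $2a+0 = 2a \geq p+1$), but a suitable $\omega$-twist of $\lambda$ does, and this twist is harmless for reflection smallness by Lemma \ref{lem:omega_reflection_small} and for Verlinde coefficients by Lemma \ref{lem:VerlindecoefficientsFundamentalgroup}.

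First I would record the numerical constraints. Since $\lambda = a\varpi_1 \in C_1$ we have $2a > p-3$, hence $a \geq \frac{p-1}{2}$ because $p$ is odd; combined with $a \neq \frac{p-1}{2}$ and $a < p-2$ this gives $\frac{p+1}{2} \leq a \leq p-3$ and $p \geq 5$. I would also note that $L(\lambda) \otimes L(\mu)$ is completely reducible by Theorem \ref{thm:reflectionsmalltensorproduct}, so that it suffices to exhibit a simple module occurring with multiplicity at least $2$ in a Krull--Schmidt decomposition.

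Next I set $\lambda_0 = s\Cdot\lambda = (p-3-a)\varpi_1 \in C_0$ and $\tilde\lambda = s\omega\Cdot\lambda_0$. A short computation using Remark \ref{rem:B2stabC0} gives $\tilde\lambda = (p-2-a)\varpi_1 + (2a+2-p)\varpi_2$, which lies in $C_1$ and satisfies $2(p-2-a) + (2a+2-p) = p-2$, with $\varpi_2$-coefficient $2a+2-p \neq 1$ precisely because $a \neq \frac{p-1}{2}$. By Lemma \ref{lem:omega_reflection_small}, reflection smallness of $\mu$ with respect to $\lambda = s\Cdot\lambda_0$ is equivalent to reflection smallness with respect to $\tilde\lambda = s\omega\Cdot\lambda_0$, so $\mu$ is reflection small with respect to $\tilde\lambda$. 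Hence $\tilde\lambda$ and $\mu$ meet all hypotheses of Corollary \ref{cor:B2_C1_multiplicity_3a} (using $a' \geq 1$ and $b' \geq 2$), yielding $\mu \in C_0$ and a weight $\nu \in C_0 \cap X$ with $c_{s\Cdot\tilde\lambda,\mu}^\nu \geq 2$, where $s\Cdot\tilde\lambda = \omega\Cdot\lambda_0$.

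Finally I would transport this back through $\omega$. By Lemma \ref{lem:VerlindecoefficientsFundamentalgroup} we have $c_{\lambda_0,\mu}^{\omega\Cdot\nu} = c_{\omega\Cdot\lambda_0,\mu}^{\nu} \geq 2$, and then Lemma \ref{lem:MFboundVerlindecoeff} (with $\lambda_0, \mu, \omega\Cdot\nu \in C_0$ and $x = s \in W_\mathrm{aff}^+$) gives $\big[ L(\lambda) \otimes L(\mu) : L(s\omega\Cdot\nu) \big]_\oplus = c_{\lambda_0,\mu}^{\omega\Cdot\nu} \geq 2$. Complete reducibility then upgrades this to composition multiplicity at least $2$, so $L(\lambda) \otimes L(\mu)$ has multiplicity. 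The main obstacle is recognizing that $\lambda$ must be replaced by the $\omega$-twisted weight $\tilde\lambda$ in order to land in the $2a+b = p-2$ locus covered by Corollary \ref{cor:B2_C1_multiplicity_3a}; a direct attack via the multiplicity formula of Theorem \ref{thm:reflectionsmalltensorproduct} stalls here because the natural candidate weights $\lambda + \mu - \varpi_1$ and $\lambda + \mu - 2\varpi_2$ only produce a net multiplicity of $1$ when $b=0$. Once the twist is identified, checking that $\tilde\lambda \in C_1$ and that its $\varpi_2$-coefficient avoids the forbidden value $1$ (which is exactly where $a \neq \frac{p-1}{2}$ enters) is routine.
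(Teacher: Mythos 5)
Your proof is correct and takes essentially the same route as the paper's own proof: both replace $\lambda$ by the $\omega$-twisted weight $s\omega\Cdot\lambda_0$ (which lies in $C_1$ with $2a+b=p-2$ and $\varpi_2$-coefficient $2a-p+2\neq 1$, exactly where $a \neq \tfrac{p-1}{2}$ enters), invoke Corollary \ref{cor:B2_C1_multiplicity_3a} via Lemma \ref{lem:omega_reflection_small}, and transport the bound $c_{\omega\Cdot\lambda_0,\mu}^{\nu}\geq 2$ back to a multiplicity in $L(\lambda)\otimes L(\mu)$ using Lemmas \ref{lem:VerlindecoefficientsFundamentalgroup} and \ref{lem:MFboundVerlindecoeff}. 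The only differences are cosmetic: your explicit naming of $\tilde\lambda$, and the closing appeal to complete reducibility, which is dispensable since a direct-summand multiplicity of $2$ already forces a composition multiplicity of $2$.
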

\begin{proof}
	Let $\lambda_0 = s \Cdot \lambda = (p-3-a) \cdot \varpi_1 \in C_0$, and observe that $\mu$ is reflection small with respect to the weight $s \omega \Cdot \lambda_0 \in C_1$ by Lemma \ref{lem:omega_reflection_small}.
	Using Remark \ref{rem:B2stabC0}, we compute
	\[ \omega \Cdot \lambda_0 = (p-3-a) \cdot \varpi_1 + (2a-p+2) \cdot \varpi_2 , \qquad s \omega \Cdot \lambda_0 = (p-2-a) \cdot \varpi_1 + (2a-p+2) \cdot \varpi_2 , \]
	where $2 \cdot (p-2-a) + (2a-p+2) = p-2$ and $2a-p+2 \neq 1$.
	Now Corollary \ref{cor:B2_C1_multiplicity_3a} implies that $\mu \in C_0$ and there is a weight $\nu \in C_0 \cap X$ such that $c_{\omega\Cdot\lambda_0,\mu}^\nu \geq 2$.
	Using Lemmas \ref{lem:MFboundVerlindecoeff} and \ref{lem:VerlindecoefficientsFundamentalgroup}, we conclude that
	\[ [ L(\lambda) \otimes L(\mu) : L(s\omega\Cdot\nu) ] \geq c_{\lambda_0,\mu}^{\omega\Cdot\nu} = c_{\omega\Cdot\lambda_0,\mu}^\nu \geq 2 , \]
	as required.
\end{proof}

\begin{Corollary} \label{cor:B2MFC1conditions}
	Let $\lambda \in C_1 \cap X$ and $\mu \in X^+$ such that $\mu$ is reflection small with respect to $\lambda$.
	If the tensor product $L(\lambda) \otimes L(\mu)$ is multiplicity free then the weights $\lambda$ and $\mu$ satisfy one of the conditions 8b--8f or 8b\om--8f\om in Table \ref{tab:B2MF}.
\end{Corollary}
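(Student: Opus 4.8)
The plan is to collate the five multiplicity results Propositions~\ref{prop:B2_C1_multiplicity_1}--\ref{prop:B2_C1_multiplicity_3b} into a single case analysis on the coordinates of $\lambda = a\varpi_1 + b\varpi_2$ and $\mu = a'\varpi_1 + b'\varpi_2$. Recall that $C_1 \cap X \neq \emptyset$ forces $p \geq 5$ (Lemma~\ref{lem:B2_C1_reflectionsmall}), so that $p$ is odd; that $\lambda \in C_1 \cap X$ gives $b \geq 0$, $2a+b \geq p-2$ and $a+b \leq p-3$; and that reflection smallness is the single inequality $a+b+a'+b' \leq p-2$ (Example~\ref{ex:B2reflectionsmall}). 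We may assume $\mu \neq 0$, the case $\mu = 0$ being trivial. First I would dispose of the two weights $\lambda \in \{ \tfrac{p-1}{2}\varpi_1 , \tfrac{p-3}{2}\varpi_1 + \varpi_2 \}$, which satisfy conditions~8f and 8f\om{} directly, so that I may assume $\lambda$ is neither of these. Since $p$ is odd and $2a+b \geq p-2$, exactly one of the possibilities $2a+b = p-2$ (which forces $b$ to be odd, hence $b \geq 1$) or $2a+b \geq p-1$ occurs, and I would treat these in turn.

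In the case $2a+b = p-2$ the value $b=1$ would give $\lambda = \tfrac{p-3}{2}\varpi_1 + \varpi_2$, already excluded, so $b \geq 3$ and in particular $b \neq 1$. Then Proposition~\ref{prop:B2_C1_multiplicity_3a} applies, and multiplicity freeness rules out having both $b' \geq 2$ and $a' \geq 1$; thus either $b' \leq 1$ (condition~8d\om) or $a' = 0$ (condition~8e\om).

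In the case $2a+b \geq p-1$ I would split on $b$. If $b = 0$ then $a \geq \tfrac{p-1}{2}$, and since $\lambda \neq \tfrac{p-1}{2}\varpi_1$ in fact $a \geq \tfrac{p+1}{2}$; Proposition~\ref{prop:B2_C1_multiplicity_3b} then rules out $b' \geq 2$ together with $a' \neq 0$, giving $b' \leq 1$ (condition~8d) or $a' = 0$ (condition~8e). If $b \geq 1$ then Proposition~\ref{prop:B2_C1_multiplicity_1} forces $a' = 0$ or $b' = 0$. When $a' = 0$, so that $\mu = b'\varpi_2$, Proposition~\ref{prop:B2_C1_multiplicity_2} forces $b' \leq 1$, whence $\mu = \varpi_2$ (condition~8b). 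When $b' = 0$, so that $\mu = a'\varpi_1$, I would distinguish according to $b$ and $2a+b$: for $b = 1$ we are in condition~8c; for $b \geq 2$ with $2a+b = p-1$ we are in condition~8c\om{}; and for $b \geq 2$ with $2a+b \geq p$ Proposition~\ref{prop:B2_C1_multiplicity_3} forces $a' \leq 1$, whence $\mu = \varpi_1$ (condition~8b).

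These cases are exhaustive, so the corollary follows. The work here is entirely bookkeeping rather than new mathematics, and the main thing requiring care---the closest thing to an obstacle---is verifying that the division is genuinely complete and that each surviving subcase meets the precise numerical hypotheses of the proposition invoked ($2a+b \geq p-1$ versus $\geq p$, $b \geq 1$ versus $b \geq 2$, $b \neq 1$) and matches the exact form of the corresponding table entry. The parity constraints are what make everything close up cleanly: $p$ odd forces $b$ to be odd when $2a+b = p-2$, and, because $2a$ is even, $b = 1$ is incompatible with $2a+b = p-1$, so the subcases for $b' = 0$ do not overlap.
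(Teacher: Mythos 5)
Your proof is correct and takes essentially the same route as the paper: it collates Propositions \ref{prop:B2_C1_multiplicity_1}--\ref{prop:B2_C1_multiplicity_3a} and \ref{prop:B2_C1_multiplicity_3b} and runs the same case analysis on the values of $2a+b$ and $b$, with only a cosmetic reorganization (extracting the two weights of conditions 8f and 8f\om{} at the outset, and using the parity of $p$ to split $2a+b = p-2$ from $2a+b \geq p-1$, exactly as the paper does implicitly). Your explicit standing assumption $\mu \neq 0$ even sharpens a point the paper glosses over: in the subcase $b' = 0$, $b \geq 2$, $2a+b \geq p$, the paper passes from $a' \leq 1$ directly to case 8b, which silently ignores $a' = 0$, whereas your argument legitimately concludes $\mu = \varpi_1$.
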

\begin{proof}
	Let us write $\lambda = a\varpi_1 + b\varpi_2$ and $\mu = a^\prime\varpi_1 + b^\prime\varpi_2$, with $a,b,a^\prime,b^\prime \in \Z_{\geq 0}$.
	The multiplicity freeness of $L(\lambda) \otimes L(\mu)$ imposes the following conditions on $\lambda$ and $\mu$:
	\begin{enumerate}
	\item If $2a+b \geq p-1$ and $b \geq 1$ then $a^\prime = 0$ or $b^\prime = 0$ by Proposition \ref{prop:B2_C1_multiplicity_1};
	\item If $2a+b \geq p-1$ and $b \geq 1$ then $b^\prime \leq 1$ by Proposition \ref{prop:B2_C1_multiplicity_2};
	\item If $2a+b \geq p$ and $b \geq 2$ then $a^\prime \leq 1$ by Proposition \ref{prop:B2_C1_multiplicity_3};
	\item If $2a+b = p-2$ and $b \neq 1$ then either $b^\prime \leq 1$ or $a^\prime = 0$ by Proposition \ref{prop:B2_C1_multiplicity_3a};
	\item If $b=0$ and $a \neq \frac{p-1}{2}$ then either $b^\prime \leq 1$ or $a^\prime = 0$ by Proposition \ref{prop:B2_C1_multiplicity_3b}.
	\end{enumerate}
	We consider the different values of $2a+b$ and of $b$ in turn.
	\begin{itemize}
	\item Suppose that $2a+b=p-2$.
	If $b = 1$ then we are in case 8f\om of Table \ref{tab:B2MF}, and if $b \neq 1$ then we are in one of the cases 8d\om or 8e\om of Table \ref{tab:B2MF} by (4).
	\item Suppose that $b=0$.
	If $2a = p-1$ then we are in case 8f of Table \ref{tab:B2MF}.
	If $2a \neq p-1$ then either $b^\prime  \leq 1$ or $a^\prime = 0$ by (5), so we are in one of the cases 8d or 8e of Table \ref{tab:B2MF}.
	\item Suppose that $2a+b \geq p-1$ and $b \geq 1$.
	Then by (1) and (2), we have either $b^\prime=0$ or $(a^\prime,b^\prime) = (0,1)$.
	If $(a^\prime,b^\prime) = (0,1)$ then we are in case 8b of Table \ref{tab:B2MF}, so now assume that $b^\prime = 0$.
	If $b=1$ or $2a+b=p-1$ then it follows that we are in one of the cases 8c or 8c\om of Table \ref{tab:B2MF}.
	If $b \geq 2$ and $2a+b \geq p$ then $a^\prime \leq 1$ by (3), so we are again in case 8b of Table \ref{tab:B2MF}.
	\end{itemize}
	In all of the three cases above, one of the conditions 8b--8f or 8c\om--8f\om is satisfied, as claimed.
\end{proof}

Our next goal is to prove that for $\lambda \in C_2 \cap X$ and $\mu \in X^+$ such that $\mu$ is reflection small with respect to $\lambda$, the tensor product $L(\lambda) \otimes L(\mu)$ is multiplicity free only if one of the conditions 8b, 8g or 8h from Table \ref{tab:B2MF} is satisfied.
This will follow from Propositions \ref{prop:B2_C2_multiplicity} and \ref{prop:B2_C2_multiplicity_2} below (see also Corollary \ref{cor:B2MFC2conditions}), which will be proven after some preliminary lemmas.

\begin{Lemma}\label{lem:B2_reflectionsmallC2C3}
	Let $\lambda\in ( C_2\cup C_3 ) \cap X$ and $\mu \in X^+$ such that $\mu$ is reflection small with respect to $\lambda$.
	Further let $\lambda_0 \in X^+$ be the unique weight with $\lambda_0 \in C_0 \cap W_\mathrm{aff} \Cdot \lambda$.
	Then $\mu$ is reflection small with respect to~$\lambda_0$.
\end{Lemma}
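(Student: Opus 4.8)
The plan is to reduce everything to the explicit systems of linear inequalities recorded in Example~\ref{ex:B2reflectionsmall} and to compute the coordinates of $\lambda_0$ by hand. Write $\lambda = a\varpi_1 + b\varpi_2$ and $\mu = a'\varpi_1 + b'\varpi_2$ with $a,b,a',b' \in \Z_{\geq 0}$. By Example~\ref{ex:B2reflectionsmall}, the hypothesis that $\mu$ is reflection small with respect to $\lambda$ forces, in both the case $\lambda \in C_2$ and the case $\lambda \in C_3$, the inequality
\[ b + 2a' + b' \leq p-1 ; \]
indeed this inequality is one of the two defining conditions of reflection smallness in each of those two alcoves. I will show that this single inequality is exactly the reflection smallness condition for $\mu$ with respect to $\lambda_0$: since $\lambda_0 \in C_0$, that condition reads $2a_0 + b_0 + 2a' + b' \leq p-3$ by Example~\ref{ex:B2reflectionsmall}, where $\lambda_0 = a_0\varpi_1 + b_0\varpi_2$.

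Next I would pin down $\lambda_0$. Since $C_2 = st\Cdot C_0$ and $C_3 = stu\Cdot C_0$ (see Subsection~\ref{subsec:B2setup}) and these are $p$-regular alcoves, $\lambda_0$ is the unique $W_\mathrm{aff}$-translate of $\lambda$ lying in $C_0$, and the relations $(ts)(st) = e$ and $(uts)(stu) = e$ in $W_\mathrm{aff}$ give $\lambda_0 = ts\Cdot\lambda$ when $\lambda \in C_2$ and $\lambda_0 = uts\Cdot\lambda$ when $\lambda \in C_3$. Using the formulas $t\Cdot(c\varpi_1 + d\varpi_2) = (-c-2)\varpi_1 + (2c+d+2)\varpi_2$, $u\Cdot(c\varpi_1 + d\varpi_2) = (c+d+1)\varpi_1 + (-d-2)\varpi_2$ and $s\Cdot(c\varpi_1 + d\varpi_2) = (p-c-d-3)\varpi_1 + d\varpi_2$ for the dot action, a direct computation yields
\[ ts\Cdot\lambda = (a+b+1-p)\varpi_1 + (2p-2a-b-4)\varpi_2 , \qquad uts\Cdot\lambda = (p-a-2)\varpi_1 + (2a+b+2-2p)\varpi_2 . \]
The crucial observation, which I would verify by substitution in each case, is that the $\alpha_\mathrm{hs}$-height of $\lambda_0$ satisfies $2a_0 + b_0 = b - 2$ in both cases; this single quantity is what governs reflection smallness in the fundamental alcove.

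Finally, combining the two steps, the reflection smallness condition for $\mu$ with respect to $\lambda_0$ is $2a_0 + b_0 + 2a' + b' \leq p-3$, and substituting $2a_0 + b_0 = b-2$ turns this into $b + 2a' + b' \leq p-1$, which is precisely the inequality supplied by the hypothesis. Hence $\mu$ is reflection small with respect to $\lambda_0$, as desired. The only real content is the identity $2a_0+b_0 = b-2$ together with the recognition that $b + 2a' + b' \leq p-1$ occurs among the reflection smallness constraints for $\lambda$ in each of $C_2$ and $C_3$; everything else is bookkeeping with the dot action. I expect the main (minor) obstacle to be keeping the coordinate computations free of sign errors and correctly matching each inequality in Example~\ref{ex:B2reflectionsmall} to the appropriate wall, but no genuine difficulty arises.
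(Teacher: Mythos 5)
Your proof is correct, and I checked the details: the dot-action formulas, the coordinates of $ts\Cdot\lambda$ and $uts\Cdot\lambda$, and the key identity $2a_0+b_0 = b-2$ are all right (your formulas for $\lambda_0$ agree with the ones the paper itself uses later, in Propositions \ref{prop:B2_C2_multiplicity} and \ref{prop:B2_C3_multiplicity}). However, your route is genuinely more computational than the paper's. The paper argues coordinate-free: the unique upper-closure wall $F_{0,1} = H_{\alpha_\mathrm{hs},1}$ of $C_0$ is carried by $st$ to the upper-closure wall $F_{2,3a} = H_{\alpha_2,1}$ of $C_2$ (and by $stu$ to $F_{3,4b}$ of $C_3$), so that $(x+\rho,\alpha_\mathrm{hs}^\vee) \leq p$ holds if and only if $(st\Cdot x + \rho, \alpha_2^\vee) \leq p$; combined with the equivariance $st\Cdot\big(\lambda_0 + W_\mathrm{fin}(\mu)\big) = st\Cdot\lambda_0 + W_\mathrm{fin}(\mu)$, the single reflection-smallness constraint at $\lambda_0$ is identified with one of the two constraints at $\lambda$, which gives the implication without ever writing down $\lambda_0$. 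Your argument encodes exactly the same matching, but filtered through Example \ref{ex:B2reflectionsmall} and explicit coordinates: the inequality $b + 2a' + b' \leq p-1$ you isolate is precisely the $F_{2,3a}$ (resp.\ $F_{3,4b}$) constraint, and $2a_0+b_0 = b-2$ is the coordinate shadow of the wall correspondence. What your version buys is explicitness — it pinpoints which single inequality does the work and produces the closed formulas for $\lambda_0$ that are needed elsewhere anyway; what the paper's version buys is robustness and transparency — no sign-sensitive bookkeeping, and an argument that would transfer unchanged to other alcoves or other root systems where explicit coordinates would be painful.
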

\begin{proof}
	Using the labeling of the walls from Subsection \ref{subsec:B2setup}, the walls that belong to the upper closure of the alcove $C_2 = st \Cdot C_0$ are precisely $F_{2,3} = H_{\alpha_{\mathrm{hs},2}}$ and $F_{2,3a} = H_{\alpha_2,1} = st \Cdot F_{0,1}$, where $F_{0,1} = H_{\alpha_\mathrm{hs},1}$ is the unique wall that belongs to the upper closure of $C_0$.
	This implies that for $x \in X_\R$, we have $(x+\rho,\alpha_\mathrm{hs}^\vee) \leq p$ if and only if $( st \Cdot x + \rho , \alpha_2^\vee ) \leq p$.
	Since $st\Cdot \big( \lambda_0 + W_\mathrm{fin}(\mu) \big) = st \Cdot \lambda_0 + W_\mathrm{fin}(\mu)$ for all $\mu \in X^+$, we conclude that $\mu$ is reflection small with respect to $\lambda_0$ only if $\mu$ is reflection small with respect to $st\Cdot\lambda_0$.
	Analogously, we see that $\lambda_0$ is reflection small with respect to $\mu$ only if $\mu$ is reflection small with respect to $stu\Cdot\lambda_0$, and the claim follows because we have $\lambda \in \{ st \Cdot \lambda_0 , stu \Cdot \lambda_0 \}$ by assumption.
\end{proof}

\begin{Lemma}\label{lem:B2_multiplicityC0C0}
	Let $\lambda \in C_0 \cap X$ and $\mu \in X^+$ such that $\mu$ is reflection small with respect to $\lambda$.
	If the tensor product $L_{\C}(\lambda) \otimes L_{\C}(\mu)$ of simple $G_\C$-modules has multiplicity then we have $\mu \in C_0$ and there is a weight $\nu\in C_0 \cap X$ such that $c_{\lambda,\mu}^\nu\geq 2$.
\end{Lemma}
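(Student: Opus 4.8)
The plan is to combine Remark~\ref{rem:C0reflectionsmalltiltingmultiplicity}, which turns the characteristic-$0$ multiplicities into multiplicities of indecomposable tilting modules, with the observation that a dominant weight in $\overline{C}_0$ is $p$-singular only if it lies on the upper wall $F_{0,1} = H_{\alpha_\mathrm{hs},1}$. First I would dispose of the trivial cases: if $\lambda = 0$ or $\mu = 0$ then $L_\C(\lambda)\otimes L_\C(\mu)$ is simple, so the hypothesis forces $\lambda \neq 0$ and $\mu \neq 0$. Writing $m_\nu = [L_\C(\lambda)\otimes L_\C(\mu):L_\C(\nu)]$, Remark~\ref{rem:C0reflectionsmalltiltingmultiplicity} gives $\mu \in \overline{C}_0$, $\lambda+\mu \in \overline{C}_0$ and $m_\nu = [T(\lambda)\otimes T(\mu):T(\nu)]_\oplus$ for all $\nu$, the latter being $c_{\lambda,\mu}^\nu$ whenever $\nu \in C_0 \cap X$. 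The hypothesis that the product has multiplicity then yields a weight $\nu_0 \in \overline{C}_0 \cap X$ with $m_{\nu_0}\geq 2$.

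Next I would prove $\mu \in C_0$. Since $\mu$ is dominant, $(\mu+\rho,\alpha^\vee)\geq 1$ for $\alpha\in\Pi$, so the only reflection hyperplane bounding $\overline{C}_0$ that $\mu$ can meet is $F_{0,1}$; thus $\mu$ is $p$-singular only if $(\mu+\rho,\alpha_\mathrm{hs}^\vee)=p$. Writing $\lambda = a\varpi_1+b\varpi_2$ and $\mu=a'\varpi_1+b'\varpi_2$, this reads $2a'+b'=p-3$, while reflection smallness gives $2a+b+2a'+b'\leq p-3$ by Example~\ref{ex:B2reflectionsmall}; together these force $2a+b\leq 0$, i.e.\ $\lambda = 0$, contradicting $\lambda\neq 0$. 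Hence $\mu\in C_0$.

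It then remains to arrange the multiplicity-$2$ weight to be $p$-regular, which is the heart of the argument. The same discussion of walls shows that $\nu_0$ either lies in $C_0$ (in which case $\nu := \nu_0$ works, as $c_{\lambda,\mu}^{\nu_0}=m_{\nu_0}\geq 2$) or lies on $F_{0,1}$. In the latter case I would pair $\lambda+\mu-\nu_0 \in \Z_{\geq0}\alpha_1+\Z_{\geq0}\alpha_2$ with $\alpha_\mathrm{hs}^\vee$: since $(\alpha_1,\alpha_\mathrm{hs}^\vee)=2$, $(\alpha_2,\alpha_\mathrm{hs}^\vee)=0$, and both $\nu_0$ and $\lambda+\mu$ satisfy $(\,\cdot+\rho,\alpha_\mathrm{hs}^\vee)\leq p$ with $\nu_0$ attaining equality, this pairing is $\leq 0$, forcing $\lambda+\mu-\nu_0 = f\alpha_2$ for some $f\geq 0$ and $\lambda+\mu\in F_{0,1}$. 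Then $\nu_0-\lambda = \mu - f\alpha_2$, and I would bound
\[ m_{\nu_0} = \big[\nabla(\lambda)\otimes\nabla(\mu):\nabla(\nu_0)\big]_\nabla \leq \dim\nabla(\mu)_{\mu-f\alpha_2} \]
using the identification of characteristic-$0$ multiplicities from Subsection~\ref{subsec:char0} together with the bound $\dim\Hom_G(\Delta(\nu_0),\nabla(\lambda)\otimes\nabla(\mu))\leq \dim\nabla(\mu)_{\nu_0-\lambda}$ (Theorem~2.5 in \cite{BrundanKleshchevBranchhing}, exactly as in the proof of Lemma~\ref{lem:boundformultiplicityintensorproductifCR}). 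The key point is that $\mu-f\alpha_2$ can be reached from the highest weight $\mu$ only by lowering repeatedly along $\alpha_2$, because every positive root other than $\alpha_2$ has positive $\alpha_1$-coefficient; hence $\dim\nabla(\mu)_{\mu-f\alpha_2}=\dim\Delta(\mu)_{\mu-f\alpha_2}\leq 1$, so $m_{\nu_0}\leq 1$, a contradiction. Therefore $\nu_0\in C_0$, and $\nu:=\nu_0$ satisfies $c_{\lambda,\mu}^\nu\geq 2$, as required.

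The main obstacle is precisely this last step: a priori the characteristic-$0$ multiplicity could be concentrated on the singular wall $F_{0,1}$, whereas the Verlinde coefficients $c_{\lambda,\mu}^\nu$ are defined only for $p$-regular $\nu$. The one-dimensionality of the $\alpha_2$-string weight spaces of $\nabla(\mu)$ is what rules this out and makes the argument close; everything else is bookkeeping with the alcove geometry of $C_0$ and the wall $F_{0,1}$.
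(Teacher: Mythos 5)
Your proof is correct and takes essentially the same route as the paper's: Remark \ref{rem:C0reflectionsmalltiltingmultiplicity} converts the characteristic-zero multiplicities into tilting multiplicities, reflection smallness combined with $\lambda \neq 0$ forces $\mu \in C_0$, and any composition factor whose highest weight lies on $F_{0,1}$ is shown to be of the form $\lambda+\mu-c\alpha_2$ and to occur with multiplicity at most one. The only difference is that you spell out this last bound via the one-dimensionality of the $\alpha_2$-string weight spaces of $\nabla(\mu)$, a detail the paper asserts without justification.
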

\begin{proof}
	Suppose that $L_\C(\lambda) \otimes L_\C(\mu)$ has multiplicity.
	As $\mu$ is reflection small with respect to $\lambda$, we have $\lambda + \mu \in \overline{C}_0$ and $\mu \in \overline{C}_0$, and by Remark \ref{rem:C0reflectionsmalltiltingmultiplicity}, there exists a weight $\nu \in \overline{C}_0 \cap X$ such that
	\[ 2 \leq [ L_\C(\lambda) \otimes L_\C(\mu) : L_\C(\nu) ] = \big[ T(\lambda) \otimes T(\mu) : T(\nu)  \big]_\oplus . \]
	This implies that $\lambda \neq 0$ and $\mu \in C_0$, and we claim that $\nu \in C_0$.
	Indeed, all weights $\delta \in (\overline{C}_0 \setminus C_0) \cap X^+$ belong to the wall $F_{0,1}$ of $C_0$, so $\delta = \lambda+\mu-c\alpha_2$ for some $c \geq 0$ and
	\[ [ L_\C(\lambda) \otimes L_\C(\mu) : L_\C(\delta) ] \leq 1 . \]
	In particular, we have $\delta \neq \nu$ and so $\nu \in C_0$ and $c_{\lambda,\mu}^\nu \geq 2$.
\end{proof}

\begin{Lemma}\label{lem:B2_C2C3multiplicity}
	Let $\lambda \in ( C_2\cup C_3 ) \cap X$ and $\mu \in X^+$ such that $\mu$ is reflection small with respect to $\lambda$.
	Let $\lambda_0 \in X^+$ be the unique weight in $C_0\cap W_\mathrm{aff}\Cdot \lambda$.
	If $L_{\C}(\lambda_0)\otimes L_{\C}(\mu)$ has multiplicity then $L(\lambda)\otimes L(\mu)$ has multiplicity.
\end{Lemma}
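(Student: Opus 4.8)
The plan is to reduce the whole statement to the weight $\lambda_0 \in C_0$ by exploiting the translation-equivariance of Verlinde coefficients, chaining together Lemmas \ref{lem:B2_reflectionsmallC2C3}, \ref{lem:B2_multiplicityC0C0} and \ref{lem:MFboundVerlindecoeff}. First I would record the combinatorial setup. Since $\lambda \in ( C_2 \cup C_3 ) \cap X$ is $p$-regular, there is a unique element $x \in W_\mathrm{aff}$ with $\lambda = x \Cdot \lambda_0$; explicitly $x = st$ if $\lambda \in C_2$ and $x = stu$ if $\lambda \in C_3$, and in both cases $x \Cdot C_0$ is a dominant alcove, so that $x \in W_\mathrm{aff}^+$ (cf.\ Subsection \ref{subsec:B2setup}). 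As $\mu$ is reflection small with respect to $\lambda$, Lemma \ref{lem:B2_reflectionsmallC2C3} shows that $\mu$ is also reflection small with respect to $\lambda_0$.

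Next I would transfer the characteristic-zero multiplicity into a statement about Verlinde coefficients. Applying Lemma \ref{lem:B2_multiplicityC0C0} to the pair $\lambda_0 \in C_0 \cap X$ and $\mu$ (which is legitimate by the previous step), the assumption that $L_\C(\lambda_0) \otimes L_\C(\mu)$ has multiplicity yields that $\mu \in C_0 \cap X$ and that there is a weight $\nu \in C_0 \cap X$ with $c_{\lambda_0,\mu}^\nu \geq 2$.

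Finally I would translate this back to the alcove $x \Cdot C_0$ containing $\lambda$. Since $\lambda_0,\mu,\nu \in C_0 \cap X$ and $x \in W_\mathrm{aff}^+$, Lemma \ref{lem:MFboundVerlindecoeff} gives
\[ \big[ L(\lambda) \otimes L(\mu) : L(x\Cdot\nu) \big]_\oplus = \big[ L(x\Cdot\lambda_0) \otimes L(\mu) : L(x\Cdot\nu) \big]_\oplus = c_{\lambda_0,\mu}^\nu \geq 2 , \]
where $x \Cdot \nu \in X^+$ because $\nu \in C_0$ and $x \Cdot C_0$ is a dominant alcove. Thus $L(x\Cdot\nu)$ occurs at least twice in a Krull--Schmidt decomposition of $L(\lambda) \otimes L(\mu)$, so a fortiori its composition multiplicity in $L(\lambda) \otimes L(\mu)$ is at least $2$, and $L(\lambda) \otimes L(\mu)$ has multiplicity, as required.

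I expect the argument to be essentially immediate once the preliminary lemmas are in place; the only points requiring care are checking that $x \in W_\mathrm{aff}^+$ and $x \Cdot \nu \in X^+$ (so that Lemma \ref{lem:MFboundVerlindecoeff} applies and $L(x\Cdot\nu)$ is a genuine simple $G$-module), and noting that a direct-summand multiplicity of at least $2$ forces a composition multiplicity of at least $2$. No genuinely hard step is anticipated.
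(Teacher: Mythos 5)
Your proof is correct and follows essentially the same route as the paper: both chain Lemma \ref{lem:B2_reflectionsmallC2C3} (reflection smallness descends to $\lambda_0$), Lemma \ref{lem:B2_multiplicityC0C0} (producing $\mu \in C_0$ and $\nu \in C_0 \cap X$ with $c_{\lambda_0,\mu}^\nu \geq 2$), and Lemma \ref{lem:MFboundVerlindecoeff} (transporting the Verlinde coefficient to a direct-summand multiplicity for $L(\lambda) \otimes L(\mu)$). Your additional checks — that $x \in \{st, stu\} \subseteq W_\mathrm{aff}^+$, that $x \Cdot \nu \in X^+$, and that a summand multiplicity of $2$ forces a composition multiplicity of $2$ — are details the paper leaves implicit, and they are all correct.
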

\begin{proof}
	Suppose that $L_{\C}(\lambda_0)\otimes L_{\C}(\mu)$ has multiplicity.
	By Lemma \ref{lem:B2_reflectionsmallC2C3}, the weight $\mu$ is reflection small with respect to $\lambda_0$, and Lemma \ref{lem:B2_multiplicityC0C0} implies that there exists a weight $\nu\in C_0 \cap X$ such that $c_{\lambda_0,\mu}^\nu\geq 2$.
	Then $L(\lambda) \otimes L(\mu)$ has multiplicity by Lemma \ref{lem:MFboundVerlindecoeff}.
\end{proof}

\begin{Proposition}\label{prop:B2_C2_multiplicity}
	Let $\lambda=a\varpi_1+b\varpi_2\in C_2 \cap X$ with $a+b=p-1$ and $\mu=a^\prime\varpi_1+b^\prime\varpi_2 \in X^+$ such that $\mu$ is reflection small with respect to $\lambda$.
	If $a'\neq0$ and $b'\neq 0$ then $L(\lambda)\otimes L(\mu)$ has multiplicity.
\end{Proposition}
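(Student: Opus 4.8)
The plan is to exhibit a single $p$-regular weight $\nu \in \widehat{C}_2$ with $[L(\lambda)\otimes L(\mu):L(\nu)] = 2$, using the multiplicity formula of Theorem \ref{thm:reflectionsmalltensorproduct}(3). First I would record the consequences of reflection smallness. Writing $\lambda = a\varpi_1 + b\varpi_2$ with $a+b = p-1$ and $\mu = a'\varpi_1 + b'\varpi_2$, the alcove inequalities force $1 \le a \le p-3$, hence $b \ge 2$; and Example \ref{ex:B2reflectionsmall} gives $2a+b+2a'+b' \le 2p-3$ together with $b+2a'+b' \le p-1$, the latter being equivalent to $2a'+b' \le a$. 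In particular $2a'+b' \le a \le p-3$, so $\mu \in \overline{C}_0$ and $L(\mu) = \Delta(\mu)$ by Subsection \ref{subsec:B2Weylmodulestiltingmodules}.

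Next I would identify $W_{C_2}$. The alcove $C_2$ has exactly one wall in its lower closure, namely $F_{1,2} = H_{\alpha_\mathrm{h},1}$ (the walls $F_{2,3}$ and $F_{2,3a}$ lie in its upper closure, cf.\ Example \ref{ex:B2reflectionsmall}), so $W_{C_2} = \{e, r\}$ with $r = s_{\alpha_\mathrm{h},1}$. Since $r$ is a reflection, $(-1)^{\ell(r)} = -1$, and Theorem \ref{thm:reflectionsmalltensorproduct}(3) reads
\[ [L(\lambda)\otimes L(\mu):L(\nu)] = \dim L(\mu)_{\nu-\lambda} - \dim L(\mu)_{r\Cdot\nu - \lambda} \]
for every $\nu \in \widehat{C}_2 \cap X$.

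The candidate is $\nu = \lambda + \mu - \varpi_1 = (a+a'-1)\varpi_1 + (b+b')\varpi_2$. I would verify $\nu \in \widehat{C}_2$ by checking the three defining pairings: $(\nu+\rho,\alpha_\mathrm{h}^\vee) = p + a'+b' > p$, $(\nu+\rho,\alpha_\mathrm{hs}^\vee) = (2a+b)+(2a'+b')+1 \le 2p-2$, and $(\nu+\rho,\alpha_2^\vee) = b+b'+1 \le p-2$, where the latter two use reflection smallness; these strict inequalities also show $\nu$ lies in the open alcove $C_2$, so $\nu$ is $p$-regular. For the first term, $\nu - \lambda = \mu - \varpi_1 = \mu - \alpha_1 - \alpha_2$, and Cavallin's reduction (Remark \ref{rem:Cavallin}, with $\tilde a = \tilde b = 1$ as $a',b' \ge 1$) gives $\dim L(\mu)_{\nu-\lambda} = \dim \Delta(\varpi_1+\varpi_2)_{\varpi_2} = 2$. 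For the second term, a direct computation yields $r\Cdot\nu - \lambda = (a'-1)\varpi_1 - (2a'+b')\varpi_2$, so $(r\Cdot\nu-\lambda,\alpha_\mathrm{h}^\vee) = -(a'+b'+1)$; since $w_0 = -\id$ in type $\mathrm{B}_2$, every weight $\delta$ of $\Delta(\mu) \supseteq L(\mu)$ satisfies $(\delta,\alpha_\mathrm{h}^\vee) \ge \min_{w \in W_\mathrm{fin}}(w\mu,\alpha_\mathrm{h}^\vee) = -(a'+b')$, and $-(a'+b'+1) < -(a'+b')$ forces $\dim L(\mu)_{r\Cdot\nu-\lambda} = 0$. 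Combining, $[L(\lambda)\otimes L(\mu):L(\nu)] = 2$, so $L(\lambda)\otimes L(\mu)$ has multiplicity.

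The only genuinely delicate points are the bookkeeping that places $\nu$ in $\widehat{C}_2$ (exactly where reflection smallness is consumed) and the two weight-space evaluations; the extremality argument through $\alpha_\mathrm{h}^\vee$ is what makes the vanishing of the second term painless and uniform in $p$. I would remark that for $b \ge 4$ one could alternatively invoke the characteristic-zero reduction (Lemma \ref{lem:B2_C2C3multiplicity}) with $\lambda_0 = ts\Cdot\lambda = (b-2)\varpi_2 \in C_0$, since then $L_\C(\lambda_0)\otimes L_\C(\mu)$ has multiplicity by Theorem \ref{thm:StembridgeB2}; but this reduction fails for $b \in \{2,3\}$, where $\lambda_0 \in \{0,\varpi_2\}$ makes the complex tensor product multiplicity free, so the direct computation above is the robust route.
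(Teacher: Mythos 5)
Your proof is correct, but it takes a genuinely different route from the paper's. The paper reduces to characteristic zero: by Lemma \ref{lem:B2_C2C3multiplicity} it suffices to show that $L_\C(\lambda_0)\otimes L_\C(\mu)$ has multiplicity, where $\lambda_0=(p-3-a)\varpi_2$ is the $C_0$-representative of $\lambda$; reflection smallness together with $a',b'\geq 1$ gives
\[ 2p-3 \;\geq\; 2(a+a')+(b+b') \;=\; (a+b)+(a+2a'+b') \;\geq\; (p-1)+(a+3), \]
hence $p-3-a\geq 2$, and Stembridge's classification (Theorem \ref{thm:StembridgeB2}) then shows $L_\C(\lambda_0)\otimes L_\C(\mu)$ has multiplicity. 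You instead run the direct alcove-geometric computation --- the same technique the paper deploys for the $C_1$ cases in Propositions \ref{prop:B2_C1_multiplicity_1}--\ref{prop:B2_C1_multiplicity_3a}: identify $W_{C_2}=\{e,s_{\alpha_\mathrm{h},1}\}$, apply Theorem \ref{thm:reflectionsmalltensorproduct}(3) to $\nu=\lambda+\mu-\varpi_1$, evaluate the leading term via Remark \ref{rem:Cavallin} as $\dim\Delta(\varpi_1+\varpi_2)_{\varpi_2}=2$, and kill the correction term by the extremality argument along $\alpha_\mathrm{h}^\vee$. All the steps check out: $2a'+b'\leq a\leq p-3$ indeed puts $\mu$ in $\overline{C}_0$ so that $L(\mu)=\Delta(\mu)$; your three inequalities do place $\nu$ in the open alcove $C_2$ (the $\alpha_1$-bound, which you leave implicit, follows from $(\nu+\rho,\alpha_\mathrm{hs}^\vee)\leq 2p$ and $(\nu+\rho,\alpha_2^\vee)>0$, and the remaining lower/upper bounds are immediate); and the vanishing $\dim L(\mu)_{s_{\alpha_\mathrm{h},1}\Cdot\nu-\lambda}=0$ is correct since that weight pairs with $\alpha_\mathrm{h}^\vee$ to $-(a'+b'+1)$, below the minimum $-(a'+b')$ attained on $\Lambda(\mu)$. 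Your argument buys an explicit composition factor of multiplicity exactly $2$; the paper's argument is shorter and leverages the existing characteristic-zero classification.

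One correction to your closing remark: the characteristic-zero reduction does not fail for $b\in\{2,3\}$ --- those cases are vacuous under the hypotheses. Reflection smallness gives $2a+b+2a'+b'\leq 2p-3$, which with $a+b=p-1$ reads $2a'+b'\leq p-2-a$; since $a'\geq 1$ and $b'\geq 1$ force $2a'+b'\geq 3$, one gets $a\leq p-5$, i.e.\ $b\geq 4$ automatically. This is precisely the inequality the paper's proof extracts, so its route is just as robust as yours; you simply did not notice that the hypotheses exclude $b\leq 3$.
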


\begin{proof}
	Let $\lambda_0 \in X^+$ be the unique weight with $\lambda_0 \in C_0\cap W_\mathrm{aff}\Cdot \lambda$.
	By Lemma \ref{lem:B2_C2C3multiplicity}, it suffices to show that $L_{\C}(\lambda_0)\otimes L_{\C}(\mu)$ has multiplicity.
	Observe that
	\[\lambda_0=(a+b-p+1)\varpi_1+(2p-4-2a-b)\varpi_2=(p-3-a)\varpi_2.\]
	Since $\mu$ is reflection small with respect to $\lambda$, we have
	\[ 2p-3 \geq 2 (a+a^\prime) + (b+b^\prime) = (a+b) + (a+2a^\prime+b) \geq (p-1) + (a+3) , \]
	so $p-3-a\geq 2$, and the claim follows from Theorem \ref{thm:StembridgeB2}.
\end{proof}

\begin{Proposition}\label{prop:B2_C2_multiplicity_2}
	Let $\lambda=a\varpi_1+b\varpi_2\in C_2 \cap X$ with $a+b>p-1$ and $\mu=a'\varpi_1+b'\varpi_2 \in X^+$ such that $\mu$ is reflection small with respect to $\lambda$.
	If $\mu\notin\{\varpi_1,\varpi_2\}$ then $L(\lambda)\otimes L(\mu)$ has multiplicity.
\end{Proposition}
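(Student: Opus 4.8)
The plan is to transport the problem to characteristic zero and invoke Stembridge's classification (Theorem \ref{thm:StembridgeB2}), exactly in the way prepared by Lemma \ref{lem:B2_C2C3multiplicity}. First I would let $\lambda_0 \in C_0 \cap X$ be the unique weight in $C_0 \cap W_\mathrm{aff}\Cdot\lambda$; as in the proof of Proposition \ref{prop:B2_C2_multiplicity}, one has
\[ \lambda_0 = a_0\varpi_1 + b_0\varpi_2 , \qquad a_0 = a+b-p+1, \quad b_0 = 2p-4-2a-b , \]
and the description of $C_0$ in Subsection \ref{subsec:B2setup} confirms $\lambda_0 \in C_0$. The hypothesis $a+b > p-1$ forces $a+b \geq p$ and hence $a_0 \geq 1$, so $\lambda_0$ has positive $\varpi_1$-coefficient; in particular $\lambda_0$ is neither $\varpi_2$ nor a nonzero multiple of $\varpi_2$. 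By Lemma \ref{lem:B2_C2C3multiplicity} it then suffices to show that $L_\C(\lambda_0)\otimes L_\C(\mu)$ has multiplicity. (Here I assume $\mu \neq 0$, which is the only relevant case, since $L(\lambda)\otimes L(0)\cong L(\lambda)$.)

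Next I would argue by contradiction, assuming $L_\C(\lambda_0)\otimes L_\C(\mu)$ is multiplicity free, and go through Table \ref{tab:B2MFchar0}. Since $\mu \notin \{\varpi_1,\varpi_2\}$ and $\lambda_0$ has positive $\varpi_1$-coefficient (so $\lambda_0$ is neither $\varpi_2$ nor a multiple of $\varpi_2$), a short inspection leaves only two possibilities: either $\lambda_0$ is a nonzero multiple of $\varpi_1$, which forces $b_0 = 0$, or $\lambda_0 = a_0\varpi_1 + \varpi_2$ with $\mu = a'\varpi_1$, which forces $b_0 = 1$. In both cases $b_0 \leq 1$. The decisive step is to combine this with reflection smallness: by Example \ref{ex:B2reflectionsmall}, $\mu$ being reflection small with respect to $\lambda \in C_2$ means $2a+b+2a'+b' \leq 2p-3$, and substituting $2a+b = 2p-4-b_0$ turns this into the single inequality $2a'+b' \leq b_0 + 1$. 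If $b_0 = 0$ this gives $2a'+b' \leq 1$, so $\mu \in \{0,\varpi_2\}$; if $b_0 = 1$ then multiplicity freeness already forced $\mu = a'\varpi_1$, whence $2a' \leq 2$ and $\mu \in \{0,\varpi_1\}$. Either way $\mu \in \{0,\varpi_1,\varpi_2\}$, contradicting the hypotheses. Hence $L_\C(\lambda_0)\otimes L_\C(\mu)$ has multiplicity, and the proposition follows from Lemma \ref{lem:B2_C2C3multiplicity}.

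The only real care needed is in the bookkeeping for Table \ref{tab:B2MFchar0}, and I expect the main obstacle to be handling the boundary weight $\lambda_0 = \varpi_1$ cleanly — this occurs precisely when $(a,b) = (p-4,4)$, i.e. $b_0 = 0$, in which case $L_\C(\varpi_1)\otimes L_\C(\mu)$ is multiplicity free for \emph{every} $\mu$, so the Stembridge step gives no information on its own. The point is that this case is absorbed into the $b_0 = 0$ branch, where reflection smallness by itself already forces $2a'+b' \leq 1$ and hence $\mu \in \{0,\varpi_2\}$, so no admissible $\mu$ survives. The remaining verifications — that $\lambda_0 \in C_0$, that $a_0 \geq 1$, and the rewriting $2a'+b' \leq b_0+1$ — are routine.
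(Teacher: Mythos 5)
Your proof is correct and takes essentially the same route as the paper's: reduce to characteristic zero via Lemma \ref{lem:B2_C2C3multiplicity}, compute $\lambda_0 = (a+b-p+1)\varpi_1 + (2p-4-2a-b)\varpi_2$, and combine Stembridge's classification (Theorem \ref{thm:StembridgeB2}) with the reflection-smallness inequality $2(a+a')+(b+b') \leq 2p-3$. The paper organizes this as a direct case split on $b_0 \geq 2$ versus $b_0 \leq 1$ (forcing $\mu = 2\varpi_2$ and $b_0 = 1$ in the latter case and applying Stembridge once more) rather than your contrapositive sweep through Table \ref{tab:B2MFchar0}, but the ingredients and estimates are identical, including the implicit handling of the boundary case $\lambda_0 = \varpi_1$ inside the $b_0 = 0$ branch.
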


\begin{proof}
	Let $\lambda_0 \in X^+$ be the unique weight with $\lambda_0 \in C_0\cap W_\mathrm{aff}\Cdot \lambda$.
	By Lemma \ref{lem:B2_C2C3multiplicity}, it suffices to show that $L_{\C}(\lambda_0)\otimes L_{\C}(\mu)$ has multiplicity.
	Observe that \[\lambda_0=(a+b-p+1)\varpi_1+(2p-4-2a-b)\varpi_2=:c\varpi_1+d\varpi_2 , \]
	with $c=a+b-p+1\geq 1$ and $d=2p-4-2a-b$.
	If $d\geq 2$ then the claim follows from Theorem \ref{thm:StembridgeB2} because $\mu\notin\{\varpi_1,\varpi_2\}$, so now assume that $d \leq 1$.
	Then $2a+b\geq 2p-5$, and since $\mu$ is reflection small with respect to $\lambda$, we further have $2(a+a')+(b+b')\leq 2p-3$.
	Combining the two inequalities, it follows that $2a^\prime+b^\prime \leq 2$, and since $\mu\notin\{\varpi_1,\varpi_2\}$, this forces that $\mu=2\varpi_2$ and $d=1$.
	As before, Theorem \ref{thm:StembridgeB2} shows that $L_{\C}(\lambda_0)\otimes L_{\C}(\mu)$ has multiplicity.
\end{proof}

\begin{Corollary} \label{cor:B2MFC2conditions}
	Let $\lambda \in C_2 \cap X$ and $\mu \in X^+$ such that $\mu$ is reflection small with respect to $\lambda$.
	If the tensor product $L(\lambda) \otimes L(\mu)$ is multiplicity free then the weights $\lambda$ and $\mu$ satisfy one of the conditions 8b, 8g or 8h in Table \ref{tab:B2MF}.
\end{Corollary}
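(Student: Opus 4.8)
The plan is to combine the multiplicity results proven above (Propositions \ref{prop:B2_C2_multiplicity} and \ref{prop:B2_C2_multiplicity_2}) with the setup for the alcove $C_2$, following exactly the template of Corollary \ref{cor:B2MFC1conditions}. First I would write $\lambda = a\varpi_1 + b\varpi_2 \in C_2 \cap X$ and $\mu = a'\varpi_1 + b'\varpi_2$ with $a,b,a',b' \in \Z_{\geq 0}$, and recall that since $\lambda \in C_2$ we have $a+b \geq p-1$. The key dichotomy is whether $a+b = p-1$ or $a+b > p-1$, matching the hypotheses of the two propositions.

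In the case $a+b > p-1$, Proposition \ref{prop:B2_C2_multiplicity_2} forces $\mu \in \{\varpi_1,\varpi_2\}$ whenever $L(\lambda)\otimes L(\mu)$ is multiplicity free, which is precisely condition 8b in Table \ref{tab:B2MF}. In the case $a+b = p-1$, Proposition \ref{prop:B2_C2_multiplicity} forces $a'=0$ or $b'=0$: if $b'=0$ then $\mu = a'\varpi_1$ and we are in case 8g, while if $a'=0$ then $\mu = b'\varpi_2$ and we are in case 8h. This exhausts all possibilities and yields one of the conditions 8b, 8g or 8h, as claimed.

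The proof is therefore a short case analysis with no real obstacle, since the substantive work was already carried out in the two preceding propositions; the only point requiring a little care is the boundary case $a+b=p-1$ with $\mu \in \{\varpi_1,\varpi_2\}$, where one should note that $\varpi_1 = \varpi_1 + 0\cdot\varpi_2$ falls under 8g and $\varpi_2 = 0\cdot\varpi_1 + \varpi_2$ falls under 8h, so that these small cases are consistently absorbed into the stated conditions. I would phrase the argument directly rather than via an enumerated list of sub-hypotheses.

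\begin{proof}
	Write $\lambda = a\varpi_1 + b\varpi_2$ and $\mu = a'\varpi_1 + b'\varpi_2$, with $a,b,a',b' \in \Z_{\geq 0}$, and recall that $a+b \geq p-1$ because $\lambda \in C_2$.
	Suppose that $L(\lambda) \otimes L(\mu)$ is multiplicity free.

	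If $a+b > p-1$ then Proposition \ref{prop:B2_C2_multiplicity_2} implies that $\mu \in \{ \varpi_1 , \varpi_2 \}$, and so we are in case 8b of Table \ref{tab:B2MF}.

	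Now suppose that $a+b = p-1$.
	Then Proposition \ref{prop:B2_C2_multiplicity} implies that $a' = 0$ or $b' = 0$.
	If $b' = 0$ then $\mu = a' \varpi_1$, and we are in case 8g of Table \ref{tab:B2MF}.
	If $a' = 0$ then $\mu = b' \varpi_2$, and we are in case 8h of Table \ref{tab:B2MF}.
	In all cases, the weights $\lambda$ and $\mu$ satisfy one of the conditions 8b, 8g or 8h in Table \ref{tab:B2MF}, as claimed.
\end{proof}
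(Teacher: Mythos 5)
Your proof is correct and follows essentially the same route as the paper's own argument: the same dichotomy $a+b=p-1$ versus $a+b>p-1$ (justified by $\lambda\in C_2$), with Proposition \ref{prop:B2_C2_multiplicity} handling the first case (giving 8g or 8h) and Proposition \ref{prop:B2_C2_multiplicity_2} handling the second (giving 8b). The only difference is cosmetic ordering of the two cases.
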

\begin{proof}
	Let us write $\lambda = a\varpi_1 + b\varpi_2$ and $\mu = a^\prime\varpi_1 + b^\prime\varpi_2$, with $a,b,a^\prime,b^\prime \in \Z_{\geq 0}$.
	If $a+b = p-1$ then we have either $a^\prime = 0$ or $b^\prime = 0$ by Proposition \ref{prop:B2_C2_multiplicity}, so we are in one of the cases 8g or 8h of Table \ref{tab:B2MF}.
	If $a+b > p-1$ then $\mu \in \{ \varpi_1 , \varpi_2 \}$ by Proposition \ref{prop:B2_C2_multiplicity_2}, and we are in case 8b of Table \ref{tab:B2MF}.
\end{proof}

Now it remains to show that for $\lambda \in C_3 \cap X$ and $\mu \in X^+$ such that $\mu$ is reflection small with respect to $\lambda$, the tensor product $L(\lambda) \otimes L(\mu)$ is multiplicity free only if one of the conditions 8b, 8i, 8j or 8k from Table \ref{tab:B2MF} is satisfied.
This will be proven in the three propositions below; see also Corollary \ref{cor:B2MFC3conditions}.

\begin{Proposition}\label{prop:B2_C3_multiplicity}
	Let $\lambda=a\varpi_1+b\varpi_2\in C_3 \cap X$ with $2a+b=2p-2$ and $\mu=a^\prime\varpi_1+b^\prime\varpi_2 \in X^+$ such that $\mu$ is reflection small with respect to $\lambda$.
	If $b^\prime \geq 2$ and $a^\prime \neq 0$ then $L(\lambda) \otimes L(\mu)$ has multiplicity.
\end{Proposition}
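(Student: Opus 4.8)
The plan is to follow verbatim the template established for the alcove $C_2$ in Propositions \ref{prop:B2_C2_multiplicity} and \ref{prop:B2_C2_multiplicity_2}, namely to transport the problem to the principal block of characteristic zero via the Verlinde coefficients and then invoke Stembridge's classification. First I would let $\lambda_0 \in X^+$ be the unique weight in $C_0 \cap W_\mathrm{aff} \Cdot \lambda$. Using the hypothesis $2a+b = 2p-2$ together with the computation already recorded in Proposition \ref{prop:B2MFC3_2a+b=2p-2}, this is $\lambda_0 = uts \Cdot \lambda = (p-2-a)\varpi_1$. By Lemma \ref{lem:B2_C2C3multiplicity}, it then suffices to prove that the tensor product $L_\C(\lambda_0) \otimes L_\C(\mu)$ of simple $G_\C$-modules has multiplicity, since this immediately yields the existence of a weight $\nu \in C_0 \cap X$ with $c_{\lambda_0,\mu}^\nu \geq 2$ and hence multiplicity in $L(\lambda) \otimes L(\mu)$.

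To establish the characteristic-zero statement, I would appeal to Theorem \ref{thm:StembridgeB2}. The crucial preliminary point is that $\lambda_0$ is genuinely a nonzero multiple of $\varpi_1$ with coefficient at least $2$: the reflection smallness inequality $a + a' + b' \leq p-1$ from Example \ref{ex:B2reflectionsmall}, combined with the hypotheses $a' \geq 1$ and $b' \geq 2$, forces $a \leq p-4$, so that the coefficient $p-2-a$ of $\lambda_0$ is at least $2$. I would then run through the conditions in Table \ref{tab:B2MFchar0}, checking both orderings of the pair $(\lambda_0,\mu)$. Since $\lambda_0 = (p-2-a)\varpi_1$ is a multiple of $\varpi_1$ with coefficient $\geq 2$, whereas $\mu = a'\varpi_1 + b'\varpi_2$ has both a nonzero $\varpi_1$-coefficient ($a' \geq 1$) and a $\varpi_2$-coefficient $\geq 2$ ($b' \geq 2$), none of the conditions 1a--4 can hold: conditions 1a, 1b are excluded because neither factor is $\varpi_1$ or $\varpi_2$; conditions 2a, 2b, 3 fail because $\mu$ is not a single fundamental-weight multiple; and condition 4 fails because its $\varpi_2$-coefficient must equal exactly $1$. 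Consequently $L_\C(\lambda_0) \otimes L_\C(\mu)$ has multiplicity, and the proposition follows.

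I do not expect any genuine obstacle here, as the argument is a direct instance of the reduction machinery already proven in this subsection. The only step demanding a moment of care is verifying $\lambda_0 \neq 0$ (equivalently $a < p-2$): were $\lambda_0$ trivial, then $L_\C(\lambda_0) \otimes L_\C(\mu) \cong L_\C(\mu)$ would be multiplicity free and the whole strategy would collapse. This potential degeneracy is precisely ruled out by the reflection smallness bound under the assumptions $a' \geq 1$ and $b' \geq 2$, which I would make sure to spell out explicitly at the start of the proof.
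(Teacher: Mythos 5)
Your proposal is correct and follows essentially the same route as the paper: both compute $\lambda_0 = (p-2-a)\varpi_1$, use the reflection smallness bound $a + a' + b' \leq p-1$ together with $a' \geq 1$, $b' \geq 2$ to get $p-2-a \geq 2$, invoke Theorem \ref{thm:StembridgeB2} to see that $L_\C(\lambda_0) \otimes L_\C(\mu)$ has multiplicity, and conclude via Lemma \ref{lem:B2_C2C3multiplicity}. Your explicit run through Table \ref{tab:B2MFchar0} and the remark about ruling out $\lambda_0 = 0$ are just more detailed versions of steps the paper leaves implicit.
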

\begin{proof}
	Let $\lambda_0 \in X^+$ be the unique weight with $\lambda_0 \in C_0\cap W_\mathrm{aff}\Cdot \lambda$.
	Observe that
	\[ \lambda_0=(p-2-a)\varpi_1+(2a+b-2p+2)\varpi_2=(p-2-a)\varpi_1. \]
	Since $\mu$ is reflection small with respect to $\lambda$, we have $p-1 \geq a+a^\prime+b^\prime \geq a + 3$, and it follows that $p-a-2 \geq 2$.
	Now Theorem \ref{thm:StembridgeB2} shows that $L_{\C}(\lambda_0)\otimes L_{\C}(\mu)$ has multiplicity, and the claim follows from Lemma \ref{lem:B2_C2C3multiplicity}.
\end{proof}

\begin{Proposition}\label{prop:B2_C3_multiplicity_2}
	Let $\lambda=a\varpi_1+b\varpi_2\in C_3 \cap X$ with $2a+b=2p-1$ and $\mu=a^\prime\varpi_1+b^\prime\varpi_2 \in X^+$ such that $\mu$ is reflection small with respect to $\lambda$.
	If $b^\prime\neq 0$ and $\mu \neq \varpi_2$ then $L(\lambda)\otimes L(\mu)$ has multiplicity.
\end{Proposition}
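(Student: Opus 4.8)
The plan is to follow the same strategy as in the proofs of Propositions \ref{prop:B2_C2_multiplicity} and \ref{prop:B2_C3_multiplicity}, reducing the question to a multiplicity computation in characteristic zero via Lemma \ref{lem:B2_C2C3multiplicity}. Let $\lambda_0 \in X^+$ be the unique weight with $\lambda_0 \in C_0 \cap W_\mathrm{aff}\Cdot\lambda$. Since $\lambda \in C_3 = stu\Cdot C_0$, we have $\lambda_0 = uts\Cdot\lambda$, and a direct computation using $2a+b = 2p-1$ gives
\[ \lambda_0 = (p-2-a)\varpi_1 + (2a+b-2p+2)\varpi_2 = (p-2-a)\varpi_1 + \varpi_2 . \]
By Lemma \ref{lem:B2_C2C3multiplicity}, it then suffices to show that the tensor product $L_\C(\lambda_0) \otimes L_\C(\mu)$ of simple $G_\C$-modules has multiplicity.

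First I would record that the $\varpi_1$-coefficient $c \coloneqq p-2-a$ of $\lambda_0$ is strictly positive. Indeed, the hypotheses $b' \neq 0$ and $\mu \neq \varpi_2$ force $a' + b' \geq 2$, and the reflection smallness of $\mu$ with respect to $\lambda$ yields $a + a' + b' \leq p-1$ (cf.\ Example \ref{ex:B2reflectionsmall}); combining these gives $a \leq p-3$ and hence $c \geq 1$. Thus $\lambda_0 = c\varpi_1 + \varpi_2$ with $c \geq 1$, so $\lambda_0$ lies on no coordinate axis and in particular $\lambda_0 \neq \varpi_1, \varpi_2$.

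The remaining step is to verify, using Stembridge's classification (Theorem \ref{thm:StembridgeB2}), that none of the conditions in Table \ref{tab:B2MFchar0} is met by the pair $(\lambda_0, \mu)$, so that $L_\C(\lambda_0) \otimes L_\C(\mu)$ has multiplicity. This is a short case check: conditions 1a and 1b fail because neither $\lambda_0$ nor $\mu$ equals $\varpi_1$ or $\varpi_2$ (recall $b' \neq 0$ and $\mu \neq \varpi_2$); conditions 2a, 2b and 3 fail because $\lambda_0$ has both fundamental coefficients positive; and condition 4 fails because $\lambda_0 = c\varpi_1 + \varpi_2$ can only play the role of $a\varpi_1 + \varpi_2$, forcing its partner $\mu$ to be a multiple of $\varpi_1$ (contradicting $b' \neq 0$), while the reverse assignment would require $\lambda_0$ to be a multiple of $\varpi_1$, which is impossible since its $\varpi_2$-coefficient is nonzero. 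An appeal to Lemma \ref{lem:B2_C2C3multiplicity} then completes the proof.

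I do not anticipate a serious obstacle, as the argument runs entirely parallel to the preceding propositions. The one point requiring genuine care is the positivity $c \geq 1$: without it, $\lambda_0$ could degenerate to $\varpi_2$, in which case $L_\C(\lambda_0) \otimes L_\C(\mu)$ would be multiplicity free for \emph{every} $\mu$ (condition 1b), and the reduction to characteristic zero would break down. It is precisely the combined hypotheses $b' \neq 0$ and $\mu \neq \varpi_2$, feeding into the reflection smallness inequality, that rule out this degeneration.
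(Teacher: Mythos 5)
Your proposal is correct and follows essentially the same route as the paper: compute $\lambda_0 = (p-2-a)\varpi_1 + \varpi_2$, use the reflection smallness inequality $a+a'+b' \leq p-1$ together with the hypotheses $b' \neq 0$, $\mu \neq \varpi_2$ to get $p-2-a \geq 1$, then conclude via Stembridge's classification (Theorem \ref{thm:StembridgeB2}) and Lemma \ref{lem:B2_C2C3multiplicity}. The only difference is that you spell out the case check against Table \ref{tab:B2MFchar0} explicitly, which the paper leaves implicit; your identification of the positivity $p-2-a \geq 1$ as the crux matches the paper exactly.
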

\begin{proof}
	Let $\lambda_0 \in X^+$ be the unique weight with $\lambda_0 \in C_0\cap W_\mathrm{aff}\Cdot \lambda$.
	Observe that
	\[ \lambda_0 = (p-2-a)\varpi_1+(2a+b-2p+2)\varpi_2 = (p-2-a)\varpi_1+\varpi_2 . \]
	Since $\mu$ is reflection small with respect to $\lambda$, we have $p-1 \geq a+a^\prime+b^\prime$, and the assumptions that $b^\prime \neq 0$ and $\mu \neq \varpi_2$ imply that $a < p-2$ and $p-2-a > 0$.
	Now Theorem \ref{thm:StembridgeB2} shows that $L_{\C}(\lambda_0)\otimes L_{\C}(\mu)$ has multiplicity, and the claim follows from Lemma \ref{lem:B2_C2C3multiplicity}.
\end{proof}

\begin{Proposition}\label{prop:B2_C3_multiplicity_3}
	Let $\lambda=a\varpi_1+b\varpi_2\in C_3 \cap X$ with $2a+b\geq2p$ and $\mu=a^\prime\varpi_1+b^\prime\varpi_2 \in X^+$ such that $\mu$ is reflection small with respect to $\lambda$.
	If $\mu\notin\{\varpi_1,\varpi_2\}$ then $L(\lambda)\otimes L(\mu)$ has multiplicity.
\end{Proposition}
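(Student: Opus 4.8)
The plan is to follow the same strategy as in Propositions \ref{prop:B2_C3_multiplicity} and \ref{prop:B2_C3_multiplicity_2}, namely to reduce to characteristic zero and invoke Stembridge's classification. Let $\lambda_0 \in X^+$ be the unique weight with $\lambda_0 \in C_0 \cap W_\mathrm{aff} \Cdot \lambda$. By Lemma \ref{lem:B2_C2C3multiplicity} it suffices to show that the tensor product $L_\C(\lambda_0) \otimes L_\C(\mu)$ of simple $G_\C$-modules has multiplicity. As in the two preceding propositions, one computes $\lambda_0 = (p-2-a)\varpi_1 + (2a+b-2p+2)\varpi_2 =: c\varpi_1 + d\varpi_2$, and the hypothesis $2a+b \geq 2p$ immediately gives $d = 2a+b-2p+2 \geq 2$.

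First I would pin down the other coefficient $c = p-2-a$. Since $\mu$ is reflection small with respect to $\lambda$, Example \ref{ex:B2reflectionsmall} yields $a + a^\prime + b^\prime \leq p-1$. If we had $a = p-2$ (that is, $c = 0$), this inequality would force $a^\prime + b^\prime \leq 1$; as $\mu \neq 0$ and $\mu \notin \{\varpi_1,\varpi_2\}$, this is impossible. Hence $c = p-2-a \geq 1$.

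The heart of the argument is then to verify that the pair $(\lambda_0,\mu)$ satisfies none of the conditions 1a--4 of Theorem \ref{thm:StembridgeB2}, up to interchanging the two weights; this is the step where I would be most careful, checking each row of Table \ref{tab:B2MFchar0}. The weight $\lambda_0 = c\varpi_1 + d\varpi_2$ with $c \geq 1$ and $d \geq 2$ is neither a multiple of $\varpi_1$ (as $d \geq 2$), nor a multiple of $\varpi_2$ (as $c \geq 1$), nor equal to $\varpi_1$ or $\varpi_2$, nor of the form $\ast\varpi_1 + \varpi_2$ (as $d \neq 1$). Combined with the hypothesis $\mu \notin \{\varpi_1,\varpi_2\}$, this excludes conditions 1a and 1b (neither factor equals $\varpi_1$ or $\varpi_2$), conditions 2a and 2b (which require both factors to be multiples of $\varpi_1$, respectively of $\varpi_2$, whereas $\lambda_0$ is neither), condition 3 (which needs one factor a pure $\varpi_1$-weight and the other a pure $\varpi_2$-weight, but $\lambda_0$ is neither), and condition 4 (which needs a factor of the form $\ast\varpi_1 + \varpi_2$ together with a pure $\varpi_1$-factor, both ruled out for $\lambda_0$). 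Consequently $L_\C(\lambda_0) \otimes L_\C(\mu)$ has multiplicity, and Lemma \ref{lem:B2_C2C3multiplicity} completes the proof.

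The only genuine obstacle is the exclusion of the borderline value $c = 0$: were $a = p-2$, the weight $\lambda_0$ would degenerate to a pure multiple of $\varpi_2$, and then the configurations 2b and 3 of Stembridge's list (with $\mu$ a pure $\varpi_2$- or pure $\varpi_1$-weight) would render $L_\C(\lambda_0) \otimes L_\C(\mu)$ multiplicity free and break the reduction. The reflection smallness inequality $a + a^\prime + b^\prime \leq p-1$, together with $\mu \notin \{0,\varpi_1,\varpi_2\}$, is exactly what rules this out, so I would take care to state that deduction explicitly rather than folding it into the case-check.
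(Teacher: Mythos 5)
Your proof is correct and takes essentially the same route as the paper: the reduction via Lemma \ref{lem:B2_C2C3multiplicity}, the computation $\lambda_0 = (p-2-a)\varpi_1 + (2a+b-2p+2)\varpi_2$ with $d \geq 2$, the deduction $c = p-2-a \geq 1$ from the reflection smallness inequality $a + a' + b' \leq p-1$ together with $\mu \notin \{0,\varpi_1,\varpi_2\}$, and the appeal to Theorem \ref{thm:StembridgeB2}. The only difference is that you make explicit the row-by-row exclusion of the conditions in Table \ref{tab:B2MFchar0}, which the paper leaves implicit.
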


\begin{proof}
	Let $\lambda_0 \in X^+$ be the unique weight with $\lambda_0 \in C_0\cap W_\mathrm{aff}\Cdot \lambda$.
	Observe that
	\[ \lambda_0 = (p-2-a)\varpi_1+(2a+b-2p+2)\varpi_2 = c\varpi_1+d\varpi_2 , \]
	where $c = p-2-a$ and $d = 2a+b-2p+2 \geq 2$.
	As $\mu$ is reflection small with respect to $\lambda$, we have $p-1 \geq a + a^\prime + b^\prime$, and the assumption that $\mu \notin \{ \varpi_1 , \varpi_2 \}$ implies that $a < p-2$ and $c > 0$.
	Now Theorem~\ref{thm:StembridgeB2} shows that $L_{\C}(\lambda_0)\otimes L_{\C}(\mu)$ has multiplicity, and the claim follows from Lemma \ref{lem:B2_C2C3multiplicity}.
\end{proof}

\begin{Corollary} \label{cor:B2MFC3conditions}
	Let $\lambda \in C_3 \cap X$ and $\mu \in X^+$ such that $\mu$ is reflection small with respect to $\lambda$.
	If the tensor product $L(\lambda) \otimes L(\mu)$ is multiplicity free then the weights $\lambda$ and $\mu$ satisfy one of the conditions 8b, 8i, 8j or 8k in Table \ref{tab:B2MF}.
\end{Corollary}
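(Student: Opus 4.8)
The plan is to collect the four Propositions just proven for $C_3$ into a clean case analysis, exactly mirroring the proof of Corollary \ref{cor:B2MFC2conditions} but with the extra stratification by the value of $2a+b$. First I would write $\lambda = a\varpi_1 + b\varpi_2$ and $\mu = a^\prime\varpi_1 + b^\prime\varpi_2$ with $a,b,a^\prime,b^\prime \in \Z_{\geq 0}$, and recall that $\lambda \in C_3$ forces $2a+b > 2p-3$, so that $2a+b \in \{2p-2, 2p-1\} \cup \Z_{\geq 2p}$. I would then handle these three ranges separately, invoking the appropriate contrapositive of Propositions \ref{prop:B2_C3_multiplicity}, \ref{prop:B2_C3_multiplicity_2}, \ref{prop:B2_C3_multiplicity_3}, since $L(\lambda)\otimes L(\mu)$ multiplicity free means $L(\lambda)\otimes L(\mu)$ does \emph{not} have multiplicity.

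The three ranges correspond precisely to the three target conditions. For $2a+b = 2p-2$, Proposition \ref{prop:B2_C3_multiplicity} says multiplicity freeness forces $b^\prime \leq 1$ or $a^\prime = 0$; the first sub-case gives condition 8j and the second gives condition 8i, so we land in 8i or 8j. For $2a+b = 2p-1$, Proposition \ref{prop:B2_C3_multiplicity_2} forces $b^\prime = 0$ or $\mu = \varpi_2$; here $b^\prime = 0$ gives condition 8k and $\mu = \varpi_2$ gives condition 8b. For $2a+b \geq 2p$, Proposition \ref{prop:B2_C3_multiplicity_3} forces $\mu \in \{\varpi_1,\varpi_2\}$, which is condition 8b. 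Thus in every range one of 8b, 8i, 8j, 8k holds, which is the claim.

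I do not expect any genuine obstacle here, since all the analytic work has already been done in the preceding propositions; the only thing to be careful about is matching the sub-cases to the exact wording of the conditions in Table \ref{tab:B2MF} (in particular that 8j allows $b^\prime \in \{0,1\}$ so that the $b^\prime \leq 1$ outcome of Proposition \ref{prop:B2_C3_multiplicity} really does fall under 8j, and that 8i and 8k impose $\mu = b^\prime\varpi_2$ respectively $\mu = a^\prime\varpi_1$, matching $a^\prime = 0$ and $b^\prime = 0$). The proof would read as follows.

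\begin{proof}
	Let us write $\lambda = a\varpi_1 + b\varpi_2$ and $\mu = a^\prime\varpi_1 + b^\prime\varpi_2$, with $a,b,a^\prime,b^\prime \in \Z_{\geq 0}$, and recall that $2a+b > 2p-3$ because $\lambda \in C_3$.
	We consider the different values of $2a+b$ in turn.
	If $2a+b = 2p-2$ then we have either $b^\prime \leq 1$ or $a^\prime = 0$ by Proposition \ref{prop:B2_C3_multiplicity}, so we are in one of the cases 8i or 8j of Table \ref{tab:B2MF}.
	If $2a+b = 2p-1$ then we have either $b^\prime = 0$ or $\mu = \varpi_2$ by Proposition \ref{prop:B2_C3_multiplicity_2}, so we are in one of the cases 8b or 8k of Table \ref{tab:B2MF}.
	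If $2a+b \geq 2p$ then $\mu \in \{ \varpi_1 , \varpi_2 \}$ by Proposition \ref{prop:B2_C3_multiplicity_3}, and we are in case 8b of Table \ref{tab:B2MF}.
	In all cases, the weights $\lambda$ and $\mu$ satisfy one of the conditions 8b, 8i, 8j or 8k in Table \ref{tab:B2MF}, as claimed.
\end{proof}
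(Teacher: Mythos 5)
Your proof is correct and takes essentially the same approach as the paper: the same stratification by $2a+b \in \{2p-2,\, 2p-1\} \cup \Z_{\geq 2p}$, followed by the contrapositives of Propositions \ref{prop:B2_C3_multiplicity}, \ref{prop:B2_C3_multiplicity_2} and \ref{prop:B2_C3_multiplicity_3} matched to conditions 8i/8j, 8b/8k and 8b respectively. Note that in the middle case the paper's own proof cites Proposition \ref{prop:B2_C2_multiplicity_2}, which appears to be a typo; your citation of Proposition \ref{prop:B2_C3_multiplicity_2} is the correct one.
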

\begin{proof}
	Let us write $\lambda = a\varpi_1 + b\varpi_2$ and $\mu = a^\prime\varpi_1 + b^\prime\varpi_2$, with $a,b,a^\prime,b^\prime \in \Z_{\geq 0}$.
	If $2a+b = 2p-2$ then either $a^\prime=0$ or $b^\prime \leq 1$ by Proposition \ref{prop:B2_C3_multiplicity}, and it follows that we are in one of the cases 8i or 8j of Table \ref{tab:B2MF}.
	If $2a+b = 2p-1$ then either $\mu = \varpi_2$ or $b^\prime = 0$ by Proposition \ref{prop:B2_C2_multiplicity_2}, so we are in one of the cases 8b or 8k of Table \ref{tab:B2MF}.
	Finally, if $2a+b  \geq 2p$ then $\mu \in \{ \varpi_1 , \varpi_2 \}$ by \ref{prop:B2_C3_multiplicity_3}, and we are again in case 8b of Table \ref{tab:B2MF}.
\end{proof}

Now we are ready to establish the necessary conditions in Theorem \ref{thm:B2MF}, that is, we prove that for weights $\lambda,\mu \in X_1 \setminus \{ 0 \}$ such that the tensor product $L(\lambda) \otimes L(\mu)$ is multiplicity free, the weights $\lambda$ and $\mu$ must satisfy one of the conditions from Table \ref{tab:B2MF}, up to interchanging $\lambda$ and $\mu$.

\begin{proof}[Proof of Theorem \ref{thm:B2MF}, necessary conditions]
\label{proof:B2MFnecessary}	
	Let $\lambda,\mu \in X_1 \setminus \{ 0 \}$ and suppose that the tensor product $L(\lambda) \otimes L(\mu)$ is multiplicity free.
	Then $L(\lambda) \otimes L(\mu)$ is completely reducible by Lemma \ref{lem:MFimpliesCR}, and so $\lambda$ and $\mu$ satisfy one of the conditions in Table \ref{tab:B2CR} by Theorem \ref{thm:B2CR} (up to interchanging $\lambda$ and $\mu$).
	As the cases 1--7 in Table \ref{tab:B2CR} match the cases 1--7 in Table \ref{tab:B2MF}, it remains to consider the pairs of weights $\lambda$ and $\mu$ such that $\lambda \in C_0 \cup C_1\cup C_2\cup C_3$ and $\mu$ is reflection small with respect to $\lambda$ (i.e.\ case 8 in Table \ref{tab:B2CR}).
	We consider the alcoves $C_0,C_1,C_2,C_3$ in turn.
	\begin{itemize}
		\item If $\lambda\in C_0$ then we are in case 8a of Table \ref{tab:B2MF} by Proposition \ref{prop:B2MFC0C0}.
		\item If $\lambda \in C_1$ then we are in one of the cases 8b--8f or 8c\om--8f\om of Table \ref{tab:B2MF} by Corollary \ref{cor:B2MFC1conditions}.
		\item If $\lambda\in C_2$ then we are in one of the cases 8b, 8g or 8h of Table \ref{tab:B2MF} by Corollary \ref{cor:B2MFC2conditions}.
		\item If $\lambda\in C_3$ then we are in one of the cases 8b, 8i, 8j or 8k of Table \ref{tab:B2MF} by Corollary \ref{cor:B2MFC3conditions}.
	\end{itemize}
	In all cases, the weights $\lambda$ and $\mu$ satisfy one of the conditions from Table \ref{tab:B2MF}, as claimed.
\end{proof}

\bibliographystyle{alpha}
\bibliography{tensor}

\end{document}